%% file: main.tex
\documentclass[11pt, a4paper]{amsart}

\usepackage{amsmath}
\usepackage{tikz}
\usepackage{amssymb}
\usepackage{enumitem}
\usepackage{geometry}
\usepackage{amsthm}
\usepackage{tikz-cd}
\usepackage[all]{xy}
\usepackage{amsfonts}

\usepackage{mathrsfs}
\usepackage[scr=boondox, cal=esstix]{mathalpha}
\usepackage{hyperref}
\usepackage{xcolor}
\usepackage{MnSymbol}

\usepackage{mathtools}
\usepackage{subcaption}

\input{header_labeling}
\input{header_newcommand}

\title{Eclipses on Zippers}
\author{KyeongRo Kim}
\address{\hskip-\parindent
School of
Mathematics\\
Korea Institute for Advanced Study(KIAS)\\
85 Hoegi-ro, Dongdaemun-gu, Seoul 02455, Republic of Korea.}
\email{kyeongrokim14@gmail.com}

\date{\today}

\begin{document}

\maketitle
\begin{abstract}
Calegari and Loukidou introduced \emph{zippers}, consisting of a disjoint pair of invariant real trees in the boundary of a closed hyperbolic $3$-manifold group, which ensure the existence of a universal circle.
We study the action of $\pi_1(M)$ on a minimal zipper and prove a fixed point dichotomy: every nontrivial element either fixes a unique point in each tree or acts freely on both. This answers a question of Calegari and Loukidou. As a consequence, there exists an element with exactly one fixed point in each tree.

\end{abstract}

\section{Introduction}
Motivated by Thurston’s vision of a universal circle \cite{ThurstonUniI}, Calegari--Dunfield proved the universal circle theorem \cite{CalegariDunfield03}. Since then, the theory has been extensively developed and applied to a wide range of topological and geometric structures on $3$-manifolds, including taut foliations, quasi-geodesic and pseudo-Anosov flows, and veering triangulations.
It now provides a common framework connecting these structures.

Recently, Calegari and Loukidou \cite{CalegariLoukidou26} introduced a remarkably simple topological notion, called \emph{zippers}, which gives a sufficient condition for the existence of a universal circle: for a closed hyperbolic $3$-manifold $M$, a \emph{zipper} is a pair $Z^\pm$ of disjoint, non-empty, path-connected subsets of the Gromov boundary $S_\infty^2 = \partial_\infty \pi_1(M)$ that are invariant under the action of $\pi_1(M)$.

The main result of this paper is the following:
\begin{thmA}[Zipper Fixed Point Dichotomy]\label{Thm:tameFix}
Let $Z^\pm$ be a minimal zipper for a closed hyperbolic $3$-manifold $M$.
Then for every $g \in \pi_1(M)\setminus\{1\}$,  one of the following holds:
\begin{itemize}
    \item(hyperbolic-like) $g$ acts freely on both $Z^\pm$;
    \item(pseudo-Anosov-like) $g$ fixes a unique point in each of $Z^\pm$.
\end{itemize}
\end{thmA}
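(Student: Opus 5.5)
The plan is to exploit the dynamics of $g$ on $S^2_\infty$. Every nontrivial $g\in\pi_1(M)$ is loxodromic, with exactly two fixed points $g^\pm$ on $S^2_\infty$: an attractor and a repeller. I will take as known the basic structure of a minimal zipper from Calegari--Loukidou: each of $Z^\pm$ is a real tree (uniquely arc-connected), the action on it is by homeomorphisms, and minimality means no proper invariant path-connected subsets.

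\textbf{Step 1 (fixed points lie in $\{g^+,g^-\}$).} Any point of $Z^\pm$ fixed by $g$ is a fixed point of $g$ on the sphere. Hence $\mathrm{Fix}(g)\cap Z^+$ and $\mathrm{Fix}(g)\cap Z^-$ are subsets of $\{g^+,g^-\}$. Since $Z^+\cap Z^-=\emptyset$, the only configurations are:
\begin{enumerate}
\item $g$ fixes nothing in either tree;
\item $g$ fixes one of $g^\pm$ in one tree and the other in the other tree;
\item $g$ fixes a point in exactly one tree;
\item $g$ fixes both $g^+,g^-$ in the same tree.
\end{enumerate}
Configuration (1) is the hyperbolic-like case and (2) is the pseudo-Anosov-like case. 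The task is to rule out (3) and (4).

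\textbf{Step 2 (ruling out (4)).} Suppose $g^\pm\in Z^+$. Let $\gamma\subset Z^+$ be the unique arc joining them, which is $g$-invariant. For any $z\in Z^-$, the iterates $g^n z$ converge to $g^+$. I would use the fact that $Z^-$ is path connected and invariant, together with north--south dynamics, to produce arcs in $Z^-$ accumulating onto all of $\gamma$. Then I would use the separation properties of the sphere to show that $Z^-$ must cross $\gamma$. Concretely: a small loop around $g^+$ in $S^2$ meets both trees. Invariant arcs of $Z^-$ that approach $g^+$ from one side, then get pushed by $g^{-n}$ toward $g^-$, sweep out a region bounded by pieces of $\gamma$; disjointness then forces a contradiction with minimality, because the "half" of $Z^+$ cut off would give a proper invariant subtree. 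I expect I will need the tree structure to say that $\gamma$ is a $g$-axis on which $g$ acts by translation with ends at the fixed points.

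\textbf{Step 3 (ruling out (3)).} Suppose $g$ fixes $g^+\in Z^+$ but acts freely on $Z^-$. A fixed-point-free homeomorphism of a real tree that is not elliptic should have an axis or translate toward an end. So $g$ has an invariant line or end in $Z^-$, and its endpoints in $S^2$, being limits of $g^n z$, are $g^\pm$. Since $g^+\in Z^+$ and the trees are disjoint, the end of $Z^-$ converging to $g^+$ must be a ray in $Z^-$ whose closure contains a point of $Z^+$. I would aim to show that this forces $g^-\in Z^-$: the ray toward $g^-$ is the other end of the axis, and by an argument symmetric to Step 2 the limit point must be in $Z^-$ rather than absent from both trees. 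Alternatively, density of fixed points combined with minimality (closures of orbits) could be used to transfer fixed points between trees.

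\textbf{Main obstacle.} The crucial step is Step 3, converting "limit of an end of $Z^-$" into "actual point of $Z^-$". Real trees in $S^2$ need not be closed, so a limit point may lie outside both trees. I would first try to use minimality: take the closure of an axis in $Z^-$ and show that the union of the $\pi_1(M)$-translates of the axis together with $g^-$ forms an invariant path-connected subset. If $g^-$ were not in it, a proper invariant subset would arise, or disjointness from $Z^+$ would fail.
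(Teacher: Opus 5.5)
Your Step 1 reduction is correct and matches the paper's: the theorem is equivalent to ruling out your configuration (4) and your configuration (3), which the paper proves as two separate theorems. Steps 2 and 3, however, contain no argument, and the mechanism you propose cannot supply one.

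\textbf{Step 2.} The part of $Z^+$ ``cut off'' by $\gamma=[p,q]$ is only $g$-invariant, never $G$-invariant, so no proper invariant subtree arises. What minimality actually gives is a pruning statement. If no connector reaches a branch $B$ at $p$ while avoiding $p$, then $h(B)\cap B=\varnothing$ for all $h\neq 1$, so $Z^+\setminus G\cdot B$ would yield a smaller zipper. This is only structural. Let $\ell\subset Z^-$ be the $g$-axis; then $\gamma\cup\ell$ is a Jordan curve. The branches at $p$ off $\gamma$ all lie in one complementary disk, those at $q$ lie in the other, and $\gamma$ is simplicial and inaccessible from both sides. This configuration is consistent with minimality and disjointness, so ``disjointness forces a contradiction with minimality'' is not a step that can be filled in.

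\textbf{Step 3.} The principle you invoke is that an end of a ray in $Z^-$ lands in $Z^-$. This is false: the same axis has its other end landing at $g^+\in Z^+$, and nothing symmetric forces the end at $g^-$ into $Z^-$. Your proposed rescue also goes the wrong way. If $g^-\notin Z^+\cup Z^-$, then $(Z^+,\,Z^-\cup G\cdot g^-)$ is a \emph{larger} zipper, path-connected because the axis lands at $g^-$. Minimality only forbids proper subzippers, so this is no contradiction. You also do not separate the subcases of (3): whether $g$ has an invariant ray in $Z^+$ from its fixed point, and, if not, whether the axis is two-sided branched or one-sided simplicial. The paper needs a different construction in each.

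\textbf{The missing idea} is a global obstruction coming from surface subgroups.
\begin{enumerate}
\item By Kahn--Markovic and Markovic, the paper picks a quasi-Fuchsian closed surface subgroup $\Gamma$ whose limit circle $\Lambda(\Gamma)$ separates the two fixed points of $g$.
\item Take the subarc of the invariant fence through $p$ that crosses the complementary disk $\mathcal{H}$ of $\Lambda(\Gamma)$ containing $p$. Its endpoints on $\Lambda(\Gamma)$ generate under $\Gamma$ a circle lamination; unlinkedness uses the inaccessibility above. This gives a geodesic lamination on $\mathcal{H}/\Gamma$.
\item The generating leaf is shown to be isolated, so its end escapes into a principal region of the second derived lamination. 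That region is either a finite ideal polygon or has a crown.
\item ``Eclipses'' act as a grid. These are the nested $g$-iterates $\mathcal{E}_n$ of a disk cut off by a connector; their intersection $\mathcal{E}_\infty$ is thin and meets the fence only along its $(+)$-side. Each translate of the fence entering that disk lies in some $\overline{\mathcal{E}_k\setminus\mathcal{E}_{k+3}}$.
\item This grid drags the endpoints of chains of non-isolated boundary leaves into $\mathcal{E}_\infty$. That contradicts either that the chain ends at the $(-)$-side endpoint of the generating leaf, or, in the crown case, that the deck element carries the $(+)$-endpoint into a translate of the fence meeting $\mathcal{E}_\infty$ only at $p$.
\item Configuration (3) needs further variants: one-sided bouncing sequences, fat fences and squashed fences.
\end{enumerate}
None of this is present in, or replaceable by, the sphere-separation-plus-minimality heuristic you describe.
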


This provides an affirmative answer to the following question posed by Calegari and Loukidou in the first version of their arXiv paper \cite{CalegariLoukidou24} (see Remark~2.27 in \cite{CalegariLoukidou26}):

\begin{ques}[Remark~2.21 in \cite{CalegariLoukidou24}]\label{Que:bothInOne}
It is natural to expect that fixed points of group elements play a key role in understanding the geometry and topology of zippers, and it would be worthwhile to develop a clearer picture of this relationship.
For instance:
is it possible that some element $g \in \pi_1(M)$ has both fixed points in $Z^+$?
Or can there exist an element $g \in \pi_1(M)$ with one fixed point in $Z^+$ and the other lying in neither of $Z^\pm$?
\end{ques}

\subsection{Calegari's Closed Orbit Conjecture}In \cite{Frankel13,Frankel18}, Frankel proved Calegari's closed orbit conjecture: every quasigeodesic flow on a closed hyperbolic $3$-manifold $M$ admits a closed orbit.
The key idea is to pass to the \emph{flowspace} $P$, the orbit space of the lifted flow on $\widetilde M \simeq \mathbb H^3$.
This is a topological plane equipped with continuous endpoint maps $e^\pm \colon P \to S^2_\infty$ assigning to each flowline its forward and backward limit points (see \cite{Calegari06}).
The fibers of $e^\pm$ define two decompositions $P^\pm$ of $P$, and in this setting, closed orbits correspond to points of $P$ fixed by nontrivial elements of $\pi_1(M)$.

Calegari and Loukidou \cite[Conjecture~3.10]{CalegariLoukidou26} conjectured that, given a minimal zipper $Z^\pm$ for $M$, there exists a quasigeodesic pseudo-Anosov flow on $M$ without perfect fits whose endpoint maps satisfy $e^\pm(P^\pm)=Z^\pm$.
In this context, it is natural to remark the following corollary:

\begin{thmA}[Existence of a pseudo-Anosov-like element]
Let $Z^\pm$ be a minimal zipper for $M$.
Then there exists $g \in \pi_1(M)$ that fixes a unique point in each of $Z^\pm$.
\end{thmA}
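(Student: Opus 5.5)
By Theorem~\ref{Thm:tameFix}, every nontrivial $g\in\pi_1(M)$ is either hyperbolic-like or pseudo-Anosov-like. It therefore suffices to show that not every nontrivial element acts freely on both $Z^\pm$. We argue by contradiction and assume that every $g\neq 1$ acts freely on $Z^+$ and on $Z^-$.

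\textbf{Step 1: closures and a dense set of endpoints.} Since $Z^+$ is a nonempty $\pi_1(M)$-invariant subset of $S^2_\infty$ and the action of $\pi_1(M)$ on $S^2_\infty$ is minimal, $Z^+$ is dense, and likewise $Z^-$. Each nontrivial $g$ is loxodromic with fixed points $g^\pm\in S^2_\infty$. Under our assumption, $g^\pm\notin Z^+\cup Z^-$. Pairs of fixed points of loxodromics are dense in $S^2_\infty\times S^2_\infty$, so the complement of $Z^+\cup Z^-$ contains a dense countable invariant set.

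\textbf{Step 2: dynamics on the real trees.} Minimality of the zipper should allow us to treat $Z^+$ as a real tree (a path-connected, uniquely arcwise connected set) on which $\pi_1(M)$ acts. Choose a loxodromic $g$ and an arc $\alpha\subset Z^+$. Iterates $g^n\alpha$ converge to $g^+$ in $S^2_\infty$ by north--south dynamics, unless $\alpha$ shrinks. Since $g$ acts freely on the tree $Z^+$, the standard dichotomy for isometry-like actions on trees gives an axis or a translation line $A_g\subset Z^+$ preserved by $g$. Its two ends then accumulate exactly at $g^+$ and $g^-$. So $g^\pm$ lie in $\overline{Z^+}$, and in fact they are the ends of a $g$-invariant line in $Z^+$. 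The same holds in $Z^-$, giving disjoint $g$-invariant lines $A_g^+\subset Z^+$ and $A_g^-\subset Z^-$, both joining $g^-$ to $g^+$.

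\textbf{Step 3: contradiction via linking.} The union $A_g^+\cup A_g^-\cup\{g^\pm\}$ is a $g$-invariant Jordan curve in $S^2_\infty$ separating it into two disks $D_1,D_2$. Now take a second loxodromic $h$ whose fixed points $h^+\in D_1$ and $h^-\in D_2$; such $h$ exists by density of fixed-point pairs. Then $A_h^+$ is a path from $h^-$ to $h^+$, so it must cross the curve. It cannot meet $A_g^-$, because $Z^+\cap Z^-=\emptyset$. It cannot pass through $g^\pm$ generically. Hence $A_h^+$ meets $A_g^+$, and the same argument shows $A_h^-$ meets $A_g^-$. I expect to use these forced intersections, together with uniqueness of arcs in the trees, to show that $A_g^+\cap A_h^+$ and $A_g^-\cap A_h^-$ are arcs whose relative orientations are incompatible with the planar embedding. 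This should force $h^\pm$ to lie on the same side of the curve, which contradicts the choice of $h$.

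\textbf{Main obstacle.} Step 2 is the main obstacle, in two respects. First, the free action on $Z^+$ must be shown to really produce an invariant line whose ends land at $g^\pm$; without an isometric structure this needs the minimality and tree structure of the zipper as developed earlier in the paper. Second, one must rule out an escape to a single end. I would first try to deduce both points from the proof of Theorem~\ref{Thm:tameFix}, which presumably already constructs such axes for hyperbolic-like elements. The planar argument in Step 3 would then finish the proof.
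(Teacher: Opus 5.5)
Your reduction is correct and is the paper's own first step: by Theorem~\ref{Thm:tameFix}, if no element is pseudo-Anosov-like, then $\pi_1(M)$ acts freely on $Z^+$ and on $Z^-$. Step~2 is not the real difficulty. The axis comes from Theorem~\ref{Thm:invHypAxis}, and Lemma~\ref{Lem:invRayLand} makes its end rays land at fixed points. Since $g^{n}(x)\to a(g)$ and $g^{-n}(x)\to r(g)$, the two ends land at different fixed points, so $\mathcal{J}_g=\overline{A_g^+\cup A_g^-}$ is a Jordan curve.

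The genuine gap is Step~3: the planar incompatibility you expect does not exist. Because $g$ and $h$ share no fixed point, $I^\pm=A_g^\pm\cap A_h^\pm$ are compact segments, and the two components of $A_h^\pm\setminus I^\pm$ lie in opposite complementary disks of $\mathcal{J}_g$. So $\mathcal{J}_h$ simply crosses $\mathcal{J}_g$ twice: once along the $(+)$-axes at a branch point of $Z^+$, and once along the $(-)$-axes. This is the generic way two Jordan curves meet.

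Here is a concrete model. Let $a(z)=i+\epsilon^2/(z+i)$ and $b(z)=1-\epsilon^2/(z+1)$ generate a classical Schottky group, with disks of radius $\epsilon$ about $\pm i$ and $\pm 1$. Let $T_1$ be the equivariant extension of the star at $0$ with segments to $\pm(1-\epsilon)$ and $\pm i(1-\epsilon)$. Let $T_2$ be the equivariant extension of the star at $\infty$ with rays to $\pm(1+\epsilon)$ and $\pm i(1+\epsilon)$. Then:
\begin{itemize}
\item $T_1$ and $T_2$ are disjoint invariant trees on which $\langle a,b\rangle$ acts freely;
\item the two $a$-axes together with the fixed points of $a$ form exactly $i\mathbb{R}\cup\{\infty\}$;
\item the two $b$-axes form exactly $\mathbb{R}\cup\{\infty\}$, crossing the $a$-axes at $0$ (in $T_1$) and $\infty$ (in $T_2$);
\item $b^{\pm}=\pm\sqrt{1-\epsilon^2}$ lie on opposite sides of $\mathcal{J}_a$.
\end{itemize}
Every ingredient of your Step~3 is present here with $g=a$ and $h=b$. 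So no argument using only two elements, planarity, disjointness of the trees and uniqueness of arcs can produce the contradiction. Density of $Z^\pm$ does not enter your sketch either.

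The missing idea is global. What separates $\pi_1(M)$ from $\langle a,b\rangle$ is group theory, not planar topology. After the same reduction, the paper uses Theorem~\ref{Thm:zipperRtree} to regard $Z^+$ as a real tree carrying a free action by homeomorphisms. It then invokes Levitt's version of the Rips theorem: a finitely presented group acting freely by homeomorphisms on a real tree is free abelian or splits over a (possibly trivial) cyclic subgroup. The fundamental group of a closed hyperbolic $3$-manifold is finitely presented, not free abelian, and has no such splitting, which gives the contradiction; only one of the two trees is needed. Without an input of this strength (the Rips machine or an equivalent), your Step~3 cannot be completed.
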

\begin{proof}
Assume that no such element exists. Then \refthm{tameFix} implies that $\pi_1(M)$ acts freely on each of $Z^\pm$. By \refthm{zipperRtree}, $Z^\pm$ are real trees.

By Levitt's version of the Rips theorem \cite[Corollary, p.~47]{Levitt98}, a finitely presented group acting freely by homeomorphisms on an real tree is free abelian or splits over a (possibly trivial) cyclic group. This yields the desired contradiction.
\end{proof}

\subsection{Eclipses and the Structure of the Proof}\label{Sec:proofOutline}
An element \( g \in \pi_1(M) \) as in \refque{bothInOne} gives rise to an \emph{eclipse} on the zipper.
For example, suppose that \( g \) has a repelling fixed point \( p \in Z^+ \) and an attracting fixed point \( q \notin Z^- \).
Roughly speaking, an eclipse is a bi-infinite nested family \( \{\cE_n\}_{n\in\mathbb{Z}} \) of closed Jordan domains in \( S^2_\infty \), defined by
\[
\cE_n = g^n(\cE_0), \qquad \cE_{n+1}\subsetneq \cE_n \qquad \text{for all } n\in\mathbb{Z},
\]
where \( \cE_0 \) is a suitably chosen closed Jordan domain.
The domain \( \cE_0 \) is chosen to contain both fixed points of \( g \) with $p\in \partial \cE_0$, and therefore so does each \( \cE_n \).
Hence the intersection
\[
\cE_\infty = \bigcap_{n\in\mathbb{Z}} \cE_n
\]
still contains both fixed points.
By contrast, any closed Jordan domain containing only the attracting fixed point shrinks to a point under forward iteration by \( g \).
The precise construction of eclipses requires a more detailed analysis of the topology of the zipper, and will be given in \refsec{eclipses}.

We now briefly explain the role of eclipses in the case where both fixed points of \(g\) lie in \(Z^+\); this is the first question in \refque{bothInOne}.
The other case requires more sophisticated variations of the same argument. This serves as a summary of the discussion up to \refsec{twoInOne}.
In this case, there exists a \( g \)-invariant Jordan curve \( \cJ(g) \), called a \emph{fence}, consisting of a closed arc \( \cJ(g)^+ \subset Z^+ \) and an open arc \( \cJ(g)^- \subset Z^- \), each connecting \( p \) to \( q \).
For each complementary disk \( \cD \) of \( \cJ(g) \), we construct an eclipse \( \{\cE_n\}_{n\in \mathbb{Z}} \) associated to \( g \) or \( g^{-1} \) such that \( \bigcup_{n\in \mathbb{Z}} \cE_n = \overline{\cD} \), \( \cJ(g)^+ \subset \partial \cE_n \) for all \( n\in\mathbb{Z} \), and \( \cJ(g)\cap \cE_\infty = \cJ(g)^+ \).

A key property is that for any \( h \in \pi_1(M) \), the curves \( h(\cJ(g)) \) and \( \cJ(g) \) are unlinked, meaning that \( h(\cJ(g)) \) does not meet both complementary components of \( \cJ(g) \).
We refer such a $\pi_1(M)$-orbit of $\cJ(g)$ an \emph{invariant fence system} for $\pi_1(M)$.
This becomes useful when one chooses a quasi-Fuchsian closed surface subgroup \( \Gamma \leq \pi_1(M) \) whose limit circle \( \Lambda(\Gamma) \) separates \( p \) from \( q \).
Starting at \( p \) and moving toward \( q \) along the arcs \( \cJ(g)^+ \) and \( \cJ(g)^- \), let \( x^+ \) and \( x^- \) denote the first intersection points with \( \Lambda(\Gamma) \), respectively.
The \( \Gamma \)-orbit of \( \{x^+,x^-\} \) determines a \( \Gamma \)-invariant circle lamination \( \cL \) on \( \Lambda(\Gamma) \), which in turn corresponds to a geodesic lamination on the closed surface associated to \( \Gamma \).

We then analyze sequences of leaves of \( \cL \) for which one endpoint converges to \( x^+ \) or \( x^- \).
At this stage, the eclipses associated to \( \cJ(g) \) play a crucial role: they constrain the possible limiting locations of the other endpoints by forcing the limit into \( \cE_\infty \).
This ultimately leads to a contradictory limiting configuration, completing the proof.

For the remaining case where $g$ has one fixed point in $Z^+$ and the other in neither $Z^+$ nor $Z^-$, which is the second question in \refque{bothInOne}, the same general strategy applies. We divide this case into three subcases; see the beginning of \refsec{oneprong}. In each case, one first constructs an appropriate eclipse and then introduces a modified fence, such as a \emph{fat fence} or a \emph{squashed fence}. Using these objects together with the existence of abundant quasi-Fuchsian closed surface subgroups, established by Kahn--Markovic and Markovic, one again constructs a circle lamination. The key input is the following theorem.
\begin{thm}[Enough quasi-convex surface subgroups \cite{KahnMarkovic12},\cite{Markovic13}]\label{Thm:separatingQF}
For any distinct points \( p,q \) in \( S_\infty^2 \), there is a quasi-Fuchsian closed surface subgroup of \( \pi_1(M) \) whose limit set, which is a Jordan curve, separates \( p \) and \( q \).
\end{thm}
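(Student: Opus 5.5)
The statement is imported from \cite{KahnMarkovic12} and \cite{Markovic13}, so I would not reprove it from scratch. I would reduce it to two inputs: the quantitative form of the Kahn--Markovic surface subgroup theorem, and the equidistribution of good pants in the frame bundle that underlies Markovic's criterion.

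First I would fix the target geometry. Given distinct $p,q\in S^2_\infty$, let $\gamma\subset\mathbb H^3$ be the geodesic joining them. Pick a point $z\in\gamma$ and a frame $F$ at $z$ whose first vector is tangent to $\gamma$. Let $P$ be the totally geodesic plane through $z$ orthogonal to $\gamma$. Its boundary circle $C=\partial_\infty P$ separates $p$ from $q$, and its two complementary disks contain the spherical balls $B(p,r)$ and $B(q,r)$ for some $r>0$, measured in the visual metric from $z$. It therefore suffices to find a quasi-Fuchsian surface subgroup $\Gamma$ whose limit set $\Lambda(\Gamma)$ lies within Hausdorff distance $r/2$ of $C$. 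A Jordan curve in the annulus $S^2_\infty\setminus(B(p,r/2)\cup B(q,r/2))$ that is Hausdorff-close to $C$ is homotopic to $C$ in that annulus, so it still separates $p$ from $q$.

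Second, I would invoke Kahn--Markovic: for every $\epsilon>0$ and all sufficiently large $R$, there are closed surfaces built from $(R,\epsilon)$-good pants, glued with the $+1$ shift, whose fundamental groups are $(1+O(\epsilon))$-quasi-Fuchsian. Moreover, every lift of such a surface to $\mathbb H^3$ is uniformly close to a totally geodesic plane near each of its good pants. Consequently, if one lift passes through a point near $z$ with tangent plane near $P$, then its limit set is a $(1+O(\epsilon))$-quasicircle. Normalized at $z$, it is $O(\epsilon)$-close to $C$.

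The main obstacle is arranging that some lift of such a surface passes near the prescribed frame $F$, or near its image under an orthogonal rotation. This is precisely where Markovic's refinement \cite{Markovic13} enters. The good pants, and the feet of their cuffs, equidistribute in the frame bundle of $M$ as a consequence of exponential mixing of the frame flow. Hence one may include among the pants a good pants whose projection passes within $\epsilon$ of the image of $F$ in $\mathrm{Fr}(M)$. The gluing procedure, via the matching theorem, can then be run with this pants included. I would try first to add a single prescribed pants, together with a bounded number of correction pants, to the Kahn--Markovic measure while keeping the balanced matching condition; this is essentially Markovic's argument. The lift of the resulting surface through that pants gives the required $\Gamma$.
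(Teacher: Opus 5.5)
The paper gives no proof of this statement: it is imported as a black box from Kahn--Markovic and Markovic. Your outline is the standard way the separation property is extracted from the Kahn--Markovic construction: take a frame adapted to the geodesic from $p$ to $q$, find a good-pants surface with a pants near that frame, and use that its lift is nearly totally geodesic. Two steps, however, need repair.

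\textbf{The reduction step fails as stated.} A Jordan curve in the annulus $S^2_\infty\setminus(B(p,r/2)\cup B(q,r/2))$ within Hausdorff distance $r/2$ of $C$ need not be essential there. Take the boundary of a thin neighbourhood of an arc of $C$ that runs almost all the way around. It is Hausdorff-close to $C$, but it bounds a disk inside the annulus and does not separate $p$ from $q$. So Hausdorff closeness of $\Lambda(\Gamma)$ to $C$ is the wrong target. Nor does "tangent plane near $P$ at one point" by itself control the limit set.

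What the Kahn--Markovic comparison with the perfect model gives is a \emph{parametrised} statement. One has $\Lambda(\Gamma)=f(C)$ for a $(1+O(\epsilon))$-quasiconformal homeomorphism $f$ of $S^2_\infty$. Once the model is normalised so that the base pants sits along $P$, this $f$ satisfies $d(f(x),x)<r/2$ for all $x$. Then $f(C)$ separates $f(p)$ from $f(q)$. The connected sets $B(p,r/2)\ni p,f(p)$ and $B(q,r/2)\ni q,f(q)$ are disjoint from $f(C)$, so $f(C)$ also separates $p$ from $q$. Alternatively, the bounded-turning property of $K$-quasicircles with $K$ close to $1$ rules out the thin curves above.

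\textbf{The ``main obstacle'' is not an obstacle.} Your remedy is to insert one prescribed pants plus correction pants and re-run the matching. This is unnecessary, and it would force you to re-verify the balancing and equidistribution hypotheses of the matching theorem. The Kahn--Markovic surface is already assembled from essentially all $(R,\epsilon)$-good pants, each together with its reverse, and these equidistribute in the frame bundle. Hence a pants within $\epsilon$ of the image of $F$ already occurs in the surface. Every component of that closed surface is $(1+O(\epsilon))$-quasi-Fuchsian. So you simply take the component containing this pants and its lift through that pants.
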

Analyzing the possible limiting behavior of leaves then leads, as above, to the desired contradiction.

\section{Topology of Zippers}

\subsection{Topology on $\RR$-trees}
Let $T$ be a (topological) real tree.
A subset $\gamma$ of $T$ is an \emph{arc, ray} or \emph{line} in $T$ if  $\gamma$ is the image of a continuous injective map $f$ from $[0,1]$, $[0,1)$ or $(0,1)$, respectively.
We call $f$ an \emph{orientation} of $\gamma$.
If $\gamma$ is a ray, then there is a unique orientation up to parametrization.
Note that an orientation $f$ induces a natural linear order $\leq$ on $\gamma$ under which $f$ is increasing.

A subset $\gamma$ of $T$ is called a \emph{segment} in $T$ if $\gamma$ is arc, ray or line.
The \emph{interior} $\Int{\gamma}$ of $\gamma$ is defined as the maximal line, contained in $\gamma$.
We write $\partial \gamma= \gamma \setminus \Int{\gamma}$ and call its elements \emph{boundary points} of $\gamma$.
In particular, $\gamma$ is said to be \emph{oriented} if an linear order is fixed, and \emph{proper} if an associated orientation $f$ is proper.
We denote by $[x,y]$ the unique arc between $x$ and $y$ in $T$ if $x\neq y$ or the point $\{x\}$ if $x=y$.
We also define $(x,y]=[x,y]\setminus \{x\}$, $[x,y)=[x,y]\setminus \{y\}$, and $(x,y)=[x,y]\setminus \{x,y\}$.

If $\ell$ is a line in $T$, then any subray $r$ of $\ell$ is called an \emph{end ray} of $\ell$ if
an orientation of $r$ is proper with respect to $\ell$.
If $r$ is a ray in $T$, then for any $x\in \Int r$, the subray $r\setminus [\partial r, x)$ is called the \emph{truncated ray} of $r$ at $x$.

For a point $p$ in $T$, we define $T_p$ as the collection of components of $T\setminus \{p\}$.
Each element of $T_p$ is called a \emph{branch} at $p$.
The point $p$ is called
\begin{itemize}
    \item a \emph{hard end} if $|T_p|=1$;
    \item a \emph{cut point} if $|T_p|\geq 2$;
    \item a \emph{branched point} if $|T_p|>2$.
\end{itemize}
A subset $b$ of $T$ is called a \emph{branch} of a segment $\gamma$ if $b$ is a branch at $p$, that does not intersect $\gamma$.

Recall \cite[Theorem~3]{Levitt98}, which is frequently used:
\begin{thm}[\cite{Levitt98}]\label{Thm:invHypAxis}
    Every fixed-point-free homeomorphism $g$ of a real tree $T$ admits a unique $g$-invarinat proper line in $T$, called the axis of $g$.
\end{thm}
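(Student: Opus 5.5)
This is Levitt's Theorem~3, quoted from \cite{Levitt98}. My plan is to reproduce the standard argument for a fixed-point-free homeomorphism $g$ of a topological real tree $T$. I use only the unique-arc property and the fact that a real tree is uniquely arcwise connected and locally arcwise connected.

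\textbf{Step 1: define a candidate axis.} For $x \in T$, compare the arcs $[x,gx]$ and $[gx,g^2x]$. Since $T$ is a tree, $[x,gx]\cap[gx,g^2x]$ is an arc $[gx,y]$ for some $y$. Let $m$ be the point of $[x,gx]$ with $[x,m] = g^{-1}[gx,y]$. I will show that $m$ lies strictly before $gm$ along the orientation, and that $[m,gm]\cap[gm,g^2m]=\{gm\}$. The second condition is where fixed-point-freeness enters. If the overlap covered all of $[x,gx]$, the map $g^{-1}$ restricted to $[gx,g^2x]\cap[x,gx]$ would send an arc into itself. An order-preserving self-map of an arc has a fixed point by the intermediate value theorem, which would give a fixed point of $g$. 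Reversal of orientation is excluded the same way. Then I set
\[
A=\bigcup_{n\in\mathbb Z} g^n[m,gm].
\]
Consecutive translates meet only at their endpoints, so $A$ is a $g$-invariant line.

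\textbf{Step 2: properness.} I must show that $g^n m$ leaves every compact set as $n\to\pm\infty$. If not, the increasing sequence $g^n m$ along the ray $\bigcup_{n\geq 0} g^n[m,gm]$ converges in $T$, since a bounded monotone sequence in an arc converges. The limit would then be fixed by $g$ by continuity, which is a contradiction. Making this precise requires that a monotone sequence in a ray of a topological real tree either converges or escapes compacta. I expect this to be the main technical obstacle in the non-metric setting: it needs local arcwise connectedness, together with the fact that an increasing union of arcs lying in a compact set has compact closure, which is again an arc.

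\textbf{Step 3: uniqueness.} Suppose $A'$ is another $g$-invariant proper line. If $A\cap A'$ is empty, consider the unique bridge arc $[a,a']$ with $a\in A$, $a'\in A'$, meeting $A$ and $A'$ only at its endpoints. Then $g$ preserves the bridge, so $g$ fixes $a$, a contradiction. If they intersect but differ, then $A\cap A'$ is a connected $g$-invariant proper subset of both lines, since the intersection of subtrees is connected. A proper invariant sub-segment of $A$ must be a ray or an arc, and $g$ would then fix its endpoint. Since $g$ acts on $A$ as a translation with no fixed point, this is impossible. Hence $A=A'$.
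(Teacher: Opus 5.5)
The paper gives no proof of this statement; it quotes Levitt's Theorem~3. So the comparison is with the standard argument, and your outline is that argument: build a fundamental domain $[m,gm]$ with $[m,gm]\cap[gm,g^2m]=\{gm\}$, translate it, check properness, and get uniqueness from bridges. The route works, but Step~1 as written does not prove what you need, and Step~2 leaves its one real point open.

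\textbf{Step 1.} You apply fixed-point-freeness to the wrong case. Orient $[x,gx]$ from $x$ to $gx$. The overlap is the terminal subarc $[y,gx]$, and $[x,m]=g^{-1}[y,gx]$ is the initial subarc, with $gm=y$. What you must show is $m<y$, i.e.\ that these two subarcs of $[x,gx]$ are disjoint. Your argument only excludes $y=x$ (full overlap); the case $x<y\le m$ is untouched. The fix is one application of the intermediate value theorem. If $y\le m$, view $t\mapsto g(t)$ for $t\in[x,m]$ as a map into the parametrized arc $[x,gx]$; its values lie in $[y,gx]$. Here $g(x)=gx$ lies after $x$, and $g(m)=y$ lies at or before $m$, so $g(t)=t$ for some $t$. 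Once $m<y$ is known, the second condition is automatic:
$[m,gm]\cap[gm,g^2m]\subseteq([x,gx]\cap[gx,g^2x])\cap[m,y]=[y,gx]\cap[m,y]=\{y\}$.
So the hypothesis enters in the first condition, not the second.

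\textbf{Step 2.} The obstacle you anticipate disappears: no metrizability or sequential compactness is needed, only local arcwise connectedness. Let $F\colon\mathbb{R}\to T$ parametrize $A$ with $F(t+1)=gF(t)$. Suppose $F(t_k)\in K$ for a compact $K$ and some $t_k\to\infty$.
\begin{itemize}
\item By compactness, some $z\in K$ has every neighbourhood containing $F(t_k)$ for infinitely many $k$.
\item Let $V$ be an arcwise connected neighbourhood of $z$. If $F(t_k),F(t_{k'})\in V$, uniqueness of arcs forces $F([t_k,t_{k'}])\subseteq V$, so a whole tail of the ray lies in $V$.
\item Hence $F(t)\to z$. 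Then $g^nm\to z$ and $g^{n+1}m\to gz$, giving $gz=z$, a contradiction.
\end{itemize}
This controls the whole ray, not only the orbit points $g^nm$, which is what properness of $F$ actually requires. The backward end is the same argument with $g^{-1}$.

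\textbf{Step 3.} Two smaller points.
\begin{itemize}
\item Existence of the bridge needs $A$ and $A'$ closed. This follows from properness, since $A\cap[u,v]=F(F^{-1}[u,v])$ is compact.
\item In the intersecting case, $A\cap A'$ need not be a proper subset of $A$: it equals $A$ when $A\subsetneq A'$. Run the invariant-interval argument in both lines. The restriction of $g$ to $A'$ is a fixed-point-free homeomorphism of a line, hence conjugate to a translation, so a nonempty connected $g$-invariant subset of $A'$ is all of $A'$.
\end{itemize}
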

\subsection{Jordan segments}

A \emph{Jordan arc} in \(S^2\) is the image of an injective continuous map \([0,1]\to S^2\), and a \emph{Jordan curve} is the image of an injective continuous map \(S^1\to S^2\). A \emph{Jordan segment} (resp. \emph{Jordan ray}, resp. \emph{Jordan line}) is a segment (resp. ray, resp. line) in a Jordan arc, viewed as a real tree.

By the Jordan--Schoenflies theorem, every Jordan curve in \(S^2\) is carried by an orientation-preserving homeomorphism of \(S^2\) to the equator.
Moreover, there are no wild Jordan arcs in $S^2$: every Jordan arc is contained in a Jordan curve (see \cite[Chapter~VI, Theorem~14.5]{Newman51} for an elementary proof). It follows that every Jordan arc in \(S^2\) is carried by an orientation-preserving homeomorphism of \(S^2\) to a subarc of the equator.

Consequently, every interior point \(x\) of a Jordan segment \(\cS\) has a neighborhood \(U\) that is a Jordan domain, together with an orientation-preserving homeomorphism
\[
\varphi \colon (\mathbb{R}^2,\mathbb{R}\times\{0\}) \to (U,U\cap \cS).
\]
We call such a \(U\) a \emph{flat neighborhood} of \(x\) relative to \(\cS\). If \(\cS\), regarded as a real tree, is oriented, then \(\varphi\) may be chosen to be increasing on \(\mathbb{R}\times\{0\}\). In that case, the sets \(\varphi(\mathbb{R}\times [0,\infty))\) and \(\varphi(\mathbb{R}\times (-\infty,0])\) are called the \emph{left} and \emph{right neighborhoods} of \(x\) relative to \(\cS\), and their interiors the \emph{left} and \emph{right open neighborhoods} of $x$ relative to $\cS$.

Now let \(\cJ\) be a Jordan curve, and let \(\cD\) be a complementary Jordan domain of \(\cJ\). If \(\cS\) is a Jordan segment, its closure \(\closure{\cS}\) in \(S^2\) is a Jordan arc. Write
\[
\sfd \cS := \closure{\cS}\setminus \Int \cS.
\]
Then \(\sfd \cS\) consists of two points. We say that \(\cS\) \emph{crosses} \(\cJ\) through \(\cD\) if
\[
\closure{\cS}\cap \cJ=\sfd \cS
\quad\text{and}\quad
\Int \cS \subset \cD .
\]
When \(\cJ=\partial\cD\) is understood, we simply say that \(\cS\) \emph{crosses} \(\cD\).

If \(\cS\) is oriented and crosses \(\cD\), then \(\cS\) separates \(\cD\) into two Jordan domains. We denote them by \(\cD^L(\cS)\) and \(\cD^R(\cS)\), and call them the \emph{left} and \emph{right Jordan domains} of \(\cS\) in \(\cD\).

The following elementary fact will be used repeatedly:
\begin{prop}
If a Jordan curve \(\cJ\) separates the endpoints of a Jordan arc \(\cA\), then \(\cJ\cap \cA\neq \emptyset\).
\end{prop}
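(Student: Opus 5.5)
The plan is to argue by connectedness, using the Jordan curve theorem. By the Jordan--Schoenflies theorem, $S^2\setminus\cJ$ has exactly two components, the open Jordan domains $\cD_1$ and $\cD_2$, and $\cJ=\partial\cD_1=\partial\cD_2$. Saying that $\cJ$ separates the endpoints $a,b$ of $\cA$ means $a\in\cD_1$ and $b\in\cD_2$, possibly after relabeling. In particular neither endpoint lies on $\cJ$.

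Suppose for contradiction that $\cJ\cap\cA=\emptyset$. Then $\cA\subset S^2\setminus\cJ=\cD_1\sqcup\cD_2$. Since $\cA$ is the continuous image of $[0,1]$, it is connected. The sets $\cA\cap\cD_1$ and $\cA\cap\cD_2$ are disjoint and relatively open in $\cA$, because $\cD_1$ and $\cD_2$ are open in $S^2$. Their union is $\cA$. Both are nonempty, since $a\in\cA\cap\cD_1$ and $b\in\cA\cap\cD_2$. This is a separation of the connected set $\cA$, which is a contradiction. Hence $\cJ\cap\cA\neq\emptyset$.

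Equivalently, one can pull back along a parametrization $f\colon[0,1]\to\cA$ with $f(0)=a$ and $f(1)=b$. Then $t_0=\sup\{t: f([0,t])\subset \cD_1\}$ satisfies $f(t_0)\in\partial\cD_1=\cJ$.

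There is no real obstacle here. The only nontrivial input is the Jordan curve theorem, namely that the complement of $\cJ$ has exactly two components and each is open. That is already implicit in the paper's use of Jordan--Schoenflies. Injectivity of $f$ is not needed: the argument works for any path joining the two points.
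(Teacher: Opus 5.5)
Your proof is correct. The paper states this as an elementary fact and gives no proof, and your connectedness argument is the standard justification it relies on. You actually need less than the Jordan curve theorem: any connected subset of $S^2\setminus\cJ$ lies in a single component, so no arc (or any path) missing $\cJ$ can join points of different components.
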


\subsection{Zippers}
Throughout this paper, we fix a closed hyperbolic \(3\)-manifold \(M\) and denote its fundamental group by \(G\). We also regard \(G\) as a discrete subgroup of \(\PSL(\CC)\). Let \(S_\infty^2\) denote the sphere at infinity of \(\HH^3\), identified with the Gromov boundary \(\partial_\infty G\).

\begin{defn}
A \emph{zipper} for \(M\) is a pair \(Z^\pm\) of subsets of \(S_\infty^2=\partial_\infty G\) satisfying the following:
\begin{enumerate}
    \item each of \(Z^\pm\) is nonempty and path-connected;
    \item each of \(Z^\pm\) is \(G\)-invariant; and
    \item \(Z^+\) and \(Z^-\) are disjoint.
\end{enumerate}
\end{defn}

In \cite{CalegariLoukidou26}, Calegari and Loukidou observed that, for each \(\alpha \in \{+,-\}\), any two points of \(Z^\alpha\) are joined by a unique embedded path. The \emph{convex hull} of a subset \(P \subset Z^\alpha\), where \(\alpha \in \{+,-\}\), is the union of all embedded paths in \(Z^\alpha\) joining pairs of points of \(P\). When \(P\) is finite, its convex hull is homeomorphic, as a subspace of \(S_\infty^2\), to a finite simplicial tree.

\begin{thm}[\cite{CalegariLoukidou26}]\label{Thm:zipperRtree}
Let \(Z^\pm\) be a zipper for \(M\). Then each of \(Z^\pm\) is a real tree with respect to the \emph{path topology}, namely the weak topology generated by the subspace topology, as a subset of \(S_\infty^2\), on the convex hulls of finite subsets. Moreover, each inclusion map \(Z^\pm \hookrightarrow S_\infty^2\) is continuous but not an embedding.
\end{thm}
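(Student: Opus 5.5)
The plan has three parts: show that each of \(Z^\pm\) is uniquely arcwise connected, show that the path topology makes it a real tree, and show that the inclusion into \(S^2_\infty\) is continuous but not an embedding. Throughout I use the dynamics of \(G\) on \(S^2_\infty\): the limit set is the whole sphere, every \(G\)-orbit is dense, and pairs of fixed points of loxodromic elements are dense in \(S^2_\infty\times S^2_\infty\).

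\textbf{Step 1 (unique embedded paths).} I would argue by contradiction. Suppose two points \(x\ne y\) of \(Z^+\) are joined by two distinct embedded paths in \(Z^+\). Their union contains a Jordan curve \(\cJ\subset Z^+\). Since \(Z^-\) is path-connected and disjoint from \(Z^+\), it lies in one complementary Jordan domain \(\cD\) of \(\cJ\); let \(\cD'\) be the other one. Choose a loxodromic \(g\in G\) with both fixed points in \(\cD'\), which is possible by density of fixed-point pairs. Take any \(z\in Z^-\); it is not a fixed point of \(g\). By \(G\)-invariance, \(g^n z\in Z^-\subset\cD\) for all \(n\), so every accumulation point of \(g^n z\) lies in \(\closure{\cD}\). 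But \(g^n z\) converges to the attracting fixed point, which lies in the open domain \(\cD'\). This is a contradiction, and the same argument applies to \(Z^-\).

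\textbf{Step 2 (real tree in the path topology).} Give \(Z^\alpha\) the weak topology from the directed family of convex hulls of finite subsets. Each such hull is a finite union of Jordan arcs which, by Step 1, pairwise meet along subarcs. So it is a compact, uniquely arcwise connected finite graph with no cycles, that is, a finite simplicial tree. I would then check that a direct limit of finite trees along inclusions of subtrees is a (topological) real tree: uniquely arcwise connected, locally arcwise connected, and with the arcs \([x,y]\) being the hulls of \(\{x,y\}\). The key point is that any arc in the weak topology lies in a single finite hull. This follows from the standard fact that a compact subset of such a colimit (a CW-like direct limit of closed subspaces) meets only finitely many "new pieces". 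Local arc-connectedness is inherited from the finite trees.

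\textbf{Step 3 (continuity, not an embedding).} Continuity is immediate, since the restriction to each finite hull is the subspace inclusion. For failure of embedding, first note that \(Z^+\) is \(G\)-invariant and nonempty, hence dense in \(S^2_\infty\). In the path topology, a small neighborhood of a point \(x\) can be taken inside a finite hull, and in particular inside a compact finite tree. If the inclusion were an embedding, some sphere-neighborhood \(U\) of \(x\) would satisfy \(U\cap Z^+\) contained in a finite tree \(K\). A finite tree is nowhere dense in \(S^2_\infty\), so \(U\setminus K\) is a nonempty open set. Density of \(Z^+\) then gives points of \(Z^+\cap U\) outside \(K\), a contradiction.

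I expect the main obstacle to be Step 2, specifically the point-set claim that compact sets and arcs of the weak topology sit inside a single finite hull, together with identifying the weak topology on \([x,y]\) with the sphere topology. I would handle this by showing that if an arc escaped every finite hull, one could extract infinitely many points of increasing hull-depth forming a closed discrete subset, which is incompatible with compactness.
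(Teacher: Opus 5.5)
Steps 1 and 2 are essentially sound (the paper itself only quotes this result from Calegari--Loukidou without proof). Step 3, however, rests on a false claim: in the path topology a neighborhood of $x$ generally \emph{cannot} be taken inside a finite hull.

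A set $U\subset Z^+$ is path-open iff $U\cap H$ is open in $H$ for every finite hull $H$. Applying this to $H=[c,y]$ with $y$ in a branch $b$ at $c$ shows that if $U$ contains a branch point $c$, then it contains a germ at $c$ of every branch at $c$. Now suppose $x$ has infinitely many branches, or there are branch points $c_n\to x$ along an arc $A\ni x$ with branches $b_n$ at $c_n$ disjoint from $A$. Then every path-open $U\ni x$ meets infinitely many of these branches. A finite tree meets only finitely many of them, since it has finitely many branch points, each of finite valence. At such points (the situation one expects everywhere in a minimal zipper) your step ``if the inclusion were an embedding, some sphere-neighborhood $U$ of $x$ would satisfy $U\cap Z^+\subset K$ for a finite tree $K$'' fails. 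Nothing in your argument produces a point where branch points do not accumulate.

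Your two ingredients, density of $Z^+$ and nowhere density of finite trees, do suffice once the logic is reversed. The path topology confines convergent sequences, not neighborhoods, to finite hulls.

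Fix $x\in Z^+$ and choose inductively $y_n\in Z^+$ within distance $1/n$ of $x$ with $y_n\notin K_n:=\operatorname{hull}\{x,y_1,\dots,y_{n-1}\}$. Suppose a finite hull $H$ contained infinitely many $y_n$. Then $x\in H$, and infinitely many of them would lie on one arc $[x,e]$ of a star neighborhood of $x$ in $H$. For $k<j$ we have $y_{n_j}\notin[x,y_{n_k}]\subset K_{n_j}$, which forces them to move monotonically away from $x$ along $[x,e]$, contradicting $y_n\to x$. Hence $\{y_n\}$ meets every finite hull in a finite set, so it is closed in the path topology. Then $Z^+\setminus\{y_n\}$ is a path-open neighborhood of $x$ containing no $y_n$, although $y_n\to x$ in $S^2_\infty$. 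Thus the inclusion is not an embedding.

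Two smaller points in Step 2:
\begin{enumerate}
\item You do not need the colimit ``standard fact''. It is not available for this uncountable directed system, since points escaping successive hulls may all lie on a single arc. Instead: the path topology is finer than the subspace topology, so an arc in the path topology is an arc in $S^2_\infty$, and by Step 1 it equals $[x,y]\subset\operatorname{hull}\{x,y\}$.
\item Local arc-connectedness needs an explicit check: for path-open $U\ni x$, the set $\{y:[x,y]\subset U\}$ is path-open. You verify this on each hull $H$ inside the finite tree $\operatorname{hull}(H\cup\{x\})$.
\end{enumerate}
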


A zipper \(Z^\pm\) is said to be \emph{minimal} if each of \(Z^+\) and \(Z^-\) coincides with the convex hull of the \(G\)-orbit of any one of its points. It is immediate that a minimal zipper has no hard ends. By \cite{CalegariLoukidou26}, every zipper contains a minimal subzipper.

\subsection{Landing rays}

Let \(Z^\pm\) be a zipper for \(M\). Since \(M\) is closed and hyperbolic, every element of \(G\) is loxodromic in \(\PSL(\CC)\). For \(g\in G\), write \(a(g)\) and \(r(g)\) for the attracting and repelling fixed points of \(g\), respectively, and \(\Fix(g)\) for the fixed point set of \(g\).

A ray in \(Z^\alpha\), where \(\alpha\in\{+,-\}\), is called a \emph{landing ray} if its image in \(S_\infty^2\) admits a compactification by a single point \(e\) to a closed interval. Equivalently, it is a Jordan ray in \(S_\infty^2\). The point \(e\) is called the \emph{end} of the landing ray, and we say that the ray \emph{lands} at \(e\), or simply that it \emph{lands}. Note that \(e\) need not lie in \(Z^\pm\). As observed in \cite{CalegariLoukidou26}, even for a proper ray \(r\), it may be difficult to determine whether \(r\) lands. On the other hand, if the topological end of a ray is preserved by an element of \(G\), then the ray lands at a fixed point; see \cite[Lemma~2.12, Lemma~2.24]{CalegariLoukidou26}.

\begin{lem}[Invariant ray lands, \cite{CalegariLoukidou26}]\label{Lem:invRayLand}
Let \(x\in Z^\alpha\), where \(\alpha\in\{+,-\}\), and let \(g\in G\). Suppose that \(\sigma=[x,g(x)]\) is an arc in \(Z^\alpha\). Then either \(\Int \sigma \) contains exactly one fixed point of \(g\), or
\(
\bigcup_{n\ge 0} g^n(\sigma)
\)
contains a ray landing at \(a(g)\) in \(S_\infty^2\). In particular, if \(\ell\) is a \(g\)-invariant line in \(Z^\alpha\), then each end ray of \(\ell\) lands at a fixed point of \(g\).
\end{lem}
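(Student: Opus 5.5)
The lemma has two parts: a dichotomy for the arc $\sigma=[x,g(x)]$, and the statement that invariant lines have landing end rays. For the dichotomy I will work in the real tree $Z^\alpha$ with its path topology (\refthm{zipperRtree}), write $\tau=\bigcup_{n\ge 0} g^n(\sigma)$, and split according to whether $g$ fixes a point of $Z^\alpha$.

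\textbf{Step 1: $g$ fixes a point of $Z^\alpha$.} Since $g$ acts on $\RR$ with exactly the two fixed points $a(g),r(g)$ in $S^2_\infty$, it has at most two fixed points in $Z^\alpha$. Suppose $z\in Z^\alpha$ is fixed. Let $y$ be the point where the arc from $z$ meets $[x,g(x)]$; in a real tree, $[z,x]$ and $[z,g(x)]=g([z,x])$ share the common initial segment $[z,y]$.
\begin{itemize}
\item If $y$ is an interior point of $\sigma$, the folding point $y$ is fixed by $g$. This is the first alternative, and uniqueness comes from counting fixed points.
\item If $y$ is an endpoint of $\sigma$, then $g$ maps $[z,x]$ into a proper subarc of itself or properly contains it. Hence iterates of $g$ (or $g^{-1}$) move along a segment toward the fixed point.
\end{itemize}
In the second situation there is a second fixed point $z'$ on $[z,x]$, or the nested iterates escape. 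I would rule out an interior fixed point of $\sigma$ separately and conclude that $\tau$ is contained in a $g$-invariant arc whose end is $a(g)$. In this case the forward iterates converge to $a(g)$ in $S^2_\infty$, because all points not equal to $r(g)$ do so under $g^n$.

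\textbf{Step 2: $g$ acts freely on $Z^\alpha$.} By \refthm{invHypAxis} there is an axis $\ell$. The forward part of $\tau$ eventually follows the end ray $\rho$ of $\ell$ toward which $g$ translates. Concretely, the projection of $x$ to $\ell$ is a point $x_0$, the arc $[x_0,g(x_0)]$ lies in $\ell\cap\tau$, and $\rho=\bigcup_{n\ge0}g^n([x_0,g(x_0)])$.

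\textbf{Step 3: landing.} In both Steps 1 and 2 there is a compact arc $\beta=[x_0,g(x_0)]\subset S^2_\infty$ with $\rho=\bigcup_{n\ge 0}g^n(\beta)$. The arc $\beta$ is compact in $S^2_\infty$, since the inclusion is continuous. If $r(g)\notin\beta$, then $g^n\to a(g)$ uniformly on $\beta$ because $g$ is loxodromic, so the diameters of $g^n(\beta)$ shrink to $0$ and the pieces accumulate only at $a(g)$. Therefore $\rho\cup\{a(g)\}$ is the continuous image of $[0,\infty]$, injective on $[0,\infty)$. It remains to see that $a(g)\notin\rho$, so that this is an arc: if $a(g)$ were in $\rho$, it would be a fixed point in $Z^\alpha$ lying on the ray, contradicting the fact that $g$ translates along $\rho$. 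Hence $\rho$ lands at $a(g)$.

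\textbf{The main obstacle} is the case $r(g)\in\beta$. Since $g$ moves points of $\rho$ forward, $r(g)$ cannot be a fixed point of $Z^\alpha$ lying on $\rho$. So $r(g)\in\beta\subset Z^\alpha$ would be a fixed point on $\rho$, which is impossible, and this should dispose of the case; I expect the care to be in making that identification precise.

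\textbf{Invariant lines.} For a $g$-invariant line $\ell$, take $x\in\ell$ and apply the dichotomy above to $g$ or $g^2$, which preserves the ends of $\ell$.
\begin{itemize}
\item If $g$ has no fixed point in $\Int\sigma$, the forward end ray lands at $a(g)$ and, using $g^{-1}$, the backward end ray lands at $r(g)$.
\item If there is a fixed point in $\ell$, split $\ell$ at the at most two fixed points and apply the same argument to the complementary rays.
\end{itemize}
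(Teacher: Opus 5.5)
The paper quotes this lemma from Calegari--Loukidou without proof, so I assess your argument directly. Your free case (Steps~2--3) is sound: $[x_0,g(x_0)]\subset\sigma$ contains no fixed point, so $g^n\to a(g)$ uniformly on it and the generated ray lands at $a(g)$. The ``main obstacle'' you flag is a non-issue.

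\textbf{The gap.} It is the first bullet of Step~1: the claim that the projection $y$ of a fixed point $z$ onto $\sigma$ is fixed whenever $y\in\Int\sigma$. That is the isometric argument: $y$ sits at equal distance from $z$ along $[z,x]$ and along $[z,g(x)]=g([z,x])$. It fails for homeomorphisms, which is all you have on $Z^\alpha$. For a counterexample, take $[0,\infty)$ with an arc $b_k$ attached at each point $2^k$ ($k\in\ZZ$), and let $g$ act by $t\mapsto 2t$, carrying $b_k$ onto $b_{k+1}$. Let $x$ be the free end of $b_0$ and $z=0$. Then $\sigma=b_0\cup[1,2]\cup b_1$ and $y=1\in\Int\sigma$, but $g(y)=2$, and $\Int\sigma$ contains no fixed point at all. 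Using $g^{-1}$ with $x$ the free end of $b_1$ instead gives $g^{-1}(y)\in[z,y)$. Your argument uses only the tree structure, so here it reaches the wrong alternative and never produces the ray in $\bigcup_{n\ge0}g^n(\sigma)$ that the lemma asserts. Two further points are only sketched: your second bullet, and ``exactly one'' (two fixed points can lie in $Z^\alpha$, so counting alone does not give uniqueness).

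\textbf{Repairing Step~1.} Compare $g(y)$ with $y$ inside $[z,g(x)]=[z,y]\cup[y,g(x)]$.
\begin{itemize}
\item If $g(y)=y$, you have the interior fixed point.
\item If $g(y)\in(y,g(x)]$, then $g([z,y])=[z,g(y)]\supsetneq[z,y]$. So $\beta=[y,g(y)]\subset\sigma$ contains no fixed point, since one would lie in $g^{-1}([z,g(y)])=[z,y]$. Also $\beta\cap g(\beta)=\{g(y)\}$, so $\bigcup_{n\ge0}g^n(\beta)$ is a ray, and your Step~3 shows it lands at $a(g)$.
\item If $g(y)\in[z,y)$, then $[g(y),y]$ lies in the connected set $\bigcup_{n\ge0}g^n(\sigma)$. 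The arcs $g^n([z,y])=[z,g^n(y)]$ decrease to $[z,z'']$ with $z''$ fixed. The ray $(z'',y]=\bigcup_{n\ge0}g^n([g(y),y])$ has closure the tree arc $[z'',y]$, so it lands at $z''=\lim g^n(y)=a(g)$.
\end{itemize}

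\textbf{A cleaner route.} This needs neither a fixed point $z$ nor Levitt's theorem. Write $\sigma\cap g(\sigma)=[y',g(x)]$.
\begin{itemize}
\item If $y'=g(x)$, no fixed point lies on $\sigma$, since it would lie in $\sigma\cap g(\sigma)=\{g(x)\}$. Then $\sigma$ itself generates the landing ray.
\item If $g^{-1}(y')\in[y',g(x)]$, the intermediate value theorem applied to $g\colon[x,g^{-1}(y')]\to[y',g(x)]\subset\sigma$ gives a fixed point in $\Int\sigma$.
\item If $g^{-1}(y')\in(x,y')$, the arc $[g^{-1}(y'),y']$ meets its image only at $y'$ and generates the ray.
\end{itemize}

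\textbf{Uniqueness.} Suppose $f_1,f_2\in\Int\sigma$ are fixed with $f_1\in(x,f_2)$. The branch at $f_1$ containing $f_2$ is $g$-invariant and contains $g(x)$ but not $x$, which is impossible.
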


Combining \reflem{invRayLand} with \refthm{invHypAxis}, we obtain the following.

\begin{prop}\label{Prop:invBranch}
Let \(p\) be a fixed point of \(g\in G\), and suppose that \(p\in Z^\alpha\) for some \(\alpha\in\{+,-\}\). If \(B\) is a branch at \(p\) such that \(g(B)=B\), then there exists a landing ray \(\gamma\subset B\cup\{p\}\) starting from \(p\) and landing at the other fixed point \(q\) of \(g\), whether or not \(q\in B\).
\end{prop}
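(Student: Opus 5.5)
We treat the branch $B$ as a real tree in its own right and study how $g$ acts on it. In $Z^\alpha$ with the path topology, $B$ is an open connected subset of the real tree $Z^\alpha$, and $B\cup\{p\}$ is a closed subtree. Since $g$ is loxodromic, its only fixed points on $S_\infty^2$ are $p$ and $q$, and $g$ preserves $B$. The argument splits according to whether $g|_B$ has a fixed point.

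\textbf{Case 1: $g$ has no fixed point in $B$.} Then $g|_B$ is a fixed-point-free homeomorphism of the real tree $B$. By \refthm{invHypAxis} it has a unique invariant proper line $\ell\subset B$, the axis of $g|_B$. By the last assertion of \reflem{invRayLand}, each end ray of $\ell$ lands at a fixed point of $g$, that is, at $p$ or at $q$. The two end rays go in opposite directions along the axis, one toward the attracting end and one toward the repelling end. We choose the end ray $r$ whose topological end corresponds to $a(g)$ if $q=a(g)$, and to $r(g)$ if $q=r(g)$ (applying the lemma to $g^{-1}$). This is the step that should yield landing at $q$ rather than at $p$.

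Next we connect $p$ to $\ell$. Let $x$ be the point of $\ell$ nearest to $p$, the projection in the tree $B\cup\{p\}$. Then $[p,x]\cup r'$ is a ray starting at $p$, where $r'$ is the subray of the chosen end ray starting at $x$ in the appropriate direction. This ray lands at the same point as $r$.

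\textbf{Main obstacle: ensuring the chosen end lands at $q$ and not at $p$.} If both end rays landed at $p$, the closure of $\ell$ would be a Jordan curve through $p$. I would argue as follows. Apply \reflem{invRayLand} to $\sigma=[y,g(y)]\subset\ell$. Because $\ell$ is the axis and $g$ has no fixed point in $\Int\sigma$, the union $\bigcup_{n\ge0}g^n(\sigma)$ is the forward end ray, and the lemma says it lands at $a(g)$. Applying the same reasoning to $g^{-1}$ shows the backward end ray lands at $r(g)$. So the two ends land at distinct points, and one of them is $q$. This dichotomy in the lemma is exactly what gives the conclusion, so the difficulty reduces to checking that $\Int[y,g(y)]$ has no fixed point, which holds in this case.

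\textbf{Case 2: $g$ fixes a point $y\in B$.} Then $y\neq p$ and $y$ lies in $S^2_\infty$, so $y=q$ and $q\in B$. The arc $[p,q]$ in $Z^\alpha$ is $g$-invariant, since it is the unique arc between two fixed points. Its image in $S^2_\infty$ is a Jordan arc, because convex hulls of finite sets embed. Hence $[p,q]$ minus its endpoint $q$ is a ray from $p$ that lies in $B\cup\{p\}$ and lands at $q$, which proves the claim in this case.
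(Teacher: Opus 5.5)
Your case split and your tools match the paper's proof, which is the one-line version of yours: take $[p,q)$ if $q\in B$, and otherwise use the axis of $g|_B$ via Theorem~\ref{Thm:invHypAxis} and Lemma~\ref{Lem:invRayLand}. Your Case~2 is fine. So is your argument that the two ends of the axis $\ell$ land at $a(g)\neq r(g)$ (apply the lemma to $g$ and to $g^{-1}$).

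The gap is the step joining $p$ to $\ell$. In Case~1 there is never a point $x\in\ell$ nearest to $p$. Suppose $[p,x]\cap\ell=\{x\}$. Applying $g$ gives $[p,g(x)]\cap\ell=\{g(x)\}$. But the first point of $\ell$ met along arcs from $p$ is unique: $\ell$ is convex, so the center of the tripod spanned by $p,x,g(x)$ lies on $[x,g(x)]\subset\ell$ and on both $[p,x]$ and $[p,g(x)]$. Hence $g(x)=x$, which contradicts that $g$ acts freely on $B\ni x$. So the ray $[p,x]\cup r'$ you build does not exist in any instance of Case~1, and the proof as written produces nothing there.

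The missing observation is what this forces. Since $\ell$ is proper in $B$, for $y\in\ell$ the set $[p,y]\cap\ell$ is an interval of $[p,y]$ containing $y$ and closed in $(p,y]$. By the above it is not of the form $[x,y]$, so it equals $(p,y]$. Hence one end of $\ell$ converges to $p$ in the path topology, so by continuity of the inclusion it lands at $p$. By your $a(g)\neq r(g)$ argument, the other end lands at $q$. Since $q\notin[p,y]\subset B\cup\{p\}$, the set $\gamma=\ell\cup\{p\}$ is $[p,y]$ together with the end ray of $\ell$ from $y$ toward $q$. These meet only at $y$, so $\gamma$ is a Jordan ray from $p$ landing at $q$, as required.

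This $\gamma$ is moreover $g$-invariant, i.e.\ a $g$-prong, and in Case~2 so is $[p,q)$. That is how the paper uses this proposition later, for instance in Proposition~\ref{Prop:invFence} and Lemma~\ref{Lem:pruning}.
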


\begin{proof}
If \(B\) contains \(q\), then \([p,q)\) is the required landing ray. Otherwise, \(g\) acts freely on \(B\), which is again a real tree, and the result follows from \refthm{invHypAxis} and \reflem{invRayLand}.
\end{proof}

Let \(\cS\) be an oriented Jordan segment. A ray \(r\) in \(Z^\pm\) is said to land at \(p\in\Int \cS\) on the \emph{right} side (resp. the \emph{left} side) of \(\cS\) if \(r\) lands at \(p\) and every right open neighborhood (resp. left open neighborhood) of \(p\) relative to \(\cS\) contains a truncated subray of \(r\). A Jordan segment is said to land at \(p\in\Int \cS\) on the \emph{right} side (resp. the \emph{left} side) of \(\cS\) if one of end rays of its interior line does.

We say that an oriented Jordan segment $\cS$ is :
\begin{itemize}
    \item \emph{left} (resp. \emph{right}) \emph{inaccessible} if there is no ray in $Z^\pm$ that lands on the left (resp. right) side of $\cS$;
    \item \emph{inaccessible} if $\cS$ is left and right inaccessible;
    \item \emph{one-sided inaccessible} if it is left inaccessible but not right inaccessible under some orientation;
    \item \emph{at least one-sided inaccessible} if it is left inaccessible under some orientation;
    \item \emph{two-sided accessible} if it is not at least one-sided inaccessible.
\end{itemize}

\subsection{Circular orders on Jordan curves}

A \emph{circular order} on a set \(X\) is a map
\[
c \colon X^{\times 3}\to \{-1,0,1\}\subset \ZZ
\]
satisfying the following conditions:
\begin{enumerate}
    \item \(c^{-1}(0)=\Delta_3(X)\), where
    \[
    \Delta_3(X)=\{(x_1,x_2,x_3)\in X^{\times 3}\mid x_i=x_j \text{ for some } i\neq j\};
    \]
    \item \(c\) satisfies the cocycle condition
    \[
    c(x_1,x_2,x_3)-c(x_0,x_2,x_3)+c(x_0,x_1,x_3)-c(x_0,x_1,x_2)=0
    \]
    for all \(x_0,x_1,x_2,x_3\in X\).
\end{enumerate}

For points \(x_1,\dots,x_n\in X\), we write
\(
x_1\le x_2\le \cdots \le x_n
\)
if \(c(x_1,x_i,x_{i+1})\ge 0\) for all \(i\). We define \(\opi[X]{x}{y}\) to be the set of all \(z\in X\) such that \(c(x,z,y)=1\). Similarly, we use \(\cldi[X]{x}{y}\), \(\ropi[X]{x}{y}\), and \(\lopi[X]{x}{y}\) in the usual sense.

The circle \(S^1\) carries a natural circular order \(co\), defined by setting \(co(x,y,z)=1\) if \(x,y,z\) occur in counterclockwise order, \(co(x,y,z)=-1\) if they occur in clockwise order, and \(co(x,y,z)=0\) if at least two of \(x,y,z\) coincide. A \emph{circular order} on a Jordan curve \(\cJ\) is the pushforward of \(co\) under a homeomorphism from \(S^1\) onto \(\cJ\).

Let \(\cS\) be a Jordan segment contained in a Jordan curve \(\cJ\). We say that a linear order \(<\) on \(\cS\) and a circular order \(c\) on \(\cJ\) are \emph{compatible} if
\[
x<y \quad\text{if and only if}\quad c(p,x,y)>0
\]
for some \(p\in \cJ\setminus \cS\). This condition is independent of the choice of \(p\). Given a linear order on \(\cS\), there is a unique compatible circular order on \(\cJ\), and conversely.

Once an orientation of the sphere \(S^2\) is fixed, it induces a canonical orientation on each Jordan domain \(\cD\), and hence a circular order on \(\partial \cD\). Conversely, if a Jordan curve \(\cJ\) is equipped with a circular order \(c\), then there is a unique Jordan domain \(\cD^L(\cJ)\) bounded by \(\cJ\) whose induced boundary order is \(c\). Note that \(\cD^L(\cJ)\) contains the left open neighborhoods of points of \(\cJ\) relative to subarcs of \(\cJ\). For this reason, we call \(\cD^L(\cJ)\) the \emph{left Jordan domain} of \(\cJ\) with respect to \(c\). The other Jordan domain bounded by \(\cJ\) is called the \emph{right Jordan domain} of \(\cJ\), and is denoted by \(\cD^R(\cJ)\).

Let \(\cJ\) be a circularly ordered Jordan curve. A landing ray \(r\) in \(Z^\pm\) is said to \emph{land at \(p\in \cJ\) on the right side} (resp. \emph{on the left side}) of \(\cJ\) if the end of \(r\) is \(p\) and \(r\) is contained in the right Jordan domain (resp. the left Jordan domain) of \(\cJ\).
We say simply that \(r\) \emph{lands on \(\cJ\)} if \(r\) lands at some point of \(\cJ\) on either side with respect to some circular order on \(\cJ\).

\subsection{Circular orders on branches}\label{Sec:COonB}
Since $Z^\alpha$, where $\alpha\in \{+,-\}$, is a continuous injective image of a real tree into $S^2$,
for each branched point $p\in Z^\alpha$, there is a natural circular order on the set of branches, $Z^\alpha_p$, induced from a given orientation of $S_\infty^2$.

For each branch point \(p\in Z^\alpha\), we define a circular order \(c_p^\alpha\) on \(Z_p^\alpha\) as follows. Let \(b_1,b_2,b_3\) be distinct elements of \(Z_p^\alpha\), and choose points \(z_i\in b_i\). Then the convex hull \(\cH\) of \(\{z_1,z_2,z_3\}\) is a tripod with median point \(p\). Fix a linear order on
\(
[z_1,p]\cup[p,z_2]=[z_1,z_2]
\)
for which \(z_1<p<z_2\). We set \(c_p^\alpha(b_1,b_2,b_3)=1\) if a left neighborhood of \(p\) relative to \([z_1,z_2]\) contains a subarc \([p,q]\subset [p,z_3]\) for some \(q\in (p,z_3)\), and set \(c_p^\alpha(b_1,b_2,b_3)=-1\) otherwise. This definition is independent of the choice of the points \(z_i\), and hence \(c_p^\alpha\) is well defined.

Let \(\gamma\) be an oriented segment in \(Z^\alpha\), where \(\alpha\in\{+,-\}\). For each \(p\in \Int \gamma\), a \emph{right branch} (resp. \emph{left branch}) at \(p\) relative to \(\gamma\) is an element \(b\in Z_p^\alpha\) such that
\[
c_p^\alpha(b_-,b,b_+)=1
\qquad
\text{(resp. }c_p^\alpha(b_-,b,b_+)=-1\text{)},
\]
where \(b_+\) and \(b_-\) are the two branches at \(p\) meeting \(\gamma\), ordered so that
\(
\gamma\cap b_-<\gamma\cap b_+
\)
with respect to the linear order on \(\gamma\). A right branch (resp. left branch) at a point \(p\in \Int \gamma\) is also called a \emph{right branch} (resp. \emph{left branch}) of \(\gamma\).

We say that \(\gamma\) is \emph{right simplicial} (resp. \emph{left simplicial}) if no point of \(\Int \gamma\) has a right branch (resp. left branch). We say that a segment is
\begin{itemize}
    \item \emph{simplicial} if it is both right simplicial and left simplicial under some orientation;
    \item \emph{one-sided simplicial} if, under some orientation, it is right simplicial but not left simplicial;
    \item \emph{at least one-sided simplicial} if, under some orientation, it is left simplicial;
    \item \emph{two-sided branched} if it is not at least one-sided simplicial.
\end{itemize}
Note that inaccessibility requires that neither \(Z^+\) nor \(Z^-\) contain a ray landing on a given side of \(\gamma\), whereas simpliciality only requires the absence of such a ray in the one of \(Z^\pm\) that contains \(\gamma\).

\begin{rmk}\label{Rmk:noSeg}
It is shown by Lemma~2.14 (Infinite ends) in \cite{CalegariLoukidou26} that no zipper is a segment. In particular, every zipper contains at least one branch point.
\end{rmk}

\subsection{Adapted segments and Fences}

A \emph{bridge} of a zipper \(Z^\pm\) is an embedding \(\gamma\colon [0,1]\to S_\infty^2\) such that
\begin{enumerate}
    \item either \(\gamma([0,1/2))\subset Z^-\), \(\gamma((1/2,1])\subset Z^+\), and \(\gamma(1/2)\in S_\infty^2\setminus (Z^-\cup Z^+)\); or
    \item either \(\gamma([0,1))\subset Z^-\) and \(\gamma(1)\in Z^+\), or \(\gamma([0,1))\subset Z^+\) and \(\gamma(1)\in Z^-\).
\end{enumerate}
A bridge of the first kind is said to be of \emph{type~1}, and a bridge of the second kind of \emph{type~2}. A point \(p\in S_\infty^2\) is called a \emph{synapse} of a bridge if \(p=\gamma(1/2)\) for some bridge of type~1, or \(p=\gamma(1)\) for some bridge of type~2.

\begin{prop}[Bridges exist, Proposition~2.22 in \cite{CalegariLoukidou26}]\label{Prop:bridgeExist}
A bridge exists. Moreover, the set of all synapses is dense in \(S_\infty^2\).
\end{prop}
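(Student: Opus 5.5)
The plan is to build a bridge out of two landing rays, one in $Z^+$ and one in $Z^-$, that land at a common fixed point of a single element $g\in G$. Density of synapses will then follow from $G$-equivariance.

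\textbf{Gluing step.} If $r^+\subset Z^+$ and $r^-\subset Z^-$ are landing rays with the same end $e$, then $\closure{r^+}\cup\closure{r^-}$ is a Jordan arc. Indeed, $r^\pm$ are Jordan rays, they are disjoint because $Z^+\cap Z^-=\emptyset$, and their closures meet only at $e$. Reparametrizing this arc as $\gamma\colon[0,1]\to S^2_\infty$ with $\gamma(1/2)=e$ gives a bridge of type~1 if $e\notin Z^+\cup Z^-$. If instead $e\in Z^+$, keep only $\closure{r^-}$, which satisfies $\gamma([0,1))\subset Z^-$ and $\gamma(1)=e\in Z^+$; this is a bridge of type~2. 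The case $e\in Z^-$ is symmetric, and a point of $Z^-$ cannot also lie in $Z^+$. More generally, a type~2 bridge also arises whenever a ray in one of $Z^\pm$ lands at a point of the other.

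\textbf{Producing the rays.} Fix a nontrivial $g\in G$. For $x\in Z^\alpha$, apply \reflem{invRayLand} to $\sigma=[x,g(x)]$ and to $[x,g^{-1}(x)]$. Then either $g$ acts freely on $Z^\alpha$, or $g$ has a fixed point in $Z^\alpha$.

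In the free case, \refthm{invHypAxis} gives an axis. Its forward end ray lands at $a(g)$ and its backward end ray lands at $r(g)$, by \reflem{invRayLand} applied to $g$ and $g^{-1}$ along the axis. So $Z^\alpha$ reaches both fixed points, where \emph{reaches} $f$ means that $f\in Z^\alpha$ or some landing ray in $Z^\alpha$ lands at $f$.

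In the non-free case, let $p\in Z^\alpha$ be a fixed point, say $p=r(g)$. I want $Z^\alpha$ to reach $q=a(g)$ as well. If some power $g^k$ preserves a branch at $p$, then \refprop{invBranch} applied to $g^k$ gives a ray from $p$ landing at $q$. Since $g^k$ has the same fixed points as $g$, this suffices.

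Granting this, each of $Z^+$ and $Z^-$ reaches both $a(g)$ and $r(g)$. Take $e=a(g)$. If $a(g)\notin Z^+\cup Z^-$, both zippers reach it by rays and we get a type~1 bridge. If $a(g)$ lies in one of them, the other reaches it by a ray and we get a type~2 bridge.

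\textbf{Main obstacle.} The hard step is the non-free case in which $g$ fixes $p\in Z^\alpha$ but every power of $g$ permutes the branches at $p$ without a fixed branch. In that case all arcs $[x,g^n(x)]$ pass through $p$, and \reflem{invRayLand} gives nothing at $q$. Two remedies seem available.
\begin{itemize}
\item Replace $g$ by an element of $G$ acting freely on $Z^\alpha$. A hyperbolic element of the action on the real tree should exist by a ping-pong argument using two elements with distinct fixed points in $Z^\alpha$. Minimality of the zipper, and the presence of branch points from \refrmk{noSeg}, supply such elements.
\item Argue directly that the $g$-orbit of branches at $p$ accumulates only at $q$. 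This would produce rays in $Z^\alpha$ landing arbitrarily close to $q$, and an $\omega$-limit argument would then be needed.
\end{itemize}
I would try the first remedy: find one $g$ acting freely on both $Z^+$ and $Z^-$, so that the axes give everything. If that fails, I would settle for a $g$ that is free on one tree, and handle the other tree by the fixed-point case split above.

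\textbf{Density.} Let $s$ be a synapse of a bridge $\gamma$. For every $h\in G$, the path $h\circ\gamma$ is again a bridge, because $Z^\pm$ are $G$-invariant and so is their complement. Hence $h(s)$ is a synapse. Since $M$ is closed, the limit set of $G$ is all of $S^2_\infty$, and every $G$-orbit is dense. So the set of synapses, which contains $G\cdot s$, is dense in $S^2_\infty$.
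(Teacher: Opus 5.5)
\paragraph{Density.} Your density argument is correct and is the paper's: translates of a bridge are bridges, and the $G$-action on $S^2_\infty$ is minimal.

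\paragraph{The gap in existence.} The existence part has a genuine gap, located at the step you yourself call the main obstacle. Your gluing step and your treatment of the free case are sound. You also need less than ``both trees reach both fixed points.''

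Suppose $g$ acts freely on $Z^-$. Its axis has end rays landing at $a(g)$ and $r(g)$. If $g$ is also free on $Z^+$, you get a type~1 bridge at $a(g)$. Otherwise $g$ fixes some $p\in Z^+$. Then $p\in\Fix(g)$, and the closed end ray of the axis landing at $p$ is a type~2 bridge. The same holds with $Z^\pm$ exchanged.

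Now suppose a power $g^k$ preserves a branch at a fixed point $p\in Z^\alpha$. Then \refprop{invBranch} gives a ray in $Z^\alpha$ landing at the other fixed point. Combined with the free case, this again yields a bridge.

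So your method produces a bridge unless \emph{every} nontrivial $g\in G$ satisfies two conditions. First, $g$ fixes a point of $Z^+$ and a point of $Z^-$, which are then its two fixed points. Second, $g$ permutes the branches at each of these points with only infinite orbits. The proposal has no argument excluding this situation. Nothing local excludes it either. Take $g(z)=2e^{i\theta}z$ with $\theta/2\pi$ irrational. Then the segments $[0,g^n(x)]$ and the radial rays from $\infty$ to $g^m(y)$, for generic $y$, are pairwise disjoint and permuted by $g$ with infinite orbits.

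\paragraph{Why neither remedy works.} Your ping-pong claim is modeled on the fact that two elliptic isometries of an $\RR$-tree with disjoint fixed sets have hyperbolic product. That proof uses that an elliptic isometry moves every direction at the boundary of its fixed set. Here $G$ acts on $Z^\pm$ only by homeomorphisms. An element fixing $p$ may preserve a direction at $p$ while acting freely on it, as in the prong situation. It may also rotate infinitely many directions. So nothing controls whether a product acts freely. The only fixed-point information you have is $\Fix(h)=\{a(h),r(h)\}$, which says nothing about whether these points lie in $Z^\pm$.

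The second remedy fails for the reason you already noted. The arcs $[p,g^n(x)]$ lie in pairwise distinct branches and all pass through $p$. Even though $g^n(x)\to q$, they never assemble into a ray, let alone a landing one.

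\paragraph{How the paper handles this.} The paper does not prove existence; it imports it as \cite[Proposition~2.22]{CalegariLoukidou26} and proves only density. You should either cite that result, or give a new argument producing some $h\in G$ that acts freely on one of $Z^\pm$ or has a power fixing a branch at one of its fixed points.
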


\begin{proof}
The first assertion is Proposition~2.22 in \cite{CalegariLoukidou26}, while the second follows immediately from the minimality of the \(G\)-action on \(S_\infty^2\).
\end{proof}

Although the notion of a bridge was introduced in \cite{CalegariLoukidou26}, we use a slightly different notion in this paper. A \emph{connector} of a zipper \(Z^\pm\) is an embedding \(\gamma\colon [0,1]\to S_\infty^2\) such that
\(
\gamma([0,1/2))\subset Z^-
\text{ and }
\gamma((1/2,1])\subset Z^+.
\)
Note that \(\gamma(1/2)\) may be a synapse of either type. The \emph{type} of a connector is defined to be the type of its associated synapse \(\gamma(1/2)\).

A Jordan arc \(\cA\subset S_\infty^2\) is called a \emph{connector arc} of \(Z^\pm\) if it is the image of a connector. The type of a connector arc, and its associated synapse, are defined to be those of the corresponding connector. By an orientation of \(\cA\) we mean the corresponding connector \(\delta\). The map \(\delta\) also induces a canonical linear order on \(\cA\).

A Jordan segment \(\cS\subset S_\infty^2\) is said to be \emph{adapted} to \(Z^\pm\) if either \(\cS\) is a segment contained in one of the sets \(Z^\pm\), or \(\cS\) contains a synapse \(s\) such that \(s\) is a cut point of \(\cS\) and the two components of \(\cS\setminus\{s\}\) are segments contained in different sets \(Z^\pm\). In the latter case, \(s\) is called the \emph{synapse} of \(\cS\), and the \emph{type} of \(\cS\) is defined to be the type of \(s\).

For \(\alpha\in\{+,-\}\), the intersection \(\cS\cap Z^\alpha\) is called the \((\alpha)\)-\emph{segment} of \(\cS\) and is denoted by \(\cS^\alpha\). An adapted segment \(\cS\) is called \emph{mixed} if \(\cS^\alpha\neq\varnothing\) for both \(\alpha\in\{+,-\}\), and \emph{pure} if \(\cS=\cS^\alpha\) for some \(\alpha\in\{+,-\}\). If \(\cS\) is mixed, then the canonical linear order on \(\cS\) is the linear order \(<\) such that \(x<y\) for every \((x,y)\in \cS^-\times \cS^+\).

A \emph{fence} of \(Z^\pm\) is a Jordan curve \(\cJ\subset S_\infty^2\) for which there exist two distinct points \(u,v\in \cJ\), not necessarily contained in \(Z^+\cup Z^-\), such that the two components of \(\cJ\setminus\{u,v\}\) are segments contained in \(Z^+\) and \(Z^-\), respectively. The points \(u\) and \(v\) are called the \emph{nodes} of \(\cJ\). We write \(\cJ^+\) and \(\cJ^-\) for the components of \(\cJ\cap Z^+\) and \(\cJ\cap Z^-\), respectively, and call them the \((+)\)-\emph{side} and \((-)\)-\emph{side} of the fence. Note that each end ray of \(\Int \cJ^\alpha\) lands at a node.

\subsection{Circle laminations}\label{Sec:circleLami}

Let \(\cM\) denote the configuration space of unordered pairs of distinct points of \(S^1\); when a  curve is understood from the context, we use the same notation for the corresponding configuration space on that curve. This space is homeomorphic to the open M\"obius band.

We say that an element \(\ell\in \cM\) \emph{lies on} an open interval \(I\subset S^1\) if \(\ell\subset \closure{I}\).
Given \(\ell,m\in \cM\), we say that \(\ell\) and \(m\) are \emph{unlinked} if \(m\) lies on a component of \(S^1\setminus \ell\), and \emph{linked} otherwise.
A \emph{circle lamination} is a closed subset \(\cL\subset \cM\) such that every pair of elements of \(\cL\) is unlinked. The elements of \(\cL\) are called its \emph{leaves}, and the points of a leaf are called its \emph{endpoints}.

Let \(I\subset S^1\) be an open interval with \(\closure{I}\neq S^1\). A sequence \(\{\ell_k\}_{k\in \NN}\subset \cM\) is called \emph{\(I\)-side} if \(\ell_k\neq \partial I\) for all \(k\), each \(\ell_k\) lies on \(I\), and \(\ell_k\to \partial I\) in \(\cM\).
Given an open interval \(I\) with \(\partial I\in \cL\), we say that \(I\) is \emph{isolated} in \(\cL\) if there is no \(I\)-side sequence of leaves in \(\cL\).
A leaf \(\{x_0,x_1\}\) of \(\cL\) is called \emph{isolated} if both intervals \(\opi{x_i}{x_{i+1}}\), for \(i\in \ZZ/2\ZZ\), are isolated in \(\cL\). It is called a \emph{boundary leaf} if at least one of the intervals \(\opi{x_i}{x_{i+1}}\), for \(i\in \ZZ/2\ZZ\), is isolated in \(\cL\).

Let \(\cL\) be a circle lamination on the Gromov boundary \(\partial \HH^2\) of the hyperbolic plane \(\HH^2\). For each \(\{x,y\}\in \cL\), there is a unique bi-infinite geodesic \(\sfg(x,y)\) in \(\HH^2\) with endpoints \(x\) and \(y\). The \emph{geometric realization} of \(\cL\) is the union of these geodesics,
\[
\bigcup_{\{x,y\}\in \cL}\sfg(x,y),
\]
which is a geodesic lamination of \(\HH^2\).
In particular, if \(\cL\) is preserved by a closed surface group \(\Gamma<\PSL(\RR)\), then the geometric realization of \(\cL\) descends to a geodesic lamination on the surface \(\HH^2/\Gamma\).

We use the terminology and basic facts concerning geodesic laminations from Casson's book~\cite{Casson88}; for the reader's convenience, most of the material needed here is collected in \refapp{geodLami}. See also Calegari's book~\cite{Calegari07}.

\subsection{Unlinked adapted segments}
Later we will consider circle laminations on the limit set \(\Lambda(\Gamma)\) of a quasi-Fuchsian surface subgroup \(\Gamma<G\). Recall the strategy and notation from \refsec{proofOutline}. In that setting, \(\{x^+,x^-\}\) is the pair of endpoints of a connector arc \(\gamma\) crossing the Jordan domain of \(\Lambda(\Gamma)\) that contains \(p\). The following notions are introduced for later use in describing such arcs \(\gamma\) and their translates \(g(\gamma)\), where \(g\in \Gamma\).

Two segments \(s_1,s_2\) in \(Z^\alpha\), where \(\alpha\in\{+,-\}\), are said to be \emph{linked} if \(\Int s_1\) meets both a left branch and a right branch of \(s_2\). Otherwise they are said to be \emph{unlinked}. In particular, if \(s_1\) and \(s_2\) are linked, then \(s_1\cap s_2=[p,q]\) for some \(p,q\in Z^\alpha\), possibly with \(p=q\).

\begin{prop}[Enclosing a landing ray]\label{Prop:enclosing}
Let \(Z^\pm\) be a zipper for \(M\), and let \(\cD\) be a Jordan domain in \(S_\infty^2\). Suppose that an interval \(\cI\subset \partial \cD\) is a segment in \(Z^+\) (resp. \(Z^-\)). Let \(\sfr\) be a ray in \(Z^+\) (resp. \(Z^-\)) such that
\begin{itemize}
    \item \(\sfr\) starts at a point \(s\in \cI\);
    \item \(\sfr\cap (\partial \cD\setminus \cI)=\varnothing\);
    \item \(\sfr\) lands either at a point of \(\partial \cD\setminus \cI\) on the left side, or at a point of \(\cD\).
\end{itemize}
Then there exists \(v\in \cI\cap \sfr\) such that \(\cI\cap \sfr=[v,s]\) and the interior of the truncated ray of \(\sfr\) at \(v\) is contained in \(\cD\). In particular,
\(\sfr\subset \closure{\cD}\) and \(\sfr\cap \partial \cD=[v,s],\)
so \(\sfr\) is unlinked with \(\cI\).
\end{prop}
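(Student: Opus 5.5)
The plan is to follow $\sfr$ from its starting point $s$ and locate the last moment at which it touches $\cI$. Set $K=\sfr\cap\cI$. Both $\sfr$ and $\cI$ are segments in the real tree $Z^+$ (the case $Z^-$ is symmetric), so their intersection is connected, and hence a segment containing $s$. I will first show that $K$ is a compact arc $[v,s]$. If $K$ contained a whole end ray of $\sfr$, then $\sfr$ would land at a point of $\closure{\cI}$. That point is either in $\cI$, which is impossible because the end ray of a landing ray cannot converge to a point it already passes through in the tree, or it is an endpoint of $\cI$ in $\partial\cD\setminus\cI$, which contradicts the landing hypothesis: the ray would lie in $\partial\cD$ rather than landing from the left side or inside $\cD$.

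To make the first claim precise, I will use that the inclusion $Z^+\hookrightarrow S^2_\infty$ is continuous by \refthm{zipperRtree} and that $\sfr$ is a Jordan ray. On each compact subarc of $\sfr$ the path topology and the subspace topology agree, so $K$ is closed along $\sfr$ and is bounded away from the end. This gives $K=[v,s]$ for some $v\in\cI\cap\sfr$.

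Next, I will show that the open truncated ray $\sfr_v\setminus\{v\}$ lies in $\cD$. This ray is connected. It misses $\cI$ by the choice of $v$, and it misses $\partial\cD\setminus\cI$ by hypothesis. Hence it lies entirely in $\cD$ or entirely in $S^2_\infty\setminus\closure{\cD}$. Its end decides which. If it lands at a point of $\cD$, it is eventually in $\cD$. If it lands at $e\in\partial\cD\setminus\cI$ on the left side, then by the definition of left landing relative to the circular order on $\partial\cD$, a truncated subray lies in $\cD^L(\partial\cD)$. Here I must check that this left domain is $\cD$ for the orientation in use. That holds by convention, since $\cD$ carries its canonical boundary orientation, which makes $\cD$ the left Jordan domain. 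In either case the ray meets $\cD$, so it is entirely contained in $\cD$.

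The conclusions then follow. $\sfr=[v,s]\cup\sfr_v\subset\cI\cup\cD\subset\closure{\cD}$, and $\sfr\cap\partial\cD=[v,s]$. For unlinkedness, $\Int\sfr$ meets $\cI$ only along $[v,s]$. Any branch of $\cI$ at $v$ that contains the remainder of $\sfr$ is a branch pointing into $\cD$. Using the tripod definition of $c^+_v$, this branch lies on a single side of $\cI$ (the left side for the boundary orientation of $\cD$), so $\Int\sfr$ cannot meet both a left and a right branch of $\cI$.

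I expect the main obstacle to be the first step, namely ruling out that $\sfr$ accumulates onto $\cI$ through infinitely many disjoint returns. This is handled by the fact that $\sfr\cap\cI$ is a single connected tree-segment, and by the behaviour of the topologies on compact subarcs.
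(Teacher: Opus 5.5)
Your proposal is correct and follows essentially the paper's route. The paper splits $\cI$ at $s$ into its two sides and notes that $\sfr$, lying in a single branch at $s$, can meet only one of them, so $\sfr\cap\cI=[s,v]$; it then observes that the tail beyond $v$ misses $\partial\cD$ and hence lies in $\cD$. Your tree-convexity argument for $K=\sfr\cap\cI$ and your connectedness-plus-landing argument for the tail supply exactly the details the paper leaves implicit.

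Two small justifications need repair:
\begin{itemize}
\item Closedness of $K$ in $\sfr$ should come from $K=\sfr\cap\partial\cD$, which holds by the hypothesis $\sfr\cap(\partial\cD\setminus\cI)=\varnothing$, together with continuity of the inclusion. The interval $\cI$ itself need not be closed.
\item A landing point lying in $\cI$ is excluded directly by the third hypothesis. It is not excluded because the ray ``already passes through'' it: for instance, $\sfr=[s,c)$ with $c\in\cI\setminus\sfr$ is a ray landing at a point it never reaches.
\end{itemize}
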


\begin{proof}
Write \(\Int \cI=\opi[\partial \cD]{x_-}{x_+}\).
If \(s\neq x_-\), let \(\sfs_-=\lopi[\partial\cD]{x_-}{s}\), and if \(s\neq x_+\), let \(\sfs_+=\ropi[\partial\cD]{s}{x_+}\).
The ray \(\sfr\) cannot meet both \(\Int \sfs_-\) and \(\Int \sfs_+\), since otherwise unique path-connectivity in \(Z^\alpha\) would produce a subarc of \(\Int \sfr\) containing \(s\), contradicting that \(\sfr\) is a ray. By symmetry, we may assume that \(\sfr\cap \Int \sfs_-=\varnothing\).

If also \(\sfr\cap \Int \sfs_+=\varnothing\), then \(\sfr\cap \partial\cD=\{s\}\) and \(\Int \sfr\subset \cD\), so we may take \(v=s\).
Otherwise \(\sfr\) meets \(\Int \sfs_+\). Then \(\sfr\) and \(\sfs_+\) meet the same branch at \(s\), and hence
\(
\sfr\cap \sfs_+=[s,v]
\)
for some \(v\in \cI\cap \sfr\). The interior of the truncated ray of \(\sfr\) at \(v\) is disjoint from \(\partial\cD\), and therefore lies in \(\cD\). This gives the result.
\end{proof}

\begin{figure}[ht]
    \centering
\begin{subfigure}[H]{0.3\textwidth}
\centering
\tikzset{every picture/.style={line width=0.75pt}} %set default line width to 0.75pt        

\begin{tikzpicture}[x=0.75pt,y=0.75pt,yscale=-1,xscale=1]
%uncomment if require: \path (0,300); %set diagram left start at 0, and has height of 300

%Shape: Polygon Curved [id:ds9289019841212706] 
\draw   (224.86,92.79) .. controls (248.86,83.79) and (251.86,109.79) .. (259.86,131.79) .. controls (267.86,153.79) and (249.86,162.79) .. (251.86,181.79) .. controls (253.86,200.79) and (195.86,234.79) .. (181.86,211.79) .. controls (167.86,188.79) and (150.86,209.79) .. (135.86,176.79) .. controls (120.86,143.79) and (152.86,123.79) .. (160.86,104.79) .. controls (168.86,85.79) and (200.86,101.79) .. (224.86,92.79) -- cycle ;
%Shape: Circle [id:dp8997997952123005] 
\draw  [color={rgb, 255:red, 74; green, 144; blue, 226 }  ,draw opacity=1 ][fill={rgb, 255:red, 208; green, 2; blue, 27 }  ,fill opacity=1 ][line width=0.75]  (198,154) .. controls (198,152.5) and (199.21,151.29) .. (200.71,151.29) .. controls (202.2,151.29) and (203.42,152.5) .. (203.42,154) .. controls (203.42,155.5) and (202.2,156.71) .. (200.71,156.71) .. controls (199.21,156.71) and (198,155.5) .. (198,154) -- cycle ;
%Curve Lines [id:da04248200682567427] 
\draw [color={rgb, 255:red, 74; green, 144; blue, 226 }  ,draw opacity=1 ]   (135.86,176.79) .. controls (175.86,146.79) and (164,170) .. (198,156) ;
%Shape: Circle [id:dp14661380621089126] 
\draw  [color={rgb, 255:red, 74; green, 144; blue, 226 }  ,draw opacity=1 ][fill={rgb, 255:red, 255; green, 255; blue, 255 }  ,fill opacity=1 ][line width=0.75]  (133,177) .. controls (133,175.5) and (134.21,174.29) .. (135.71,174.29) .. controls (137.2,174.29) and (138.42,175.5) .. (138.42,177) .. controls (138.42,178.5) and (137.2,179.71) .. (135.71,179.71) .. controls (134.21,179.71) and (133,178.5) .. (133,177) -- cycle ;
%Curve Lines [id:da9822128484848912] 
\draw [color={rgb, 255:red, 208; green, 2; blue, 27 }  ,draw opacity=1 ]   (203,153) .. controls (226,133) and (240,141) .. (261,137) ;
%Shape: Circle [id:dp050191056496360686] 
\draw  [color={rgb, 255:red, 208; green, 2; blue, 27 }  ,draw opacity=1 ][fill={rgb, 255:red, 255; green, 255; blue, 255 }  ,fill opacity=1 ][line width=0.75]  (258,137) .. controls (258,135.5) and (259.21,134.29) .. (260.71,134.29) .. controls (262.2,134.29) and (263.42,135.5) .. (263.42,137) .. controls (263.42,138.5) and (262.2,139.71) .. (260.71,139.71) .. controls (259.21,139.71) and (258,138.5) .. (258,137) -- cycle ;
%Curve Lines [id:da8107158506148203] 
\draw [color={rgb, 255:red, 208; green, 2; blue, 27 }  ,draw opacity=1 ]   (222,213) .. controls (213.33,189.67) and (241.33,137) .. (224,141) ;
%Curve Lines [id:da9261983428085323] 
\draw [color={rgb, 255:red, 74; green, 144; blue, 226 }  ,draw opacity=1 ]   (177,95) .. controls (178.33,114) and (201.67,117) .. (200.33,152) ;
%Shape: Circle [id:dp28423238078235813] 
\draw  [color={rgb, 255:red, 208; green, 2; blue, 27 }  ,draw opacity=1 ][fill={rgb, 255:red, 255; green, 255; blue, 255 }  ,fill opacity=1 ][line width=0.75]  (219,212) .. controls (219,210.5) and (220.21,209.29) .. (221.71,209.29) .. controls (223.2,209.29) and (224.42,210.5) .. (224.42,212) .. controls (224.42,213.5) and (223.2,214.71) .. (221.71,214.71) .. controls (220.21,214.71) and (219,213.5) .. (219,212) -- cycle ;
%Shape: Circle [id:dp8294971997272432] 
\draw  [color={rgb, 255:red, 74; green, 144; blue, 226 }  ,draw opacity=1 ][fill={rgb, 255:red, 255; green, 255; blue, 255 }  ,fill opacity=1 ][line width=0.75]  (174,95) .. controls (174,93.5) and (175.21,92.29) .. (176.71,92.29) .. controls (178.2,92.29) and (179.42,93.5) .. (179.42,95) .. controls (179.42,96.5) and (178.2,97.71) .. (176.71,97.71) .. controls (175.21,97.71) and (174,96.5) .. (174,95) -- cycle ;

% Text Node
\draw (267.86,124.67) node [anchor=north west][inner sep=0.75pt]  [font=\normalsize,color={rgb, 255:red, 208; green, 2; blue, 27 }  ,opacity=1 ]  {$e_{1}^{+}$};
% Text Node
\draw (111.86,170.67) node [anchor=north west][inner sep=0.75pt]  [font=\normalsize,color={rgb, 255:red, 74; green, 144; blue, 226 }  ,opacity=1 ]  {$e_{1}^{-}$};
% Text Node
\draw (178.86,165.67) node [anchor=north west][inner sep=0.75pt] [color={rgb, 255:red, 0; green, 0; blue, 0 }  ,opacity=1 ]  {$s_{1}=s_{2}$};
% Text Node
\draw (123.86,111.67) node [anchor=north west][inner sep=0.75pt]  [font=\normalsize,color={rgb, 255:red, 0; green, 0; blue, 0 }  ,opacity=1 ]  {$\mathcal{D}$};
% Text Node
\draw (167.86,69) node [anchor=north west][inner sep=0.75pt]  [font=\normalsize,color={rgb, 255:red, 74; green, 144; blue, 226 }  ,opacity=1 ]  {$e_{2}^{-}$};
% Text Node
\draw (215.86,213) node [anchor=north west][inner sep=0.75pt]  [font=\normalsize,color={rgb, 255:red, 208; green, 2; blue, 27 }  ,opacity=1 ]  {$e_{2}^{+}$};
\end{tikzpicture}
\subcaption{Case (1)}
\end{subfigure}
\hspace{0.01\textwidth}
\begin{subfigure}[H]{0.3\textwidth}
\centering

\tikzset{every picture/.style={line width=0.75pt}} %set default line width to 0.75pt        

\begin{tikzpicture}[x=0.75pt,y=0.75pt,yscale=-1,xscale=1]
%uncomment if require: \path (0,300); %set diagram left start at 0, and has height of 300

%Shape: Polygon Curved [id:ds5909257920987011] 
\draw   (244.86,112.79) .. controls (268.86,103.79) and (271.86,129.79) .. (279.86,151.79) .. controls (287.86,173.79) and (269.86,182.79) .. (271.86,201.79) .. controls (273.86,220.79) and (215.86,254.79) .. (201.86,231.79) .. controls (187.86,208.79) and (170.86,229.79) .. (155.86,196.79) .. controls (140.86,163.79) and (172.86,143.79) .. (180.86,124.79) .. controls (188.86,105.79) and (220.86,121.79) .. (244.86,112.79) -- cycle ;
%Shape: Circle [id:dp2810304472021107] 
\draw  [color={rgb, 255:red, 74; green, 144; blue, 226 }  ,draw opacity=1 ][fill={rgb, 255:red, 208; green, 2; blue, 27 }  ,fill opacity=1 ][line width=0.75]  (218,174) .. controls (218,172.5) and (219.21,171.29) .. (220.71,171.29) .. controls (222.2,171.29) and (223.42,172.5) .. (223.42,174) .. controls (223.42,175.5) and (222.2,176.71) .. (220.71,176.71) .. controls (219.21,176.71) and (218,175.5) .. (218,174) -- cycle ;
%Curve Lines [id:da8201695525420443] 
\draw [color={rgb, 255:red, 74; green, 144; blue, 226 }  ,draw opacity=1 ]   (155.86,196.79) .. controls (195.86,166.79) and (184,190) .. (218,176) ;
%Shape: Circle [id:dp07870131069015185] 
\draw  [color={rgb, 255:red, 74; green, 144; blue, 226 }  ,draw opacity=1 ][fill={rgb, 255:red, 255; green, 255; blue, 255 }  ,fill opacity=1 ][line width=0.75]  (153,197) .. controls (153,195.5) and (154.21,194.29) .. (155.71,194.29) .. controls (157.2,194.29) and (158.42,195.5) .. (158.42,197) .. controls (158.42,198.5) and (157.2,199.71) .. (155.71,199.71) .. controls (154.21,199.71) and (153,198.5) .. (153,197) -- cycle ;
%Curve Lines [id:da9782969240488663] 
\draw [color={rgb, 255:red, 208; green, 2; blue, 27 }  ,draw opacity=1 ]   (223,173) .. controls (246,153) and (260,161) .. (281,157) ;
%Shape: Circle [id:dp03718961715447655] 
\draw  [color={rgb, 255:red, 208; green, 2; blue, 27 }  ,draw opacity=1 ][fill={rgb, 255:red, 255; green, 255; blue, 255 }  ,fill opacity=1 ][line width=0.75]  (278,157) .. controls (278,155.5) and (279.21,154.29) .. (280.71,154.29) .. controls (282.2,154.29) and (283.42,155.5) .. (283.42,157) .. controls (283.42,158.5) and (282.2,159.71) .. (280.71,159.71) .. controls (279.21,159.71) and (278,158.5) .. (278,157) -- cycle ;
%Curve Lines [id:da758662140528384] 
\draw [color={rgb, 255:red, 208; green, 2; blue, 27 }  ,draw opacity=1 ]   (242,233) .. controls (233.33,209.67) and (215,199.49) .. (220.71,176.71) ;
%Curve Lines [id:da43272075724366443] 
\draw [color={rgb, 255:red, 74; green, 144; blue, 226 }  ,draw opacity=1 ]   (197,115) .. controls (198.33,134) and (245,139.49) .. (246.71,156.29) ;
%Shape: Circle [id:dp9753771775340537] 
\draw  [color={rgb, 255:red, 208; green, 2; blue, 27 }  ,draw opacity=1 ][fill={rgb, 255:red, 255; green, 255; blue, 255 }  ,fill opacity=1 ][line width=0.75]  (239,232) .. controls (239,230.5) and (240.21,229.29) .. (241.71,229.29) .. controls (243.2,229.29) and (244.42,230.5) .. (244.42,232) .. controls (244.42,233.5) and (243.2,234.71) .. (241.71,234.71) .. controls (240.21,234.71) and (239,233.5) .. (239,232) -- cycle ;
%Shape: Circle [id:dp8768508031900737] 
\draw  [color={rgb, 255:red, 74; green, 144; blue, 226 }  ,draw opacity=1 ][fill={rgb, 255:red, 255; green, 255; blue, 255 }  ,fill opacity=1 ][line width=0.75]  (194,115) .. controls (194,113.5) and (195.21,112.29) .. (196.71,112.29) .. controls (198.2,112.29) and (199.42,113.5) .. (199.42,115) .. controls (199.42,116.5) and (198.2,117.71) .. (196.71,117.71) .. controls (195.21,117.71) and (194,116.5) .. (194,115) -- cycle ;
%Shape: Circle [id:dp8729893288275562] 
\draw  [color={rgb, 255:red, 74; green, 144; blue, 226 }  ,draw opacity=1 ][fill={rgb, 255:red, 208; green, 2; blue, 27 }  ,fill opacity=1 ][line width=0.75]  (244,159) .. controls (244,157.5) and (245.21,156.29) .. (246.71,156.29) .. controls (248.2,156.29) and (249.42,157.5) .. (249.42,159) .. controls (249.42,160.5) and (248.2,161.71) .. (246.71,161.71) .. controls (245.21,161.71) and (244,160.5) .. (244,159) -- cycle ;

% Text Node
\draw (287.86,144.67) node [anchor=north west][inner sep=0.75pt]  [font=\normalsize,color={rgb, 255:red, 208; green, 2; blue, 27 }  ,opacity=1 ]  {$e_{1}^{+}$};
% Text Node
\draw (131.86,190.67) node [anchor=north west][inner sep=0.75pt]  [font=\normalsize,color={rgb, 255:red, 74; green, 144; blue, 226 }  ,opacity=1 ]  {$e_{1}^{-}$};
% Text Node
\draw (143.86,131.67) node [anchor=north west][inner sep=0.75pt]  [font=\normalsize,color={rgb, 255:red, 0; green, 0; blue, 0}  ,opacity=1 ]  {$\cD$};
% Text Node
\draw (187.86,87.67) node [anchor=north west][inner sep=0.75pt]  [font=\normalsize,color={rgb, 255:red, 74; green, 144; blue, 226 }  ,opacity=1 ]  {$e_{2}^{-}$};
% Text Node
\draw (235.86,231.67) node [anchor=north west][inner sep=0.75pt]  [font=\normalsize,color={rgb, 255:red, 208; green, 2; blue, 27 }  ,opacity=1 ]  {$e_{2}^{+}$};
% Text Node
\draw (209.86,154.67) node [anchor=north west][inner sep=0.75pt]  [color={rgb, 255:red, 0; green, 0; blue, 0 }  ,opacity=1 ]  {$s_{1}$};
% Text Node
\draw (243,163) node [anchor=north west][inner sep=0.75pt] [color={rgb, 255:red, 0; green, 0; blue, 0 }  ,opacity=1 ]  {$s_{2}$};
\end{tikzpicture}
\subcaption{Case (2)}
\end{subfigure}
\begin{subfigure}[H]{0.3\textwidth}
\centering

\tikzset{every picture/.style={line width=0.75pt}} %set default line width to 0.75pt        

\begin{tikzpicture}[x=0.75pt,y=0.75pt,yscale=-1,xscale=1]
%uncomment if require: \path (0,300); %set diagram left start at 0, and has height of 300

%Shape: Polygon Curved [id:ds3666341492368743] 
\draw   (212.86,106.79) .. controls (236.86,97.79) and (239.86,123.79) .. (247.86,145.79) .. controls (255.86,167.79) and (237.86,176.79) .. (239.86,195.79) .. controls (241.86,214.79) and (183.86,248.79) .. (169.86,225.79) .. controls (155.86,202.79) and (138.86,223.79) .. (123.86,190.79) .. controls (108.86,157.79) and (140.86,137.79) .. (148.86,118.79) .. controls (156.86,99.79) and (188.86,115.79) .. (212.86,106.79) -- cycle ;
%Shape: Circle [id:dp22821117150652803] 
\draw  [color={rgb, 255:red, 74; green, 144; blue, 226 }  ,draw opacity=1 ][fill={rgb, 255:red, 208; green, 2; blue, 27 }  ,fill opacity=1 ][line width=0.75]  (186,168) .. controls (186,166.5) and (187.21,165.29) .. (188.71,165.29) .. controls (190.2,165.29) and (191.42,166.5) .. (191.42,168) .. controls (191.42,169.5) and (190.2,170.71) .. (188.71,170.71) .. controls (187.21,170.71) and (186,169.5) .. (186,168) -- cycle ;
%Curve Lines [id:da7271795744163541] 
\draw [color={rgb, 255:red, 74; green, 144; blue, 226 }  ,draw opacity=1 ]   (123.86,190.79) .. controls (163.86,160.79) and (152,184) .. (186,170) ;
%Shape: Circle [id:dp12959051209302586] 
\draw  [color={rgb, 255:red, 74; green, 144; blue, 226 }  ,draw opacity=1 ][fill={rgb, 255:red, 255; green, 255; blue, 255 }  ,fill opacity=1 ][line width=0.75]  (121,191) .. controls (121,189.5) and (122.21,188.29) .. (123.71,188.29) .. controls (125.2,188.29) and (126.42,189.5) .. (126.42,191) .. controls (126.42,192.5) and (125.2,193.71) .. (123.71,193.71) .. controls (122.21,193.71) and (121,192.5) .. (121,191) -- cycle ;
%Curve Lines [id:da6008693565282317] 
\draw [color={rgb, 255:red, 208; green, 2; blue, 27 }  ,draw opacity=1 ]   (191,167) .. controls (214,147) and (228,155) .. (249,151) ;
%Shape: Circle [id:dp9286128855526717] 
\draw  [color={rgb, 255:red, 208; green, 2; blue, 27 }  ,draw opacity=1 ][fill={rgb, 255:red, 255; green, 255; blue, 255 }  ,fill opacity=1 ][line width=0.75]  (246,151) .. controls (246,149.5) and (247.21,148.29) .. (248.71,148.29) .. controls (250.2,148.29) and (251.42,149.5) .. (251.42,151) .. controls (251.42,152.5) and (250.2,153.71) .. (248.71,153.71) .. controls (247.21,153.71) and (246,152.5) .. (246,151) -- cycle ;
%Curve Lines [id:da8750619273939797] 
\draw [color={rgb, 255:red, 208; green, 2; blue, 27 }  ,draw opacity=1 ]   (210,227) .. controls (201.33,203.67) and (188,192.13) .. (188.71,170.71) ;
%Curve Lines [id:da6647562720404535] 
\draw [color={rgb, 255:red, 74; green, 144; blue, 226 }  ,draw opacity=1 ]   (165,109) .. controls (165,132.13) and (156,157.13) .. (158,173.13) .. controls (164,152.13) and (176,126.13) .. (199,139.13) ;
%Shape: Circle [id:dp9586304185165072] 
\draw  [color={rgb, 255:red, 208; green, 2; blue, 27 }  ,draw opacity=1 ][fill={rgb, 255:red, 255; green, 255; blue, 255 }  ,fill opacity=1 ][line width=0.75]  (207,226) .. controls (207,224.5) and (208.21,223.29) .. (209.71,223.29) .. controls (211.2,223.29) and (212.42,224.5) .. (212.42,226) .. controls (212.42,227.5) and (211.2,228.71) .. (209.71,228.71) .. controls (208.21,228.71) and (207,227.5) .. (207,226) -- cycle ;
%Shape: Circle [id:dp14880851752445368] 
\draw  [color={rgb, 255:red, 74; green, 144; blue, 226 }  ,draw opacity=1 ][fill={rgb, 255:red, 255; green, 255; blue, 255 }  ,fill opacity=1 ][line width=0.75]  (162,109) .. controls (162,107.5) and (163.21,106.29) .. (164.71,106.29) .. controls (166.2,106.29) and (167.42,107.5) .. (167.42,109) .. controls (167.42,110.5) and (166.2,111.71) .. (164.71,111.71) .. controls (163.21,111.71) and (162,110.5) .. (162,109) -- cycle ;
%Shape: Circle [id:dp010968078683743165] 
\draw  [color={rgb, 255:red, 74; green, 144; blue, 226 }  ,draw opacity=1 ][fill={rgb, 255:red, 208; green, 2; blue, 27 }  ,fill opacity=1 ][line width=0.75]  (198,141) .. controls (198,139.5) and (199.21,138.29) .. (200.71,138.29) .. controls (202.2,138.29) and (203.42,139.5) .. (203.42,141) .. controls (203.42,142.5) and (202.2,143.71) .. (200.71,143.71) .. controls (199.21,143.71) and (198,142.5) .. (198,141) -- cycle ;
%Curve Lines [id:da8797845860137088] 
\draw [color={rgb, 255:red, 208; green, 2; blue, 27 }  ,draw opacity=1 ]   (214,154.13) .. controls (214,144.13) and (205,151.13) .. (201.71,143.71) ;
%Curve Lines [id:da028270064310275678] 
\draw    (168,111.13) .. controls (167,129.13) and (166,136.13) .. (162,157.13) .. controls (167.79,135.89) and (182.89,126.77) .. (198.32,135.14) ;
\draw [shift={(200,136.13)}, rotate = 212.01] [fill={rgb, 255:red, 0; green, 0; blue, 0 }  ][line width=0.08]  [draw opacity=0] (7.2,-1.8) -- (0,0) -- (7.2,1.8) -- cycle    ;
%Curve Lines [id:da6326674983174845] 
\draw    (127,192.33) .. controls (162.46,164.75) and (154.26,188.6) .. (186.5,172.75) ;
\draw [shift={(188,172)}, rotate = 152.99] [fill={rgb, 255:red, 0; green, 0; blue, 0 }  ][line width=0.08]  [draw opacity=0] (7.2,-1.8) -- (0,0) -- (7.2,1.8) -- cycle    ;

% Text Node
\draw (255.86,138.67) node [anchor=north west][inner sep=0.75pt]  [font=\normalsize,color={rgb, 255:red, 208; green, 2; blue, 27 }  ,opacity=1 ]  {$e_{1}^{+}$};
% Text Node
\draw (99.86,184.67) node [anchor=north west][inner sep=0.75pt]  [font=\normalsize,color={rgb, 255:red, 74; green, 144; blue, 226 }  ,opacity=1 ]  {$e_{1}^{-}$};
% Text Node
\draw (111.86,125.67) node [anchor=north west][inner sep=0.75pt]  [font=\normalsize,color={rgb, 255:red, 0; green, 0; blue, 0 }  ,opacity=1 ]  {$\mathcal{D}$};
% Text Node
\draw (155.86,81.67) node [anchor=north west][inner sep=0.75pt]  [font=\normalsize,color={rgb, 255:red, 74; green, 144; blue, 226 }  ,opacity=1 ]  {$e_{2}^{-}$};
% Text Node
\draw (203.86,225.67) node [anchor=north west][inner sep=0.75pt]  [font=\normalsize,color={rgb, 255:red, 208; green, 2; blue, 27 }  ,opacity=1 ]  {$e_{2}^{+}$};
% Text Node
\draw (192.86,163.67) node [anchor=north west][inner sep=0.75pt]  [font=\normalsize,color={rgb, 255:red, 0; green, 0; blue, 0 }  ,opacity=1 ]  {$s_{1}$};
% Text Node
\draw (204,125.4) node [anchor=north west][inner sep=0.75pt]  [font=\normalsize,color={rgb, 255:red, 0; green, 0; blue, 0 }  ,opacity=1 ]  {$s_{2}$};

\end{tikzpicture}

\subcaption{Case (3)}
\end{subfigure}
    \caption{Sample figures for each case with $\sfd \cS_i=\{e_i^{-},e_i^{+}\}$ and $s_i\in Z^+$.}
    \label{Fig:unlinkedCrossingLine}
\end{figure}
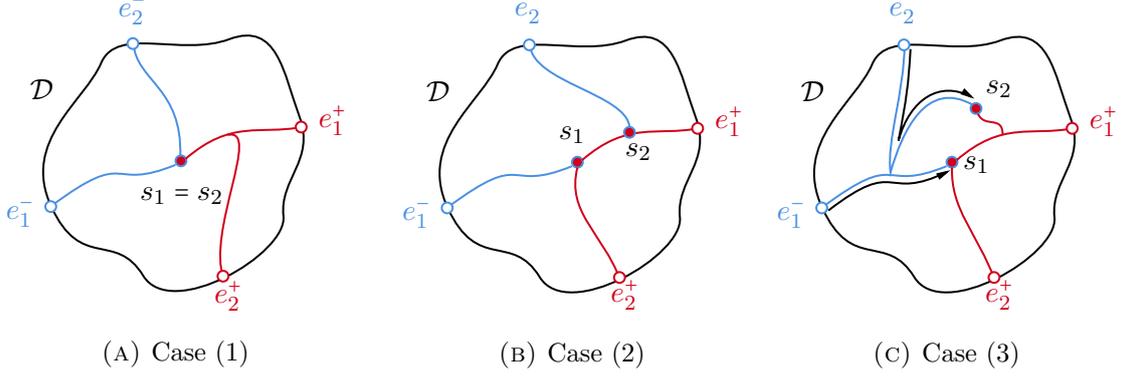

\begin{prop}[Unlinked adapted crossing lines]\label{Prop:unlinkedCrossingLine}
Let \(Z^\pm\) be a zipper for \(M\), and let \(\cD\) be a Jordan domain in \(S_\infty^2\). Suppose that \(\cS_1\) and \(\cS_2\) are adapted Jordan lines crossing \(\cD\), that \(\sfd \cS_1\) and \(\sfd \cS_2\) are linked in \(\partial \cD\), and that \(\cS_1^\alpha\) and \(\cS_2^\alpha\) are unlinked for each \(\alpha\in\{+,-\}\). Then one of the following holds:
\begin{enumerate}
    \item\label{Itm:sameSynapse} both \(\cS_1\) and \(\cS_2\) are mixed and have the same synapse;
    \item\label{Itm:oneSideLand} both \(\cS_1\) and \(\cS_2\) are mixed, and their synapses \(s_1,s_2\) are distinct points of \(Z^\alpha\) for some \(\alpha\in\{+,-\}\). Moreover, \(\cS_1^\alpha\cap \cS_2^\alpha=[s_1,s_2]\), and, writing \(\beta\neq\alpha\), there are rays in \(\cS_1^\beta\) and \(\cS_2^\beta\) landing at \(s_2\) and \(s_1\), respectively, on the same side of the line \(\cS_1^\alpha\cup \cS_2^\alpha\);
    \item\label{Itm:bothLand} there exist \(j\in \ZZ/2\ZZ\) and \(\alpha\in\{+,-\}\) such that \(\cS_j\) is mixed with synapse in \(Z^\alpha\), and \(\cS_{j+1}^\alpha\) admits two rays contained in \(\cS_j^\pm\) landing on opposite sides of \(\cS_{j+1}^\alpha\). In particular, \(\cS_{j+1}^\alpha\) is two-sided accessible.
\end{enumerate}
\end{prop}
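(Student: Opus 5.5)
The starting point is that $\cS_1$ and $\cS_2$ are Jordan lines crossing $\cD$ whose endpoint pairs are linked in $\partial\cD$. By the Jordan curve theorem (the elementary proposition above, applied to the Jordan curve $\closure{\cS_1}\cup$ one arc of $\partial\cD$), the closures $\closure{\cS_1}$ and $\closure{\cS_2}$ must meet. Since $\Int\cS_i\subset\cD$ and the endpoints $\sfd\cS_i$ lie on $\partial\cD$ and are distinct (being linked), we get $\cS_1\cap\cS_2\neq\varnothing$. Every point of $\cS_1\cap\cS_2$ lies in $Z^+$, in $Z^-$, or is a synapse. Because $Z^+$ and $Z^-$ are disjoint, we can split into cases according to where the intersection points sit relative to the decompositions $\cS_i=\cS_i^-\cup\{s_i\}\cup\cS_i^+$ (with $s_i$ absent if $\cS_i$ is pure).

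\textbf{Case A: the synapses coincide.} If both $\cS_i$ are mixed and $s_1=s_2$, we are in \refitm{sameSynapse}.

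\textbf{Case B: otherwise.} Pick $\alpha$ with $\cS_1^\alpha\cap\cS_2^\alpha\neq\varnothing$; if the only common points were synapses, they would match some $\cS_j^\alpha$ point or coincide, which reduces to Case A or to this case. Unique path-connectivity in $Z^\alpha$ makes the intersection $\cS_1^\alpha\cap\cS_2^\alpha$ a connected subsegment $\sigma$. The hypothesis that the $\alpha$-segments are unlinked then means that, near each end of $\sigma$, the continuations of $\cS_1^\alpha$ and $\cS_2^\alpha$ leave on the same side. In other words, $\cS_1^\alpha\cup\cS_2^\alpha$ is locally a tree that does not cross itself transversally.

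To produce the crossing that linkedness of $\sfd\cS_1,\sfd\cS_2$ forces, look along $\closure{\cS_1}$ and $\closure{\cS_2}$ past $\sigma$. Each end of $\sigma$ is either an interior branching of $Z^\alpha$ (where the segments split off on the same side by unlinkedness), or a synapse $s_j$, or an endpoint on $\partial\cD$. At a branching end, if both $\cS_i$ continued in $Z^\alpha$ on both sides, the four ends would alternate consistently with unlinkedness, so the endpoint pairs would be unlinked, a contradiction. Hence at least one end of $\sigma$ is a synapse, and the $\beta$-part ($\beta\neq\alpha$) of that $\cS_j$ must carry the crossing.

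Because $\cS_j^\beta$ is disjoint from $Z^\alpha$, it can only approach $\cS_{j+1}^\alpha$ by landing on it. It reaches $\cS_{j+1}^\alpha$ via the ray $\cS_j^\beta$ landing at the synapse $s_j$, on some side.

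From here the subcases separate by how many of the two $\beta$-rays land on the line $L=\cS_1^\alpha\cup\cS_2^\alpha$ (or on $\cS_{j+1}^\alpha$):
\begin{itemize}
\item If both $\cS_i$ are mixed with synapses in $Z^\alpha$ and $\sigma=[s_1,s_2]$, each $\cS_i^\beta$ lands at $s_i$, which lies on the other $\alpha$-segment. Linkedness of the endpoints in $\partial\cD$ then forces the two $\beta$-rays into the same complementary component of $L$ in $\cD$, giving \refitm{oneSideLand}. If they landed on opposite sides, a Jordan-curve argument on the domains $\cD^L(L)$ and $\cD^R(L)$ would show the endpoint pairs are unlinked.
\item If only one synapse lies on $\cS_{j+1}^\alpha$, then $\cS_j^\alpha$ and $\cS_j^\beta$ together must pass from one side of $\cS_{j+1}^\alpha$ to the other. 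Since $\cS_j^\alpha$ is unlinked with $\cS_{j+1}^\alpha$, the crossing has to be effected by $\beta$-pieces landing on both sides. This yields two rays in $\cS_j^\pm$ landing on opposite sides of $\cS_{j+1}^\alpha$, which is \refitm{bothLand}.
\end{itemize}

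\refprop{enclosing} is the tool for pinning down which side a ray lands on, and where the truncated rays sit. Apply it to the Jordan subdomains of $\cD$ cut out by $\cS_{j+1}$, with $\cI=\cS_{j+1}^\alpha$.

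The main obstacle is this last side-bookkeeping: carefully converting linkedness of $\sfd\cS_1$ and $\sfd\cS_2$ in $\partial\cD$ into the landing-side conclusions, and excluding degenerate configurations where the intersection is just a synapse meeting a pure segment. I would handle it by working in $\cD^L(\cS_{j+1})$ and $\cD^R(\cS_{j+1})$ and tracking in which of these the ends of $\cS_j$ lie.
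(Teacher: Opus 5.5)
There is a genuine gap in Case~B. You fix an arbitrary $\alpha$ with $\cS_1^\alpha\cap\cS_2^\alpha\neq\varnothing$ and then reason as if the linking of $\sfd\cS_1$ and $\sfd\cS_2$ had to be realized at that one component $\sigma$. But $\cS_1\cap\cS_2$ can meet $Z^+$ and $Z^-$ simultaneously, and a type~2 synapse of one line can sit on the opposite-sign segment of the other. The crossing forced by linkedness may then happen at a different site, with $\sigma$ only a touching.

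Two configurations satisfying all the hypotheses show that both of your key inferences fail.
\begin{itemize}
\item Take $\cS_2$ vertical with $\cS_2^-$ on top. Let $\cS_1^-$ come in from the west, run along a subarc of $\cS_2^-$, leave again to the west, and land at its synapse $s_1\in\cS_2^+$ from the west; let $\cS_1^+$ leave $s_1$ to the east. The endpoints are linked and each pair of $\alpha$-segments is unlinked. Yet for $\alpha=-$ both ends of $\sigma$ are branch points, so ``hence at least one end of $\sigma$ is a synapse'' is false.
\item Take $\cS_1$ horizontal with $s_2\in\cS_1^-$ and $s_1\in\cS_2^+$. Let $\cS_2^-$ arrive at $s_2$ from the north, and let $\cS_2^+$ leave $s_2$ southward, come back up to touch $\cS_1$ only at $s_1$, and exit to the south-east. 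With $\alpha=+$, your second bullet asserts that $\cS_1^-$ and $\cS_1^+$ lie on opposite sides of $\cS_2^+$, but they lie on the same side. The true conclusion is (3) with $(j,\alpha)=(2,-)$.
\end{itemize}
In the same way, your remark that $\cS_j^\beta$ ``can only approach $\cS_{j+1}^\alpha$ by landing on it'' overlooks that $\cS_j^\beta$ may cross $\cS_{j+1}$ through $\cS_{j+1}^\beta$.

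What is missing is the global bookkeeping done before any local analysis. When both lines are mixed with $s_1\neq s_2$:
\begin{itemize}
\item First show that some synapse lies on the other line; otherwise, by disjointness of $Z^\pm$, some pair $\cS_1^\alpha,\cS_2^\alpha$ would be linked.
\item Then split into three cases: both synapses on the other line with the same sign (giving (2), essentially your first bullet), both with different signs, and exactly one.
\item In the different-sign case $s_1\in\cS_2^+$, $s_2\in\cS_1^-$, Proposition~\ref{Prop:enclosing} places $\cS_2^-$ on one side of $\cS_1^-$. You must then ask on which side the end ray of $\cS_2^+$ lands at $s_2$. If it is the opposite side, (3) holds with $(j,\alpha)=(2,-)$. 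If it is the same side, the crossing must occur at $s_1$: apply Proposition~\ref{Prop:enclosing} to the two end rays of $\cS_2^+$ from $s_1$, relative to $\cD^L(\cS_1)$ and $\cD^R(\cS_1)$. This yields $\cS_1^+\cap\cS_2^+=[s_1,u]$ and the two landing rays.
\end{itemize}
Note also that in (3) the two rays are not both $\beta$-pieces, since $\cS_j^\beta$ is a single ray. One ray is $\cS_j^\beta$ landing at $s_j$; the other is the truncation of $\cS_j^\alpha$ at the far end of $\cS_j^\alpha\cap\cS_{j+1}^\alpha$. Showing that these land on opposite sides is exactly where the linkedness of the endpoints must be used, not merely asserted.
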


\begin{proof}
Since \(\sfd \cS_1\) and \(\sfd \cS_2\) are linked in \(\partial \cD\), the lines \(\cS_1\) and \(\cS_2\) meet in \(\cD\). Hence they cannot both be pure: otherwise both would lie in the same \(Z^\alpha\), and then \(\cS_1^\alpha\) and \(\cS_2^\alpha\) would be linked, contrary to hypothesis. Thus at least one of \(\cS_1,\cS_2\) is mixed.

Assume first that both \(\cS_1\) and \(\cS_2\) are mixed. Let \(s_i\) be the synapse of \(\cS_i\), and write \(\sfd \cS_i=\{e_i^+,e_i^-\}\), where \(e_i^\pm\) are the ends of some end rays of \(\Int \cS_i^\pm\). If \(s_1=s_2\), then we are in Case~\ref{Itm:sameSynapse}. Thus assume \(s_1\neq s_2\).

If neither \(s_1\) lies on \(\cS_2\) nor \(s_2\) lies on \(\cS_1\), then by the disjointness of \(Z^\pm\), the segments \(\cS_1^\alpha\) and \(\cS_2^\alpha\) are linked for some \(\alpha\in\{+,-\}\), contradicting the assumption. Hence there are two cases:
\begin{enumerate}[label=(\alph*)]
    \item\label{Case:bothSynapses} \(s_1\in \cS_2\) and \(s_2\in \cS_1\);
    \item\label{Case:oneSynapse} \(s_i\in \cS_{i+1}\) and \(s_{i+1}\notin \cS_i\) for some \(i\in \ZZ/2\ZZ\).
\end{enumerate}

In Case~\ref{Case:bothSynapses}, there are again two subcases:
\begin{enumerate}[label=(\roman*)]
    \item\label{Case:differentSigns} \(s_1\) and \(s_2\) lie in different sets \(Z^\pm\);
    \item\label{Case:sameSign} \(s_1\) and \(s_2\) lie in the same \(Z^\alpha\).
\end{enumerate}

In Case~\ref{Case:bothSynapses}--\ref{Case:differentSigns}, by symmetry we may assume \(s_1\in \cS_2^+\) and \(s_2\in \cS_1^-\). Since \(\sfd \cS_1\) and \(\sfd \cS_2\) are linked in \(\partial\cD\), after relabelling we may assume \(e_1^-<e_2^+<e_1^+<e_2^-\). Applying Proposition~\ref{Prop:enclosing} to \(\cS_2^-\) relative to \(\cD^L(\cS_1)\), we obtain a ray in \(\cS_2^-\) landing on the left side of \(\cS_1^-\). If the end ray of \(\cS_2^+\) from \(s_1\) to \(s_2\) lands on the right side of \(\cS_1^-\), then we are in Case~\ref{Itm:bothLand} with \(\alpha=-\) and \(j=2\).

Otherwise, that ray lands on the left side of \(\cS_1^-\). Let \(\sft_{\mathrm{in}}\) and \(\sft_{\mathrm{out}}\) be the end rays of \(\cS_2^+\) starting from \(s_1\) and landing at \(s_2\) and \(e_2^+\), respectively. Applying Proposition~\ref{Prop:enclosing} to \(\sft_{\mathrm{in}}\) relative to \(\cD^L(\cS_1)\), and to \(\sft_{\mathrm{out}}\) relative to \(\cD^R(\cS_1)\), we obtain points \(u_{\mathrm{in}},u_{\mathrm{out}}\in \cS_1^+\) such that \(\sft_{\mathrm{in}}\cap \cS_1^+=[u_{\mathrm{in}},s_1]\) and \(\sft_{\mathrm{out}}\cap \cS_1^+=[u_{\mathrm{out}},s_1]\), and the corresponding truncated rays crossing  \(\cD^L(\cS_1)\) and \(\cD^R(\cS_1)\), respectively.
Since \(s_1\in [u_{\mathrm{in}},u_{\mathrm{out}}]\subset \cS_2^+\) and \(\cS_1^+\) and \(\cS_2^+\) are unlinked, at least one of \(u_{\mathrm{in}},u_{\mathrm{out}}\) is equal to \(s_1\).
Write \([u_{\mathrm{in}},u_{\mathrm{out}}]=[s_1,u]\).
Hence \(\cS_1^-\) contains an end ray landing at \(s_2\) on one side of \(\cS_2^+\), while \(\cS_1^+\) contains an end ray landing at $u$ on the opposite side of \(\cS_2^+\). Thus Case~\ref{Itm:bothLand} holds.

In Case~\ref{Case:bothSynapses}--\ref{Case:sameSign}, by symmetry we may assume \(\{s_1,s_2\}\subset Z^+\). Then \(\cS_1^+\cap \cS_2^+=[s_1,s_2]\), and \(\fS^+=\cS_1^+\cup \cS_2^+\) is a line in \(Z^+\) crossing \(\cD\). Orient \(\fS^+\) compatibly with \(\cS_1^+\). If \(e_1^-\in \opi[\partial \cD]{e_1^+}{e_2^+}\), then \(\cS_1^-\) crosses \(\cD^L(\fS^+)\), so an end ray of \(\cS_1^-\) lands at \(s_1\) on the left side of \(\fS^+\). Since \(\sfd \cS_1\) and \(\sfd \cS_2\) are linked, we also have \(e_2^-\in \opi[\partial \cD]{e_1^+}{e_1^-}\), and hence  \(\cS_2^-\) lands at \(s_2\) on the same side of \(\fS^+\). The other cyclic order is symmetric. Thus Case~\ref{Itm:oneSideLand} holds. 
See \reffig{unlinkedCrossingLine} (middle).

In Case~\ref{Case:oneSynapse}, by symmetry we may assume  \(s_1\in \cS_2^+\) and \(s_2\in \cD^L(\cS_1)\).
See \reffig{unlinkedCrossingLine} (right).
Then \(e_2^-\in \opi[\partial \cD]{e_1^+}{e_1^-}\), for otherwise \(\cS_2^-\) and \(\cS_1^-\) would be linked. Hence \(e_2^+\in \opi[\partial \cD]{e_1^-}{e_1^+}\). Let \(\sfq_{\mathrm{in}}\) and \(\sfq_{\mathrm{out}}\) be the end rays of \(\Int \cS_2^+\) starting from \(s_1\) and landing at \(s_2\) and \(e_2^+\), respectively. Applying Proposition~\ref{Prop:enclosing} to these rays relative to \(\cD^L(\cS_1)\) and \(\cD^R(\cS_1)\), respectively, we obtain points \(w_{\mathrm{in}},w_{\mathrm{out}}\in \cS_1^+\) such that the corresponding truncated rays lie in the indicated domains. Since \(s_1\in [w_{\mathrm{in}},w_{\mathrm{out}}]\subset \cS_1^+\), at least one of \(w_{\mathrm{in}},w_{\mathrm{out}}\) is equal to \(s_1\).
Write $[w_{\mathrm{in}},w_{\mathrm{out}}]=[s_1,w]$.
It follows that \(\cS_1^-\) and \(\cS_1^+\) contain rays landing at $s_1$ and $w$, respectively, on opposite sides of \(\cS_2^+\). Thus Case~\ref{Itm:bothLand} holds.

Finally, suppose that exactly one of \(\cS_1,\cS_2\) is mixed. By symmetry, assume \(\cS_1\) is mixed with synapse \(s\), while \(\cS_2\) is pure. Then \(s\in \cS_2\), since otherwise \(\cS_2\) would be linked with one of \(\cS_1^+\) or \(\cS_1^-\). Again by symmetry, we may assume \(\cS_2=\cS_2^+\). Then \(\cS_1^+\cap \cS_2=[s,x]\) for some \(x\in \cS_2\), and a subray of \(\cS_1^+\) lands at \(x\) on one side of \(\cS_2\), while an end ray of \(\cS_1^-\) lands at \(s\) on the other side. Hence Case~\ref{Itm:bothLand} holds.
\end{proof}

\section{Eclipses}\label{Sec:eclipses}

Let \(Z^\pm\) be a minimal zipper for \(M\). To prove \refthm{tameFix}, it suffices, by symmetry, to show that there is no element \(g\in G\) with \(r(g)\in Z^+\) and \(a(g)\notin Z^-\). As indicated in \refsec{proofOutline}, such an element would determine a certain configuration in \(Z^\pm\), which we call an \emph{eclipse}. In this section we explain how an eclipse arises and establish its basic properties. In the following sections we show that the existence of an eclipse would force an absurd configuration of points; in this sense, the eclipse is only an illusion. This completes the proof of \refthm{tameFix}.

\subsection{Prongs and Fences}
For an element $g$ in $G$,  a $g$-invariant ray in a zipper is called a \emph{$g$-prong}.
When exactly one of $Z^\pm$ contains a fixed point $p$ of an element $g$, there exists at most one $g$-prong that starts at $p$:

\begin{prop}[Unique invariant prong]\label{Prop:oneProng}
Let \(Z^\pm\) be a zipper for \(M\). Suppose that \(g\in G\) fixes a point \(p\in Z^+\) and that \(\Fix(g)\cap Z^-=\varnothing\). Then at most one \(g\)-prong in \(Z^+\) starts at \(p\) and lands at the other fixed point \(q\) of \(g\).
\end{prop}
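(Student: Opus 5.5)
Suppose, for contradiction, that there are two distinct $g$-prongs $\gamma_1,\gamma_2\subset Z^+$, both starting at $p$ and landing at $q$. The plan is to build from them a $g$-invariant Jordan curve $\cJ$ and then use a point of $Z^-$ together with the dynamics of $g$ to force a fixed point of $g$ in $Z^-$.

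First I will show that the two prongs are disjoint away from $p$. Since both are $g$-invariant rays from $p$, unique path-connectivity in $Z^+$ gives $\gamma_1\cap\gamma_2=[p,x]$ for some $x$. This set is $g$-invariant, so $x$ is fixed by $g$. The only fixed points of $g$ are $p$ and $q$. A ray landing at $q$ does not contain $q$ unless $q\in Z^+$, and even then the ray is $[p,q)$, so $q\notin\gamma_i$. Hence $x=p$, and $\gamma_1\cap\gamma_2=\{p\}$. Their closures in $S^2_\infty$ are Jordan arcs from $p$ to $q$ that meet only at the endpoints, so $\cJ=\closure{\gamma_1}\cup\closure{\gamma_2}$ is a $g$-invariant Jordan curve with $\cJ\setminus\{q\}\subset Z^+$.

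Next I will use the complementary Jordan domains $\cD_1,\cD_2$ of $\cJ$. Since $g$ preserves orientation, fixes $p$ and $q$, and preserves each arc $\closure{\gamma_i}$, it preserves each $\cD_i$. Now $Z^-$ is nonempty, path-connected, and disjoint from $\cJ\setminus\{q\}$. If $q\notin Z^-$, then $Z^-$ lies entirely in one domain, say $\cD_1$. If $q\in Z^-$, then $g$ fixes a point of $Z^-$, which contradicts the hypothesis $\Fix(g)\cap Z^-=\varnothing$. So $Z^-\subset\cD_1$, and $Z^-$ is $g$-invariant.

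Finally I will get the contradiction as follows. Since $g$ acts freely on $Z^-$, \refthm{invHypAxis} gives an axis $\ell\subset Z^-$. By \reflem{invRayLand}, each end ray of $\ell$ lands at a fixed point of $g$. The two end rays of an axis go to $a(g)$ and $r(g)$ under forward and backward iteration, so $\closure{\ell}$ is a Jordan arc joining $p$ to $q$ whose interior lies in $\cD_1$. This arc is disjoint from $Z^+$, so it splits $\cD_1$. That alone is not a contradiction, so I need an extra ingredient, and I expect this to be the main obstacle.

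For this step I would invoke the $G$-invariance of $Z^-$ and the minimality of the action on $S^2_\infty$. The orbit $G\cdot\ell$ must meet $\cD_2$, because the translates $hZ^-=Z^-$ stay inside $\cD_1$, while $Z^-$ is $G$-invariant and every $G$-orbit is dense in $S^2_\infty$. Concretely, choose $h\in G$ and $z\in Z^-$ with $h z\in\cD_2$. Since $hz\in Z^-\subset\cD_1$, this is impossible because $\cD_1\cap\cD_2=\varnothing$.

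So density of the orbit of $z$ already contradicts $Z^-\subset\cD_1$, provided $\cD_2$ is a nonempty open set. It is, being a Jordan domain. This shows that this step is actually easy: the axis argument is unnecessary, and the contradiction comes directly from $G$-invariance and minimality of the $G$-action on $S^2_\infty$ (every orbit is dense). The real care will go into the first step, ruling out that the prongs share an initial segment or that $q$ lies on a prong.
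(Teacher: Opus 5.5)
Your proof is correct and is essentially the paper's argument. The curve $\closure{\gamma_1}\cup\closure{\gamma_2}$ is a Jordan curve meeting $Z^-$ at most in $q$, so path-connectedness and density of $Z^-$ force $q\in Z^-$, contradicting $\Fix(g)\cap Z^-=\varnothing$; the paper instead picks $x,y\in Z^-$ in different complementary domains and notes that $[x,y]$ must cross the curve at $q$, which is your argument run in the other direction.

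Your explicit check that the prongs meet only at $p$, which the paper leaves implicit, is a welcome addition. Strictly, the intersection could a priori be a whole prong if one contained the other, but the same invariance-and-landing reasoning rules this out. The axis detour and the remark that $g$ preserves each $\cD_i$ are unnecessary and can be deleted.
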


\begin{proof}
Suppose that there are two distinct \(g\)-prongs \(r_1\) and \(r_2\) in \(Z^+\) starting at \(p\). Since both land at \(q\), the set \(r_1\cup r_2\cup\{q\}\) is a Jordan curve \(\cJ\); see \reflem{invRayLand}. Since the \(G\)-action on \(S_\infty^2\) is minimal, \(Z^-\) is dense in \(S_\infty^2\). Hence we may choose points \(x,y\in Z^-\) lying in different Jordan domains of \(\cJ\). The arc \([x,y]\subset Z^-\) must then meet \(\cJ\). By the disjointness of \(Z^+\) and \(Z^-\), the intersection point can only be \(q\). Thus \(q\in Z^-\), and since \(q\in \Fix(g)\), this gives \(q\in \Fix(g)\cap Z^-\), contrary to the assumption.
\end{proof}

A $g$-invariant prong gives rise to an $g$-invariant fence  of $Z^\pm$:

\begin{prop}[Invariant fence]\label{Prop:invFence}
Let \(Z^\pm\) be a zipper for \(M\). Assume that \(g\in G\) acts freely on \(Z^-\). If \(g\) admits a \(g\)-prong \(r\) in \(Z^+\), then the union of \(r\), the axis \(\ell\) of \(g\) in \(Z^-\), and \(\Fix(g)\) is a \(g\)-invariant fence \(\cJ\). Its nodes are precisely the points of \(\Fix(g)\); moreover, \(\ell\) is the \((-)\)-side of \(\cJ\), and the closure of \(r\) in \(Z^+\) is the \((+)\)-side of \(\cJ\). In particular, if \(B\) is a branch at a fixed point of \(g\) that is disjoint from \(r\), then \(B\) is not preserved by \(g\), and \(g(B)\cap B=\varnothing\).
\end{prop}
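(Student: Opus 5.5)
The proof assembles the fence from three pieces and checks them one at a time. Since \(g\) acts freely on \(Z^-\), \refthm{invHypAxis} gives a unique \(g\)-invariant proper line \(\ell\subset Z^-\), the axis. By \reflem{invRayLand}, each end ray of \(\ell\) lands at a fixed point of \(g\).

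Next consider the \(g\)-prong \(r\subset Z^+\). Its topological end is preserved by \(g\), so by \reflem{invRayLand} (or \refprop{invBranch}) it lands at a fixed point. We first locate its starting point. The starting point of a \(g\)-invariant ray is fixed, since \(g(r)=r\) forces \(g(\partial r)=\partial r\). So \(r\) starts at a fixed point \(p\in Z^+\) and lands at the other fixed point \(q\). It cannot land back at \(p\), because then its closure would be a Jordan curve inside \(Z^+\cup\{p\}\), contradicting unique path-connectivity. The closure of \(r\) in \(Z^+\) is \(r\) itself if \(q\notin Z^+\), or \([p,q]\) if \(q\in Z^+\).

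The key claim is that the two end rays of \(\ell\) land at distinct fixed points. Suppose instead that both land at the same point \(x\in\Fix(g)\). Then \(\ell\cup\{x\}\) is a Jordan curve \(\cC\). Since \(Z^+\) is dense, we can pick points of \(Z^+\) in both complementary domains of \(\cC\). The arc joining them in \(Z^+\) must meet \(\cC\), and disjointness forces it to pass through \(x\), so \(x\in Z^+\). Now choose points of \(Z^-\) separated by \(\cC\) in a similar way, or use the prong. The subray of \(r\) near its landing point, together with the structure at \(x\), should give a contradiction with invariance. This is the step I expect to be the main obstacle.

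My first attempt at it goes through \(\ell\) itself. The map \(g\) acts on \(\ell\) as a translation, so the two ends of \(\ell\) correspond to attraction and repulsion under iteration. Take a point \(y\in\ell\). Then \(g^n(y)\) converges in \(S^2_\infty\) to \(a(g)\) as \(n\to\infty\) and to \(r(g)\) as \(n\to-\infty\), by loxodromic dynamics (\(y\) is not fixed). These iterates move to opposite ends of \(\ell\). Hence the two ends land at \(a(g)\) and \(r(g)\) respectively, which are distinct. With this, no Jordan-curve argument is needed.

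Now assemble the curve. Let \(\cJ=r\cup\ell\cup\Fix(g)\). The pieces \(\closure{\ell}=\ell\cup\{p,q\}\) and \(\closure{r}=r\cup\{q\}\) are Jordan arcs with common endpoints \(p,q\). Their interiors are disjoint because \(Z^+\cap Z^-=\varnothing\). So \(\cJ\) is a Jordan curve. It is \(g\)-invariant, and \(\cJ\setminus\{p,q\}\) consists of \(\Int r\) (or \(r\setminus\{p\}\) appropriately) and \(\ell\). Hence \(\cJ\) is a fence with nodes \(\Fix(g)\), \((-)\)-side \(\ell\), and \((+)\)-side the closure of \(r\) in \(Z^+\), which includes \(p\) and includes \(q\) exactly when \(q\in Z^+\).

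For the final claim, let \(B\) be a branch at a fixed point \(x\in Z^+\) with \(B\cap r=\varnothing\). Since \(g\) permutes the branches at \(x\), either \(g(B)=B\) or \(g(B)\cap B=\varnothing\). Suppose \(g(B)=B\). Then \refprop{invBranch} gives a landing ray \(\gamma\subset B\cup\{x\}\) from \(x\) to the other fixed point. This \(\gamma\) is \(g\)-invariant, being the unique such ray from the axis or segment construction, so it is a \(g\)-prong distinct from \(r\) (or from the reverse prong, in case \(x=q\in Z^+\)). The closures of \(\gamma\) and the relevant part of \(r\) then bound a Jordan curve in \(Z^+\cup\Fix(g)\). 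Exactly as in \refprop{oneProng}, density of \(Z^-\) forces the arc in \(Z^-\) between separated points to cross this curve at a fixed point lying in \(Z^-\). That contradicts \(g\) acting freely on \(Z^-\). Therefore \(g(B)\neq B\), and so \(g(B)\cap B=\varnothing\).
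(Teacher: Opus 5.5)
Your proof is correct and takes essentially the paper's approach. You build the fence from the Levitt axis \(\ell\subset Z^-\) and the prong. Your loxodromic-dynamics argument that the two ends of \(\ell\) land at \(a(g)\neq r(g)\) is just \reflem{invRayLand} applied to \(g\) and \(g^{-1}\), so the unfinished Jordan-curve digression before it can be deleted. For the final claim, \refprop{invBranch} yields a second prong, which you rule out by rerunning the argument of \refprop{oneProng} instead of citing it; your explicit handling of the case \(x=q\in Z^+\) via the reversed prong is a detail the paper leaves implicit.
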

\begin{proof}
The description of \(\cJ\) follows immediately from \refthm{invHypAxis} and \reflem{invRayLand}.

For the final statement, let \(B\) be a branch at a fixed point \(p\in Z^+\) such that \(B\cap r=\varnothing\). If \(g(B)=B\), then \refprop{invBranch} gives a \(g\)-prong \(\gamma\subset B\cup \{p\}\) starting at \(p\). Since \(r\) is another \(g\)-prong starting at \(p\), this contradicts \refprop{oneProng}. Hence \(B\) is not \(g\)-invariant.

If \(g(B)\cap B\neq\varnothing\), then \(g(B)=B\), since distinct branches at a point are disjoint. This contradiction shows that \(g(B)\cap B=\varnothing\).
\end{proof}

\begin{rmk}\label{Rmk:denseType2}
    We denote the fence $\cJ$ in \refprop{invFence} by $\cJ(g)$ and call it the \emph{$g$-invariant fence}.
    Note that at least one node of $\cJ(g)$ is a synapse of type 2 in $Z^+$.
    Hence, the set of all synapses of type 2 in $Z^+$ is dense in $S_\infty^2$ by the minimality of the $G$-action.
\end{rmk}

\subsection{Pruning Fences}
We then show that the minimality of the zipper forces the \((+)\)-side of an invariant fence to be either inaccessible or at least one-sided inaccessible. This, in turn, implies that the invariant fence is unlinked from its translates under the \(G\)-action; see \refsec{proofOutline}.

The following lemma is the main tool for detecting this inaccessibility:
\begin{lem}[Pruning lemma]\label{Lem:pruning}
   Let $Z^\pm$ be a zipper, and let $p$ be a point of $Z^+$. Assume that $p$ is not a hard end, and let $B$ be a branch at $p$.
    If there is no connector $\gamma:[0,1]\to S_\infty^2$ such that $p\nin \gamma([0,1])$ and $\gamma(1)\in B$, then
    $Z^+\setminus \bigcup_{g\in G}g(B)$, together with $Z^-$, forms another zipper. In particular, $Z^+$ is not minimal.
\end{lem}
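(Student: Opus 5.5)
Set \(W^+=Z^+\setminus \bigcup_{g\in G}g(B)\) and \(W^-=Z^-\). We check the three zipper axioms for \((W^+,W^-)\), and then observe that \(W^+\subsetneq Z^+\), which contradicts minimality. Disjointness is immediate, and \(G\)-invariance holds because \(\bigcup_g g(B)\) is \(G\)-invariant. The point \(p\) itself need not lie in \(W^+\), since \(p\) may lie in some translate \(g(B)\). So the real content is that \(W^+\) is nonempty and path-connected.

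The key step is a structural claim: for \(g,h\in G\), either \(g(B)\) and \(h(B)\) are disjoint, or one contains the other. Each \(g(B)\) is a branch at \(g(p)\), i.e. a component of \(Z^+\setminus\{g(p)\}\). Suppose \(g(B)\) and \(h(B)\) meet but are not nested. Then \(g(p)\in h(B)\) and \(h(p)\in g(B)\), so each branch contains the other's basepoint, and together they cover \(Z^+\). I would rule this out with the connector hypothesis, in translated form: no connector avoiding \(g(p)\) ends in \(g(B)\). Since synapses are dense by \refprop{bridgeExist}, there exists a connector \(\gamma\) with \(\gamma(1)\in Z^+\). Choose such a \(\gamma\) whose image avoids at least one of \(g(p),h(p)\); this can be arranged by truncating the \(Z^+\)-part, or by taking a translate if needed. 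Its endpoint lies in \(g(B)\cup h(B)=Z^+\). Truncating along its \(Z^+\)-part, we obtain a connector ending inside \(g(B)\) and avoiding \(g(p)\), or the analogous one for \(h\). This is a contradiction.

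The same argument, applied to a single translate, gives more. Pick any connector \(\gamma\) with \(\gamma(1)\in Z^+\). Its \(Z^+\)-part \(\gamma((1/2,1])\) meets no translate \(g(B)\) unless it passes through \(g(p)\). If it passes through \(g(p)\), truncate \(\gamma\) before its first hit of a basepoint \(g(p)\) with \(g(B)\) meeting the arc. In this way we produce a point of \(Z^+\), namely the synapse side near \(\gamma(1/2)\), that lies in no translate of \(B\). Hence \(W^+\neq\varnothing\).

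For path-connectedness, take \(x,y\in W^+\) and the arc \([x,y]\subset Z^+\). If the interior of \([x,y]\) enters some \(g(B)\), it must do so through \(g(p)\). Then \(x\) and \(y\) lie outside \(g(B)\), and \(g(B)\) is a single component of \(Z^+\setminus\{g(p)\}\), so the arc exits through \(g(p)\) again. This contradicts injectivity of the arc. Hence \([x,y]\subset W^+\).

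Finally, properness. Since \(p\) is not a hard end, \(Z^+\setminus\{p\}\) has another branch, so \(B\neq Z^+\). Moreover \(B\) is open and nonempty, so \(W^+\neq Z^+\). Also \(W^+\) is a proper subset that is closed under convex hulls. If \(Z^\pm\) were minimal, the convex hull of the orbit of a point of \(W^+\) would equal \(Z^+\), which is impossible. I expect the main obstacle to be the nonemptiness and nesting step. There one must carefully produce a connector, from density of synapses and the type~2 synapses of \refrmk{denseType2}, that avoids the relevant basepoints. That requires control over where the \(Z^+\)-part of a connector can pass.
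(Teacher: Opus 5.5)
Most of your reduction is sound. Disjointness and $G$-invariance are clear. Your exit-through-$g(p)$ argument correctly shows that $W^+$ is convex, since each $Z^+\setminus g(B)$ is a subtree. Properness and the minimality contradiction are fine. Your nested-or-disjoint claim can also be made rigorous: truncate the $Z^+$-part of a connector before it meets $g(p)$ or $h(p)$ (each is met at most once), and use openness of branches if the synapse is one of them. That is essentially the paper's observation that $B\cup k(B)=Z^+$ is impossible.

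The genuine gap is nonemptiness of $W^+$. Nested-or-disjoint does not exclude strict nesting $k(B)\subsetneq B$, i.e.\ an element $k$ with $k(p)\in B$. You concede this when you say $p$ may lie in a translate, and indeed $W^+=\varnothing$ forces $g^{-1}(p)\in B$ for some $g$. In that regime your ``first hit'' need not exist, in two ways:
\begin{itemize}
\item Forward-pointing basepoints $g_n(p)=\gamma(t_n)$ with $t_n\downarrow 1/2$ can accumulate at the synapse end of the $Z^+$-part. Consecutive ones give nested translates $g_n(B)\subsetneq g_{n+1}(B)$, so $g_{n+1}^{-1}g_n(p)\in B$.
\item A type~2 synapse can itself be a basepoint $g(p)$ with $\gamma((1/2,1])\subset g(B)$.
\end{itemize}
In both cases every point of $Z^+$ near $\gamma(1/2)$ lies in a translate, and the hypothesis is not violated. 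No truncation argument inside the tree rules this out. (Also, \refrmk{denseType2} is not available here: it presupposes minimality and an element admitting a prong.)

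The missing idea is dynamical. You must show that no $k\in G$ satisfies $k(p)\in B$. Then $p\in W^+$, and in fact the translates of $B$ are pairwise disjoint, which is what the paper proves; this makes your nesting claim unnecessary. The paper applies \reflem{invRayLand} to $\sigma=[p,k(p)]\subset B\cup\{p\}$:
\begin{itemize}
\item If $\sigma$ contains a fixed point of $k$, then $B\cup k(B)=Z^+$; this is your non-nested case.
\item Otherwise $\bigcup_n k^n([v,k(v)])$ is a $k$-invariant line $A$ with an end ray in $B$, and one of two things happens.
\begin{itemize}
\item If $A$ is a proper axis, then $\bigcup_n k^n(B)=Z^+$, which is incompatible with the existence of synapses (\refprop{bridgeExist}).
\item If an end of $A$ lands at a fixed point of $k$, that point is a synapse. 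Either it lies in $Z^-$ (type~2), or $k$ acts freely on $Z^-$ and its $Z^-$-axis (\refthm{invHypAxis}, \reflem{invRayLand}) lands there. This yields a connector ending in $B$ and avoiding $p$, contradicting the hypothesis.
\end{itemize}
\end{itemize}
The paper also treats $h(p)=p$, $h(B)=B$ via \refprop{invBranch}; that is needed for disjointness, not for your nonemptiness. Without this dynamical step, your proposal does not establish that $W^+\neq\varnothing$.
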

\begin{proof}
It suffices to show that \(g(B)\cap B=\varnothing\) for every \(g\in G\setminus\{\id\}\). Suppose otherwise, and choose \(h\in G\setminus\{\id\}\) such that \(h(B)\cap B\neq \varnothing\).

We first claim that there exists \(k\in G\) such that \(k(p)\in B\). If \(p\in h(B)\), then \(k=h^{-1}\) works. So assume \(p\notin h(B)\). Since \(h(B)\) meets \(B\), it must be contained in the branch \(B\) at \(p\). If \(h(p)\in B\), then we may take \(k=h\). Otherwise \(h(p)=p\) and \(h(B)=B\). We claim that this is impossible.

Indeed, after replacing \(h\) by \(h^{-1}\) if necessary, we may assume \(p=r(h)\), and write \(q=a(h)\). By \refprop{invBranch}, there is an \(h\)-prong in \( B\cup \{p\}\) starting from \(p\) and landing at \(q\). This produces a connector \(\gamma\) associated to \(q\) such that \(p\notin \gamma([0,1])\) and \(\gamma(1)\in B\), contrary to the assumption. To see this, note that if \(q\in Z^-\), then \(q\) is a type~2 synapse. If \(q\notin Z^-\), then \(h\) acts freely on \(Z^-\), and \refthm{invHypAxis} together with \reflem{invRayLand} gives an \(h\)-axis in \(Z^-\) with an end ray landing at \(q\); hence \(q\) is a synapse again. This proves the claim.

Choose \(k\in G\) with \(k(p)\in B\), and let \(\sigma=[p,k(p)]\subset B\cup\{p\}\). We consider the two cases given by \reflem{invRayLand}.

\smallskip
\noindent\emph{Case 1: \(\sigma\cap \Fix(k)=\varnothing\).}
Then there exists \(v\in (p,k(p)]\) such that
\[
[p,k(p)]\cap [k(p),k^2(p)]=[k(p),v],
\]
with \(v\neq k^2(p)\). Since \(p\notin [k(p),k^2(p)]\), the whole segment \([k(p),k^2(p)]\) lies in \(B\). Moreover, \(k([k(p),v])\subset k(\sigma)\), and since \(k(\sigma)\cap \Fix(k)=\varnothing\), we have \(k(v)\in (v,k^2(p)]\).

It follows that
\[
A=\bigcup_{n\in \ZZ}k^n([v,k(v)])
\]
is a \(k\)-invariant line in \(Z^+\). By construction, one end ray of \(A\) is contained in \(B\), and by \reflem{invRayLand}, each end ray of \(A\) lands at a fixed point of \(k\).

We now distinguish three possibilities.

If \(\Fix(k)\subset Z^+\) and \(A\cup \Fix(k)\) is the unique closed arc joining the two fixed points in \(Z^+\), then the fixed point \(a(k)\) lies in \(B\). On the other hand, \(k\) acts freely on \(Z^-\), so \reflem{invRayLand} yields an axis in \(Z^-\) whose end ray lands at \(a(k)\). Thus \(a(k)\) is a synapse, contradicting the assumption.

If \(A\) is proper, that is, if \(A\) is the axis of \(k\), then \(\Fix(k)\cap Z^+=\varnothing\). Since one end ray of \(A\) is contained in \(B\), we have \(\bigcup_{n\in \ZZ}k^n(B)=Z^+\). Hence there is no synapse, contradicting \refprop{bridgeExist}.

The remaining possibility is that \(|\Fix(k)\cap Z^+|=1\). Equivalently, exactly one topological end of \(A\) lands at a point \(a\in Z^+\). Then \(\closure A=A\cup a\) is a proper ray in \(Z^+\) whose other end lands at the second fixed point \(b\) of \(k\), which is not in $Z^+$.

If \(b\in Z^-\), then \(b\) is a type~2 synapse. If \(b\notin Z^-\), then \(k\) acts freely on \(Z^-\), so \reflem{invRayLand} gives an axis in \(Z^-\) whose end ray lands at \(b\), and hence \(b\) is a synapse. Either way we obtain a contradiction.

\smallskip
\noindent\emph{Case 2: \(\sigma\cap \Fix(k)\neq \varnothing\).}
Let \(q\in \sigma\cap \Fix(k)\). Then \(\sigma=[p,q]\cup [k(p),q]\) and \(k([p,q])=[k(p),q]\). Since \(B\) is the unique branch at \(p\) containing \((p,q]\), the branch \(k(B)\) is the unique branch at \(k(p)\) containing \((k(p),q]\). In particular, \(k(B)\) contains \(Z^+\setminus B\), and therefore \(B\cup k(B)=Z^+\). It follows from the assumption that there is no synapse, contradicting \refprop{bridgeExist}.

This contradiction completes the proof.
\end{proof}

Given a fence \(\cJ\) of a zipper \(Z^\pm\), we say that a connector \(\gamma\colon [0,1]\to S_\infty^2\) \emph{crosses} \(\cJ\) through a Jordan domain \(\cD\) of \(\cJ\) if \(\gamma([0,1])\) does. By the density of synapses (see \refrmk{denseType2}), each Jordan domain of an invariant fence admits a crossing connector whose synapse lies in \(Z^+\). The possible configurations of such crossing connectors are described by the following dichotomy:
\begin{lem}[Alternative for crossing connectors]\label{Lem:crossAlt}
Let \(Z^\pm\) be a zipper for \(M\), and let \(g\in G\) be such that \(g\) admits a \(g\)-prong in \(Z^+\) and acts freely on \(Z^-\). Let \(\cD\) be a Jordan domain of the fence \(\cJ(g)\) (given by \refprop{invFence}). Then exactly one of the following holds:
\begin{enumerate}
    \item\label{Itm:tail} there is a unique node \(p\) of \(\cJ(g)\) such that \(\gamma(1)=p\) for every connector \(\gamma\) crossing \(\cD\);
    \item\label{Itm:noTail} for every connector \(\gamma\) crossing \(\cD\), the point \(\gamma(1)\) lies in \(\Int \cJ(g)^+\).
\end{enumerate}
Moreover, in Case~\refitm{tail}, \(\cJ(g)^+\) is inaccessible from the \(\cD\)-side.
\end{lem}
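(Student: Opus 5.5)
The plan is to study the set of terminal points \(\gamma(1)\) of connectors crossing \(\cD\), and to use the dynamics of \(g\) on the fence to show that this set cannot mix the two possibilities.

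\textbf{Step 1: where a crossing connector can end.} Write \(\Fix(g)=\{p,q\}\) with \(p\in Z^+\) the starting point of the \(g\)-prong \(r\). By \refprop{invFence}, \(\cJ(g)^-=\ell\) is the axis of \(g\) in \(Z^-\), and \(\cJ(g)^+=\closure r\). A connector \(\gamma\) crossing \(\cD\) has \(\gamma(0)\in \cJ(g)\cap Z^-\). No node lies in \(Z^-\), because \(g\) acts freely on \(Z^-\), so \(\gamma(0)\in \ell\). Likewise \(\gamma(1)\in \cJ(g)\cap Z^+\), so \(\gamma(1)\) is either in \(\Int \cJ(g)^+\) or is a node lying in \(Z^+\).

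Next I check that \(g\) preserves \(\cD\): \(g\) is orientation preserving, fixes the nodes, and preserves each of the arcs \(\cJ(g)^\pm\). Hence every translate \(g^n\gamma\) is again a connector crossing \(\cD\). By \refrmk{denseType2}, at least one crossing connector exists.

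\textbf{Step 2: the dichotomy.} Suppose some crossing connector \(\gamma_1\) ends at a node, say \(p\). Then the \((+)\)-part of \(\gamma_1\) lies in a branch \(B\) at \(p\) that is disjoint from \(r\). By \refprop{invFence}, the branches \(B_n:=g^n(B)\) are pairwise disjoint. Put \(y_1=\gamma_1(0)\). The points \(g^n(y_1)\) converge to \(p\) as \(n\to\infty\) and to \(q\) as \(n\to-\infty\) (or the reverse).

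For \(n\ll 0\ll m\), the arcs \(g^n\gamma_1\) and \(g^m\gamma_1\), together with the subarc of \(\ell\) between \(g^n(y_1)\) and \(g^m(y_1)\), bound a subdomain \(R\subset\cD\). This domain \(R\) contains any prescribed point of \(\ell\), and it is disjoint from \(\Int\cJ(g)^+\).

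Now let \(\gamma_2\) be another crossing connector, and assume \(\gamma_2(1)=x\in\Int\cJ(g)^+\), or \(\gamma_2(1)=q\neq p\). Choose \(n,m\) with \(\gamma_2(0)\in R\). Then \(\gamma_2\) must meet \(g^n\gamma_1\cup g^m\gamma_1\). The intersection point lies in \(Z^+\) or in \(Z^-\), since \(Z^\pm\) are disjoint.

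If it lies in \(Z^+\), the unique path in \(Z^+\) from a point of \(B_n\) (or \(B_m\)) to \(x\) or to \(q\) passes through \(p\). This forces \(p\in\Int\gamma_2\subset\cD\), which is impossible.

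This leaves the case in which \(\gamma_2^-\) meets \(g^n\gamma_1^-\) or \(g^m\gamma_1^-\), which I expect to be the main obstacle. There I would replace \(\gamma_2\) by the concatenation of \(\gamma_2^+\), the synapse, and the unique \(Z^-\)-path from the synapse of \(\gamma_2\) to the synapse of \(g^k\gamma_1\). I would then use unique path-connectedness in \(Z^-\), in the style of the proof of \refprop{enclosing}, to reduce to the \(Z^+\)-intersection case. Failing that, I would intersect with \(R\) for a larger pair \((n,m)\) and exploit that \(\gamma_2^+\) is a fixed compact arc while the \(B_k\) are disjoint branches.

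This shows that if one connector ends at the node \(p\), then every crossing connector ends at \(p\). The same argument rules out two different nodes, which gives the uniqueness in Case~\refitm{tail}. If no connector ends at a node, we are in Case~\refitm{noTail}. The two cases are exclusive because a crossing connector exists.

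\textbf{Step 3: inaccessibility in Case~\refitm{tail}.} Suppose a ray in \(Z^+\) or \(Z^-\) lands at some \(x\in\Int\cJ(g)^+\) from the \(\cD\)-side. I would manufacture from it a connector crossing \(\cD\) that ends at a point of \(\Int\cJ(g)^+\), contradicting Case~\refitm{tail}.

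For a \(Z^-\)-ray, \(x\) itself is a type~2 synapse. I would join the ray to \(\ell\) through \(Z^-\) and then apply \refprop{enclosing} to confine the relevant subarc to \(\closure\cD\).

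For a \(Z^+\)-ray, I would join it back to \(\Int\cJ(g)^+\) inside \(Z^+\), again using \refprop{enclosing}. I would then use the density of synapses from \refprop{bridgeExist} near the ray to attach a \((-)\)-part ending on \(\ell\), and truncate at the first hits of \(\cJ(g)\).
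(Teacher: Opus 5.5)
Your Step~2 is organised differently from the paper, and the case you flag as the main obstacle is left open. It closes with one observation you are missing.

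\textbf{How the paper avoids your obstacle.} Take crossing connectors $\gamma,\delta$ with $\delta(1)=a(g)$ and $\gamma(1)\in\Int\cJ(g)^+\cup\{r(g)\}$. The paper applies $g^n$ to $\gamma$. Then $g^n(\gamma([0,1/2]))$ converges to $a(g)$ and eventually misses the compact set $\delta([0,1/2])$. The forced crossing of $\delta$ therefore takes place in $Z^+$, where the tree structure gives the contradiction. So the $Z^-\cap Z^-$ case never arises there.

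\textbf{The missing observation.} You instead fix $\gamma_2$ and trap it in the region $R$ bounded by $g^n\gamma_1$, $g^m\gamma_1$ and an arc of $\ell$. So you must exclude $\gamma_2^-\cap g^k\gamma_1^-\neq\varnothing$, and this is automatic.
\begin{itemize}
\item If a connector $\gamma$ crosses $\cD$, then $\gamma((0,1/2))\subset\cD$ misses $\ell$. Hence $\gamma^-\setminus\{\gamma(0)\}$ lies in a branch of $Z^-$ at $\gamma(0)$ disjoint from $\ell$; otherwise $\gamma^-$ and $\ell$ would share an initial arc.
\item Two such branches based at distinct points $y\neq y'$ of $\ell$ are disjoint. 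A common point $z$ would force the nondegenerate arc $[y,y']\subset\ell$ into $[y,z]\cup[z,y']$, which meets $\ell$ only in $\{y,y'\}$.
\end{itemize}
Since $\gamma_2(0)$ lies strictly between $g^n(y_1)$ and $g^m(y_1)$ on $\ell$, the $Z^-$ case is vacuous. The concatenation and enlargement tricks you sketch are not needed.

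The same fact is what guarantees $g^n\gamma_1\cap g^m\gamma_1=\{p\}$, i.e.\ that $R$ is a Jordan domain at all. You also overlooked a crossing at a common type~1 synapse, which lies in neither of $Z^\pm$. Avoid it by the choice of $n,m$: the points $g^k(\gamma_1(1/2))$ are pairwise distinct. With these additions, Step~2 is a correct alternative proof of the dichotomy.

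\textbf{Step~3: rays in $Z^-$.} Joining a $Z^-$-ray that lands at $x\in\Int\cJ(g)^+$ to $\ell$ gives a type~2 bridge crossing $\cD$, not a connector, because its $(+)$-segment is degenerate. So it does not contradict Case~(1) as you claim. Instead, rerun your Step~2 exit argument with this bridge in place of $\gamma_2$. This is essentially what the paper does with its ray $r'$.

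\textbf{Step~3: rays in $Z^+$.} Here density of synapses in $S^2_\infty$ cannot produce a connector ending at $x$. A synapse near the branch $b$ of $Z^+$ at $x$ may have its $(+)$-side reaching $\cJ(g)$ only through the node $p$, which is consistent with Case~(1).

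In fact, in Case~(1) the branch $b$ lies on the $\cJ(g)^+$-side of every $g^k\gamma_1$. A $Z^-$-ray from $\ell$ landing at a synapse in $b$ would then have to cross some $g^k\gamma_1$, which your exit argument forbids. So $b$ contains no synapse at all.

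What you would need is a synapse inside $b$. This holds, for instance, when $Z^+$ is minimal, since $b$ then contains a translate of the type~2 synapse $p$. Note that the paper's own proof reduces the second statement to rays in $Z^-$ and treats only those.
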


\begin{rmk}\label{Rmk:simplicial}
We say that \(\cD\) is \emph{of type~I} in Case~\ref{Itm:tail}, and \emph{of type~II} in Case~\ref{Itm:noTail}. In the type~I case, the unique node \(p\) is called the \emph{pivot} of \(\cD\).
See \reffig{bothType}.
\end{rmk}

\begin{figure}[htbp]
\centering

\begin{subfigure}[t]{0.40\textwidth}
\centering
\tikzset{every picture/.style={line width=0.75pt}}

\begin{tikzpicture}[x=0.75pt,y=0.75pt,yscale=-1.5,xscale=1.5]
%uncomment if require: \path (0,300); %set diagram left start at 0, and has height of 300

%Curve Lines [id:da6159870012263243] 
\draw [color={rgb, 255:red, 74; green, 144; blue, 226 }  ,draw opacity=1 ]   (166.43,78.26) .. controls (190.14,70.82) and (199.71,102.12) .. (212.71,106.12) .. controls (225.71,110.12) and (216.71,147.12) .. (212.71,157.12) .. controls (208.71,167.12) and (180.71,177.12) .. (156.27,178.12) ;
%Curve Lines [id:da45773744481677325] 
\draw [color={rgb, 255:red, 208; green, 2; blue, 27 }  ,draw opacity=1 ]   (161.88,79.56) .. controls (135.43,84.26) and (101.71,118.12) .. (114.71,129.12) .. controls (127.71,140.12) and (129.71,176.12) .. (152.71,178.12) ;
%Curve Lines [id:da6678563563543913] 
\draw [color={rgb, 255:red, 208; green, 2; blue, 27 }  ,draw opacity=1 ]   (166.43,80.26) .. controls (176.43,89.26) and (175.15,106.24) .. (188.15,110.24) .. controls (201.15,114.24) and (183.71,124.12) .. (203.71,128.12) ;
%Curve Lines [id:da6678345765779716] 
\draw [color={rgb, 255:red, 208; green, 2; blue, 27 }  ,draw opacity=1 ]   (164.43,81.26) .. controls (166.43,98.26) and (144.71,114.12) .. (156.71,120.12) .. controls (168.71,126.12) and (156.71,146.12) .. (176.71,150.12) ;
%Curve Lines [id:da6902714591979715] 
\draw [color={rgb, 255:red, 208; green, 2; blue, 27 }  ,draw opacity=1 ]   (161.88,81.56) .. controls (143.15,97.56) and (128.15,124.56) .. (139.15,131.56) .. controls (150.15,138.56) and (135.15,150.56) .. (154.15,159.56) ;
%Shape: Circle [id:dp04035532249361262] 
\draw  [color={rgb, 255:red, 74; green, 144; blue, 226 }  ,draw opacity=1 ][fill={rgb, 255:red, 208; green, 2; blue, 27 }  ,fill opacity=1 ][line width=0.75]  (152.86,160.27) .. controls (152.86,158.77) and (154.07,157.56) .. (155.57,157.56) .. controls (157.06,157.56) and (158.27,158.77) .. (158.27,160.27) .. controls (158.27,161.76) and (157.06,162.98) .. (155.57,162.98) .. controls (154.07,162.98) and (152.86,161.76) .. (152.86,160.27) -- cycle ;
%Curve Lines [id:da24028636405391757] 
\draw [color={rgb, 255:red, 74; green, 144; blue, 226 }  ,draw opacity=1 ]   (181.43,150.98) .. controls (190.14,152.67) and (197.71,138.83) .. (201.71,148.26) .. controls (205.71,157.69) and (205.71,150.12) .. (211.71,158.69) ;
%Curve Lines [id:da21322226693709545] 
\draw [color={rgb, 255:red, 74; green, 144; blue, 226 }  ,draw opacity=1 ]   (158.43,160.98) .. controls (167.14,162.67) and (168.71,162.69) .. (170.71,167.69) .. controls (172.71,172.69) and (166.71,170.69) .. (166.71,176.69) ;
%Curve Lines [id:da5789425138595687] 
\draw [color={rgb, 255:red, 74; green, 144; blue, 226 }  ,draw opacity=1 ]   (208.43,128.98) .. controls (217.14,130.67) and (209.71,136.69) .. (210.71,139.69) .. controls (211.71,142.69) and (210.71,141.69) .. (216.71,145.69) ;
%Straight Lines [id:da40699100043836833] 
\draw [color={rgb, 255:red, 208; green, 2; blue, 27 }  ,draw opacity=1 ] [dash pattern={on 0.84pt off 2.51pt}]  (117,123) -- (132.43,123.12) ;
%Straight Lines [id:da9221788160085597] 
\draw [color={rgb, 255:red, 208; green, 2; blue, 27 }  ,draw opacity=1 ] [dash pattern={on 0.84pt off 2.51pt}]  (178.43,93.12) -- (186.43,84.12) ;
%Straight Lines [id:da44334410980192274] 
\draw [color={rgb, 255:red, 208; green, 2; blue, 27 }  ,draw opacity=1 ] [dash pattern={on 0.84pt off 2.51pt}]  (154.43,174.12) -- (155.43,164.12) ;
%Shape: Circle [id:dp847467114035152] 
\draw  [color={rgb, 255:red, 0; green, 0; blue, 0 }  ,draw opacity=1 ][fill={rgb, 255:red, 255; green, 255; blue, 255 }  ,fill opacity=1 ][line width=0.75]  (150.86,178.27) .. controls (150.86,176.77) and (152.07,175.56) .. (153.57,175.56) .. controls (155.06,175.56) and (156.27,176.77) .. (156.27,178.27) .. controls (156.27,179.76) and (155.06,180.98) .. (153.57,180.98) .. controls (152.07,180.98) and (150.86,179.76) .. (150.86,178.27) -- cycle ;
%Shape: Circle [id:dp9413137750222575] 
\draw  [color={rgb, 255:red, 74; green, 144; blue, 226 }  ,draw opacity=1 ][fill={rgb, 255:red, 208; green, 2; blue, 27 }  ,fill opacity=1 ][line width=0.75]  (175.86,150.27) .. controls (175.86,148.77) and (177.07,147.56) .. (178.57,147.56) .. controls (180.06,147.56) and (181.27,148.77) .. (181.27,150.27) .. controls (181.27,151.76) and (180.06,152.98) .. (178.57,152.98) .. controls (177.07,152.98) and (175.86,151.76) .. (175.86,150.27) -- cycle ;
%Shape: Circle [id:dp7626337384404354] 
\draw  [color={rgb, 255:red, 74; green, 144; blue, 226 }  ,draw opacity=1 ][fill={rgb, 255:red, 208; green, 2; blue, 27 }  ,fill opacity=1 ][line width=0.75]  (202.86,128.27) .. controls (202.86,126.77) and (204.07,125.56) .. (205.57,125.56) .. controls (207.06,125.56) and (208.27,126.77) .. (208.27,128.27) .. controls (208.27,129.76) and (207.06,130.98) .. (205.57,130.98) .. controls (204.07,130.98) and (202.86,129.76) .. (202.86,128.27) -- cycle ;
%Shape: Circle [id:dp5107686317067551] 
\draw  [color={rgb, 255:red, 74; green, 144; blue, 226 }  ,draw opacity=1 ][fill={rgb, 255:red, 208; green, 2; blue, 27 }  ,fill opacity=1 ][line width=0.75]  (160.86,79.27) .. controls (160.86,77.77) and (162.07,76.56) .. (163.57,76.56) .. controls (165.06,76.56) and (166.27,77.77) .. (166.27,79.27) .. controls (166.27,80.76) and (165.06,81.98) .. (163.57,81.98) .. controls (162.07,81.98) and (160.86,80.76) .. (160.86,79.27) -- cycle ;

% Text Node
\draw (158,66) node [anchor=north west][inner sep=0.75pt]  [color={rgb, 255:red, 208; green, 2; blue, 27 }  ,opacity=1 ]  {$p$};
% Text Node
\draw (150,183) node [anchor=north west][inner sep=0.75pt]  [color={rgb, 255:red, 0; green, 0; blue, 0 }  ,opacity=1 ]  {$q$};
% Text Node
\draw (223,121) node [anchor=north west][inner sep=0.75pt]  [color={rgb, 255:red, 74; green, 144; blue, 226 }  ,opacity=1 ]  {$\ell $};
\end{tikzpicture}

\caption{type I}
\end{subfigure}
\hspace{0.01\textwidth}
\begin{subfigure}[t]{0.4\textwidth}
\centering
\tikzset{every picture/.style={line width=0.75pt}}

\begin{tikzpicture}[x=0.75pt,y=0.75pt,yscale=-1.5,xscale=1.5]
%uncomment if require: \path (0,300); %set diagram left start at 0, and has height of 300

%Curve Lines [id:da2616625356684277] 
\draw [color={rgb, 255:red, 74; green, 144; blue, 226 }  ,draw opacity=1 ]   (186.27,129.11) .. controls (209.99,121.67) and (219.56,152.97) .. (232.56,156.97) .. controls (245.56,160.97) and (236.56,197.97) .. (232.56,207.97) .. controls (228.56,217.97) and (200.56,227.97) .. (176.12,228.97) ;
%Curve Lines [id:da5300851283334995] 
\draw [color={rgb, 255:red, 208; green, 2; blue, 27 }  ,draw opacity=1 ]   (181.88,129.4) .. controls (155.43,134.1) and (121.71,167.96) .. (134.71,178.96) .. controls (147.71,189.96) and (149.71,225.96) .. (172.71,227.96) ;
%Curve Lines [id:da6432369500027086] 
\draw [color={rgb, 255:red, 208; green, 2; blue, 27 }  ,draw opacity=1 ]   (154,215.27) .. controls (162,209.27) and (158,204.27) .. (165,202.27) .. controls (172,200.27) and (169,206.27) .. (178,207.27) ;
%Shape: Circle [id:dp8236093690876375] 
\draw  [color={rgb, 255:red, 74; green, 144; blue, 226 }  ,draw opacity=1 ][fill={rgb, 255:red, 208; green, 2; blue, 27 }  ,fill opacity=1 ][line width=0.75]  (177.86,207.11) .. controls (177.86,205.61) and (179.07,204.4) .. (180.57,204.4) .. controls (182.06,204.4) and (183.27,205.61) .. (183.27,207.11) .. controls (183.27,208.6) and (182.06,209.82) .. (180.57,209.82) .. controls (179.07,209.82) and (177.86,208.6) .. (177.86,207.11) -- cycle ;
%Curve Lines [id:da5826363003444915] 
\draw [color={rgb, 255:red, 74; green, 144; blue, 226 }  ,draw opacity=1 ]   (183,208.27) .. controls (191.71,209.96) and (193.56,213.82) .. (195.56,218.82) .. controls (197.56,223.82) and (191.56,221.82) .. (191.56,227.82) ;
%Straight Lines [id:da3656555418035341] 
\draw [color={rgb, 255:red, 208; green, 2; blue, 27 }  ,draw opacity=1 ] [dash pattern={on 0.84pt off 2.51pt}]  (175.43,222.96) -- (178,213.27) ;
%Shape: Circle [id:dp7772651838906471] 
\draw  [color={rgb, 255:red, 0; green, 0; blue, 0 }  ,draw opacity=1 ][fill={rgb, 255:red, 255; green, 255; blue, 255 }  ,fill opacity=1 ][line width=0.75]  (170.86,228.11) .. controls (170.86,226.61) and (172.07,225.4) .. (173.57,225.4) .. controls (175.06,225.4) and (176.27,226.61) .. (176.27,228.11) .. controls (176.27,229.6) and (175.06,230.82) .. (173.57,230.82) .. controls (172.07,230.82) and (170.86,229.6) .. (170.86,228.11) -- cycle ;
%Shape: Circle [id:dp4779585625526547] 
\draw  [color={rgb, 255:red, 74; green, 144; blue, 226 }  ,draw opacity=1 ][fill={rgb, 255:red, 208; green, 2; blue, 27 }  ,fill opacity=1 ][line width=0.75]  (180.86,129.11) .. controls (180.86,127.61) and (182.07,126.4) .. (183.57,126.4) .. controls (185.06,126.4) and (186.27,127.61) .. (186.27,129.11) .. controls (186.27,130.6) and (185.06,131.82) .. (183.57,131.82) .. controls (182.07,131.82) and (180.86,130.6) .. (180.86,129.11) -- cycle ;
%Curve Lines [id:da41894999963326507] 
\draw [color={rgb, 255:red, 208; green, 2; blue, 27 }  ,draw opacity=1 ]   (142,188.27) .. controls (154,185.27) and (166,175.27) .. (173,173.27) .. controls (180,171.27) and (177,177.27) .. (186,178.27) ;
%Shape: Circle [id:dp6336699074292964] 
\draw  [color={rgb, 255:red, 74; green, 144; blue, 226 }  ,draw opacity=1 ][fill={rgb, 255:red, 208; green, 2; blue, 27 }  ,fill opacity=1 ][line width=0.75]  (185.86,178.11) .. controls (185.86,176.61) and (187.07,175.4) .. (188.57,175.4) .. controls (190.06,175.4) and (191.27,176.61) .. (191.27,178.11) .. controls (191.27,179.6) and (190.06,180.82) .. (188.57,180.82) .. controls (187.07,180.82) and (185.86,179.6) .. (185.86,178.11) -- cycle ;
%Curve Lines [id:da7322422883084145] 
\draw [color={rgb, 255:red, 74; green, 144; blue, 226 }  ,draw opacity=1 ]   (191,179.27) .. controls (199.71,180.96) and (201.56,184.82) .. (203.56,189.82) .. controls (205.56,194.82) and (223,202.27) .. (228,213.27) ;
%Curve Lines [id:da9102641340911595] 
\draw [color={rgb, 255:red, 208; green, 2; blue, 27 }  ,draw opacity=1 ]   (144,151.27) .. controls (151,156.27) and (161,152.27) .. (168,152.27) .. controls (175,152.27) and (174,153.27) .. (183,154.27) ;
%Shape: Circle [id:dp2984870814405949] 
\draw  [color={rgb, 255:red, 74; green, 144; blue, 226 }  ,draw opacity=1 ][fill={rgb, 255:red, 208; green, 2; blue, 27 }  ,fill opacity=1 ][line width=0.75]  (182.86,155.11) .. controls (182.86,153.61) and (184.07,152.4) .. (185.57,152.4) .. controls (187.06,152.4) and (188.27,153.61) .. (188.27,155.11) .. controls (188.27,156.6) and (187.06,157.82) .. (185.57,157.82) .. controls (184.07,157.82) and (182.86,156.6) .. (182.86,155.11) -- cycle ;
%Curve Lines [id:da125815002672375] 
\draw [color={rgb, 255:red, 74; green, 144; blue, 226 }  ,draw opacity=1 ]   (188,155.27) .. controls (196.71,156.96) and (200,169.27) .. (208,166.27) .. controls (216,163.27) and (230,169.27) .. (235,158.27) ;
%Curve Lines [id:da1708778381493764] 
\draw [color={rgb, 255:red, 208; green, 2; blue, 27 }  ,draw opacity=1 ]   (161,138.27) .. controls (167,145.27) and (168,144.27) .. (173,142.27) .. controls (178,140.27) and (180,143.27) .. (185,143.27) ;
%Shape: Circle [id:dp618005722860833] 
\draw  [color={rgb, 255:red, 74; green, 144; blue, 226 }  ,draw opacity=1 ][fill={rgb, 255:red, 208; green, 2; blue, 27 }  ,fill opacity=1 ][line width=0.75]  (184.86,144.11) .. controls (184.86,142.61) and (186.07,141.4) .. (187.57,141.4) .. controls (189.06,141.4) and (190.27,142.61) .. (190.27,144.11) .. controls (190.27,145.6) and (189.06,146.82) .. (187.57,146.82) .. controls (186.07,146.82) and (184.86,145.6) .. (184.86,144.11) -- cycle ;
%Curve Lines [id:da14467910044557297] 
\draw [color={rgb, 255:red, 74; green, 144; blue, 226 }  ,draw opacity=1 ]   (190,144.27) .. controls (198.71,145.96) and (198,152.27) .. (201,146.27) .. controls (204,140.27) and (207,146.27) .. (213,138.27) ;
%Curve Lines [id:da7913947137776988] 
\draw [color={rgb, 255:red, 208; green, 2; blue, 27 }  ,draw opacity=1 ] [dash pattern={on 0.84pt off 2.51pt}]  (188,130.84) .. controls (191,134.84) and (188,138.41) .. (191,142.41) ;

% Text Node
\draw (178,115) node [anchor=north west][inner sep=0.75pt]  [color={rgb, 255:red, 208; green, 2; blue, 27 }  ,opacity=1 ]  {$p$};
% Text Node
\draw (170,233) node [anchor=north west][inner sep=0.75pt]  [color={rgb, 255:red, 0; green, 0; blue, 0 }  ,opacity=1 ]  {$q$};
% Text Node
\draw (242,168.24) node [anchor=north west][inner sep=0.75pt]  [color={rgb, 255:red, 74; green, 144; blue, 226 }  ,opacity=1 ]  {$\ell $};

\end{tikzpicture}

\caption{type II}
\end{subfigure}

\caption{Jordan domains of both types with $p\in Z^+$ whose left side is in $Z^+$.}
\label{Fig:bothType}
\end{figure}
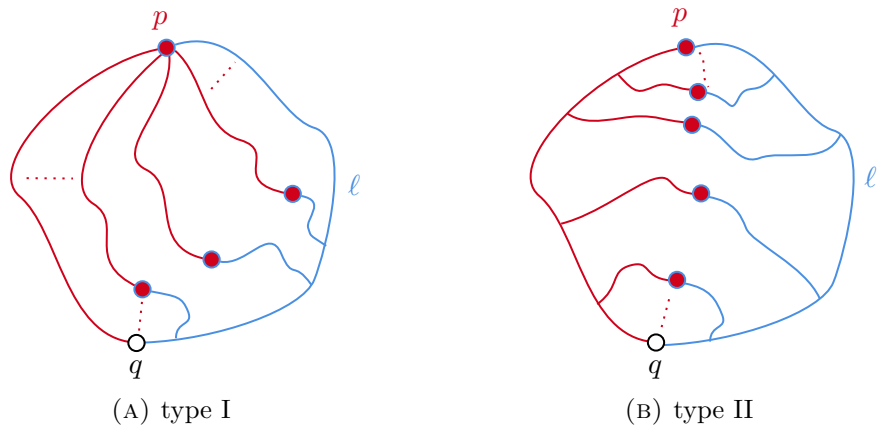
\begin{proof}[Proof of \reflem{crossAlt}]
We first claim that if \(\gamma\) and \(\delta\) are connectors crossing \(\cD\), then either \(\{\gamma(1),\delta(1)\}\subset \Int \cJ(g)^+\) or \(\{\gamma(1),\delta(1)\}\subset \Fix(g)\). Suppose not. By symmetry, after replacing \(g\) by \(g^{-1}\) if necessary, we may assume that \(\gamma(1)\in \Int \cJ(g)^+\) and \(\delta(1)=a(g)\). Since \(g\) acts as a translation on the interior of each side of \(\cJ(g)\), and since \(\gamma([0,1])\) is compact and disjoint from \(\Fix(g)\), the connector \(g^n\circ \gamma\) meets \(\delta\) away from \(\delta([0,1/2])\) for all sufficiently large \(n\in \NN\).
This contradicts the fact that \(Z^+\) and \(Z^-\) are disjoint and simply connected. The claim follows.

It remains to show that if \(\gamma\) and \(\delta\) are connectors crossing \(\cD\) with \(\{\gamma(1),\delta(1)\}\subset \Fix(g)\), then \(\gamma(1)=\delta(1)\). Suppose instead that \(\gamma(1)=r(g)\) and \(\delta(1)=a(g)\). Then, exactly as above, \(g^n\circ \gamma\) meets \(\delta\) away from  \(\delta([0,1/2])\)for all sufficiently large \(n\in \NN\), again contradicting the fact that \(Z^+\) and \(Z^-\) are disjoint and simply connected. This proves the first part.

To prove the second statement, it is enough to show that no ray in \(Z^-\) lands on the \(\cD\)-side of \(\cJ(g)^+\). Suppose otherwise, and let \(r\subset Z^-\) be such a ray, contained in \(\cD\).
There exists a ray \(r'\subset Z^-\) issuing from \(\cJ(g)^-\) and crossing \(\cD\). As above, since the nodes of \(\cJ(g)\) do not belong to \(Z^-\) and \(\cJ(g)^-\) is a line, one then finds a connector \(\gamma\) crossing \(\cD\) whose \((+)\)-segment meets \(r'\), contrary to the disjointness of \(Z^\pm\).
This completes the proof.
\end{proof}

Applying \reflem{simplicialSide} to both Jordan domains of an invariant fence \(\cJ(g)\), we conclude that \(\cJ(g)\) has at least one Jordan domain of type~I:

\begin{lem}[Existence of a  Jordan Domain of Type~I]\label{Lem:simplicialSide}
Let \(Z^\pm\) be a minimal zipper for \(M\). Assume that \(g\in G\) admits a \(g\)-prong in \(Z^+\) and acts freely on \(Z^-\). Then, for each \(p\in \Fix(g)\cap Z^+\), there is a unique Jordan domain \(\cD(p)\) of \(\cJ(g)\) that is of type~I and has pivot \(p\). In particular, every branch at \(p\) disjoint from \(\cJ(g)^+\) is contained in \(\cD(p)\).
\end{lem}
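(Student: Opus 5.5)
The plan is to combine the Pruning lemma (\reflem{pruning}) with the dichotomy of \reflem{crossAlt}. Fix \(p\in \Fix(g)\cap Z^+\); \(p\) is a node of \(\cJ(g)\). Since the zipper is minimal, \(p\) is not a hard end. It is also a cut point of \(Z^+\) with at least one branch \(B\) disjoint from \(\cJ(g)^+\). If \(p\) is an end of the \((+)\)-side, this holds because \(p\) is not a hard end. If \(p\) is interior to \(\cJ(g)^+\) (both fixed points in \(Z^+\)), it holds because \(p\) cannot be interior to the closed arc \(\cJ(g)^+\) joining the two fixed points. Moreover, \(p\) lies on \(\cJ(g)\) and \(B\cap \cJ(g)=\{\}\): \(B\) misses \(Z^-\), and it misses \(\cJ(g)^+\) by construction. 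Also \(B\) does not contain the other node, since that node lies on \(\cJ(g)^+\) or outside \(Z^+\). Hence the connected set \(B\) lies in a single complementary Jordan domain \(\cD\) of \(\cJ(g)\). This domain \(\cD\) is my candidate for \(\cD(p)\).

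Next I apply \reflem{pruning} to \(p\) and \(B\). Minimality forbids its conclusion, so there is a connector \(\gamma\) with \(p\notin\gamma([0,1])\) and \(\gamma(1)\in B\). I then truncate \(\gamma\) to a connector crossing \(\cD\). Starting from \(\gamma(1)\in B\subset\cD\), follow \(\gamma\) backwards to its last point on \(\cJ(g)\). That point cannot lie on \(\cJ(g)^+\): the \((+)\)-part of \(\gamma\) joins it to \(\gamma(1)\) inside \(Z^+\), and by unique arcs this path would pass through \(p\). So the last meeting point lies on \(\cJ(g)^-\) or is the other node. The truncated arc is then a connector crossing \(\cD\) whose endpoint is \(\gamma(1)\in B\), which is not in \(\Int\cJ(g)^+\); strictly, I mean the relevant endpoint on \(\partial\cD\) obtained after the Enclosing argument of \refprop{enclosing}. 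The point I will need to handle is that the crossing connector's \((+)\)-endpoint on \(\partial \cD\) must actually be \(p\). Concretely, I take the \((+)\)-arc from \(\gamma(1/2)\) to \(\gamma(1)\), follow it inside \(Z^+\), and extend it along \([\gamma(1),p]\subset B\cup\{p\}\). This gives a connector into \(\cD\) that reaches \(\cJ(g)\) first at \(p\), possibly after a further truncation on the \(Z^-\) side. Since its endpoint is not in \(\Int\cJ(g)^+\), case~\refitm{noTail} fails, so \(\cD\) is of type~I. The only possible pivot is \(p\).

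For uniqueness, I argue that \(\cD(p)\) cannot be both domains. If both complementary domains were of type~I with pivot \(p\), then \(\cJ(g)^+\) would be inaccessible from both sides by \reflem{crossAlt}. I would then apply \reflem{pruning} at a point \(x\in\Int\cJ(g)^+\), or at the other node, with a branch meeting \(\cJ(g)^+\) toward the other fixed point. Every connector into that branch must cross \(\cJ(g)\) and land at \(p\), and so avoids it only via \(p\); this contradicts minimality. I expect this step to be the main obstacle, since the branch chosen must be one for which ``avoiding the point'' fails for every connector, and the case where the other node is outside \(Z^+\) may need separate care.

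Finally, the ``in particular'' follows by running the argument of the first paragraph for every branch \(B'\) at \(p\) disjoint from \(\cJ(g)^+\). Each such \(B'\) lies in some domain, and that domain is type~I with pivot \(p\). By uniqueness it equals \(\cD(p)\).
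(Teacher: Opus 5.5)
Your existence argument and the ``in particular'' clause follow the paper's route: the Pruning lemma combined with the crossing-connector alternative. Showing directly that the domain containing any branch $B$ at $p$ off $\cJ(g)^+$ has a crossing connector ending at $p$ pins down the pivot more explicitly than the paper does. The truncation step needs a few routine extra cases, all repairable:
\begin{itemize}
\item the synapse of $\gamma$ may be the node $q$ itself; this is excluded, since together with $[\gamma(1),p]$ and the prong it gives a Jordan curve in $Z^+\cup\{q\}$, which the dense set $Z^-$ could only cross at $q\notin Z^-$;
\item the synapse may lie on $\cJ(g)^-$;
\item $\gamma$ may miss $\cJ(g)$ entirely, and then its $(-)$-part must be extended inside $Z^-$.
\end{itemize}
The genuine gap is the uniqueness step: pruning at a point $x\in\Int\cJ(g)^+$ cannot succeed. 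The proof of the Pruning lemma shows that its hypothesis forces $k(B)\cap B=\varnothing$ for every nontrivial $k\in G$. But every branch $B$ at $x$ either contains $p=g(p)$ or contains a tail of the $g$-invariant prong, so $g(B)\cap B\neq\varnothing$. Concretely, the branch toward $q$ is entered, avoiding $x$, by the arc of $\cJ(g)$ around $q$: an end ray of $\cJ(g)^-$, then $q$ as a type~1 synapse, then a tail of $\cJ(g)^+$. The branch toward $p$ is entered by the crossing connectors ending at $p$. Pruning at $q$ itself is unavailable when $q\notin Z^+$.

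The missing idea is to use minimality in its convex-hull form, as the paper does. Suppose both domains are type~I with pivot $p$. Three facts follow.
\begin{itemize}
\item $q\notin Z^+$, since otherwise your existence argument at $q$ gives a domain with pivot $q$.
\item $\cJ(g)^+$ is inaccessible from both sides, so no interior point of it carries an extra branch. Hence $\cJ(g)^+\setminus\{p\}$ is itself a branch at $p$ containing no branch point.
\item $p$ is a branch point. Each domain contains a crossing connector (by density of type~2 synapses), and its $(+)$-part determines a branch at $p$ inside that domain, distinct from $\cJ(g)^+\setminus\{p\}$.
\end{itemize}
Since $G$ preserves the number of branches, no point of $Gp$ lies in $\cJ(g)^+\setminus\{p\}$. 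An arc between two points of $Gp$ cannot enter this branch either, since it would have to enter and leave through $p$. So $\cJ(g)^+\setminus\{p\}$ misses the convex hull of $Gp$, contradicting minimality.
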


\begin{rmk}[Inaccessibility of invariant fences]\label{Rmk:inaccessibleSide}
The following are immediate consequences of \reflem{simplicialSide} and \reflem{crossAlt}.
If \(\Fix(g)\subset Z^+\), then both Jordan domains of \(\cJ(g)\) are of type~I, with distinct nodes. In particular, \(\cJ(g)^+\) is inaccessible. If \(\Fix(g)\cap Z^+=\{p\}\) and $\Fix(g) \cap Z^-=\varnothing$, then \(\cD(p)\) is the unique Jordan domain of \(\cJ(g)\) of type~I, and the other Jordan domain is of type~II. In particular, \(\cJ(g)^+\) is one-sided inaccessible.
\end{rmk}

\begin{proof}[Proof of \reflem{simplicialSide}]
For each \(p\in \Fix(g)\cap Z^+\), there is a unique \(g\)-prong \(r\subset \cJ(g)^+\) landing at the other fixed point \(q\), where \(q\) may or may not belong to \(Z^+\).

Suppose that both Jordan domains of \(\cJ(g)\) are of type~II. Let \(B\) be a branch at \(p\) disjoint from \(r\). Then \(B\) is contained in one of the Jordan domains of \(\cJ(g)\), and \reflem{crossAlt} implies that \(B\) satisfies the hypothesis of \reflem{pruning}. By \reflem{pruning}, \(Z^+\) is then not minimal, a contradiction. It follows that \(p\) is a hard end, contradicting again the minimality of \(Z^+\). Hence at least one Jordan domain of \(\cJ(g)\) is of type~I. Let \(\cD\) denote such a domain, and let \(\cD'\) denote the other.

If \(\cD'\) is also of type~I with node \(p\), then \reflem{crossAlt} implies that \(\cJ(g)^+\) is simplicial and that \(\cJ(g)^+\setminus\{p\}\) is a branch at \(p\). However, since \(p\) is a branch point of $Z^+$, there is no element \(k\in G\) such that \(k(p)\in \cJ(g)^+\setminus\{p\}\). This contradicts the minimality of \(Z^+\). Therefore, \(\cD'\) is either of type~II or of type~I with node \(q\). This proves the first assertion.

The second assertion follows from \reflem{crossAlt} and \reflem{pruning} as above.
Thus, we are done.
\end{proof}

Before turning to the other case, we note that the simpliciality of the \((+)\)-side of \(\cJ(g)\) will play an important role later in proving that \(\cJ(g)\) is unlinked from its translates under the \(G\)-action; see \refsec{proofOutline}.

\subsection{Pruning Axes}\label{Sec:pruningAxes}

Let \(Z^\pm\) be a minimal zipper for \(M\), and let \(g\in G\) act freely on \(Z^-\) while fixing a point of \(Z^+\). Suppose that \(g\) fixes a unique point \(p\in Z^+\). Then \(g\) need not admit a \(g\)-prong in \(Z^+\), and in that case there is no \(g\)-invariant fence. Nevertheless, \(g\) still admits an axis \(\ell\subset Z^-\).

We now study the one-sided simpliciality and one-sided inaccessibility of \(\ell\) in this situation. By contrast, if \(g\) admits a \(g\)-prong in \(Z^+\) and has no fixed point in \(Z^-\), then \(\ell\) is always two-sided branched by \reflem{simplicialSide}.

Assume from now on that \(g\) does not admit a \(g\)-prong in \(Z^+\). We say that a connector \(\gamma\colon [0,1]\to S^2_\infty\), or the corresponding connector arc, \emph{crosses} \(\ell\) if \(\gamma(1)=p\) and \(\gamma([0,1])\cap \ell=\{\gamma(0)\}\). We call \(p\) the \emph{pivot} of \(\ell\) if there exists such a crossing arc. Here we regard \(\ell\cup \Fix(g)\) as a \(2\)-fold Jordan curve. Note that, since \(p\) is a synapse of type~2 in \(Z^+\), there always exists a connector crossing \(\ell\) whose synapse lies in \(Z^+\).

After fixing an orientation \(f\) of \(\ell\), we classify connectors crossing \(\ell\) as follows. We say that a crossing connector \(\gamma\) is \emph{left} (resp. \emph{right}) with respect to \(f\) if \(\gamma((0,1/2))\) is contained in a left (resp. right) branch of \(\ell\). We say that \(f\) is \emph{canonical} with respect to \(g\) if \(f\) is increasing toward \(p\) along \(\ell\). Unless stated otherwise, \(\ell\) will always be endowed with this canonical orientation. Note that \(g\) sends left (reps. right) crossing connectors to left (resp. right) crossing connectors.

\begin{prop}[Alternative for branches at the pivot]\label{Prop:classifyConnector}
Let \(Z^\pm\) be a minimal zipper for \(M\). Assume that \(g\in G\) fixes a unique point \(p\in Z^+\), acts freely on \(Z^-\), and admits no \(g\)-prong in \(Z^+\). If \(\gamma_1\) and \(\gamma_2\) are connectors crossing \(\ell\) that are left and right, respectively, then \(\gamma_1((1/2,1])\cap \gamma_2((1/2,1])=\{p\}\).
\end{prop}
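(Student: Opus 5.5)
Suppose, for contradiction, that the $(+)$-segments $\sigma_i=\gamma_i([1/2,1])\cup\{\gamma_i(1/2)\}\cap Z^+$ share more than $p$. Both $\sigma_1\setminus\{\gamma_1(1/2)\}$ and $\sigma_2\setminus\{\gamma_2(1/2)\}$ are segments in $Z^+$ ending at $p$. By unique path-connectivity, their intersection is then an arc $[p,v]$ with $v\neq p$, and $(p,v]$ lies in a single branch $B$ at $p$ that contains both $\gamma_i((1/2,1))$ near $p$. I will derive a contradiction by showing that $\gamma_1$ and $\gamma_2$ would force a $g$-prong, or force a connector linkage violating the disjointness of $Z^\pm$.

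\textbf{Step 1: a Jordan curve.} Let $x_i=\gamma_i(0)\in\ell$. The $(-)$-parts $\gamma_i([0,1/2))$ lie in left and right branches of $\ell$ respectively. Form the closed curve
\[
\cC=[x_1,x_2]_\ell\cup\gamma_1([0,1])\cup\gamma_2([0,1]),
\]
where $[x_1,x_2]_\ell$ is the arc of $\ell$. Trim the two $(+)$-parts at $v$, their first common point coming from the synapse side. The result is a Jordan curve $\cC'$ made of $[x_1,x_2]_\ell$, the two $(-)$-parts, and the $(+)$-arcs from the synapses to $v$. Near $x_1$ and $x_2$ the curve leaves $\ell$ on opposite sides. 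Hence $\ell$ crosses $\cC'$ at $[x_1,x_2]_\ell$, so one end ray of $\ell$ enters each complementary domain of $\cC'$, locally. Because the orientation is canonical, the end ray of $\ell$ toward $p$ lands at $p$, and the other end ray lands at $q=$ the other fixed point.

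\textbf{Step 2: locating the end rays.} The end ray toward $p$ cannot cross $\cC'$ again. It cannot meet the $(+)$-parts since $Z^+\cap Z^-=\varnothing$. It cannot meet the $(-)$-parts beyond $x_i$, since $Z^-$ is a tree and $\gamma_i((0,1/2))$ lies in branches off $\ell$. The same holds for the end ray toward $q$, provided $q\notin\cC'$. Since $p\notin\cC'$ (we trimmed at $v\ne p$), the point $p$ lies in one domain $\cU$ of $\cC'$, and $q$ lies in the closure of the other.

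\textbf{Step 3: dynamics.} Now iterate $g$, replacing $g$ by $g^{-1}$ if needed so that $p$ is attracting along $\ell$. The connectors $g^n\gamma_i$ are again left and right crossing connectors, and their $(-)$-endpoints move along $\ell$ toward $p$ or toward $q$. I would take $n$ so that $g^n(x_1)$ lies on the end ray beyond $x_2$, on the $q$-side of $\cC'$. Then $g^n\gamma_1$ is an arc from outside $\overline{\cU}$ ending at $p\in\cU$, so it must meet $\cC'$. Its $(-)$-part cannot meet the $(+)$-parts. Its $(+)$-part cannot meet the $(-)$-parts of $\cC'$. Its $(-)$-part meets the $(-)$-parts of $\cC'$ only within the tree $Z^-$, which gives a contradiction unless $g^n\gamma_1$ and $\gamma_1$ share a branch of $\ell$, and that is impossible because $g^n(x_1)\neq x_1$ and a branch of $\ell$ hangs at one point. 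Therefore the $(+)$-part of $g^n\gamma_1$ must meet the $(+)$-arcs of $\cC'$. In $Z^+$, this together with $[p,v]\subset\sigma_1\cap\sigma_2$ and $g^n(v)\in g^n\sigma_1$ yields a $g$-invariant configuration: the branch $B$ at $p$ satisfies $g^n(B)\cap B\neq\varnothing$, hence $g(B)=B$ (iterating the argument for all $n$ in a suitable arithmetic progression, or using that distinct branches are disjoint). Then \refprop{invBranch} produces a $g$-prong in $B\cup\{p\}$, contradicting the hypothesis.

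The main obstacle is Step 3. I need to verify that the crossing really occurs in the $(+)$-parts and forces $g^n(B)=B$, rather than occurring at synapses. If the synapses are of type 1, the synapse points lie in neither set, so the Jordan-curve argument must also exclude intersections exactly at $\gamma_i(1/2)$. I expect to handle this using injectivity of $g^n$ together with $g^n\gamma_1(1/2)\neq\gamma_j(1/2)$ for generic $n$.
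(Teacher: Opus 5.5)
Your argument is correct, but it is arranged differently from the paper's.

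\textbf{The paper's route.} The paper uses the no-prong hypothesis first. By \refprop{invBranch}, $g(B)\neq B$, hence $g(B)\cap B=\varnothing$. It then takes the two fences $\cF_i=r_i\cup\gamma_i([0,1])$, where $r_i$ is the end ray of $\ell$ from $\gamma_i(0)$ to $p$. The connected set $g(B)$ avoids both fences and accumulates at $p$, so it lies in the left domain $\cD_1$ or the right domain $\cD_2$. Then $g\circ\gamma_1$ (or $g\circ\gamma_2$) would have to end on the wrong side, contradicting that $g$ sends left crossing connectors to left ones.

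\textbf{Your route.} You argue the contrapositive. Your single Jordan curve $\cC'$ through $[x_1,x_2]_\ell$ is crossed by $\ell$, so it separates the two ends of $\ell$. Translating $x_1$ along $\ell$ to the $q$-side forces the $(+)$-part of $g^n\circ\gamma_1$ to meet $\gamma_j^+\setminus\{p\}\subset B$, whence $g^n(B)=B$, and then a prong.

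\textbf{What each buys.} Your route never uses that $g$ preserves left and right, nor where $g(B)$ sits near $p$. It only uses that $\gamma_1,\gamma_2$ leave $\ell$ on opposite sides and that $g$ translates along $\ell$. The paper's route is a one-step argument with no iterates, so it never has to recover $g(B)=B$ from a power of $g$.

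\textbf{Three small repairs.}

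First, the direction of iteration. As written (so that $p$ is attracting), positive iterates move $x_1$ toward $p$, into the domain of $\cC'$ containing $p$, where nothing is forced. You need the iterates that move $x_1$ toward $q$.

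Second, passing from a power back to $g$. An arithmetic progression with common difference $d>1$ only yields $g^d(B)=B$. However, your argument applies to every sufficiently large admissible $n$, so two consecutive exponents give $g(B)=B$. Alternatively, a $g^d$-prong is a $g$-prong: $g$ permutes the $g^d$-prongs at $p$ landing at $q$, and there is at most one by \refprop{oneProng} applied to $g^d$, since $\Fix(g^d)=\Fix(g)$.

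Third, the synapse issues you flag. These are handled by the trick in the proof of \refprop{oneProng}. Two rays of $Z^-$ in branches of $\ell$ at distinct points, or one such ray and an end ray of $\ell$, cannot land at a common point $s\notin Z^+$. Otherwise, together with an arc of $\ell$, they would form a Jordan curve disjoint from the dense path-connected set $Z^+$. This gives:
\begin{itemize}
\item $\gamma_i(1/2)\neq q$, hence $q\notin\cC'$;
\item $g^n\circ\gamma_1(1/2)\neq\gamma_1(1/2)$, since $\gamma_1(1/2)\notin\Fix(g)$;
\item $g^n\circ\gamma_1(1/2)\neq\gamma_2(1/2)$ whenever $g^n(x_1)\neq x_2$.
\end{itemize}
Together with the absence of cycles in $Z^-$, the same trick also excludes the case where the two $(+)$-segments coincide entirely with a synapse outside $Z^+$. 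So your point $v\neq p$ exists.
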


\begin{proof}
Suppose that \(\gamma_1((1/2,1])\cap \gamma_2((1/2,1])\) contains a nondegenerate arc \([p,q]\subset Z^+\). Then there is a unique branch \(B\) at \(p\) containing \((p,q]\).

For \(i=1,2\), let \(r_i\) be the end ray of \(\ell\) starting from \(\gamma_i(0)\) and landing at \(p\). Note that \(\cF_i:=r_i\cup \gamma_i([0,1])\) is a fence. Let \(\cD_i\) be the Jordan domain of \(\cF_i\) disjoint from the \(g\)-axis \(\ell\).

Since \(g\) admits no \(g\)-prong in \(Z^+\), \refprop{invBranch} implies that \(g(B)\) is a branch at \(p\) disjoint from \(B\). Because \(Z^+\cap Z^-=\varnothing\), it follows that \(g(B)\subset \cD_i\) for some \(i\in\{1,2\}\). Without loss of generality, assume that \(g(B)\subset \cD_2\). Then \(g\circ \gamma_1\) is a right crossing connector with respect to \(\ell\), contradicting the fact that \(g\) sends left crossing connectors to left crossing connectors.
\end{proof}

It follows that any two crossing connectors meeting the same branch at the pivot have the same type. Accordingly, we say that a branch \(B\) at the pivot \(p\) is \emph{left} (resp. \emph{right}) if some left (resp. right) crossing connector of \(\ell\) meets \(B\).

\begin{lem}[One-sided inaccessible axes]\label{Lem:sideSimplicial}
Let \(Z^\pm\) be a minimal zipper for \(M\). Assume that \(g\in G\) fixes a unique point \(p\in Z^+\), acts freely on \(Z^-\), and admits no \(g\)-prong in \(Z^+\). If there is no right (resp. left) crossing connector of the \(g\)-axis \(\ell\) with respect to the canonical orientation, then \(\ell\) is right (resp.left) inaccessible, and  one-sided inaccessible.
\end{lem}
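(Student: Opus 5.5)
By the symmetry between left and right, I will treat the case in which $\ell$ has no right crossing connector and show that $\ell$ is right inaccessible. One-sided inaccessibility then follows by proving that $\ell$ is not left inaccessible.

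\textbf{Step 1: no ray of $Z^-$ lands on the right.} Suppose a ray $\sfr\subset Z^-$ lands at some $x\in\ell$ on the right side. Since $\ell$ and $\sfr$ both lie in the real tree $Z^-$, unique path-connectivity means $\sfr$ can leave a neighborhood of $x$ only by passing through a right branch of $\ell$ at some point $y\in\ell$. So, after truncation, we get a right branch $b$ of $\ell$ containing a subray landing at $x$. I will then transport this ray towards $p$ using $g$. Since $g$ translates $\ell$ towards $p$ (canonical orientation) and preserves sides, the rays $g^n(\sfr)$ are right branches landing at $g^n(x)\to p$.

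The main obstacle is to turn this into an actual right crossing connector. My plan is to use density of type~2 synapses in $Z^+$ (\refrmk{denseType2}), applied inside the Jordan domain cut off by $\sfr$ together with the subarc of $\ell$ from $y$ to $x$. Such a synapse is joined to a point of $Z^-$ inside that domain, and this point lies on a path through $b$. I would then concatenate: a path in $Z^-$ from a point of $\ell$ through $b$ to the synapse $s$, followed by the $Z^+$-arc from $s$ to $p$. This requires that the $Z^+$-arc from $s$ to $p$ avoids $\ell$, which is automatic since $Z^+\cap Z^-=\varnothing$. It also requires that $p\in Z^+$ lies in the same Jordan domain as $s$, or at least that the arc exits only at $p$. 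That last point is the delicate part. I would handle it by applying \refprop{enclosing} to the $Z^+$-arc relative to the domain bounded by $\ell\cup\Fix(g)$ and the pieces of $g^n(\sfr)$, choosing $n$ large. The result is a connector $\gamma$ with $\gamma(1)=p$, $\gamma([0,1])\cap\ell=\{\gamma(0)\}$, and $\gamma((0,1/2))$ in a right branch, which contradicts the hypothesis.

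\textbf{Step 1': no ray of $Z^+$ lands on the right.} Such a ray, landing at $x\in\ell$ on the right side, would have to lie in a branch at $p$ or be connected to $p$ through $Z^+$ outside $\ell$. Combined with the $Z^-$-path obtained from density of synapses, it again yields a right crossing connector ending at $p$, after using \refprop{classifyConnector} to control which branch at $p$ is involved. Steps 1 and 1' together give right inaccessibility.

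\textbf{Step 2: $\ell$ is not left inaccessible.} As noted before \refprop{classifyConnector}, since $p$ is a type~2 synapse there exists a crossing connector $\gamma$ of $\ell$; by hypothesis it is left. Its $Z^-$-part $\gamma([0,1/2))$ lies in a left branch of $\ell$ at $\gamma(0)$. I then take the arc from $\gamma(1/2)$ back to $\gamma(0)$ inside this branch. Its end ray lands at $\gamma(0)\in\Int\ell$, and lies in the left open neighborhood. So $\ell$ is accessible from the left and hence one-sided inaccessible.
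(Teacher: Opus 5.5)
The genuine gap is in Step~1. That is where minimality of the zipper has to enter, and your argument never uses it.

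First, the configuration you build in Step~1 is degenerate. Suppose a ray of $Z^-$ lands at $x\in\ell$ while its tail lies in a branch of $\ell$ at some $y\neq x$. Then that tail, joined to $y$ inside the branch, together with the arc $[y,x]\subset\ell$, is a Jordan curve contained in $Z^-$. This is impossible, since $Z^+$ is dense, path-connected and disjoint from $Z^-$. So $y=x$, and a $Z^-$-ray landing on the right of $\ell$ simply means that $\ell$ has a right branch $b$ at $x$.

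In that case $\sfr$ and a subarc of $\ell$ bound no Jordan domain. Neither does $\ell\cup\Fix(g)\cup\bigcup_n g^n(\sfr)$, whose complement is connected. So your density-of-synapses step has nothing to act on. Density of type~2 synapses in $S^2_\infty$ gives no reason why any connector should have its $Z^-$-part inside $b$; a priori $b$ could be a dead branch of $Z^-$ that no connector ever enters. The global facts you use (density of synapses, disjointness of $Z^\pm$, uniqueness of arcs) hold for every zipper. Excluding dead branches is exactly what minimality is for.

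The missing ingredient is Lemma~\ref{Lem:pruning} applied to $Z^-$, with the roles of $\pm$ exchanged. If no connector avoiding $x$ started in $b$, then $Z^-\setminus\bigcup_{h\in G}h(b)$ together with $Z^+$ would be a smaller zipper, contradicting minimality. Hence such a connector $\gamma$ exists. Since $\gamma([0,1/2))$ is an arc of $Z^-$ missing $x$, it lies in $b$.

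Now prepend the tree arc from $x$ into $b$, and append the $Z^+$-arc from the synapse to $p$ (which misses $\ell$ automatically). The result meets $\ell$ only at $x$ and has its $Z^-$-part in the right branch $b$. That is, it is a right crossing connector, contradicting the hypothesis. If the synapse happens to be $p$ itself, you instead get a $Z^-$-ray in $b$ landing at $p$. In that case the synapse argument of Step~1$'$, run in the domain bounded by this ray and the end ray of $\ell$ from $x$ to $p$, finishes the proof. This is how the paper obtains right simpliciality, i.e.\ the $Z^-$ half of right inaccessibility.

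Your Step~2 is correct: the crossing connector that exists because $p$ is a type~2 synapse must be left, and reversing its $Z^-$-part gives a $Z^-$-ray landing on the left of $\ell$.

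Step~1$'$ is in substance the paper's argument, but it should be made explicit:
\begin{itemize}
\item Extend the $Z^+$-ray through $Z^+$ so that it starts at $p$. Together with the end ray $\sigma$ of $\ell$ from the landing point to $p$, it forms a fence.
\item A type~2 synapse of $Z^+$ in the complementary domain disjoint from $\ell$ produces a connector whose $Z^-$-part starts on $\sigma$ from the right and whose $Z^+$-part runs to $p$.
\item This is a right crossing connector, contradicting the hypothesis.
\end{itemize}
Proposition~\ref{Prop:classifyConnector} is not needed here.
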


\begin{proof}
The right (resp. left) simpliciality, and hence one-sided simpliciality, of \(\ell\) follows from \reflem{pruning} together with the fact that there exists at least one left or right branch at the pivot, since \(Z^-\) cannot be a segment by \refrmk{noSeg}.

By symmetry, it suffices to consider the case where there is no right crossing connector of \(\ell\). We must show that no ray in \(Z^+\) lands on the right side of \(\ell\). Suppose otherwise. Then there exists a ray \(r\subset Z^+\) starting from the pivot \(p\) of \(\ell\) and landing at a point \(e\in \ell\) on the right side. Let \(\sigma\) be the end ray of \(\ell\) starting from \(e\) and landing at \(p\). Then \(\sigma\cup r\) forms a fence \(\cF\). Let \(\cD\) be the Jordan domain of \(\cF\) disjoint from \(\ell\). Since, by \refrmk{denseType2}, \(\cD\) contains a type~2 synapse in \(Z^+\), there exists a connector \(\gamma\) of type~2 crossing \(\cF\) in \(\cD\). By construction, $\gamma((0,1/2))$ should land at $\gamma(0)$ on the right side of $\ell$, a contradiction. This proves the claim.
\end{proof}
\subsection{Eclipses}
We are now ready to define the notion of an \emph{eclipse} on a zipper for an element $g\in G$ with $\Fix(g)\cap Z^+\ne \varnothing$ and $\Fix(g)\cap Z^-=\varnothing$.

\begin{const}[Eclipse on a Jordan domain]\label{Const:eclipseD}
Let \(Z^\pm\) be a minimal zipper for \(M\), and let \(g\in G\). Suppose that \(g\) admits a \(g\)-prong in \(Z^+\) and acts freely on \(Z^-\). Let \(\cD\) be a type~I Jordan domain of the invariant fence \(\cJ(g)\), let \(p\) be the pivot of \(\cD\), and write \(\Fix(g)=\{p,q\}\). Choose a connector arc \(\gamma\) crossing \(\cD\) such that the synapse of \(\gamma\) lies in \(Z^+\); see \refrmk{denseType2}.

Define the \emph{eclipse} \(\{\cE_n\}_{n\in\ZZ}\) on \(\cD\) associated to \(\gamma\) by letting \(\cE_0\) be the unique element of \(\{\closure{\cD^L(\gamma)},\closure{\cD^R(\gamma)}\}\) that contains \(q\), and setting
\[
\cE_n=
\begin{cases}
\ g^n(\cE_0) & \text{if } a(g)=q,\\
\ g^{-n}(\cE_0) & \text{if } r(g)=q.
\end{cases}
\qquad\text{for all } n\in\ZZ.
\]
See \reffig{bothType} (left).
\end{const}

Even when \(g\) admits no invariant fence, one can still define an eclipse as follows.

\begin{const}[Eclipse along an axis]\label{Const:eclipseL}
Let \(Z^\pm\) be a minimal zipper for \(M\). Let \(g\in G\) fix a unique point \(p\in Z^+\), act freely on \(Z^-\), and admit no \(g\)-prong in \(Z^+\). Write \(\Fix(g)=\{p,q\}\), and note that \(p\) is the pivot of the axis \(\ell\subset Z^-\) of \(g\).

Define \(\cE_0\) as follows. If \(p\) admits both a left branch and a right branch, choose type~2 connector arcs \(\delta_1,\delta_2\) crossing \(\ell\), with \(\delta_1\) left, \(\delta_2\) right, and \(\delta_i(1/2)\in Z^+\). Let \(s=[\delta_1(0),\delta_2(0)]\subset\ell\). After replacing \(\delta_2\) by \(g^k\circ\delta_2\) for some \(k\in\ZZ\), we may assume that \(s\cap g(s)=\varnothing\). Let \(\cJ\) be the fence formed by \(\delta_1\), \(\delta_2\), and \(s\), and let \(\cE_0\) be the closure of the Jordan domain of \(\cJ\) containing \(q\). If \(p\) admits only left branches or only right branches, choose a type~2 connector arc \(\delta\) of the corresponding type crossing \(\ell\), with \(\delta(1/2)\in Z^+\). Let \(r\) be the end ray of \(\ell\) from \(\delta(0)\) to \(p\). Then \(\delta([0,1])\cup r\) is a fence \(\cJ\) whose nodes lie in \(Z^+\), and let \(\cE_0\) be the closure of the Jordan domain bounded by \(\cJ\) that contains \(q\).

The \emph{eclipse} \(\{\cE_n\}_{n\in\ZZ}\) along \(\ell\), associated to \(\delta\) or to \((\delta_1,\delta_2)\), is defined by
\[
\cE_n=
\begin{cases}
\ g^n(\cE_0) & \text{if } a(g)=q,\\
\ g^{-n}(\cE_0) & \text{if } r(g)=q.
\end{cases}
\qquad\text{for all } n\in\ZZ.
\]
See \reffig{fatFence} and \reffig{squashedFence}. 
\end{const}

For an eclipse \(\{\cE_n\}_{n\in\ZZ}\) arising from \refconst{eclipseD} or \refconst{eclipseL}, set
\[
\cE_{-\infty}=\bigcup_{n\in\ZZ}\cE_n,
\quad\text{and}\quad
\cE_{\infty}=\bigcap_{n\in\ZZ}\cE_n.
\]
The crucial properties of an eclipse are that \(\cE_{n+1}\subsetneq \cE_n\) for all \(n\in\ZZ\), and that the \((-)\)-segments of the defining connectors shrink arbitrarily small toward \(q\) as \(n\to\infty\), while the \((+)\)-side remains anchored at \(p\). In fact, as is already visible in the proof of \reflem{crossAlt}, these properties are used implicitly but crucially to establish inaccessibility.

Before concluding this section, we summarize the topological properties of eclipses on a type~I Jordan domain.

\begin{prop}\label{Prop:eclipseDProperty}
Under the assumptions of \refconst{eclipseD}, the eclipse \(\{\cE_n\}_{n\in\ZZ}\) satisfies the following properties.
\begin{enumerate}
    \item \emph{(Nested exhaustion)} One has \(\cE_{-\infty}=\closure{\cD}\). Moreover, \(\cE_{n+1}\subsetneq \cE_n\) for all \(n\in\ZZ\).

    \item \emph{(Separation property)} For every \(n\in\ZZ\) and every \(k\in\NN\), one has
    \[
    \closure{\cE_n\setminus\cE_{n+k}}\cap \cE_\infty=\{p\}.
    \]

    \item \emph{(The end of the eclipse)} The set \(\cE_\infty\) is a non-empty closed subset of \(\closure{\cD}\) such that  \(\cE_\infty\cap \cJ(g)=\closure{\cJ(g)^+}\).
    Moreover,
    \[
\cE_\infty=\closure{\cJ(g)^+}\sqcup \bigcap_{n\in\ZZ}\Int\cE_n
    \]
    \item \emph{(Thinness of the end)} The set \(\bigcap_{n\in\ZZ}\Int\cE_n\) contains neither  synapse nor  point of \(Z^\pm\). In particular, \(\Int \cE_\infty=\varnothing\).
\end{enumerate}
\end{prop}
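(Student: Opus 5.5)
The plan is to fix the data of \refconst{eclipseD} and, after replacing \(g\) by \(g^{-1}\) if necessary, assume \(a(g)=q\), so that \(\cE_n=g^n(\cE_0)\). Write \(\ell=\cJ(g)^-\) for the \(g\)-axis in \(Z^-\). Put \(x=\gamma(0)\in\ell\), \(\gamma_n=g^n(\gamma)\) and \(x_n=g^n(x)\). Since \(\cD\) is of type~I with pivot \(p\), \reflem{crossAlt} gives \(\gamma(1)=p\). Hence \(\partial\cE_0\) is the union of \(\gamma\), the subray \([x,q)\) of \(\ell\), the point \(q\), and \(\closure{\cJ(g)^+}\). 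Likewise \(\partial\cE_n\) consists of \(\gamma_n\), the ray \([x_n,q)\) and \(\closure{\cJ(g)^+}\). Along \(\ell\) the points \(x_n\) move monotonically toward \(q\): \(x_n\to q\) as \(n\to\infty\) and \(x_n\to p\) as \(n\to-\infty\).

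\textbf{Step 1: the two connectors meet only at \(p\).} The key input is \(\gamma^+\cap g(\gamma)^+=\{p\}\) and \(\gamma^-\cap g(\gamma)^-=\varnothing\).
\begin{itemize}
\item For the \((+)\)-parts: by \reflem{simplicialSide}, \(\gamma^+\setminus\{p\}\) lies in a branch \(B\) at \(p\) disjoint from \(\cJ(g)^+\), and \(B\subset\cD\). By \refprop{invFence}, \(g(B)\cap B=\varnothing\), so the two \((+)\)-parts meet only at \(p\).
\item For the \((-)\)-parts: \(\gamma^-\setminus\{x\}\) and \(g(\gamma)^-\setminus\{x_1\}\) lie in branches of the tree \(Z^-\) hanging off \(\ell\) at the distinct points \(x\neq x_1\). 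Such branches are disjoint, and each meets \(\ell\) only at its foot.
\item Since \(Z^+\cap Z^-=\varnothing\), the two synapses are distinct and not shared.
\end{itemize}
It follows that \(\gamma_1\) meets \(\partial\cE_0\) only in \(x_1\in(x,q)\) and in \(p\). So \(\gamma_1\) is a cross-cut of \(\cE_0\), and the piece it cuts off containing \(q\) is \(\cE_1\). Thus \(\cE_1\subsetneq\cE_0\), and applying \(g^n\) gives strict nesting for every \(n\).

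\textbf{Step 2: exhaustion.} The arc \(\gamma\) is compact and misses \(q=a(g)\). Hence \(g^{n}(\gamma)\to p=r(g)\) in the Hausdorff sense as \(n\to-\infty\). The same holds for the arc \([x_n,p]\subset\ell\cup\{p\}\). The region \(\closure{\cD}\setminus\cE_n\) is bounded by \(\gamma_n\), the arc \([x_n,p]\) of \(\ell\), and the point \(p\). It therefore shrinks to \(p\), and \(p\in\cE_n\) for every \(n\). This gives \(\cE_{-\infty}=\closure{\cD}\), which completes property (1).

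\textbf{Step 3: separation and the end.} For property (2), \(\closure{\cE_n\setminus\cE_{n+k}}\) is the closed region bounded by \(\gamma_n\), \(\gamma_{n+k}\) and \([x_n,x_{n+k}]\subset\ell\). By Step~1 applied to \(\gamma_{n+k}\) and \(\gamma_{n+k+1}\), this region meets \(\cE_{n+k+1}\) only at \(p\), so it meets \(\cE_\infty\) only in \(\{p\}\). For property (3), note that \(\closure{\cJ(g)^+}\subset\partial\cE_n\) for all \(n\). On the other hand, each point of \(\ell\) lies in \([x_m,p]\) for some \(m\), so it is eventually outside \(\cE_{m+1}\). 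The points of \(\gamma_n\setminus\{p\}\) are also removed, by property (2). Therefore \(\cE_\infty\cap\cJ(g)=\closure{\cJ(g)^+}\), and every other point of \(\cE_\infty\) lies in \(\Int\cE_n\) for all \(n\). This gives the stated disjoint decomposition.

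\textbf{Step 4: thinness.} This is the step I expect to be the main obstacle. The plan is to argue by contradiction in three cases.
\begin{itemize}
\item \emph{A point \(z\in Z^-\) in \(\bigcap_n\Int\cE_n\).} The geodesic from \(z\) to \(\ell\) in \(Z^-\) reaches \(\ell\) at some point \(y\). Within \(\cE_n\), this path can only leave through \(\partial\cE_n\cap Z^-=\gamma_n^-\cup[x_n,q)\). If it met \(\gamma_n^-\), then \(z\) would lie in the branch at \(x_n\), which is cut off from \(\Int\cE_{n+1}\) by Step~1. Hence \(y\in[x_n,q)\) for all \(n\), which is impossible since \(x_n\to q\).
\item \emph{A point \(z\in Z^+\).} Consider the arc \([z,p]\). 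It lies in \(\cJ(g)^+\) or in a branch \(B'\) at \(p\) contained in \(\cD\). The translates \(g^m(B')\) are pairwise disjoint (\refprop{invFence}), and \(\cE_\infty\) is \(g\)-invariant. Together with property (2), this should force \(B'\) to be cut off by some \(\gamma_n\). The delicate point is showing that \(B'\) cannot sit entirely inside every \(\cE_n\). Here I would rerun the translation argument of \reflem{crossAlt}, using a connector ending in \(B'\) to produce a contradiction with the disjointness of \(Z^\pm\).
\item \emph{A synapse.} It carries a connector whose \((-)\)-part contains points of \(Z^-\) arbitrarily close to it. Those points lie in \(\Int\cE_n\), which reduces this case to the first.
\end{itemize}
Since \(Z^-\) is dense in \(S^2_\infty\), it follows that \(\Int\cE_\infty=\varnothing\).
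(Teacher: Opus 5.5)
\textbf{There is a genuine gap in Step~4, the case $z\in Z^+$.} Your Steps 1--3 and the $Z^-$ case of Step~4 are sound and follow the paper's route. In fact your Step~1 ($\gamma^+\cap g(\gamma)^+=\{p\}$ and $\gamma^-\cap g(\gamma)^-=\varnothing$) supplies the justification that the paper compresses into ``nestedness is immediate''.

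For the case $z\in Z^+\cap\bigcap_n\Int\cE_n$ you only write that things ``should force'' $B'$ to be cut off. The facts you list do not exclude a whole branch at $p$ lying in every $\cE_n$: disjointness of the translates $g^m(B')$, $g$-invariance of $\cE_\infty$, and the separation property. The argument must use minimality of $Z^+$, which your sketch never invokes. The ``connector ending in $B'$'' you want to use has no stated source; its existence would itself require minimality via the Pruning Lemma. The paper's argument is short and needs no rerun of the translation argument:
\begin{itemize}
\item $B'$ is none of the branches $B_n=g^n(B_0)$ that contain $\gamma_n^+\setminus\{p\}$. Indeed, $B_n$ is connected and misses $\partial\cE_{n+1}$ (by your Step~1 and the disjointness $g(B_0)\cap B_0=\varnothing$). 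It also contains points of $\gamma_n$ lying outside $\cE_{n+1}$, so $B_n\cap\cE_{n+1}=\varnothing$.
\item Hence $B'$ misses every $\partial\cE_n$. Being connected and containing $z$, it satisfies $B'\subset\bigcap_n\Int\cE_n$.
\item Now use minimality: $z\in(p,h(p)]$ for some $h\in G$, so $h(p)\in B'$.
\item $h(p)$ is a type~2 synapse, since an end ray of the axis $h(\ell)\subset Z^-$ lands there. So a ray of $Z^-$ lands at a point of $\bigcap_n\Int\cE_n$, which the $Z^-$/synapse case forbids.
\end{itemize}

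\textbf{Your reduction of the synapse case to Case~A is also not valid as written.} Points of the $(-)$-ray near the synapse $x$ lie in $\Int\cE_n$ only for $n$ depending on the point. Case~A, however, needs a single point of $Z^-$ in every $\Int\cE_n$. You can repair this by arguing with the ray itself. Your Case~A reasoning shows that any point of $Z^-\cap\Int\cE_n$ projects to $\ell$ inside $[x_n,q)$. Since the ray lands at $x\notin\closure{\ell}$, it eventually leaves $\ell$ for good, so its tail projects to one point $y_0\in\ell$. Tail points lie in $\Int\cE_n$ for each $n$, hence $y_0\in[x_n,q)$ for all $n$, which is impossible because $x_n\to q$.

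Alternatively, follow the paper. Take a ray $r\subset Z^-$ from some $s\in\ell$, landing at $x$ and meeting $\ell$ only at $s$. For $k$ large, $\partial\cE_k$ separates $x$ from $s$, so $r$ must meet $\gamma_k$, necessarily along $\gamma_k^-$, and this closes a loop in $Z^-$.

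Finally, the conclusion $\Int\cE_\infty=\varnothing$ needs only the $Z^-$ case together with density of $Z^-$. That part of (4) is therefore already fine as you have it.
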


\begin{proof}
The nestedness of the eclipse is immediate from the definition: for each \(n\in\ZZ\), one has \(\cE_{n+1}\subset \cE_n\).
To prove that \(\cE_{-\infty}=\closure{\cD}\), set \(D':=\closure{\cD\setminus \cE_0}\). Then \(D'\) is compact, \(D'\cup \cE_0=\cD\), and \(\Fix(g)\cap D'=\{p\}\).
It follows that either the forward or the backward iterates of \(D'\) under \(g\) are eventually contained in an arbitrarily small neighborhood of \(p\).
Therefore every point of \(\closure{\cD}\) belongs to \(\cE_n\) for all sufficiently negative \(n\), and hence \(\cE_{-\infty}=\closure{\cD}\).
The strict inclusion \(\cE_{n+1}\subsetneq \cE_n\) follows from the same observation.
This proves \((1)\).

Observe that \(\cE_n\cap \gamma=\{p\}\) for all \(n>0\), whereas \(\gamma\) crosses \(\Int \cE_n\) for all \(n<0\).
More generally, the same statement holds for every iterate \(g^m(\gamma)\), with the index shifted accordingly.
This yields \((2)\) and \((3)\), since the landing point of \(\gamma\) on \(\ell\) converges to \(q\) under iteration by \(g\).

It remains to prove the thinness of the end \(\cE_\infty\).
Suppose, for contradiction, that there exists a point
\(x\in \bigcap_{n\in\ZZ}\Int\cE_n\)
such that \(x\) is either a synapse or belongs to \(Z^\pm\).

First assume that \(x\) is either a synapse or belongs to \(Z^-\).
Then one can find a ray \(r\subset Z^-\) landing at \(x\) such that \(r\) meets the \((-)\)-side of \(\cJ(g)\) only at its initial point \(s\).
Let \(\gamma_k:=\partial \cE_k\cap \cD\), and let \(I\) be the component of \(\partial \cD\setminus\{q,s\}\) contained in \(\ell\).
Then \(\gamma_k\) lands at a point of \(I\) for all sufficiently large \(k\).
Fix such a \(k\).

Since \(x\in \Int \cE_k\), the set \(\partial \cE_k\) separates \(x\) from \(s\).
Hence the ray \(r\) must intersect \(\gamma_k\).
On the other hand, \(\gamma_k\) is of type~2 and its synapse lies in \(Z^+\).
Since \(Z^+\cap Z^-=\varnothing\), the intersection point must lie on the \((-)\)-segment of \(\gamma_k\).
But then \(r\), together with \(\gamma_k^-\) and \(\ell\), forms a loop in \(Z^-\), a contradiction.

Now assume that \(x\in Z^+\).
By \reflem{simplicialSide}, there is a unique branch \(B\) at \(p\) such that   \(x\in B \subset \cD\).
Since \(x\in \Int \cE_n\) for all \(n\in\ZZ\), (2) implies that \(B\subset \Int \cE_n\) for all \(n\in\ZZ\).
By the minimality of \(Z^+\), one has \(x\in (p,h(p)]\) for some \(h\in G\), so \(h(p)\in B\).
Since \(h(p)\) is a synapse, this contradicts the previous case.
Thus (4) follows from \refrmk{denseType2}.
\end{proof}

The same argument yields the corresponding properties for eclipses defined along an axis.
\begin{prop}\label{Prop:eclipseLProperty}
Assume the hypotheses of \refconst{eclipseL}.
\begin{itemize}
    \item \emph{(Nested exhaustion)} One has \(\cE_{-\infty}=S_\infty^2\). Moreover,
    \(
    \cE_{n+1}\subsetneq \cE_n
    \) for all  $n\in\ZZ$.
    \item \emph{(Separation property)}
    For every \(n\in\ZZ\) and every \(k\in\NN\), one has the following.

    \begin{enumerate}
        \item if $\ell$ is two-sided branched, then
        \[
\closure{\cE_n\setminus\cE_{n+k}}\cap \cE_\infty=\{p\}.
        \]

        \item if $\ell$ is one-sided simplicial, then
        \[        \closure{\cE_n\setminus\cE_{n+k}}\cap \cE_\infty
        =
        \{p\}\cup g^m([\delta(0),g^k\circ \delta(0)]),
        \]
        where \(m=n\) if \(q=a(g)\), and \(m=-(n+k)\) if \(q=r(g)\).
    \end{enumerate}

    \item \emph{(The end of the eclipse)} One has
\[
\cE_\infty=
\begin{dcases}
\ \{p\}\sqcup \displaystyle\bigcap_{n\in\ZZ}\Int\cE_n,
& \text{in the two-sided branched case},\\[1ex]
\ \{p\}\sqcup \ell\sqcup \displaystyle\bigcap_{n\in\ZZ}\Int\cE_n,
& \text{in the one-sided simplicial case}.
\end{dcases}
\]
In either case,
\(q\in \bigcap_{n\in\ZZ}\Int\cE_n.
\)
In particular, if $\ell$ is two-sided branched, then $\cE_\infty\cap\ell=\varnothing$.
    \item \emph{(Thinness of the end)} The set \(\bigcap_{n\in\ZZ}\Int\cE_n\) contains neither synapse or point of \(Z^\pm\). In particular, $q$ is not a synapse and
    \(
    \Int \cE_\infty=\varnothing.
    \)
\end{itemize}
\end{prop}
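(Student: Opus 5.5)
The plan is to follow the proof of \refprop{eclipseDProperty}, replacing the invariant fence \(\cJ(g)\) by the fence \(\cJ\) built in \refconst{eclipseL}. We then track how the two cases (two-sided branched versus one-sided simplicial) change the part of \(\partial\cE_0\) that stays fixed under iteration.

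\emph{Nested exhaustion.} Normalize so that \(a(g)=q\); the other case follows by replacing \(g\) with \(g^{-1}\). We first check \(g(\cE_0)\subset\cE_0\).
\begin{itemize}
\item In the two-sided branched case, \(s\cap g(s)=\varnothing\). The arcs \(g\circ\delta_i\) are again left/right crossing connectors, and by \refprop{classifyConnector} their \((+)\)-parts meet those of \(\delta_j\) only at \(p\). So \(g(\cJ)\) meets \(\cJ\) only at \(p\), and \(g(\cJ)\setminus\{p\}\) lies in the closed domain containing \(q\), because \(g(s)\) is closer to \(q\) along \(\ell\).
\item In the one-sided case, \(g(r)\subsetneq r\), and \(g\circ\delta\) meets \(\delta\) only at \(p\) by the same proposition.
\end{itemize}
Set \(D'=\closure{S^2_\infty\setminus\cE_0}\). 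Then \(D'\cap\Fix(g)=\{p\}\), since \(q\in\Int\cE_0\). By loxodromic dynamics, backward iterates of any compact set avoiding \(q\) accumulate at \(r(g)=p\). Hence every point other than \(q\) eventually lies in \(g^{-n}(\cE_0)\), which gives \(\cE_{-\infty}=S^2_\infty\). Strictness follows because \(g(\partial\cE_0)\ne\partial\cE_0\).

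\emph{Separation and end.} Compute \(\partial\cE_n\cap\partial\cE_{n+k}\).
\begin{itemize}
\item Two-sided case: the \((-)\)-segments \(g^n(s)\) are pairwise disjoint and march toward \(q\), and the \((+)\)-parts meet only at \(p\). So \(\closure{\cE_n\setminus\cE_{n+k}}\) meets every later \(\cE_m\) only at \(p\). Intersecting over \(m\) gives \(\cE_\infty=\{p\}\sqcup\bigcap\Int\cE_n\), and \(\cE_\infty\cap\ell=\varnothing\) because every point of \(\ell\) lies in some \(g^n(s)\)-gap that is eventually excluded.
\item One-sided case: all \(\cE_n\) share the ray \(g^n(r)\subset\ell\) on their boundary. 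The difference \(\cE_n\setminus\cE_{n+k}\) is bounded by \(g^n\delta\), \(g^{n+k}\delta\) and the subsegment \(g^n([\delta(0),g^k\delta(0)])\) of \(\ell\). That subsegment lies in every \(\cE_m\), since the domains sit on the non-branched side of \(\ell\). This yields the stated formula, and the union over \(n\) of these segments is \(\ell\).
\end{itemize}
In both cases, \(q\in\Int\cE_n\) for all \(n\): \(q\) is fixed and lies in \(\Int\cE_0\), because it is not on \(\cJ\) (\(q\notin Z^+\) and \(q\notin\ell\)).

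\emph{Thinness.} This copies part (4) of \refprop{eclipseDProperty}. Suppose \(x\in\bigcap\Int\cE_n\) is a synapse or a point of \(Z^-\). Take a ray in \(Z^-\) landing at \(x\), starting on \(\ell\) (or joined to \(\ell\) inside \(Z^-\)). For large \(n\), the boundary \(\partial\cE_n\) separates \(x\) from the initial point. The ray must then cross the \((-)\)-part of some \(g^n\delta_i\), which creates a loop in \(Z^-\) together with \(\ell\), or a contradiction with disjointness of \(Z^\pm\).
\begin{itemize}
\item If \(x\in Z^+\), use minimality to find \(h(p)\in(p,x]\). Then \(h(p)\) is a type~2 synapse inside the same branch at \(p\), and the separation property forces that whole branch near \(x\) into \(\bigcap\Int\cE_n\), which reduces to the previous case.
\item Since \(q\in\bigcap\Int\cE_n\), it follows that \(q\) is not a synapse. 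Finally, \(\Int\cE_\infty=\varnothing\) because synapses are dense (\refrmk{denseType2}).
\end{itemize}

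The main obstacle is the one-sided simplicial case. There the fixed boundary piece \(\ell\) is not shrunk, so the separation formula and the claim that the difference regions meet \(\cE_\infty\) exactly in that segment need care. This relies on \reflem{sideSimplicial}: with no connectors or rays on that side, nothing from \(Z^\pm\) enters between consecutive \(g^n\delta\) near \(\ell\).
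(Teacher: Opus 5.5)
Your route is the paper's own: it only says that the proof of \refprop{eclipseDProperty} carries over. Your nested-exhaustion, separation and end computations are essentially right, apart from three slips. First, with \(a(g)=q\) one has \(g(r)\supsetneq r\), not \(g(r)\subsetneq r\). Second, \refprop{classifyConnector} only compares a left connector with a right one, so for \(\delta\) against \(g\circ\delta\) (or \(\delta_i\) against \(g\circ\delta_i\)) you instead need \(g(B)\cap B=\varnothing\) for branches \(B\) at \(p\); this follows from \refprop{invBranch} and the absence of \(g\)-prongs. Third, minimality gives \(x\in(p,h(p)]\), not \(h(p)\in(p,x]\), though this still places \(h(p)\) in the branch containing \(x\).

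The genuine gap is the thinness step, which does not transfer verbatim from \refprop{eclipseDProperty}. It fails in two places.

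(i) In the one-sided simplicial case, the claim that \(\partial\cE_n\) separates \(x\) from the initial point \(s\in\ell\) of your ray is false. You showed \(\ell\subset\cE_n\) for all \(n\), so \(s\) lies in \(\Int\cE_n\) or on \(g^n(r)\subset\partial\cE_n\), never on the other side. The missing idea is a side argument:
\begin{itemize}
\item truncate so that the ray meets \(\ell\) only at \(s\);
\item the ray then leaves \(s\) into a branch of \(\ell\), which by one-sided simpliciality lies on the branched side;
\item near points of \(g^n(r)\), however, \(\cE_n\) lies on the simplicial side;
\item so once \(s\in\Int g^n(r)\), the ray starts outside \(\cE_n\) and must enter through \(g^n(\delta)^-\);
\item the tripod contradiction then works because \(s\neq g^n(\delta(0))\).
\end{itemize}

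(ii) The case \(x=q\) is not covered in either case, and this is exactly your ``in particular, \(q\) is not a synapse''. Every ray of \(Z^-\) landing at \(q\), once joined to \(\ell\), ends along the end of \(\ell\) toward \(q\). Otherwise it would bound, together with \(\ell\), a Jordan curve in \(Z^-\cup\{q\}\), and density of \(Z^+\) would force \(q\in Z^+\). So no such ray meets \(\ell\) only at its start. In the two-sided case the crossing of \(\partial\cE_n\) can then occur on \(g^n(s)\subset\ell\), which gives no contradiction.

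You need a separate argument for \(q\), for example:
\begin{itemize}
\item since \(q\notin Z^\pm\), a synapse at \(q\) would be of type~1, so some ray of \(Z^+\) lands at \(q\);
\item extend it to a ray \(R\) from \(p\), lying in a branch \(B\) at \(p\);
\item \(R\) and \(g(R)\) meet only at \(p\) because \(g(B)\cap B=\varnothing\), so \(R\cup g(R)\cup\{q\}\) is a Jordan curve in \(Z^+\cup\{q\}\);
\item density and path-connectivity of \(Z^-\) then force \(q\in Z^-\), contradicting the free action of \(g\) on \(Z^-\), exactly as in \refprop{oneProng}.
\end{itemize}
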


\section{Two fixed points in $Z^+$}\label{Sec:twoInOne}

In this section, we prove the following.
\begin{restate}{Theorem}{Thm:twoInOne}
Let $Z^\pm$ be a minimal zipper for $M$. Then no element \(g\in G\) has its fixed point set contained entirely in \(Z^+\) or entirely in \(Z^-\).
\end{restate}
Recall the strategy outlined in \refsec{proofOutline}.
Most of the lemmas proved in this section extend to the more general setting in which an element \(g\in G\) admits a \(g\)-prong on \(Z^+\).
For later use in the proof of \refthm{oneProng}, we therefore state them in this greater generality.

\subsection{Invariant Fence systems}
We say that a pair of Jordan curves, $\cI,\cJ$ in $S^2$, are \emph{unlinked} if $\cI$ does not intersect both Jordan domains of $\cJ$, equivalently, $\cI\subset \closure{\cD}$ for some Jordan domain of $\cJ$.

\begin{prop}\label{Prop:unlinking}
Let $\cI,\cJ$ be distinct fences of a zipper $Z^\pm$.
Assume that the nodes of $\cI$ lie in $Z^+$.
If $\cI^+$ and $\cJ^+$ are disjoint, then $\cI$ and $\cJ$ are unlinked.
\end{prop}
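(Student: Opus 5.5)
The plan is to argue directly from the structure of the two sides of each fence. The key fact is that \(\cI\setminus\cJ\) has few components, and each of them can be shown to accumulate at a node of \(\cI\). I will show that \(\cI\setminus\cJ\) lies in a single Jordan domain \(\cD_1\) of \(\cJ\), which gives \(\cI\subset\closure{\cD_1}\).

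\textbf{Step 1: \(\cI^+\) misses \(\cJ\).} Since the nodes \(u,v\) of \(\cI\) lie in \(Z^+\), the \((+)\)-side \(\cI^+\) is the closed arc \(\cI\cap Z^+\), and \(\cI^-=\cI\setminus\cI^+\) is an open arc in \(Z^-\) landing at \(u\) and \(v\). I compute \(\cI^+\cap\cJ\) piece by piece.
\begin{itemize}
\item \(\cI^+\) is disjoint from \(\cJ^+\) by hypothesis.
\item \(\cI^+\) is disjoint from \(\cJ^-\) because \(Z^+\cap Z^-=\varnothing\).
\item A node of \(\cJ\) lying in \(Z^+\) belongs to the component \(\cJ^+\) of \(\cJ\cap Z^+\), so it is already excluded. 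A node lying in \(Z^-\) or in neither set is not in \(Z^+\), so it cannot lie on \(\cI^+\).
\end{itemize}
Hence \(\cI^+\cap\cJ=\varnothing\). Since \(\cI^+\) is connected, it lies in one Jordan domain of \(\cJ\); call it \(\cD_1\). In particular \(u,v\in\cD_1\).

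\textbf{Step 2: where \(\cI^-\) can meet \(\cJ\).} Since \(\cI^-\subset Z^-\), we have \(\cI^-\cap\cJ\subset\cJ\cap Z^-\). By the same component argument, \(\cJ\cap Z^-\) is a segment \(\tau\) of the real tree \(Z^-\): it is \(\cJ^-\) together with those nodes of \(\cJ\) that happen to lie in \(Z^-\). The set \(\cI^-\) is also a segment (a line) in \(Z^-\). In a real tree the intersection of two segments is connected, so \(\sigma:=\cI^-\cap\tau\) is a (possibly empty) subsegment of \(\cI^-\). I would justify this carefully through unique arc-connectivity of \(Z^-\) (Theorem~\ref{Thm:zipperRtree}).

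\textbf{Step 3: the components of \(\cI^-\setminus\sigma\).} If \(\sigma=\varnothing\), then \(\cI^-\) is connected and disjoint from \(\cJ\). Otherwise \(\cI^-\setminus\sigma\) has at most two components, and each is an end ray (or a subsegment) of \(\cI^-\) disjoint from \(\cJ\). In either case, every such piece contains points arbitrarily close in \(S^2_\infty\) to \(u\) or to \(v\), because the end rays of \(\cI^-\) land at the nodes.

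These pieces are connected in the path topology. Since the inclusion \(Z^-\hookrightarrow S^2_\infty\) is continuous, they are also connected in \(S^2_\infty\), and they avoid \(\cJ\). Each piece therefore lies in a single Jordan domain of \(\cJ\). Since \(u,v\in\cD_1\), which is open, that domain must be \(\cD_1\).

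\textbf{Conclusion and main obstacle.} We get \(\cI\setminus\cJ\subset\cD_1\), so \(\cI\subset\closure{\cD_1}\) and the fences are unlinked. The step needing the most care is Step 2. There one must check that the \(\cJ\)-points of \(\cI^-\) form a single connected segment. This uses the tree structure of \(Z^-\), and requires making sure that nodes of \(\cJ\) in \(Z^-\) and the path-topology versus subspace-topology distinction (Theorem~\ref{Thm:zipperRtree}) cause no trouble.
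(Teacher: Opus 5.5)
Your proof is correct, and your first step (showing $\cI^+$ avoids $\cJ$, so $\cI^+$ and both nodes of $\cI$ lie in one Jordan domain $\cD_1$) is the same as the paper's. The second step takes a different route.

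\emph{The paper's route.} It takes two small disjoint end rays of $\cI^-$ inside $\cD_1$ and joins their starting points by the tree arc $\cA\subset Z^-$. When $\cA$ meets $\cJ^-$, it invokes Proposition~\ref{Prop:enclosing} to force $\cA\subset\closure{\cD_1}$.

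\emph{Your route.} You observe that $\cI^-\cap\cJ=\cI^-\cap(\cJ\cap Z^-)$ is an intersection of two segments of the real tree $Z^-$. Segments are convex by unique arc-connectedness, so this intersection is a single closed interval $\sigma$ of the line $\cI^-$. It stays away from the ends of $\cI^-$, since near $u,v$ the line lies in the open set $\cD_1$. Each component of $\cI^-\setminus\sigma$ therefore contains an end ray landing at a node, and by connectedness lies in $\cD_1$.

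\emph{Comparison.} Your argument is more self-contained and avoids the left/right-side hypotheses of the enclosing proposition. It also handles nodes of $\cJ$ lying in $Z^-$ explicitly, whereas the paper silently absorbs them into $\cJ^-$. The paper's route has the advantage of reusing a lemma it needs repeatedly later.

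The topological point you flag in Step~2 is harmless. A compact subarc of the Jordan line $\cI^-$ is an embedded path in $S^2_\infty$ lying in $Z^-$, so by uniqueness it is the tree arc. Hence the tree order on $\cI^-$ agrees with its order as a Jordan line, and intervals of one are intervals of the other.
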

\begin{proof}
Since $\cI^+\cap \cJ^+=\varnothing$ and $Z^+\cap Z^-=\varnothing$, $\cI^+$ is disjoint from $\cJ$ and hence contained in a Jordan domain $\cD$ of $\cJ$. Choose two disjoint end rays $r_1,r_2$ of $\cI^-$ contained in $\cD$. Let $\cA\subset Z^-$ be the unique arc joining the starting points of $r_1$ and $r_2$. If $\cA\cap \cJ=\varnothing$, then $\cI\subset \cD$. Otherwise, $\cA\cap \cJ^- \neq \varnothing$, so \refprop{enclosing} yields $\cA\subset \overline{\cD}$. Thus in either case $\cI\subset \overline{\cD}$, and therefore $\cI$ and $\cJ$ are unlinked.
\end{proof}
\begin{figure}[h]
\centering

\tikzset{every picture/.style={line width=0.75pt}} %set default line width to 0.75pt        

\begin{tikzpicture}[x=0.75pt,y=0.75pt,yscale=-1.5,xscale=1.5]
%uncomment if require: \path (0,300); %set diagram left start at 0, and has height of 300

%Curve Lines [id:da0038433347597566225] 
\draw [color={rgb, 255:red, 74; green, 144; blue, 226 }  ,draw opacity=1 ]   (186.43,98.26) .. controls (210.14,90.82) and (219.71,122.12) .. (232.71,126.12) .. controls (245.71,130.12) and (236.71,167.12) .. (232.71,177.12) .. controls (228.71,187.12) and (200.71,197.12) .. (176.27,198.12) ;
%Curve Lines [id:da9742120534768556] 
\draw [color={rgb, 255:red, 208; green, 2; blue, 27 }  ,draw opacity=1 ]   (181.88,99.56) .. controls (155.43,104.26) and (121.71,138.12) .. (134.71,149.12) .. controls (147.71,160.12) and (149.71,196.12) .. (172.71,198.12) ;
%Curve Lines [id:da9758218458639457] 
\draw [color={rgb, 255:red, 208; green, 2; blue, 27 }  ,draw opacity=1 ]   (186.43,100.26) .. controls (196.43,109.26) and (195.15,126.24) .. (208.15,130.24) .. controls (221.15,134.24) and (203.71,144.12) .. (223.71,148.12) ;
%Curve Lines [id:da7333613917215671] 
\draw [color={rgb, 255:red, 208; green, 2; blue, 27 }  ,draw opacity=1 ]   (184.43,101.26) .. controls (186.43,118.26) and (164.71,134.12) .. (176.71,140.12) .. controls (188.71,146.12) and (176.71,166.12) .. (196.71,170.12) ;
%Curve Lines [id:da1292432106126329] 
\draw [color={rgb, 255:red, 208; green, 2; blue, 27 }  ,draw opacity=1 ]   (181.88,101.56) .. controls (163.15,117.56) and (148.15,144.56) .. (159.15,151.56) .. controls (170.15,158.56) and (155.15,170.56) .. (174.15,179.56) ;
%Shape: Circle [id:dp5906174230219742] 
\draw  [color={rgb, 255:red, 74; green, 144; blue, 226 }  ,draw opacity=1 ][fill={rgb, 255:red, 208; green, 2; blue, 27 }  ,fill opacity=1 ][line width=0.75]  (172.86,180.27) .. controls (172.86,178.77) and (174.07,177.56) .. (175.57,177.56) .. controls (177.06,177.56) and (178.27,178.77) .. (178.27,180.27) .. controls (178.27,181.76) and (177.06,182.98) .. (175.57,182.98) .. controls (174.07,182.98) and (172.86,181.76) .. (172.86,180.27) -- cycle ;
%Curve Lines [id:da24188621678162836] 
\draw [color={rgb, 255:red, 74; green, 144; blue, 226 }  ,draw opacity=1 ]   (178.43,180.98) .. controls (187.14,182.67) and (188.71,182.69) .. (190.71,187.69) .. controls (192.71,192.69) and (186.71,190.69) .. (186.71,196.69) ;
%Curve Lines [id:da8750175090996589] 
\draw [color={rgb, 255:red, 74; green, 144; blue, 226 }  ,draw opacity=1 ]   (228.43,148.98) .. controls (237.14,150.67) and (229.71,156.69) .. (230.71,159.69) .. controls (231.71,162.69) and (230.71,161.69) .. (236.71,165.69) ;
%Straight Lines [id:da5686199741985563] 
\draw [color={rgb, 255:red, 208; green, 2; blue, 27 }  ,draw opacity=1 ] [dash pattern={on 0.84pt off 2.51pt}]  (137,143) -- (152.43,143.12) ;
%Straight Lines [id:da045917735291756356] 
\draw [color={rgb, 255:red, 208; green, 2; blue, 27 }  ,draw opacity=1 ] [dash pattern={on 0.84pt off 2.51pt}]  (198.43,113.12) -- (206.43,104.12) ;
%Straight Lines [id:da7264516054291804] 
\draw [color={rgb, 255:red, 208; green, 2; blue, 27 }  ,draw opacity=1 ] [dash pattern={on 0.84pt off 2.51pt}]  (174.43,194.12) -- (175.43,184.12) ;
%Shape: Circle [id:dp8957873923122714] 
\draw  [color={rgb, 255:red, 144; green, 19; blue, 254 }  ,draw opacity=1 ][fill={rgb, 255:red, 255; green, 255; blue, 255 }  ,fill opacity=1 ][line width=0.75]  (170.86,198.27) .. controls (170.86,196.77) and (172.07,195.56) .. (173.57,195.56) .. controls (175.06,195.56) and (176.27,196.77) .. (176.27,198.27) .. controls (176.27,199.76) and (175.06,200.98) .. (173.57,200.98) .. controls (172.07,200.98) and (170.86,199.76) .. (170.86,198.27) -- cycle ;
%Shape: Circle [id:dp6739313483128019] 
\draw  [color={rgb, 255:red, 74; green, 144; blue, 226 }  ,draw opacity=1 ][fill={rgb, 255:red, 208; green, 2; blue, 27 }  ,fill opacity=1 ][line width=0.75]  (222.86,148.27) .. controls (222.86,146.77) and (224.07,145.56) .. (225.57,145.56) .. controls (227.06,145.56) and (228.27,146.77) .. (228.27,148.27) .. controls (228.27,149.76) and (227.06,150.98) .. (225.57,150.98) .. controls (224.07,150.98) and (222.86,149.76) .. (222.86,148.27) -- cycle ;
%Shape: Circle [id:dp9368157684936457] 
\draw  [color={rgb, 255:red, 74; green, 144; blue, 226 }  ,draw opacity=1 ][fill={rgb, 255:red, 208; green, 2; blue, 27 }  ,fill opacity=1 ][line width=0.75]  (180.86,99.27) .. controls (180.86,97.77) and (182.07,96.56) .. (183.57,96.56) .. controls (185.06,96.56) and (186.27,97.77) .. (186.27,99.27) .. controls (186.27,100.76) and (185.06,101.98) .. (183.57,101.98) .. controls (182.07,101.98) and (180.86,100.76) .. (180.86,99.27) -- cycle ;
%Shape: Circle [id:dp5397747102005691] 
\draw  [color={rgb, 255:red, 74; green, 144; blue, 226 }  ,draw opacity=1 ][fill={rgb, 255:red, 208; green, 2; blue, 27 }  ,fill opacity=1 ][line width=0.75]  (207.86,160.27) .. controls (207.86,158.77) and (209.07,157.56) .. (210.57,157.56) .. controls (212.06,157.56) and (213.27,158.77) .. (213.27,160.27) .. controls (213.27,161.76) and (212.06,162.98) .. (210.57,162.98) .. controls (209.07,162.98) and (207.86,161.76) .. (207.86,160.27) -- cycle ;
%Curve Lines [id:da3449168649803357] 
\draw [color={rgb, 255:red, 208; green, 2; blue, 27 }  ,draw opacity=1 ]   (183.86,101.12) .. controls (190.86,111.12) and (179.86,124.79) .. (191.86,130.79) .. controls (203.86,136.79) and (202.86,146.79) .. (208.86,157.79) ;
%Curve Lines [id:da5318788132215212] 
\draw [color={rgb, 255:red, 74; green, 144; blue, 226 }  ,draw opacity=1 ]   (212.86,161.79) .. controls (219.86,172.79) and (230.71,175.69) .. (220.86,177.79) .. controls (211,179.88) and (215.86,181.79) .. (200.86,171.79) ;
%Shape: Circle [id:dp3526006615494177] 
\draw  [color={rgb, 255:red, 144; green, 19; blue, 254 }  ,draw opacity=1 ][fill={rgb, 255:red, 255; green, 255; blue, 255 }  ,fill opacity=1 ][line width=0.75]  (195.86,171.27) .. controls (195.86,169.77) and (197.07,168.56) .. (198.57,168.56) .. controls (200.06,168.56) and (201.27,169.77) .. (201.27,171.27) .. controls (201.27,172.76) and (200.06,173.98) .. (198.57,173.98) .. controls (197.07,173.98) and (195.86,172.76) .. (195.86,171.27) -- cycle ;

% Text Node
\draw (178,86) node [anchor=north west][inner sep=0.75pt]  [,color={rgb, 255:red, 208; green, 2; blue, 27 }  ,opacity=1 ]  {$p$};
% Text Node
\draw (168,204) node [anchor=north west][inner sep=0.75pt]  [,color={rgb, 255:red, 0; green, 0; blue, 0 }  ,opacity=1 ]  {$q$};
% Text Node
\draw (243,141.4) node [anchor=north west][inner sep=0.75pt]  [,color={rgb, 255:red, 74; green, 144; blue, 226 }  ,opacity=1 ]  {$\ell $};
% Text Node
\draw (180.86,172) node [anchor=north west][inner sep=0.75pt]  [font=\footnotesize,color={rgb, 255:red,0; green, 0; blue, 0 }  ,opacity=1 ]  {$h( q)$};
% Text Node
\draw (171.86,139.67) node[
  anchor=north west,
  inner sep=0.75pt,
  font=\footnotesize,
  color={rgb,255:red,208; green,2; blue,27},
  fill=white
] {$h(\cJ(g))^+$};

\end{tikzpicture}

\caption{$h(\cJ(g))$ in the type I Jordan domain, intersecting $\cJ(g)^+$ for $q\notin Z^+$.}
\label{Fig:plusIntersection}
\end{figure}
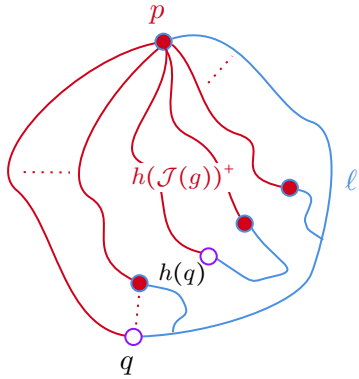
\begin{lem}[Invariant fence system]\label{Lem:fenceSystem}
Let $Z^\pm$ be a minimal zipper for $M$.
Assume that $g$ admits a $g$-prong in $Z^+$ and acts freely on $Z^-$.
Write $\Fix(g)=\{p,q\}$.
Then, for any $h\in G\setminus \{1\}$, the following hold:
\begin{itemize}
    \item $\cJ(g)$ and $h(\cJ(g))$ are unlinked;
    \item if $\Fix(g)\subset Z^+$, then:
    \begin{itemize}
        \item $h(\cJ(g)^+)\cap \cJ(g)^+\neq \varnothing$ if and only if $\cJ(g)$ is the invariant fence of $h$;
    \end{itemize}
    \item if $\Fix(g)\cap Z^+=\{p\}$, then:
    \begin{enumerate}
        \item either $h(q)\notin \cJ(g)$ or $h(q)=q$;
        \item \label{Itm:coincide} $h(p)=p$ or $h(q)=q$ if and only if $\cJ(g)$ is the invariant fence of $h$;
        \item $h(p)\notin \cD(p)$ if and only if $h(q)\notin \cD(p)$;
        \item\label{Itm:innerTouch} $h(p)\in \cD(p)$ and $h(\cJ(g)^+)\cap \cJ(g)^+\neq \varnothing$ if and only if $h(p)\neq p$, $h(\cJ(g)^+)\cap \cJ(g)^+=\{p\}$, and $h(\cJ(g)^+)\setminus\{p\}\subset \cD(p)$. 
        (see \reffig{plusIntersection})
    \end{enumerate}
\end{itemize}
\end{lem}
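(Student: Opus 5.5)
The plan is to prove every item by comparing the $(+)$-sides $\cJ(g)^+$ and $h(\cJ(g)^+)=\cJ(hgh^{-1})^+$ inside the real tree $Z^+$. The tools are \refprop{unlinking}, \refprop{oneProng}, and the type~I structure of \reflem{simplicialSide}. Throughout, $h(\cJ(g))$ is the invariant fence of $hgh^{-1}$, whose nodes are $h(p),h(q)$.

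\textbf{Unlinking.} If $\cJ(g)^+\cap h(\cJ(g))^+=\varnothing$, the sides are disjoint. When $\Fix(g)\subset Z^+$ the nodes lie in $Z^+$, and \refprop{unlinking} applies directly. When only $p\in Z^+$, I would rerun the proof of \refprop{unlinking}. The argument used that the nodes lie in $Z^+$ only to place the whole of $\cI^+$ in one domain, and the end rays of $\cI^-$ land at $h(q)$, which may sit on $\cJ$ only if it meets $\cJ^-$; that is excluded because $q\notin Z^-$.

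If instead the $(+)$-sides meet, their intersection is an arc $[x,y]$ of the tree $Z^+$, and the two segments are unlinked in $Z^+$. The reason is that a branch of $\cJ(g)^+$ at an interior point lies in a type~I domain, so by \reflem{crossAlt} a connector crossing that domain must end at the pivot. A linked configuration would produce, after translating by powers of $g$ as in the proof of \reflem{crossAlt}, a connector crossing $\cD(p)$ that ends in $\Int\cJ(g)^+$, which is impossible. Unlinked $(+)$-sides, together with disjoint $(-)$-sides (handled by \refprop{enclosing} as before), force $h(\cJ(g))\subset\closure{\cD}$ for one domain $\cD$.

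\textbf{Case $\Fix(g)\subset Z^+$.} Here both domains are type~I with distinct pivots $p,q$ (\refrmk{inaccessibleSide}), and $\cJ(g)^+$ is inaccessible. Suppose $h(\cJ(g)^+)$ meets $\cJ(g)^+$ without being equal to it. Then some end of $h(\cJ(g)^+)$ leaves $\cJ(g)^+$ at a point $x$ and enters a branch there. If $x$ is interior, this contradicts the simpliciality of $\cJ(g)^+$ (there are no branches at interior points). If $x\in\{p,q\}$, the branch lies in $\cD(x)$, and its remaining part must reach the other node $h(\cdot)$, which is a type~2 synapse. A connector from $h(\cJ(g)^-)$ landing there crosses $\cD(x)$. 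Translating it by $g^{\pm n}$ as in \reflem{crossAlt} produces a link with the translate, a contradiction. Hence the two sides coincide, so $h$ permutes $\{p,q\}$. Since $h^2$ fixes $p$, $h$ commutes with $g$ up to a common power, and $\cJ(g)$ is the $h$-invariant fence by \refprop{oneProng}. The converse is immediate.

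\textbf{Case $\Fix(g)\cap Z^+=\{p\}$.}
\begin{enumerate}
\item If $h(q)\in\cJ(g)$, then $h(q)\notin Z^\pm$ forces $h(q)\in\{p,q\}$, so $h(q)=q$ because $p\in Z^+$.
\item If $h(p)=p$ or $h(q)=q$, then $h$ shares a fixed point with $g$. By discreteness $h$ and $g$ share both fixed points, and \refprop{oneProng} identifies the fences. The converse is clear.
\item The sets $h(p),h(q)$ are the nodes of the connected fence $h(\cJ(g))$, which lies in a single closed domain by the first bullet. Using item (1) to exclude nodes on $\cJ(g)$ except in the trivial case, both nodes lie in the same domain.
\item Suppose $h(p)\in\cD(p)$ and the $(+)$-sides meet. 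Then $h(\cJ(g)^+)\subset\closure{\cD(p)}$. The intersection with $\cJ(g)^+$ must be $\{p\}$, for otherwise a nondegenerate overlap would leave $\cJ(g)^+$ at a point with a branch on the $\cD(p)$-side, contradicting the simplicial/pivot structure of \reflem{simplicialSide}. The reverse implication is trivial.
\end{enumerate}

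The main obstacle is the unlinking when the $(+)$-sides overlap and $q\notin Z^+$. There, $\cD(p)$ is type~I while the other domain is type~II, so one-sided accessibility must be exploited carefully. I would first try to reduce it to the $g^n$-translation argument of \reflem{crossAlt}, applied to a connector ending at $h(p)$.
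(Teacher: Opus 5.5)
Your disjoint-sides argument and items (1)--(2) are fine. The case you flag as the main obstacle, however, is a genuine gap. Since you derive items (3) and (4) from the first bullet, they fall with it. Your derivation of (3) also overlooks $h(p)\in\Int\cJ(g)^+$, which item (1) does not exclude.

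Unlinkedness of $\cJ(g)^+$ and $h(\cJ(g)^+)$ in the tree $Z^+$ only constrains branches at \emph{interior} points of $\cJ(g)^+$. It does hold here, simply because the $\cD(p)$-side carries no such branches. Your stated reason is wrong, though: the type~II side $\cD'$ does carry branches at interior points. In any case tree-unlinkedness says nothing about the node $p$, which is an endpoint of $\cJ(g)^+$.

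The configuration that must be excluded is the following. The side $h(\cJ(g)^+)$ starts at $h(p)$ in a branch $B_n\subset\cD(p)$ at $p$, passes through $p$, and runs along an initial arc of $\Int\cJ(g)^+$. It then leaves through a branch on the $\cD'$-side and lands at $h(q)\in\cD'$. This is unlinked in $Z^+$, and the $(-)$-sides may be disjoint, yet $h(\cJ(g))$ meets both domains of $\cJ(g)$. So ``unlinked $(+)$-sides plus \refprop{enclosing}'' does not give $h(\cJ(g))\subset\closure{\cD}$. Nor does \reflem{crossAlt} rule it out. The connectors visible in this picture end at $p$ from the $\cD(p)$-side and in $\Int\cJ(g)^+$ from the $\cD'$-side, exactly as types~I and~II allow, so a $g^n$-translation argument has nothing to contradict.

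The missing idea is to prove (3) first and directly.
\begin{itemize}
\item Suppose exactly one node $n$ of $h(\cJ(g))$ lies in $\cD(p)$, let $m$ be the other, and put $\cA=h(\Int\cJ(g)^+)$. By (1) and (2), either $m\in\cD(p)^c$ or $m\in\Int\cJ(g)^+$.
\item If $p\notin\cA$, then $\cA$ lies in the branch $B_n\subset\cD(p)$ at $p$ containing its end near $n$. So it lands at $m\in\Int\cJ(g)^+$ from the $\cD(p)$-side, contradicting the inaccessibility part of \reflem{crossAlt}.
\item Hence $p\in\cA$, and $\cA\setminus\{p\}$ consists of a line $\ell_n\subset B_n$ and a line in the $g$-invariant branch containing the prong.
\item The element $g$ fixes $p$ and preserves the circular order of branches at $p$, and $g(B_n)\cap B_n=\varnothing$ by \refprop{invFence}. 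So the lines $g(\ell_n)$ and $g^{-1}(\ell_n)$ land at the interior point $p$ of $h(\cJ(g)^+)=\cJ(hgh^{-1})^+$ from opposite sides. This contradicts its one-sided inaccessibility (\refrmk{inaccessibleSide}).
\end{itemize}
Then (4) follows from (3) and \reflem{simplicialSide}. Only after that is the first bullet deduced, case by case, from (1)--(4) and \refprop{enclosing}. This includes the sub-case $h(p)\in\Int\cJ(g)^+$, where $\cD(p)$-side inaccessibility forces $h(\cJ(g)^-)$ to approach $h(p)$ from $\cD'$.

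Two smaller repairs are needed when $\Fix(g)\subset Z^+$. First, a partial overlap ending at a node $x$ is excluded not by translating connectors but as follows. Either $x$ is a single common fixed point of $g$ and $hgh^{-1}$, which is impossible by discreteness. Or $x$ is an interior point of $h(\cJ(g)^+)$ at which there are at least three branches, which is impossible by simpliciality. Second, $h$ cannot swap $p$ and $q$, because $\Fix(h^2)=\Fix(h)$.
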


\begin{proof}
Fix $h\in G\setminus\{1\}$.

Assume first that $\Fix(g)\subset Z^+$.
If $\cJ(g)^+\cap h(\cJ(g)^+)=\varnothing$, then $\cJ(g)$ and $h(\cJ(g))$ are unlinked by \refprop{unlinking}.
Suppose $\cJ(g)^+\cap h(\cJ(g)^+)\neq\varnothing$.
By \refrmk{inaccessibleSide}, both $\cJ(g)^+$ and $h(\cJ(g)^+)$ are simplicial, and by \reflem{crossAlt}, all boundary points  are branched.
Hence either $\cJ(g)^+=h(\cJ(g)^+)$ or $\cJ(g)^+\cap h(\cJ(g)^+)=\{x\}$ for some $x\in \{p,q\}$.

The latter is impossible, since then $g$ and $hgh^{-1}$ would share exactly one fixed point $x$, contradicting the discreteness of $G<\PSL(\CC)$.
Thus $\cJ(g)^+=h(\cJ(g)^+)$.

If $h$ interchanged $p$ and $q$, then $h$ would have a fixed point in $\Int \cJ(g)^+$, so $h^2$ would have at least three fixed points, a contradiction.
Hence $h$ fixes both endpoints of $\cJ(g)^+$, and therefore $\Stab{G}{\cJ(g)^+}=\Stab{G}{\partial\cJ(g)^+}$.
Let $k$ generate this stabilizer.
Then $\Fix(k)=\Fix(g)=\partial\cJ(g)^+$, and $\cJ(g)^+$ is also the $(+)$-side of $\cJ(k)$.
Since $\cJ(k)=\cJ(k^m)$ for every $m\neq 0$ and $g=k^m$ for some $m\neq 0$, we get $\cJ(k)=\cJ(g)$.
Therefore $\cJ(h)=\cJ(g)$.
This proves the first two assertions in this case.

Now assume that \(\Fix(g)\cap Z^+=\{p\}\).
Before proving that \(\cJ(g)\) and \(h(\cJ(g))\) are unlinked, we first prove the third assertion.

For (1), note that \(h(q)\) is a type~1 synapse and that either \(h(q)\notin \cJ(g)\) or \(h(q)=q\). In the latter case, \(h(p)=p\) as well. Otherwise, \(g\) and \(hgh^{-1}\) have exactly one common fixed point, contradicting the discreteness of \(G<\PSL(\CC)\). This proves~(1).

The ``if'' statement of (2) is clear, so we prove the ``only if'' statement. By the discreteness of \(G<\PSL(\CC)\), we have \(h(p)=p\) if and only if \(h(q)=q\), as above. Hence, if \(h(p)=p\) or \(h(q)=q\), then \(h\in \Stab{G}{\Fix(g)}\). This is an infinite cyclic subgroup generated by an element \(k\). Moreover, \(\cJ(g)\) is the invariant fence of \(k\).
This proves (2).

For (3), assume that \(h(p)\in \cD(p)\) and \(h(q)\in \cD(p)^c\), or vice versa. Let \(n\) be the node of \(h(\cJ(g))\) in \(\cD(p)\), and let \(m\) be the other node. Write
\(
\cA=h(\Int \cJ(g)^+),
\)
so that \(\cA\) lands at both \(m\) and \(n\).

If \(m\in \{p,q\}\), then \(\{n,m\}=\{h(q),h(p)\}\) by (1) and (2), a contradiction. Hence either \(m\in \cD(p)^c\) or \(m\in \Int \cJ(g)^+\).

On the other hand, \(p\in \cA\). Indeed, suppose not. Then, by \reflem{simplicialSide}, there is a branch \(B_n\) at \(p\) containing a sufficiently small end ray of \(\cA\) landing at \(n\). If \(p\notin \cA\), then \(\cA\subset B_n\subset \cD(p)\). It follows that \(m\in \Int \cJ(g)^+\) and that \(\cA\) lands at \(m\) on the \(\cD(p)\)-side relative to \(\cJ(g)^+\). This contradicts the fact that \(\cJ(g)^+\) is inaccessible from the \(\cD(p)\)-side. Hence \(p\in \cA\).

Therefore the two line components \(\ell_n\) and \(\ell_m\) of \(\cA\setminus \{p\}\), landing at \(n\) and \(m\), respectively, lie in distinct branches \(B_n\) and \(B_m\) at \(p\). By \reflem{simplicialSide}, we have \(B_n\subset \cD(p)\) and \(B_m\subset \cD(p)^c\). By \refprop{invFence}, \(g^{-1}(\ell_n)\) and \(g(\ell_n)\) land at \(p\) on opposite sides of \(\cA\). This contradicts the one-sided inaccessibility.
This proves $(3)$.

We next prove (4). The ``if'' direction is clear, so it suffices to prove the ``only if'' direction. Since \(h(p)\in \cD(p)\), part~(3) implies that \(h(q)\in \cD(p)\) as well. As above, \reflem{simplicialSide} implies that there are two branches \(B_p\) and \(B_q\) at \(p\), containing sufficiently small end rays of \(h(\Int \cJ(g)^+)\), and contained in \(\cD(p)\). Hence \(h(\Int \cJ(g)^+)\subset B_p\cup B_q\cup \{p\}\). This proves the desired statement.

Finally, we prove that \(\cJ(g)\) and \(h(\cJ(g))\) are unlinked. By (1), either \(h(q)\notin \cJ(g)\) or \(h(q)=q\). In the latter case, the claim follows from (2). Thus assume that \(h(q)\notin \cJ(g)\).

Suppose first that \(h(q)\in \cD(p)\). Then (3) and (4) imply that \(h(p)\in \cD(p)\) and that \(h(\cJ(g)^-)\) contains disjoint end rays landing at \(h(p)\) and \(h(q)\), both contained in \(\cD(p)\). Then the arc joining the starting points of these end rays is contained in \(\closure{\cD(p)}\). This proves that \(\cJ(g)\) and \(h(\cJ(g))\) are unlinked.

Now suppose that \(h(q)\) lies in the opposite Jordan domain \(\cD'=S_\infty^2\setminus \closure{\cD(p)}\) of \(\cJ(g)\). By (2) and (3), either \(h(p)\in \cD'\) or \(h(p)\in \Int \cJ(g)^+\). In the former case, we are done by the previous argument. In the latter case, \refprop{enclosing} implies that \(h(\cJ(g)^+)\subset \closure{\cD'}\), and the \(\cD(p)\)-side inaccessibility of \(\cJ(g)^+\) implies that \(h(\cJ(g)^-)\) lands at \(h(p)\) on the \(\cD'\)-side. We may therefore apply the same argument again, taking sufficiently small distinct end rays of \(h( \cJ(g)^-)\) contained in \(\cD'\). Thus \(\cJ(g)\) and \(h(\cJ(g))\) are unlinked.
\qedhere

\end{proof}

\subsection{Eclipses as a Grid}

Eclipses play an important role as a grid, restricting the possible locations of invariant fences and axes:

\begin{lem}[Grid structure of eclipses]\label{Lem:separatingFence}
Assume the conditions and notation of \refconst{eclipseD}. Let \(h\in G\) satisfy \(h(p)\in \cD\). Then there exists \(k\in \ZZ\) such that \(h(\cJ(g))\subset \closure{\cE_k\setminus \cE_{k+3}}\).
\end{lem}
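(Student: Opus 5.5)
The plan is to localize \(h(\cJ(g))\) inside \(\closure{\cD}\) first, and then to trap it between two boundary arcs of the eclipse. For \(n\in\ZZ\), write \(\gamma_n=\partial\cE_n\cap\closure{\cD}\); this is the translate \(g^{\pm n}(\gamma)\) of the chosen connector arc. By construction these arcs are pairwise disjoint except at the pivot \(p\), where all their \((+)\)-ends meet, and their \((-)\)-ends land on \(\cJ(g)^-\) and march monotonically toward \(q\).

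\emph{Step 1: \(h(\cJ(g))\subset\closure{\cD}\).} By \reflem{fenceSystem}, the curves \(\cJ(g)\) and \(h(\cJ(g))\) are unlinked. Since \(h(p)\in\cD\), the curve \(h(\cJ(g))\) therefore lies in \(\closure{\cD}\).

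\emph{Step 2: choose \(k\).} The point \(h(p)\) lies in \(Z^+\cap\cD\), and \(h(p)\neq p\) because \(p\in\cJ(g)\) while \(h(p)\in\cD\). By thinness (\refprop{eclipseDProperty}(4)), \(h(p)\notin\bigcap_n\Int\cE_n\), and by (3) it is not in \(\closure{\cJ(g)^+}\). Hence \(h(p)\notin\cE_\infty\). Nested exhaustion (1) then gives a unique index \(k\) with \(h(p)\in\cE_{k+1}\setminus\cE_{k+2}\). The task reduces to showing that \(h(\cJ(g))\) meets neither \(\Int\cE_{k+3}\) nor \(\closure{\cD}\setminus\cE_k\).

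\emph{Step 3: bounded crossing.} This is the step I expect to be the main obstacle. I would show that \(h(\cJ(g))\) can cross at most one arc \(\gamma_m\), in the sense of meeting both open sides of \(\gamma_m\) in \(\cD\).
\begin{enumerate}
\item \((-)\)-sides. The arc \(\gamma_m^-\) and the line \(h(\cJ(g)^-)\) both lie in the tree \(Z^-\). A point of \(h(\cJ(g))\) crossing \(\gamma_m\) on the \((-)\)-side would close a loop in \(Z^-\) together with \(\cJ(g)^-\), exactly as in the thinness argument. The exception is an arc-overlap, which I would exclude using the disjointness of \(Z^\pm\).
\item \((+)\)-sides. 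By \reflem{simplicialSide}, \(h(\cJ(g)^+)\) is inaccessible from the side \(h(\cD(p))\). I would apply \refprop{unlinkedCrossingLine} inside a small Jordan domain to the pair \(\gamma_m\) and \(h(\cJ(g))\) (suitably truncated to an adapted line). Cases (1) and (2) force a common synapse or a landing configuration that conflicts with \(\gamma_m\) being a type~2 connector with synapse in \(Z^+\). Case (3) would make some segment two-sided accessible, contradicting the one-sided inaccessibility of the relevant \((+)\)-side (\refrmk{inaccessibleSide}), possibly after applying \(g^{-m}\) to compare with \(\gamma\) itself.
\end{enumerate}
Since \(h(\cJ(g))\) is a Jordan curve passing through \(h(p)\in\cE_{k+1}\setminus\cE_{k+2}\), a curve that crosses at most one \(\gamma_m\) can reach at most one adjacent band beyond the band containing \(h(p)\). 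This would place it in \(\closure{\cE_k\setminus\cE_{k+2}}\) or \(\closure{\cE_{k+1}\setminus\cE_{k+3}}\), and in either case inside \(\closure{\cE_k\setminus\cE_{k+3}}\).

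\emph{Step 4: the pivot case.} When \(h(\cJ(g))\) passes through \(p\), which is the situation of \reflem{fenceSystem}\refitm{innerTouch}, the curve touches every \(\gamma_n\) at \(p\). This touching is harmless because the separation property \refprop{eclipseDProperty}(2) identifies \(p\) as the only common point of each band closure with \(\cE_\infty\). I would handle this case by removing a small neighborhood of \(p\) and running Step 3 on the complement.
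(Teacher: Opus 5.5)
Your Steps~1 and~2 are fine, but Step~3 carries the entire content of the lemma, and the arguments you sketch for it do not work.

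\emph{The $(-)$-side.} The loop in the thinness part of \refprop{eclipseDProperty} exists only because the ray $r$ there starts on $\cJ(g)^-$ and meets it only at its initial point. The line $h(\cJ(g)^-)$ need not meet $\cJ(g)^-$ at all. Meeting $\gamma_m^-$ then only means sharing a subarc in the tree $Z^-$, and nothing closes up. In fact the $(-)$-side cannot be controlled on its own: its ends land at $h(p)$ and $h(q)$, and your proposal never locates $h(q)$.

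\emph{The $(+)$-side.} \refprop{unlinkedCrossingLine} assumes that the relevant tree segments are already unlinked. The configurations you must exclude are exactly tree-linkings: $\cA=h(\Int\cJ(g)^+)$ sharing a subarc with $\gamma_m^+$ and leaving it on both sides, or $\cA$ passing through $p$ between two far-apart branches. Moreover, in its Case~(3) the two-sided accessible segment may be $\gamma_m^+$, and that contradicts nothing. $\gamma^+$ is just an arc from $p$ to a synapse inside $\cD$, with no inaccessibility property before or after applying $g^{-m}$.

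\emph{Step~4.} Cutting out a neighbourhood of $p$ only moves the case of $\cA$ running through $p$ into Step~3, where it is not handled.

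\emph{The missing idea is the structure of branches at the pivot.}
\begin{enumerate}
\item By \reflem{simplicialSide}, small end rays of $\cA$ at $h(p)$ and $h(q)$ lie in branches $B_p,B_q$ at $p$ contained in $\cD$. Hence $\cA\subset B_p\cup B_q\cup\{p\}$.
\item A band boundary $\gamma_n$ meets only one branch at $p$, namely $g^n(B_0)\supset\gamma_n^+\setminus\{p\}$. This is because distinct $g$-translates of a branch at $p$ are disjoint by \refprop{invFence}, and branches miss $\gamma_n^-\subset Z^-$. So each branch at $p$ inside $\cD$ is confined to at most two consecutive bands.
\item If $B_p\neq B_q$, use that $\cA$ is inaccessible from the $h(\cD)$-side, where $h(\cD)\subset\cD$. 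This leaves no branch at $p$ inside $h(\cD)$, so $B_p$ and $B_q$ are adjacent among the branches at $p$ in $\cD$. Hence both lie in $\fD=\Int(\cE_k\setminus\cE_{k+3})$ for some $k$.
\item Only now is the $(-)$-side handled. $\partial\fD$ is a fence with nodes in $Z^+$, so the $Z^-$-arc joining small end rays of $h(\cJ(g)^-)$ at $h(p),h(q)\in\fD$ stays in $\closure{\fD}$, as in \refprop{unlinking}.
\end{enumerate}
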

\begin{proof}

Write \(\cA=h(\Int \cJ(g)^+)\), which is a line in \(Z^+\) landing at \(h(p)\) and \(h(q)\). Since \(h(p)\in \cD\) and \(h(p)\neq p\), \reflem{fenceSystem} implies that \(h(q)\in \cD\) as well and that \(\cA\cap \cJ(g)\subset \{p\}\). By \reflem{simplicialSide}, there are branches \(B_p\) and \(B_q\) at \(p\) containing sufficiently small end rays of \(\cA\) landing at \(h(p)\) and \(h(q)\), respectively, and contained in \(\cD\). If \(B_p=B_q\), then \(\cA\subset B_p\subset \cD\). If \(B_p\neq B_q\), then \(p\in \cA\) and \(\cA\setminus \{p\}\subset \cD\). Moreover, since \(\cA\) is one-sided inaccessible and \(p\) is a branch point, there is no branch \(B\) at \(p\) contained in \(h(\cD)\); here \(h(\cD)\subset \cD\) by the unlinkedness of \(\cJ(g)\) and \(h(\cJ(g))\).

In either case, \refprop{eclipseDProperty} yields \(k\in \ZZ\) such that \(\fD=\Int(\cE_k\setminus \cE_{k+3})\) contains \(B_p\) and \(B_q\). Hence \(h(p),h(q)\in \fD\). Take sufficiently small distinct end rays \(r_1,r_2\) of \(h(\cJ(g)^-)\), landing at \(h(p)\) and \(h(q)\), respectively, and contained in \(\fD\). Since \(\partial \fD\) is a fence whose nodes lie in \(Z^+\), the arc in \(Z^-\) joining the starting points of \(r_1\) and \(r_2\) is contained in \(\closure{\fD}\). Therefore \(h(\cJ(g)^-)\subset \closure{\fD}\). This proves the statement.\qedhere

\end{proof}

\reflem{separatingFence}, together with the separation property (\refprop{eclipseDProperty}), is useful for controlling limits of convergence sequences on invariant fence systems.
\begin{lem}[Dragging limit points]\label{Lem:dragging}
Assume the conditions in \refconst{eclipseD}. Let \(\{g_n\}_{n\in \NN}\) be a sequence in \(G\), and for each \(n\in \NN\), let \(x_n,y_n\) be distinct non-node points of \(g_n(\cJ(g))\). Assume that \(\{x_n\}_{n\in \NN}\) is contained in \(\cD\) and converges to a point \(x\in \cE_\infty\setminus \Fix(g)\). If \(\{y_n\}_{n\in \NN}\) converges to a point \(y\), then \(y\in \cE_\infty\).
\end{lem}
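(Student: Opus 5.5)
The plan is to trap each fence $g_n(\cJ(g))$, for large $n$, inside a band $\closure{\cE_k\setminus\cE_{k+3}}$ whose level $k$ tends to $+\infty$ as $n\to\infty$. The limit $y$ then lies in every $\cE_k$, hence in $\cE_\infty$.

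\emph{Step 1: reduce to fences with a node in $\cD$.} Since $x_n\in\cD\cap g_n(\cJ(g))$, the two fences $g_n(\cJ(g))$ and $\cJ(g)$ are unlinked by \reflem{fenceSystem}. Hence $g_n(\cJ(g))\subset\closure{\cD}$, and in particular $y\in\closure{\cD}$. We now distinguish two cases.

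\begin{itemize}
\item If $g_n(\cJ(g))=\cJ(g)$, then $x_n$ would lie on $\cJ(g)$ rather than in $\cD$, so this case does not occur.
\item If $g_n(p)\notin\cD$, we use items (1)--(4) of \reflem{fenceSystem}. In this case the nodes $g_n(p),g_n(q)$ lie on $\closure{\cJ(g)^+}$. A fence lying in $\closure{\cD}$ with both nodes on $\closure{\cJ(g)^+}$ has its $(+)$-side equal to the arc of $\cJ(g)^+$ between them. This is because $\cJ(g)^+$ is inaccessible from the $\cD$-side (\reflem{crossAlt}), so $g_n(\Int\cJ(g)^+)$ cannot leave $\cJ(g)^+$ into $\cD$ and come back. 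We then apply the eclipse argument to the $(-)$-side.
\end{itemize}

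I expect the cleanest route is to show that in every case there is $k_n$ with $g_n(\cJ(g))\subset\closure{\cE_{k_n}\setminus\cE_{k_n+3}}$ or $g_n(\cJ(g))\subset \cE_{k_n}$. In the main case $g_n(p)\in\cD$ this is exactly \reflem{separatingFence}.

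\emph{Step 2: the levels escape to $+\infty$.} Suppose, for contradiction, that along a subsequence $k_n\le K$ for some fixed $K$. Then $x_n\in\closure{\cE_{k_n}\setminus\cE_{k_n+3}}$. Since the finitely many sets $\closure{\cE_{k}\setminus\cE_{k+3}}$ with $k$ in a bounded range are closed, we may pass to a subsequence with $k_n$ constant, say $k_n=k$, provided the levels are also bounded below. The limit $x$ then lies in $\closure{\cE_k\setminus\cE_{k+3}}\cap\cE_\infty$. By the separation property (\refprop{eclipseDProperty}(2)), this intersection is $\{p\}$, so $x=p\in\Fix(g)$, contradicting the hypothesis.

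If instead $k_n\to-\infty$, then the bands lie in $\closure{\cE_{k_n}\setminus\cE_{k_n+3}}$, which shrink toward $p$ under $g^{k_n}$-iteration of a fixed compact set avoiding $q$. This is the same mechanism as in the proof of nested exhaustion, and it forces $x_n\to p$, again a contradiction. Therefore $k_n\to+\infty$. Consequently, for each fixed $m$, we have $y_n\in\cE_{k_n}\subset\cE_m$ for all large $n$. Since $\cE_m$ is closed, $y\in\cE_m$ for every $m$, and hence $y\in\cE_\infty$.

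\emph{Expected obstacle.} The delicate point is the case $k_n\to-\infty$ together with the degenerate fences whose nodes lie on $\closure{\cJ(g)^+}$, i.e.\ $g_n(p)\notin\cD$. For the former, I would argue as follows: $\closure{\cE_k\setminus\cE_{k+3}}=g^{\pm k}(\closure{\cE_0\setminus\cE_3})$, and $\closure{\cE_0\setminus\cE_3}$ is a compact set meeting $\Fix(g)$ only in $p$. Its iterates therefore converge to $p$ in the direction that makes $k\to-\infty$, so any limit of points in them is $p$. For the latter, the $(+)$-parts $g_n(\cJ(g)^+)$ lie in $\closure{\cJ(g)^+}\subset\cE_\infty$. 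The points $x_n\in\cD$ are then on $(-)$-sides, whose end rays land at points of $\cJ(g)^+$. I would handle these by enclosing each $g_n(\cJ(g)^-)$ in the corresponding eclipse band, using \refprop{enclosing} and the fact that $\partial\cE_k$ is a fence with nodes in $Z^+$, exactly as at the end of the proof of \reflem{separatingFence}. After that, the same escape argument applies.
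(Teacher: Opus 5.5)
Your main line is the paper's argument. \reflem{separatingFence} places each $g_n(\cJ(g))$ in a band $\closure{\cE_{k_n}\setminus\cE_{k_n+3}}$. The separation property in \refprop{eclipseDProperty} then forces $k_n\to+\infty$, because $x\in\cE_\infty\setminus\Fix(g)$. Finally, closedness of each $\cE_m$ gives $y\in\cE_\infty$. Your explicit handling of bounded subsequences, and of $k_n\to-\infty$ via north--south dynamics of $g$ on the compact set $\closure{\cE_0\setminus\cE_3}$ (which misses $q$ since $q\in\cE_\infty$), spells out a step the paper only asserts.

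The part that does not hold up is the second bullet of Step~1 and the matching ``expected obstacle.'' The configuration you describe there is wrong, and in fact the case $g_n(p)\notin\cD$ never occurs, so no enclosing argument for ``degenerate fences'' is needed. Since $x_n\in\cD$, we have $g_n(\cJ(g))\neq\cJ(g)$.

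\begin{itemize}
\item If $\Fix(g)\cap Z^+=\{p\}$, items (1) and (2) of \reflem{fenceSystem} give $g_n(q)\notin\cJ(g)$. So the nodes of $g_n(\cJ(g))$ cannot both lie on $\closure{\cJ(g)^+}$. Moreover, if $g_n(p)\notin\cD$, item (3) puts $g_n(q)$ in the other complementary domain of $\cJ(g)$. Unlinkedness then forces $g_n(\cJ(g))$ into the closure of that domain, contradicting $x_n\in\cD$.
\item If $\Fix(g)\subset Z^+$, the second bullet of \reflem{fenceSystem} gives $g_n(\cJ(g)^+)\cap\cJ(g)^+=\varnothing$. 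So the connected set $g_n(\cJ(g)^+)\subset Z^+$ misses $\cJ(g)$ and lies in a single complementary domain. By unlinkedness, that domain must be $\cD$, since $x_n\in\cD$.
\end{itemize}

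Hence $g_n(p)\in\cD$ for every $n$, and \reflem{separatingFence} applies to every $n$. Step~1 should simply record this fact, which the paper uses tacitly, in place of the speculative treatment of degenerate fences.
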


\begin{proof}
For each \(n\in \NN\), \reflem{separatingFence} yields \(k(n)\in \ZZ\) such that \(g_n(\cJ(g))\subset \closure{\cE_{k(n)}\setminus \cE_{k(n)+3}}\). Since \(\closure{\cE_{k(n)}\setminus \cE_{k(n)+3}}\cap \cE_\infty=\{p\}\) by \refprop{eclipseDProperty}, while \(x_n\to x\in \cE_\infty\setminus \Fix(g)\), it follows that \(k(n)\to +\infty\) as \(n\to\infty\). Since \(y_n\in g_n(\cJ(g))\subset \closure{\cE_{k(n)}\setminus \cE_{k(n)+3}}\subset \cE_{k(n)}\) and \(y_n\to y\), we conclude that \(y\in \cE_\infty\).
\end{proof}

\subsection{Circle laminations on quasi-Fuchsian limit circles}
As sketched in \refsec{proofOutline}, we now construct a circle lamination on the limit circle of a given quasi-Fuchsian surface subgroup.
\begin{figure}[htbp]
\centering
\tikzset{every picture/.style={line width=0.75pt}}

\begin{tikzpicture}[x=0.75pt,y=0.75pt,yscale=-1,xscale=1]
%uncomment if require: \path (0,300); %set diagram left start at 0, and has height of 300

%Shape: Polygon Curved [id:ds33540346082777717] 
\draw   (204.86,72.79) .. controls (228.86,63.79) and (231.86,89.79) .. (239.86,111.79) .. controls (247.86,133.79) and (229.86,142.79) .. (231.86,161.79) .. controls (233.86,180.79) and (175.86,214.79) .. (161.86,191.79) .. controls (147.86,168.79) and (130.86,189.79) .. (115.86,156.79) .. controls (100.86,123.79) and (132.86,103.79) .. (140.86,84.79) .. controls (148.86,65.79) and (180.86,81.79) .. (204.86,72.79) -- cycle ;
%Shape: Circle [id:dp9298659860244245] 
\draw  [color={rgb, 255:red, 74; green, 144; blue, 226 },draw opacity=1][fill={rgb, 255:red, 208; green, 2; blue, 27 },fill opacity=1][line width=0.75]  (172.86,129.27) .. controls (172.86,127.77) and (174.07,126.56) .. (175.57,126.56) .. controls (177.06,126.56) and (178.27,127.77) .. (178.27,129.27) .. controls (178.27,130.76) and (177.06,131.98) .. (175.57,131.98) .. controls (174.07,131.98) and (172.86,130.76) .. (172.86,129.27) -- cycle ;
%Shape: Circle [id:dp8052617187167587] 
\draw  [color={rgb, 255:red, 74; green, 144; blue, 226 },draw opacity=1][fill={rgb, 255:red, 208; green, 2; blue, 27 },fill opacity=1][line width=0.75]  (130.86,230.27) .. controls (130.86,228.77) and (132.07,227.56) .. (133.57,227.56) .. controls (135.06,227.56) and (136.27,228.77) .. (136.27,230.27) .. controls (136.27,231.76) and (135.06,232.98) .. (133.57,232.98) .. controls (132.07,232.98) and (130.86,231.76) .. (130.86,230.27) -- cycle ;
%Curve Lines [id:da07118106296245041] 
\draw [color={rgb, 255:red, 208; green, 2; blue, 27 },draw opacity=1]   (133.86,228.12) .. controls (135.86,208.45) and (161.86,199.79) .. (163.86,184.79) .. controls (165.86,169.79) and (132.86,203.79) .. (137.86,179.79) .. controls (142.86,155.79) and (137.86,161.79) .. (172.86,131.79) ;
%Shape: Circle [id:dp048500853001922306] 
\draw  [color={rgb, 255:red, 208; green, 2; blue, 27 },draw opacity=1][fill={rgb, 255:red, 208; green, 2; blue, 27 },fill opacity=1][line width=0.75]  (135.86,177.27) .. controls (135.86,175.77) and (137.07,174.56) .. (138.57,174.56) .. controls (140.06,174.56) and (141.27,175.77) .. (141.27,177.27) .. controls (141.27,178.76) and (140.06,179.98) .. (138.57,179.98) .. controls (137.07,179.98) and (135.86,178.76) .. (135.86,177.27) -- cycle ;
%Curve Lines [id:da30029095944271356] 
\draw [color={rgb, 255:red, 74; green, 144; blue, 226 },draw opacity=1]   (178,128) .. controls (218,98) and (197.86,149.79) .. (237.86,119.79) .. controls (277.86,89.79) and (185.86,112.79) .. (210.86,87.79) .. controls (235.86,62.79) and (239.86,109.79) .. (264.86,84.79) .. controls (289.86,59.79) and (289.86,144.12) .. (268.86,154.12) .. controls (247.86,164.12) and (214.86,182.12) .. (232.86,199.12) .. controls (250.86,216.12) and (156.86,259.12) .. (134.86,233.12) ;
%Shape: Circle [id:dp7159630722412608] 
\draw  [color={rgb, 255:red, 74; green, 144; blue, 226 },draw opacity=1][fill={rgb, 255:red, 74; green, 144; blue, 226 },fill opacity=1][line width=0.75]  (237.86,117.27) .. controls (237.86,115.77) and (239.07,114.56) .. (240.57,114.56) .. controls (242.06,114.56) and (243.27,115.77) .. (243.27,117.27) .. controls (243.27,118.76) and (242.06,119.98) .. (240.57,119.98) .. controls (239.07,119.98) and (237.86,118.76) .. (237.86,117.27) -- cycle ;

% Text Node
\draw (120.86,226.67) node [anchor=north west][inner sep=0.75pt]  [color={rgb, 255:red, 208; green, 2; blue, 27 },opacity=1]  {$q$};
% Text Node
\draw (161.86,114.67) node [anchor=north west][inner sep=0.75pt]  [color={rgb, 255:red, 208; green, 2; blue, 27 },opacity=1]  {$p$};
% Text Node
\draw (85,98.4) node [anchor=north west][inner sep=0.75pt]    {$\Lambda(\Gamma)$};
% Text Node
\draw (118,177.67) node [anchor=north west][inner sep=0.75pt]  [color={rgb, 255:red, 208; green, 2; blue, 27 },opacity=1]  {$x^{+}$};
% Text Node
\draw (275,150.4) node [anchor=north west][inner sep=0.75pt]  [color={rgb, 255:red, 74; green, 144; blue, 226 },opacity=1]  {$\ell$};
% Text Node
\draw (243.86,111.67) node [anchor=north west][inner sep=0.75pt]  [color={rgb, 255:red, 74; green, 144; blue, 226 },opacity=1]  {$x^{-}$};
% Text Node
\draw (185,130) node [anchor=north west][inner sep=0.75pt]    {$\gamma$};
\end{tikzpicture}

\caption{The connector arc $\gamma$ crossing $\cH$, joining $x^{+}\in \cJ(g)^{+}$ to $x^{-}\in \cJ(g)^{-}$.}
\label{Fig:genConnector}
\end{figure}
\begin{const}[Circle laminations on quasi-Fuchsian limit circles]\label{Const:circleLami}
Let \(Z^\pm\) be a minimal zipper for \(M\), and let \(g\in G\) satisfy \(\Fix(g)\subset Z^+\). Choose \(p\in \Fix(g)\) and write \(\Fix(g)=\{p,q\}\).

By \refthm{separatingQF}, choose a quasi-Fuchsian closed surface subgroup \(\Gamma<G\) such that \(\Lambda(\Gamma)\) separates the fixed points of \(g\). Let \(\cH\) be the Jordan domain of \(\Lambda(\Gamma)\) containing \(p\), and equip \(\Lambda(\Gamma)\) with the canonical circular order induced from \(\cH\). Then the unique connector subarc \(\gamma\) of \(\cJ(g)\) with synapse \(p\) that crosses \(\cH\) is called the \emph{generating arc} associated with \(g\), \(p\), and \(\Gamma\).
See \reffig{genConnector}.

We define \(\cL(\Gamma,p)\) to be the closure of the \(\Gamma\)-orbit of \(\sfd \gamma\) in \(\cM\).
\end{const}
\begin{prop}
In the setting of \refconst{circleLami}, $\sfd \gamma$ is unlinked with its translation under the $\Gamma$-action.  Consequently, \(\cL(\Gamma,p)\)  is a \(\Gamma\)-invariant circle lamination of \(\Lambda(\Gamma)\).
\end{prop}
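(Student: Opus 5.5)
The plan is to show that for every \(h\in\Gamma\setminus\{1\}\) the pairs \(\sfd\gamma=\{x^+,x^-\}\) and \(h(\sfd\gamma)\) are unlinked in \(\Lambda(\Gamma)\), and then to pass to the closure.

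\emph{Step 1: setup.} Since \(h\in\Gamma\) preserves \(\Lambda(\Gamma)\) and is orientation preserving, it preserves \(\cH\), because \(\Gamma\) is quasi-Fuchsian and acts on each complementary disk. Thus \(h(\gamma)\) is again a connector arc crossing \(\cH\). It is a subarc of the fence \(h(\cJ(g))\), with synapse \(h(p)\in Z^+\). Suppose, for contradiction, that \(\sfd\gamma\) and \(h(\sfd\gamma)\) are linked. Then I apply \refprop{unlinkedCrossingLine} in \(\cD=\cH\) to the adapted Jordan lines \(\cS_1=\Int\gamma\) and \(\cS_2=\Int h(\gamma)\). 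Both lines are mixed, with synapses \(p\) and \(h(p)\).

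\emph{Step 2: checking the hypotheses of \refprop{unlinkedCrossingLine}.} This is the step I expect to need the most care. The hypothesis to verify is that \(\cS_1^\alpha\) and \(\cS_2^\alpha\) are unlinked for each \(\alpha\in\{+,-\}\).
\begin{enumerate}
\item For \(\alpha=+\), I show that \(\cJ(g)^+\cap h(\cJ(g)^+)=\varnothing\). If not, \reflem{fenceSystem} (case \(\Fix(g)\subset Z^+\)) says that \(\cJ(g)\) is the invariant fence of \(h\). Then \(h\) fixes \(p\) and \(q\), so \(p,q\in\Lambda(\Gamma)\). This is absurd, because \(p\in\cH\) lies off \(\Lambda(\Gamma)\). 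Disjoint segments are in particular unlinked.
\item For \(\alpha=-\), suppose \(\gamma^-\subset\cJ(g)^-\) and \(h(\gamma)^-\subset h(\cJ(g)^-)\) were linked in \(Z^-\). Then \(h(\cJ(g)^-)\) would meet both a left branch and a right branch of \(\cJ(g)^-\). Hence \(h(\cJ(g))\) would meet both Jordan domains of \(\cJ(g)\), contradicting the first assertion of \reflem{fenceSystem}.
\end{enumerate}

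\emph{Step 3: excluding the three cases of \refprop{unlinkedCrossingLine}.}
\begin{enumerate}
\item Case~\ref{Itm:sameSynapse} would give \(h(p)=p\). As above, this forces \(h\) to stabilize \(\Fix(g)\), which is impossible.
\item Case~\ref{Itm:oneSideLand} requires \(\cS_1^\alpha\cap\cS_2^\alpha=[s_1,s_2]\neq\varnothing\). For \(\alpha=+\) this contradicts Step 2. For \(\alpha=-\) it is impossible because both synapses lie in \(Z^+\).
\item Case~\ref{Itm:bothLand} gives \(\alpha=+\), since the synapses lie in \(Z^+\). It then yields a two-sided accessible segment \(\cS_{j+1}^+\). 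But \(\cS_{j+1}^+\) is a subsegment of \(\cJ(g)^+\) or of \(h(\cJ(g)^+)\), and these are inaccessible by \refrmk{inaccessibleSide}, since \(\Fix(g)\subset Z^+\) and inaccessibility is \(G\)-invariant. Rays landing on a side of a subsegment land on that side of the whole segment, so this is a contradiction.
\end{enumerate}
Hence \(\sfd\gamma\) and \(h(\sfd\gamma)\) are unlinked. Applying this to \(h_1^{-1}h_2\) shows that any two elements of the orbit \(\Gamma\cdot\sfd\gamma\) are unlinked.

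\emph{Step 4: closure.} Unlinkedness is a closed condition on \(\cM\times\cM\). It is defined by non-strict circular-order inequalities among the four endpoints, and these persist under limits of pairs of distinct points. So the closure \(\cL(\Gamma,p)\) of the orbit in \(\cM\) consists of pairwise unlinked elements, and it is closed by definition; hence it is a circle lamination. It is \(\Gamma\)-invariant because \(\Gamma\) acts on \(\cM\) by homeomorphisms and preserves the orbit.
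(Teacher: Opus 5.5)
Your proof is correct and follows essentially the same route as the paper: you use the invariant fence system lemma to get unlinkedness of the $(\pm)$-segments of $\gamma$ and $h(\gamma)$, note that $h(p)\neq p$, and then apply the proposition on unlinked adapted crossing lines, whose remaining alternatives contradict the inaccessibility of $\cJ(g)^+$ and its translates. The differences are minor. You rule out Case~(2) using the actual disjointness $\cJ(g)^+\cap h(\cJ(g)^+)=\varnothing$ rather than accessibility, and you spell out the closure argument that the paper leaves implicit.
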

\begin{proof}
    Assume that \(\sfd \gamma\) is linked with \(h(\sfd \gamma)\) for some \(h\in \Gamma\). Since \(\Fix(h)\subset \Lambda(\Gamma)\), we have \(h(p)\neq p\). Moreover, \reflem{fenceSystem} implies that \(\gamma^\alpha\) is unlinked with \(h(\gamma^\alpha)\) for each \(\alpha\in \{+,-\}\). Since \(p,h(p)\in Z^+\), \refprop{unlinkedCrossingLine} implies that at least one of \(\gamma^+\) and \(h(\gamma^+)\) is not inaccessible. This contradicts the inaccessibility of \(\cJ(g)^+\) and \(h(\cJ(g))^+\); see \refrmk{inaccessibleSide}.

    The second assertion follows immediately from the first and the construction. This completes the proof.
\end{proof}

\begin{rmk}[Geometric realizations of circle laminations]\label{Rmk:realization}
Let \(\Gamma<G\) be a quasi-Fuchsian closed surface subgroup, let \(\cH\) be a Jordan domain of \(\Lambda(\Gamma)\), and let \(\cL\) be a \(\Gamma\)-invariant circle lamination on \(\Lambda(\Gamma)\).
Since \(\cH\) is a domain of discontinuity for \(\Gamma\), the quotient \(\cH/\Gamma\) is a closed surface of genus \(>1\).
After fixing a hyperbolic metric on \(\cH/\Gamma\) and lifting it to \(\cH\), we may form the geometric realisation \(\widetilde{\cG}(\cL)\) of \(\cL\) (see \refsec{circleLami}).
By \(\Gamma\)-invariance, this descends to a geodesic lamination \(\cG(\cL)\) on \(\cH/\Gamma\).
\end{rmk}

\begin{lem}\label{Lem:isolatedLeaf}
    In the setting of \refconst{circleLami},
    $\sfd \gamma$ is an isolated leaf of $\cL(\Gamma,p)$.
\end{lem}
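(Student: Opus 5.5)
The plan is to show something stronger: no sequence of leaves of \(\cL(\Gamma,p)\) distinct from \(\sfd\gamma\) converges to \(\sfd\gamma\) in \(\cM\). In particular, neither complementary interval of \(\sfd\gamma=\{x^+,x^-\}\) in \(\Lambda(\Gamma)\) admits a side sequence, so the leaf is isolated.

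\emph{Step 1: reduction to orbit leaves.} Suppose, for contradiction, that leaves \(\ell_n\neq \sfd\gamma\) of \(\cL(\Gamma,p)\) converge to \(\sfd\gamma\). Every leaf is a limit of \(\Gamma\)-translates of \(\sfd\gamma\). A diagonal argument therefore gives \(h_n\in\Gamma\) with \(h_n(\sfd\gamma)\neq\sfd\gamma\) and \(h_n(\sfd\gamma)\to\sfd\gamma\) in \(\cM\). We must check that this can be arranged with \(h_n(\sfd\gamma)\neq \sfd\gamma\):
\begin{itemize}
\item If \(\ell_n\) is itself an orbit leaf, take it directly.
\item Otherwise, approximate \(\ell_n\) by orbit leaves within distance \(1/n\), chosen different from \(\sfd\gamma\). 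This is possible since \(\ell_n\neq\sfd\gamma\) and \(\sfd\gamma\) is a single point of \(\cM\).
\end{itemize}
Also \(h_n(p)\neq p\), because \(\Fix(h_n)\subset\Lambda(\Gamma)\) while \(p\in\cH\). Hence \(h_n\) does not stabilize \(\Fix(g)\). By \reflem{fenceSystem}, \(h_n(\cJ(g))\) is not the fence \(\cJ(g)\), and \(h_n(\cJ(g)^+)\cap\cJ(g)^+=\varnothing\).

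\emph{Step 2: locating the points.} After passing to a subsequence, there are two cases:
\begin{itemize}
\item either \(h_n(x^+)\to x^+\) and \(h_n(x^-)\to x^-\);
\item or \(h_n(x^+)\to x^-\) and \(h_n(x^-)\to x^+\).
\end{itemize}
Let \(x_n\) be whichever of \(h_n(x^\pm)\) converges to \(x^+\), and \(y_n\) the other one, so \(y_n\to x^-\). The point \(x_n\) lies in \(h_n(\cJ(g))\), is not a node of it, and does not lie on \(\cJ(g)\):
\begin{itemize}
\item \(x_n\) is off \(\cJ(g)^+\) by Step 1 if \(x_n\in h_n(\cJ(g)^+)\);
\item it is off \(\cJ(g)^-\) by disjointness of \(Z^\pm\) if \(x_n\in h_n(\cJ(g)^+)\);
\item the case \(x_n\in h_n(\cJ(g)^-)\) is symmetric, using the unlinkedness of \(h_n(\cJ(g))\) and \(\cJ(g)\) together with \(x^+\in\Int\cJ(g)^+\).
\end{itemize}
Passing to a further subsequence, all \(x_n\) lie in a single Jordan domain \(\cD\) of \(\cJ(g)\). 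Since \(\Fix(g)\subset Z^+\), \refrmk{inaccessibleSide} says both Jordan domains of \(\cJ(g)\) are of type~I. So \(\cD\) is of type~I with some pivot, and \refconst{eclipseD} gives an eclipse \(\{\cE_n\}\) on \(\cD\).

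\emph{Step 3: dragging.} By \refprop{eclipseDProperty}(3), \(\cE_\infty\cap\cJ(g)=\closure{\cJ(g)^+}\). Hence \(x^+\in\Int\cJ(g)^+\subset\cE_\infty\setminus\Fix(g)\). Apply \reflem{dragging} with:
\begin{itemize}
\item \(g_n=h_n\);
\item the non-node points \(x_n,y_n\in h_n(\cJ(g))\);
\item \(x_n\in\cD\) with \(x_n\to x^+\).
\end{itemize}
It yields \(x^-=\lim y_n\in\cE_\infty\). But \(x^-\in\cJ(g)^-\), which is disjoint from \(\closure{\cJ(g)^+}=\cE_\infty\cap\cJ(g)\). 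This is a contradiction.

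The main obstacle I expect is Step 2: guaranteeing that \(x_n\) and \(y_n\) are genuine non-node points of \(h_n(\cJ(g))\) and that the \(x_n\) lie off \(\cJ(g)\), inside one domain. This is where the unlinking statements of \reflem{fenceSystem} and the fact that \(\Gamma\)-elements never fix \(p\) or \(q\) must be used carefully. The diagonal reduction in Step 1 is routine.
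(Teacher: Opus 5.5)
Your proposal is correct and follows essentially the paper's argument. You reduce to translates \(h_n(\sfd\gamma)\to\sfd\gamma\) with \(h_n(p)\neq p\), place the points in a type~I Jordan domain of \(\cJ(g)\), build the eclipse there, and use \reflem{dragging} to force \(x^-\in\cE_\infty\), contradicting \(\cE_\infty\cap\cJ(g)=\closure{\cJ(g)^+}\).

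The differences are cosmetic. The paper chooses \(\cD\) as the domain containing \(h_n(p)\) rather than \(x_n\), and you are more explicit about the possible swap of endpoints. In the swapped case \(x_n\in h_n(\cJ(g)^-)\), note that \(x_n\) avoids \(\cJ(g)^-\) for large \(n\) simply because \(x_n\to x^+\) and \(x^+\notin\closure{\cJ(g)^-}\), not because of unlinkedness.
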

\begin{proof}
Assume not. Then there is a sequence \(\{g_n\}_{n\in \NN}\) in \(\Gamma\) such that \(g_n(\sfd \gamma)\to \sfd \gamma\) in \(\cL(\Gamma,p)\) and \(g_n(\sfd \gamma)\neq \sfd \gamma\) for all \(n\). Since \(\Fix(g_n)\subset \Lambda(\Gamma)\), we have \(g_n(p)\neq p\). Hence \reflem{fenceSystem} implies that \(g_n(p)\notin \cJ(g)\). Therefore there is a Jordan domain \(\cD\) such that \(g_n(p)\in \cD\) for infinitely many \(n\). Passing to a subsequence, we may assume that \(g_n(p)\in \cD\) for all \(n\). By \refrmk{inaccessibleSide}, the domain \(\cD\) is of type~I. Thus \refconst{eclipseD} yields an eclipse \(\{\cE_n\}_{n\in \ZZ}\) on \(\cD\).

Write \(\sfd \gamma=\{x^+,x^-\}\) with \(x^\alpha\in Z^\alpha\). Since \(x^+\in \Int \cJ(g)^+\subset \cE_\infty\setminus \Fix(g)\) and \(\Fix(g)\cap \Lambda(\Gamma)=\varnothing\), \reflem{dragging} implies that \(x^-\in \cE_\infty\setminus \Fix(g)\). On the other hand, \(\cE_\infty\cap \cJ(g)^-=\varnothing\) by \refprop{eclipseDProperty}, whereas \(x^-\in \cJ(g)^-\). This is a contradiction.
\end{proof}

\subsection{Two fixed points in $Z^+$}
Now, we prove the main theorem in this section:
\begin{thm}\label{Thm:twoInOne}
Let $Z^\pm$ be a minimal zipper for $M$. Then no element \(g\in G\) has its fixed point set contained entirely in \(Z^+\) or entirely in \(Z^-\).
\end{thm}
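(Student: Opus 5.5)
By symmetry of the roles of \(Z^+\) and \(Z^-\), I will assume \(\Fix(g)=\{p,q\}\subset Z^+\) for some \(g\in G\setminus\{1\}\) and derive a contradiction. The plan is to use the circle lamination of \refconst{circleLami} and push the geodesic-lamination picture until it produces a non-isolated occurrence of the generating leaf.

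\textbf{Step 1: the setup.} Since \(g\) has no fixed point in \(Z^-\), it acts freely on \(Z^-\). The arc \([p,q]\subset Z^+\) gives a \(g\)-prong, so \refprop{invFence} provides the invariant fence \(\cJ(g)\). By \refrmk{inaccessibleSide}, both Jordan domains of \(\cJ(g)\) are of type~I, with pivots \(p\) and \(q\) respectively. I then choose \(\Gamma\) as in \refconst{circleLami}, which yields the generating arc \(\gamma\) with \(\sfd\gamma=\{x^+,x^-\}\) and the \(\Gamma\)-invariant lamination \(\cL=\cL(\Gamma,p)\). By \reflem{isolatedLeaf}, the leaf \(\sfd\gamma\) is isolated.

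\textbf{Step 2: extracting a limiting sequence from the surface.} Pass to the geometric realization \(\cG(\cL)\) on the closed surface \(\cH/\Gamma\) (\refrmk{realization}). The leaf \(\sfd\gamma\) is the closure of an orbit of a single leaf, so its image in \(\cG(\cL)\) is a geodesic whose closure is the whole lamination \(\cG(\cL)\). It cannot be a closed leaf. Indeed, a closed leaf would mean some \(h\in\Gamma\) fixes \(x^+\) and \(x^-\). Then \(h\) would fix \(x^+\in\Int\cJ(g)^+\), so \(hgh^{-1}\) and \(g\) would be related in a way contradicting the fence-system rigidity of \reflem{fenceSystem} together with discreteness.

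By the standard structure theory of geodesic laminations (Casson, \refapp{geodLami}), a non-closed isolated leaf must spiral. That is, its ends accumulate on a minimal sublamination, and there are leaves \(h_n(\sfd\gamma)\), with \(h_n\in\Gamma\), whose endpoints converge: one endpoint tends to \(x^+\) (respectively \(x^-\)) while the leaves are not equal to \(\sfd\gamma\). Concretely, I would take \(h_n\) to be translates along the spiralling end, so that \(h_n(x^\alpha)\to x^\alpha\) for a fixed \(\alpha\), or \(h_n(x^{\beta})\to x^\alpha\) for the opposite endpoint.

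\textbf{Step 3: the contradiction via eclipses.} Since \(h_n(p)\neq p\) and \(h_n(p)\notin\cJ(g)\) by \reflem{fenceSystem}, after passing to a subsequence all \(h_n(p)\) lie in one Jordan domain \(\cD\), which is of type~I. This gives an eclipse on \(\cD\). I then apply \reflem{dragging} to the pair of non-node points of \(h_n(\cJ(g))\) that are the endpoints of the leaf \(h_n(\sfd\gamma)\).

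If the endpoint that converges does so to \(x^+\in\cE_\infty\setminus\Fix(g)\), then the other endpoint's limit lies in \(\cE_\infty\). That limit is a point of \(\Lambda(\Gamma)\) which is the other end of the limiting leaf. I then need to rule out that it lies in \(\cE_\infty\). It cannot be a point of \(Z^\pm\) or a synapse in \(\bigcap\Int\cE_n\), by the thinness in \refprop{eclipseDProperty}(4). It cannot lie on \(\cJ(g)^-\), since that is disjoint from \(\cE_\infty\).

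So it would lie on \(\cJ(g)^+\cap\Lambda(\Gamma)\). Here I would use that the limiting leaf in \(\cL\) must be unlinked with \(\sfd\gamma\) and must meet \(\Lambda(\Gamma)\) consistently with the first-intersection choice of \(x^+\) along \(\cJ(g)^+\) from \(p\). Together with the fact that endpoints of leaves in \(\cL\) on the \(+\)-side are \(\Gamma\)-translates of \(x^+\) or limits thereof, and with the inaccessibility of \(\cJ(g)^+\), this should force the limiting leaf to equal \(\sfd\gamma\), contradicting \reflem{isolatedLeaf}.

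If instead the converging endpoint approaches \(x^-\), I would run the same argument with the roles reversed, using the eclipse on the other domain with pivot \(q\) and \(g^{-1}\).

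\textbf{Main obstacle.} I expect the hard step to be Step 3's final exclusion, together with the precise form of the spiralling sequence in Step 2. What is needed is to guarantee that the limit of the partner endpoints is a point of \(\Lambda(\Gamma)\) that the eclipse genuinely forbids. The first thing I would try is to exploit that \(x^+\) is the first point of \(\Lambda(\Gamma)\) on \([p,q]\) from \(p\). Then \([p,x^+)\subset\cH\), so any limiting leaf endpoint in \(\cE_\infty\cap\Lambda(\Gamma)\) lies on \((x^+,q]\cap\Lambda(\Gamma)\). From there I would attempt a linking argument with \(\sfd\gamma\) to conclude.
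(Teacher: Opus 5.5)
Your Step~3 has a genuine gap. First, the exclusion ``it cannot lie in $\bigcap_n\Int\cE_n$, so it lies on $\cJ(g)^+$'' is not justified. Proposition~\ref{Prop:eclipseDProperty}(4) only excludes points of $Z^\pm$ and synapses from $\bigcap_n\Int\cE_n$. The limit of the partner endpoints is a point of $\Lambda(\Gamma)$ that in general is neither. For the same reason, $\cE_\infty\cap\Lambda(\Gamma)$ need not lie in $(x^+,q]$.

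Second, and more seriously, no local argument can force the limiting leaf to be $\sfd\gamma$. Your Step~2 argument in fact shows that no nontrivial element of $\Gamma$ fixes $x^+$. This is the form you need, since it gives $\cG''\neq\varnothing$. Hence the $x^+$-end of the geodesic $\sfg$ on $\cH/\Gamma$ carrying $\sfd\gamma$ escapes toward an end of a principal region $U$ of $\cG''$. The boundary leaves $\{x^+,x_{\pm 1}\}$ of $U$ through $x^+$ are exactly the non-isolated limits of translates of $\sfd\gamma$ that you are producing. They share only the endpoint $x^+$ with $\sfd\gamma$. One application of Lemma~\ref{Lem:dragging} merely puts $x_1$ in $\cE_\infty$ (or in $\cE'_\infty$, for the eclipse on $\cD(q)$), and that is perfectly consistent.

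What is missing is the global structure of $U$ from Proposition~\ref{Prop:gapShape}, together with an iteration of the dragging along it. Each boundary leaf $\{x_i,x_{i+1}\}$ of $U$ is non-isolated, hence a limit of translates of $\sfd\gamma$. So from $x_i\in(\cE_\infty\cup\cE'_\infty)\setminus\Fix(g)$ the dragging lemma gives $x_{i+1}\in(\cE_\infty\cup\cE'_\infty)\setminus\Fix(g)$. You need both eclipses at once, since the translates of $p$ may fall into either type~I domain. The chain then ends at a point the eclipse genuinely forbids:
\begin{itemize}
\item If $U$ is an ideal polygon, or a crown with $x^-$ among the $x_i$, the chain reaches $x^-\in\cJ(g)^-$. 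That point lies outside $\cE_\infty\cup\cE'_\infty$.
\item Otherwise, the crown's translation $h\in\Gamma$ puts $x_k=h(x^+)$ on the chain. The grid lemma (Lemma~\ref{Lem:separatingFence}) places $h(\cJ(g))$ in some $\closure{\cE_s\setminus\cE_{s+3}}$ or its primed analogue. By the separation property, this is incompatible with $x_k\in(\cE_\infty\cup\cE'_\infty)\setminus\Fix(g)$.
\end{itemize}
These are the only places where the eclipse forbids an endpoint, and your proposal never reaches them.
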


\begin{proof}
By symmetry, it suffices to show that there is no nontrivial element $g\in G$ such that $\Fix(g)\subset Z^+$.
Assume otherwise, and write $\Fix(g)=\{p,q\}$.
Recall that $p$ and $q$ are pivots of distinct Jordan domains of $\cJ(g)$ (\refrmk{inaccessibleSide}).
By \refconst{circleLami}, we construct a circle lamination $\cL(\Gamma,p)$ on $\Lambda(\Gamma)$ associated with $g$, $p$, and a quasi-Fuchsian closed surface subgroup $\Gamma$, and by \refconst{eclipseD}, we construct eclipse sequences $\{\cE_n\}_{n\in \ZZ}$ on $\cD(p)$ and $\{\cE'_n\}_{n\in \ZZ}$ on $\cD(q)$.
From now on, we use the notation from \refconst{circleLami}.

By \refrmk{realization}, \(\cL(\Gamma,p)\) gives rise to a geodesic lamination \(\cG(\Gamma,p)\) on \(\cH/\Gamma\), with lift \(\widetilde{\cG}(\Gamma,p)\subset \cH\).
Let $\widetilde{\sfg}$ be the leaf of $\widetilde{\cG}(\Gamma,p)$ associated with $\sfd\gamma$, and let $\sfg$ be its projection to $\cG(\Gamma,p)$.
By \reflem{isolatedLeaf}, $\sfd\gamma$ is isolated in $\cL(\Gamma,p)$, so $\widetilde{\sfg}$ and $\sfg$ are isolated from both sides in $\widetilde{\cG}(\Gamma,p)$ and $\cG(\Gamma,p)$, respectively.
Moreover, by construction, $\cG(\Gamma,p)$ is the closure of $\sfg$ in $\cH/\Gamma$.
Hence, if $\cG(\Gamma,p)''\ne \varnothing$, then \reflem{closureOfLeaf} implies that some end of $\sfg$ spirals toward a component of $\cG(\Gamma,p)''$.

Write $\sfd\gamma=\{x^+,x^-\}$ with $x^\alpha\in Z^\alpha$.
We claim that $x^+$ cannot be fixed by a nontrivial element of $\Gamma$.
This implies that $\cG(\Gamma,p)''\ne \varnothing$, since any half-ray of $\widetilde{\sfg}$ tending to $x^+$ projects to a ray of $\sfg$ spiraling toward a component of $\cG(\Gamma,p)''$, which is not a simple closed curve.
Indeed, if some $h\in \Gamma$ fixes $x^+$, then
\(
x^+\in h(\cJ(g)^+)\cap \cJ(g)^+\neq\varnothing.
\)
Hence \reflem{fenceSystem} gives $\cJ(g)=\cJ(h)$ and $\Fix(h)=\Fix(g)$, contrary to $x^+\notin \Fix(g)$.

By \reflem{closureOfLeaf}, the end of $\sfg$ corresponding to $x^+$ escapes toward an end of a principal region $U$ of $\cG(\Gamma,p)''$.
Hence, by \refprop{gapShape}, one of the following holds:
\begin{enumerate}
    \item\label{Itm:idealPolygon}($U$ is a finite ideal polygon) there are circularly ordered finitely many points in $\Lambda(\Gamma)$,
    \[
    x_{-m}<x_{-(m-1)}<\cdots<x_0<\cdots<x_{n-1}<x_n
    \]
    for some $m,n\ge 2$, such that the pairs $\{x_i,x_{i+1}\}\in \cL(\Gamma,p)$ are boundary leaves but are not isolated, $x_0=x^+$, and $x_{-m}=x_n=x^-$;

    \item\label{Itm:crown}($U$ admits a core) there exist an element $h\in \Gamma$, an integer $k\in \NN$, and a circularly ordered bi-infinite sequence $\{x_n\}_{n\in \ZZ}$ in $\Lambda(\Gamma)$ such that
    \[
    r(h)<\cdots<x_{-n}<\cdots<x_0<\cdots<x_n<\cdots<a(h),
    \]
    the pairs $\{x_i,x_{i+1}\}\in \cL(\Gamma,p)$ are boundary leaves but are not isolated, $x_0=x^+$, $x^-\in \opi[\Lambda(\Gamma)]{x_1}{x_{-1}}$, and $h(x_n)=x_{n+k}$ for all $n\in \ZZ$.
\end{enumerate}

First assume Case-\refitm{idealPolygon}.
By construction of $\cL(\Gamma,p)$, there is a sequence $\{g_n\}_{n\in \NN}$ in $\Gamma$ such that $g_n(\sfd\gamma)\to \{x_0,x_1\}$ in $\cL(\Gamma,p)$ as $n\to\infty$.
After passing to a subsequence, \reflem{fenceSystem} implies that either $g_n(p)\in \cD(p)$ for all $n$ or $g_n(p)\in \cD(q)$ for all $n$.
Moreover, $g_n(x^+)$ lies in the same Jordan domain as $g_n(p)$.
Since $x_0=x^+$ lies in $\Int \cJ(g)^+$, which is contained in both $\cE_\infty$ and $\cE'_\infty$ by \refprop{eclipseDProperty}, \reflem{dragging}, applied to $\{g_n(\sfd\gamma)\}$, shows that $x_1$ lies in $\cE_\infty\setminus \Fix(g)$ or in $\cE'_\infty\setminus \Fix(g)$, respectively.
Applying the same argument to $\{x_1,x_2\}$, we see that $x_2$ also lies in $\cE_\infty\setminus \Fix(g)$ or in $\cE'_\infty\setminus \Fix(g)$.
Continuing inductively, we obtain
\[
x_n\in (\cE_\infty\cup \cE'_\infty)\setminus \Fix(g).
\]
However, $x_n$ lies on the $(-)$-side of $\cJ(g)$, contradicting \refprop{eclipseDProperty}.
Thus Case-\refitm{idealPolygon} does not occur.

Now consider Case-\refitm{crown}.
There are two subcases: (a) $x^-=x_i$ for some $i\notin\{-1,0,1\}$; (b) $x^-\in \cldi{a(h)}{r(h)}$.
In Case-(a), there is a finite sequence of leaves $\{x_j,x_{j+1}\}$ connecting $x_0$ to $x_i$.
As above, repeated application of \reflem{dragging} rules this out.
Thus only Case-(b) remains.

Observe that $h(\sfd\gamma)$ is also a leaf of $\cL(\Gamma,p)$ and that $h(x^+)=h(x_0)=x_k$.
Applying the same argument to $\{x_j,x_{j+1}\}$ for $j=0,\dots,k-1$, we obtain
\[
x_k\in (\cE_\infty\cup \cE'_\infty)\setminus \Fix(g).
\]
On the other hand, by \reflem{fenceSystem} and \reflem{separatingFence},
\[
h(\cJ(g))\subset \closure{\cE_s\setminus \cE_{s+3}}
\quad\text{or}\quad
h(\cJ(g))\subset \closure{\cE'_t\setminus \cE'_{t+3}}
\]
for some $s,t\in \ZZ$.
By \refprop{eclipseDProperty}, this contradicts $x_k\in (\cE_\infty\cup \cE'_\infty)\setminus \Fix(g)$.
This completes the proof.
\end{proof}

\section{One prong in $Z^+$}\label{Sec:oneprong}
To complete the proof of \refthm{tameFix}, it remains, up to exchanging $Z^+$ and $Z^-$, to show:

\begin{thm}\label{Thm:oneInOne}
Let $Z^\pm$ be a minimal zipper for $M$. Then there is no element $g\in G$ such that $g$ fixes a unique point in $Z^+$ and acts freely on $Z^-$, with axis $\ell\subset Z^-$.
\end{thm}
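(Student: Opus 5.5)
Suppose, for contradiction, that $g$ fixes a unique point $p\in Z^+$, acts freely on $Z^-$ with axis $\ell$, and write $\Fix(g)=\{p,q\}$. I would mimic the proof of \refthm{twoInOne}, splitting into three subcases according to the structure near $p$:
\begin{enumerate}
    \item[(A)] $g$ admits a $g$-prong in $Z^+$, so the invariant fence $\cJ(g)$ exists.
    \item[(B)] There is no $g$-prong and $\ell$ is two-sided branched, i.e.\ $p$ has both left and right branches.
    \item[(C)] There is no $g$-prong and $\ell$ is one-sided simplicial, hence one-sided inaccessible by \reflem{sideSimplicial}.
\end{enumerate}
In each subcase I first build an eclipse: in (A) on the type~I domain $\cD(p)$ via \refconst{eclipseD} (the other domain is type~II by \refrmk{inaccessibleSide}); in (B) and (C) along $\ell$ via \refconst{eclipseL}. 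I then replace $\cJ(g)$ by a suitable $G$-invariant system of curves. In (B) this is a ``fat fence'': the fence bounded by $\delta_1,\delta_2$ and $s\subset\ell$, together with its $\langle g\rangle$-translates. In (C) it is a ``squashed fence'' $\ell\cup\Fix(g)$ viewed as a $2$-fold Jordan curve.

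The first step is an analogue of \reflem{fenceSystem}: the $G$-translates of this object are pairwise unlinked. For $h\in G$ with $h(p)\ne p$, discreteness of $G<\PSL(\CC)$ forces $\Fix(hgh^{-1})\cap\Fix(g)=\varnothing$. The one-sided inaccessibility of $\cJ(g)^+$ (in A) or of $\ell$ (in C), together with \refprop{enclosing} and \refprop{unlinking}, then shows that $h$ of the object lies in one closed complementary region.

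The second step is a grid lemma analogous to \reflem{separatingFence}. It should say that $h$ of the fence, or of $h(\ell)$, lies in some $\closure{\cE_k\setminus\cE_{k+3}}$ whenever $h(p)\in\Int\cE_{-\infty}$, away from the other fixed point. Combined with the separation property of \refprop{eclipseLProperty} or \refprop{eclipseDProperty}, this gives a dragging lemma exactly as in \reflem{dragging}: if $x_n\to x\in\cE_\infty\setminus\Fix(g)$, then $y_n\to y$ forces $y\in\cE_\infty$. In subcase (C) the limit $\cE_\infty$ also contains $\ell$, so the dragging statement must instead be phrased relative to $\cE_\infty\setminus\ell$, or else use the explicit intersection in \refprop{eclipseLProperty}(2).

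Next, use \refthm{separatingQF} to choose $\Gamma$ whose limit circle separates $p$ from $q$. Take as generating arc the connector subarc joining the first points $x^+$ and $x^-$ where the $(+)$-part through $p$ and the $(-)$-part (the end ray of $\ell$ toward $p$, or the fence sides) meet $\Lambda(\Gamma)$. Define $\cL(\Gamma,p)$ as the closure of the $\Gamma$-orbit of $\{x^+,x^-\}$. Unlinkedness follows from \refprop{unlinkedCrossingLine}: its case (3) is excluded by one-sided inaccessibility, and case (2) should be excluded by the landing-side constraint. I would then show $\{x^+,x^-\}$ is isolated, as in \reflem{isolatedLeaf}, and that $x^+$ is not fixed by any element of $\Gamma$. \reflem{closureOfLeaf} and \refprop{gapShape} then give the two configurations: a finite ideal polygon, or a crown with core $h$. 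Iterated dragging along the chain of boundary leaves from $x^+$ puts $x_n$ (or $x_k=h(x^+)$) into $\cE_\infty$. This contradicts either $x^-\notin\cE_\infty$, or the grid lemma applied to $h$ of the fence.

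I expect the main obstacle to be subcase (C), together with the type~II side in (A). In (C) the eclipse end contains $\ell$ itself, so the minus endpoints are \emph{not} excluded from $\cE_\infty$ and the final contradiction must come from elsewhere. I would first try to exploit that a dragged leaf must then land on $\ell$ from the inaccessible side, which contradicts \reflem{sideSimplicial}. In (A), translates with $h(p)$ in the type~II domain are not controlled by any eclipse. Here I would choose the surface group, and hence the side of the crown, so that all relevant $g_n(p)$ lie in $\cD(p)$. Failing that, I would use the crossing-connector alternative of \reflem{crossAlt} to rule out leaves accumulating from the type~II side.
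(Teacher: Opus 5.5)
Your three-way split and the auxiliary objects (the eclipse on $\cD(p)$, eclipses along $\ell$, fat and squashed fences) match the paper. The gap is the engine. You propose, for every case, a single isolated generating leaf with $x^+$ not $\Gamma$-fixed, followed by dragging along a gap chain as in \refthm{twoInOne}. This breaks down in cases (A) and (C), and your fallbacks do not replace it.

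In (A), with $\{x^-,x^+\}=\{x_0,x_1\}$, both ingredients fail.
\begin{itemize}
\item \reflem{fenceSystem} for $\Fix(g)\cap Z^+=\{p\}$ no longer says that $h(\cJ(g)^+)\cap\cJ(g)^+\ne\varnothing$ forces $\cJ(h)=\cJ(g)$. So $x^+$ may be fixed by some $h\in\Gamma$; the paper only shows that no $h$ fixes both endpoints (\reflem{fanningArcs}).
\item Only the side of $\{x_0,x_1\}$ toward $\cD(p)$ is known to be isolated (\refprop{sideIsolated}). Leaves accumulating from the type~II side cannot be ``chosen away'': \refthm{separatingQF} gives no such control over $\Gamma$. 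Nor does \reflem{crossAlt} obstruct them, since $\cJ(g)^+$ is accessible from that side.
\end{itemize}
In (C), your natural generating leaves $\sfd\sfa_n$, built from $(\partial\cE_n)^+$, are not known to be unlinked from their $\Gamma$-translates; the paper uses them only as auxiliary leaves. Also, a dragged endpoint lying in $\cE_\infty\cap\ell$ does not by itself give a ray landing on $\ell$ from the inaccessible side.

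The missing idea is the \emph{bouncing} construction.
\begin{itemize}
\item In (A), generate the lamination by $\{\sfd\gamma_k\}$ for a chain $x_0<x_1<\dots<x_n$ along $\cJ(g)^+$ whose arcs $\gamma_k=[x_{k-1},x_k]$ cross $\cH$ (\refconst{lamiByManyLeaves}).
\item Show the chain always extends. If $x_n$ is $\Gamma$-fixed, use \reflem{oneStepAtFix}. If the first leaf of the adjacent gap is approached by an outside sequence, use \reflem{outsideExt}; type~II accumulation is thus \emph{used}, not excluded. If it is approached by an inside sequence, \reflem{directionalDragging} drags the gap vertices into $\cD(p)\cap\cE_\infty$. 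That contradicts $x_0\in\cJ(g)^-$ in the polygon case, or \reflem{separatingFence} for the crown element.
\item An infinite chain gives infinitely many boundary leaves $\sfd\gamma_k$. By \refprop{finiteBD}, some $f\in\Gamma$ carries one bouncing arc onto a later one. This yields an $f$-invariant subline of $\cJ(g)^+$ with ends $\Fix(f)$, contradicting discreteness or \refthm{twoInOne}.
\item Case (C) runs the same scheme along $\ell$ toward $q$. The base lamination is the set of conical limit pairs of $z_1$ (\refprop{unlinkedConicalLimitPairs}), and the extension steps are \reflem{oneStepAtFix2}, \reflem{outsideExt2} and \reflem{insideExt2}. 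It ends with \refthm{twoInOne} applied to an element of $\Gamma$ with an invariant subline of $\ell\subset Z^-$.
\end{itemize}

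In (B) your dragging scheme is the right one, but two details need fixing. The generating leaf must be $\{e_+^L,e_-\}$ with $e_+^L=\lim x_n^L\in\cE_\infty$. A point of a fixed $(\partial\cE_n)^+$ lies outside $\cE_\infty$, so dragging never starts from it. Unlinkedness also cannot come from one-sided inaccessibility, since $\ell$ is two-sided accessible in (B). The paper instead uses \refprop{fatFenceSystem} together with simple connectivity of $Z^+$, and \refprop{fatLeafLink} for isolation.
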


As in \refrmk{inaccessibleSide}, \refconst{eclipseD}, and \refconst{eclipseL}, we divide the proof into three cases:
\begin{enumerate}
    \item $g$ admits a $g$-prong in $Z^+$;
    \item $g$ admits no $g$-prong in $Z^+$ and $\ell$ is two-sided branched;
    \item $g$ admits no $g$-prong in $Z^+$ and $\ell$ is one-sided simplicial.
\end{enumerate}
The next three sections treat these cases in order.

In each case, the argument follows the general pattern of \refsec{twoInOne}, but requires additional care.
The three cases are independent and may be read separately.

In this section, we consider the first case.
\begin{thm}\label{Thm:oneProng}
Let $Z^\pm$ be a minimal zipper for $M$. Then there is no element $g\in G$ such that $g$ admits a $g$-prong in $Z^+$ and acts freely on $Z^-$.
\end{thm}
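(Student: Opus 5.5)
By \refthm{twoInOne}, if $g$ admits a $g$-prong in $Z^+$ and acts freely on $Z^-$, then $\Fix(g)\cap Z^+=\{p\}$ and $q\notin Z^+\cup Z^-$, where $\Fix(g)=\{p,q\}$. The plan is to rerun the lamination argument of \refsec{twoInOne}, with two changes forced by this asymmetry. First, only $\cD(p)$ is of type~I; the other domain $\cD'$ of $\cJ(g)$ is of type~II (\refrmk{inaccessibleSide}). Second, $\cJ(g)^+$ is only one-sided inaccessible.

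\textbf{Step 1: choose $\Gamma$ and the lamination.} By \refthm{separatingQF}, I take a quasi-Fuchsian $\Gamma$ with $\Lambda(\Gamma)$ separating $p$ from $q$, and let $\cH$ be the domain containing $p$. As in \refconst{circleLami}, I take the generating connector subarc $\gamma$ of $\cJ(g)$ crossing $\cH$, running from $x^+\in\cJ(g)^+$ to $x^-\in\cJ(g)^-$, and set $\cL(\Gamma,p)$ to be the closure of $\Gamma\cdot\sfd\gamma$.

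\textbf{Step 2: unlinkedness.} Suppose $\sfd\gamma$ and $h(\sfd\gamma)$ are linked. By \reflem{fenceSystem} and \refprop{unlinkedCrossingLine}, we then land in case (2) or (3).
\begin{itemize}
\item In case (3), the segment $\gamma^+$ or $h(\gamma^+)$ is two-sided accessible, which contradicts one-sided inaccessibility.
\item In case (2), the synapses are $p,h(p)\in Z^+$, and the rays of $\gamma^-$ and $h(\gamma^-)$ land on the same side of $\gamma^+\cup h(\gamma^+)$. I rule this out using \reflem{fenceSystem}(4): there $h(\cJ(g)^+)\cap\cJ(g)^+=\{p\}$ with the rest inside $\cD(p)$, so the $(-)$-ray of $h(\cJ(g))$ lands at $h(p)$ from the $\cD(p)$-side of $\cJ(g)^+$. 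This contradicts the $\cD(p)$-side inaccessibility in \reflem{crossAlt}.
\end{itemize}

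\textbf{Step 3: isolation.} To show $\sfd\gamma$ is an isolated leaf, I copy \reflem{isolatedLeaf}. A sequence $g_n(\sfd\gamma)\to\sfd\gamma$ has $g_n(p)$ eventually in $\cD(p)$ or in $\cD'$.
\begin{itemize}
\item In $\cD(p)$, \reflem{dragging} with the eclipse on $\cD(p)$ gives the contradiction as before.
\item In $\cD'$, I need a substitute for the eclipse. By \reflem{fenceSystem}(3), $g_n(\cJ(g))\subset\closure{\cD'}$. I use the type~II property: every connector crossing $\cD'$ ends in $\Int\cJ(g)^+$, and $g$ translates these toward $q$ and $p$. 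This lets me build a "squashed" nested family $g^k(\closure{\cD'^{L}(\delta)})$ from a crossing connector $\delta$. I would prove analogues of the separation property and of \reflem{separatingFence} for it, but with the end set meeting $\cJ(g)$ only in $\closure{\cJ(g)^-}\cup\{q\}$ or only in a subarc. This is then enough to run the dragging argument.
\end{itemize}

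\textbf{Step 4: finish as in \refthm{twoInOne}.} Any $h\in\Gamma$ fixing $x^+$ forces $\Fix(h)=\Fix(g)$ by \reflem{fenceSystem}(2), which is impossible. So the $x^+$-end of $\sfg$ spirals into a principal region. The ideal-polygon case and the crown case are then killed by iterated dragging: the points $x_i$ are forced into an eclipse end while also lying on a translate of $\cJ(g)$ confined to a band $\closure{\cE_s\setminus\cE_{s+3}}$.

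\textbf{Main obstacle.} The hardest step is the $\cD'$ side, which has no pivot. I would first try to show that such sequences cannot accumulate there at all. The idea is that the translates $g_n(\cJ(g))$ in $\cD'$ have $(+)$-sides accessible from $\cD'$, which should contradict type~II behaviour via \reflem{pruning}. If that fails, I would construct the fat-fence eclipse described above.
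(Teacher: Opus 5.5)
\textbf{There is a genuine gap.} It sits exactly where the one-prong case differs from \refthm{twoInOne}.

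\emph{Step~4 rests on an unsupported claim.} You assert that no non-trivial $h\in\Gamma$ fixes $x^+$, but \reflem{fenceSystem}(2) concerns only elements fixing $p$ or $q$. The two-fixed-point argument, that $h(\cJ(g)^+)\cap\cJ(g)^+\neq\varnothing$ forces $\cJ(h)=\cJ(g)$, used that $\cJ(g)^+$ is simplicial on both sides.

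Here $\cJ(g)^+$ carries branches on its $\cD'$-side, since every connector crossing the type~II domain $\cD'$ ends in $\Int\cJ(g)^+$. Moreover, \reflem{fenceSystem} constrains $h(\cJ(g)^+)\cap\cJ(g)^+$ only when $h(p)\in\cD(p)$. A translate with $h(p)\in\cD'$ may run along an arc of $\cJ(g)^+$.

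The paper treats a fixed endpoint as a real possibility: \reflem{fanningArcs} only excludes an element fixing both endpoints of a generating leaf. If $x^+$ is fixed, the $x^+$-end of $\sfg$ spirals onto a closed leaf instead of into a principal region of $\cG''$, which may even be empty. Your Step~4 then never starts.

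\emph{Step~3 cannot be completed on the $\cD'$-side.} Crossing connectors of $\cD'$ have their $(+)$-ends in $\Int\cJ(g)^+$, where $g$ acts by translation, so nothing is anchored. If $P$ is a component of $\cD'\setminus\delta$ for a crossing connector $\delta$ and the family $g^k(\closure{P})$, $k\ge 0$, is nested, its intersection is at most one fixed point of $g$: every other point is pushed to the other fixed point by $g^{-k}$. So there is no thin end set to trap limits of leaf endpoints, and no analogue of \reflem{separatingFence}.

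Your fallback does not help either. Accessibility of the $(+)$-sides of translates from $\cD'$ is precisely type~II behaviour, so \reflem{pruning} yields no contradiction.

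\emph{A smaller point in Step~2.} In case~(2) one has $h(p)\in\Int\cJ(g)^+$, so \reflem{fenceSystem}(4) does not apply. The correct argument is that $hgh^{-1}$ fixes the branch at $h(p)$ containing $p$. It therefore moves the other branch along $\cJ(g)^+$ to branches on both sides of $\cJ(g)^+$, one of which lies on the inaccessible $\cD(p)$-side.

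\textbf{What the paper does instead.} It never controls the $\cD'$-side.
\begin{itemize}
\item It proves only one-sided isolation, on the inaccessible side $\opi[\Lambda(\Gamma)]{x_1}{x_0}$ (\refprop{sideIsolated}).
\item It then enlarges the generating set by a bouncing sequence $x_1<x_2<\cdots$ in $\cJ(g)^+$, with each $[x_{k-1},x_k]$ crossing $\cH$ (\refconst{lamiByManyLeaves}).
\item A fixed endpoint $x_n$ is handled by \reflem{oneStepAtFix}.
\item An outside sequence accumulating on the next gap is converted into a new bouncing point by \reflem{outsideExt}.
\item Inside sequences are controlled by the eclipse on $\cD(p)$ alone, through \reflem{directionalDragging}. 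This rules out the ideal-polygon and crown alternatives.
\end{itemize}
Hence the extension never stops, and the leaves $\sfd[x_{k-1},x_k]$ give infinitely many boundary leaves. \refprop{finiteBD} then yields $f\in\Gamma$ preserving a subline of $\cJ(g)^+$, which contradicts discreteness or \refthm{twoInOne}. The idea missing from your plan is to use the uncontrolled side as a source of new generating leaves rather than trying to exclude it.
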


By \refrmk{inaccessibleSide}, the $(+)$-side $\cJ(g)^+$ is one-sided inaccessible, while by \reflem{simplicialSide}, $\cJ(g)$ has a unique Jordan domain, of type~I. For this reason, the dragging argument in \reflem{dragging} must be refined to record the direction of accumulation of translates of $\cJ(g)$. This will be done in \refsec{uniformBouncing}.

To this end, we first modify \refconst{circleLami} as follows:

\begin{const}[Circle laminations on quasi-Fuchsian limit circles]\label{Const:lamiByManyLeaves}
Let \(Z^\pm\) be a minimal zipper for \(M\), and let \(g\in G\) be such that \(g\) admits a \(g\)-prong in \(Z^+\), \(\Fix(g)\cap Z^+=\{p\}\), and \(\Fix(g)\cap Z^-=\varnothing\).
Write \(\Fix(g)=\{p,q\}\).
Assume that \(\cJ(g)\) is equipped with a circular order compatible with the canonical order on the ray \(\cJ(g)^+\).

By \refthm{separatingQF}, choose a quasi-Fuchsian closed surface subgroup \(\Gamma<G\) such that \(\Lambda(\Gamma)\) separates the fixed points of \(g\). Let \(\cH\) be the Jordan domain of \(\Lambda(\Gamma)\) containing \(p\), and equip \(\Lambda(\Gamma)\) with the canonical circular order induced by \(\cH\).
Then there is a unique connector subarc \(\gamma_1\) of \(\cJ(g)\) with synapse \(p\) that crosses \(\Lambda(\Gamma)\) through \(\cH\).
Write \(\sfd\gamma_1=\{x_0,x_1\}\), where \(x_0\in Z^-\) and \(x_1\in Z^+\). We define
\[
\cL(\Gamma, \{x_0,x_1\}):=\overline{\Gamma\cdot\sfd\gamma_1}\subset\cM.
\]

More generally, let \(x_1<\cdots<x_n\) be a finite strictly increasing sequence in \(\cJ(g)^+\), where \(n\ge2\), satisfying the following \emph{bouncing condition}: for each \(k\in\{2,\dots,n\}\), the arc
\[
\gamma_k=[x_{k-1},x_{k}]
\]
in \(\cJ(g)^+\) crosses \(\cH\).
For \(0< k\le n\), write \(\ell_k=\sfd\gamma_k\), where \(\gamma_1\) is as above. We then define
\[
\cL(\Gamma,\{x_k\}_{k=0}^n)
:=
\overline{\{h(\ell_k): h\in\Gamma,\ 1\le k\le n\}}
\subset\cM.
\]

Thus \(\cL(\Gamma,\{x_k\}_{k=0}^n)\) should be viewed as an extension of the basic construction \(\cL(\Gamma,\{x_0,x_1\})\), obtained by allowing additional generating leaves arising from a bouncing sequence.
We call \(\{\ell_k\}_{k=1}^{n}\) the \emph{generating leaves} of \(\cL(\Gamma,\{x_k\}_{k=0}^n)\), and we say that \(\cL(\Gamma,\{x_k\}_{k=0}^n)\) is \emph{generated} by \(\{\ell_k\}_{k=1}^{n}\), or equivalently by \(\{\gamma_k\}_{k=1}^{n}\).
\end{const}

\begin{prop}
In the setting of \refconst{lamiByManyLeaves}, every two leaves in the \(\Gamma\)-orbit of \(\{\ell_k\}_{k=1}^{n}\) are unlinked. Consequently, \(\cL(\Gamma,\{x_k\}_{k=0}^n)\) is a \(\Gamma\)-invariant circle lamination of \(\Lambda(\Gamma)\).
\end{prop}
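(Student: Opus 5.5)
The plan mirrors the proof of the analogous proposition for \refconst{circleLami}: argue by contradiction, apply \refprop{unlinkedCrossingLine} to a linked pair, and rule out each of its conclusions using the one-sided inaccessibility of \(\cJ(g)^+\) (\refrmk{inaccessibleSide}) together with \reflem{fenceSystem}. The second assertion then follows as before: \(\cL(\Gamma,\{x_k\}_{k=0}^n)\) is the closure of a \(\Gamma\)-invariant set of pairwise unlinked leaves, and unlinkedness is a closed condition in \(\cM\).

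\textbf{Reduction.} By \(\Gamma\)-equivariance it suffices to show that \(\ell_k\) and \(h(\ell_j)\) are unlinked for all \(1\le j,k\le n\) and all \(h\in\Gamma\), excluding the trivial case \(h=1\), \(j=k\).

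\textbf{Case \(h=1\), \(j\neq k\).} Each \(\gamma_k\) is a subarc of the Jordan curve \(\cJ(g)\) whose interior is a component of \(\cJ(g)\cap\cH\). Hence \(\Int\gamma_j\) and \(\Int\gamma_k\) are disjoint cross-cuts of the Jordan domain \(\cH\). Disjoint cross-cuts of a disk have unlinked endpoint pairs on its boundary. So \(\ell_j\) and \(\ell_k\) are unlinked on \(\Lambda(\Gamma)\).

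\textbf{Case \(h\neq 1\).} First, \(\Fix(h)\subset\Lambda(\Gamma)\) while \(p\notin\Lambda(\Gamma)\), so \(h(p)\neq p\). By \reflem{fenceSystem}, \(\cJ(g)\) and \(h(\cJ(g))\) are unlinked Jordan curves.

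From this I would deduce that, for each \(\alpha\), the tree segments \(\gamma_k^\alpha\) and \(h(\gamma_j^\alpha)\) are unlinked in \(Z^\alpha\). Suppose instead that some \(\Int h(\gamma_j^\alpha)\) met both a left and a right branch of \(\gamma_k^\alpha\). Then \(h(\cJ(g))\) would enter both Jordan domains of \(\cJ(g)\), contradicting unlinkedness.

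Now suppose \(\ell_k\) and \(h(\ell_j)\) are linked. Then \(\Int\gamma_k\) and \(h(\Int\gamma_j)\) are adapted Jordan lines crossing \(\cH\) that satisfy the hypotheses of \refprop{unlinkedCrossingLine}. Note first that both cannot be pure, since two pure lines in \(Z^+\) with linked endpoints would be linked segments. I then treat the three conclusions in turn.
\begin{enumerate}
    \item \emph{Same synapse.} Both lines are mixed, so \(j=k=1\), and equal synapses mean \(h(p)=p\), which is excluded.
    \item \emph{Two-sided accessibility.} Some \(\gamma^+\)-segment, a subsegment of \(\cJ(g)^+\) or of \(h(\cJ(g)^+)\), is two-sided accessible. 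But such segments are inaccessible from the \(\cD(p)\)-side (resp. the \(h(\cD(p))\)-side) by \reflem{crossAlt}, a contradiction.
    \item \emph{Distinct synapses \(p\neq h(p)\) in \(Z^+\).} Here \(j=k=1\), and \(\gamma_1^+\cap h(\gamma_1^+)=[p,h(p)]\). In particular \(h(\cJ(g)^+)\cap\cJ(g)^+\neq\varnothing\).
\end{enumerate}

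\textbf{The main obstacle: conclusion (2) of \refprop{unlinkedCrossingLine}.} By the third bullet of \reflem{fenceSystem} (part~(3)), \(h(p)\in\cD(p)\) if and only if \(h(q)\in\cD(p)\). Then part~(4) gives \(h(\cJ(g)^+)\cap\cJ(g)^+=\{p\}\) whenever \(h(p)\in\cD(p)\). Since \(h(p)\neq p\), this contradicts \([p,h(p)]\) being a nondegenerate common arc.

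If instead \(h(p)\notin\cD(p)\), then \(h(p)\in\Int\cJ(g)^+\). By \refprop{unlinkedCrossingLine}, \(\gamma_1^-\) and \(h(\gamma_1^-)\) land at \(p\) and \(h(p)\) on the same side of the line \(\gamma_1^+\cup h(\gamma_1^+)\). Now \(\cJ(g)^-\) approaches \(p\) from the side opposite to \(\cD(p)\). Comparing this with the direction from which \(h(\gamma_1^-)\) must land at \(h(p)\), I expect to produce a ray of \(Z^-\) landing on the \(\cD(p)\)-side of \(\cJ(g)^+\), or on the \(h(\cD(p))\)-side of \(h(\cJ(g)^+)\). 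Either would contradict \reflem{crossAlt}.

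I expect this final side-comparison to be the delicate point. The fallback is to use the canonical orientation of \(\cJ(g)^+\), which is preserved by \(h\), to pin down the sides explicitly.
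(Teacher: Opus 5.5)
Your reduction, your treatment of $h=1$, and your derivation of segment-unlinkedness from the unlinked fences in \reflem{fenceSystem} all follow the paper's argument, as does your exclusion of the same-synapse and two-sided-accessible conclusions of \refprop{unlinkedCrossingLine}. The gap is the remaining conclusion: distinct synapses $p\neq h(p)$ with $\gamma_1^+\cap h(\gamma_1^+)=[p,h(p)]$ (conclusion~(2) in the paper's numbering). You leave this case at ``I expect''.

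Your sub-case $h(p)\in\cD(p)$ is vacuous. This conclusion already gives $h(p)\in\Int\gamma_1^+\subset\Int\cJ(g)^+$ and, symmetrically, $p\in\Int h(\gamma_1^+)$.

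More seriously, the side comparison you propose cannot produce a contradiction. Near $p$, the curve $\cJ(g)$ consists of the germs of $\cJ(g)^+$ and $\gamma_1^-$. The germ of $h(\gamma_1^+)$ beyond $p$ lies in $\cD(p)$; this is shown in the next paragraph. Consequently, along $[p,h(p)]$ the $\cD(p)$-side of $\cJ(g)^+$ is the side of the line $\gamma_1^+\cup h(\gamma_1^+)$ that does not contain $\gamma_1^-$. So the requirement ``$h(\gamma_1^-)$ lands at $h(p)$ off the $\cD(p)$-side'' is literally equivalent to the same-side statement in conclusion~(2). The same holds at $p$ for $h(\cJ(g)^+)$. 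Both $(-)$-rays are therefore compatible with every inaccessibility constraint, and no $Z^-$-ray is forced onto a forbidden side.

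The missing idea is the $Z^+$-branch structure at $p$. The ray $h(\cJ(g)^+)$ reaches $p$ along $[h(p),p]\subset\cJ(g)^+$. Beyond $p$ it therefore runs in a branch $B$ at $p$ that is disjoint from $\cJ(g)^+$, and \reflem{simplicialSide} gives $B\subset\cD(p)$. Hence the tail of $h(\cJ(g)^+)$ lies in $\cD(p)$, and its landing point satisfies $h(q)\in\closure{\cD(p)}$. Since $h(p)\neq p$, parts (1)--(2) of \reflem{fenceSystem} give $h(q)\notin\cJ(g)$, so $h(q)\in\cD(p)$. But $h(p)\in\Int\cJ(g)^+$ is not in $\cD(p)$, which contradicts \reflem{fenceSystem}(3).

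Equivalently, by \refprop{invFence} the element $g$ moves $B$ while fixing the branch containing $\cJ(g)^+\setminus\{p\}$, and it preserves the circular order at $p$. So $g(B)$ and $g^{-1}(B)$ lie on opposite sides of $h(\cJ(g)^+)$ at its interior point $p$. This yields a $Z^+$-ray landing at $p$ from the $h(\cD(p))$-side. That is the accessibility the paper's one-line treatment of $(i,j)=(1,1)$ appeals to, and it comes from these $(+)$-rays, not from the $(-)$-rays supplied by \refprop{unlinkedCrossingLine}.
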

\begin{proof}
Assume that \(h(\ell_j)\) is linked with \(\ell_i\) for some \(h\in\Gamma\) and some \(i,j\in\{1,\dots,n\}\) with \(j\le i\).
Note that \(h(p)\neq p\), since \(\Fix(h)\subset \Lambda(\Gamma)\).

If \(1<j\) or \(j<i\), then \(\gamma_i\) is an arc in \(\Int \cJ(g)^+\subset Z^+\).
In this case, \refprop{unlinkedCrossingLine} implies that \(\gamma_i\) is two-sided accessible, a contradiction since \(\gamma_i\subset \cJ(g)^+\) is at least one-sided inaccessible by \refrmk{inaccessibleSide}.

It remains to consider the case \((i,j)=(1,1)\).
Then both \(h(\gamma_1)\) and \(\gamma_1\) are connector arcs with synapses in \(Z^+\).
Again by \refprop{unlinkedCrossingLine}, at least one of \(h(\gamma_1)^+\) and \(\gamma_1^+\) is accessible from the \(\cD(p)\)-side.
As above, this contradicts \refrmk{inaccessibleSide}.

The second assertion is immediate from the first and from the construction. This completes the proof.
\end{proof}

\subsection{Trivial stabilizers of generating leaves}
In the setting of \refconst{lamiByManyLeaves}, by \refrmk{realization}, \(\cL(\Gamma,\{x_k\}_{k=0}^n)\) induces a geodesic lamination \(\cG(\Gamma,\{x_k\}_{k=0}^n)\) on \(\cH/\Gamma\), with lift \(\widetilde{\cG}(\Gamma,\{x_k\}_{k=0}^n)\subset \cH\).
However, unlike in \reflem{isolatedLeaf}, the endpoints of the generating leaf \(\ell_1\) may be fixed by a non-trivial element of \(\Gamma\).
Thus the leaf of \(\cG(\Gamma,\{x_k\}_{k=0}^n)\) corresponding to \(\ell_1\) may be a simple closed curve.
In that case, by \reflem{closureOfLeaf}, \(\cG(\Gamma,\ell_1)''=\varnothing\), which prevents us from following the proof of \refthm{twoInOne} in the present setting.

We therefore turn to the proof that the stabiliser of \(\ell_1\) is trivial.
Note that the triviality of the stabiliser of \(\ell_k\) for \(k>1\) follows automatically from \refthm{twoInOne}.
We begin with an auxiliary lemma.

\begin{lem}[Extension of  \(\gamma_0^+\)] \label{Lem:extArc}
In the setting of \refconst{lamiByManyLeaves}, if \(\cJ(g)^+\) is left (resp. right) inaccessible, then there is an arc \(\cA\subset Z^+\) crossing \(\cH^L(\gamma_1)\) (resp. \(\cH^R(\gamma_1)\)) and joining \(p\) to a point in \(\opi[\Lambda(\Gamma)]{x_1}{x_0}\) (resp. \(\opi[\Lambda(\Gamma)]{x_0}{x_1}\)).
\end{lem}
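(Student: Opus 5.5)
The idea is to take a branch at \(p\) that leaves \(\cJ(g)^+\), push it toward \(q\) with a power of \(g\), and then cut it off at its first hit with \(\Lambda(\Gamma)\). By symmetry I treat only the case where \(\cJ(g)^+\) is left inaccessible. I first identify the left side with the type~I side. By \reflem{crossAlt} and \refrmk{inaccessibleSide}, \(\cJ(g)^+\) is inaccessible exactly from the \(\cD(p)\)-side, while the other domain is of type~II and so is accessible. Hence, with the circular order on \(\cJ(g)\) fixed in \refconst{lamiByManyLeaves}, \(\cD(p)\) is the left Jordan domain of \(\cJ(g)\). Near \(p\), the left side of \(\gamma_1\) inside \(\cH\) then lies in \(\cD(p)\); that is, \(\cH^L(\gamma_1)\subset \cD(p)\), at least in a neighbourhood of \(p\). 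Checking that these orientation conventions really match is the step I expect to need the most care.

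Next I produce a branch at \(p\) that reaches outside \(\cH\). Since \(Z^\pm\) is minimal, \(p\) is not a hard end. Because \(\cJ(g)^+\setminus\{p\}\) is contained in a single branch at \(p\), there is a branch \(B\) at \(p\) disjoint from \(\cJ(g)^+\). By \reflem{simplicialSide}, every such branch satisfies \(B\subset\cD(p)\), and the same holds for each \(g^n(B)\), which is again a branch at \(p\) disjoint from the prong. Pick \(y\in B\). Then \(y\notin\Fix(g)=\{p,q\}\), because \(y\neq p\) and \(q\notin Z^+\). Under forward or backward iteration of \(g\), whichever makes \(q\) the attracting point, \(g^{\pm n}(y)\to q\). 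Since \(\Lambda(\Gamma)\) separates \(p\) from \(q\), the point \(q\) lies in the open complement of \(\closure{\cH}\). Hence for large \(n\) the point \(y':=g^{\pm n}(y)\) lies outside \(\closure{\cH}\), while \(y'\in B':=g^{\pm n}(B)\subset\cD(p)\).

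Now consider the arc \([p,y']\subset B'\cup\{p\}\subset Z^+\). It starts at \(p\in\cH\) and ends outside \(\closure{\cH}\), so by the elementary separation proposition it meets \(\Lambda(\Gamma)\). Let \(z\) be its first intersection point with \(\Lambda(\Gamma)\), and set \(\cA=[p,z]\). Then \(\cA\setminus\{z\}\subset\cH\). I claim \(\cA\) meets \(\gamma_1\) only at \(p\). It cannot meet \(\gamma_1^-\), since \(Z^+\cap Z^-=\varnothing\). It cannot meet \(\gamma_1^+\setminus\{p\}\subset\cJ(g)^+\setminus\{p\}\), since \(B'\) is disjoint from \(\cJ(g)^+\). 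So \(\Int\cA\) lies in one component of \(\cH\setminus\gamma_1\). Near \(p\), the arc \(\cA\) lies in \(B'\subset\cD(p)\), which by the first step is the left side of \(\gamma_1\). Hence \(\Int\cA\subset\cH^L(\gamma_1)\).

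It remains to locate \(z\). The endpoint \(z\) lies on \(\partial\cH^L(\gamma_1)\cap\Lambda(\Gamma)\), which is the closed arc \(\cldi[\Lambda(\Gamma)]{x_1}{x_0}\). Moreover \(z\neq x_0\), because \(x_0\in Z^-\). Also \(z\neq x_1\), because \(x_1\in\cJ(g)^+\) while \(B'\cap\cJ(g)^+=\varnothing\). Therefore \(z\in\opi[\Lambda(\Gamma)]{x_1}{x_0}\), and \(\cA\) crosses \(\cH^L(\gamma_1)\) as required. The right-inaccessible case is the mirror image, with \(\cH^R(\gamma_1)\) and \(\opi[\Lambda(\Gamma)]{x_0}{x_1}\).
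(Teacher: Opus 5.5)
Your proof is correct and takes essentially the same route as the paper. The paper likewise takes a ray in \(Z^+\) landing at \(p\) inside the type~I domain \(\cD(p)\), which is the left side of \(\gamma_1\); it replaces the ray by a \(g\)-iterate so that its initial point lies outside \(\closure{\cH}\), and then truncates at \(\Lambda(\Gamma)\). You also make explicit several steps the paper leaves implicit: that such a branch exists and lies in \(\cD(p)\) by \reflem{simplicialSide}, that the truncated arc lies on the left of \(\gamma_1\), and that its endpoint avoids \(x_0\) and \(x_1\).
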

\begin{proof}
By symmetry, it suffices to treat the left-inaccessible case.
Since \(\gamma_1^+\) is left inaccessible and \(\cD(p)\) is of type~I, we can find a ray \(r\subset Z^+\) landing at \(p\) on \(\cD(p)\) and hence on the left side of \(\gamma_1\).
Since \(q\notin \overline{\cH}\), if the initial point of \(r\) lies in \(\overline{\cH}\), then, after replacing \(r\) by a suitable iterate under \(g\), we may assume that its initial point does not lie in \(\overline{\cH}\).
It follows that \(r\) admits a truncation \(r'\) crossing \(\cH^L(\gamma_1)\), starting at a point in \(\opi[\Lambda(\Gamma)]{x_1}{x_0}\), and landing at  \(p\).
Taking \(\cA=r'\cup \{p\}\) completes the proof.
\end{proof}

\begin{lem}[Fanning arcs]\label{Lem:fanningArcs}
In the setting of \refconst{lamiByManyLeaves}, let \(h\in \Gamma\) and \(k\in\{1,\dots,n\}\).
If \(h\) fixes \(x_k\), then \(h\) does not fix \(x_{k-1}\).
Moreover, \(h(\gamma_k)\) crosses either \(\cH^R(\gamma_k)\) or \(\cH^L(\gamma_k)\).
\end{lem}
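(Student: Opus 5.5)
Both assertions can be derived from the fence-system structure of \reflem{fenceSystem} together with the fact that nontrivial elements of \(\Gamma\) have their fixed points on \(\Lambda(\Gamma)\) and act as loxodromics there.

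\emph{First assertion.} Suppose \(h\in\Gamma\setminus\{1\}\) fixes both \(x_{k-1}\) and \(x_k\). Then \(\Fix(h)=\{x_{k-1},x_k\}=\sfd\gamma_k\). We split according to \(k\).
\begin{itemize}
\item If \(k\ge 2\), both endpoints lie in \(Z^+\). This contradicts \refthm{twoInOne}, which says no element of \(G\) has its fixed point set contained in \(Z^+\).
\item If \(k=1\), then \(x_0\in Z^-\) and \(x_1\in\Int\cJ(g)^+\). Since \(h(x_1)=x_1\), the sets \(h(\cJ(g)^+)\) and \(\cJ(g)^+\) meet at the non-node point \(x_1\).
\end{itemize}
In the case \(k=1\), we use \reflem{fenceSystem}. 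Item \refitm{innerTouch} shows that a nonempty intersection with \(h(p)\in\cD(p)\) forces the intersection to be \(\{p\}\), which does not contain \(x_1\). So either \(h(p)\notin\cD(p)\), or \(h(p)\in\cJ(g)\) (and then \(h(p)=p\) by item (1)/(2)). In the remaining case \(h(p)\) lies in the type~II domain. Then \refprop{enclosing} and the \(\cD(p)\)-side inaccessibility force \(h(\cJ(g)^+)\) and \(\cJ(g)^+\) to share an arc through \(x_1\). Because \(h\) fixes \(x_1\) and acts on this shared arc, we get \(h(\cJ(g)^+)=\cJ(g)^+\) near \(x_1\). Together with the ray structure this gives \(h(p)=p\). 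By \refitm{coincide} we then have \(\Fix(h)=\Fix(g)\), contradicting \(\Fix(h)\subset\Lambda(\Gamma)\), since \(p\notin\Lambda(\Gamma)\).

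\emph{Second assertion.} Let \(h\) fix \(x_k\). By the first part, \(h(x_{k-1})\neq x_{k-1}\), so \(h(\gamma_k)\neq\gamma_k\). The arc \(h(\gamma_k)\) crosses \(\cH\), because \(h\) preserves \(\cH\) and \(\Lambda(\Gamma)\). Its endpoint pair \(\{x_k,h(x_{k-1})\}\) shares the endpoint \(x_k\) with \(\ell_k\). The two leaves are unlinked by the preceding proposition. Hence \(h(x_{k-1})\) lies in one of the complementary intervals \(\opi[\Lambda(\Gamma)]{x_{k-1}}{x_k}\) or \(\opi[\Lambda(\Gamma)]{x_k}{x_{k-1}}\). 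It remains to show that \(\Int h(\gamma_k)\) lies entirely in the corresponding domain \(\cH^R(\gamma_k)\) or \(\cH^L(\gamma_k)\), i.e. that \(h(\gamma_k)\) meets \(\gamma_k\) only along an initial segment at \(x_k\) that we then truncate.

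For this we apply \refprop{enclosing} to each segment of \(h(\gamma_k)\) starting at \(x_k\), relative to the chosen side. The pieces are the \((+)\)-part, and for \(k=1\) also the \((-)\)-part; note that \(x_k\) is then a synapse endpoint. The \((+)\)-parts are unlinked by the tree structure in \(Z^+\), and the \((-)\)-parts are unlinked in \(Z^-\). The intersection \(h(\gamma_k)\cap\gamma_k\) is therefore an initial arc \([x_k,v]\) in the same \(Z^\alpha\). A nondegenerate overlap would contradict \(h\) being loxodromic with fixed point \(x_k\): iterating \(h\) would contract or expand the overlap into \(\gamma_k\) on both sides, producing branching at \(x_k\) on both sides of \(\cJ(g)^+\), which contradicts the one-sided inaccessibility of \refrmk{inaccessibleSide}. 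Hence the overlap is \(\{x_k\}\) and the crossing is clean. I expect this step to be the main obstacle, and I would handle it by applying \refprop{unlinkedCrossingLine} to \(\gamma_k\) and \(h(\gamma_k)\).
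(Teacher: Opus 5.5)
Your case $k\ge 2$ is essentially the paper's argument. The first assertion is \refthm{twoInOne}. For the second, replace $h$ by $h^{-1}$ if needed so that $h(x_{k-1})$ lies on the inaccessible side of $\gamma_k$. The truncated ray given by \refprop{enclosing} then lands at $v$ from that side, which forces $v=x_k$. Your ``branching at $x_k$'' picture is not the right mechanism. Also, \refprop{unlinkedCrossingLine} is unavailable here: $\sfd\gamma_k$ and $h(\sfd\gamma_k)$ share $x_k$, so they are not linked.

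The genuine gaps are in the case $k=1$, starting with the first assertion. Your use of item~(4) of \reflem{fenceSystem} to exclude $h(p)\in\cD(p)$ is valid, but the remaining cases are not handled.
\begin{itemize}
\item Items (1)--(2) do not say that $h(p)\in\cJ(g)$ forces $h(p)=p$; item (1) is about $h(q)$. So $h(p)\in\Int\cJ(g)^+$ is a real case. For $h(p)\in\Int\gamma_1^+$ the paper excludes it by a branch argument. Since $h(\gamma_1^+)=[h(p),x_1]$, the branch at $h(p)$ containing $p$ misses $h(\cJ(g)^+)$. By \reflem{simplicialSide} for $hgh^{-1}$ it lies in $h(\cD(p))$, and by \refprop{invFence} it is moved by $hgh^{-1}$. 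Its images under $hg^{\pm1}h^{-1}$ give $Z^+$-rays landing at $h(p)$ from both sides of $\gamma_1^+$, contradicting one-sided inaccessibility.
\item In the type~II case, nothing forces a shared arc. $h(\cJ(g)^+)$ may meet $\cJ(g)^+$ only at $x_1$, approaching from the $\cD'$-side, where $\cJ(g)^+$ is accessible. Even a shared arc would not yield $h(p)=p$.
\end{itemize}
Tellingly, your argument never uses $h(x_0)=x_0$. If it worked, it would show that no nontrivial element of $\Gamma$ fixes $x_1$ at all. The paper makes no such claim, and treats exactly that situation in \reflem{oneStepAtFix} with $n=1$.

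For the second assertion with $k=1$, $x_1$ is not the synapse of $\gamma_1$; the synapse is $p$. So $h(\gamma_1)^-$, which runs from $h(x_0)$ to $h(p)$, is not a segment issuing from $x_1$. Applying \refprop{enclosing} at $x_1$ says nothing about it. The unlinkedness of $\gamma_1^-$ and $h(\gamma_1)^-$ from \reflem{fenceSystem} does not prevent them from overlapping. Three steps are missing.
\begin{itemize}
\item Show $h(p)\notin\gamma_1$, by the branch argument above.
\item Reduce to $h(p)\in\cH^L(\gamma_1)$, the inaccessible side. In the other case one shows $p\in\cH^L(h(\gamma_1))$ and swaps $h$ with $h^{-1}$. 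Then \refprop{enclosing} and left inaccessibility give $h(\gamma_1)^+\cap\gamma_1^+=\{x_1\}$.
\item Show $h(\gamma_1)^-\cap\gamma_1^-=\varnothing$, the key step. If they met, $\gamma_1^-\cup h(\gamma_1)^-$ would contain a ray $\sft\subset Z^-$ from $x_0$ to $h(p)$ with $\sft\setminus\{x_0\}\subset\cH$. Meanwhile the $Z^+$-arc $\cA\cup\gamma_1^+$ of \reflem{extArc} crosses $\cH$ from a point of $\opi[\Lambda(\Gamma)]{x_1}{x_0}$ to $x_1$. It separates $x_0$ from $h(p)$, because $[h(p),x_1)$ lands at $x_1$ from $\cH^L(\gamma_1)$. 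Hence $\sft$ would meet $Z^+$, a contradiction.
\end{itemize}
This last step is also where $h(x_0)\neq x_0$ actually comes from when $k=1$.
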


\begin{proof}
By symmetry, it suffices to consider the case where \(\gamma_k^+\) is left inaccessible.
If \(k>1\) and \(\ell_k\subset \cJ(g)^+\subset Z^+\), then \refthm{twoInOne} implies that \(h\) cannot fix \(x_{k-1}\).
Hence either \(h(x_{k-1})\in \opi[\Lambda(\Gamma)]{x_k}{x_{k-1}}\) or
\(h(x_{k-1})\in \opi[\Lambda(\Gamma)]{x_{k-1}}{x_k}\).
Replacing \(h\) by \(h^{-1}\) in the latter case, we may assume that
\(h(x_{k-1})\in \opi[\Lambda(\Gamma)]{x_k}{x_{k-1}}\).
Applying \refprop{enclosing} to \(h(\gamma_k)\) with respect to \(\cH^L(\gamma_k)\), the left-inaccessibility of \(\gamma_k^+\) implies that \(h(\gamma_k)\) crosses \(\cH^L(\gamma_k)\).

Assume now that \(k=1\).
We first claim that \(h(p)\notin \gamma_1\).
Suppose otherwise.
Since \(\Fix(h)\subset \Lambda(\Gamma)\), we have \(h(p)\neq p\), and since \(Z^+\cap Z^-=\varnothing\), it follows that \(h(p)\in \Int \gamma_1^+\).
By the unique path connectedness of \(Z^+\), this implies that \(h(\gamma_1^+)\subsetneq \gamma_1^+\).

Now let \(r_1=[x_1,h(p))\) and \(r_2=[p,h(p))\), so these are the two components of \(\gamma_1^+\setminus\{h(p)\}\).
They lie in distinct branches \(B_1\) and \(B_2\) at \(h(p)\).
Moreover, since \(h(\cD(p))\) is of type~I with respect to \(hgh^{-1}\) and \(r_1\subset h(\cJ(g))^+\subset B_1\), \reflem{simplicialSide} implies that \(B_2\subset h(\cD(p))\).
Furthermore, by \refprop{invFence}, the branch \(B_2\) is not preserved by \(hgh^{-1}\).
Hence \(hgh^{-1}(r_2)\) and \(hg^{-1}h^{-1}(r_2)\) land at \(p\) from opposite sides relative to \(\gamma_1^+\).
This contradicts the fact that \(\gamma_1^+\) is at least one-sided inaccessible.
Therefore \(h(p)\notin \gamma_1\).

Hence either $h(p)\in \cH^L(\gamma_1)$ or $h(p)\in \cH^R(\gamma_1)$.
We claim that, in the latter case, one has $p\in \cH^L(h(\gamma_1))$.
Indeed, applying the first claim to $h^{-1}$ shows that $h^{-1}(p)\notin \gamma_1$, or equivalently, $p\notin h(\gamma_1)$.
Now apply \refprop{enclosing} to $h(\gamma_1)^+$ with respect to $\cH^R(\gamma_1)$.
Since $h(\gamma_1)^+$ is left inaccessible and $p\notin h(\gamma_1)$, it follows that
\(
\Int h(\gamma_1)^+ \subset \cH^R(\gamma_1).
\)
Moreover, $\gamma_1^-$ and $h(\gamma_1)^-$ are unlinked by \reflem{fenceSystem}.
Hence $\closure{\cH^R(\gamma_1)}$ contains $h(\gamma_1)$.
Since $p\notin h(\gamma_1)$, we obtain
\(
p\in \cH^L(h(\sfa_n)).
\)
Therefore, replacing $h$ by $h^{-1}$ in the latter case, we may assume that
\(h(p)\in \cH^L(\gamma_1).\)

Applying \refprop{enclosing} to \(h(\gamma_1)^+\) with respect to \(\cH^L(\gamma_1)\), we obtain a point \(v\in \gamma_1^+\) such that the subray
\[
\sfr=[h(p),v)\subset h(\gamma_1)^+
\]
lands at \(v\) from the left side of \(\partial\cH^L(\gamma_1)\).
By the left-inaccessibility of \(\gamma_1^+\), either \(v=x_1\) or \(v=p\).
The latter case reduces to the previous one with \(h^{-1}\), since \(v=p\in h(\gamma_1^+)\), equivalently \(h^{-1}(p)\in \gamma_1^+\).
Hence only the former case can occur.

To complete the proof, it suffices to show that \(h(\gamma_1)^-\) does not meet \(\gamma_1^-\).
Assume otherwise.
Then we can take a ray \(\sft\) in \(h(\gamma_1)^-\cup \gamma_1^-\subset Z^-\) by concatenating an initial arc of \(\gamma_1^-\) with a end ray of \(h(\gamma_1)^-\).
Note that $\sft$ starts at $x_0$ and lands at $h(p)$.

On the other hand, by \reflem{extArc}, we can take an arc \(\cA\) in \(Z^+\) crossing \(\cH^L(\gamma_1)\) and joining \(p\) to a point \(z\in \opi[\Lambda(\Gamma)]{x_1}{x_0}\).
Note that \(\cA'=\cA \cup \gamma_1^+\) is the arc \([z,x_1]\) in \(Z^+\) crossing \(\cH\).
Assume that \(\cA'\) is equipped with a linear order compatible with \(\gamma_1^+\).
Since \(\sfr\) lands at \(x_1\) on \(\cH^L(\gamma_1)\) and \(Z^+\) is simply connected, \(\sfr\subset \cH^L(\cA')\), and so \(h(p)\in \cH^L(\cA')\).
However, since \(x_0\nin \closure{\cH^L(\cA')}\) and $\sft$ runs from $x_0$ to $h(p)$, this implies that \(\sft\subset Z^-\) meets \(\cA'\subset Z^+\), a contradiction.
This completes the proof.
\end{proof}

\subsection{One-sided isolatedness of generating leaves}

Now, we discuss the isolatedness of the generating leaves as in \reflem{isolatedLeaf}:

\begin{prop}[One-sided isolated]\label{Prop:sideIsolated}
In the setting of  \refconst{lamiByManyLeaves},
if \(\cJ(g)^+\) is left (resp.\ right) inaccessible, then \(\opi[\Lambda(\Gamma)]{x_1}{x_0}\) (resp.\ \(\opi[\Lambda(\Gamma)]{x_0}{x_1}\)) is isolated in \(\cL(\Gamma,\{x_k\}_{k=0}^n)\).
\end{prop}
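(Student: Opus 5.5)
Assume for contradiction that \(\cJ(g)^+\) is left inaccessible and that \(\opi[\Lambda(\Gamma)]{x_1}{x_0}\) is not isolated; the right-inaccessible case is symmetric. Since \(\cL(\Gamma,\{x_k\}_{k=0}^n)\) is the closure of finitely many \(\Gamma\)-orbits of generating leaves, there is then a sequence \(h_m\in\Gamma\) and a fixed index \(k\in\{1,\dots,n\}\), obtained after passing to a subsequence, with the following properties: \(h_m(\ell_k)\neq\ell_1\), \(h_m(\ell_k)\) lies on \(\opi[\Lambda(\Gamma)]{x_1}{x_0}\), and \(h_m(\ell_k)\to\ell_1\) in \(\cM\).

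By the unlinking proposition following \refconst{lamiByManyLeaves}, together with \reflem{fenceSystem}, the arcs \(h_m(\gamma_k)\) are contained in \(\closure{\cH^L(\gamma_1)}\). One endpoint \(y_m\) converges to \(x_1\in\Int\cJ(g)^+\), and the other endpoint \(w_m\) converges to \(x_0\in\cJ(g)^-\). Consequently \(h_m(\gamma_k)\) approaches \(x_1\) from the left side of \(\gamma_1\). I expect this left side to be the \(\cD(p)\)-side of \(\cJ(g)^+\). This is the step where left inaccessibility enters: by \reflem{crossAlt}, inaccessibility occurs exactly on the type~I side.

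The first case is that \(h_m(p)\in\cD(p)\) for infinitely many \(m\). Then I apply \reflem{dragging} to the eclipse of \refconst{eclipseD} on \(\cD(p)\), taking the points \(y_m\to x_1\in\cE_\infty\setminus\Fix(g)\) and \(w_m\to x_0\). The endpoints of \(h_m(\ell_k)\) are non-node points of \(h_m(\cJ(g))\), because \(\Fix(h_m)\subset\Lambda(\Gamma)\) and \(p,q\notin\Lambda(\Gamma)\). Hence \reflem{dragging} gives \(x_0\in\cE_\infty\). This contradicts item~(3) of \refprop{eclipseDProperty}, which says \(\cE_\infty\cap\cJ(g)=\closure{\cJ(g)^+}\).

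The second case is that \(h_m(p)\) lies in the type~II domain \(\cD'\) for all large \(m\); this is the case I expect to be the main obstacle. Here I would use \reflem{extArc} to obtain an arc \(\cA\subset Z^+\) crossing \(\cH^L(\gamma_1)\) from \(p\) to a point \(z\in\opi[\Lambda(\Gamma)]{x_1}{x_0}\). For large \(m\), the endpoints \(y_m\) and \(w_m\) lie on opposite sides of \(z\), so \(h_m(\gamma_k)\) must meet \(\cA\cup\gamma_1\). Since \(h_m(\gamma_k)\) is separated from \(\gamma_1\) except possibly at its endpoints, and \(Z^+\cap Z^-=\varnothing\), the intersection is forced into \(h_m(\gamma_k)^+\cap\cA\). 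The point is then to show that this meeting, combined with \(h_m(\cJ(g))\subset\closure{\cD'}\), produces one of two impossible configurations:
\begin{itemize}
\item a ray of \(Z^\pm\) landing on \(\Int\cJ(g)^+\) from the \(\cD(p)\)-side, to be obtained via \refprop{enclosing} applied to \(h_m(\gamma_k)^+\) relative to \(\cH^L(\gamma_1)\), exactly as in the proof of \reflem{fanningArcs}; this contradicts left inaccessibility;
\item or a closed loop in \(Z^+\) or \(Z^-\), contradicting unique path-connectedness.
\end{itemize}
I would treat \(k=1\), where \(h_m(\gamma_1)\) is a connector with synapse \(h_m(p)\), separately from \(k>1\), where \(h_m(\gamma_k)\subset Z^+\) and part~(4) of \reflem{fenceSystem} controls how \(h_m(\cJ(g)^+)\) can touch \(\cJ(g)^+\). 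Ruling out both cases shows that no such sequence exists, which proves the proposition.
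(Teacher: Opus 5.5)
Your Case~1 is correct, and it is a genuine shortcut relative to the paper. When $h_m(p)\in\cD(p)$, \reflem{fenceSystem} gives $h_m(\cJ(g))\subset\closure{\cD(p)}$ with $h_m(\cJ(g))\cap\cJ(g)^+\subset\{p\}$. So the endpoint $y_m\to x_1$ lies in $\cD(p)$, and \reflem{dragging} yields $x_0\in\cE_\infty$, which is impossible. The paper never splits according to the position of $h_m(p)$. It first reduces to $k=1$, since a translate of $\gamma_k$ with $k>1$ would be linked with $\gamma_1^+\cup\cR$. It then proves, by a case analysis, that $g_i(p)$ lies on the extension ray $\cR$ of \reflem{extArc}, hence in $\cD(p)$, and only then runs the eclipse argument.

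The gap is your Case~2, which is only a programme.
\begin{itemize}
\item Your assertion that $h_m(\gamma_k)$ meets $\gamma_1$ at most in its endpoints is unproved, and it is exactly what has to be shown.
\item Your first intended contradiction cannot be produced this way. Here $h_m(\cJ(g))\subset\closure{\cD'}$, so any ray of $h_m(\gamma_k)$ landing on $\cJ(g)^+$ lands from the type~II side, which is accessible by \refrmk{inaccessibleSide}. The inclusion $h_m(\cJ(g))\subset\closure{\cD'}$ also holds, by \reflem{fenceSystem}, when $h_m(p)\in\Int\cJ(g)^+$, a subcase your dichotomy omits.
\item You do not say where a loop would come from.
\end{itemize}

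The missing idea is a planar separation argument.
\begin{itemize}
\item The ray $\cR$ from $z\in\opi[\Lambda(\Gamma)]{x_1}{x_0}$ to $p$ lies in a branch at $p$ disjoint from $\cJ(g)^+$. By \reflem{simplicialSide}, it therefore lies in $\cD(p)$.
\item As you guessed, $\cD(p)$ is the left domain of $\cJ(g)$. Hence every point of $\Int\gamma_1$ has a neighbourhood whose intersection with $\cH^L(\gamma_1)$ lies in $\cD(p)$.
\item Consider the arc $h_m(\gamma_k)\subset\closure{\cD'}$. Its interior lies in $\cH$, and it issues from $w_m\in\opi[\Lambda(\Gamma)]{z}{x_0}$ into $\cH^L(\gamma_1)$. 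By the two previous points, it can never reach $\Int\gamma_1\cup\cR$.
\item But $\{y_m,w_m\}$ is linked with $\{z,x_0\}$. So $h_m(\gamma_k)$ must meet the arc $\gamma_1^-\cup\{p\}\cup\cR$, which is a contradiction.
\end{itemize}

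This handles every $k$ at once, without separating $k=1$ from $k>1$. The linking step needs $w_m\neq x_0$, so you must first discard translates sharing an endpoint with $\ell_1$; use \refprop{oneEndFix} and \reflem{fanningArcs}, as the paper does. With these additions your two-case route is complete and shorter than the paper's.
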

\begin{proof}
    By symmetry, it suffices to treat the left-inaccessible case.
Assume that there is an \(\opi[\Lambda(\Gamma)]{x_1}{x_0}\)-side sequence \(\{\lambda_i\}_{i\in\NN}\) such that
\(
\lambda_i=g_i(\sfd\gamma_{k_i})
\)
for some \(g_i\in \Gamma\) and some \(k_i\in \{1,\dots,n\}\).
If infinitely many \(\lambda_i\) share an endpoint \(e\), then \(e\in \{x_0,x_1\}\).
Applying \refprop{oneEndFix} to \(\wt \cG(\Gamma,\{x_k\}_{k=0}^n)\), we conclude that \(\sfd\gamma_1\) is isolated, contradicting the assumption.
Hence, after passing to a subsequence, we may assume that
\(
\closure{I_i}\subset I_{i+1}\subset \opi[\Lambda(\Gamma)]{x_1}{x_0},
\)
where \(I_i\) is a component of \(\Lambda(\Gamma)\setminus \lambda_i\).

On the other hand, by \reflem{extArc}, we can take a ray \(\cR\subset Z^+\) crossing \(\cH^L(\gamma_1)\), starting at a point \(z\in \opi[\Lambda(\Gamma)]{x_1}{x_0}\), and landing at \(p\) from the left side of \(\gamma_1\).
Note that \(\cR\) is contained in a branch \(B\) at \(p\), and \reflem{simplicialSide} implies that \(B\subset \cD(p)\).
Now consider the arc
\[
\cA=\gamma_1^+\cup \cR,
\]
equipped with the linear order compatible with \(\gamma_1^+\).

There exists \(N\in \NN\) such that \(z\in I_i\) for all \(i>N\).
Hence \(\lambda_i\) is linked with \(\partial \cA\) in \(\Lambda(\Gamma)\) for all \(i>N\).
Fix \(i>N\).
If \(k_i>1\), then \(g_i(\gamma_{k_i})\) is an arc in \(Z^+\), and hence it is linked with \(\cA\).
This is impossible, since \(g_i(\gamma_{k_i})\) is at least one-sided inaccessible.
Therefore \(k_i=1\) for all \(i>N\).

\begin{claim*}
For all \(i>N\), we have \(g_i(p)\in \cR\).
\end{claim*}

\begin{proof}
Fix \(i>N\), and assume that \(g_i(p)\notin \cR\).
Since \(\Fix(g_i)\subset \Lambda(\Gamma)\), we have \(g_i(p)\neq p\), and hence
\(
g_i(p)\notin [z,p]=\cR\cup\{p\}.
\)
Write
\(
I_i=\opi[\Lambda(\Gamma)]{e_1}{e_2},
\)
so that \(x_1<e_1<z<e_2<x_0\) in \(\Lambda(\Gamma)\).

Consider the ray
\(
\sfr=g_i(\gamma_1)^-\subset Z^-,
\)
landing at \(g_i(p)\).
Assume first that \(\partial \sfr=e_1\).
By the disjointness of \(Z^\pm\), either \(g_i(p)\in \Int\gamma_1^+\) or \(g_i(p)\in \cH^L(\cA)\).
In the former case, \(\sfr\) lands at \(g_i(p)\) from the left side of \(\gamma_1^+\), contradicting the left-inaccessibility of \(\gamma_1^+\).
In the latter case, since \(g_i(\gamma_1)^+\) joins \(e_2\) to \(g_i(p)\), the arc \(g_i(\gamma_1)^+\) is linked with \(\cA\).
This again contradicts the fact that \(g_i(\gamma_1)^+\) is at least one-sided inaccessible.

Next assume that \(\partial \sfr=e_2\).
Then either \(g_i(p)\in \Int\gamma_1^+\) or \(g_i(p)\in \cH^R(\cA)\).
In the latter case, as above, \(g_i(\gamma_1)^+\) is linked with \(\cA\), again a contradiction.
In the former case, \(\sfr\) lands at \(g_i(p)\) from the right side of \(\gamma_1^+\).
However, \(\sfd\sfr\) and \(\sfd\gamma_1^-\) are linked in \(\partial \cH^R(\cA)\), and hence \(\sfr\) and \(\gamma_1^-\) are linked.
This contradicts their unlinkedness by \reflem{fenceSystem}.
\end{proof}

Construct an eclipse \(\{\cE_m\}_{m\in \ZZ}\) on \(\cD(p)\) by \refconst{eclipseD}.
The claim above implies that \(g_i(p)\in B\subset \cD(p)\) for all \(i>N\).
Hence, by \reflem{separatingFence} and \refprop{eclipseDProperty}, there exists \(M\in \ZZ\) such that
\(
\cE_M\cap g_i(\cJ(g))\subset \{p\}
\)
for all  \(i>N\).
However, this contradicts \refprop{eclipseDProperty}, since the endpoints of \(\lambda_i\subset g_i(\cJ(g))\) accumulate at the interior point \(x_1\) of \(\cJ(g)^+\).
This completes the proof.
\end{proof}

\subsection{Uniform bouncing direction}\label{Sec:uniformBouncing}
As explained above, although \(\cJ(g)^+\) is one-sided inaccessible, the absence of two-sided inaccessibility requires a more careful analysis of the convergence of \(\Gamma\)-translates of \(\cJ(g)^+\).
To handle this, the following three lemmas describe how to extend \(\{x_k\}_{k=1}^n\) in \refconst{lamiByManyLeaves} while preserving the bouncing condition.

\begin{lem}[One step extension at a fixed end]\label{Lem:oneStepAtFix}
In the setting of \refconst{lamiByManyLeaves}, assume that \(\cJ(g)^+\) is left (resp.\ right) inaccessible and that \(\{x_k\}_{k=1}^{n}\) satisfies the bouncing condition if $n>1$.
If \(x_n\) is fixed by a non-trivial element \(h\in \Gamma\), then there exists a point \(x_{n+1}\in \cJ(g)^+\) such that \(\{x_k\}_{k=1}^{n+1}\) satisfies the bouncing condition and \(x_{n-1}<x_n<x_{n+1}\) (resp.\ \(x_{n+1}<x_n<x_{n-1}\)) in \(\Lambda(\Gamma)\).
\end{lem}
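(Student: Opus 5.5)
By symmetry I treat only the case where \(\cJ(g)^+\) is left inaccessible. The plan is to use the loxodromic dynamics of \(h\) on \(\Lambda(\Gamma)\) to move \(\gamma_n\) to a translate \(h^{\pm m}(\gamma_n)\) that still ends at \(x_n\). I would then read off \(x_{n+1}\) as the first point where the continuation of \(\cJ(g)^+\) beyond \(x_n\) re-meets \(\Lambda(\Gamma)\).

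First, fix \(h\in\Gamma\setminus\{1\}\) with \(h(x_n)=x_n\). By \reflem{fanningArcs}, \(h\) does not fix \(x_{n-1}\), and \(h(\gamma_n)\) crosses \(\cH^L(\gamma_n)\) or \(\cH^R(\gamma_n)\). In the left-inaccessible case, the proof of \reflem{fanningArcs} shows that after replacing \(h\) by \(h^{-1}\) we may assume \(h(x_{n-1})\in\opi[\Lambda(\Gamma)]{x_n}{x_{n-1}}\) and that \(h(\gamma_n)\) crosses \(\cH^L(\gamma_n)\). Then \(h(\gamma_n)\) is an arc of \(Z^+\) (or, for \(n=1\), a connector arc whose \((+)\)-segment lies in \(Z^+\)) that ends at \(x_n\). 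Since \(h(\gamma_n)\subset h(\cJ(g))\), it is at least one-sided inaccessible by \refrmk{inaccessibleSide}.

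Second, I need to produce the continuation of \(\cJ(g)^+\) beyond \(x_n\). Recall that \(\cJ(g)^+\) is the ray from the far end towards \(p\), with the order compatible with \(\gamma_1^+\). Since \(x_n\in\Int\cJ(g)^+\), I take the subray of \(\cJ(g)^+\) beyond \(x_n\), oriented so that \(\gamma_n\) precedes it. Its end lies at a node \(p\) or \(q\), and one of these lies outside \(\closure{\cH}\). I expect this subray to leave \(\closure{\cH}\) immediately into the complementary domain of \(\Lambda(\Gamma)\). Indeed, \(x_n\) is a point where \(\cJ(g)^+\) crosses the Jordan curve \(\Lambda(\Gamma)\), and \(\gamma_n\) lies on the \(\cH\)-side just before \(x_n\). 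If the ray ran along or back into \(\cH\) immediately, then either the arc \(h(\gamma_n)\subset Z^+\) would be linked with \(\cJ(g)^+\), contradicting \refprop{unlinkedCrossingLine} together with inaccessibility, or \(h(\gamma_n)\) would land on the left side of \(\cJ(g)^+\). Having settled that the ray leaves, I would then show that it must return to \(\Lambda(\Gamma)\) after \(x_n\). The subray ends at a node, and the node it reaches may lie in \(\cH\), since \(p\in\cH\). I define \(x_{n+1}\) to be the first point after \(x_n\) at which the subray meets \(\Lambda(\Gamma)\) again while being followed by points of \(\cH\). Concretely, \(x_{n+1}\) is the endpoint of the first excursion subarc \([x_n,x_{n+1}]\) whose interior lies in \(\cH\) and which therefore crosses \(\cH\). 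This gives the bouncing condition for \(\gamma_{n+1}\).

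Third, I must check the circular order \(x_{n-1}<x_n<x_{n+1}\) in \(\Lambda(\Gamma)\), and this is the main obstacle. The two arcs \(\gamma_n\) and \(\gamma_{n+1}\) both cross \(\cH\) and share the endpoint \(x_n\). Because \(\cJ(g)^+\) is a single embedded ray, they are unlinked. The question is only on which side of \(\gamma_n\) the arc \(\gamma_{n+1}\) lies. My plan is to compare \(\gamma_{n+1}\) with \(h(\gamma_n)\), which lies in \(\cH^L(\gamma_n)\) and ends at \(x_n\). I will show that \(\gamma_{n+1}\) lands at \(x_n\) on the same side as \(h(\gamma_n)\). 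Otherwise \(\gamma_n\) would be squeezed between two \(Z^+\)-arcs landing at \(x_n\) from both sides, which would make \(\gamma_n^+\) accessible from the left. More precisely, \refprop{enclosing} applied to \(h(\gamma_n)\) in \(\cH^L(\gamma_n)\) shows that the side containing \(h(\gamma_n)\) is determined by the left-inaccessibility of \(\cJ(g)^+\). The forward direction along \(\cJ(g)^+\) then forces \(x_{n+1}\in\opi[\Lambda(\Gamma)]{x_n}{x_{n-1}}\), so the points are ordered as \(x_{n-1}<x_n<x_{n+1}\). In the right-inaccessible case the same argument gives the reversed order.
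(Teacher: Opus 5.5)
Your Step~1 (normalizing \(h\) via \reflem{fanningArcs} so that \(h(\gamma_n)\) crosses \(\cH^L(\gamma_n)\)) is fine. Step~2, however, contains a genuine error that the rest of the argument cannot recover from.

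You treat \(x_n\) as a point where \(\cJ(g)^+\) crosses \(\Lambda(\Gamma)\), and you expect the subray past \(x_n\) to leave \(\closure{\cH}\) immediately. The bouncing condition for \(\gamma_{n+1}=[x_n,x_{n+1}]\) demands the opposite: \((x_n,x_{n+1})\subset\cH\). So the ray must touch \(\Lambda(\Gamma)\) at \(x_n\) from inside and re-enter \(\cH\) at once. Your definition of \(x_{n+1}\) as the end of ``the first excursion \([x_n,x_{n+1}]\) with interior in \(\cH\)'' is therefore inconsistent with your own claim that the ray has just left \(\closure{\cH}\). No arc starting at \(x_n\) and crossing \(\cH\) is ever produced.

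Your side analysis is also backwards. Suppose the continuation went outside \(\closure{\cH}\), or into \(\cH^R(\gamma_n)\), near \(x_n\). Then \(h(\gamma_n)\setminus\{x_n\}\subset\cH^L(\gamma_n)\) would lie in a \emph{left} branch of \(\cJ(g)^+\) at the interior point \(x_n\), contradicting left simpliciality. It is exactly this constraint that forces the continuation either along \(h(\gamma_n)\) or into the wedge between \(\gamma_n\) and \(h(\gamma_n)\), both inside \(\cH\). Note too that the subray past \(x_n\) lands at \(q\notin\closure{\cH}\), not at \(p\), since the order on \(\cJ(g)^+\) starts at \(p\).

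Step~3 does not repair this. \(x_n\) is an endpoint of \(\gamma_n\), so arcs ``squeezing'' \(\gamma_n\) at \(x_n\) say nothing about accessibility of \(\gamma_n^+\). The claim that the forward direction forces \(x_{n+1}\in\opi[\Lambda(\Gamma)]{x_n}{x_{n-1}}\) is unsupported.

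The missing idea is to compare \(\cJ(g)^+\) with the arc \(\cA=\gamma_n\cup h(\gamma_n)=[x_{n-1},h(x_{n-1})]\) inside the real tree \(Z^+\).
\begin{enumerate}
\item Let \(\cR\) be the subray of \(\cJ(g)^+\) from \(x_{n-1}\). Then \(\cR\cap\cA=[x_{n-1},y]\) for some \(y\in h(\gamma_n)\). If \(y=h(x_{n-1})\), take \(x_{n+1}=y\).
\item Otherwise, left simpliciality puts \(\cR_y=\cR\setminus[x_{n-1},y]\) in a left branch of \(\cA\) at \(y\). That is, \(\cR_y\) starts in the left domain of the Jordan curve \(\cC=\cA\cup\opi[\Lambda(\Gamma)]{h(x_{n-1})}{x_{n-1}}\), and this domain lies in \(\cH\).
\item Since \(\cR_y\cap\cA=\varnothing\) and \(\cR_y\) lands at \(q\) in the other domain, it must cross \(\opi[\Lambda(\Gamma)]{h(x_{n-1})}{x_{n-1}}\). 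Its first point there is \(x_{n+1}\).
\item Then \([x_n,x_{n+1}]=[x_n,y]\cup[y,x_{n+1}]\) crosses \(\cH\), and \(x_{n+1}\in\opi[\Lambda(\Gamma)]{x_n}{x_{n-1}}\). The bouncing condition and the circular order come out together.
\end{enumerate}

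Your proposal also omits the case \(n=1\), which needs separate treatment. There \(\gamma_1\) is a connector arc, so \(\gamma_1\cup h(\gamma_1)\) is not an arc of \(Z^+\). One first extends \([p,x_1)\) beyond \(p\) by the arc of \reflem{extArc}, giving a ray \(\cR\subset Z^+\) starting at some \(z\in\opi[\Lambda(\Gamma)]{x_1}{x_0}\). Since \(\cR\) and \(h(\cR)\) lie in distinct branches at \(x_1\), one gets \(h(z)\in\opi[\Lambda(\Gamma)]{x_1}{z}\). The same argument then runs with \([z,h(z)]\) in place of \(\cA\).
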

\begin{proof}
By symmetry, it suffices to treat the case where \(\cJ(g)^+\) is left inaccessible.
By \reflem{fanningArcs}, the arc \(h(\gamma_n)\) crosses \(\cH^L(\gamma_n)\) or \(\cH^R(\gamma_n)\).
Replacing \(h\) by \(h^{-1}\) in the latter case, we may assume that \(h(\gamma_n)\) crosses \(\cH^L(\gamma_n)\), and therefore \(h(x_{n-1})\in \opi[\Lambda(\Gamma)]{x_n}{x_{n-1}}\).

Assume first that \(n>1\).
Set \(\cA:=\gamma_n\cup h(\gamma_n)\), so that \(\cA\) is the arc \([x_{n-1},h(x_{n-1})]\) in \(Z^+\).
Equip \(\cA\) with the linear order compatible with \(\gamma_n\), and let \(\cR\) be the truncated ray of \(\cJ(g)^+\) at \(x_{n-1}\).
Then \(\gamma_n\) is a common initial arc of \(\cA\) and \(\cR\).
Since \(Z^+\) is a real tree, their intersection \(\cI:=\cA\cap \cR\) is an arc containing \(\gamma_n\), and hence \(\cI=[x_{n-1},y]\) for some \(y\in h(\gamma_n)\).

If \(y=h(x_{n-1})\), then \(\cA\subset \cR\subset \cJ(g)^+\), so taking \(x_{n+1}:=y\) completes the proof.
Assume now that \(y\neq h(x_{n-1})\).
Then the terminal line \(\cR_y:=\cR\setminus \cI\) is contained in a left branch of \(\cA\) at \(y\), because \(\cR\) is left simplicial.

Consider the Jordan curve \(\cJ:=\cA\cup \opi[\Lambda(\Gamma)]{h(x_{n-1})}{x_{n-1}}\), equipped with the circular order compatible with \(\cA\).
One end of \(\cR_y\) lands at \(y\) on \(\cD^L(\cJ)\), and the other end lands at \(q\in \cD^R(\cJ)\).
Since \(\cA\cap \cR_y=\varnothing\), the line \(\cR_y\) must meet the complementary boundary arc \(\opi[\Lambda(\Gamma)]{h(x_{n-1})}{x_{n-1}}\).
Hence some end ray of \(\cR_y\), landing at \(y\), crosses \(\cD^L(\cJ)\).
Let \(x_{n+1}\) be the initial point of this end ray.
Then \(x_{n+1}\in \cJ(g)^+\), and \(x_{n-1}<x_n<x_{n+1}\) in \(\Lambda(\Gamma)\).
This proves the case \(n>1\).

Assume now that \(n=1\).
Write \(r=[p,x_1)=\gamma_1^+\setminus \{x_1\}\), so that \(h(r)\subset h(\gamma_1)^+\).
Since \(r\) and \(h(r)\) are the components of \([p,h(p)]\setminus \{x_1\}\), they lie in distinct branches \(B_1\) and \(h(B_1)\) at \(x_1\).
By \reflem{extArc}, there is an arc \(a\subset Z^+\) crossing \(\cH^L(\gamma_1)\) and joining \(p\) to a point \(z\in \opi[\Lambda(\Gamma)]{x_1}{x_0}\).
Observe that \(a\subset B_1\), and hence so is the extended ray \(\cR:=r\cup a\).
Equip \(\cR\) with the linear order compatible with \(\gamma_1^+\).
Then \(h(\cR)\subset h(B_1)\), and therefore \(\cR\cap h(\cR)=\varnothing\).
It follows that \(h(\cR)\) crosses \(\cH^L(\cR)\), and hence \(h(z)\in \opi[\Lambda(\Gamma)]{x_1}{z}\).

Set \(\fA:=[z,h(z)]=\cR\cup \{x_1\}\cup h(\cR)\), and let \(\fB\) be the subarc \([p,h(z)]\subset \fA\).
Equip \(\fA\) and \(\fB\) with the linear orders compatible with \(\cR\).
Since \(\gamma_1^+\) is an initial arc of both \(\cJ(g)^+\) and \(\fB\), and since \(Z^+\) is a real tree, the intersection \(\cI:=\cJ(g)^+\cap \fB\) is an arc containing \(\gamma_1^+\).
Hence \(\cI=[p,v]\) for some \(v\in \{x_1\}\cup h(\cR)\).

If \(v=h(z)\), then \(\fB\subset \cJ(g)^+\), so taking \(x_2:=h(z)\) completes the proof.
Assume therefore that \(v\neq h(z)\).
Then the terminal line \(\cJ(g)^+\setminus \cI\) is contained in a left branch of \(\fB\), and hence of \(\fA\), at \(v\), since \(\cJ(g)^+\) is left simplicial.

Consider the Jordan curve \(\cJ:=\fA\cup \opi[\Lambda(\Gamma)]{h(z)}{z}\), equipped with the circular order compatible with \(\fA\).
One end of \(\cJ(g)^+\setminus \cI\) lands at \(v\) on \(\cD^L(\cJ)\), and the other end lands at \(q\in \cD^R(\cJ)\).
Since \(\cJ(g)^+\setminus \cI\) is disjoint from \(\fA\), it must meet the complementary boundary arc \(\opi[\Lambda(\Gamma)]{h(z)}{z}\).
Hence some end ray of \(\cJ(g)^+\setminus \cI\), landing at \(v\), crosses \(\cD^L(\cJ)\).
Let \(x_2\) be the initial point of this end ray.
Then \(x_2\in \cJ(g)^+\), and \(x_0<x_1<x_2\) in \(\Lambda(\Gamma)\).
This completes the proof.
\end{proof}

Let \(\cL=\cL(\Gamma,\{x_k\}_{k=0}^n)\) be the lamination constructed in \refconst{lamiByManyLeaves}.
A sequence \(\{\lambda_m\}_{m\in\NN}\) of leaves of \(\cL\) is called \emph{inside} (resp.\ \emph{outside}) if
\(\lambda_m=h_m(\ell_{k_m})\) for some \(k_m\in\{1,\dots,n\}\) and \(h_m\in\Gamma\), and
\(h_m(\cJ(g))\subset \closure{\cD(p)}\) (resp.\ \(h_m(\cJ(g))\subset S^2_\infty\setminus \cD(p)\)).
Recall \reflem{fenceSystem}.
Indeed, if \(h_m(\cJ(g))\subset \closure{\cD(p)}\), then
\(h_m(\cJ(g)^+)\subset \cD(p)\cup\{p\}\) and therefore $\lambda_m\subset \cD(p)$.
Otherwise, \(h_m(\cJ(g))=\cJ(g)\), so \(h_m(p)=p\), contradicting \(\Fix(h_m)\subset \Lambda(\Gamma)\).
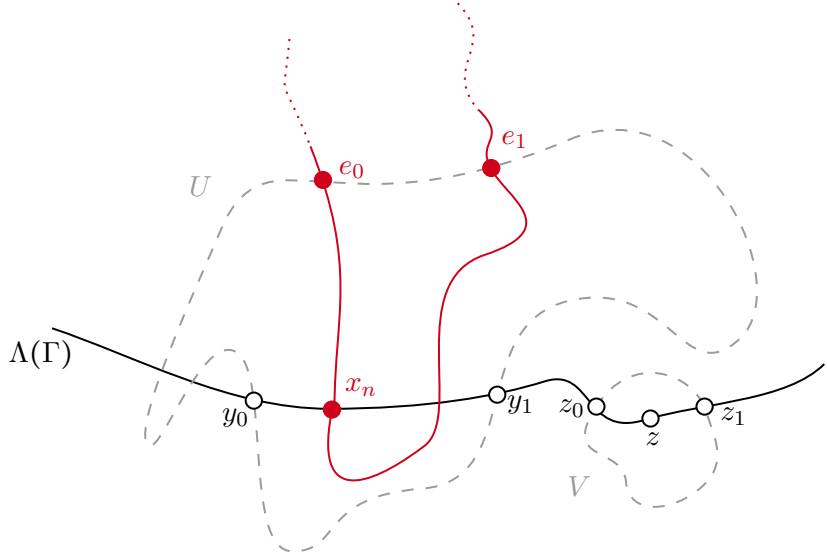
\begin{figure}[htpb]
    \centering

\tikzset{every picture/.style={line width=0.75pt}} %set default line width to 0.75pt        

\begin{tikzpicture}[x=0.75pt,y=0.75pt,yscale=-1.5,xscale=1.5]
%uncomment if require: \path (0,300); %set diagram left start at 0, and has height of 300

%Curve Lines [id:da4858368031492286] 
\draw    (96.4,172) .. controls (123.4,181) and (155.4,199) .. (188.4,199) .. controls (221.4,199) and (244.4,194.8) .. (260.4,189.8) .. controls (276.4,184.8) and (273.4,208.8) .. (293.4,202.8) .. controls (313.4,196.8) and (340.9,196.5) .. (353.4,184) ;
%Curve Lines [id:da8186709388735396] 
\draw [color={rgb, 255:red, 208; green, 2; blue, 27 }  ,draw opacity=1 ]   (190.4,199) .. controls (190.4,167.8) and (198.4,152) .. (182.4,111) ;
%Curve Lines [id:da044779039123563624] 
\draw [color={rgb, 255:red, 208; green, 2; blue, 27 }  ,draw opacity=1 ] [dash pattern={on 0.84pt off 2.51pt}]  (175.4,75) .. controls (173.4,99) and (171.4,83) .. (182.4,111) ;
%Shape: Polygon Curved [id:ds5845212833610566] 
\draw  [color={rgb, 255:red, 155; green, 155; blue, 155 }  ,draw opacity=1 ][dash pattern={on 4.5pt off 4.5pt}] (268.4,108.8) .. controls (312.4,89.8) and (344.4,157.4) .. (325.4,175.4) .. controls (306.4,193.4) and (302.4,154.8) .. (268.4,162.8) .. controls (234.4,170.8) and (253.94,223.2) .. (219.4,225.8) .. controls (184.86,228.4) and (193.09,246.85) .. (176.4,246.8) .. controls (159.71,246.75) and (169.4,185.8) .. (157.4,179.8) .. controls (145.4,173.8) and (131.61,218.65) .. (127.86,210.4) .. controls (124.11,202.15) and (144.86,157.4) .. (152.86,138.4) .. controls (160.86,119.4) and (173.4,122.3) .. (185.4,122.8) .. controls (197.4,123.3) and (224.4,127.8) .. (268.4,108.8) -- cycle ;
%Shape: Circle [id:dp8756785732579062] 
\draw  [color={rgb, 255:red, 208; green, 2; blue, 27 }  ,draw opacity=1 ][fill={rgb, 255:red, 208; green, 2; blue, 27 }  ,fill opacity=1 ][line width=0.75]  (186.86,199.27) .. controls (186.86,197.77) and (188.07,196.56) .. (189.57,196.56) .. controls (191.06,196.56) and (192.27,197.77) .. (192.27,199.27) .. controls (192.27,200.76) and (191.06,201.98) .. (189.57,201.98) .. controls (188.07,201.98) and (186.86,200.76) .. (186.86,199.27) -- cycle ;
%Shape: Circle [id:dp14349243062963624] 
\draw  [color={rgb, 255:red, 0; green, 0; blue, 0 }  ,draw opacity=1 ][fill={rgb, 255:red, 255; green, 255; blue, 255 }  ,fill opacity=1 ][line width=0.75]  (160.86,196.27) .. controls (160.86,194.77) and (162.07,193.56) .. (163.57,193.56) .. controls (165.06,193.56) and (166.27,194.77) .. (166.27,196.27) .. controls (166.27,197.76) and (165.06,198.98) .. (163.57,198.98) .. controls (162.07,198.98) and (160.86,197.76) .. (160.86,196.27) -- cycle ;
%Shape: Circle [id:dp5537521053654307] 
\draw  [color={rgb, 255:red, 0; green, 0; blue, 0 }  ,draw opacity=1 ][fill={rgb, 255:red, 255; green, 255; blue, 255 }  ,fill opacity=1 ][line width=0.75]  (241.86,194.27) .. controls (241.86,192.77) and (243.07,191.56) .. (244.57,191.56) .. controls (246.06,191.56) and (247.27,192.77) .. (247.27,194.27) .. controls (247.27,195.76) and (246.06,196.98) .. (244.57,196.98) .. controls (243.07,196.98) and (241.86,195.76) .. (241.86,194.27) -- cycle ;
%Shape: Circle [id:dp2901616068857591] 
\draw  [color={rgb, 255:red, 208; green, 2; blue, 27 }  ,draw opacity=1 ][fill={rgb, 255:red, 208; green, 2; blue, 27 }  ,fill opacity=1 ][line width=0.75]  (183.86,122.27) .. controls (183.86,120.77) and (185.07,119.56) .. (186.57,119.56) .. controls (188.06,119.56) and (189.27,120.77) .. (189.27,122.27) .. controls (189.27,123.76) and (188.06,124.98) .. (186.57,124.98) .. controls (185.07,124.98) and (183.86,123.76) .. (183.86,122.27) -- cycle ;
%Curve Lines [id:da08458448534822494] 
\draw [color={rgb, 255:red, 208; green, 2; blue, 27 }  ,draw opacity=1 ]   (189.4,200.4) .. controls (182.8,236) and (207.4,221.4) .. (220.4,211.4) .. controls (233.4,201.4) and (213.4,156.4) .. (240.4,147.4) .. controls (267.4,138.4) and (247.4,128) .. (242.4,118.4) .. controls (237.4,108.8) and (248.4,108.8) .. (238.4,98.8) ;
%Shape: Circle [id:dp43597253641464717] 
\draw  [color={rgb, 255:red, 208; green, 2; blue, 27 }  ,draw opacity=1 ][fill={rgb, 255:red, 208; green, 2; blue, 27 }  ,fill opacity=1 ][line width=0.75]  (239.86,118.27) .. controls (239.86,116.77) and (241.07,115.56) .. (242.57,115.56) .. controls (244.06,115.56) and (245.27,116.77) .. (245.27,118.27) .. controls (245.27,119.76) and (244.06,120.98) .. (242.57,120.98) .. controls (241.07,120.98) and (239.86,119.76) .. (239.86,118.27) -- cycle ;
%Curve Lines [id:da028731597049740376] 
\draw [color={rgb, 255:red, 208; green, 2; blue, 27 }  ,draw opacity=1 ] [dash pattern={on 0.84pt off 2.51pt}]  (231.4,63) .. controls (245.4,77.4) and (225.4,83.8) .. (238.4,99) ;
%Shape: Circle [id:dp9312225463423296] 
\draw  [color={rgb, 255:red, 0; green, 0; blue, 0 }  ,draw opacity=1 ][fill={rgb, 255:red, 255; green, 255; blue, 255 }  ,fill opacity=1 ][line width=0.75]  (292.86,202.27) .. controls (292.86,200.77) and (294.07,199.56) .. (295.57,199.56) .. controls (297.06,199.56) and (298.27,200.77) .. (298.27,202.27) .. controls (298.27,203.76) and (297.06,204.98) .. (295.57,204.98) .. controls (294.07,204.98) and (292.86,203.76) .. (292.86,202.27) -- cycle ;
%Shape: Polygon Curved [id:ds7073187425015286] 
\draw  [color={rgb, 255:red, 155; green, 155; blue, 155 }  ,draw opacity=1 ][dash pattern={on 4.5pt off 4.5pt}] (277.4,198.4) .. controls (289.4,179.4) and (309.4,187.4) .. (313.4,198.4) .. controls (317.4,209.4) and (321.4,217.4) .. (312.4,225.4) .. controls (303.4,233.4) and (288.4,235.4) .. (287.4,223.4) .. controls (286.4,211.4) and (265.4,217.4) .. (277.4,198.4) -- cycle ;
%Shape: Circle [id:dp40793792230063775] 
\draw  [color={rgb, 255:red, 0; green, 0; blue, 0 }  ,draw opacity=1 ][fill={rgb, 255:red, 255; green, 255; blue, 255 }  ,fill opacity=1 ][line width=0.75]  (274.86,198.27) .. controls (274.86,196.77) and (276.07,195.56) .. (277.57,195.56) .. controls (279.06,195.56) and (280.27,196.77) .. (280.27,198.27) .. controls (280.27,199.76) and (279.06,200.98) .. (277.57,200.98) .. controls (276.07,200.98) and (274.86,199.76) .. (274.86,198.27) -- cycle ;
%Shape: Circle [id:dp9122289158358222] 
\draw  [color={rgb, 255:red, 0; green, 0; blue, 0 }  ,draw opacity=1 ][fill={rgb, 255:red, 255; green, 255; blue, 255 }  ,fill opacity=1 ][line width=0.75]  (310.86,198.27) .. controls (310.86,196.77) and (312.07,195.56) .. (313.57,195.56) .. controls (315.06,195.56) and (316.27,196.77) .. (316.27,198.27) .. controls (316.27,199.76) and (315.06,200.98) .. (313.57,200.98) .. controls (312.07,200.98) and (310.86,199.76) .. (310.86,198.27) -- cycle ;

% Text Node
\draw (81,175) node [anchor=north west][inner sep=0.75pt]    {$\Lambda ( \Gamma )$};
% Text Node
\draw (141,120) node [anchor=north west][inner sep=0.75pt]  [color={rgb, 255:red, 155; green, 155; blue, 155 }  ,opacity=1 ]  {$U$};
% Text Node
\draw (193,188) node [anchor=north west][inner sep=0.75pt]  [color={rgb, 255:red, 208; green, 2; blue, 27 }  ,opacity=1 ]  {$x_{n}$};
% Text Node
\draw (152,198) node [anchor=north west][inner sep=0.75pt]  [color={rgb, 255:red, 0; green, 0; blue, 0 }  ,opacity=1 ]  {$y_{0}$};
% Text Node
\draw (246.86,193.67) node [anchor=north west][inner sep=0.75pt]  [color={rgb, 255:red, 0; green, 0; blue, 0 }  ,opacity=1 ]  {$y_{1}$};
% Text Node
\draw (190.86,115) node [anchor=north west][inner sep=0.75pt]  [color={rgb, 255:red, 208; green, 2; blue, 27 }  ,opacity=1 ]  {$e_{0}$};
% Text Node
\draw (245,105) node [anchor=north west][inner sep=0.75pt]  [color={rgb, 255:red, 208; green, 2; blue, 27 }  ,opacity=1 ]  {$e_{1}$};
% Text Node
\draw (292.86,206) node [anchor=north west][inner sep=0.75pt]  [color={rgb, 255:red, 0; green, 0; blue, 0 }  ,opacity=1 ]  {$z$};
% Text Node
\draw (267,220.6) node [anchor=north west][inner sep=0.75pt]  [color={rgb, 255:red, 155; green, 155; blue, 155 }  ,opacity=1 ]  {$V$};
% Text Node
\draw (264,195) node [anchor=north west][inner sep=0.75pt]  [color={rgb, 255:red, 0; green, 0; blue, 0 }  ,opacity=1 ]  {$z_{0}$};
% Text Node
\draw (317.86,197.67) node [anchor=north west][inner sep=0.75pt]  [color={rgb, 255:red, 0; green, 0; blue, 0 }  ,opacity=1 ]  {$z_{1}$};
\end{tikzpicture}
  \caption{Local configuration near $x_n$}
    \label{Fig:nearX}
\end{figure}

\begin{lem}[One step extension by outside sequences]\label{Lem:outsideExt}
In the setting of \refconst{lamiByManyLeaves}, assume that \(\cJ(g)^+\) is left (resp.\ right) inaccessible and that \(\{x_k\}_{k=1}^{n}\) satisfies the bouncing condition if $n>1$.
If there is an outside sequence \(\{\lambda_m\}_{m\in\NN}\) in \(\cL(\Gamma,\{x_k\}_{k=0}^n)\) which is \(\opi[\Lambda(\Gamma)]{x_n}{z}\)-side (resp.\ \(\opi[\Lambda(\Gamma)]{z}{x_n}\)-side) for some \(z\in \opi[\Lambda(\Gamma)]{x_n}{x_{n-1}}\) (resp.\ \(z\in \opi[\Lambda(\Gamma)]{x_{n-1}}{x_n}\)), then there exists a point \(x_{n+1}\in \cJ(g)^+\) such that \(\{x_k\}_{k=1}^{n+1}\) satisfies the bouncing condition and \(x_{n-1}<x_n<x_{n+1}\) (resp.\ \(x_{n+1}<x_n<x_{n-1}\)) in \(\Lambda(\Gamma)\).
\end{lem}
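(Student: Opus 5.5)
The plan is to build \(x_{n+1}\) by following \(\cJ(g)^+\) past \(x_n\), in the same way as in the proof of \reflem{oneStepAtFix}, and to use the outside sequence only to force this continuation into \(\cH^L(\gamma_n)\). By symmetry I treat only the left-inaccessible case. Let \(\cR\) be the truncated ray of \(\cJ(g)^+\) at \(x_n\); it starts at \(x_n\) and lands at \(q\). Since \(q\notin\closure{\cH}\), \(\cR\) is not contained in \(\closure{\cH}\). Because \(Z^+\) is uniquely path connected, \(\cR\) meets \(\gamma_n\) only at \(x_n\). So the two possibilities for the initial germ of \(\cR\) at \(x_n\) are:
\begin{enumerate}
\item it enters \(\cH^L(\gamma_n)\);
\item it enters \(\cH^R(\gamma_n)\) or leaves \(\closure{\cH}\).
\end{enumerate}
(If the germ were tangled on \(\Lambda(\Gamma)\), I would pass to the first point after which \(\cR\) enters \(\cH\).)

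First suppose the germ enters \(\cH^L(\gamma_n)\). Then \(\cR\) must leave \(\closure{\cH^L(\gamma_n)}\) before landing at \(q\), and it cannot do so through \(\gamma_n\). Hence it first returns to \(\Lambda(\Gamma)\) at a point \(x_{n+1}\) of the boundary arc \(\opi[\Lambda(\Gamma)]{x_n}{x_{n-1}}\). For \(n=1\) I would use the same extended-arc argument as in \reflem{oneStepAtFix}, with the arc from \reflem{extArc}, since \(\gamma_1\) is a connector. Then \([x_n,x_{n+1}]\subset\cJ(g)^+\) crosses \(\cH\), so the bouncing condition holds and \(x_{n-1}<x_n<x_{n+1}\).

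The main step, and the obstacle I expect, is to exclude the second possibility using the outside sequence. Write \(\lambda_m=h_m(\ell_{k_m})\) with \(h_m(\cJ(g))\subset S^2_\infty\setminus\cD(p)\). Its endpoints \(y_0^m,y_1^m\) lie in \(\opi[\Lambda(\Gamma)]{x_n}{z}\) and converge to \(x_n\) and \(z\), respectively. Thus the arcs \(h_m(\gamma_{k_m})\) cross \(\cH^L(\gamma_n)\), cut off the region bounded by \(\gamma_n\), and come arbitrarily close to \(x_n\) from the left. I would consider the arc \(h_m(\gamma_{k_m})\), its fence \(h_m(\cJ(g))\), and the unlinkedness from \reflem{fenceSystem}. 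Since \(h_m(\cJ(g))\) lies in the type~II domain \(\cD'\) of \(\cJ(g)\), I would apply \refprop{enclosing} to the segments of \(h_m(\gamma_{k_m})\) relative to \(\cH^L(\gamma_n)\), together with the one-sided inaccessibility of \(\cJ(g)^+\) and of \(h_m(\cJ(g))^+\) (\refrmk{inaccessibleSide}). The aim is to show that if \(\cR\) turned away from \(\cH^L(\gamma_n)\) at \(x_n\), then, for \(m\) large, the \((+)\)-segment of \(h_m(\gamma_{k_m})\) would be linked with the arc \(\gamma_n\cup\cR\) in \(Z^+\). Otherwise one of its segments would land on \(\cJ(g)^+\) from its inaccessible left side, or \(h_m(\cJ(g))\) would be linked with \(\cJ(g)\). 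In all three cases we get a contradiction.

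Concretely, I would set up a Jordan curve from \(\gamma_n\cup\cR\) (suitably truncated) and an arc of \(\Lambda(\Gamma)\) near \(x_n\), as in \reffig{nearX}. I would then locate the nearby endpoints \(y_0^m\) and the ends \(e_0,e_1\) of the \((+)\)-parts in the appropriate complementary regions \(U,V\). The sign bookkeeping needed to see that the translates \(h_m(\gamma_{k_m})\) are separated by the wrong side of \(\cR\) is where I expect most of the work. Once the second case is excluded, the first case gives \(x_{n+1}\) with the required properties.
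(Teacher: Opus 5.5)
Your architecture is the paper's. The new point \(x_{n+1}\) is the first return to \(\Lambda(\Gamma)\) of the continuation of \(\cJ(g)^+\) beyond \(x_n\), and the outside sequence is used only to force that continuation into \(\cH^L(\gamma_n)\). Your first case is correct. For \(n=1\) no extended arc is needed, because \(\cR\) meets \(\gamma_1\) only at \(x_1\), by unique path-connectedness of \(Z^+\) and \(Z^+\cap Z^-=\varnothing\).

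The gap is that the decisive step is only stated as an aim, and two things it needs are missing.

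\emph{The reduction when the \(\lambda_m\) share an endpoint.} You assert that the endpoints of \(\lambda_m\) lie in the open interval \(\opi[\Lambda(\Gamma)]{x_n}{z}\). A side sequence only lies on the closed interval, and all \(\lambda_m\) may share the endpoint \(x_n\). In that case the translates touch \(\cJ(g)^+\) at \(x_n\) rather than approaching it from the left. The paper removes this case first.
\begin{itemize}
\item If \(x_n\) is fixed by a nontrivial element of \(\Gamma\), then \reflem{oneStepAtFix} already gives \(x_{n+1}\).
\item Otherwise \refprop{oneEndFix} forbids infinitely many \(\lambda_m\) sharing \(x_n\). It also forbids infinitely many sharing \(z\), since that would make \(\{x_n,z\}\) isolated.
\item One can then pass to a subsequence whose endpoints \(\alpha_m\to x_n\) are strictly monotone in \(\opi[\Lambda(\Gamma)]{x_n}{z}\).
\end{itemize}
Separately, your germ dichotomy is not exhaustive. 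The ``first point after which \(\cR\) enters \(\cH\)'' need not exist when \(\cR\) oscillates across \(\Lambda(\Gamma)\) near \(x_n\).

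\emph{How outsideness becomes a position statement.} Set up the local picture as follows.
\begin{itemize}
\item Take a flat neighborhood \(U\) of \(x_n\) relative to \(\cJ(g)^+\), with \(\partial U\cap\cJ(g)=\{e_0,e_1\}\) and \(e_0<x_n<e_1\).
\item Let \(\opi[\Lambda(\Gamma)]{y_0}{y_1}\) be the arc of \(\Lambda(\Gamma)\) through \(x_n\) crossing \(U\).
\item Let \(\fB\) be the quadrant bounded by \([e_0,x_n]\), \(\opi[\Lambda(\Gamma)]{x_n}{y_1}\), and the arc of \(\partial U\) from \(y_1\) to \(e_0\).
\end{itemize}
Left inaccessibility shows that the subarc \(\tau_m\subset h_m(\gamma_{k_m})\) from \(\alpha_m\) to \(\partial U\) avoids \([e_0,x_n]\) and crosses \(\fB\). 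It therefore exits at some \(\mu_m\) between \(y_1\) and \(e_0\). Outsideness, \(h_m(\cJ(g))\cap\cD(p)=\varnothing\), together with \(\opi[\partial U]{e_1}{e_0}\subset\cD(p)\), then forces \(y_1<\mu_m\le e_1<e_0\) on \(\partial U\).

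So \(\cJ(g)^+\) leaves \(U\) through \(\partial\fB\). Now suppose \((x_n,e_1]\) met \(\opi[\Lambda(\Gamma)]{x_n}{y_1}\), and let \(\sfe\) be its last point there. Then \([\sfe,e_1]\subset Z^+\) crosses \(\fB\), and its ends are linked with \(\sfd\tau_m\) as soon as \(\alpha_m\in\opi[\Lambda(\Gamma)]{x_n}{\sfe}\). This contradicts \refprop{unlinkedCrossingLine}, or \refprop{enclosing} when \(\mu_m=e_1\). Hence \((x_n,e_1)\subset\fB\), which is your case~(1), and oscillating germs are handled at the same time.

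Only this local statement about \(\tau_m\) is needed. Your global claim that the arcs \(h_m(\gamma_{k_m})\) cross \(\cH^L(\gamma_n)\) is unjustified as written. Your sketch, with phrases like ``linked with \(\gamma_n\cup\cR\)'' and \(e_0,e_1\) as ``ends of the \((+)\)-parts'' in regions \(U,V\), does not identify this exit-point argument. That argument is exactly where the outside hypothesis does its work.
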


\begin{proof}

By symmetry, it suffices to detail the case where \(\cJ(g)^+\) is left inaccessible.
By \reflem{oneStepAtFix}, if \(x_n\) is fixed by a non-trivial element of \(\Gamma\), then there is nothing to prove.
Assume therefore that no non-trivial element of \(\Gamma\) fixes \(x_n\).
Write \(\fD=\cD^L(\gamma_n)\).
Then \(x_n<z<x_{n-1}\) in \(\partial\fD\).

Choose a flat neighborhood \(V\) of \(z\) relative to \(\opi[\partial\fD]{x_n}{x_{n-1}}\) such that
\(\closure{V}\cap \gamma_n=\varnothing\).
Choose also a flat neighborhood \(U\) of \(x_n\) relative to \(\cJ(g)^+\) such that
\[
\closure{U}\cap \cJ(g)\subset \Int \cJ(g)^+,\qquad
\closure{U}\cap \cldi[\partial\fD]{z}{x_{n-1}}=\varnothing,\qquad
\closure{U}\cap \closure{V}=\varnothing.
\]
Then:
\begin{itemize}
\item \(\partial U\cap \cJ(g)=\{e_0,e_1\}\) with \(e_0<x_n<e_1\) in \(\cJ(g)^+\);
\item \(\partial V\cap \Lambda(\Gamma)=\{z_0,z_1\}\) with \(x_n<z_0<z<z_1<x_{n-1}\) in \(\Lambda(\Gamma)\);
\item there exists an interval \(\cI=\opi[\Lambda(\Gamma)]{y_0}{y_1}\) containing \(x_n\) and crossing \(U\);
\item \(y_0<x_n<y_1<z_0<z<z_1<x_{n-1}\) in \(\Lambda(\Gamma)\), since \(\closure{U}\cap \cldi[\partial\fD]{z}{x_{n-1}}=\varnothing\).
\end{itemize}
See \reffig{nearX}.

Since the ray \([e_0,x_n)\subset Z^+\cap \cH\) lands at \(x_n\) from the left side of \(\cI\), we have \(y_1<e_0<y_0\) on \(\partial U\).

Passing to a subsequence of \(\{\lambda_m\}\), we may assume that
\begin{itemize}
\item \(\ropi[\partial\fD]{x_n}{y_1}\) and \(\lopi[\partial\fD]{z_0}{z}\) contain the endpoints \(\alpha_m\) and \(\beta_m\) of \(\lambda_m\), respectively;
\item \(\{\alpha_m\}\) and \(\{\beta_m\}\) are monotone in \(\cldi[\partial\fD]{x_n}{z}\).
\end{itemize}
If infinitely many \(\lambda_i\) share the same endpoint \(e\), then \(e\in\{x_n,z\}\).
If \(e=x_n\), then \refprop{oneEndFix} implies that \(e\) is fixed by a non-trivial element of \(\Gamma\), contrary to assumption.
If \(e=z\), then \refprop{oneEndFix} implies that \(\{x_n,z\}\) is isolated, contradicting that \(\{\lambda_m\}_{m\in\NN}\) is an \(\opi[\Lambda(\Gamma)]{x_n}{z}\)-side sequence.
Thus, after passing to a further subsequence if necessary, we may assume that \(\{\alpha_m\}\) and \(\{\beta_m\}\) are both strictly monotone.

Since \(\{\lambda_m\}\) is outside, \(\lambda_m\) is written \(h_m(\ell_{k_m})\) for some $h_m\in \Gamma$ and $0<k_m\le n$.
Let \(\cC=\cldi[\partial\fD]{e_0}{y_1}\cup \cldi[\partial U]{y_1}{e_0}\), equipped with the circular order such that $e_0<x_n<y_1$, and write \(\fB=\cD^L(\cC)\) (which is a first quadrant of \(U\)).
Since \(\partial U\) separates \(\alpha_m\) and \(\beta_m\), there exists a subline \(\tau_m\subset h_m(\gamma_{k_m})\) crossing \(U^L(\opi[\Lambda(\Gamma)]{y_0}{y_1})\) and joining $\alpha_m$ with a point \(\mu_m\in \opi[\partial U]{y_1}{y_0}\)

\begin{claim}
\(\tau_m\cap [e_0,x_n]=\varnothing\), and \(\tau_m\) crosses \(\fB\) with $\mu_m\in \opi[\partial U]{y_1}{e_0}$.
\end{claim}
\begin{proof}[Proof of the claim]
Assume that \(\tau_m\) meets \([e_0,x_n]\subset Z^+\).
Then either \(\tau_m=\tau_m^+\), or \(\tau_m\) is mixed and \(\tau_m^+\cap [e_0,x_n]\neq \varnothing\).

We first assume that  \(\alpha_m\in Z^+\).
Since \(\alpha_m\in \opi[\cC]{e_0}{y_1}\), there exists a unique subline \(\cA\subset \tau_m^+\) crossing \(\fB\) and joining \(\alpha_m\) to a point \(v\in [e_0,x_n)\).
By the choice of \(U\), the subline \(\cA\) lands at \(v\) from the left side of \(\cJ(g)^+\), contradicting the fact that \(\cJ(g)^+\) is left inaccessible.
This proves the claim in this case.

It remains to consider the case where \(\tau_m\) is mixed and \(\alpha_m\in Z^-\).
Let \(s\) be the synapse of \(\tau_m\).
If \(s\in (e_0,x_n)\), then, since \(Z^+\cap Z^-=\varnothing\) and \(\alpha_m\in \opi[\cC]{e_0}{y_1}\), the arc \(\tau_m^-\) lands at \(s\) from the left side of \((e_0,x_n)\), contradicting the left-inaccessibility of \((e_0,x_n)\).
Hence \(s\notin (e_0,x_n)\).
Since \(\tau_m^-\) is disjoint from \([e_0,x_n]\), it follows that \(s\in \fB\).

Now \(s\in \tau_m^+\) and \(\tau_m^+\cap [e_0,x_n]\neq \varnothing\), so there exists a subray of \(\tau_m^+\) starting at \(s\) and landing at a point of \([e_0,x_n)\) from the left side.
This again contradicts the left-inaccessibility of \(\cJ(g)^+\).
Therefore the first assertion holds.

Once the first assertion is established, the second follows immediately from the left-inaccessibility of \(\cJ(g)^+\).
Thus the proof is complete.
\end{proof}

\begin{claim}\label{Clm:endOrder}
\(y_1<\mu_m\le e_1<e_0<y_0\) in \(\partial U\), hence \(y_1<\mu_m\le e_1<e_0\) in \(\cC\).
\end{claim}

\begin{proof}[Proof of the claim]
Since \(\opi[\partial U]{e_1}{e_0}\subset \cD(p)\) and the sequence is outside, \(\mu_m\in \lopi[\partial U]{e_0}{e_1}\).
The inequalities follow from the above claim.
\end{proof}

\begin{claim}\label{Clm:localCrossing}
the subarc \([x_n,e_1]\subset \cJ(g)^+\) crosses \(\fB\).
\end{claim}

\begin{proof}[Proof of the claim]
Assume \([x_n,e_1]\) meets \(\opi[\cC]{x_n}{y_1}\).
Then a subarc \([\sfe,e_1]\) of \([x_n,e_1]\)  crosses \(\fB\)  since  \(y_1<\mu_m\le e_1<e_0\) in \(\cC\). .
Choose \(m\) with \(\alpha_m\in \opi[\cC]{x_n}{\sfe}\).
Then \(\sfd \tau_m\) is linked with \(\{\sfe,e_1\}\) in $\cC$, contradicting \refprop{unlinkedCrossingLine} unless \(\mu_m=e_1\).

If \(\mu_m=e_1\), then  applying \refprop{enclosing} to \(\tau_m^+(\ni e_1)\) yields a ray landing from the left side of \(\cJ(g)^+\), again a contradiction.
\end{proof}

By \refclm{localCrossing}, a truncation of the ray \((x_n,e_1]\subset \cJ(g)^+\) lands at \(x_n\) from the left side of \(\partial \cH^L(\gamma_{n})\).
This yields the required point \(x_{n+1}\), and completes the proof.
\end{proof}

\setcounter{claim}{0}

A Jordan segment \(\cS\) is said to be \emph{alternatively adapted} to \(Z^\pm\) with respect to \(\{s_i\}_{i=1}^{2k}\) if \(\{s_i\}_{i=1}^{2k}\) is an increasing finite sequence in \(\Int \cS\) for some linear order on \(\cS\), and each Jordan arc \([s_{2i-1},s_{2i}]\subset \Int \cS\), viewed as a real tree, is adapted to \(Z^\pm\).
\begin{lem}[Dragging by inside sequences]\label{Lem:directionalDragging}
In the setting of \refconst{lamiByManyLeaves}, let
\(\cL=\cL(\Gamma,\{x_i\}_{i=0}^n)\)
be the circle lamination constructed in \refconst{lamiByManyLeaves}. Assume that
\(x_0<x_1<\cdots<x_n\) in \(\Lambda(\Gamma)\), that \(\cJ(g)^+\) is left inaccessible, and that \(\{\cE_i\}_{i\in\ZZ}\) is the eclipse on \(\cD(p)\) constructed in \refconst{eclipseD}.

Let \(\{z_i\}_{i=0}^m\subset \Lambda(\Gamma)\), where \(m>0\), be a finite sequence such that
\begin{itemize}
    \item \(x_n=z_0<z_1<\cdots<z_m<x_0\) in \(\cldi[\Lambda(\Gamma)]{x_n}{x_0}\);
    \item for each \(i\in\{1,\dots,m\}\), the interval \(\opi[\Lambda(\Gamma)]{z_{i-1}}{z_{i}}\) is not isolated in \(\cL\);
    \item there exists an inside sequence \(\{\lambda_i\}_{i\in\NN}\) in \(\cL\) that is \(\opi[\Lambda(\Gamma)]{z_0}{z_1}\)-side.
\end{itemize}
Then \(z_i\in \cD(p)\cap \cE_\infty \) for all \(i\in\{1,\dots,m\}\).

The same conclusion holds in the symmetric case where
\(x_n<x_{n-1}<\cdots<x_0\) in \(\Lambda(\Gamma)\),
\(\cJ(g)^+\) is right inaccessible, and
\(\{z_i\}_{i=0}^m\) is a decreasing finite sequence in
\(\lopi[\Lambda(\Gamma)]{x_0}{x_n}\) with \(z_0=x_n\).
\end{lem}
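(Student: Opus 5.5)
The plan is induction on $i$, using the dragging mechanism of \reflem{dragging} and \reflem{separatingFence}, with the eclipse $\{\cE_j\}_{j\in\ZZ}$ on $\cD(p)$ as the grid. The base step treats $z_1$ using the given inside sequence. Each inductive step passes from $z_{i-1}\in \cD(p)\cap\cE_\infty$ to $z_i$, using the non-isolatedness of $\opi[\Lambda(\Gamma)]{z_{i-1}}{z_i}$.

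\emph{Base step.} Write $\lambda_j=h_j(\ell_{k_j})$ with $h_j(\cJ(g))\subset\closure{\cD(p)}$. Since $\lambda_j\to\{z_0,z_1\}=\{x_n,z_1\}$, after passing to a subsequence one endpoint $a_j$ of $\lambda_j$ converges to $x_n$ and the other endpoint $b_j$ converges to $z_1$. Both endpoints are non-node points of $h_j(\cJ(g))$. Indeed, $h_j(p)\ne p$ and the nodes of $h_j(\cJ(g))$ lie off $\Lambda(\Gamma)$ except possibly $h_j(q)$; we must exclude $h_j(q)\in\Lambda(\Gamma)$, and this follows because $q\notin\closure{\cH}$ while $h_j$ preserves the two domains of $\Lambda(\Gamma)$. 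The sequence $a_j$ lies in $\cD(p)$ (inside sequences have $\lambda_j\subset\cD(p)$), and $x_n\in\Int\cJ(g)^+\subset\cE_\infty\setminus\Fix(g)$ by \refprop{eclipseDProperty}(3). Hence \reflem{dragging} gives $z_1\in\cE_\infty$. It remains to show $z_1\in\cD(p)$, i.e.\ $z_1\notin\cJ(g)$. Since $\cE_\infty\cap\cJ(g)=\closure{\cJ(g)^+}$, we must exclude $z_1\in\cJ(g)^+\cap\Lambda(\Gamma)$. The points of $\cJ(g)^+\cap\Lambda(\Gamma)$ lying in $\opi[\Lambda(\Gamma)]{x_n}{x_0}$ are separated from $x_n$ in a controlled way, and I would use left inaccessibility together with \refprop{unlinkedCrossingLine}. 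If $z_1\in\Int\cJ(g)^+$, the arcs $h_j(\gamma_{k_j})$, for $j$ large, approach the sub-arc of $\cJ(g)^+$ from the $\cD(p)$-side and would cross an adapted arc of $\cJ(g)^+$ with linked endpoints. That yields a ray landing on the inaccessible side (the $\cD(p)$-side is the inaccessible left side, by type~I) — the same contradiction as in \refclm{localCrossing}. The point $z_1=q$ is excluded since $q\notin\closure{\cH}$.

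\emph{Inductive step.} Assume $z_{i-1}\in\cD(p)\cap\cE_\infty$. Non-isolatedness gives an $\opi[\Lambda(\Gamma)]{z_{i-1}}{z_i}$-side sequence $\mu_j=h'_j(\ell_{k'_j})$, with endpoints converging to $z_{i-1}$ and $z_i$. The key point is that $\mu_j$ is eventually inside. If it were outside, then $h'_j(\cJ(g))\subset S^2_\infty\setminus\cD(p)$, so its endpoints stay in the closed set $S^2_\infty\setminus\cD(p)$. That set does not contain $z_{i-1}\in\cD(p)$, which contradicts convergence. Once the sequence is inside, \reflem{dragging} applies with $x=z_{i-1}\in\cE_\infty\setminus\Fix(g)$ (note $z_{i-1}\ne p$ and $z_{i-1}\ne q$, both being off $\Lambda(\Gamma)$). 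This yields $z_i\in\cE_\infty$. Membership $z_i\in\cD(p)$ follows by the same argument as in the base case: any point of $\cE_\infty\cap\cJ(g)$ lies on $\closure{\cJ(g)^+}$, and this is excluded by left inaccessibility and \refprop{unlinkedCrossingLine}. The symmetric case (right inaccessible, decreasing order) follows by reversing orientations.

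\emph{Main obstacle.} I expect the hardest step to be the exclusion of $z_i\in\Int\cJ(g)^+$, that is, guaranteeing $z_i\in\cD(p)$ rather than merely $z_i\in\cE_\infty$. I would try first to show that the translates $h_j(\cJ(g)^+)$ accumulating at such a point force either a linking with $\gamma_k$, or with an extension arc from \reflem{extArc}, contradicting \refprop{unlinkedCrossingLine}, or a ray landing on the inaccessible $\cD(p)$-side of $\cJ(g)^+$. The cyclic ordering $x_n<z_1<\cdots<z_m<x_0$ places these limits in the region $\cH^L$, bounded by the bouncing arcs, where this argument applies.
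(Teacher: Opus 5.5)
Your handling of $\cE_\infty$-membership is correct and matches the paper.
\reflem{dragging} applied to the given inside sequence gives $z_1\in\cE_\infty$. If $z_{i-1}\in\cD(p)$, then any $\opi[\Lambda(\Gamma)]{z_{i-1}}{z_i}$-side sequence of translates is eventually inside by \reflem{fenceSystem}, so \reflem{dragging} gives $z_i\in\cE_\infty$.

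The genuine gap is the step you flag yourself: excluding $z_i\in\cJ(g)^+$. This is the real content of the lemma, since dragging alone does not place $z_1$ in $\cD(p)$. The mechanism you suggest is not available in general. You propose either a subarc of $\cJ(g)^+$ crossing $\cH$ with endpoints linked with $\lambda_j$, so that \refprop{unlinkedCrossingLine} applies, or a ray landing on the left of $\cJ(g)^+$ as in the local crossing claim of \reflem{outsideExt}. Neither rules out the following picture. After $x_n$, the arc $\cJ(g)^+$ stays in $S^2_\infty\setminus\cH$, touching $\Lambda(\Gamma)$ at $z_\sfm$. Meanwhile the inside translates $h_j(\gamma_{k_j})$ run through $\cH\cap\cD(p)$ from near $x_n$ to near $z_\sfm$. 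In this picture:
\begin{itemize}
\item no crossing subarc of $\cJ(g)^+$ is linked with any $\lambda_j$;
\item nothing lands on $\cJ(g)^+$;
\item the local picture at $z_\sfm$ is compatible with left inaccessibility.
\end{itemize}
The obstruction is also not local at a single $z_i$. It involves the whole chain from $z_0$ to the first index $\sfm$ with $z_\sfm\notin\cD(p)$. So an inductive step that only looks near $z_i$ cannot close.

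The missing idea is a separation argument that uses the eclipse itself.
\begin{enumerate}
\item The paper takes the minimal such $\sfm$. Then all approximating sequences for $\{z_{i-1},z_i\}$ with $i\le\sfm$ are inside, and $z_0,z_\sfm\in\cJ(g)^+$.
\item By \reflem{separatingFence}, the translated arcs $\tau_{i,j}$ lie in bands $\closure{\cE_{s(i,j)}\setminus\cE_{s(i,j)+3}}$ with $s(i,j)\to\infty$. Using at least one-sided inaccessibility, one passes to pairwise disjoint subsequences.
\item These arcs are concatenated with pieces of $\Lambda(\Gamma)$ into alternately adapted arcs $\cS_j$. Each $\cS_j$ joins points $\sfa_j,\sfb_j\in\cJ(g)^+$ near $z_0$ and $z_\sfm$ and crosses $\cD(p)$. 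This gives nested Jordan curves $[\sfa_j,\sfb_j]\cup\cS_j$.
\item Fix $j$ and choose $\sfn$ so that $\cE_\sfn$ misses every $\tau_{i,j}$ and $(\partial\cE_\sfn)^-$ lies outside $\closure{\cH}$. Then the connector arc $\sigma$ of $\partial\cE_\sfn$ must cross $\cS_j$. Its $(+)$-segment first meets $\cS_j$ at a point $\sfe\in\Lambda(\Gamma)$ on the portion of $\cS_j$ near $z_0$ or near $z_\sfm$.
\item For $j'\gg j$, the corresponding extreme arc $\tau_{1,j'}$ or $\tau_{\sfm,j'}$ lies in $\cE_{\sfn+3}$. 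It is forced to meet $[p,\sfe]\subset\sigma^+$, contradicting $\cE_{\sfn+3}\cap[p,\sfe]=\{p\}$ from \refprop{eclipseDProperty}.
\end{enumerate}
You need something of this kind, exploiting that the deep eclipse layers near $x_n$ and near $z_\sfm$ are separated inside $\closure{\cH}$. Without it, your argument only shows $z_i\in\cE_\infty$ up to the first $i$ with $z_i\in\cJ(g)^+$, not the stated conclusion $z_i\in\cD(p)\cap\cE_\infty$.
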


\begin{proof}
By symmetry, it suffices to consider the case where \(\cJ(g)^+\) is left inaccessible.

Write \(\fD=\bigcap_{k=1}^{n}\cH^L(\gamma_{n-1})\). Then
\[
\partial \fD=\opi[\Lambda(\Gamma)]{x_n}{x_0}\cup \bigcup_{k=1}^{n}\gamma_k
\]
since \(\{x_k\}_{k=0}^n\) satisfies the bouncing condition and is increasing in \(\cldi[\Lambda(\Gamma)]{x_0}{x_n}\).

Since each \(\opi[\partial \fD]{z_{i-1}}{z_{i}}\) is not isolated in $\cL$, it follows from \refconst{lamiByManyLeaves} that there exist  \(\opi[\partial \fD]{z_{i-1}}{z_{i}}\)-side sequences \(\{\lambda_{i,j}\}_{j\in\NN}\) in \(\cL\) for each \(i\), such that \(\lambda_{1,j}=\lambda_j\), \(\lambda_{i,j}=h_{i,j}(\ell_{\sfk(i,j)})\) for some \(\sfk(i,j)\in\{1,\dots,n\}\), and \(h_{i,j}\in \Gamma\).
Write \(\lambda_{i,j}=\{\alpha_{i,j},\beta_{i,j}\}\) so that
\(z_{-1}\le \alpha_{i,j}<\beta_{i,j}\le z_{i}\) in
\(\cldi[\partial \fD]{z_{i-1}}{z_{i}}\).

Since \(\{\lambda_j\}_{j\in\NN}\) is an inside sequence and \(z_0=x_n\in \Int \cJ(g)^+\subset \cE_\infty\setminus \Fix(g)\), by \refprop{eclipseDProperty}, \reflem{dragging} implies that \(z_1\in \cE_\infty\). Note that \(z_1\) need not belong to \(\cD(p)\)(see \refprop{eclipseDProperty}).
\begin{claim}\label{Clm:evenInside}
For each \(i\) with \(1<i\le m\), if \(z_{i-1}\in \cD(p)\cap \cE_\infty\), then \(\{\lambda_{i,j}\}_{j\in\NN}\) is eventually inside. In particular, \(z_i\in \cE_\infty\).
\end{claim}
\begin{proof}[Proof of the claim]
Since \(z_{i-1}\in \cD(p)\) and \(\alpha_{i,j}\to z_{i-1}\) as \(j\to\infty\), the sequence \(\{\alpha_{i,j}\}_{j\in\NN}\) is eventually contained in \(\cD(p)\). Hence \reflem{fenceSystem} implies that \(h_{i,j}(\cJ(p))\subset \closure{\cD(p)}\) for all sufficiently large \(j\), and therefore \(\beta_{i,j}\in \cD(p)\) for all sufficiently large \(j\). Thus \(\{\lambda_{i,j}\}_{j\in\NN}\) is eventually inside. It then follows from \reflem{dragging} that \(z_i\in \cE_\infty\).
\end{proof}

We show that \(z_i\in \cD(p)\) for all $i\in \{1,\cdots, m\}$
Assume for contradiction that \(\sfm\) is the smallest index with \(z_\sfm \notin \cD(p)\).

By applying \refclm{evenInside} successively, we obtain, for every \(i\in\{1,\dots,\sfm\}\), that \(z_i\in \cE_\infty\) and that \(\{\lambda_{i,j}\}_{j\in\NN}\) is eventually inside.
Passing to subsequences, we may assume that \(\{\lambda_{i,j}\}_{j\in\NN}\) are inside.
Also, since $z_m\in \cE_\infty \setminus \cD(p)$,  \refprop{eclipseDProperty} implies $z_\sfm \in \cJ(g)^+$ with $z_0=x_n<z_\sfm$ in $\cJ(g)^+$.

Since $h_{i,j}\in \Gamma$ can not fix $p$, \reflem{fenceSystem} implies  $h_{i,j}(p)\in \cD(p)$, and by \reflem{separatingFence}, $h_{i,j}(\cJ(g))\subset \closure{\cE_{s(i,j)}\setminus \cE_{s(i,j)+3}}$ for some $s(i,j)\in \ZZ$.
Since $z_i\in \cE_\infty \setminus \Fix(g)$, the separation property in \refprop{eclipseDProperty} implies that $s(i,j)\to \infty$ as $j\to \infty$.
Hence, after taking a subsequence of $\{s(i,j)\}_{i,j\in\NN}$, we may assume that
\begin{itemize}
    \item $\{s(i,j)\}_{j\in\NN}$ is strictly increasing;
    \item $|s(i,j)-s(i',j')|>3$ whenever $(i,j)\neq (i',j')$;

\end{itemize}

Set $\tau_{i,j}=h_{i,j}(\gamma_{\sfk(i,j)})$. Then $\tau_{i,j}$ is an adapted arc; it is mixed if $\sfk(i,j)=1$, and pure if $\sfk(i,j)>1$.

For $(i,j)\neq (i',j')$, since
$\closure{\cE_{s(i,j)}\setminus \cE_{s(i,j)+3}}
\cap
\closure{\cE_{s(i',j')}\setminus \cE_{s(i',j')+3}}
=
\{p\}$,
we have $\tau_{i,j}\cap \tau_{i',j'}\subset \{p\}$. We claim that $\tau_{i,j}$ cannot meet both $\tau_{i,j-1}$ and $\tau_{i,j+1}$. Suppose otherwise. For each $j'\in \{j-1,j+1\}$, choose $e_{j'}\in \sfd \tau_{i,j'}\cap Z^+$. Then the arc $\cA=[e_{j-1},e_{j+1}]\subset Z^+$ is contained in $\tau_{i,j-1}^+\cup \tau_{i,j+1}^+$ and crosses $\cH$. Since $\tau_{i,j+1}^+\cap \tau_{i,j-1}^+=\{p\}\subset \Int \tau_{i,j}^+$, and $\sfd \cA$ is linked with $\sfd \tau_{i,j}$ in $\Lambda(\Gamma)$, it follows that $\cA$ is linked with $\tau_{i,j}^+$. This contradicts that $\tau_{i,j}^+$ is at least one-sided inaccessible.

Therefore, after passing to a subsequence, we may assume that $\tau_{i,j}\cap \tau_{i,j'}=\varnothing$ whenever $j\neq j'$.
Then, $\cldi[\Lambda(\Gamma)]{\alpha_{i,j}}{\beta_{i,j}}\subset \opi[\Lambda(\Gamma)]{\alpha_{i,j+1}}{\beta_{i,j+1}}\subset \opi[\Lambda(\Gamma)]{z_{i+1}}{z_i}$ for all $i\in \{1,\cdots, \sfm\}$.
By the ordering of the endpoints of the $\{x_i\}_{i=0}^n$ and $\{z_i\}_{i=0}^\sfm$, we also have that $\tau_{i,j}\cap \tau_{i',j'}=\varnothing$ whenever $(i,j)\neq (i',j')$, and that $\tau_{i,j}\cap \gamma_k=\varnothing$ for all $k\in \{1,\dots,n\}$.
In particular,  $p\nin \tau_{i,j}$.

Take flat neighborhoods $U_0$  of $z_0$ and $U_\sfm$ of $z_\sfm$, relative to $\cJ(g)^+$, such that
\begin{itemize}
    \item $\closure{U_i}\cap \cJ(g)=\closure{U_i}\cap \Int \cJ(g)^+$ for $i\in\{0,\sfm\}$ and
$\closure{U_0}\cap\closure{U_\sfm}=\varnothing$;
\item $\closure{U_0}\cap\cldi[\partial \fD]{z_1}{x_{n-1}}=\varnothing$, and
$\closure{U_\sfm}\cap\cldi[\partial \fD]{x_0}{z_{\sfm-1}}=\varnothing$.
\end{itemize}
For each $i\in\{0,\sfm\}$, there is a unique subarc $\cldi[\Lambda(\Gamma)]{u_i}{v_i}$ in $\Lambda(\Gamma)$, that crosses $U_i$ and contains $z_i$. Note that
\[x_{n-1}<u_0<x_n=z_0<v_0<z_1<\cdots<z_{\sfm-1}<u_\sfm<z_\sfm<v_\sfm<x_0
\text{ in } \Lambda(\Gamma).\]

Since $z_i\in \cD(p)$ for each $i\notin\{0,\sfm\}$, there is $J\in\NN$ such that, for all $j\ge J$,
$\alpha_{0,j}\in \ropi[\partial \fD]{z_0}{v_0}$ and
$\beta_{\sfm,j}\in \lopi[\partial \fD]{u_\sfm}{z_\sfm}$, while
$\cldi[\partial \fD]{\beta_{i,J}}{\alpha_{i+1,J}}
\subset
\cD(p)\setminus (\closure{U_0}\cup \closure{U_\sfm})$
for every $i\neq \sfm$.
Discarding finitely many terms, we may assume that $J=0$.

Fix $j\in\NN$. Since $\alpha_{1,j}$ lies in the left open neighborhood $U_0^L$ of $z_0$ associated to $U_0$, and since
$z_0\in \partial U_0^L\cap \cJ(g)^+$ and
$\closure{U_0}\cap \cJ(g)=\closure{U_0}\cap \cJ(g)^+$,
the component of
$U_0^L\cap \cldi[\partial \fD]{z_0}{\alpha_{1,j}}$
containing $\alpha_{1,j}$ is of the form
$\lopi[\partial \fD]{\sfa_j}{\alpha_{1,j}}$
for some $\sfa_j\in U_0^L\cap \cJ(g)^+$.
Similarly, there exists $\sfb_j\in \cJ(g)^+\cap U_\sfm$ such that
$\cldi[\partial \fD]{\beta_{\sfm,j}}{\sfb_j}\cap\cJ(g)=\{\sfb_j\}$.
By construction, $\{\sfa_j\}_{j\in\NN}$ is decreasing, $\{\sfb_j\}_{j\in\NN}$ is increasing, both in $\cldi[\partial \fD]{z_0}{z_\sfm}$, and
$\sfa_j<\sfb_j$ in $\cJ(g)^+$ since $\sfa_j\in U_0$ and $\sfb_j\in U_\sfm$.

For each $j\in\NN$, define an arc $\cS_j$ by
\[\cS_j=
\begin{dcases}
\ \cldi[\partial \fD]{\sfa_j}{\alpha_{1,j}}\cup \tau_{1,j}\cup \cldi[\partial \fD]{\beta_{1,j}}{\sfb_j}, & \text{if }\sfm=1,\\
\ \cldi[\partial \fD]{\sfa_j}{\alpha_{1,j}}
\cup
\left(
\bigcup_{i=1}^{\sfm-1}\bigl(\tau_{i,j}\cup \cldi[\partial \fD]{\beta_{i,j}}{\alpha_{i+1,j}}\bigr)
\right)
\cup
\bigl(\tau_{\sfm,j}\cup \cldi[\partial \fD]{\beta_{\sfm,j}}{\sfb_j}\bigr), & \text{if }\sfm>1.
\end{dcases}
\]

Then $\cS_j$ is alternatively adapted to $Z^\pm$ with respect to $\{\alpha_{i,j},\beta_{i,j}:i\in\{1,\dots,\sfm\}\}$, and
$\cS_j\cap \cJ(g)^+=\{\sfa_j,\sfb_j\}=\sfd \cS_j$.
Hence $\hat\cS_j=[\sfa_j,\sfb_j]\cup \cS_j$ is a Jordan curve.
See \reffig{altArc}.
We orient $\hat\cS_j$ according to the circular order, compatible with the linear order on $\cJ(g)^+$, and orient $\cS_j$ accordingly.

\begin{claim}\label{Clm:unlinking}
The pairs \(\{\sfa_\sfj,\sfb_\sfj\}\) and \(\{\sfa_{\sfj'},\sfb_{\sfj'}\}\) are unlinked in \(\Int \cJ(g)^+\); that is, either \([\sfa_\sfj,\sfb_\sfj]\subset [\sfa_{\sfj'},\sfb_{\sfj'}]\) or \([\sfa_{\sfj'},\sfb_{\sfj'}]\subset [\sfa_\sfj,\sfb_\sfj]\). In particular, either \(\cD^L(\hat \cS_\sfj)\subset \cD^L(\hat \cS_{\sfj'})\) or \(\cD^L(\hat \cS_{\sfj'})\subset \cD^L(\hat \cS_\sfj)\).
\end{claim}

\begin{proof}[Proof of the claim]
It suffices to prove that either \(\cD^L(\hat \cS_\sfj)\subset \cD^L(\hat \cS_{\sfj'})\) or \(\cD^L(\hat \cS_{\sfj'})\subset \cD^L(\hat \cS_\sfj)\). Assume that \(\sfj'<\sfj\) and that there exist points \(\sfx,\sfy\in \Int \cS_{\sfj'}\) such that \(\sfx\in \cD^R(\hat \cS_\sfj)\) and \(\sfy\in \cD^L(\hat \cS_\sfj)\). Since \(\cJ(g)^+\cap \Int \cS_{\sfj'}=\varnothing\), there exists a Jordan subarc \(r\subset \Int \cS_{\sfj'}\) such that \(\sfd r=\{\sfx,\sfz\}\) for some \(\sfz\in \Int \cS_{\sfj}\cap \Int \cS_{\sfj'}\), and \(\Int r\subset \cD^R(\hat \cS_\sfj)\). Observe that for any right neighborhood \(\fU\) of \(\sfz\) relative to \(\cS_{\sfj}\), there exists a subarc \(r'\subset r\) such that \(\sfz\in r'\subset \fU\). Since \(\sfj'<\sfj\) and  $\cS_j\cap \cS_{j'}\subset \cS_j\cap \Lambda(\Gamma)$, no such subarc can exist. Hence \(\Int \cS_{\sfj'}\) is contained in the closure of a Jordan domain bounded by \(\hat \cS_\sfj\), proving the claim.
\end{proof}

\begin{claim}\label{Clm:ascending}
For all \(j\), one has \(\cD^L(\hat \cS_j)\subset \cD^L(\hat \cS_{j+1})\). In particular, \([\sfa_j,\sfb_j]\subset [\sfa_{j+1},\sfb_{j+1}]\) and $\cS_j$ crosses $\cD(p)$ for all \(j\in\NN\).
\end{claim}

\begin{proof}[Proof of the claim]
By \refclm{unlinking}, either \(\cD^L(\hat \cS_j)\subset \cD^L(\hat \cS_{j+1})\) or \(\cD^L(\hat \cS_{j+1})\subset \cD^L(\hat \cS_j)\).

First suppose that \(m>1\). Any left neighborhood of \(\beta_{1,j+1}\) relative to \(\cS_{j+1}\) contains a subarc \(\cldi[\partial \fD]{\beta}{\beta_{1,j+1}}\) of \(\cS_j\) for some \(\beta\in \opi[\partial \fD]{\beta_{1,j}}{\beta_{1,j+1}}\). Moreover, \(\cldi[\partial \fD]{\beta_{1,j}}{\beta_{1,j+1}}\cap \cS_{j+1}=\{\beta_{1,j+1}\}\). This forces \(\cD^L(\hat \cS_j)\subset \cD^L(\hat \cS_{j+1})\).

Now suppose that \(m=1\).
By the choice of $U_0$, $\closure{U_0}\cap \cJ(g)^+$ is written as $[e_0,e_1]$ with $e_0<e_1$ in $\cJ(g)^+$ and the ray $[e_0,z_0)$ is an end ray of $\Int \gamma_n$.
Hence, \([e_0,z_0)\) lands at \(z_0\) on the left side of \(\cldi[\Lambda(\Gamma)]{u_0}{v_0}\).
We have
\[
u_0<z_0<\alpha_{1,j+1}<\alpha_{1,j}<v_0
\quad\text{and}\quad
u_1<\beta_{1,j}<\beta_{1,j+1}<z_1
\]
in \(\Lambda(\Gamma)\).

Define a Jordan curve \(\cC\) as the union of \([e_0,z_0)\), \(\cldi[\Lambda(\Gamma)]{z_0}{v_0}\), and \(\cldi[\partial U_0]{v_0}{e_0}\). Choose the circular order on \(\cC\) compatible with the linear order on \(\cldi[\Lambda(\Gamma)]{z_0}{v_0}\). Since \(\partial U_0\) separates \(\alpha_{0,k}\) and \(\beta_{0,k}\) for \(k\in\{j,j+1\}\), there exist points \(\sfx_j\in \cldi[\partial U_0]{v_0}{e_0}\cap \Int \tau_{0,j}\) and \(\sfx_{j+1}\in \cldi[\partial U_0]{v_0}{e_0}\cap \Int \tau_{1,j+1}\) such that the subarc \(\sigma_k\) of \(\tau_{1,k}\) joining \(\sfx_k\) to \(\alpha_{0,k}\) crosses \(\cC\) through \(\cD^L(\cC)\) for each \(k\in\{j,j+1\}\). Since \(\sigma_j\) and \(\sigma_{j+1}\) are disjoint,  \(\sfd \sigma_j\) and \(\sfd \sigma_{j+1}\) are disjoint and unlinked in \(\cC\). Therefore, since \(z_0<\alpha_{1,j+1}<\alpha_{1,j}<v_0\), we obtain \(v_0<\sfx_j<\sfx_{j+1}<e_0\) on \(\partial U_0\). Since \(\sfx_j,\sfx_{j+1}\in \cD(p)\), we also have \(e_1<\sfx_j<\sfx_{j+1}<e_0\) in \(\cldi[\partial U_0]{e_1}{e_0}\subset \closure{\cD(p)}\).
This implies \(\cD^L(\hat \cS_j)\subset \cD^L(\hat \cS_{j+1})\).

Finally, we show that $\cS_j$ crosses $\cD(p)$. It suffices to prove that $\Int \cS_j\subset \cD(p)$. By the choice of $J$, we have $[\beta_{i,j},\alpha_{i+1,j}]\subset \cD(p)$, and by the choice of $\sfa_j,\sfb_j$, $\lopi{\sfa_j}{\alpha_{1,j}}\cup \ropi{\beta_{\sfm,j}}{\sfb_j}\subset \cD(p)$. Also, since $\cldi[\partial \fD]{\alpha_{i,j}}{\beta_{i,j}}\subset \opi[\partial \fD]{\alpha_{i,j+1}}{\beta_{i,j+1}}$ and  $\tau_{i,j}$ do not meet $\hat\cS_{j+1}$,  by $\cD^L(\hat \cS_j)\subset \cD^L(\hat \cS_{j+1})$, we obtain $\tau_{i,j} \subset \cD^L(\hat \cS_{j+1})\subset \cD(p)$. Therefore $\Int \cS_j\subset \cD(p)$, so $\cS_j$ crosses $\cD(p)$.
\end{proof}

We now complete the proof. Fix \(j\in\NN\).
Since $\tau_{i,j}\subset \closure{\cE_{s(i,j)}\setminus\cE_{s(i,j)+3}}\setminus\{p\}$ and $q\in S_\infty^2\setminus \closure{\cH}$, by the separation property in \refprop{eclipseDProperty}, we can choose $\sfn\in \ZZ$ that \(\cE_{\sfn}\cap \tau_{i,j}=\varnothing\) for all \(i\)  and $(\partial \cE_\sfn)^-\subset S_\infty^2\setminus \closure{\cH}$.
Write $\sigma$ for the connector arc associated to \(\cE_{\sfn}\), crossing \(\cD(p)\).
Recall that \(\sigma\) carries a canonical linear order that increases from the $(-)$-segment to the $(+)$-segment.

Since \(\partial(\cD(p)^R(\sigma))\) separates \(\alpha_{1,j}\) and \(\sfa_j\), the arcs \(\sigma\) and \(\cS_j\) intersect.
Since  $\sigma^- \subset (\partial \cE_\sfn)^-\subset S_\infty^2\setminus \closure{\cH}$ and $\cS_j\subset \closure{\cH}$, we have that \(\sigma^-\cap \cS_j=\varnothing\) and so we may find a point $\sfe\in \sigma^+ \cap \cS_j$ such that $[p,\sfe]\subset \sigma_+$ intersects $\cS_j$ only at $\sfe$, namely, $[p,\sfe]$ lands at $\sfe$ on the right side of $\cS_j$ .
Since \(\cE_{\sfn}\cap \tau_{i,j}=\varnothing\) for all \(i\), it follows that \(\sfe\in \cS_j\cap \Lambda(\Gamma)\). Moreover, since \(\sfe\in \sigma^+\setminus \{p\}\), we  have that  \(\sfe\notin \cE_\infty\) and hence \(\sfe\neq z_i\) for all \(i\).

\begin{claim}
\(\sfe\in \opi[\partial \fD]{\sfa_j}{\alpha_{1,j}}\cup \opi[\partial \fD]{\beta_{\sfm,j}}{\sfb_j}\).
\end{claim}
\begin{proof}[Proof of the claim]
Assume not. Then \(\sfm>1\), and
\(\sfe\in \opi[\partial \fD]{\beta_{\sfi,j}}{z_{\sfi}}\cup \opi[\partial \fD]{z_{\sfi}}{\alpha_{\sfi+1,j}}\)
for some \(\sfi\). Since the sequences \(\{\alpha_{i,j}\}_{j\in\NN}\) and
\(\{\beta_{i,j}\}_{j\in\NN}\) are strictly monotone in
\(\opi[\partial \fD]{z_i}{z_{i+1}}\), \refclm{ascending} allows us to choose
\(j'\) sufficiently large so that \(\sfe\in \cD^L(\hat \cS_{j'})\),
\(\tau_{i,j'}\subset \cE_{\sfn+3}\) for all \(i\), and
\(\lopi[\partial \fD]{\sfa_{j'}}{\alpha_{1,j'}}\cup
\ropi[\partial \fD]{\beta_{\sfm,j'}}{\sfb_{j'}}\subset \Int \cE_{\sfn+3}\).
Since \(\hat \cS_{j'}\) separates \(\sfe\) from \(p\), the arc \([p,\sfe]\)
must intersect \(\cS_{j'}\).

Now \([p,\sfe]\cap \cE_{\sfn+3}\subset \sigma\cap \cE_{\sfn+3}=\{p\}\), whereas
\(\tau_{i,j'}\subset \cE_{\sfn+3}\setminus\{p\}\) for all \(i\). Hence
\([p,\sfe]\) does not intersect any \(\tau_{i,j'}\). Also,
\(\lopi[\partial \fD]{\sfa_{j'}}{\alpha_{1,j'}}\cup
\ropi[\partial \fD]{\beta_{\sfm,j'}}{\sfb_{j'}}\subset \Int \cE_{\sfn+3}\), so
\([p,\sfe]\) meets neither of these rays. Therefore \([p,\sfe]\) meets
\(\cS_{j'}\) only in
\(\bigcup_{i=1}^{\sfm-1}\opi[\partial \fD]{\beta_{i,j'}}{\alpha_{i+1,j'}}\),
which is contained in \(\cS_{j}\cap \Lambda(\Gamma)\).

On the other hand, since \([p,\sfe]\) lands on the right side of \(\cS_j\), we
have \([p,\sfe)\cap \cS_j=\varnothing\), and
\(\sfe\in \opi[\partial \fD]{\alpha_{\sfi,j}}{\alpha_{\sfi,j'}}\cup
\opi[\partial \fD]{\beta_{\sfi,j'}}{\beta_{\sfi,j}}\). Thus \([p,\sfe]\) cannot
meet \(\bigcup_{i=1}^{\sfm-1}\opi[\partial \fD]{\beta_{i,j'}}{\alpha_{i+1,j'}}\), a contradiction. This proves the claim.
\end{proof}

Suppose first that \(\sfe\in \opi[\partial \fD]{\sfa_j}{\alpha_{1,j}}\). Define a Jordan curve \(\cK\) by
\(\cK=[p,\sfa_j]\cup \opi[\partial \fD]{\sfa_j}{\sfe}\cup [\sfe,p]\).
Equip \(\cK\) with the circular order induced by the linear order on \([p,\sfa_j]\).
By \refclm{ascending}, we may choose \(j'\) sufficiently large so that
\(\cldi[\partial \fD]{\sfa_{j'}}{\alpha_{1,j'}}\subset \closure{\cD^L(\cK)}\),
\(\tau_{i,j'}\subset \cE_{\sfn+3}\) for all \(i\in\{1,\dots,\sfm\}\), and
\(\lopi[\partial \fD]{\sfa_{j'}}{\alpha_{1,j'}}\cup \ropi[\partial \fD]{\beta_{\sfm,j'}}{\sfb_{j'}}\subset \Int \cE_{\sfn+3}\).

If \(\sfa_j=\sfa_{j'}\), then \(\{\sfe,p\}\) and \(\sfd\tau_{1,j'}\) are linked in
\(\partial \cD(p)^R(\cS_j)\), so \(\tau_{1,j'}\) intersects \([p,\sfe]\).
If \(\sfa_j\ne \sfa_{j'}\), then \(\{\sfe,p\}\) and
\(\{\sfa_{j'},\beta_{1,j'}\}\) are linked in \(\partial \cD(p)^R(\cS_j)\).
Since \(\lopi[\partial \fD]{\sfa_{j'}}{\alpha_{1,j'}}\subset \cD^L(\cK)\),
we again conclude that \(\tau_{1,j'}\) intersects \((p,\sfe)\).
In either case, this contradicts
\(\tau_{1,j'}\subset \cE_{\sfn+3}\) and
\(\cE_{\sfn+3}\cap [p,\sfe]=\{p\}\).

The case \(\sfe\in \opi[\partial \fD]{\beta_{\sfm,j}}{\sfb_j}\) is analogous.
Define \(\cK=[p,\sfe]\cup \opi[\partial \fD]{\sfe}{\sfb_j}\cup [\sfb_j,p]\).
Then for all sufficiently large \(j'\), the arc \(\tau_{\sfm,j'}\) intersects \((p,\sfe)\),
again contradicting \refprop{eclipseDProperty}.
Therefore \(z_\sfm\notin \cJ(g)^+\), and the proof is complete.
\end{proof}

\setcounter{claim}{0}
\begin{figure}[htpb]
    \centering
\tikzset{every picture/.style={line width=0.75pt}} %set default line width to 0.75pt        

\begin{tikzpicture}[x=0.75pt,y=0.75pt,yscale=-1.5,xscale=1.5]
%uncomment if require: \path (0,300); %set diagram left start at 0, and has height of 300

%Curve Lines [id:da007067054094063607] 
\draw [color={rgb, 255:red, 74; green, 144; blue, 226 }  ,draw opacity=1 ]   (184.4,85) .. controls (206.37,83.74) and (232.4,84.8) .. (245.4,100.4) .. controls (258.4,116) and (245.4,158.8) .. (246.4,181.4) .. controls (247.4,204) and (194.84,214.4) .. (170.4,215.4) ;
%Curve Lines [id:da7012291428823031] 
\draw [color={rgb, 255:red, 208; green, 2; blue, 27 }  ,draw opacity=1 ]   (179.88,85.56) .. controls (153.43,90.26) and (108.4,98.6) .. (109.4,124) .. controls (110.4,149.4) and (95.4,186) .. (114.4,195) .. controls (133.4,204) and (130.4,220) .. (168.4,216) ;
%Curve Lines [id:da9819131681456408] 
\draw [color={rgb, 255:red, 208; green, 2; blue, 27 }  ,draw opacity=1 ]   (177.71,137.29) .. controls (209.71,118.29) and (186.71,141.29) .. (191.71,155.29) .. controls (196.71,169.29) and (194.71,172.29) .. (182.71,182.29) ;
%Curve Lines [id:da4077370652674174] 
\draw [color={rgb, 255:red, 74; green, 144; blue, 226 }  ,draw opacity=1 ]   (130.14,194.57) .. controls (139.14,201.57) and (149.71,184.83) .. (153.71,194.26) .. controls (157.71,203.69) and (166.71,206.29) .. (178.71,185.29) ;
%Curve Lines [id:da6069967534048927] 
\draw [color={rgb, 255:red, 74; green, 144; blue, 226 }  ,draw opacity=1 ]   (160.43,112.98) .. controls (169.14,114.67) and (168.71,112.43) .. (170.71,117.43) .. controls (172.71,122.43) and (166.57,119.17) .. (162.71,125.43) ;
%Shape: Circle [id:dp41726939774179383] 
\draw  [color={rgb, 255:red, 189; green, 16; blue, 224 }  ,draw opacity=1 ][fill={rgb, 255:red, 255; green, 255; blue, 255 }  ,fill opacity=1 ][line width=0.75]  (166.86,215.27) .. controls (166.86,213.77) and (168.07,212.56) .. (169.57,212.56) .. controls (171.06,212.56) and (172.27,213.77) .. (172.27,215.27) .. controls (172.27,216.76) and (171.06,217.98) .. (169.57,217.98) .. controls (168.07,217.98) and (166.86,216.76) .. (166.86,215.27) -- cycle ;
%Shape: Circle [id:dp2946578607219864] 
\draw  [color={rgb, 255:red, 74; green, 144; blue, 226 }  ,draw opacity=1 ][fill={rgb, 255:red, 208; green, 2; blue, 27 }  ,fill opacity=1 ][line width=0.75]  (177.86,183.27) .. controls (177.86,181.77) and (179.07,180.56) .. (180.57,180.56) .. controls (182.06,180.56) and (183.27,181.77) .. (183.27,183.27) .. controls (183.27,184.76) and (182.06,185.98) .. (180.57,185.98) .. controls (179.07,185.98) and (177.86,184.76) .. (177.86,183.27) -- cycle ;
%Shape: Circle [id:dp750634114902227] 
\draw  [color={rgb, 255:red, 74; green, 144; blue, 226 }  ,draw opacity=1 ][fill={rgb, 255:red, 208; green, 2; blue, 27 }  ,fill opacity=1 ][line width=0.75]  (178.86,85.27) .. controls (178.86,83.77) and (180.07,82.56) .. (181.57,82.56) .. controls (183.06,82.56) and (184.27,83.77) .. (184.27,85.27) .. controls (184.27,86.76) and (183.06,87.98) .. (181.57,87.98) .. controls (180.07,87.98) and (178.86,86.76) .. (178.86,85.27) -- cycle ;
%Shape: Circle [id:dp6123523758616866] 
\draw  [color={rgb, 255:red, 208; green, 2; blue, 27 }  ,draw opacity=1 ][fill={rgb, 255:red, 208; green, 2; blue, 27 }  ,fill opacity=1 ][line width=0.75]  (113.86,108.27) .. controls (113.86,106.77) and (115.07,105.56) .. (116.57,105.56) .. controls (118.06,105.56) and (119.27,106.77) .. (119.27,108.27) .. controls (119.27,109.76) and (118.06,110.98) .. (116.57,110.98) .. controls (115.07,110.98) and (113.86,109.76) .. (113.86,108.27) -- cycle ;
%Shape: Polygon Curved [id:ds5487634547799186] 
\draw  [color={rgb, 255:red, 155; green, 155; blue, 155 }  ,draw opacity=1 ][dash pattern={on 4.5pt off 4.5pt}] (90.29,94.86) .. controls (102.29,75.86) and (130.4,86.4) .. (134.4,97.4) .. controls (138.4,108.4) and (142.51,119.94) .. (133.4,124.4) .. controls (124.29,128.86) and (119.29,142.86) .. (102.29,140.86) .. controls (85.29,138.86) and (78.29,113.86) .. (90.29,94.86) -- cycle ;
%Shape: Polygon Curved [id:ds1235697797554598] 
\draw  [color={rgb, 255:red, 155; green, 155; blue, 155 }  ,draw opacity=1 ][dash pattern={on 4.5pt off 4.5pt}] (105.4,184.4) .. controls (118.4,166.6) and (133.29,174.86) .. (141.4,184.4) .. controls (149.51,193.94) and (150.29,205.06) .. (144.29,215.86) .. controls (138.29,226.66) and (116.29,213.66) .. (107.29,213.86) .. controls (98.29,214.06) and (92.4,202.2) .. (105.4,184.4) -- cycle ;
%Shape: Circle [id:dp8199861510480305] 
\draw  [color={rgb, 255:red, 208; green, 2; blue, 27 }  ,draw opacity=1 ][fill={rgb, 255:red, 208; green, 2; blue, 27 }  ,fill opacity=1 ][line width=0.75]  (118.86,199.27) .. controls (118.86,197.77) and (120.07,196.56) .. (121.57,196.56) .. controls (123.06,196.56) and (124.27,197.77) .. (124.27,199.27) .. controls (124.27,200.76) and (123.06,201.98) .. (121.57,201.98) .. controls (120.07,201.98) and (118.86,200.76) .. (118.86,199.27) -- cycle ;
%Curve Lines [id:da16958583148058193] 
\draw    (110.71,128) .. controls (124.71,131) and (126.4,110) .. (139.4,115) ;
%Shape: Circle [id:dp1276332935226674] 
\draw  [color={rgb, 255:red, 0; green, 0; blue, 0 }  ,draw opacity=1 ][fill={rgb, 255:red, 0; green, 0; blue, 0 }  ,fill opacity=1 ][line width=0.75]  (106.86,127.27) .. controls (106.86,125.77) and (108.07,124.56) .. (109.57,124.56) .. controls (111.06,124.56) and (112.27,125.77) .. (112.27,127.27) .. controls (112.27,128.76) and (111.06,129.98) .. (109.57,129.98) .. controls (108.07,129.98) and (106.86,128.76) .. (106.86,127.27) -- cycle ;
%Curve Lines [id:da3894198570267903] 
\draw [color={rgb, 255:red, 208; green, 2; blue, 27 }  ,draw opacity=1 ]   (131.71,115) .. controls (130.71,105) and (138.12,107.82) .. (143.29,108.71) .. controls (148.45,109.61) and (154.57,105.87) .. (158.14,111.43) ;
%Shape: Circle [id:dp3579013019920423] 
\draw  [color={rgb, 255:red, 74; green, 144; blue, 226 }  ,draw opacity=1 ][fill={rgb, 255:red, 208; green, 2; blue, 27 }  ,fill opacity=1 ][line width=0.75]  (155.86,112.27) .. controls (155.86,110.77) and (157.07,109.56) .. (158.57,109.56) .. controls (160.06,109.56) and (161.27,110.77) .. (161.27,112.27) .. controls (161.27,113.76) and (160.06,114.98) .. (158.57,114.98) .. controls (157.07,114.98) and (155.86,113.76) .. (155.86,112.27) -- cycle ;
%Shape: Polygon Curved [id:ds688642844175417] 
\draw  [color={rgb, 255:red, 155; green, 155; blue, 155 }  ,draw opacity=1 ][dash pattern={on 4.5pt off 4.5pt}] (153.4,126.4) .. controls (166.4,108.6) and (182.57,110.43) .. (186.57,121.43) .. controls (190.57,132.43) and (188.57,132.63) .. (182.57,143.43) .. controls (176.57,154.23) and (170.57,146.23) .. (161.57,146.43) .. controls (152.57,146.63) and (140.4,144.2) .. (153.4,126.4) -- cycle ;
%Curve Lines [id:da6635076228957261] 
\draw    (157.57,122.43) .. controls (166.57,125.43) and (170.57,135.43) .. (183.57,140.43) ;
%Shape: Circle [id:dp8112719686132208] 
\draw  [color={rgb, 255:red, 0; green, 0; blue, 0 }  ,draw opacity=1 ][fill={rgb, 255:red, 0; green, 0; blue, 0 }  ,fill opacity=1 ][line width=0.75]  (167.86,132.27) .. controls (167.86,130.77) and (169.07,129.56) .. (170.57,129.56) .. controls (172.06,129.56) and (173.27,130.77) .. (173.27,132.27) .. controls (173.27,133.76) and (172.06,134.98) .. (170.57,134.98) .. controls (169.07,134.98) and (167.86,133.76) .. (167.86,132.27) -- cycle ;
%Curve Lines [id:da8928280024173862] 
\draw    (127.4,205) .. controls (136.71,197.29) and (122.14,198.57) .. (133.14,192.57) ;
%Shape: Circle [id:dp8723124787255135] 
\draw  [color={rgb, 255:red, 0; green, 0; blue, 0 }  ,draw opacity=1 ][fill={rgb, 255:red, 0; green, 0; blue, 0 }  ,fill opacity=1 ][line width=0.75]  (124.86,205.27) .. controls (124.86,203.77) and (126.07,202.56) .. (127.57,202.56) .. controls (129.06,202.56) and (130.27,203.77) .. (130.27,205.27) .. controls (130.27,206.76) and (129.06,207.98) .. (127.57,207.98) .. controls (126.07,207.98) and (124.86,206.76) .. (124.86,205.27) -- cycle ;
%Curve Lines [id:da0880326060724772] 
\draw [color={rgb, 255:red, 208; green, 2; blue, 27 }  ,draw opacity=1 ]   (121.71,116) .. controls (121.71,106.43) and (131.55,101.54) .. (136.71,102.43) .. controls (141.88,103.32) and (153.71,94.43) .. (161.71,100.43) ;
%Shape: Circle [id:dp3532520790566761] 
\draw  [color={rgb, 255:red, 74; green, 144; blue, 226 }  ,draw opacity=1 ][fill={rgb, 255:red, 208; green, 2; blue, 27 }  ,fill opacity=1 ][line width=0.75]  (159.86,101.27) .. controls (159.86,99.77) and (161.07,98.56) .. (162.57,98.56) .. controls (164.06,98.56) and (165.27,99.77) .. (165.27,101.27) .. controls (165.27,102.76) and (164.06,103.98) .. (162.57,103.98) .. controls (161.07,103.98) and (159.86,102.76) .. (159.86,101.27) -- cycle ;
%Curve Lines [id:da13703580622688938] 
\draw [color={rgb, 255:red, 74; green, 144; blue, 226 }  ,draw opacity=1 ]   (165.43,101.98) .. controls (174.14,103.67) and (176.71,105.43) .. (176.71,113.43) .. controls (176.71,121.43) and (170.57,122.17) .. (166.71,128.43) ;
%Curve Lines [id:da23198991451180284] 
\draw [color={rgb, 255:red, 74; green, 144; blue, 226 }  ,draw opacity=1 ]   (174.71,135.29) .. controls (201.71,112.57) and (199.71,118.57) .. (204.71,144.57) .. controls (209.71,170.57) and (230.71,155.57) .. (189.71,187.57) ;
%Curve Lines [id:da653523065737875] 
\draw [color={rgb, 255:red, 208; green, 2; blue, 27 }  ,draw opacity=1 ]   (131.14,200.57) .. controls (146.14,201.86) and (147.71,218.57) .. (160.71,209.57) .. controls (173.71,200.57) and (174.71,210.57) .. (186.71,189.57) ;
%Shape: Circle [id:dp7961085001350284] 
\draw  [color={rgb, 255:red, 74; green, 144; blue, 226 }  ,draw opacity=1 ][fill={rgb, 255:red, 208; green, 2; blue, 27 }  ,fill opacity=1 ][line width=0.75]  (184.86,189.27) .. controls (184.86,187.77) and (186.07,186.56) .. (187.57,186.56) .. controls (189.06,186.56) and (190.27,187.77) .. (190.27,189.27) .. controls (190.27,190.76) and (189.06,191.98) .. (187.57,191.98) .. controls (186.07,191.98) and (184.86,190.76) .. (184.86,189.27) -- cycle ;
%Curve Lines [id:da5667501475453902] 
\draw    (111.71,114) .. controls (124.14,108.57) and (112.14,116.57) .. (123.14,115.57) ;
%Shape: Circle [id:dp39679132788561533] 
\draw  [color={rgb, 255:red, 0; green, 0; blue, 0 }  ,draw opacity=1 ][fill={rgb, 255:red, 0; green, 0; blue, 0 }  ,fill opacity=1 ][line width=0.75]  (108.86,113.27) .. controls (108.86,111.77) and (110.07,110.56) .. (111.57,110.56) .. controls (113.06,110.56) and (114.27,111.77) .. (114.27,113.27) .. controls (114.27,114.76) and (113.06,115.98) .. (111.57,115.98) .. controls (110.07,115.98) and (108.86,114.76) .. (108.86,113.27) -- cycle ;
%Curve Lines [id:da7408761165301735] 
\draw    (122.14,115.57) .. controls (133.14,115.57) and (113.14,125.57) .. (105.14,119.57) .. controls (97.14,113.57) and (96.14,121.57) .. (109.14,126.57) ;

% Text Node
\draw (176,73) node [anchor=north west][inner sep=0.75pt]  [font=\normalsize,color={rgb, 255:red, 208; green, 2; blue, 27 }  ,opacity=1 ]  {$p$};
% Text Node
\draw (164,220) node [anchor=north west][inner sep=0.75pt]  [font=\normalsize,color={rgb, 255:red, 208; green, 2; blue, 27 }  ,opacity=1 ]  {$q$};
% Text Node
\draw (255,133.4) node [anchor=north west][inner sep=0.75pt]  [font=\normalsize,color={rgb, 255:red, 74; green, 144; blue, 226 }  ,opacity=1 ]  {$\ell $};
% Text Node
\draw (114,187) node [anchor=north west][inner sep=0.75pt]  [font=\normalsize,color={rgb, 255:red, 208; green, 2; blue, 27 }  ,opacity=1 ]  {$z_{2}$};
% Text Node
\draw (110,95) node [anchor=north west][inner sep=0.75pt]  [font=\normalsize,color={rgb, 255:red, 208; green, 2; blue, 27 }  ,opacity=1 ]  {$z_{0}$};
% Text Node
\draw (97,73) node [anchor=north west][inner sep=0.75pt]  [color={rgb, 255:red, 155; green, 155; blue, 155 }  ,opacity=1 ]  {$U_{0}$};
% Text Node
\draw (87,180) node [anchor=north west][inner sep=0.75pt]  [color={rgb, 255:red, 155; green, 155; blue, 155 }  ,opacity=1 ]  {$U_{2}$};
% Text Node
\draw (96,125) node [anchor=north west][inner sep=0.75pt]  [font=\normalsize,color={rgb, 255:red, 0; green, 0; blue, 0 }  ,opacity=1 ]  {$\sfa_{j}$};
% Text Node
\draw (155,130) node [anchor=north west][inner sep=0.75pt]  [font=\normalsize,color={rgb, 255:red, 0; green, 0; blue, 0 }  ,opacity=1 ]  {$z_{1}$};
% Text Node
\draw (140,145) node [anchor=north west][inner sep=0.75pt]  [color={rgb, 255:red, 155; green, 155; blue, 155 }  ,opacity=1 ]  {$U_{1}$};
% Text Node
\draw (86,205) node [anchor=north west][inner sep=0.75pt]  [font=\normalsize,color={rgb, 255:red, 0; green, 0; blue, 0 }, fill=white  ,opacity=1 ]  {$\sfb_{j} =\sfb_{j+1}$};
% Text Node
\draw (170,170) node [anchor=north west][inner sep=0.75pt]  [font=\normalsize,color={rgb, 255:red, 0; green, 0; blue, 0 }  ,opacity=1 ]  {$\tau_{2,j}$};
% Text Node
\draw (165, 92) node [anchor=north west][inner sep=0.75pt]  [font=\normalsize,color={rgb, 255:red, 0; green, 0; blue, 0 }  ,opacity=1 ]  {$\tau_{1,j+1}$};
% Text Node
\draw (143,112) node [anchor=north west][inner sep=0.75pt]  [font=\normalsize,color={rgb, 255:red, 0; green, 0; blue, 0 }  ,opacity=1 ]  {$\tau_{1,j}$};
% Text Node
\draw (207,174.67) node [anchor=north west][inner sep=0.75pt]  [font=\normalsize,color={rgb, 255:red, 0; green, 0; blue, 0 }  ,opacity=1 ]  {$\tau_{2,j+1}$};
% Text Node
\draw (90,108) node [anchor=north west][inner sep=0.75pt]  [font=\normalsize,color={rgb, 255:red, 0; green, 0; blue, 0 }  ,opacity=1 ]  {$\sfa_{j+1}$};

\end{tikzpicture}
    \caption{$\cS_j$ and $\cS_{j+1}$ when $(n,\sfm)=(1,2)$; the black arcs are subsegments of $\Lambda(\Gamma)$}
    \label{Fig:altArc}
\end{figure}
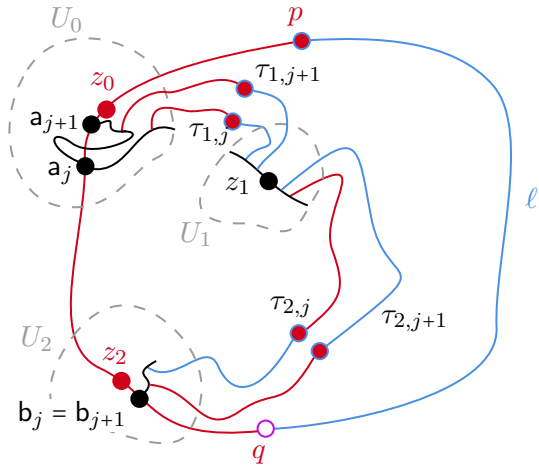

\subsection{Only one prong in $Z^\pm$}
We now prove the main theorem of this section, \refthm{oneProng}.
\begin{proof}[Proof of \refthm{oneProng}]
By \refthm{twoInOne}, it suffices to prove that  there is no element $g \in G$ such that $g$ fixes exactly one point of the zipper and admits a $g$-prong in $Z^+$.
Assume that such a \(g\) exists. By symmetry, it suffices to consider the case where \(\cJ(g)^+\) is left-inaccessible; see \refrmk{inaccessibleSide}. Assume the setting of \refconst{lamiByManyLeaves}. By \refconst{lamiByManyLeaves}, we obtain \(\cL_1=\cL(\Gamma,\{x_0,x_1\})\), generated by \(\gamma_1\), and by \refconst{eclipseD} an eclipse \(\{\cE_n\}_{n\in\NN}\) on \(\cD(p)\).

By \refprop{sideIsolated}, \(\opi[\Lambda(\Gamma)]{x_1}{x_0}\) is isolated in \(\cL_1\), and by \reflem{fanningArcs} no element of \(\Gamma\) preserves \(\sfd\gamma_1\) pointwise. Hence the leaf \(\wt{\sfg}_1=\sfg(x_0,x_1)\) corresponds to a boundary leaf \(\sfg_1\) of \(\cG_1\) and $\cG_1''\ne \varnothing$.

\begin{claim*}
Let \(n\ge 1\). Suppose \(\{x_i\}_{i=1}^n\subset \Lambda(\Gamma)\) satisfies the bouncing condition and \(x_0<x_1<\cdots<x_n\) in $\Lambda(\Gamma)$ if $n>1$. Then there exists \(x_{n+1}\in \opi[\Lambda(\Gamma)]{x_n}{x_0}\) such that \(\{x_i\}_{i=1}^{n+1}\) satisfies the bouncing condition.
\end{claim*}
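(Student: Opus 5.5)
We argue by cases on how the last generating leaf \(\ell_n=\{x_{n-1},x_n\}\) sits in \(\cL_n=\cL(\Gamma,\{x_i\}_{i=0}^n)\) (with \(\ell_1=\{x_0,x_1\}\) when \(n=1\)). The extension lemmas of \refsec{uniformBouncing} each produce \(x_{n+1}\) under one hypothesis. So the plan is to show that at least one of these hypotheses must hold, and otherwise derive a contradiction from the dragging lemma combined with the geodesic-lamination structure.

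\emph{Step 1.} If \(x_n\) is fixed by a nontrivial element of \(\Gamma\), we apply \reflem{oneStepAtFix} and are done. Assume it is not.

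\emph{Step 2.} Consider the geodesic lamination \(\cG_n\) on \(\cH/\Gamma\) realizing \(\cL_n\) (\refrmk{realization}). Let \(\sfg_n\) be the leaf coming from \(\ell_n\), and look at its end corresponding to \(x_n\). Since \(x_n\) is not fixed, this end is not asymptotic to a closed leaf in a trivial way. By \reflem{closureOfLeaf} and \refprop{gapShape}, one of two things happens on the side \(\opi[\Lambda(\Gamma)]{x_n}{x_{n-1}}\):
\begin{enumerate}
\item that side is non-isolated near \(x_n\), giving a sequence of leaves accumulating onto an interval \(\opi[\Lambda(\Gamma)]{x_n}{z}\) from inside;
\item the end escapes into a principal region, so there is a chain \(x_n=z_0<z_1<\cdots<z_m\) of boundary leaves \(\{z_{i-1},z_i\}\) that are non-isolated, ending at a crown core or closing up the polygon back at \(x_{n-1}\) or \(x_0\).
\end{enumerate}
In either case we extract an \(\opi[\Lambda(\Gamma)]{z_0}{z_1}\)-side sequence \(\{\lambda_i\}\). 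Passing to a subsequence and using \reflem{fenceSystem}, this sequence is either outside or inside.

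\emph{Step 3.} If the sequence is outside, \reflem{outsideExt} yields \(x_{n+1}\) with \(x_{n-1}<x_n<x_{n+1}\). The bouncing condition then places \(x_{n+1}\) in \(\opi[\Lambda(\Gamma)]{x_n}{x_0}\), as required.

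\emph{Step 4.} If the sequence is inside, \reflem{directionalDragging} gives \(z_i\in\cD(p)\cap\cE_\infty\) for every \(i\) in the chain. We then aim for a contradiction exactly as in the proof of \refthm{twoInOne}.
\begin{itemize}
\item If the chain ends on a generating-leaf endpoint, namely some \(x_k\) with \(k<n\) or \(x_0\), it reaches a point of \(\cJ(g)^-\) or of \(\cJ(g)^+\setminus\cD(p)\). For \(\cJ(g)^-\) this is impossible by \refprop{eclipseDProperty}, since \(\cE_\infty\cap\cJ(g)^-=\varnothing\). For points of \(\cJ(g)^+\) it contradicts \(z_i\in\cD(p)\).
\item If the chain runs into a crown with element \(h\in\Gamma\), then \(h\) maps a leaf to one further along the chain. \reflem{separatingFence} places \(h(\cJ(g))\) in some \(\closure{\cE_s\setminus\cE_{s+3}}\). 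The separation property then contradicts the fact that \(h(x_n)\) lies in \(\cE_\infty\setminus\{p\}\).
\end{itemize}

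The main obstacle is Step 2. We must ensure the non-isolated side really is \(\opi[\Lambda(\Gamma)]{x_n}{x_{n-1}}\), i.e., the side toward \(x_0\), rather than the side on which \(\cJ(g)^+\) continues. For \(n=1\) this should follow from \refprop{sideIsolated}. For general \(n\), the first thing to try is left-inaccessibility of \(\cJ(g)^+\) combined with \refprop{unlinkedCrossingLine}, to rule out leaves accumulating on the other side, just as in \refprop{sideIsolated}. Handling the chain when it passes through points already occupied by earlier \(x_k\) will also require care.
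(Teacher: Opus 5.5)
Your outline is the one the paper follows: fixed endpoint via \reflem{oneStepAtFix}; otherwise a polygon or crown chain from \reflem{closureOfLeaf} and \refprop{gapShape}; outside sequences via \reflem{outsideExt}; inside sequences via \reflem{directionalDragging} and the eclipse. The gap is Step~2, which you leave open, and as you phrase it the target is reversed. What is needed is that the side \(\opi[\Lambda(\Gamma)]{x_n}{x_{n-1}}\) of \(\ell_n=\sfd\gamma_n\), the side containing \(x_0\), is \emph{isolated} in \(\cL_n\). Only then does \(\sfg_n\) border (or lie in) a principal region on that side, so that the next boundary leaf is \(\{x_n,z_1\}\) with \(z_1\in\opi[\Lambda(\Gamma)]{x_n}{x_{n-1}}\). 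Its non-isolated side \(\opi[\Lambda(\Gamma)]{x_n}{z_1}\) is exactly the kind of interval that \reflem{outsideExt} and \reflem{directionalDragging} accept. Your case~(1), where that side is non-isolated, does not give such an interval. It gives leaves converging to \(\ell_n\) itself, i.e.\ \(z=x_{n-1}\). \reflem{outsideExt} does not allow this: its \(z\) must lie in the open interval, and its proof needs a flat neighbourhood of \(z\) disjoint from \(\gamma_n\). It also gives no chain for \reflem{directionalDragging}. So this case has to be ruled out, not used.

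Here is how the paper rules it out. For \(n=1\) this is \refprop{sideIsolated}, as you guessed. For \(n\ge 2\) no inaccessibility or \refprop{unlinkedCrossingLine} argument is needed:
\begin{itemize}
\item The leaf \(\ell_{n-1}=\{x_{n-2},x_{n-1}\}\) lies on that side and shares the endpoint \(x_{n-1}\) with \(\ell_n\).
\item So leaves converging to \(\ell_n\) from that side must, being unlinked with \(\ell_{n-1}\), eventually all have \(x_{n-1}\) as an endpoint.
\item Then \refprop{oneEndFix} makes \(x_{n-1}\) fixed by a nontrivial element of \(\Gamma\). This element cannot also fix \(x_n\), by \reflem{fanningArcs}.
\item The first part of \refprop{oneEndFix} then says \(\ell_n\) is isolated, a contradiction.
\end{itemize}
With isolation in hand, apply \reflem{closureOfLeaf} to the closure \(\sG\) of the single leaf \(\sfg_n\), since it is a one-leaf statement. \reflem{closureOfManyLeaves} gives \(\sL\subset\cL_n\), so non-isolation of \(\opi[\Lambda(\Gamma)]{z_i}{z_{i+1}}\) in \(\sL\) passes to \(\cL_n\), as \reflem{directionalDragging} requires. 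The rest of your Steps~3--4 then goes through. One small point in the crown case: before \reflem{separatingFence} can be applied you need \(h(p)\in\cD(p)\). This follows from \reflem{fenceSystem}, because \(h(x_n)=z_k\in h(\cJ(g))\cap\cD(p)\).
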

\begin{proof}[Proof of the claim]
    Let \(\cL_n=\cL(\Gamma,\{x_i\}_{i=0}^n)\), and assume that no such \(x_{n+1}\) exists. If \(x_n\) were fixed by a non-trivial element \(h\in\Gamma\), then \(h\) could not fix \(x_{n-1}\) by \reflem{fanningArcs}. Thus \reflem{oneStepAtFix} would give a contradiction. Therefore \(x_n\) is not fixed by any non-trivial element of \(\Gamma\).

If \(n\ge 2\) and \(x_0<x_1<\cdots<x_n\) in \(\Lambda(\Gamma)\), then each \(\opi[\Lambda(\Gamma)]{x_i}{x_{i-1}}\) is isolated in \(\cL_n\). Otherwise, since \(\sfd\gamma_{i-1}\) and \(\sfd\gamma_i\) share the endpoint \(x_{i-1}\), there would be infinitely many leaves with endpoint \(x_{i-1}\) accumulating on \(\sfd\gamma_i\), and \refprop{oneEndFix} would imply that \(\sfd\gamma_i\) is isolated, a contradiction. Hence the leaves \(\wt{\sfg}_i=\sfg(x_{i-1},x_i)\) of \(\wt{\cG}_n=\wt{\cG}(\Gamma,\{x_i\}_{i=0}^{n})\) descend to boundary leaves \(\sfg_i\) of \(\cG_n=\cG(\Gamma,\{x_i\}_{i=0}^{n})\).

By \reflem{closureOfLeaf}, the closure \(\sG\) of \(\sfg_n\) is a geodesic lamination on \(\cH/\Gamma\). Let \(\sL\) denote the circle lamination on \(\Lambda(\Gamma)\) generated by \(\sfd\gamma_n\). Then the geometric realization of \(\sL\) is the lift of \(\sG\), and \reflem{closureOfManyLeaves} gives \(\sG\subset \cG_n\) and \(\sL\subset \cL_n\). If \(n=1\), then \(\sG=\cG_1\) and \(\sL=\cL_1\).

Since \(x_n\) is not fixed by any non-trivial element of \(\Gamma\), \reflem{closureOfLeaf} implies that there is a principal region \(U\) of \(\sG''\) such that either \(\sfg_n\) is a boundary leaf of \(U\), or the end of \(\sfg_n\) corresponding to \(x_n\) escapes toward an end of \(U\) with \(\sfg_n\subset U\). Since \(\opi[\Lambda(\Gamma)]{x_n}{x_{n-1}}\) is isolated in both \(\cL_n\) and \(\sL\), \refprop{gapShape} implies that one of the following occurs:
\begin{enumerate}
\item There exists a finite increasing sequence \(\{z_i\}_{i=0}^m\) in \(\cldi[\Lambda(\Gamma)]{x_n}{x_{n-1}}\), where \(m\ge 2\), such that \(z_0=x_n\), \(z_m=x_{n-1}\), and \(\opi[\Lambda(\Gamma)]{z_i}{z_{i+1}}\) is not isolated in \(\sL\) for each \(i\).
\item There exist \(h\in\Gamma\) and a bi-infinite increasing sequence \(\{z_i\}_{i\in\ZZ}\) in \(\opi[\Lambda(\Gamma)]{r(h)}{a(h)}\) such that \(z_0=x_n\), \(x_{n-1}\in \cldi[\Lambda(\Gamma)]{a(h)}{z_j}\) for some \(j<0\), \(h\) translates \(\{z_i\}_{i\in\ZZ}\) by \(k\in \NN\), that is, \(h(z_i)=z_{i+k}\) for all \(i\in\ZZ\), and \(\opi[\Lambda(\Gamma)]{z_i}{z_{i+1}}\) is not isolated in \(\sL\) for each \(i\).
\end{enumerate}
In either case, each \(\{z_i,z_{i+1}\}\) is a leaf of both \(\sL\) and \(\cL_n\). Moreover, since \(\opi[\Lambda(\Gamma)]{z_i}{z_{i+1}}\) is not isolated in either \(\sL\) or \(\cL_n\), each leaf \(\{z_i,z_{i+1}\}\) lies in \(\opi[\Lambda(\Gamma)]{x_n}{x_0}\) or in \(\opi[\Lambda(\Gamma)]{x_j}{x_{j+1}}\) for some \(j\). In particular, \(\{z_0,z_1\}\) lies in \(\opi[\Lambda(\Gamma)]{x_n}{x_0}\).

If there were a \(\opi[\Lambda(\Gamma)]{z_0}{z_1}\)-side outside sequence in both \(\sL\) and \(\cL_n\), this would contradict \reflem{outsideExt}. Hence there is a \(\opi[\Lambda(\Gamma)]{z_0}{z_1}\)-side inside sequence in both \(\sL\) and \(\cL_n\). If \(x_0=z_j\) for some \(j>0\), then \refprop{sideIsolated} implies that either \(n>1\) or \(j>1\), and \reflem{directionalDragging} gives \(z_i\in \cD(p)\) for all \(0<i\le j\), contradicting \(z_j=x_0\in \cJ(g)^-\).
Therefore,  \(z_i\in \opi[\Lambda(\Gamma)]{x_n}{x_0}\) for all $i\in\NN$ and thus \((1)\) cannot occur.

Hence we are in case \((2)\). Again by \reflem{directionalDragging}, \(z_i\in \cD(p)\cap \cE_\infty\) for all \(i\in\NN\). Since \(z_k=h(z_0)\in h(\cJ(g))\cap \cD(p)\), \reflem{fenceSystem} implies that \(h(p)\in \cD(p)\). Then \reflem{separatingFence} and \refprop{eclipseDProperty} imply that \(h(\cJ(g))\cap \cE_\infty\subset\{p\}\), contradicting \(z_k\in h(\cJ(g))\cap \cE_\infty\) and \(z_k\neq p\). This completes the proof.
\end{proof}

By induction, we can find a strictly increasing sequence $\{x_i\}_{i\in \NN}$ in $\cJ(g)^+$ such that, for every $n>2$, the finite sequence $\{x_i\}_{i=1}^n$ satisfies the bouncing condition, and $\{x_i\}_{i=1}^\infty$ is strictly increasing in $\cldi[\Lambda(\Gamma)]{x_1}{x_0}$.
Then the closure $\cL_\infty$ of the $\Gamma$-orbits of $\{\sfd \gamma_i:i\in \NN\}$ in $\cM$ is a $\Gamma$-invariant circle lamination by \refprop{unlinkedCrossingLine}, \refrmk{inaccessibleSide}, and \reflem{fenceSystem}.
Accordingly, the geometric realization $\wt{\cG}_\infty$ of $\cL_\infty$ induces a geodesic lamination on $\cH/\Gamma$ by \refrmk{realization}.

Since $\{x_i\}_{i=1}^\infty$ is strictly increasing in $\cldi[\Lambda(\Gamma)]{x_1}{x_0}$, for each $i\in \NN$, the interval $\opi[\Lambda(\Gamma)]{x_{i}}{x_{i-1}}$ is isolated in $\cL_\infty$ by \refprop{oneEndFix}, as above.
Hence, each leaf $\sfg_i$ of $\cG_\infty$ corresponding to $\sfd \gamma_i$ is a boundary leaf of $\cG_\infty$.
By \refprop{finiteBD}, there are only finitely many boundary leaves in $\cG_\infty$.
Therefore, there exist $j,j'\in \NN$ with $j<j'$ and a nontrivial element $f\in \Gamma$ such that
$f$ maps the subarc $[x_{j-1},x_{j}]$ of $\cJ(g)^+$ to the subarc $[x_{j'-1},x_{j'}]$ of $\cJ(g)^+$, preserving the order.
Observe that, by the left inaccessibility of $\cJ(g)^+$, the image $f([x_{j-1},x_{j'-1}])$ is also a subarc of $\cJ(g)^+$.
Hence,
\[
\ell_f=\bigcup_{\sfm\in\ZZ} f^m([x_{j-1},x_{j'-1}])
\]
is an $f$-invariant subline of $\cJ(g)^+$.
By \reflem{invRayLand}, $\sfd \ell_f=\Fix(f)$.
If $\sfd \ell_f$ intersects $\Fix(g)$, then $\Fix(g)=\Fix(f)$ by the discreteness of $G$.
However, this is a contradiction, since $\Fix(g)\cap \Lambda(\Gamma)=\varnothing$ and $\Fix(f)\subset \Lambda(\Gamma)$.
If, on the other hand, $\sfd \ell_f\subset \Int \cJ^+(g)$, this is also a contradiction by \refthm{twoInOne}.
This completes the proof.
    \qedhere
\end{proof}

\section{No Prong in $Z^+$ with a Two-Sided Branched Axis in $Z^-$}\label{Sec:noprongTwoside}
As explained at the beginning of \refsec{oneprong}, we now treat the case where $g$ admits no $g$-prong in $Z^+$ and the $g$-axis $\ell$ is two-sided branched.
To rule out this case, we adapt the strategy of \refthm{twoInOne} and prove \refthm{simplicialAxis}, namely, that $\ell$ is one-sided simplicial.

In the present setting, the argument of \refthm{twoInOne} does not apply directly, since there is no invariant fence.
We therefore begin by introducing a modified version of the invariant fence, which we call a \emph{fat fence}.
\subsection{Fat fences}

Let $Z^\pm$ be a minimal zipper for $M$.
Assume that $g\in G$ fixes a unique point $p\in Z^+$.
By \refthm{oneProng}, $g$ admits no $g$-prong in $Z^+$.
On the other hand, $g$ admits an axis $\ell$ in $Z^-$, and $p$ is the pivot of $\ell$.

Although there is no $g$-prong in $Z^+$ and no $g$-invariant fence, in the case where $\ell$ is two-sided branched we can define \emph{fat fences} as follows.
Recall that the canonical orientation of $\ell$ is the linear order increasing toward $p$.
By assumption, the pivot $p$ has both left and right branches.
Hence there exist  the left and right connectors of $\ell$ that are of type~2.
Thus, by \refconst{eclipseL}, we can construct an eclipse $\{\cE_n\}_{n\in\NN}$ along $\ell$ associated with a pair $(\delta_1,\delta_2)$, where $\delta_1$ and $\delta_2$ are respectively left and right relative to $\ell$.

\begin{figure}[htpb]
    \centering

\tikzset{every picture/.style={line width=0.75pt}} %set default line width to 0.75pt        

\begin{tikzpicture}[x=0.75pt,y=0.75pt,yscale=-1,xscale=1]
%uncomment if require: \path (0,349); %set diagram left start at 0, and has height of 349

%Shape: Circle [id:dp8483192385866364] 
\draw  [color={rgb, 255:red, 74; green, 144; blue, 226 }  ,draw opacity=1 ][fill={rgb, 255:red, 208; green, 2; blue, 27 }  ,fill opacity=1 ][line width=0.75]  (272.86,86.47) .. controls (272.86,84.97) and (274.07,83.76) .. (275.57,83.76) .. controls (277.06,83.76) and (278.27,84.97) .. (278.27,86.47) .. controls (278.27,87.96) and (277.06,89.18) .. (275.57,89.18) .. controls (274.07,89.18) and (272.86,87.96) .. (272.86,86.47) -- cycle ;
%Shape: Circle [id:dp2228347476853021] 
\draw  [color={rgb, 255:red, 74; green, 144; blue, 226 }  ,draw opacity=1 ][fill={rgb, 255:red, 255; green, 255; blue, 255 }  ,fill opacity=1 ][line width=0.75]  (269.86,211.47) .. controls (269.86,209.97) and (271.07,208.76) .. (272.57,208.76) .. controls (274.06,208.76) and (275.27,209.97) .. (275.27,211.47) .. controls (275.27,212.96) and (274.06,214.18) .. (272.57,214.18) .. controls (271.07,214.18) and (269.86,212.96) .. (269.86,211.47) -- cycle ;
%Curve Lines [id:da18088361018156862] 
\draw [color={rgb, 255:red, 74; green, 144; blue, 226 }  ,draw opacity=1 ]   (273.4,214) .. controls (288.4,245) and (343.4,238) .. (365.4,215) ;
%Shape: Circle [id:dp17942078516761317] 
\draw  [color={rgb, 255:red, 74; green, 144; blue, 226 }  ,draw opacity=1 ][fill={rgb, 255:red, 208; green, 2; blue, 27 }  ,fill opacity=1 ][line width=0.75]  (343.86,149.47) .. controls (343.86,147.97) and (345.07,146.76) .. (346.57,146.76) .. controls (348.06,146.76) and (349.27,147.97) .. (349.27,149.47) .. controls (349.27,150.96) and (348.06,152.18) .. (346.57,152.18) .. controls (345.07,152.18) and (343.86,150.96) .. (343.86,149.47) -- cycle ;
%Curve Lines [id:da21983249203204414] 
\draw [color={rgb, 255:red, 74; green, 144; blue, 226 }  ,draw opacity=1 ]   (350.4,149.4) .. controls (383.8,154.4) and (336.4,200) .. (344.4,228) ;
%Shape: Circle [id:dp29762501412948195] 
\draw   (203,151.7) .. controls (203,99.95) and (244.95,58) .. (296.7,58) .. controls (348.45,58) and (390.4,99.95) .. (390.4,151.7) .. controls (390.4,203.45) and (348.45,245.4) .. (296.7,245.4) .. controls (244.95,245.4) and (203,203.45) .. (203,151.7) -- cycle ;
%Shape: Ellipse [id:dp9990319621255483] 
\draw  [dash pattern={on 0.84pt off 2.51pt}] (203.4,155) .. controls (203.4,143.95) and (245.04,135) .. (296.4,135) .. controls (347.76,135) and (389.4,143.95) .. (389.4,155) .. controls (389.4,166.05) and (347.76,175) .. (296.4,175) .. controls (245.04,175) and (203.4,166.05) .. (203.4,155) -- cycle ;
%Curve Lines [id:da598462727137555] 
\draw [color={rgb, 255:red, 74; green, 144; blue, 226 }  ,draw opacity=1 ]   (277.4,84) .. controls (290.4,61) and (325.4,62) .. (340.4,69) ;
%Curve Lines [id:da9095420159766927] 
\draw [color={rgb, 255:red, 74; green, 144; blue, 226 }  ,draw opacity=1 ] [dash pattern={on 4.5pt off 4.5pt}]  (340,69) .. controls (369.4,85) and (377.4,125) .. (364.4,162) .. controls (351.4,199) and (393.4,178) .. (367.4,213) ;
%Curve Lines [id:da6849808765334008] 
\draw [color={rgb, 255:red, 208; green, 2; blue, 27 }  ,draw opacity=1 ]   (278,88) .. controls (299.4,95) and (300.4,145.4) .. (343.4,148.4) ;
%Curve Lines [id:da479511576765675] 
\draw [color={rgb, 255:red, 208; green, 2; blue, 27 }  ,draw opacity=1 ]   (277.4,89) .. controls (291.4,100) and (274.4,190) .. (310.4,193) ;
%Shape: Circle [id:dp8639424440799469] 
\draw  [color={rgb, 255:red, 74; green, 144; blue, 226 }  ,draw opacity=1 ][fill={rgb, 255:red, 208; green, 2; blue, 27 }  ,fill opacity=1 ][line width=0.75]  (309.86,193.47) .. controls (309.86,191.97) and (311.07,190.76) .. (312.57,190.76) .. controls (314.06,190.76) and (315.27,191.97) .. (315.27,193.47) .. controls (315.27,194.96) and (314.06,196.18) .. (312.57,196.18) .. controls (311.07,196.18) and (309.86,194.96) .. (309.86,193.47) -- cycle ;
%Curve Lines [id:da5751497696281425] 
\draw [color={rgb, 255:red, 74; green, 144; blue, 226 }  ,draw opacity=1 ]   (315.4,194.4) .. controls (348.8,199.4) and (307.4,212.6) .. (305.4,234.6) ;
%Curve Lines [id:da4992668122887517] 
\draw [color={rgb, 255:red, 208; green, 2; blue, 27 }  ,draw opacity=1 ]   (238.4,157.2) .. controls (232.4,131.2) and (226.4,119.8) .. (243.4,107.8) .. controls (260.4,95.8) and (236.4,86.8) .. (272.4,86.8) ;
%Shape: Circle [id:dp3765937891193817] 
\draw  [color={rgb, 255:red, 74; green, 144; blue, 226 }  ,draw opacity=1 ][fill={rgb, 255:red, 208; green, 2; blue, 27 }  ,fill opacity=1 ][line width=0.75]  (235.86,156.47) .. controls (235.86,154.97) and (237.07,153.76) .. (238.57,153.76) .. controls (240.06,153.76) and (241.27,154.97) .. (241.27,156.47) .. controls (241.27,157.96) and (240.06,159.18) .. (238.57,159.18) .. controls (237.07,159.18) and (235.86,157.96) .. (235.86,156.47) -- cycle ;
%Curve Lines [id:da17294908444304313] 
\draw [color={rgb, 255:red, 74; green, 144; blue, 226 }  ,draw opacity=1 ]   (240.4,159.2) .. controls (249.4,185.2) and (237.4,211.2) .. (249.4,227.2) .. controls (261.4,243.2) and (312.4,248.2) .. (318.4,234.2) ;
%Curve Lines [id:da38034245376038756] 
\draw [color={rgb, 255:red, 208; green, 2; blue, 27 }  ,draw opacity=1 ]   (274.4,89) .. controls (266.4,117.6) and (240.4,118) .. (253.4,150) .. controls (266.4,182) and (254.4,188) .. (255.4,211) ;
%Shape: Circle [id:dp285330198550213] 
\draw  [color={rgb, 255:red, 74; green, 144; blue, 226 }  ,draw opacity=1 ][fill={rgb, 255:red, 208; green, 2; blue, 27 }  ,fill opacity=1 ][line width=0.75]  (253.86,213.47) .. controls (253.86,211.97) and (255.07,210.76) .. (256.57,210.76) .. controls (258.06,210.76) and (259.27,211.97) .. (259.27,213.47) .. controls (259.27,214.96) and (258.06,216.18) .. (256.57,216.18) .. controls (255.07,216.18) and (253.86,214.96) .. (253.86,213.47) -- cycle ;
%Curve Lines [id:da4782169460492418] 
\draw [color={rgb, 255:red, 74; green, 144; blue, 226 }  ,draw opacity=1 ]   (257.4,215.6) .. controls (258.4,232.6) and (280.4,232.6) .. (285.4,227.6) ;
%Straight Lines [id:da5967795418338653] 
\draw [color={rgb, 255:red, 208; green, 2; blue, 27 }  ,draw opacity=1 ] [dash pattern={on 0.84pt off 2.51pt}]  (262,148) -- (277.43,148.12) ;
%Straight Lines [id:da9955695325322751] 
\draw [color={rgb, 255:red, 208; green, 2; blue, 27 }  ,draw opacity=1 ] [dash pattern={on 0.84pt off 2.51pt}]  (282.4,206.6) -- (297.4,199.6) ;
%Curve Lines [id:da46619819635654525] 
\draw [color={rgb, 255:red, 208; green, 2; blue, 27 }  ,draw opacity=1 ] [dash pattern={on 0.84pt off 2.51pt}]  (257.4,208.2) .. controls (259.8,201.4) and (263.4,203.2) .. (270.4,209.2) ;

% Text Node
\draw (266,65.2) node [anchor=north west][inner sep=0.75pt]  [font=\normalsize,color={rgb, 255:red, 208; green, 2; blue, 27 }  ,opacity=1 ]  {$p$};
% Text Node
\draw (264,192) node [anchor=north west][inner sep=0.75pt]  [font=\normalsize,color={rgb, 255:red, 0; green, 0; blue, 0 }  ,opacity=1 ]  {$q$};
% Text Node
\draw (343,53) node [anchor=north west][inner sep=0.75pt]  [font=\normalsize,color={rgb, 255:red, 74; green, 144; blue, 226 }  ,opacity=1 ]  {$\ell $};
% Text Node
\draw (175,135) node [anchor=north west][inner sep=0.75pt]  [font=\normalsize,color={rgb, 255:red, 0; green, 0; blue, 0 }  ,opacity=1 ]  {$S_{\infty }^{2}$};
% Text Node
\shade[inner color=gray!40, outer color=white]
  (326,100) ellipse [x radius=24pt, y radius=15pt];

\draw (320,93) node [anchor=north west][inner sep=0.75pt]
  [font=\normalsize,color=black,opacity=1]
  {$\cE_n^\leftslice$};

\end{tikzpicture}
   \caption{A ``fat" fence}
    \label{Fig:fatFence}
\end{figure}
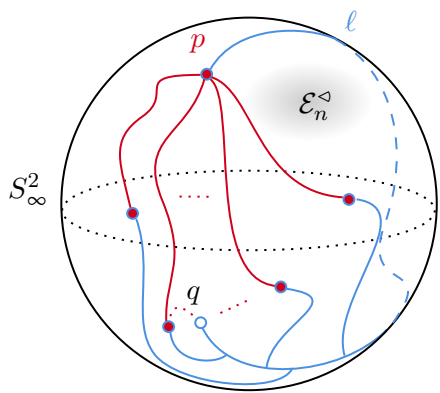

A \emph{fat fence} $\cF(g)$ of $g$ is defined to be the pair
\[
\cF(g)=(\ell,\{\cE_n\}_{n\in\ZZ}).
\]
Note that $S_\infty^2\setminus (\ell\cup \cE_n)$ is a disjoint union of two Jordan domains bounded by fences that intersect only along $\ell$ by \refprop{classifyConnector}, and that $\cE_\infty$ has empty interior, but is not necessarily a segment by \refprop{eclipseLProperty}.
See \reffig{fatFence}.
This explains the prefix ``fat.''

The fat fence $\cF(g)$ is $g$-invariant, namely,
\[
g(\cF(g))
=
\bigl(g(\ell),\{g(\cE_n)\}_{n\in\ZZ}\bigr)
=
(\ell,\{\cE_n\}_{n\in\ZZ})
=
\cF(g).
\]

A Jordan domain of $S_\infty^2\setminus (\ell\cup \cE_n)$ is called the \emph{left} (resp.\ \emph{right}) \emph{$n^{th}$ shadow} of $\cF(g)$, denoted by $\cE_n^\leftslice$ (resp.\ $\cE_n^\rightslice$), if it contains a left (resp.\ right) open neighborhood of a point of $\ell$ relative to $\ell$.
A Jordan domain $\fS$ is called an \emph{$n^{th}$ shadow} of $\cF(g)$, or simply a \emph{shadow}, if $\fS$ is either the left or the right $n^{th}$ shadow.

Note that $\partial \cE_n$, $\partial \cE_n^\leftslice$, and $\partial \cE_n^\rightslice$ are fences whose nodes lie in $Z^+$ by \refconst{eclipseL}.
A node of $\partial \cE_n$ is called \emph{left} (resp.\ \emph{right}) if it is contained in the $(n+1)^{th}$ left (resp.\ right) shadow.
Equip $(\partial \cE_n)^+$ with the linear order from the left node to the right node.
This determines a unique circular order on $\partial \cE_n$.
Under this order, $\Int \cE_n=\cD^L(\partial \cE_n)$.

\subsection{Unlinkedness of fat fences}
In order to construct a circle lamination on a quasi-Fuchsian limit circle with a fat fence, we first establish the unlinkedness of fat fences.
\begin{prop}[Unlinkedness between fences and fat fences]\label{Prop:fatNonFat}
Let $Z^\pm$ be a minimal zipper for $M$.
Assume that $\cF(g)=(\ell,\{\cE_n\}_{n\in\ZZ})$ is a fat fence for some $g\in G$, and that $\cJ$ is a fence of $Z^\pm$ whose nodes lie in $Z^+$.
Write $p$ for the pivot of $\ell$.
If $p \notin \cJ$, then $\Fix(g)$ lies in a Jordan domain $\cD$ of $\cJ$.
In particular, $\ell\subset \closure{\cD}$ and so $\ell$ and $\cJ^-$ are unlinked.
Moreover, $\cE_n\subset \cD$ and $\cJ$ is contained in the closure of either the left or right $n^{th }$ shadow of $\{\cE_n\}_{n\in \NN}$ for all sufficiently large $n$.
\end{prop}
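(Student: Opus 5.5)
The plan is to first locate $\ell$ relative to $\cJ$, then $q$, and finally the eclipse sets $\cE_n$. Write $\Fix(g)=\{p,q\}$.

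\emph{Step 1: $\ell$ lies on one side of $\cJ$.} Since $p\notin\cJ$ and $p\in Z^+$, the point $p$ is not on $\cJ^-$, and it is not a node. Hence $p$ is either in $\Int\cJ^+$ or off $\cJ$. Suppose first $p\in\Int\cJ^+$. The pivot $p$ has both a left and a right branch (two-sided branched case). Take type~2 connectors $\delta_1,\delta_2$ crossing $\ell$ from opposite sides. Their $(-)$-segments are rays in $Z^-$ landing at $p$ from both sides of $\ell$. Exactly as in \reflem{sideSimplicial} and \refprop{unlinkedCrossingLine}, this should force a two-sided accessible configuration, or a loop in $Z^-$ through $\cJ^-$. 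I expect such an intersection to be excluded.

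Suppose instead $p\notin\cJ$. Then $p$ lies in a Jordan domain $\cD$ of $\cJ$. Since $\ell\subset Z^-$ and $\cJ^+\subset Z^+$, the line $\ell$ meets $\cJ$ only along $\cJ^-$. I would apply \refprop{enclosing} to the end ray of $\ell$ landing at $p$, taken relative to $\cD$: its truncation lies in $\cD$, and any piece of $\ell$ leaving $\closure{\cD}$ would have to recross $\cJ^-$. That would create a second component of $\ell\cap\cJ^-$, contradicting unique arcs in the real tree $Z^-$. This gives $\ell\subset\closure{\cD}$, hence $\ell$ and $\cJ^-$ are unlinked.

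\emph{Step 2: $q\in\cD$.} The second end of $\ell$ lands at $q$, so $q\in\closure{\cD}$. If $q\in\cJ$, then $q$ would be a node or a point of $\cJ^\pm$. By \refprop{eclipseLProperty}, $q$ is not a synapse and lies in neither $Z^+$ nor $Z^-$. Since nodes of $\cJ$ lie in $Z^+$, this is impossible. So $\Fix(g)\subset\cD$.

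\emph{Step 3: $\cE_n\subset\cD$ and $\cJ$ lies in a shadow, for large $n$.} By nested exhaustion and $\bigcap\cE_n=\cE_\infty$, together with compactness, the sets $\cE_n$ converge into arbitrarily small neighborhoods of $\cE_\infty$. This set is $\{p\}\sqcup\bigcap\Int\cE_n$ and is disjoint from $\ell$.

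The main obstacle is to show $\cJ\cap\cE_n=\varnothing$ for $n$ large, despite $\cE_\infty$ not being a segment. My first attempt: any point of $\cJ$ in $\cE_\infty$ is either $p$, which is excluded by hypothesis, or lies in $\bigcap\Int\cE_n$. The latter contains no point of $Z^\pm$, by thinness. Since $\cJ$ is compact and $\cJ\subset Z^+\cup Z^-\cup\{\text{nodes}\}$ with nodes in $Z^+$, we get $\cJ\cap\cE_\infty=\varnothing$. Compactness then yields $\cJ\cap\cE_n=\varnothing$ for large $n$.

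Then $\cE_n$ is connected, disjoint from $\cJ$, and contains $q\in\cD$, so $\cE_n\subset\cD$. Finally, $\cJ$ is connected and disjoint from $\Int\cE_n$. By Step 1 it meets $\ell$ at most along $\cJ^-$, which it cannot cross since $\ell$ and $\cJ^-$ are unlinked. Therefore $\cJ$ lies in the closure of a single component of $S^2_\infty\setminus(\ell\cup\cE_n)$, namely $\cE_n^\leftslice$ or $\cE_n^\rightslice$.
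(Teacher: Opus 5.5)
Your Step~1 derivation of $\ell\subset\closure{\cD}$ fails, and Step~2 inherits the failure because it reads off $q\in\closure{\cD}$ from it. You claim that a piece of $\ell$ leaving $\closure{\cD}$ ``would have to recross $\cJ^-$'' and so create a second component of $\ell\cap\cJ^-$. This is false. The set $\ell\cap\cJ^-$ is connected, and $\ell$ can reach it through a branch on the $\cD$-side of $\cJ^-$, run along it, and exit through a branch on the opposite side. That is exactly a linked pair of segments in the real tree $Z^-$. The tree structure does not forbid it, and in the fat-fence setting $\ell$ is two-sided branched, so no inaccessibility is available to exclude it either. Correspondingly, \refprop{enclosing} applied at a point $s\in\ell\cap\cJ^-$ controls an end ray only if that ray lands in $\cD$ (or on $\partial\cD\setminus\cJ^-$ from the $\cD$-side). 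You may use it for the ray toward $p$, but not for the ray toward $q$ until you already know $q\in\cD$. So the real content of the statement, namely that $p$ and $q$ lie in the same Jordan domain of $\cJ$, is never established. (The case $p\in\Int\cJ^+$ that you treat first is excluded outright by the hypothesis $p\notin\cJ$.)

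The paper supplies the missing fact from the eclipse. It takes $\cD$ to be the domain containing $q$, which is legitimate since $q\notin\cJ$ by \refprop{eclipseLProperty}. For large $n$, the closure of the $(-)$-side of $\partial\cE_n$, and hence both nodes $\sfn_1,\sfn_2$ of $\partial\cE_n$, lie in $\cD$. If $p$ lay in the other domain, both arcs $[p,\sfn_i]\subset Z^+$ would meet $\cJ^+$, giving a loop in $Z^+$ because $p\notin\cJ$. Only after this does the paper apply \refprop{enclosing} to both end rays of $\ell$ from a point of $\ell\cap\cJ^-$.

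Alternatively, your own Step~3 closes the gap if you move it to the front. The argument that $\cJ\cap\cE_\infty=\varnothing$ (from $p\notin\cJ$ and thinness), and hence $\cJ\cap\cE_n=\varnothing$ for large $n$, uses nothing from Steps~1--2. The connected set $\cE_n$ contains both $p\in\partial\cE_n$ and $q\in\Int\cE_n$, so it places $\Fix(g)$ in a single domain $\cD$ and gives $\cE_n\subset\cD$ at once. Then \refprop{enclosing} on both end rays yields $\ell\subset\closure{\cD}$, and your shadow argument goes through. This ordering is somewhat more direct than the paper's, which gets $\cE_n\subset\cD$ by applying \refprop{enclosing} to $[p,\sfn_i]$ and then ruling out $\cD^R(\cJ)\subset\cE_n$ via thinness.
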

\begin{proof}
Write $\Fix(g)=\{p,q\}$.
Since $q$ is neither a synapse or a point of $Z^\pm$ (\refprop{eclipseLProperty}), we have $q\notin \cJ$, so $q$ lies in a Jordan domain $\cD$ of $\cJ$.
Choose the circular order on $\cJ$ so that $\cD=\cD^L(\cJ)$.

We show that $p\in \cD$.
Since $q\in \cD$, there exists $N$ such that $\closure{\cE_n^-} \subset \cD$ for all $n>N$.
Fix $n>N$, and let $\{\sfn_1,\sfn_2\}$ be the set of nodes of $\partial \cE_n$.
Suppose that $p\in \cD^R(\cJ)$.
Then $\cJ$ separates each $\sfn_i$ from $p$, so by the disjointness of $Z^+$, each arc $[p,\sfn_i]\subset Z^+$ meets $\cJ^+$.
Hence $\cE_n^+\cup \cJ^+$ contains a Jordan curve in $Z^+$, contradicting the simple connectedness of $Z^+$.
Thus $p\notin \cD^R(\cJ)$, and since $p\notin \cJ$, we have $p\in \cD$.
Therefore $\Fix(g)\subset \cD$.

We now show that $\ell$ and $\cJ^-$ are unlinked.
If $\ell\cap \cJ^-=\emptyset$, then the disjointness of $Z^\pm$ implies $\ell\subset \cD$.
If $\ell\cap \cJ^-\neq\emptyset$, let $s\in \ell\cap \cJ^-$.
Applying \refprop{enclosing} to the two distinct end rays of $\ell$ starting at $s$, we obtain $\ell\subset \cD\cup \cJ^-$.
This proves the claim.

Finally, we prove the last statement.
For each $n>N$, since $\closure{\cE_n^-}\subset \cD$ and $p\in \cD$, another application of \refprop{enclosing} to each arc $[p,\sfn_i]$ yields $[p,\sfn_i]\subset \cD\cup \cJ^+$.
Hence either $\cE_n\subset \cD$ or $\cD^R(\cJ)\subset \cE_n$.
By the thinness of $\cE_\infty$ (see \refprop{eclipseLProperty}), the latter is impossible for large $n$, so $\cE_n\subset \cD$ for all sufficiently large $n$.
This completes the proof.
\end{proof}

A pair of fat fences, $\cF(g_i)=(\ell^i,\{\cE_n^i\}_{n\in\ZZ}), \ i \in \ZZ/2\ZZ$ , is said to be \emph{unlinked} if either $\ell^1=\ell^2$ or there are two shadows $\fS_1$ and $\fS_2$ of $\cF(g_1)$ and of $\cF(g_2)$, respectively, such that $(\fS_2)^c \subset \closure{\fS_1}$.

\begin{rmk}\label{Rmk:sameAxis}
Note that in the former case, $\Fix(g_1)=\Fix(g_2)$.
Since $G$ is discrete in $\PSL(\CC)$, there exists $h\in G$ that generates $\Stab{G}{\Fix(g_1)}$, and hence $\{g_1,g_2\}\subset \langle h\rangle$.
Therefore, $\ell^1$ is the axis in $Z^-$ of every nontrivial power $h^k$.
Moreover, if
\[
\cF(h^{n_1})=(\ell^1,\{\cE_n\}_{n\in \NN})
\quad\text{and}\quad
\cF(h^{n_2})=(\ell^1,\{\cI_n\}_{n\in \NN})
\]
are fat fences for some $n_i\neq 0$, then $\cI_\infty=\cE_\infty$ (\refprop{eclipseLProperty}).
Indeed, any fat fence
\(
\cF(h)=(\ell^1,\{\cE_n\}_{n\in \ZZ})
\)
of $h$ is preserved by every element of $\langle h\rangle$.
\qedhere

\end{rmk}
\begin{prop}\label{Prop:fatFenceSystem}(Invariant fat fence system)
Let $Z^\pm$ be a minimal zipper for $M$.
Assume that $\cF(g)=(\ell,\{\cE_n\}_{n\in\ZZ})$ is a fat fence of $g$.
Then, for any $h\in G$, the fat fences
$\cF(g)$ and $h(\cF(g))=\cF(hgh^{-1})=(h(\ell),\{h(\cE_n)\}_{n\in\ZZ})$
are unlinked.
Moreover, if $h(p)\neq p$, then there exists $m\in\ZZ$ such that either
$\closure{\cE_m^\leftslice\setminus \cE_{m-2}^\leftslice}$
or
$\closure{\cE_m^\rightslice\setminus \cE_{m-2}^\rightslice}$
contains the complement of a shadow of $h(\cF(g))$.
In particular, $\ell$ and $h(\ell)$ are unlinked.
\end{prop}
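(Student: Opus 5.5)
The plan is to reduce to \refprop{fatNonFat}, applied to the fences $\partial h(\cE_n)$ and their shadows, and then to use the eclipse structure for the localization. First I dispose of the degenerate case. If $h(p)=p$, then $hgh^{-1}$ and $g$ share the fixed point $p$. Discreteness of $G<\PSL(\CC)$ then forces $\Fix(hgh^{-1})=\Fix(g)$, so $h(\ell)$ is the $Z^-$-axis of an element of $\Stab{G}{\Fix(g)}$. Uniqueness of axes (\refthm{invHypAxis}) gives $h(\ell)=\ell$, and the pair is unlinked by definition; \refrmk{sameAxis} records this. So I assume from now on that $h(p)\neq p$, and hence $\Fix(hgh^{-1})\cap\Fix(g)=\varnothing$.

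For the main step, I fix $n$ and apply \refprop{fatNonFat} to $\cF(g)$ and the fence $\cJ_n:=\partial h(\cE_n)$. Its nodes lie in $Z^+$ by \refconst{eclipseL}, and by \refprop{eclipseLProperty} it meets $\cE_\infty(h)$ only at $h(p)$; in particular $p\notin\cJ_n$ for large $n$. The proposition then places $\Fix(g)$ in one Jordan domain of $\cJ_n$. For large $n$, this domain must be the complement of $h(\cE_n)$, because $\cE_\infty(h)=h(\cE_\infty)$ is thin and avoids $p,q$. It also gives $\cE_m\subset S^2_\infty\setminus h(\cE_n)$ for large $m$, with $\cJ_n$ inside the closure of a single $m$-th shadow of $\cF(g)$, say $\closure{\cE_m^\leftslice}$.

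I run the same argument symmetrically, with the roles of $g$ and $hgh^{-1}$ exchanged. Combining the two directions, $h(\ell)\cup h(\cE_n)$ lies in $\closure{\cE_m^\leftslice}$: the axis $h(\ell)$ joins the two nodes-side of $h(\cE_n)$, and it is unlinked with $(\partial\cE_m)^-$ by the ``in particular'' of \refprop{fatNonFat}. Then the complementary shadow of $h(\cF(g))$ not containing $\Fix(g)$ has complement contained in $\closure{\cE_m^\leftslice}$, which is the unlinkedness. The last claim, that $\ell$ and $h(\ell)$ are unlinked, follows directly.

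For the refinement to $\closure{\cE_m^\leftslice\setminus\cE_{m-2}^\leftslice}$, I imitate \reflem{separatingFence}. The complement of the relevant shadow of $h(\cF(g))$ is $h(\ell)\cup h(\cE_n)$ together with one side, and it is a compact set avoiding $\Fix(g)$ except possibly near $p$. Since $h(p)\neq p$ and $h(p)\notin\cE_\infty$ (thinness), $h(p)$ lies in $\cE_k\setminus\cE_{k+1}$ for a unique $k$. The separation property in \refprop{eclipseLProperty}, in its two-sided branched form, then confines the whole configuration between two consecutive fences $\partial\cE_{m-2}$ and $\partial\cE_m$, using that each $g^j$-translate of $\cJ_n$ meets these fences only at $p$.

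The main obstacle is exactly this last confinement. I would control it by choosing $\cE_0$ and $\delta_1,\delta_2$ so that $s\cap g(s)=\varnothing$. Then consecutive fences $\partial\cE_j$ are disjoint away from $p$, and an adapted connected set that avoids $p$ and meets no $(+)$-side can cross at most one of them. Applying \refprop{enclosing} to the $(+)$-arcs of $h(\partial\cE_n)$ should give the width $2$.
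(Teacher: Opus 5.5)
You handle the case $h(p)=p$ correctly, and the first assertion can be extracted from your two applications of \refprop{fatNonFat}. The ``combining'' step needs a different reason than the one you give, though. Unlinkedness of $h(\ell)$ with $(\partial\cE_m)^-$ does not stop $h(\ell)$ from crossing the ray of $\ell$ from $\partial\cE_m$ to $p$, and that ray is not part of $\partial\cE_m$. What you need is a tree argument: $h(\ell)\cap\ell$ is connected, $h(p),h(q)$ lie in the open left shadow, and $h(\ell)\cap\Int\cE_m=\varnothing$.

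The genuine gap is the ``Moreover'' part, the confinement to $\closure{\cE_m^\leftslice\setminus\cE_{m-2}^\leftslice}$. This is what later arguments (e.g.\ \reflem{draggingByFatFence}) actually use, and your last two paragraphs rest on unproved assertions:
\begin{itemize}
\item You claim each $g^j$-translate of $\cJ_n=h(\partial\cE_n)$ meets the fences $\partial\cE_j$ only at $p$. Nothing forces these curves to pass through $p$ at all.
\item You claim an adapted connected set avoiding $p$ and the $(+)$-sides crosses at most one $\partial\cE_j$. But the set to be confined is the complement of a shadow of $h(\cF(g))$. That is a two-dimensional region whose boundary contains $(+)$-arcs $h(g^{\pm k}(\delta_i^+))$, and these can meet the $(+)$-sides of the $\partial\cE_j$ inside the tree $Z^+$.
\end{itemize}
The disjointness of $\partial\cE_j$ and $\partial\cE_{j+1}$ away from $p$ is already built into \refconst{eclipseL}, and it does not bound how many levels such a region spans. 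The separation property of \refprop{eclipseLProperty} concerns only $\cE_\infty$, not the location of $h$-translates. So knowing $h(p)\in\cE_k\setminus\cE_{k+1}$ does not by itself confine the rest of $h(\cF(g))$.

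The missing idea is to apply \refprop{fatNonFat} once, with the roles arranged the other way. Take the fat fence to be $h(\cF(g))$, with pivot $h(p)$, and the fence to be the boundary of a band between two shadow levels of $\cF(g)$.
\begin{itemize}
\item Since $h(p)\notin\cE_\infty\cup\ell$, after symmetry there is $n$ with $h(p)\in\cE_n^\leftslice\setminus\cE_{n-1}^\leftslice$. Put $\fD:=\cE_n^\leftslice\setminus\closure{\cE_{n-2}^\leftslice}$.
\item Its boundary consists of two left connectors and a segment of $\ell$, so it is a fence with nodes in $Z^+$. Moreover $h(p)\in\fD$; the shift by $2$ keeps $h(p)$ off $\partial\fD$.
\item \refprop{fatNonFat} then gives $\Fix(hgh^{-1})\subset\fD$, $h(\ell)\subset\closure{\fD}$, and $h(\cE_k)\subset\fD$ for large $k$.
\item Hence the connected set $S^2_\infty\setminus\closure{\fD}$ lies in a single $k$-th shadow $\fS$ of $h(\cF(g))$, and $S^2_\infty\setminus\fS\subset\closure{\fD}\subset\closure{\cE_n^\leftslice\setminus\cE_{n-2}^\leftslice}$.
\end{itemize}
This gives the localization and the unlinkedness of the two fat fences in one step. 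It also gives unlinkedness of $\ell$ and $h(\ell)$, since $\fD$ lies on one side of $\ell$ and $h(\ell)\subset\closure{\fD}$. Your double application and the combining step then become unnecessary.
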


\begin{proof}
If $h=\id$, there is nothing to prove.
Assume therefore that $h$ is non-trivial.

If $h(p)=p$, then both $g$ and $hgh^{-1}$ fix $p$.
Since $G$ is discrete in $\PSL(\CC)$, it follows that $\Fix(g)=\Fix(hgh^{-1})=h(\Fix(g))$.
Hence, by \refrmk{sameAxis}, the fat fences $\cF(g)$ and $h(\cF(g))$ are unlinked.

It remains to consider the case $h(p)\neq p$.
Since $h(p)\neq p$ and $h(p)\in Z^+$, \refprop{eclipseLProperty} implies that $h(p)$ is contained in a shadow of $\cF(g)$.
By symmetry, we may assume that $h(p)\in \cE_n^\leftslice\setminus \cE_{n-1}^\leftslice$ for some $n\in\ZZ$.
Then $h(p)\in \cE_n^\leftslice\setminus \closure{\cE_{n-2}^\leftslice}=: \fD$.
Since $\partial \fD$ is a fence whose nodes lie in $Z^+$, applying \refprop{fatNonFat} to $h(\cF(g))$ and $\partial \fD$ completes the proof.
\end{proof}

\subsection{Circle laminations from fat fence systems}

We are now ready to construct the circle lamination induced by a fat fence.

\begin{const}\label{Const:fatFenceLami}
    Let $Z^\pm$ be a minimal zipper for $M$.
    Assume that $g\in G$ fixes exactly one point $p$ in $Z^+$ and acts freely on $Z^-$, with the $g$-axis $\ell$ two-sided branched.
    Let $\cF(g)=(\ell,\{\cE_n\}_{n\in\ZZ})$ be a fat fence of $g$.
    Write $\Fix(g)=\{p,q\}$.

    Take a closed quasi-Fuchsian surface group $\Gamma$ whose limit set $\Lambda(\Gamma)$ separates the fixed points $p,q$ of $g$ (\refthm{separatingQF}).
    Write $\cH$ for the Jordan domain bounded by $\Lambda(\Gamma)$ and containing $p$.
    Fix the circular order on $\Lambda(\Gamma)$ so that $\cH=\cD^L(\Lambda(\Gamma))$.
    There is a unique point $e_-\in \ell$ such that the end ray $\sfr_-$ of $\ell$ starting at $e_-$ and landing at $p$ intersects $\Lambda(\Gamma)$ exactly in $\{e_-\}$.

    Write $v_n^L$ and $v_n^R$ for the left and right nodes of $\partial\cE_n$, respectively.
    Since $q\in \cD^R(\Lambda(\Gamma))$, there exists $N\in\NN$ such that $\closure{(\partial\cE_n)^-}\subset \cD^R(\Lambda(\Gamma))$ for all $n\ge N$.
    For each $n\ge N$, the lines $(p,v_n^L)$ and $(p,v_n^R)$ in $(\partial\cE_n)^+$ intersect $\Lambda(\Gamma)$, and hence there exist points $x_n^L\in (p,v_n^L)$ and $x_n^R\in (p,v_n^R)$ such that $\opi[\partial\cE_n]{x_n^L}{x_n^R}$ crosses $\cH$.
    Observe that $\sfr_-$ lands at $p$ on the right side of $\opi[\partial\cE_n]{x_n^L}{x_n^R}$, and that \[x_m^L<x_n^L<e_-<x_n^R<x_m^R\] in $\Lambda(\Gamma)$ for all $n<m$.
    Therefore, $\{x_n^R\}_{n=N}^\infty$ and $\{x_n^L\}_{n=N}^\infty$ are strictly increasing and strictly decreasing sequences in $\Lambda(\Gamma)$ converging to $e_+^R$ and $e_+^L$, respectively.
    Note that $e_-<e_+^R\le e_+^L$, possibly with equality.

    Let $\sfa_n^L$ and $\sfa_n^R$ denote the connector arcs $\sfr_-\cup [p,x_n^L]$ and $\sfr_-\cup [p,x_n^R]$, respectively.
    Set $\ell^L=\{e_+^L,e_-\}$ and $\ell^R=\{e_+^R,e_-\}$.
    Define $\cL(\Gamma)$ to be the closure in $\cM$ of the $\Gamma$-orbit of $\ell^L$.
    \qedhere
\end{const}

\begin{rmk}[Positive endpoints are in $\cE_\infty$]\label{Rmk:positiveEndInInfty}
Note that $\{x_n^L,x_n^R\}\subset \Int \cE_{n-1}$ for all $n>N$.
Since $e_+^R$ and $e_+^L$ are the limit points of $\{x_n^R\}_{n=N}^\infty$ and $\{x_n^L\}_{n=N}^\infty$, respectively, we have $\{e_+^R,e_+^L\}\subset \cE_\infty$.
\end{rmk}

\begin{prop}[Well-defined circle laminations from fat fences]\label{Prop:lamiFromFatFence}
In \refconst{fatFenceLami}, $\ell^L$ is unlinked with $h(\ell^L)$ for all $h\in G$.
In particular, $\cL(\Gamma)$ is a $\Gamma$-invariant circle lamination of $\Lambda(\Gamma)$.
\end{prop}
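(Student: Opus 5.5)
To prove that $\ell^L=\{e_+^L,e_-\}$ is unlinked with $h(\ell^L)$ for every $h\in G$, I would realize $\ell^L$ as a limit of leaves that are already known to be unlinked with their translates, and use the fact that the unlinked condition is closed in $\cM$. The natural approximants are the endpoint pairs $\sfd\sfa_n^L=\{x_n^L,e_-\}$ of the connector arcs $\sfa_n^L=\sfr_-\cup[p,x_n^L]$, which converge to $\ell^L$ as $n\to\infty$ by \refconst{fatFenceLami}. So it suffices to prove that, for each fixed $h$, the pairs $\sfd\sfa_n^L$ and $h(\sfd\sfa_m^L)$ are unlinked in $\Lambda(\Gamma)$ for all sufficiently large $n,m$. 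Passing to the limit then gives that $\ell^L$ and $h(\ell^L)$ are unlinked; strictly, $h(\ell^L)$ need not lie on $\Lambda(\Gamma)$ unless $h\in\Gamma$, and for the lamination statement only $h\in\Gamma$ matters. $\Gamma$-invariance and closedness of $\cL(\Gamma)$ are then automatic.

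To prove unlinkedness of the approximants, I would argue by contradiction with \refprop{unlinkedCrossingLine}, applied to the adapted crossing lines $\sfa_n^L$ and $h(\sfa_m^L)$ in $\cH$. Both are mixed, and their synapses $p$ and $h(p)$ lie in $Z^+$. The case $h(p)=p$ should be handled separately: by discreteness, $\Fix(h)=\Fix(g)$, while $\Fix(h)\subset\Lambda(\Gamma)$ for $h\in\Gamma$; this is impossible unless $h=1$. Next I would check the hypotheses of \refprop{unlinkedCrossingLine}. The $(-)$-segments $\sfr_-$ and $h(\sfr_-)$ are unlinked because $\ell$ and $h(\ell)$ are unlinked by \refprop{fatFenceSystem}. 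For the $(+)$-segments, I would use the moreover part of \refprop{fatFenceSystem}: $h(\cF(g))$ is pushed into a single region $\closure{\cE_m^\leftslice\setminus\cE_{m-2}^\leftslice}$ (or its right analogue). From this, $h([p,x_m^L])$ lies in the closure of one shadow of $\cF(g)$, and so it cannot meet both a left and a right branch of $[p,x_n^L]\subset(\partial\cE_n)^+$.

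With the hypotheses in place, the three conclusions of \refprop{unlinkedCrossingLine} must each be excluded. Case (1), equal synapses, is already ruled out since $h(p)\neq p$. Case (2) would force $h(p)\in[p,x_n^L]$, and case (3) would force $(\partial\cE_n)^+$ or its translate to be two-sided accessible near $p$. I would rule both out by the separation and thinness properties in \refprop{eclipseLProperty}. For $n$ large, $[p,x_n^L]\setminus\{p\}$ lies in $\Int\cE_{n-1}$, whereas $h(\cF(g))$ lies in a band $\closure{\cE_m^\leftslice\setminus\cE_{m-2}^\leftslice}$ that meets $\cE_\infty$ only at $p$. Hence, for $n\gg m$, the translate $h(\sfa_m^L)$ can touch $\sfa_n^L$ only near $\sfr_-$ or at $p$, and that is incompatible with the landing configurations required in (2) and (3).

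The main obstacle I expect is the uniformity in $n$ and $m$ together with the passage to the limit. The limit point $e_+^L$ lies in $\cE_\infty$ and may coincide with $e_+^R$, so a leaf $h(\ell^L)$ could a priori be linked with $\ell^L$ only in the limit, with one endpoint accumulating on $e_+^L$ from the wrong side. To control this I would fix $h$, choose the index $m$ supplied by \refprop{fatFenceSystem}, and take $n>m+3$. Then $h(\sfd\sfa_k^L)$ for all large $k$ lies in $\closure{\fS}$ for a shadow $\fS$, while $\sfd\sfa_n^L$ lies in $\cE_{n-1}\cup\ell$. I would then show that both endpoints of $h(\ell^L)$ lie in a single component of $\Lambda(\Gamma)\setminus\ell^L$ by tracking the circular order of $e_-$, $x_n^L$, $e_+^L$ and $e_+^R$ against the shadow boundaries.
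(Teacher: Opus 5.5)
Your strategy is a legitimate alternative to the paper's. You show that $\sfd\sfa_n^L$ and $h(\sfd\sfa_m^L)$ are unlinked for all large $n,m$, and then pass to the limit. The paper argues by contradiction instead. Assuming $\ell^L$ and $h(\ell^L)$ are linked, openness of linking gives linked approximants for all $n,m\ge K$. Using only the unlinkedness of $\ell$ and $h(\ell)$ and $Z^+\cap Z^-=\varnothing$, it then forces $h(p)\in\cH^R(\sfa_n^L)$ and $p\notin h(\sfa_n^L)$ for all $n\ge K$. Finally it finds a loop in $Z^+$ inside $\sfa_K^L\cup h(\sfa_K^L)\cup h(\sfa_{K+1}^L)$.

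The paper needs neither the band localization in \refprop{fatFenceSystem} nor \refprop{unlinkedCrossingLine}. Your route uses both, and in return proves the stronger fact that individual approximant pairs are eventually unlinked. The ``main obstacle'' in your last paragraph is not a real one: linking is an open condition, so a linked limit always forces linked approximants.

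As written, however, your exclusion of case (3) of \refprop{unlinkedCrossingLine} is a gap. You rule it out because it would make $(\partial\cE_n)^+$ or its translate two-sided accessible, citing separation and thinness. Neither works: in the fat-fence setting nothing makes $(\partial\cE_n)^+$ inaccessible, and separation and thinness say nothing about the sides of these arcs. Nor is ``touching only at $p$'' incompatible with (3); it is exactly what (3) looks like here.

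The correct mechanism is as follows. Let $W=\closure{\cE_k^\leftslice\setminus\cE_{k-2}^\leftslice}$ (or its right analogue) be the band from \refprop{fatFenceSystem}; it contains $h(\sfa_m^L)$ for all large $m$. For $n-1\ge k$ one has $(p,x_n^L]\subset\Int\cE_{n-1}\subset\Int\cE_k$, and $\Int\cE_k$ is disjoint from $W$. This, not ``lying in the closure of one shadow'', is why the $(+)$-segments are unlinked. For small $n$ they genuinely can be linked.

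It also gives $\sfa_n^L\cap h(\sfa_m^L)\subset\{p\}\cup(\sfr_-\cap h(\sfr_-))$. That kills case (2) and the subcase $h(p)\in\sfa_n^L$ of (3). What remains is the configuration $p\in h((p,x_m^L))$, with $\sfr_-$ and $(p,x_n^L]$ landing at $p$ on opposite sides of $h([p,x_m^L])$.

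To exclude this you need a local argument at $p$:
\begin{itemize}
\item $W$ is a closed Jordan disk with $p\in\partial W$. Near $p$, its boundary is the union of the $(+)$-arcs of the left connectors of $\partial\cE_{k-2}$ and $\partial\cE_k$, which meet only at $p$.
\item $h([p,x_m^L])$ is contained in $W$.
\item Truncations of $\sfr_-$ (which meets $W\subset\cE_{k-2}$ only in a compact subsegment of $\ell$) and of $(p,x_n^L]$ lie in $S_\infty^2\setminus W$.
\item $S_\infty^2\setminus W$ is locally connected at $p$ and disjoint from $h([p,x_m^L])$, so both rays land on the same side.
\end{itemize}
With this step supplied, your argument goes through.
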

\begin{proof}
Assume that $\ell^L$ is unlinked with $h(\ell^L)$ for some $h\in G$.
By symmetry, we may assume that $e_+^L<h(e_+^L)<e_-<h(e_-)$ in $\Lambda(\Gamma)$.
Then, for all sufficiently large $n\in\ZZ$, we have
$e_+^L<x_n^L<h(e_+^L)<h(x_n^L)<e_-$.
Hence there exists $K\in\NN$ such that, for all $n,m\ge K$, $\sfd \sfa_n^L$ and $\sfd h(\sfa_m^L)$ are linked.

We claim that $h(p)\in \cH^R(\sfa_n^L)$ for all $n\ge K$.
Suppose first that $h(p)\in \cH^L(\sfa_n^L)$ for some $n\ge K$.
Then, by the disjointness of $Z^\pm$, the rays $\sfr_-$ and $h(\sfr_-)$ are linked, contradicting \refprop{fatFenceSystem}.
Moreover, $h(p)\neq p$ since $\Fix(h)\subset \Lambda(\Gamma)$.
If $h(p)\in (p,x_n^L)$ for some $n\ge K$, then $h(\sfr_-)$ intersects $(p,x_{n+1}^L)$, again contradicting the disjointness of $Z^\pm$.
Hence $h(p)\notin \closure{\cH^L(\sfa_n^L)}$ for all $n\ge K$, and therefore $h(p)\in \cH^R(\sfa_n^L)$ for all $n\ge K$.

On the other hand, $p\notin h(\sfa_n^L)$ for all $n\ge K$.
Indeed, suppose that $p\in h(\sfa_n^L)$ for some $n\ge K$.
Then the union of $\sfa_n^L$, $h(\sfa_n^L)$, and $h(\sfa_{n+1}^L)$ contains a Jordan curve, contradicting the simple connectedness of $Z^+$.

Since $p\notin h(\sfa_n^L)$ and $h(p)\in \cH^R(\sfa_n^L)$ for all $n\ge K$, the union of $h(\sfa_K^L)$, $h(\sfa_{K+1}^L)$, and $\sfa_K^L$ contains a Jordan curve, again contradicting the simple connectedness of $Z^+$.
This completes the proof.
\end{proof}

Before proving the isolatedness of $\ell^L$, we prove the following auxiliary proposition.

\begin{prop}[Linked fat leaves]\label{Prop:fatLeafLink}
Let $Z^\pm$ be a zipper for $M$, and let $\cJ$ be an oriented Jordan curve in $S_\infty^2$.
Assume that $\sfa_1,\sfa_2,\sfb_1,\sfb_2$ are connector arcs satisfying the following:
\begin{itemize}
    \item each crosses $\cD^L(\cJ)$, and its synapse lies in $Z^+$;
    \item $\sfa_1^-=\sfa_2^-$ and $\sfb_1^-=\sfb_2^-$;
    \item $\sfa_1^+\cap \sfa_2^+=\{p\}$ and $\sfb_1^+\cap \sfb_2^+=\{q\}$; that is, the $(+)$-segments intersect only at the synapses $p$ and $q$.
\end{itemize}
Write $\sfa_1^-\cap \cJ=\{e^-\}$, $\sfb_1^-\cap \cJ=\{d^-\}$, $\sfa_i^+\cap \cJ=\{e_i^+\}$, and $\sfb_i^+\cap \cJ=\{d_i^+\}$.
If $d^-<e_1^+<e_2^+<d_1^+<d_2^+<e^-$ in $\cJ$, then either $p=q$ or $\sfa_1^-$ and $\sfb_1^-$ are linked.
\end{prop}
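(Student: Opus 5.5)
The plan is a planar separation argument inside $\cD^L(\cJ)$. The idea is that the two ``tripods'' formed by the $\sfa$-arcs and the $\sfb$-arcs must cross each other, and they can only do so through their $(+)$-parts or their $(-)$-parts.

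Write $\sfa^-=\sfa_1^-=\sfa_2^-$ and $T_{\sfa}=\sfa_1\cup\sfa_2$. Then $T_{\sfa}$ is a tripod with centre $p$ and legs $\sfa^-$, $\sfa_1^+$, $\sfa_2^+$, ending at $e^-$, $e_1^+$, $e_2^+$ on $\cJ$. It crosses $\cD^L(\cJ)$ and cuts it into three Jordan domains:
\begin{itemize}
\item $R_1$, bounded by $\sfa^-\cup\sfa_1^+$ and $\cldi[\cJ]{e^-}{e_1^+}$, which contains $d^-$;
\item $R_2$, bounded by $\sfa_1^+\cup\sfa_2^+$ and $\cldi[\cJ]{e_1^+}{e_2^+}$;
\item $R_3$, bounded by $\sfa_2^+\cup\sfa^-$ and $\cldi[\cJ]{e_2^+}{e^-}$, which contains $d_1^+$ and $d_2^+$.
\end{itemize}
These locations follow from the hypothesis $d^-<e_1^+<e_2^+<d_1^+<d_2^+<e^-$. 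Define the tripod $T_{\sfb}$ with centre $q$ in the same way. Since $Z^+\cap Z^-=\varnothing$ and the synapses $p,q$ lie in $Z^+$, a $(+)$-leg of one tripod can meet the other tripod only in its $(+)$-legs, and a $(-)$-leg can meet only $(-)$-legs.

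First I would record a tree lemma. Suppose an arc $\sigma\subset Z^+$ meets both $\sfb_1^+$ and $\sfb_2^+$. Pick points $x\in\sigma\cap\sfb_1^+$ and $y\in\sigma\cap\sfb_2^+$. Because $\sfb_1^+\cap\sfb_2^+=\{q\}$, the unique arc $[x,y]$ in $Z^+$ is $[x,q]\cup[q,y]$, and unique path-connectedness forces $q\in\sigma$. The symmetric statement holds with the roles of $\sfa$ and $\sfb$ exchanged.

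Then I would assume $p\neq q$ and that $\sfa^-$ and $\sfb^-$ are unlinked, and split according to where $q$ lies.
\begin{itemize}
\item If $q\in\sfa_i^+$, then $\sfa_i^+$ meets both $\sfb_j^+$. Also $q$ cannot lie on $\sfa^-$, since $q\in Z^+$.
\item If $q\in R_2$, the leg $\sfb^-$ runs from $R_2$ to $d^-\in R_1$. Its interior cannot cross $\sfa_1^+$ or $\sfa_2^+$, and it cannot pass through $p\notin Z^-$. So it has no route out of $\closure{R_2}$ that avoids the $(+)$-legs, which is a contradiction.
\item If $q\in R_1$, each $\sfb_j^+$ runs from $R_1$ to $d_j^+\in R_3$ and cannot cross $\sfa^-$. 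It must therefore pass through $R_2$, meeting both $\sfa_1^+$ and $\sfa_2^+$. The tree lemma (with $\sfa$ and $\sfb$ exchanged) then gives $p\in\sfb_1^+\cap\sfb_2^+=\{q\}$.
\item If $q\in R_3$, then $\sfb^-$ runs from $R_3$ to $R_1$ without crossing the $(+)$-legs. It must therefore pass from the $R_3$-side of $\sfa^-$ to the $R_1$-side. These are opposite sides of the segment $\sfa^-$, so $\Int\sfb^-$ meets both a left and a right branch of $\sfa^-$, i.e.\ $\sfa^-$ and $\sfb^-$ are linked.
\end{itemize}
Each case contradicts one of the two assumptions, which gives the dichotomy.

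The main obstacle is making the ``crossing'' statements rigorous, because the arcs may touch without crossing transversally. In particular I must justify, in the $R_3$ case, that passing from one complementary region to another along $\sfa^-$ really produces branches on both sides in the sense of Section~\ref{Sec:COonB}. I would handle this with \refprop{enclosing}: truncate $\sfb^-$ at its last intersection with $\sfa^-$, so that it lands on $\sfa^-$ from a definite side, and do the same with the first intersection. I would also use flat neighbourhoods of interior points of $\sfa^-$ to identify these sides with left and right branches. The same truncation trick handles the touching cases in the $R_1$ and $R_2$ arguments.
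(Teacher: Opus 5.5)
Your split by the position of $q$ relative to $T_{\sfa}$ works for $q\in R_1\cup R_2\cup R_3$. The gap is the first bullet, $q\in\sfa_1^+\cup\sfa_2^+$, which you never resolve. Observing that $\sfa_i^+$ meets both $\sfb_j^+$ gives nothing: your tree lemma applied to $\sigma=\sfa_i^+$ only returns $q\in\sfa_i^+$, which is the case hypothesis.

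The case is not vacuous. If $q\in\Int\sfa_2^+$, both $\sfb_j^+$ can leave $q$ directly into $R_3$, so no contradiction can come from the $(+)$-legs. In such a configuration the conclusion holds only because the $(-)$-rays are linked, so you need a separate argument.
\begin{itemize}
\item For $q\in\Int\sfa_2^+$: since $q\notin\closure{\sfa^-}\cup\sfa_1^+\cup\cJ$, a small flat neighbourhood of $q$ relative to $\sfa_2^+$ has its two halves in $R_2$ and $R_3$. A terminal subray of $\sfb^-$ lies in one half. The $R_2$ half is excluded exactly as in your $R_2$ case, and the $R_3$ half reduces to your $R_3$ argument.
\item For $q\in\Int\sfa_1^+$: adapt your $R_1$ argument. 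Since $\sfb_j^+\cap\sfa_1^+=[q,v_j]$ is connected, beyond $v_j$ the arc $\sfb_j^+$ does one of three things. It passes through $p$. Or it enters $R_1$, where it is trapped, since it cannot meet $\sfa^-$, $\sfa_1^+$ again, or $\cJ$. Or it enters $R_2$ and must then meet $\sfa_2^+$. In every admissible case $p\in\sfb_j^+$ for $j=1,2$, so $p\in\sfb_1^+\cap\sfb_2^+=\{q\}$, contradicting $p\neq q$.
\end{itemize}

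With this case added your proof is complete, and it is essentially dual to the paper's. The paper instead splits on the position of $p$ relative to the single crossing arc $\sfb_2$ in $\fD=\cD^L(\cJ)$. If $p\in\closure{\fD^L(\sfb_2)}$, then $p\in\fD^L(\sfb_1)$, because $p\notin Z^-$ and $p\neq q$. Both $\sfa_i^+$ must then meet $\sfb_1^+$, and the tree lemma puts $p$ on $\sfb_1^+$, which is absurd. Hence $p\in\fD^R(\sfb_2)$, and $\sfa^-$ must cross $\sfb^-$ to reach $e^-$. Because that split uses one Jordan domain and its closure, it is exhaustive and absorbs the boundary cases your four-region split leaves over.

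One small further point: your $R_3$ case shows that $\Int\sfb^-$ meets both sides of $\sfa^-$, rather than the reverse as the statement's wording suggests. The two are equivalent, since crossing of two segments in $Z^-$ is symmetric, but you should say so.
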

\begin{proof}
    Write $\fD=\cD^L(\cJ)$.
    It suffices to consider the case where $p\neq q$.

    We first claim that $p\notin \closure{\fD^L(\sfb_2)}$.
    Assume that $p\in \closure{\fD^L(\sfb_2)}$.
    By the disjointness of $Z^\pm$, we then have $p\in \fD^L(\sfb_2)\cup \sfb_2^+$.
    Since $p\neq q$ and $p\in \fD^L(\sfb_1)$, it follows that $\partial \fD^L(\sfb_1)$ separates $p$ from $e_i^+$ for all $i\in\{1,2\}$.
    By the disjointness of $Z^\pm$, both $\sfa_i^+$ intersect $\sfb_1^+$.
    Hence $\sfb_1^+$ contains an arc $\gamma$ joining a point of $\sfa_1^+\cap \sfb_1^+$ to a point of $\sfa_2^+\cap \sfb_1^+$.
    Since $\sfa_1^+\cap \sfa_2^+=\{p\}$ and $\sfa_1^+\cup \sfa_2^+$ is an arc in $Z^+$ crossing $\fD$, the arc $\gamma$ contains $p$.
    Thus $\sfb_1^+$ contains $p$, a contradiction since $p\in \fD^L(\sfb_1)$.

    Therefore, $p\in \fD^R(\sfb_2)$.
    This implies that $\sfa_1^-$ and $\sfb_1^-$ are linked, since these are rays in $Z^-$.
    Thus the proof is complete.
\end{proof}

\begin{lem}[The generating leaf is isolated]\label{Lem:isolatedLeftLeaf}
    In \refconst{fatFenceLami}, $\ell^L$ is isolated in $\cL(\Gamma)$ and no non-trivial element in $\Gamma$ fixes  $e_+^L$.
\end{lem}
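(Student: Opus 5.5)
We treat the two assertions separately; the argument follows \reflem{isolatedLeaf} and \reflem{fanningArcs}, with the fat fence $\cF(g)$ and its eclipse $\{\cE_n\}$ taking the place of the invariant fence.

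\emph{No non-trivial $h\in\Gamma$ fixes $e_+^L$.} Suppose $h\neq 1$ fixes $e_+^L$. Since $\Fix(h)\subset\Lambda(\Gamma)$, we have $h(p)\ne p$. By \refprop{fatFenceSystem}, there is $m$ such that the complement of a shadow of $h(\cF(g))$ lies in $\closure{\cE_m^\leftslice\setminus\cE_{m-2}^\leftslice}$ (or the right analogue). In particular, $h(\cE_\infty)$ lies in such a band. By \refrmk{positiveEndInInfty}, $e_+^L\in\cE_\infty$, so $e_+^L=h(e_+^L)\in h(\cE_\infty)$. The separation property in \refprop{eclipseLProperty} (two-sided branched case) gives $\closure{\cE_m^\leftslice\setminus\cE_{m-2}^\leftslice}\cap\cE_\infty\subset\{p\}$. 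Hence $e_+^L=p$, which is impossible because $p\in\cH$ while $e_+^L\in\Lambda(\Gamma)$. The delicate point is to verify that $h(\cE_\infty)$, and not just $h(\ell)$, lies in the band. We would obtain this from the last statement of \refprop{fatNonFat}, which says that $h(\cE_n)$ lies in the closure of one shadow for all large $n$.

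\emph{Isolatedness.} Suppose there are $g_n\in\Gamma$ with $g_n(\ell^L)\to\ell^L$ and $g_n(\ell^L)\ne\ell^L$, accumulating on one side. If infinitely many of these leaves shared an endpoint, \refprop{oneEndFix} would force that endpoint to be fixed by a non-trivial element of $\Gamma$. For $e_+^L$ this contradicts the first part. For $e_-\in\ell$ it is ruled out as follows: the fixing element would stabilise $\ell$ by unlinkedness, so it would fix $\Fix(g)$ by discreteness, which contradicts $\Fix(h)\subset\Lambda(\Gamma)$. We may therefore assume all endpoints move.

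As above, $g_n(p)\ne p$. Passing to a subsequence, $g_n(p)$ lies in a fixed side (left or right) of the fat fence, and \refprop{fatFenceSystem} gives bands $\closure{\cE_{m_n}^\leftslice\setminus\cE_{m_n-2}^\leftslice}$ containing $g_n(\cE_\infty)$ together with the complement of a shadow of $g_n(\cF(g))$. Since $g_n(e_+^L)\in g_n(\cE_\infty)$ converges to $e_+^L\in\cE_\infty\setminus\{p\}$, and the bands meet $\cE_\infty$ only in $p$, we get $m_n\to\infty$; this is a dragging argument modeled on \reflem{dragging}. Then $g_n(e_-)$, which also lies in the $n$th band (as $g_n(\ell)$ sits in the closure of a shadow), must accumulate in $\cE_\infty$. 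But $g_n(e_-)\to e_-\in\ell$, and \refprop{eclipseLProperty} gives $\cE_\infty\cap\ell=\varnothing$ in the two-sided branched case. This is a contradiction.

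The main obstacle is making the nesting of the shadows $\cE^\leftslice_m$ monotone enough that a sequence of bands whose index tends to infinity has all its accumulation points in $\cE_\infty$. We would try to deduce this from nested exhaustion together with $\cE^\leftslice_m\subset\cE^\leftslice_{m-1}\cup\cE_{m-1}$. If this fails, a fallback is \refprop{fatLeafLink}: applied to the arcs $\sfa_n^L$ and $g_n(\sfa_n^L)$, it shows that linking of $\ell$ with $g_n(\ell)$ contradicts \refprop{fatFenceSystem}.
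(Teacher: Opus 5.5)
Your argument is correct, but it takes a different route from the paper's; the fallback you mention at the end is essentially what the paper does.

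\textbf{How the two proofs differ.} The paper never uses $\cE_\infty$ in this proof.
\begin{itemize}
\item It first excludes an $h\in\Gamma$ fixing both endpoints. If $r(h)=e_-$ and $a(h)=e_+^L$, some $h^k(\sfr_-)$ would cross $\opi[\partial\cE_n]{x_n^L}{x_n^R}\subset Z^+$.
\item For an $h$ fixing $e_+^L$, and again for one-sided sequences $h_i(\ell^L)$, it arranges interleaved endpoints on $\Lambda(\Gamma)$ of arcs $\sfa^L_n$ (or $\sfa^R_n$) and their translates. It treats the two sides separately, using the $x^R_n$ when $e_+^R\neq e_+^L$.
\item \refprop{fatLeafLink} then forces $\sfr_-$ and $h(\sfr_-)$ to be linked, contradicting \refprop{fatFenceSystem}.
\end{itemize}
You instead use the eclipse as a grid, as in \reflem{isolatedLeaf} and \reflem{draggingByFatFence}. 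You combine three facts: $e_+^L\in\cE_\infty\setminus\{p\}$ (\refrmk{positiveEndInInfty}); any translate $h(\cE_\infty\cup\ell)$ with $h(p)\neq p$ lies in a shadow band; and bands meet $\cE_\infty$ only at $p$.

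\textbf{What each buys.} Your route is shorter and uniform. One mechanism gives both assertions, and it rules out every sequence $g_n\neq 1$ with $g_n(\ell^L)\to\ell^L$. So you need neither the paper's preliminary two-endpoint step, nor the side-by-side analysis, nor the reduction to sequences with moving endpoints. The paper's route needs only the unlinkedness of $\ell$ with its translates and the local picture of the arcs near $\Lambda(\Gamma)$. It does not need the finer fact that $h(\cE_\infty)$ lies in a band.

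\textbf{Points to tidy.}
\begin{itemize}
\item Your ``delicate point'' is immediate. A shadow of $h(\cF(g))$ is a component of the complement of $h(\ell)\cup h(\cE_n)$. So the complement of that shadow contains $h(\ell)\cup h(\cE_n)\supset h(\ell)\cup h(\cE_\infty)$. The right phrase is ``the complement of a shadow'', not ``the closure of a shadow''.
\item Your ``main obstacle'' is settled by the inclusions $\cE^\leftslice_{m-2}\subset\cE^\leftslice_m$ and $\cE^\rightslice_{m-2}\subset\cE^\rightslice_m$. These give $\closure{\cE^\leftslice_m\setminus\cE^\leftslice_{m-2}}\subset\closure{\cE_{m-2}\setminus\cE_m}\subset\cE_{m-2}$. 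This set meets $\cE_\infty$ only at $p$ by \refprop{eclipseLProperty}, and it misses $\Int\cE_m$. Since $e_+^L\in\bigcap_n\Int\cE_n$, this forces $m_n\to\infty$ and puts every limit point in $\cE_\infty$.
\item Leaves are unordered pairs, so you must also allow $g_n(e_-)\to e_+^L$ and $g_n(e_+^L)\to e_-$. The same argument handles this case, dragging from $g_n(e_-)\in g_n(\ell)$ instead.
\item Drop the aside about $e_-$. The claim that an element fixing $e_-$ stabilises $\ell$ is unjustified. Unlinkedness allows $h(\ell)$ to share a segment of $\ell$ through $e_-$ and then branch off to one side, and a priori $e_-$ may be fixed by a non-trivial element of $\Gamma$. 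Your dragging argument never uses that endpoints move, so you do not need this step. If you want it anyway, do as the paper does: such an element fixes exactly one endpoint of $\ell^L$ by your first part, so \refprop{oneEndFix} already gives isolatedness.
\end{itemize}
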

\begin{proof}
    First observe that there is no element $h\in \Gamma$ such that $h(e_+^L)=e_+^L$ and $h(e_-)=e_-$.
    Indeed, suppose that such an $h$ exists.
    Without loss of generality, we may assume that $r(h)=e_-$ and $a(h)=e_+^L$.
    For each $n\ge N$, the set $\cH^L(\opi[\partial \cE_n]{x_n^L}{x_n^R})$ is an open left neighborhood of $e_+^L$ relative to $\opi[\Lambda(\Gamma)]{x_n^R}{x_n^L}$.
    Since $a(h)=e_+^L$ and $h(\cH)=\cH$, for each $n\ge N$ there exists a sufficiently large $k$ such that $h^k(p)\in \cH^L(\opi[\partial \cE_n]{x_n^L}{x_n^R})$.
    Hence $h^k(\sfr_-)$ intersects $\opi[\partial \cE_n]{x_n^L}{x_n^R}$, contradicting the disjointness of $Z^\pm$.
    Therefore, there is no element of $\Gamma$ fixing both endpoints of $\ell^L$.

    We next claim that $e_+^L$ is not fixed by any non-trivial element of $\Gamma$.
    Suppose that there exists $h\in \Gamma$ such that $h(e_+^L)=e_+^L$.
    Replacing $h$ by $h^{-1}$ if necessary, we may assume that $h(e_-)\in \opi[\Lambda(\Gamma)]{e_+^L}{e_-}$.
    Then there exist $n,m\in\NN$ with $m>2$ such that
    $e_+^L=h(e_+^L)<h(x_{m+1}^L)<h(x_m^L)<x_{n+1}^L<x_n^L<h(e_-)<e_-$ in $\Lambda(\Gamma)$.
    Since $\Fix(h)\subset \Lambda(\Gamma)$ and so $h(p)\ne p$,   \refprop{fatLeafLink} implies that $\sfr_-$ and $h(\sfr_-)$ are linked.
    This contradicts \refprop{fatFenceSystem}.
    Thus no non-trivial element of $\Gamma$ fixes $e_+^L$.

    We now show that $\ell^L$ is isolated in $\cL(\Gamma)$.
    If some non-trivial element of $\Gamma$ fixes $e_-$, then we are done by \refprop{oneEndFix}.
    Hence we may assume that no non-trivial element of $\Gamma$ fixes $e_-$.

    We first show that $\opi[\Lambda(\Gamma)]{e_+^L}{e_-}$ is isolated in $\cL(\Gamma)$.
    Suppose that there exists an $\opi[\Lambda(\Gamma)]{e_+^L}{e_-}$-side sequence $\{\ell_i\}_{i\in\NN}$ of leaves in $\cL(\Gamma)$ such that $\ell_i=h_i(\ell^L)$ for some $h_k\in \Gamma$.
    If infinitely many $\ell_i$ have the same endpoints $\sfe$, then $\sfe\in \cL(\Gamma)$.
    Since $\ell_i\to \ell^L$, it follows that $\sfe$ shares an endpoint with $\ell^L$.
    This contradicts \refprop{oneEndFix}, because no non-trivial element of $\Gamma$ fixes an endpoint of $\ell^L$.
    Passing to a subsequence, we may therefore assume that $\{\ell_i\}_{i\in\NN}$ is pairwise disjoint and contained in $\opi[\Lambda(\Gamma)]{e_+^L}{e_-}$.

   We can choose $\sfn,\sfk,\sfm$ such that either
\[
h_{\sfk}(e_-)<x_{\sfn+1}^L<x_\sfn^L<h_{\sfk}(x_{\sfm+1}^L)<h_{\sfk}(x_{\sfm}^L)<e_-
\quad \text{or}\quad
e_-<h_{\sfk}(x_{\sfm+1}^L)<h_{\sfk}(x_{\sfm}^L)<x_{\sfn+1}^L<x_\sfn^L<h_{\sfk}(e_-)
\]
in $\Lambda(\Gamma)$.
In either case, since $h_{\sfk}(p)\neq p$, \refprop{fatLeafLink} implies that $h_{\sfk}(\sfr_-)$ and $\sfr_-$ are linked, contradicting \refprop{fatFenceSystem}.
Therefore, $\opi[\Lambda(\Gamma)]{e_+^L}{e_-}$ is isolated in $\cL(\Gamma)$.

If $e_+^R=e_+^L$, then by a symmetric argument, $\opi[\Lambda(\Gamma)]{e_-}{e_+^L}$ is also isolated in $\cL(\Gamma)$, completing the proof.

Assume now that $e_+^R\neq e_+^L$ and that $\opi[\Lambda(\Gamma)]{e_-}{e_+^L}$ is not isolated in $\cL(\Gamma)$.
As above, there exists an $\opi[\Lambda(\Gamma)]{e_-}{e_+^L}$-side sequence $\{\lambda_i\}_{i\in\NN}$ such that $\lambda_i=h_i(\ell^L)$ and $\lambda_i\subset \opi[\Lambda(\Gamma)]{e_-}{e_+^L}$ for all $i\in\NN$.
As above, we can choose $\sfn,\sfk,\sfm$ such that either
\[
h_{\sfk}(e_-)<x_\sfn^R<x_{\sfn+1}^R<h_{\sfk}(x_{\sfm+1}^L)<h_{\sfk}(x_{\sfm}^L)<e_-
\quad \text{or}\quad
e_-<h_{\sfk}(x_{\sfm+1}^L)<h_{\sfk}(x_{\sfm}^L)<x_\sfn^R<x_{\sfn+1}^R<h_{\sfk}(e_-)
\]
in $\Lambda(\Gamma)$.
Again this contradicts \refprop{fatLeafLink} and \refprop{fatFenceSystem}.
Therefore, $\opi[\Lambda(\Gamma)]{e_-}{e_+^L}$ is isolated in $\cL(\Gamma)$, and hence $\ell^L$ is isolated in $\cL(\Gamma)$.
\end{proof}

Now, we modify \reflem{dragging} for fat fences:
\begin{lem}[Dragging limit points by fat fences]\label{Lem:draggingByFatFence}
In the setting of \refconst{fatFenceLami}, let $\{g_i\}_{i\in\NN}$ be a sequence in $\Gamma$, and let $x_i,y_i$ be distinct points of $g_i(\cE_\infty\cup \ell)$ that are not fixed points of $g$.
Assume that $g_i(p)\neq p$ for all $i\in\NN$, and that $\{x_i\}_{i\in\NN}$ is a sequence in $\Int \cE_j$ converging to $x\in \cE_\infty\setminus \Fix(g)$.
If $\{y_i\}_{i\in\NN}$ converges to a point $y$, then $y\in \cE_\infty$.
\end{lem}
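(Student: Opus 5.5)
The plan is to follow the proof of \reflem{dragging}, replacing the role of \reflem{separatingFence} by the ``moreover'' part of \refprop{fatFenceSystem}.

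\textbf{Step 1: locating each translate.} Since $g_i(p)\neq p$ for every $i$, \refprop{fatFenceSystem} applies to each $g_i$. It gives an integer $m(i)$ and a side $\sigma(i)\in\{\leftslice,\rightslice\}$ such that
\[
\closure{\cE_{m(i)}^{\sigma(i)}\setminus \cE_{m(i)-2}^{\sigma(i)}}
\]
contains the complement of some shadow of $g_i(\cF(g))$. That complement contains $g_i(\ell)$. It also contains $g_i(\cE_n)$ for all sufficiently large $n$, and hence contains $g_i(\cE_\infty)$. Consequently both $x_i$ and $y_i$ lie in the closed region $\closure{\cE_{m(i)}^{\sigma(i)}\setminus \cE_{m(i)-2}^{\sigma(i)}}$.

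\textbf{Step 2: a separation property for shadow annuli.} I will need the analogue of (2) in \refprop{eclipseLProperty} for these regions. The claim is that
\[
\closure{\cE_m^{\sigma}\setminus \cE_{m-2}^{\sigma}}\cap \cE_\infty\subset\{p\},
\]
possibly together with a bounded subsegment of $\ell$ in the one-sided simplicial case. Here $\ell$ is two-sided branched, so $\cE_\infty\cap\ell=\varnothing$. The region is bounded by pieces of $\partial\cE_m$ and $\partial\cE_{m-2}$, together with a compact subarc of $\ell$. Since the eclipse is nested and $\cE_\infty=\bigcap\cE_n$, I expect to show that the region meets $\cE_n$ only at $p$ once $n>m+1$. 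I also need to check that as $m\to-\infty$ these regions leave $\cE_\infty$ except at $p$. This comes from the nested exhaustion and the fact that $\cE_\infty\setminus\{p\}\subset\bigcap\Int\cE_n$.

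\textbf{Step 3: forcing $m(i)\to\infty$.} We have $x_i\in\Int\cE_j$ with $x_i\to x\in\cE_\infty\setminus\Fix(g)$. The separation property, together with compactness, shows that the indices $m(i)$ cannot stay bounded: a bounded index would force $x$ into a fixed closed region meeting $\cE_\infty$ only at $p$, but $x\neq p$. Taking a subsequence, we may assume $m(i)\to\pm\infty$. The hypothesis $x_i\in\Int\cE_j$ is what rules out $m(i)\to-\infty$, the regime where the regions escape into the complementary shadows and away from $\cE_\infty$. So $m(i)\to+\infty$. Then $y_i\in\closure{\cE_{m(i)}^{\sigma(i)}\setminus\cE_{m(i)-2}^{\sigma(i)}}$, and this region lies in a neighborhood of $\cE_{m(i)-2}\cup\ell$ that shrinks appropriately. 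The intended conclusion is that $y$ lies in $\bigcap_n\cE_n=\cE_\infty$.

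\textbf{Main obstacle.} The delicate point is the bookkeeping of shadows versus eclipse sets. A shadow $\cE_m^{\leftslice}$ lies outside $\cE_m$, so I must check carefully the direction in which shadow annuli approach $\cE_\infty$. Concretely, I must verify that regions with large index accumulate only on $\cE_\infty\cup\ell$, and then exclude $\ell$ as the limit of $y_i$ using $x\notin\Fix(g)$ and the thinness of $\cE_\infty$. If this index convention turns out to be reversed, I would instead use the hypothesis $x_i\in\Int\cE_j$ to pick the opposite end, and pass to subsequences on which $\sigma(i)$ is constant.
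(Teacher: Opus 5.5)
Your proposal follows the paper's proof. You trap $x_i,y_i$ in a region $\closure{\cE_{m(i)}^{\sigma(i)}\setminus \cE_{m(i)-2}^{\sigma(i)}}$ via \refprop{fatFenceSystem}, use \refprop{eclipseLProperty} (via $\cE_\infty\setminus\{p\}\subset\Int\cE_m$) to see that such regions meet $\cE_\infty$ only at $p$ and hence that $m(i)\to+\infty$, and pass to the limit. The index bookkeeping you flag as the main obstacle resolves directly. Shadows increase with the index, so the region lies in $\cE_{m-2}\setminus\Int\cE_m$; this excludes $m(i)\le j$ because $x_i\in\Int\cE_j$, and it gives $y_i\in\cE_{m(i)-2}$, so $y\in\cE_\infty$ and you never need to exclude $\ell$ separately.
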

\begin{proof}
For each $i\in\NN$, by \refprop{fatFenceSystem}, there is $k(i)\in \ZZ$ such that $\closure{\cE_{k(i)}\setminus \cE_{k(i)-2}}$ contains the complement of a shadow of $h(\cF(g))$.
Hence, $\{x_i,y_i\}\subset \closure{\cE_{k(i)}\setminus \cE_{k(i)-2}}$ for all $i\in\NN$.
Since by \refprop{eclipseLProperty}, $\closure{\cE_{k(i)}\setminus \cE_{k(i)-2}}\cap \cE_\infty=\{p\}$ and $x_i\to x\in \cE_\infty$ as $i\to\infty$, we have $k(i)\to+\infty$ as $i\to\infty$.
This implies that $y\in \cE_\infty$ since $\{x_i,y_i\}\subset \closure{\cE_{k(i)}\setminus \cE_{k(i)-2}}$ for all $i\in\NN$. \qedhere
\end{proof}

We then follow the strategy of the proof of \refthm{twoInOne}, together with \refconst{fatFenceLami}, to prove the main theorem of this section:
\begin{thm}[One-sided simpliciality of the axis]\label{Thm:simplicialAxis}
Let $Z^\pm$ be a minimal zipper for $M$. Assume that $g\in G$ fixes a unique point $p$ in $Z^+$ and acts freely on $Z^-$.
Then the $g$-axis $\ell$ in $Z^-$ is one-sided inaccessible.
\end{thm}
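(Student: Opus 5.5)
We will argue by contradiction, following the pattern of the proof of \refthm{twoInOne} with the fat fence of \refconst{fatFenceLami} taking the role of the invariant fence. By \refthm{oneProng}, $g$ admits no $g$-prong in $Z^+$. So if $\ell$ is not one-sided inaccessible, then by \reflem{sideSimplicial} the pivot $p$ has both a left and a right crossing connector. That is, $\ell$ is two-sided branched, and we are in the setting of \refconst{fatFenceLami}.

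\textbf{Setup.} First fix the fat fence $\cF(g)=(\ell,\{\cE_n\}_{n\in\ZZ})$. Then choose $\Gamma$ with $\Lambda(\Gamma)$ separating $p$ from $q$, and form the $\Gamma$-invariant circle lamination $\cL(\Gamma)$ generated by $\ell^L=\{e_+^L,e_-\}$. This is a lamination by \refprop{lamiFromFatFence}, and we pass to its geometric realization on $\cH/\Gamma$ by \refrmk{realization}. By \reflem{isolatedLeftLeaf}, $\ell^L$ is isolated and $e_+^L$ is fixed by no nontrivial element of $\Gamma$. Hence the projected leaf $\sfg$ is not closed, and its end toward $e_+^L$ does not spiral onto a closed curve. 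Then \reflem{closureOfLeaf} gives $\cG''\neq\varnothing$ and shows that this end escapes toward an end of a principal region $U$. By \refprop{gapShape}, there are two cases, exactly as in \refthm{twoInOne}:
\begin{enumerate}
\item a finite ideal polygon chain $e_+^L=x_0,x_1,\dots,x_n=e_-$ of non-isolated boundary leaves;
\item a crown, namely a bi-infinite chain $\{x_i\}$ translated by some $h\in\Gamma$ with $h(x_i)=x_{i+k}$, with $e_-$ either on the chain or outside $\cldi{r(h)}{a(h)}$.
\end{enumerate}

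\textbf{Dragging.} The core step is to propagate membership in $\cE_\infty$ along such a chain using \reflem{draggingByFatFence}. We start from $x_0=e_+^L\in\cE_\infty\setminus\Fix(g)$, which holds by \refrmk{positiveEndInInfty}. Each leaf $\{x_i,x_{i+1}\}$ is a limit of leaves $g_j(\ell^L)$ with $g_j\in\Gamma$, so $g_j(p)\neq p$. The endpoints of $g_j(\ell^L)$ lie in $g_j(\cE_\infty\cup\ell)$, which follows from \refrmk{positiveEndInInfty} together with $e_-\in\ell$. Therefore, if $x_i\in\cE_\infty\setminus\Fix(g)$, then \reflem{draggingByFatFence} gives $x_{i+1}\in\cE_\infty$. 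Inductively, every $x_i$ lies in $\cE_\infty$, and none of them is a fixed point of $g$, since all lie on $\Lambda(\Gamma)$.

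\textbf{Contradictions.} In case (1), and in case (2) with $e_-$ on the chain, the argument ends with $e_-\in\cE_\infty$. This contradicts \refprop{eclipseLProperty}, which says that $\cE_\infty\cap\ell=\varnothing$ when $\ell$ is two-sided branched. In the remaining crown case, we get $x_k=h(e_+^L)\in\cE_\infty$ with $h(p)\neq p$. On the other hand, \refprop{fatFenceSystem} places the complement of a shadow of $h(\cF(g))$, which contains $h(\cE_\infty)\ni h(e_+^L)$, inside some $\closure{\cE_m^\leftslice\setminus\cE_{m-2}^\leftslice}$ or its right analogue. Combined with the separation property of \refprop{eclipseLProperty}, this forces $h(e_+^L)\in\{p\}$, which is absurd.

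\textbf{Main obstacle.} The delicate point is the hypothesis of \reflem{draggingByFatFence} that the approximating points lie in some fixed $\Int\cE_j$. Near $x_i\in\cE_\infty$ this should follow because $\cE_\infty\subset\Int\cE_{j-1}$ away from $p$, but we must rule out accumulation at $p$ itself. This holds because $x_i\in\Lambda(\Gamma)$ while $p\in\cH$. We also need to keep track of which endpoint of the approximating leaf $g_j(\ell^L)$ converges to which of $x_i,x_{i+1}$. Both endpoints lie in $g_j(\cE_\infty\cup\ell)$, so either assignment is admissible.
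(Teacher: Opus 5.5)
Your proposal is correct and follows essentially the same route as the paper. You reduce to the two-sided branched case via the lemma on one-sided inaccessible axes, build the lamination generated by $\ell^L$ from a fat fence, and use the isolation of $\ell^L$ together with the ideal-polygon/crown dichotomy. You then drag membership in $\mathcal{E}_\infty$ along the chain of non-isolated boundary leaves, contradicting either $\mathcal{E}_\infty\cap\ell=\varnothing$ or the invariant fat fence system.

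Your extra check that the approximating endpoints eventually lie in a fixed $\operatorname{Int}\mathcal{E}_j$ is a detail the paper leaves implicit. It holds because $x_i\in\Lambda(\Gamma)$ forces $x_i\neq p$, and $\mathcal{E}_\infty\setminus\{p\}\subset\bigcap_n\operatorname{Int}\mathcal{E}_n$ in the two-sided branched case.
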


\begin{proof}
    By \refrmk{noSeg} and \reflem{sideSimplicial}, it suffices to show that $\ell$ is not two-sided branched.
    Assume for contradiction that $\ell$ is two-sided branched.
    By \refconst{fatFenceLami} and \refprop{lamiFromFatFence}, applied to a fat fence $\cF(g)=(\ell,\{\cE_n\}_{n\in \ZZ})$ of $g$, we obtain a $\Gamma$-invariant circle lamination $\cL(\Gamma)$ on the limit set $\Lambda(\Gamma)$ of a quasi-Fuchsian closed surface group $\Gamma$.
    For convenience, we use the notation of \refconst{fatFenceLami}.
    By \refrmk{realization}, there exists a unique geodesic lamination $\cG(\Gamma)$ on $\cH/\Gamma$ whose lift $\widetilde{\cG}(\Gamma)$ to $\cH$ is the geometric realization of $\cL(\Gamma)$.

    By \reflem{isolatedLeftLeaf}, $\ell^L$ is isolated in $\cL(\Gamma)$, and there is no element of $\Gamma$ that fixes $e_+^L$.
    Applying \reflem{closureOfLeaf} to $\cG(\Gamma)$, we see that $\cG''(\Gamma)\neq\varnothing$ and that there is a unique principal region $U$ of $\cG''(\Gamma)$ such that either the bi-infinite geodesic leaf $\sfg$ of $\cG(\Gamma)$ associated with $\ell^L$ is a boundary leaf of $U$, or $\sfg$ is contained in $U$, while the end associated with $e_+^L$ escapes toward an end of $U$.
Hence, by \refprop{gapShape}, we have two possible cases:
\begin{enumerate}
    \item\label{Itm:idealPolygon2}($U$ is a finite ideal polygon) there are circularly ordered finitely many points in $\Lambda(\Gamma)$,
    \[
    u_{-\sfm}<u_{-(\sfm-1)}<\cdots<u_0<\cdots<u_{\sfn-1}<u_\sfn
    \]
    for some $\sfm,\sfn\geq 2$, such that $\{u_i,u_{i+1}\}\in \cL(\Gamma)$ are boundary leaves but are not isolated, $u_0=e_+^L$, and $u_{-\sfm}=u_\sfn=e_-$;

    \item\label{Itm:crown2}($U$ admits a core) there are an element $h\in \Gamma$, $k\in \NN$, and a circularly ordered bi-infinite sequence $\{u_i\}_{i\in \ZZ}$ in $\Lambda(\Gamma)$ such that
    \[
    r(h)<\cdots<u_{-i}<\cdots<u_0<\cdots<u_i<\cdots<a(h)\text{ in }\Lambda(\Gamma),
    \]
    $\{u_i,u_{i+1}\}\in \cL(\Gamma)$ are boundary leaves but are not isolated, $u_0=e_+^L$, and $h(u_i)=u_{i+k}$ for all $i\in \ZZ$.
\end{enumerate}

We first consider \refitm{idealPolygon2}. By the construction of $\cL(\Gamma)$, there is a sequence $\{g_i\}_{i\in \NN}$ of elements of $\Gamma$ such that $\{g_i(\ell^L)\}_{i\in\NN}$ is a $\opi[\Lambda(\Gamma)]{u_0}{u_1}$-side sequence in $\cL(\Gamma)$. Note that  $g_i(p)\neq p$ since $\Fix(g_i)\subset \Lambda(\Gamma)$. Since by \refrmk{positiveEndInInfty}, $e_+^L=u_0\in \cE_\infty\setminus \Fix(g)$ and $e_-\in \ell$, applying \refprop{fatFenceSystem} and \reflem{draggingByFatFence} to $\{g_i(\ell^L)\}_{i\in\NN}$ gives $u_1\in \cE_\infty$. Moreover, $u_1\in \cE_\infty\setminus \Fix(g)$ since $\Fix(g)\cap \Lambda(\Gamma)=\varnothing$.

Repeating the same argument inductively for $\{u_{i-1},u_{i}\}$, we obtain $u_i\in \cE_\infty\setminus \Fix(g)$ for all $i\in\{2,\ldots,\sfn\}$, and hence $e_-\in \cE_\infty$. This contradicts \refprop{eclipseLProperty}, since $\cE_\infty\cap \ell=\varnothing$. Thus \refitm{idealPolygon2} cannot occur.

We next consider \refitm{crown2}. There are two possibilities for the position of $e_-$: either $e_-=u_i$ for some $i\notin \{-1,0,1\}$, or $e_-\in \cldi[\Lambda(\Gamma)]{a(h)}{r(h)}$.

Suppose first that $e_-=u_i$ for some $i\notin \{-1,0,1\}$. Then $u_0=e_+^L$ and $u_i=e_-$ are connected by a finite chain of boundary leaves of the form $\{u_{j-1},u_{j}\}$. Since each interval $\opi[\Lambda(\Gamma)]{u_{j-1}}{u_{j}}$ is not isolated in $\cL(\Gamma)$, we may apply \reflem{draggingByFatFence} successively along this chain, exactly as above. This rules out the possibility that $e_-=u_i$ for some $i\notin \{-1,0,1\}$.

It remains to consider the case where $e_-\in \cldi[\Lambda(\Gamma)]{a(h)}{r(h)}$. Observe that $h(\ell^L)$ is also a leaf of $\cL(\Gamma)$ and $h(e_+^L)=h(u_0)=u_k$. Applying the same argument to the consecutive boundary leaves $\{u_{j-1},u_{j}\}$ for $j=1,\dots,k$, we obtain $u_k\in \cE_\infty\setminus \Fix(g)$. On the other hand, by \refprop{fatFenceSystem}, the complement of a shadow of $h(\cF(g))$ is contained in $\closure{\cE_j\setminus \cE_{j+2}}$ for some $j\in \ZZ$, and hence $u_k\in \closure{\cE_j\setminus \cE_{j+2}}$. By \refprop{eclipseLProperty}, this contradicts $u_k\in \cE_\infty\setminus \Fix(g)$. Thus \refitm{crown2} also cannot occur.
This completes the proof.
\end{proof}

\section{No Prong in $Z^+$ with a One-Sided Simplicial Axis in $Z^-$}\label{Sec:noprongOneside}
In this final section, we complete the proof of \refthm{oneInOne} by treating the third case described at the beginning of \refsec{oneprong}. In \refsec{oneprong}, because of the failure of two-sided inaccessibility of the invariant $g$-prong, we had to analyze the bouncing direction carefully in \refsec{uniformBouncing}. In \refsec{noprongTwoside}, because there is no invariant $g$-prong, and hence no invariant fence, we introduced fat fences. In the present case, both difficulties arise: there is no $g$-prong by \refthm{oneProng}, and the $g$-axis $\ell$ is one-sided simplicial by \refthm{simplicialAxis}. We therefore combine the modifications developed in \refsec{oneprong} and \refsec{noprongTwoside}.

We begin by introducing a modified invariant fence, called a \emph{squashed fence}.

\subsection{Squashed Fences}
Let $Z^\pm$ be a minimal zipper for $M$. Assume that $g\in G$ fixes exactly one point $p$ in $Z^+$ and acts freely on $Z^-$. By \refthm{simplicialAxis}, the $g$-axis $\ell$ is one-sided inaccessible. Hence, there are no left or right branches at $p$. Recall that the canonical orientation on $\ell$ is the linear order increasing toward $p$.

A \emph{squashed fence} $\fF(g)$ of $g$ is defined to be a pair $(\ell,\{\cE_n\}_{n\in \ZZ})$ such that $\{\cE_n\}_{n\in \ZZ}$ is an eclipse along $\ell$, constructed in \refconst{eclipseL}. The squashed fence $\fF(g)$ is $g$-invariant, that is,
\[
g(\fF(g))=(g(\ell),\{g(\cE_n)\}_{n\in\ZZ})=(\ell,\{\cE_n\}_{n\in\ZZ})=\fF(g).
\]
One should think of $\closure{\ell}$ as a degenerate 2-fold fence bounding a type~I Jordan domain with pivot $p$.
See \reffig{squashedFence}.
This explains the prefix ``squashed''.

\begin{figure}[htpb]
    \centering
\tikzset{every picture/.style={line width=0.75pt}} %set default line width to 0.75pt        

\begin{tikzpicture}[x=0.75pt,y=0.75pt,yscale=-1,xscale=1]
%uncomment if require: \path (0,349); %set diagram left start at 0, and has height of 349

%Shape: Circle [id:dp37561613816206396] 
\draw  [color={rgb, 255:red, 74; green, 144; blue, 226 }  ,draw opacity=1 ][fill={rgb, 255:red, 208; green, 2; blue, 27 }  ,fill opacity=1 ][line width=0.75]  (238.86,100.47) .. controls (238.86,98.97) and (240.07,97.76) .. (241.57,97.76) .. controls (243.06,97.76) and (244.27,98.97) .. (244.27,100.47) .. controls (244.27,101.96) and (243.06,103.18) .. (241.57,103.18) .. controls (240.07,103.18) and (238.86,101.96) .. (238.86,100.47) -- cycle ;
%Shape: Circle [id:dp47413110758019605] 
\draw  [color={rgb, 255:red, 74; green, 144; blue, 226 }  ,draw opacity=1 ][fill={rgb, 255:red, 255; green, 255; blue, 255 }  ,fill opacity=1 ][line width=0.75]  (222.86,230.47) .. controls (222.86,228.97) and (224.07,227.76) .. (225.57,227.76) .. controls (227.06,227.76) and (228.27,228.97) .. (228.27,230.47) .. controls (228.27,231.96) and (227.06,233.18) .. (225.57,233.18) .. controls (224.07,233.18) and (222.86,231.96) .. (222.86,230.47) -- cycle ;
%Shape: Circle [id:dp13030945615819356] 
\draw  [color={rgb, 255:red, 74; green, 144; blue, 226 }  ,draw opacity=1 ][fill={rgb, 255:red, 208; green, 2; blue, 27 }  ,fill opacity=1 ][line width=0.75]  (211.86,232.47) .. controls (211.86,230.97) and (213.07,229.76) .. (214.57,229.76) .. controls (216.06,229.76) and (217.27,230.97) .. (217.27,232.47) .. controls (217.27,233.96) and (216.06,235.18) .. (214.57,235.18) .. controls (213.07,235.18) and (211.86,233.96) .. (211.86,232.47) -- cycle ;
%Curve Lines [id:da6120957118068008] 
\draw [color={rgb, 255:red, 74; green, 144; blue, 226 }  ,draw opacity=1 ]   (240.4,102.6) .. controls (219.4,139.6) and (240.4,171.6) .. (226.4,228.6) ;
%Shape: Circle [id:dp9153658817180073] 
\draw   (161,173.7) .. controls (161,121.95) and (202.95,80) .. (254.7,80) .. controls (306.45,80) and (348.4,121.95) .. (348.4,173.7) .. controls (348.4,225.45) and (306.45,267.4) .. (254.7,267.4) .. controls (202.95,267.4) and (161,225.45) .. (161,173.7) -- cycle ;
%Shape: Ellipse [id:dp5800413884253695] 
\draw  [dash pattern={on 0.84pt off 2.51pt}] (161.4,177) .. controls (161.4,165.95) and (203.04,157) .. (254.4,157) .. controls (305.76,157) and (347.4,165.95) .. (347.4,177) .. controls (347.4,188.05) and (305.76,197) .. (254.4,197) .. controls (203.04,197) and (161.4,188.05) .. (161.4,177) -- cycle ;
%Curve Lines [id:da9362137771419227] 
\draw [color={rgb, 255:red, 208; green, 2; blue, 27 }  ,draw opacity=1 ] [dash pattern={on 4.5pt off 4.5pt}]  (276.4,82.6) .. controls (309.8,93.2) and (314.4,138) .. (301.4,175) .. controls (288.4,212) and (347.4,203.6) .. (321.4,238.6) ;
%Curve Lines [id:da8632431240286986] 
\draw [color={rgb, 255:red, 208; green, 2; blue, 27 }  ,draw opacity=1 ]   (239.4,99.6) .. controls (205.4,95.6) and (203.4,112.6) .. (188.4,128.6) .. controls (173.4,144.6) and (160.4,175.6) .. (183.4,183.6) .. controls (206.4,191.6) and (190.4,219.6) .. (207.4,215.6) ;
%Curve Lines [id:da27020448490298454] 
\draw [color={rgb, 255:red, 208; green, 2; blue, 27 }  ,draw opacity=1 ]   (205.4,171.2) .. controls (199.4,145.2) and (193.4,133.8) .. (210.4,121.8) .. controls (227.4,109.8) and (203.4,100.8) .. (239.4,100.8) ;
%Shape: Circle [id:dp8230622608568857] 
\draw  [color={rgb, 255:red, 74; green, 144; blue, 226 }  ,draw opacity=1 ][fill={rgb, 255:red, 208; green, 2; blue, 27 }  ,fill opacity=1 ][line width=0.75]  (203.86,171.47) .. controls (203.86,169.97) and (205.07,168.76) .. (206.57,168.76) .. controls (208.06,168.76) and (209.27,169.97) .. (209.27,171.47) .. controls (209.27,172.96) and (208.06,174.18) .. (206.57,174.18) .. controls (205.07,174.18) and (203.86,172.96) .. (203.86,171.47) -- cycle ;
%Curve Lines [id:da05235088198418669] 
\draw [color={rgb, 255:red, 208; green, 2; blue, 27 }  ,draw opacity=1 ]   (213.4,233.6) .. controls (205.4,262.2) and (229.4,250.6) .. (250.4,254.6) .. controls (271.4,258.6) and (303.4,255.6) .. (317.4,243.6) ;
%Shape: Circle [id:dp0387474551879331] 
\draw  [color={rgb, 255:red, 74; green, 144; blue, 226 }  ,draw opacity=1 ][fill={rgb, 255:red, 208; green, 2; blue, 27 }  ,fill opacity=1 ][line width=0.75]  (205.86,216.47) .. controls (205.86,214.97) and (207.07,213.76) .. (208.57,213.76) .. controls (210.06,213.76) and (211.27,214.97) .. (211.27,216.47) .. controls (211.27,217.96) and (210.06,219.18) .. (208.57,219.18) .. controls (207.07,219.18) and (205.86,217.96) .. (205.86,216.47) -- cycle ;
%Curve Lines [id:da08268499151499409] 
\draw [color={rgb, 255:red, 74; green, 144; blue, 226 }  ,draw opacity=1 ]   (208.4,173.6) .. controls (218.4,175.6) and (214.4,188.6) .. (231.4,185.6) ;
%Curve Lines [id:da10724838593899855] 
\draw [color={rgb, 255:red, 74; green, 144; blue, 226 }  ,draw opacity=1 ]   (211.4,215.6) .. controls (221.4,217.6) and (214.4,201.6) .. (231.4,198.6) ;
%Curve Lines [id:da2352955400928708] 
\draw [color={rgb, 255:red, 208; green, 2; blue, 27 }  ,draw opacity=1 ]   (244.4,99.6) .. controls (261.4,92.6) and (257.4,80.6) .. (274.4,82.6) ;
%Curve Lines [id:da2534607927255955] 
\draw [color={rgb, 255:red, 74; green, 144; blue, 226 }  ,draw opacity=1 ]   (216.4,231.6) .. controls (223.4,221.6) and (217.4,216.6) .. (229.4,214.6) ;
%Curve Lines [id:da7230066601358913] 
\draw [color={rgb, 255:red, 74; green, 144; blue, 226 }  ,draw opacity=1 ]   (221.4,237.6) .. controls (217.4,232.6) and (221.4,223.6) .. (227.4,222.6) ;
%Shape: Circle [id:dp6120974390183166] 
\draw  [color={rgb, 255:red, 74; green, 144; blue, 226 }  ,draw opacity=1 ][fill={rgb, 255:red, 208; green, 2; blue, 27 }  ,fill opacity=1 ][line width=0.75]  (219.86,238.47) .. controls (219.86,236.97) and (221.07,235.76) .. (222.57,235.76) .. controls (224.06,235.76) and (225.27,236.97) .. (225.27,238.47) .. controls (225.27,239.96) and (224.06,241.18) .. (222.57,241.18) .. controls (221.07,241.18) and (219.86,239.96) .. (219.86,238.47) -- cycle ;
%Curve Lines [id:da7505567525356747] 
\draw [color={rgb, 255:red, 208; green, 2; blue, 27 }  ,draw opacity=1 ]   (225.4,239.6) .. controls (251.4,243.6) and (262.4,191.6) .. (249.4,171.6) .. controls (236.4,151.6) and (258.4,123.4) .. (243.4,102.6) ;

% Text Node
\draw (237,82) node [anchor=north west][inner sep=0.75pt]  [font=\normalsize,color={rgb, 255:red, 208; green, 2; blue, 27 }  ,opacity=1 ]  {$p$};
% Text Node
\draw (231,220) node [anchor=north west][inner sep=0.75pt]  [font=\normalsize,color={rgb, 255:red, 0; green, 0; blue, 0 }  ,opacity=1 ]  {$q$};
% Text Node
\draw (221,125) node [anchor=north west][inner sep=0.75pt]  [font=\normalsize,color={rgb, 255:red, 74; green, 144; blue, 226 }  ,opacity=1 ]  {$\ell $};
% Text Node
\draw (133,157) node [anchor=north west][inner sep=0.75pt]  [font=\normalsize,color={rgb, 255:red, 0; green, 0; blue, 0 }  ,opacity=1 ]  {$S_{\infty }^{2}$};

\end{tikzpicture}
    \caption{A ``squashed fence" with $\ell$ right inaccessible}
    \label{Fig:squashedFence}
\end{figure}
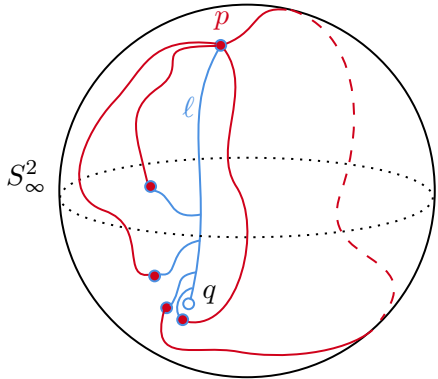

From now on, we study the unlinkedness of squashed fences.
\begin{prop}(Unlinkedness between fences and squashed fences)\label{Prop:squashedNonSquashed}
Let $Z^\pm$ be a minimal zipper for $M$. Assume that $\fF(g)=(\ell,\{\cE_n\}_{n\in \ZZ})$ is a squashed fence of $g\in G$ and $\cJ$ is a fence of the zipper, whose nodes are in $Z^+$.
Write $p$ for the pivot of $\ell$ and $\Fix(g)=\{p,q\}$.
Then, a Jordan domain $\cD$ of $\cJ$, contains $q$, and  $\closure{\cD}$ contains $\cE_n$ for all sufficiently large $n$.
\end{prop}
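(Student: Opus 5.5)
The argument follows the proof of \refprop{fatNonFat}, adapted to the one-sided simplicial setting where $\cE_n$ is bounded by a fence $\delta\cup r$ built from a single type~2 connector $\delta$ and an end ray $r$ of $\ell$ (\refconst{eclipseL}).

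\emph{Step 1: locating $q$.} By the thinness part of \refprop{eclipseLProperty}, $q$ is neither a synapse nor a point of $Z^\pm$. Every point of $\cJ$ is either in $Z^+\cup Z^-$ or is a node, and the nodes lie in $Z^+$. Hence $q\notin\cJ$, so $q$ lies in a Jordan domain $\cD$ of $\cJ$. Orient $\cJ$ so that $\cD=\cD^L(\cJ)$.

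\emph{Step 2: the $(-)$-parts of $\partial\cE_n$ enter $\cD$.} Since $g$ (or $g^{-1}$) contracts compact sets away from $p$ toward $q$, the connector $\delta_n$ bounding $\cE_n$ satisfies $\closure{\delta_n^-}\subset\cD$ for all large $n$. This is the same observation used in the proof of \refprop{eclipseDProperty}: the landing point on $\ell$ converges to $q$. I must check that the end ray of $\ell$ in $\partial\cE_n$ lies essentially in $\closure{\cD}$. Here I would apply the second half of the argument of \refprop{fatNonFat}. Either $\ell\cap\cJ^-=\varnothing$, in which case $\ell\subset\cD$ by disjointness of $Z^\pm$ and since $\ell$ accumulates on $q\in\cD$. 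Or some $s\in\ell\cap\cJ^-$ exists, and then \refprop{enclosing}, applied to the end rays of $\ell$ from $s$ (one of which lands at $q\in\cD$), gives $\ell\subset\cD\cup\cJ^-$. The other end ray lands at $p$. If $p\in\cD^R(\cJ)$, that ray would have to cross $\cJ$, which is impossible except through $\cJ^-$ by disjointness; so \refprop{enclosing} again confines it.

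\emph{Step 3: the $(+)$-part.} The $(+)$-side of $\partial\cE_n$ is the arc $[p,\delta_n(1/2)]$ in $Z^+$ when the synapse lies in $Z^+$. Unlike the fat case, $p$ itself may lie on $\cJ$, which is why the conclusion is only $\cE_n\subset\closure{\cD}$.
\begin{itemize}
\item If $p\in\cD^R(\cJ)$: the node $\delta_n(1/2)$ lies in $\cD$, because $\delta_n^-\subset\cD$ and the synapse is the endpoint of that ray. The arc $[p,\delta_n(1/2)]$ would then meet $\cJ^+$, and $\ell$ would have to meet $\cJ$ near $p$. Combining with $[p,\cdot]$ arcs for two different $n$ (compare the argument in \refprop{fatNonFat}) produces a loop in $Z^+$, a contradiction.
\item Otherwise $p\in\closure{\cD}$: applying \refprop{enclosing} to $[p,\delta_n(1/2)]$ gives $\partial\cE_n\subset\closure{\cD}$.
\end{itemize}
Thus either $\cE_n\subset\closure{\cD}$ or $\cD^R(\cJ)\subset\cE_n$. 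The latter fails for large $n$, since $\bigcap_n\Int\cE_n$ contains no point of $Z^\pm$ (\refprop{eclipseLProperty}) while $\cD^R(\cJ)$ contains points of $Z^-$ by density.

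\emph{Main obstacle.} I expect the hardest step to be ruling out $p\in\cD^R(\cJ)$ when $\cE_n$ has only one connector. In the fat case there were two nodes $\sfn_1,\sfn_2$ giving a cycle in $\cE_n^+\cup\cJ^+$. Here I would instead use two levels $n<m$: the arcs $[p,\delta_n(1/2)]$ and $[p,\delta_m(1/2)]=g^{m-n}[p,\delta_n(1/2)]$ both cross $\cJ^+$. Together with the one-sided simpliciality of $\ell$ (no branches at $p$ on the simplicial side), these should force a Jordan curve in $Z^+$, or else an accessing ray to the inaccessible side of $\ell$, contradicting \reflem{sideSimplicial}.
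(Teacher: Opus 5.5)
Your outline (locate $q$, push the $(-)$-part of $\partial\cE_n$ into $\cD$, show $\partial\cE_n\subset\closure{\cD}$, conclude by thinness) is the paper's. Step~1, your $(+)$-arc argument when $p\in\closure{\cD}$, and the closing thinness argument are all fine.

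The genuine gap is the piece of $\partial\cE_n$ lying on the axis: the end ray $r_n$ of $\ell$ that lands at $p$. In Step~2, when some $s\in\ell\cap\cJ^-$ exists, \refprop{enclosing} applies to the end ray from $s$ to $p$ only if that ray lands in $\cD$, or lands on $\cJ^+$ from the $\cD$-side.
\begin{itemize}
\item If $p\in\cD^R(\cJ)$, the proposition does not apply at all, so ``\refprop{enclosing} again confines it'' is unjustified.
\item If $p\in\cJ^+$, you never check from which side $\ell$ approaches $p$. Your second bullet in Step~3 only places $\delta_n^+=[\delta_n(1/2),p]$ in $\closure{\cD}$, and tacitly takes $r_n\subset\closure{\cD}$ from Step~2.
\end{itemize}

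The missing idea, which is the first step of the paper's proof, is to use the one-sided inaccessibility of $\ell$ (\refthm{simplicialAxis}) directly. Suppose $\ell$ met both complementary domains of $\cJ$, and take $x_L,x_R\in\ell$ on opposite sides. The arc $[x_L,x_R]\subset Z^-$ must meet $\cJ$, and it can only do so in $\cJ^-$. Since its ends lie on opposite sides, $\cJ^-$ crosses it: it leaves $[x_L,x_R]$ into a left branch at one end and a right branch at the other. This is impossible because $\ell$ is one-sided simplicial. Hence $\ell$ lies in the closure of a single Jordan domain of $\cJ$, and that domain is $\cD$ because $q\in\closure{\ell}$. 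This gives $p\in\closure{\cD}$ and $r_n\subset\closure{\cD}$ at once. Your second bullet and the thinness argument then finish exactly as in the paper.

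Your two-level idea for excluding $p\in\cD^R(\cJ)$ can be completed, but it needs an ingredient you did not isolate: $\delta_n^+$ and $\delta_m^+$ meet only at $p$. This follows from the absence of a $g$-prong (\refthm{oneProng}).
\begin{itemize}
\item We have $\delta_m^+=g^{k}(\delta_n^+)$ for some $k\neq 0$.
\item Suppose $g^k(B)=B$, where $B$ is the branch at $p$ containing $\delta_n^+\setminus\{p\}$. Then \refprop{invBranch} gives a $g^k$-prong $\gamma$ from $p$.
\item $g(\gamma)$ is another $g^k$-prong from $p$, so $g(\gamma)=\gamma$ by \refprop{oneProng}, and $\gamma$ would be a $g$-prong.
\end{itemize}
With this in hand, pick $a_n\in\delta_n^+\cap\cJ^+$ and $a_m\in\delta_m^+\cap\cJ^+$. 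Since $p\notin\cJ$, these points are distinct and differ from $p$. The subarc of $\cJ^+$ joining them and the arc $[a_n,p]\cup[p,a_m]$ are then embedded arcs in $Z^+$ with the same endpoints. By uniqueness of arcs they coincide, forcing $p\in\cJ^+$, a contradiction. Without the disjointness, the two arcs could share an initial segment and no loop appears.

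Even when repaired, this route does not handle $p\in\cJ^+$ with $\ell$ approaching $p$ from the $\cD^R(\cJ)$-side. So the one-sided inaccessibility of $\ell$ has to be invoked anyway, and once it is, the two-level argument is unnecessary.
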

\begin{proof}
We first observe that $\ell$ is contained in the closure of a Jordan domain of $\cJ$. Assume not. Choose a circular order on $\cJ$. Then there exist $x_L,x_R\in \ell$ such that $x_\alpha\in \cD^\alpha(\cJ)$ for each $\alpha\in\{L,R\}$. Since $\cJ$ separates $x_L,x_R$, the arc $[x_L,x_R]\subset Z^-$ intersects $\cJ$. By the disjointness of $Z^\pm$, it does not intersect $\cJ^+$. Hence $[x_L,x_R]$ intersects $\cJ^-$ and is linked with $\cJ^-$. This contradicts the one-sided inaccessibility of $\ell$ (\refthm{simplicialAxis}). Thus there exists a Jordan domain $\cD$ of $\cJ$ whose closure contains $\ell$.

For each $n\in\ZZ$, let $\sigma_n$ be the connector arc in $\partial \cE_n$ crossing $\ell$. Since $q\notin Z^\pm$, we have $q\notin \cJ$ and $q\in \cD$. Hence there exists $N\in\NN$ such that $\sigma_n^-\subset \cD$ for all $n\ge N$. Fix $n\ge N$, and write $\partial\sigma_n^+=\{p,q_n\}$.

Now we claim that $\partial \cE_n\subset \overline{\cD}$ for every $n\ge N$. Hence either $\cE_n\subset \cD$ or $\cD\subset \cE_n$. By the thinness of $\cE_\infty$ in \refprop{eclipseLProperty}, we have $\cE_n\subset \cD$ for all sufficiently large $n$. This completes the proof.

If $p\in \cJ^+$, then \refprop{enclosing} gives $\sigma_n^+\subset \overline{\cD}$, and so $\partial \cE_n\subset \overline{\cD}$. Assume that $p\notin \cJ^+$, so $p\in \cD$. If $\sigma_n^+\cap \cJ^+=\varnothing$, then $\partial \cE_n\subset \overline{\cD}$. Otherwise, choose $s\in \sigma_n^+\cap \cJ^+$. Applying \refprop{enclosing} to the subarcs $[s,p]$ and $[s,q_n]$ of $\sigma_n^+$, we obtain $\partial \cE_n\subset \overline{\cD}$. Thus the claim follows.
\end{proof}

A pair of squashed fences, $\fF(g_i)=(\ell^i,\{\cE_n^i\}_{n\in\ZZ})$, $i\in \ZZ/2\ZZ$, is said to be \emph{unlinked} if either $\ell^1=\ell^2$, or there exist $j,k\in \ZZ$ such that $(\cE_j^2)^c$ contains $\Int \cE_k^1$.

\begin{rmk}\label{Rmk:sameAxis2}
In the first case, $\Fix(g_1)=\Fix(g_2)$.
Since $G$ is discrete in $\PSL(\CC)$, there exists a loxodromic element $h\in G$ that generates $\Stab{G}{\Fix(g_1)}$, and hence $\{g_1,g_2\}\subset \langle h\rangle$.
Therefore, $\ell^1$ is the axis of $h^k$ for every nonzero $h^k\in Z^-$.
Moreover, for any squashed fences
$\fF(h^{n_1})=(\ell^1,\{\cE_n\}_{n\in \ZZ})$ and
$\fF(h^{n_2})=(\ell^1,\{\cI_n\}_{n\in \ZZ})$,
we have $\cI_\infty=\cE_\infty$.
Indeed, any squashed fence $\fF(h)=(\ell^1,\{\cE_n\}_{n\in \ZZ})$ of $h$ is preserved by every element of $\langle h\rangle$.
\end{rmk}

\begin{prop}[Invariant squashed fence system]\label{Prop:squashedFenceSystem}
Let $Z^\pm$ be a minimal zipper for $M$.
Assume that $\fF(g)=(\ell,\{\cE_n\}_{n\in\ZZ})$ is a squashed fence of $g\in G$.
Write $p$ for the pivot of $\ell$, and $\Fix(g)=\{p,q\}$.
Then for any $h\in G$, the two squashed fences
\(
\fF(g)
\) and \(
h(\fF(g))=\fF(hgh^{-1})=(h(\ell),\{h(\cE_n)\}_{n\in\ZZ})
\)
are unlinked.
In particular, if $p\neq h(p)$, then there exist $i,j\in \ZZ$ such that
\(
h(\cE_j)\subset \closure{\cE_i\setminus \cE_{i+1}}.
\)
Hence
\(
h(\cE_\infty)\subset \closure{\cE_i\setminus \cE_{i+1}}.
\)
\end{prop}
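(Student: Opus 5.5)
We follow the pattern of \refprop{fatFenceSystem}, with \refprop{squashedNonSquashed} playing the role of \refprop{fatNonFat}. Let $h\in G$.

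\emph{The case $h(p)=p$.} Both $g$ and $hgh^{-1}$ fix $p$. By discreteness of $G<\PSL(\CC)$ they share both fixed points, so $h(\ell)$ is the axis in $Z^-$ of $hgh^{-1}\in\Stab{G}{\Fix(g)}$. By \refrmk{sameAxis2} this axis is $\ell$, and the two squashed fences are unlinked by definition.

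\emph{The case $h(p)\neq p$.}

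\textbf{Locating $h(p)$.} By the thinness of the end in \refprop{eclipseLProperty}, $h(p)\notin\bigcap_n\Int\cE_n$. Since $\ell\subset Z^-$, we also have $h(p)\notin\ell$. By the description of $\cE_\infty$ in the one-sided simplicial case, $h(p)\notin\cE_\infty$. Nested exhaustion gives $\cE_{-\infty}=S_\infty^2$, so there is a unique $i$ with $h(p)\in\cE_i\setminus\cE_{i+1}$.

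\textbf{Finding a separating fence.} We look for a fence $\cJ$ with nodes in $Z^+$ that separates $h(p)$ from $\cE_\infty\setminus\{p\}$ and is contained in $\closure{\cE_i\setminus\cE_{i+1}}$. The natural candidate is assembled from $\partial\cE_i$, $\partial\cE_{i+1}$ and the arc of $\ell$ between their connectors. Its interior Jordan domain $\fD$ is then a component of $\Int(\cE_i\setminus\cE_{i+1})$ containing $h(p)$, and the nodes of $\partial\fD$ are synapses in $Z^+$ by \refconst{eclipseL}. If $h(p)$ happens to lie on one of these boundary arcs, we replace $i$ by $i\pm1$, or shift to $\cE_{i-1}\setminus\cE_{i+2}$, exactly as the width-$3$ annuli are used in \reflem{separatingFence}.

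\textbf{Applying \refprop{squashedNonSquashed}.} Apply it to the squashed fence $h(\fF(g))$ and the fence $\partial\fD$. It gives a Jordan domain $\cD$ of $\partial\fD$ that contains $h(q)$ and whose closure contains $h(\cE_j)$ for all large $j$. We must show $\cD=\fD$. The pivot $h(p)$ lies in $\fD$, and the proof of \refprop{squashedNonSquashed} shows that $\partial h(\cE_j)$ contains $(+)$-arcs from $h(p)$ to the nodes, all lying in $\closure{\cD}$. If $\cD$ were the exterior domain, these arcs together with $\fD^+$ would form a loop in $Z^+$ (as in the proof of \refprop{fatNonFat}), contradicting unique path-connectedness; alternatively, $h(\cE_j)$ would contain $\fD$ while shrinking to a thin set. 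Hence $h(\cE_j)\subset\closure{\fD}\subset\closure{\cE_i\setminus\cE_{i+1}}$ for large $j$. This yields $h(\cE_\infty)\subset\closure{\cE_i\setminus\cE_{i+1}}$, and therefore $\Int h(\cE_j)\subset(\cE_{i+1})^c$. Unlinkedness follows after swapping the roles of $g$ and $hgh^{-1}$ (apply the statement to $h^{-1}$) to match the definition's form $(\cE^2_j)^c\supset\Int\cE^1_k$.

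\textbf{Expected obstacle.} The main difficulty is the separation step when $\ell$ is one-sided simplicial. By \refprop{eclipseLProperty}, $\closure{\cE_n\setminus\cE_{n+k}}\cap\cE_\infty$ contains a whole segment $g^m([\delta(0),g^k\delta(0)])$ of $\ell$, so $\closure{\cE_i\setminus\cE_{i+1}}$ touches $\ell$. We must rule out $h(\cE_\infty)$ hitting $\ell$ or straddling it. For this we would use the one-sided inaccessibility of $\ell$ (\refthm{simplicialAxis}) together with $h(p)\notin\ell$. These force $h(\ell)\subset\closure{\cD}$ and keep $h(\ell)$ off the accessible side, exactly as in the first paragraph of the proof of \refprop{squashedNonSquashed}.
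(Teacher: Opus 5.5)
Your argument is correct in outline and uses the same ingredients as the paper: Proposition~\ref{Prop:squashedNonSquashed} applied to $h(\fF(g))$ and the boundary fence of a width-one annulus, plus thinness of the end. The difference is that you anchor the argument at $h(p)$, while the paper anchors it at $h(q)$.

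\textbf{The paper's route.} Since $h(q)\notin Z^\pm$, the point $h(q)$ never lies on any $\partial\cE_n$. The paper takes the largest $\sfi$ with $h(q)\in\Int\cE_{\sfi}$, so $h(q)\in\Int(\cE_{\sfi}\setminus\cE_{\sfi+1})$. Proposition~\ref{Prop:squashedNonSquashed} returns precisely the domain containing $h(q)$, so no identification step is needed. The only real work is excluding $h(q)\in\bigcap_n\Int\cE_n$, which thinness does not rule out directly. Otherwise Proposition~\ref{Prop:squashedNonSquashed} applied to each $\partial\cE_i$ gives $h(p)\in\cE_i$, hence $h(p)\in\Int\cE_{i-1}$, for all $i$, contradicting thinness at the synapse $h(p)$.

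\textbf{Your route.} The locating step is immediate. Your identification step is also easier than you make it: $h(p)\in h(\cE_j)\subset\closure{\cD}$, and $h(p)$ lies in the open domain $\fD$, so $\cD=\fD$ at once. No loop argument is needed.

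\textbf{The gap: the boundary case.} Let $\sigma_i\subset\partial\cE_i$ be the connector arc. Since $h(p)\in Z^+$, it may a priori lie on $\sigma_i^+\setminus\{p\}$. Replacing $i$ by $i\pm1$ does not help, because $\sigma_i^+$ lies on the common boundary of the two adjacent annuli. Widening only gives $h(\cE_j)\subset\closure{\cE_{i-1}\setminus\cE_{i+1}}$. That still suffices for unlinkedness, since $\Int h(\cE_j)\subset(\cE_{i+1})^c$. But it is weaker than the width-one inclusion the proposition asserts.

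\textbf{The fix.} Add one step. We have $h(q)\in h(\cE_j)$ and $h(q)\notin\sigma_i$ (as $h(q)\notin Z^\pm$). So $h(q)$ lies in one of the two open width-one annuli $\Int(\cE_{i-1}\setminus\cE_i)$ or $\Int(\cE_i\setminus\cE_{i+1})$. A second application of Proposition~\ref{Prop:squashedNonSquashed} to that annulus's boundary fence gives $h(\cE_j)\subset\closure{\cE_{i'}\setminus\cE_{i'+1}}$ for the appropriate $i'$. This is exactly the paper's mechanism.

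Your ``expected obstacle'' is not a separate issue. The one-sided inaccessibility of $\ell$ is already used inside Proposition~\ref{Prop:squashedNonSquashed}.
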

\begin{proof}
Since  $q\nin Z^\pm$, $h(q)\nin \ell\cup \{p\}$.
If $h(q)=q$, then by the discreteness of $G$ in $\PSL(\CC)$, $\Fix(h)=\Fix(g)$, and by \refrmk{sameAxis2}, $\ell=h(\ell)$.
Hence $\fF(g)$ is unlinked with $h(\fF(g))$.

Assume $q\neq h(q)$.
Since $\cE_{-\infty}=\cS_\infty^2$, $h(q)\in \Int \cE_i$ for some $i\in \ZZ$, and $h(q)\in \Int \cE_j$ for all $i\ge j$.
Observe that the set $\fI=\{ i\in \ZZ: h(q)\in \Int \cE_i\}$ has a minimum.
If not, then $h(q)\in \Int \cE_i$ for all $i\in \ZZ$.
Hence, \refprop{squashedNonSquashed} implies that $h(\ell)\subset \cE_i$ for all $i\in \ZZ$.
Note that by \refrmk{sameAxis2}, $h(p)\neq p$.
Hence $h(p)\in \Int \cE_i$ for all $i\in \ZZ$, since $(\partial \cE_{i+1})^+\setminus \{p\}\subset \Int \cE_i$ for all $i\in \ZZ$.
Thus $h(p)\in \bigcap_{i\in \ZZ}\Int \cE_i$, which contradicts \refprop{eclipseLProperty}, since $h(p)$ is a synapse of type $2$.

Hence $\fI$ has a minimum $\sfi$, and so $h(q)\nin \cE_{\sfi+1}$.
Since the boundary of $\cD=\Int(\cE_{\sfi}\setminus \cE_{\sfi+1})$ is a fence with nodes in $Z^+$ and $h(q)\in \cD$, the result follows from \refprop{squashedNonSquashed}.
\end{proof}

\subsection{Circle laminations from one-sided simplicial axes}

Before constructing a circle lamination from a squashed fence, we recall the notion of conical limit points.

\begin{rmk}\label{Rmk:conicalLimit}
Let $\Gamma$ be a quasi-Fuchsian closed surface subgroup of $G$.
Recall that, for the action of $\Gamma$ on $\Lambda(\Gamma)$, every point $x\in \Lambda(\Gamma)$ is a \emph{conical limit point}: there exist distinct points $a,r\in \Lambda(\Gamma)$ and a sequence $\{g_i\}_{i\in\NN}$ in $\Gamma$ such that $g_i(x)\to r$ and $g_i(y)\to a$ for all $y\in \Lambda(\Gamma)\setminus \{x\}$, and hence also for all $y\in S_\infty^2\setminus \{x\}$ (\cite{Bowditch98}, \cite{Tukia98}).
We call the pair $\{a,r\}\in \cM$ a \emph{conical limit pair} for $x$, and the sequence $\{g_i\}_{i\in\NN}$ a \emph{conical limit sequence} for $x$ with respect to $\{a,r\}$.

The set of conical limit pairs for $x$ is closed in $\cM$.
Indeed, let $\{\ell_i\}_{i\in \NN}$ be a sequence of conical limit pairs for $x$ such that $\ell_i\to \ell_\infty$ in $\cM$.
For each $i\in\NN$, choose a conical limit sequence $\{g_{i,j}\}_{j\in\NN}$ for $x$ with respect to $\ell_i$.
Writing $\ell_i=\{a_i,r_i\}$, we have $g_{i,j}(x)\to r_i$ and $g_{i,j}(y)\to a_i$ for all $y\in \Lambda(\Gamma)\setminus \{x\}$.
After passing to a subsequence in $i$, we may assume that $a_i\to a_\infty$ and $r_i\to r_\infty$, where $\ell_\infty=\{a_\infty,r_\infty\}$.
A diagonal subsequence of $\{g_{i,j}\}_{(i,j)\in \NN\times \NN}$ is then a conical limit sequence for $x$ with respect to $\ell_\infty$.
\end{rmk}

We now begin the construction of a circle lamination from squashed fences.

\begin{const}[First-step construction from a squashed fence]\label{Const:firstStepConst}
Let $Z^\pm$ be a minimal zipper for $M$.
Assume that $g\in G$ acts freely on $Z^-$ and fixes a unique fixed point $p$ in $Z^+$.
Write $\Fix(g)=\{p,q\}$.
Let $\fF(g)=(\ell,\{\cE_n\}_{n\in \ZZ})$ be a squashed fence of $g$.
Write $\sigma_n$ for the connector arc in $\partial \cE_n$ that crosses $\ell$, and write $\sfd \sigma_n=\{s_n,p\}$.
Note that $s_n<s_{n-1}$ in $\ell$.

By \refthm{separatingQF}, take a quasi-Fuchsian closed surface subgroup $\Gamma$ of $G$ such that the limit set $\Lambda(\Gamma)$ of $\Gamma$ separates the fixed points of $g$.
Write $\cH$ for the Jordan domain of $\Lambda(\Gamma)$ that contains $p$.
Fix a circular order on $\Lambda(\Gamma)$ such that $\cH=\cD^L(\Lambda(\Gamma))$.

There exists $\cN\in \ZZ$ such that $\Lambda(\Gamma)$ separates $p$ and $\sigma_n^-$ for all $n\geq \cN$.
For each $n\geq \cN$, the arc $\sigma_n^+$ intersects $\Lambda(\Gamma)$, and there exists a unique connector arc $\sfa_n\subset \partial \cE_n$ crossing $\cH$ with synapse $p$.
Equip $\sfa_n$ with its canonical order, which is compatible with the linear order on $\ell$.
Write $\sfa_n^+\cap \Lambda(\Gamma)=x_n$ and $\sfd \sfa_n=\{x_n,z_1\}$.
Then,  in $\Lambda(\Gamma)$, $x_{n+1}<x_n<z_1$ if $\ell$ is right simplicial, and $z_1<x_n<x_{n+1}$ if $\ell$ is left simplicial.
Hence $\{x_n\}_{n=\cN}^\infty$ converges to a point $z_0$ such that $z_1\leq z_0<x_n$  in the first case and $x_n<z_0\leq z_1$ in the second.

Let $\cL(\Gamma,z_1)$ be the collection of all conical limit pairs for $z_1$ under the action of $\Gamma$ on $\Lambda(\Gamma)$.
By \refrmk{conicalLimit}, $\cL(\Gamma,z_1)$ is a non-empty closed subset of $\cM$, and by definition $\cL(\Gamma,z_1)$ is $\Gamma$-invariant.
\qedhere

\end{const}

\begin{prop}[Unlinkedness conical limit pairs]\label{Prop:unlinkedConicalLimitPairs}
In \refconst{firstStepConst}, any two elements of $\cL(\Gamma,z_1)$ are unlinked in $\Lambda(\Gamma)$.
In particular, $\cL(\Gamma,z_1)$ is a $\Gamma$-invariant circle lamination on $\Lambda(\Gamma)$.
\end{prop}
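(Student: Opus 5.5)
We will argue by contradiction, transferring a hypothetical linking in $\Lambda(\Gamma)$ to a linking of two $\Gamma$-translates of the squashed fence, which \refprop{squashedFenceSystem} forbids. Suppose $\{a,r\},\{a',r'\}\in\cL(\Gamma,z_1)$ are linked. Let $\{g_i\}$ and $\{g'_i\}$ be conical limit sequences for $z_1$ with respect to these pairs. The key observation is that the pair $\sfd\sfa_n=\{x_n,z_1\}$ is tied to the squashed fence. Its $(-)$-endpoint $z_1$ lies on $\ell$, and its $(+)$-endpoint $x_n$ lies on $(\partial\cE_n)^+$. So $g_i(\sfd\sfa_n)\to\{a,r\}$ for every fixed $n\ge\cN$. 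More precisely, $g_i(z_1)\to r$, while $g_i(x_n)\to a$ and $g_i(z_0)\to a$ (note $z_0\neq z_1$ unless $z_0=z_1$; that degenerate case has to be handled separately using $x_n\ne z_1$). Hence for large $i,j$ the pairs $g_i(\sfd\sfa_n)$ and $g'_j(\sfd\sfa_m)$ are linked in $\Lambda(\Gamma)$. Setting $h=g_i^{-1}g'_j\in\Gamma$, we get that $\sfd\sfa_n$ and $h(\sfd\sfa_m)$ are linked.

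Next we show that such a link is impossible. Since $\Fix(h)\subset\Lambda(\Gamma)$, we have $h(p)\neq p$. By \refprop{squashedFenceSystem} there are $i_0,j_0$ with $h(\cE_{j_0})\subset\closure{\cE_{i_0}\setminus\cE_{i_0+1}}$. Taking $m\ge j_0$ gives $h(\sfa_m)\subset h(\cE_m)\subset h(\cE_{j_0})$, so $h(\sfa_m)$ lies in a single slab $\closure{\cE_{i_0}\setminus\cE_{i_0+1}}$. Now choose $n\ge i_0+1$, with $n$ large. Then $\sfa_n\subset\partial\cE_n\subset\cE_{i_0+1}$, and this meets the slab only along $\partial\cE_{i_0+1}$. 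Both arcs cross $\cH$, and their endpoint pairs are linked, so $\sfa_n$ and $h(\sfa_m)$ must intersect in $\cH$. Any such intersection point lies in $\cE_n\cap\closure{\cE_{i_0}\setminus\cE_{i_0+1}}$. For $n>i_0+1$ this set is contained in $\{p\}\cup(\text{part of }\ell)$, by the separation property in \refprop{eclipseLProperty}, one-sided simplicial case.

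Finally we rule out an intersection at $p$ or along $\ell$. An intersection at $p$ would force $p\in h(\sfa_m)$. But $h(\sfa_m)^+$ is one-sided inaccessible, being a subarc of a fence of the translated squashed fence, and $p$ is a branch point, so we can apply \refprop{unlinkedCrossingLine} to $\sfa_n$ and $h(\sfa_m)$ to obtain a ray landing on the simplicial side of $\ell$ or of $h(\ell)$. This contradicts the one-sided inaccessibility of the axis (\refthm{simplicialAxis}). An intersection along $\ell$ means $\sfa_n^-\subset\ell$ meets $h(\sfa_m)^-\subset h(\ell)$; this is ruled out by the unlinkedness of $\ell$ and $h(\ell)$, which comes from \refprop{squashedNonSquashed} via the disjointness argument of \refprop{enclosing}. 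This proves pairwise unlinkedness. Closedness and $\Gamma$-invariance were already noted in \refconst{firstStepConst} via \refrmk{conicalLimit}, so $\cL(\Gamma,z_1)$ is a $\Gamma$-invariant circle lamination.

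The main obstacle is the middle step: checking that the separation property really confines the intersection to $\{p\}\cup\ell$, and in particular that $p$ cannot be a crossing point, since $\sfa_n$ has synapse $p$. The first approach to try there is the same pattern as in \reflem{fenceSystem}(3): show that if $p\in h(\sfa_m)$, then $hgh^{-1}$-iterates of a branch at $p$ land on both sides of $\sfa_n^+$.
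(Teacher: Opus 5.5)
There is a genuine gap. The middle and final steps each fail as written, for three separate reasons.

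\emph{Circular quantifiers.} You must fix $n,m$ before choosing $i,j$ large enough that $g_i(\sfd\sfa_n)$ and $g'_j(\sfd\sfa_m)$ are linked. Only then is $h=g_i^{-1}g'_j$ defined, and only then are the indices $i_0,j_0$ of \refprop{squashedFenceSystem} determined. So you cannot afterwards demand $m\ge j_0$ and $n>i_0+1$. Making the linking uniform in $n,m$ would require the points $x_n$ to stay in a compact arc avoiding $z_1$, i.e.\ $z_0\neq z_1$. The paper establishes this only later, in \refcor{limitAuxiliaryLeaf}, and that proof already uses that $\cL(\Gamma,z_1)$ is a lamination, so invoking it here is circular. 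Appealing to $x_n\neq z_1$ gives only fixed-$n$ information.

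\emph{The point $p$.} Even granting the slab confinement, the case $p\in h(\sfa_m^+)$ is left open. Your proposed fix assumes $h(\sfa_m)^+$ is one-sided inaccessible, but nothing in the paper says this for $h(\sfa_m^+)\subset h(\sigma_m^+)$; only the axis $\ell$ and its translates are known to be one-sided inaccessible. Moreover, \refprop{unlinkedCrossingLine} takes unlinkedness of the $(+)$-segments as a \emph{hypothesis}, which is exactly what is in doubt.

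\emph{Unlinked is not disjoint.} The step along $\ell$ treats these as the same. That $\ell$ and $h(\ell)$ are unlinked does not prevent $\sfa_n^-$ and $h(\sfa_m^-)$ from meeting. What contradicts \refthm{simplicialAxis} is that the two $(-)$-segments are \emph{linked} as segments of $Z^-$. That must be derived by showing that $h(\sfa_m^-)$ passes from one side of $\sfa_n$ to the other.

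The missing idea is to push forward rather than pull back; this is the paper's argument. Fix a single $n\ge\cN$. Since $\sfa_n^+$ is a compact arc avoiding $z_1$, the convergence property sends $g_{1,j}(\sfa_n^+)$ and $g_{2,j}(\sfa_n^+)$ uniformly into arbitrarily small, disjoint flat neighborhoods of $a_1\neq a_2$. Meanwhile $g_{i,j}(z_1)$ is near $r_i$.

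For large $j$ the endpoint pairs are linked, so the arcs $g_{1,j}(\sfa_n)$ and $g_{2,j}(\sfa_n)$ meet. Their $(+)$-segments are disjoint by construction, and $Z^+\cap Z^-=\varnothing$. Hence $g_{2,j}(\sfa_n)^+$, and with it $g_{2,j}(p)$, lies on one side of $g_{1,j}(\sfa_n)$ in $\cH$. By the linking, $g_{2,j}(z_1)$ lies on the opposite arc of $\Lambda(\Gamma)$.

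Thus $g_{2,j}(\sfa_n^-)$ crosses $g_{1,j}(\sfa_n^-)$ from one side to the other. These two translates of subrays of $\ell$ are therefore linked in $Z^-$, contradicting the one-sided simpliciality of the translates of $\ell$.

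This route uses neither eclipses, nor the squashed fence system, nor $z_0\neq z_1$. The disjointness of the $(+)$-segments, which your slab argument cannot secure at $p$, comes for free from $a_1\neq a_2$.
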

\begin{proof}
Choose distinct conical limit pairs $\ell_1,\ell_2$ for $z_1$.
Then, for each $i\in \{1,2\}$, there exists a conical limit sequence $\{g_{i,j}\}_{j\in\NN}$ for $z_1$ with respect to $\ell_i=\{a_i,r_i\}$ such that $g_{i,j}(z_1)\to r_i$ and $g_{i,j}(y)\to a_i$ for all $y\in \Lambda(\Gamma)\setminus \{r_i\}$.
Assume that $\ell_1$ and $\ell_2$ are linked.
Let $U_1,U_2,V_1,V_2$ be flat neighborhoods of $a_1,a_2,r_1,r_2$, respectively, relative to sufficiently small open neighborhoods of these points in $\Lambda(\Gamma)$, such that the closures of $U_1,U_2,V_1,V_2$ are pairwise disjoint.
Set $I_i=U_i \cap \Lambda(\Gamma)$ and $J_i=V_i\cap \Lambda(\Gamma)$, so that these are pairwise disjoint open intervals in $\Lambda(\Gamma)$.

Fix $n\geq \cN$.
Then there exists $M\in \ZZ$ such that $g_{i,j}(z_1)\in J_i$, $g_{i,j}(x_n)\in I_i$, and $g_{i,j}(\sfa_n^+)\subset U_i$ for all $j\geq M$.
By the choice of $U_i$ and $V_i$, it follows that $g_{1,j}(\sfa_n)$ and $g_{2,j}(\sfa_n)$ intersect for all $j\geq M$.
On the other hand, since $U_1\cap U_2=\varnothing$, we have $g_{1,j}(\sfa_n)^+\cap g_{2,j}(\sfa_n)^+=\varnothing$.
By the disjointness of $Z^\pm$, it follows that $g_{2,j}(\sfa_n)^+ \subset \closure{\cH^\alpha(g_{1,j}(\sfa_n))}\setminus g_{1,j}(\sfa_n)$ for some $\alpha\in \{R,L\}$.
Hence $g_{2,j}(p)\in \cH^\alpha(g_{1,j}(\sfa_n))$.
Since $\partial \cH^\alpha(g_{1,j}(\sfa_n))$ separates $g_{2,j}(p)$ from $g_{2,j}(x_n)$, and since $g_{i,j}(\sfa_n)$ has synapse in $Z^+$ and $g_{1,j}(\sfa_n)^+\cap g_{2,j}(\sfa_n)^+=\varnothing$, it follows that $g_{1,j}(\sfa_n)^-$ and $g_{2,j}(\sfa_n)^-$ are linked.
This contradicts the one-sided inaccessibility of $\ell$ (\refthm{simplicialAxis}).
\end{proof}

As mentioned above, we study the extension and bouncing directions as in \refsec{oneprong}.
We now describe how the extension and bouncing conditions are defined in the present setting.

\begin{const}[Extension of the circle lamination in \refconst{firstStepConst}]\label{Const:extConst}
Assume the hypotheses of \refconst{firstStepConst}.
Assume also that a finite strictly decreasing sequence $\{z_i\}_{i=1}^m$, with $m\ge 2$, in $\ell$ satisfies the \emph{bouncing condition}: for each $i\in \{2,\dots,m\}$, the pure arc $\gamma_i=[z_{i-1},z_i]$ on $\ell$ crosses $\cH$.
Define $\cL(\Gamma,\{z_i\}_{i=1}^m)$ to be the closure in $\cM$ of
\[
\{g(\sfd\gamma_i): g\in \Gamma \text{ and } 2\le i\le m\}\cup \cL(\Gamma,z_1).
\]
We write $\ell_i=\sfd\gamma_i$.
\end{const}

\begin{prop}\label{Prop:welldefinedExt}
    In \refconst{extConst}, $\cL(\Gamma,\{z_i\}_{i=1}^m)$ is a $\Gamma$-invariant circle lamination in $\Lambda(\Gamma)$.
\end{prop}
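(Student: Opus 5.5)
The plan is to show that any two elements of the generating set
\[
\cS=\{h(\ell_i): h\in\Gamma,\ 2\le i\le m\}\cup \cL(\Gamma,z_1)
\]
are unlinked in $\Lambda(\Gamma)$. Unlinkedness is a closed condition on pairs in $\cM$, so it then passes to the closure, which is $\Gamma$-invariant by construction. Pairs of elements both in $\cL(\Gamma,z_1)$ are already handled by \refprop{unlinkedConicalLimitPairs}. Two cases remain: pairs of pure leaves, and mixed pairs consisting of a pure leaf and a conical limit pair.

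\emph{Pure leaves.} Suppose $h(\ell_j)$ and $\ell_i$ are linked for some $h\in\Gamma$ and $2\le i,j\le m$. Then $\gamma_i$ and $h(\gamma_j)$ are pure adapted arcs in $Z^-$ crossing $\cH$ with linked endpoint pairs. By \refprop{unlinkedCrossingLine}, either they are linked as segments of $Z^-$, or case (3) occurs; since both arcs are pure, case (3) does not apply. Being linked means $\Int\gamma_i$ meets both a left and a right branch of $h(\gamma_j)$, or the reverse. Both $\gamma_i\subset\ell$ and $h(\gamma_j)\subset h(\ell)$ are one-sided simplicial, even one-sided inaccessible, by \refthm{simplicialAxis}, so this is already impossible. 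Alternatively, I would invoke \refprop{squashedFenceSystem}: the axes $\ell$ and $h(\ell)$ are either equal or unlinked as segments. If $h(\ell)=\ell$ then $h$ fixes $\Fix(g)$, which is impossible because $\Fix(h)\subset\Lambda(\Gamma)$ while $\Fix(g)\cap\Lambda(\Gamma)=\varnothing$.

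\emph{Mixed pairs.} Let $\{a,r\}\in\cL(\Gamma,z_1)$ be linked with $h(\ell_i)$. By $\Gamma$-invariance we may take $h=1$. Choose a conical limit sequence $g_k$ with $g_k(z_1)\to r$ and $g_k(y)\to a$ for $y\neq z_1$. Then $g_k(\sfa_n)$ has one endpoint near $r$, and its $(+)$-part, together with $g_k(x_n)$, lies in a small flat neighborhood $U$ of $a$. Copying the argument of \refprop{unlinkedConicalLimitPairs}, with $U,V$ chosen disjoint from $\gamma_i$ and from each other, gives for large $k$ that the arc $g_k(\sfa_n)$ crosses $\cH$ with endpoints linked with $\ell_i$. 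Hence it meets $\gamma_i\subset Z^-$ only along $g_k(\sfa_n)^-$. Then $g_k(\sfa_n)^-$, which is a subsegment of $g_k(\ell)$, and $\gamma_i\subset\ell$ are linked in $Z^-$: the ray $g_k(\sfa_n)^-$ enters on one side of $\gamma_i$ and must exit, since its synapse $g_k(p)$ lies in $U$, far from $\gamma_i$. This contradicts the unlinkedness of $\ell$ and $g_k(\ell)$ from \refprop{squashedFenceSystem}, or one-sided inaccessibility as in the earlier proof.

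The main obstacle is this mixed case. One must check that the crossing point really forces linking of segments in $Z^-$, rather than a tangency where $g_k(\sfa_n)^-$ merely shares a subarc with $\gamma_i$ and leaves on the same side. I would handle this with \refprop{enclosing} applied to the two end rays of $g_k(\sfa_n)^-$ relative to $\cH^L(\gamma_i)$ and $\cH^R(\gamma_i)$. The point is that the two ends of $g_k(\sfa_n)$ lie on opposite sides of $\gamma_i$, because $\sfd g_k(\sfa_n)$ is linked with $\ell_i$. Such a configuration produces rays of $Z^-$ landing on both sides of $\gamma_i$, contradicting the one-sided inaccessibility of $\ell$.
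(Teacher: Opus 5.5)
Your proposal is correct and follows the paper's own proof. Pairs of conical limit pairs are handled by the earlier unlinkedness proposition, and pairs of pure translates of the $\gamma_i$ by the one-sided inaccessibility of the axis. In the mixed case, a conical limit sequence pushes $g_k(\mathsf{a}_n)^+$ and $g_k(p)$ to one side of $\gamma_i$ while $g_k(z_1)$ stays on the other, so $g_k(\mathsf{a}_n)^-\subset Z^-$ must link $\gamma_i$. Your extra care about a possible shared subarc fills in a step the paper leaves implicit, and your alternative appeal to the squashed fence system is unnecessary.
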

\begin{proof}
By \refprop{unlinkedConicalLimitPairs} and \refthm{simplicialAxis}, it suffices to show that for $m\geq 2$ and $i\ge 2$, $\sfd\gamma_i$ is unlinked with every element of $\cL(\Gamma,z_1)$.

Assume otherwise. Then $\sfd\gamma_i$ is linked with some $\lambda\in\cL(\Gamma,z_1)$. Since $\lambda\in\cL(\Gamma,z_1)$, there exists a sequence $\{g_i\}_{i\in\NN}\subset\Gamma$ such that $\lambda=\{u_2,u_4\}$, $g_i(z_1)\to u_2$, and $g_i(y)\to u_4$ for all $y\in \Lambda(\Gamma)\setminus\{u_2\}$. Write $\sfd\gamma_i=\{u_1,u_3\}$ so that $u_1<u_2<u_3<u_4$ in  $\Lambda(\Gamma)$.

Orient $\gamma_i$ so that it increases toward $u_3$, and fix $n\geq \cN$. Then there exists $j\in\NN$ such that $g_i(\sfa_n)^+\subset \overline{\cH^L(\gamma_i)}$ for all $i\geq j$. Since $Z^+\cap Z^-=\emptyset$ and $g_i(p)\in \cH^L(\gamma_i)$, it follows that $g_i(\sfa_n^-)$ is linked with $\gamma_i$ for all sufficiently large $i$, contradicting \refthm{simplicialAxis}.
\end{proof}

In fact, once the lamination $\cL(\Gamma,z_1)$ admits an extension, every element of $\cL(\Gamma,z_1)$ is realized as a limit of $\ell_2$. In this case, the set of generating leaves can be reduced.

\begin{prop}[Finite generating leaves]\label{Prop:finiteGenLeaves}
In \refconst{extConst}, $\cL(\Gamma,\{z_i\}_{i=1}^{m})$ agrees with the closure in $\cM$ of the $\Gamma$-orbits of $\{\ell_i\}_{i=2}^{m}$.
\end{prop}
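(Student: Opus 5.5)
One inclusion is immediate. Write \(\cL'\) for the closure in \(\cM\) of \(\Gamma\cdot\{\ell_i\}_{i=2}^m\). Since \(\cL(\Gamma,\{z_i\}_{i=1}^m)\) is closed and contains every \(g(\ell_i)\), we get \(\cL'\subset \cL(\Gamma,\{z_i\}_{i=1}^m)\). For the reverse inclusion it therefore suffices to show \(\cL(\Gamma,z_1)\subset \cL'\), that is, every conical limit pair \(\{a,r\}\) for \(z_1\) is a limit of translates of \(\ell_2\).

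So fix a conical limit pair \(\lambda=\{a,r\}\) and a conical limit sequence \(\{h_j\}_{j\in\NN}\) for \(z_1\) with respect to \(\lambda\). Thus \(h_j(z_1)\to r\) and \(h_j(y)\to a\) for every \(y\in S_\infty^2\setminus\{z_1\}\). The key observation is that \(\ell_2=\sfd\gamma_2=\{z_1,z_2\}\) has \(z_1\) as one endpoint and \(z_2\neq z_1\) (the sequence is strictly decreasing) as the other. Hence
\[
h_j(\ell_2)=\{h_j(z_1),h_j(z_2)\}\to\{r,a\}=\lambda \quad\text{in } \cM .
\]
Since \(a\neq r\), the limit lies in \(\cM\) and not on the diagonal, so \(\lambda\in \cL'\). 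This gives \(\cL(\Gamma,z_1)\subset\cL'\), and taking closures yields equality.

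The only point that needs care is to confirm that in \refconst{extConst} the arc \(\gamma_2=[z_1,z_2]\) really has \(z_1\) as an endpoint in \(\Lambda(\Gamma)\). This is the intended reading of the bouncing condition: \(z_1\) is the \(Z^-\)-endpoint of the generating connector arcs \(\sfa_n\), lying on \(\ell\cap\Lambda(\Gamma)\), and \(\gamma_2\) crosses \(\cH\), so \(\sfd\gamma_2=\{z_1,z_2\}\subset\Lambda(\Gamma)\). Once this is verified, the conical convergence applies directly, because \(z_2\in\Lambda(\Gamma)\setminus\{z_1\}\).
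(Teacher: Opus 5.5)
Your proof is correct and follows the paper's argument. The paper likewise reduces to showing that $\cL(\Gamma,z_1)$ lies in the closure of the $\Gamma$-orbit of the leaf $\{z_1,z_2\}$, and it applies a conical limit sequence for $z_1$ to that leaf's two endpoints. The paper labels this leaf $\sfd\gamma_1$, but it means $\ell_2=\sfd\gamma_2$, exactly as you have it.
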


\begin{proof}
It is enough to show that $\cL(\Gamma,z_1)$ is contained in the closure of the $\Gamma$-orbit of $\sfd\gamma_1$ in $\cM$.

Choose $\lambda\in\cL(\Gamma,z_1)$. Since $\lambda$ is a conical limit pair for $z_1$, there exists a sequence $\{g_i\}_{i\in\NN}\subset\Gamma$ such that $\lambda=\{r,a\}$, $g_i(z_1)\to r$, and $g_i(y)\to a$ for all $y\in \Lambda(\Gamma)\setminus\{z_1\}$. Since $\sfd\gamma_1=\{z_1,z_2\}$ and $z_2\in \Lambda(\Gamma)\setminus\{z_1\}$, it follows that $g_i(z_1)\to r$ and $g_i(z_2)\to a$. Hence $g_i(\sfd\gamma_1)$ converges to $\lambda$ in $\cM$.

Therefore $\cL(\Gamma,z_1)$ is contained in the closure of the $\Gamma$-orbit of $\sfd\gamma_1$, and the result follows.
\end{proof}

After \refprop{finiteGenLeaves}, when $m\geq 2$, we call $\{\ell_i\}_{i=2}^{m}$ the \emph{generating leaves} of $\cL(\Gamma,\{z_i\}_{i=1}^m)$ in \refconst{extConst}. We also say that $\cL(\Gamma,\{z_k\}_{k=1}^m)$ is \emph{generated} by $\{\ell_i\}_{i=2}^{m}$.

\subsection{Auxiliary leaves of $\cL(\Gamma,\{z_i\}_{i=1}^m)$}
We call $h(\sfd\sfa_n)$, with $n>\cN$ and $h\in\Gamma$, an \emph{auxiliary leaf} for $\cL(\Gamma,\{z_i\}_{i=1}^m)$. In general, for $n,n'>\cN$, the leaf $\sfd\sfa_n$ need not be unlinked with $h(\sfd\sfa_{n'})$ for $h\in\Gamma$. Nevertheless, every auxiliary leaf is unlinked with every leaf of $\cL(\Gamma,\{z_i\}_{i=1}^m)$:

\begin{prop}[Auxiliary leaves]\label{Prop:auxiliaryLeaf}
Assume the hypotheses of \refconst{firstStepConst}. Let $\cL(\Gamma,\{z_i\}_{i=1}^m)$ be the circle lamination constructed in \refconst{firstStepConst} or \refconst{extConst}. Then, for each $n>\cN$, the leaf $\sfd\sfa_n$ is unlinked with every leaf of $\cL(\Gamma,\{z_i\}_{i=1}^m)$, and no sequence of leaves of $\cL(\Gamma,\{z_i\}_{i=1}^m)$ converges to $\sfd\sfa_n$.
\end{prop}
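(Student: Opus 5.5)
The plan is to handle the two kinds of leaves of \(\cL(\Gamma,\{z_i\}_{i=1}^m)\) separately. By \refprop{finiteGenLeaves} (for \(m\ge 2\)) or by definition (for \(m=1\)), every leaf is either a translate \(h(\ell_i)\) of a generating leaf, a conical limit pair for \(z_1\), or a limit of such leaves. Being unlinked with a fixed pair is a closed condition on limits that do not share an endpoint with \(\sfd\sfa_n\); the case of a limit sharing an endpoint is trivially unlinked. Hence unlinkedness reduces to two cases: \(\sfd\sfa_n\) against each \(h(\sfd\gamma_i)\), and \(\sfd\sfa_n\) against each conical limit pair for \(z_1\).

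\textbf{Translates of generating leaves.} Suppose \(\sfd\sfa_n\) is linked with \(h(\sfd\gamma_i)\). Then the arc \(h(\gamma_i)\subset h(\ell)\subset Z^-\) crosses \(\cH\) and separates \(x_n\) from \(z_1\) in \(\closure{\cH}\). By disjointness of \(Z^\pm\), \(h(\gamma_i)\) cannot meet \(\sfa_n^+\), so it must meet \(\sfa_n^-\subset \sigma_n^-\). Then \(\sfa_n^-\) and \(h(\gamma_i)\) lie in the lines \(\ell\) and \(h(\ell)\) in \(Z^-\). I would use \refprop{squashedFenceSystem} to place \(h(\cE_\infty)\), and hence \(h(\ell)\), inside \(\closure{\cE_k\setminus\cE_{k+1}}\). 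Combined with the one-sided inaccessibility of \(\ell\) and \(h(\ell)\) from \refthm{simplicialAxis}, this shows \(h(\gamma_i)\) cannot cross \(\sfa_n^-\) transversally: it would produce rays of \(Z^-\) landing on both sides of a subsegment of \(h(\ell)\), via \refprop{unlinkedCrossingLine}. The degenerate case \(h(\ell)=\ell\) is handled by \refrmk{sameAxis2}: there \(h\) fixes \(p\notin\Lambda(\Gamma)\), so \(h=1\), and \(\gamma_i\subset\ell\) does not separate the endpoints of \(\sfa_n\) because the \(\gamma_i\) lie between consecutive crossings of \(\ell\) with \(\Lambda(\Gamma)\), beyond \(z_1\).

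\textbf{Conical limit pairs.} I would mimic the proof of \refprop{welldefinedExt}. Take a conical sequence \(g_j\) with \(g_j(z_1)\to r\) and \(g_j(y)\to a\) otherwise, and assume \(\{a,r\}\) is linked with \(\sfd\sfa_n\). Choose \(n'\) large. Then \(g_j(\sfa_{n'})^+\) eventually lies in a small flat neighbourhood of \(a\), while \(g_j(z_1)\) lies near \(r\), so \(g_j(\sfa_{n'})\) crosses \(\sfa_n\). Its \((+)\)-segment is near \(a\) and disjoint from \(\sfa_n^+\), so \(g_j(p)\) lies on one side of \(\sfa_n\) and \(g_j(\sfa_{n'})^-\) must cross \(\sfa_n^-\). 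That makes two subsegments of the lines \(\ell\) and \(g_j(\ell)\) linked, contradicting \refthm{simplicialAxis} as before.

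\textbf{Non-accumulation.} Suppose leaves \(\lambda_k\to\sfd\sfa_n=\{x_n,z_1\}\). Since they are unlinked with \(\sfd\sfa_n\), they eventually lie on one side. I would approximate by orbit leaves \(\lambda_k=h_k(\ell_{i_k})\), or conical pairs realized by \(h_k(\sfd\gamma_1)\) via \refprop{finiteGenLeaves}, with one endpoint near \(x_n\). The key input is \refprop{squashedFenceSystem}: each \(h_k(\ell)\) with \(h_k(p)\ne p\) sits in some \(\closure{\cE_{i_k}\setminus\cE_{i_k+1}}\), and since \(x_n\in\sigma_n^+\) one gets control of \(i_k\). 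I expect this to be the main obstacle. I would try to show that \(i_k\) must stay bounded, because the other endpoint converges to \(z_1\in\ell\), which forces \(h_k(\ell)\) to approach \(\ell\). With only finitely many slabs available, discreteness then yields infinitely many \(h_k\) with \(h_k(\ell)\) meeting a compact subarc of \(\ell\). The conclusion would come from \refrmk{sameAxis2}: such \(h_k\) would have to stabilize \(\ell\) and so fix \(p\), which is impossible for nontrivial elements of \(\Gamma\).
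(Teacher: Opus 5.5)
Your unlinkedness argument is essentially the paper's. A translate $h(\gamma_i)\subset Z^-$ whose endpoints separate $x_n$ from $z_1$ cannot meet $\sfa_n^+$, so it crosses $\sfa_n^-$ transversally. For a conical pair, the translated ray $g_j(\sfa_n^-)$ is forced across $\sfa_n^-$ in the same way. Either way $\ell$ acquires branches on both sides, contradicting \refthm{simplicialAxis}. Two corrections:
\begin{itemize}
\item $\sfa_n^-$ is the end ray of $\ell$ from $z_1$ to $p$ (the same for all $n$), not a subset of $\sigma_n^-$.
\item The detour through \refprop{squashedFenceSystem} and \refprop{unlinkedCrossingLine} is unnecessary. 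The latter even assumes the segments are \emph{unlinked}, so it does not apply here.
\end{itemize}

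The non-accumulation part has a genuine gap, at exactly the step you flagged. Even granting control of the slab index $i_k$, the conclusion via \refrmk{sameAxis2} fails. That remark needs $h_k(\ell)=\ell$, and knowing that $h_k(\ell)$ meets a compact subarc of $\ell$ gives nothing of the sort. Two distinct lines in a real tree can share a compact arc. Moreover, \refprop{squashedFenceSystem} itself only confines $h(\ell)\cap\ell$ to the compact arc $\closure{\cE_i\setminus\cE_{i+1}}\cap\ell$. So a nonempty compact intersection with $h(p)\neq p$ is not excluded by anything you invoke. The intermediate claims are also unsupported: that an endpoint near $z_1$ makes $h_k(\ell)$ approach $\ell$, and the unspecified appeal to discreteness. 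Your argument also never uses the hypothesis $n>\cN$, which signals that the key idea is missing.

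That idea is elementary and uses only $Z^+\cap Z^-=\varnothing$.
\begin{itemize}
\item On $\Lambda(\Gamma)$, $x_n$ lies strictly between $x_{n+1}$ and $x_\cN$; this is where $n>\cN$ enters. Also $z_1$ lies outside the closed arc between $x_{n+1}$ and $x_\cN$ that contains $x_n$.
\item Hence every pair close enough to $\{x_n,z_1\}$ is linked with $\{x_\cN,x_{n+1}\}$. These are the endpoints of the arc $[x_\cN,x_{n+1}]\subset \sfa_\cN^+\cup\sfa_{n+1}^+\subset Z^+$, which crosses $\cH$.
\item Linkedness is an open condition. So a sequence of leaves converging to $\sfd\sfa_n$ produces a leaf of generating type linked with $\{x_\cN,x_{n+1}\}$.
\item If that leaf is some $h(\ell_i)$, the pure arc $h(\gamma_i)\subset Z^-$ crosses $\cH$ and so must meet $[x_\cN,x_{n+1}]$.
\item If it is a conical pair $\{a,r\}$, take a conical sequence $g_j$. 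The ray $g_j(\sfa_n^-)\subset Z^-$ runs in $\closure{\cH}$ from $g_j(z_1)$, near one point of the pair, to $g_j(p)$, near the other. It again must meet $[x_\cN,x_{n+1}]$.
\end{itemize}
Both cases contradict the disjointness of $Z^\pm$, and no slab bookkeeping is needed.
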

\begin{proof}

Fix $n>\cN$.
Assume first that $\sfd \sfa_n$ is linked with $h(\gamma_i)$ for some $h\in \Gamma$ and some $1<i\le m$.
Since the synapse $p$ of $\sfa_n$ lies in $Z^+$ and $Z^+\cap Z^-=\varnothing$, it follows that $h(\gamma_i)\subset Z^-$ is linked with $\sfa_n^-$, contradicting the at least one-sided inaccessibility of $\sfa_n^-$ by \refthm{simplicialAxis}.

We next consider the case where $\sfd \sfa_n$ is linked with some leaf $\lambda\in \cL(\Gamma,z_1)$.
Then there exists a conical limit sequence $\{h_i\}_{i\in \NN}$ for $z_1$ with respect to $\lambda=\{a,r\}$ such that $h_i(z_1)\to r$ and $h_i(y)\to a$ for all $y\in \Lambda(\Gamma)\setminus\{z_1\}$, hence for all $y\in S^2_\infty\setminus\{z_1\}$.
Then either $z_1<r<x_n<a$ or $z_1<a<x_n<r$.

Suppose first that $z_1<r<x_n<a$. Then for all sufficiently large $i$, we have $h_i(\sfa_n)^+\subset \closure{\cH^L(\sfa_n)}\setminus \sfa_n$, so that $h_i(p)\in \cH^L(\sfa_n)$ and $h_i(z_1)\in \opi[\Lambda(\Gamma)]{z_1}{x_n}$. Since $\partial \cH^L(\sfa_n)$ separates $h_i(p)$ from $h_i(z_1)$ and $Z^+\cap Z^-=\varnothing$, it follows that $h_i(\sfa_n)^-$ is linked with $\sfa_n^-$, contradicting the at least one-sided inaccessibility of $\sfa_n^-$ by \refthm{simplicialAxis}.

The other case is treated in the same way and yields the same contradiction.
This proves the first assertion.

To see the second assertion, assume  for contradiction that there exists a sequence of leaves of $\cL(\Gamma,\{z_i\}_{i=1}^m)$ converging to $\sfd \sfa_n$. Hence, we may find a leaf $\lambda$ of $\cL(\Gamma,\{z_i\}_{i=1}^m)$ such that $\lambda$ is linked with $\{x_{\cN},x_{n+1}\}$ and either $\lambda\in \cL(\Gamma,z_1)$ or $\lambda=h(\ell_i)$ for some $h\in \Gamma$ and $i\in \{2,\dots,m\}$.

In the second case, since $\lambda$ and $\{x_\cN,x_{n+1}\}$ are linked, $h(\gamma_i)$ intersects $[x_\cN,x_{n+1}]$. This is a contradiction since $h(\gamma_i)\subset Z^-$ and $[x_\cN,x_{n+1}]\subset Z^+$.

Thus $\lambda\in \cL(\Gamma,z_1)$.
Then there exists a conical limit sequence $\{g_i\}_{i\in\NN}$ for $z_1$ with respect to $\lambda=\{a,r\}$ such that $g_i(z_1)\to a$ and $g_i(y)\to r$ for all $y\in \Lambda(\Gamma)\setminus\{z_1\}$. Since $\{x_{n+1},x_\cN\}$ and $\lambda$ are linked, for all sufficiently large $i$ the sets $g_i(\sfa_n^-)\subset Z^-$ and $[x_{n+1},x_\cN]\subset Z^+$ intersect, contradicting the disjointness of $Z^\pm$. Thus we are done.
\end{proof}

Under a stronger unlinked condition, the connector arcs associated with the auxiliary leaves intersect only along the $(-)$-segment.
\begin{lem}[Unlinked auxiliary arcs]\label{Lem:auxArcUnlinked}
Assume the conditions in \refconst{firstStepConst}. Suppose that $\cL(\Gamma,z_1)$ is a circle lamination constructed as in \refconst{firstStepConst}$,$ and that $\ell$ is right- or left-inaccessible. Let $i,j>\cN$ and $h\in \Gamma$. Assume that $\sfd \sfa_i$ and $h(\sfd \sfa_j)$ are unlinked in $\Lambda(\Gamma)$ and, respectively,
\[
\left(\cldi[\Lambda(\Gamma)]{x_{i+1}}{x_{i-1}} \cup \{z_1\}\right)
\cap h\!\left(\cldi[\Lambda(\Gamma)]{x_{j+1}}{x_{j-1}} \cup \{z_1\}\right)
= \varnothing,
\]
or
\[
\left(\cldi[\Lambda(\Gamma)]{x_{i-1}}{x_{i+1}} \cup \{z_1\}\right)
\cap h\!\left(\cldi[\Lambda(\Gamma)]{x_{j-1}}{x_{j+1}} \cup \{z_1\}\right)
= \varnothing.
\]
Then there exists $\alpha\in \{R,L\}$ such that $h(\sfa_j)\subset \overline{\cH^\alpha(\sfa_i)}$. Moreover, $\sfa_i \cap h(\sfa_j)=\sfa_i^- \cap h(\sfa_j^-)$.
\end{lem}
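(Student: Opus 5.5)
The plan is to treat $\sfa_i$ and $h(\sfa_j)$ as two connector arcs crossing $\cH$, both with synapse in $Z^+$: $p$ and $h(p)$ respectively. By symmetry we take $\ell$ right simplicial, so that $x_{n+1}<x_n<z_1$; the first displayed hypothesis then applies, and the left-simplicial case is the mirror image. The heart of the argument is to show that the $(+)$-segments $\sfa_i^+$ and $h(\sfa_j)^+$ are disjoint. Once that is known, unlinkedness of $\sfd\sfa_i$ and $h(\sfd\sfa_j)$ in $\Lambda(\Gamma)$ puts both endpoints of $h(\sfa_j)$ in the closure of one side $\cH^\alpha(\sfa_i)$, and the containment and intersection statements should follow from \refprop{enclosing} together with the disjointness of $Z^\pm$.

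Step 1 handles the $(+)$-segments. The main tool is that $\sfa_i^+\subset[p,x_{i\pm1}]$-type arcs of $(\partial\cE_{i\pm1})^+$ hit $\Lambda(\Gamma)$ at $x_{i\pm1}$, and $h$ of these arcs hit it at $h(x_{j\pm1})$. The hypothesis says the intervals $\cldi[\Lambda(\Gamma)]{x_{i+1}}{x_{i-1}}$ and $h(\cldi[\Lambda(\Gamma)]{x_{j+1}}{x_{j-1}})$ are disjoint and avoid each other's $z_1$. Suppose $\sfa_i^+\cap h(\sfa_j)^+\neq\varnothing$. Then $Z^+$ being a tree gives an arc in $Z^+$ made of pieces of $\sfa_i^+$ and $h(\sfa_j)^+$. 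Using the neighbouring arcs $\sfa_{i\pm1}^+$ (which share the initial segment near $p$ and land on both sides of $x_i$), we get a $(+)$-arc crossing $\cH$ whose endpoints are linked with $h(\sfd\sfa_{j})$ or $h(\{x_{j+1},x_{j-1}\})$. Via \refprop{unlinkedCrossingLine}, this makes some subarc of $\partial\cE_n^+$ or of its translate two-sided accessible, or else produces a $(-)$-ray linked with $\ell$ or $h(\ell)$. Either outcome contradicts \refthm{simplicialAxis} (one-sided inaccessibility) or \refprop{squashedFenceSystem}.

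Step 2 places the translate on one side. With the $(+)$-parts disjoint, the unlinkedness of the endpoint pairs places $h(x_j)$ and $h(z_1)$ in $\cldi{}{}$ of one complementary arc of $\sfd\sfa_i$, say the $\alpha$-side. Applying \refprop{enclosing} to $h(\sfa_j)^+$ relative to $\cH^\alpha(\sfa_i)$, starting from $h(x_j)$, gives $h(\sfa_j)^+\subset\cH^\alpha(\sfa_i)\cup\{\text{boundary}\}$. Because the $(+)$-parts are disjoint, this places $h(p)$ in the open domain $\cH^\alpha(\sfa_i)$. Next apply \refprop{enclosing} to $h(\sfa_j)^-$, which starts at $h(z_1)$ in $\partial\cH^\alpha(\sfa_i)$ and lands at $h(p)\in\cH^\alpha(\sfa_i)$. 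The result is $h(\sfa_j)^-\subset\overline{\cH^\alpha(\sfa_i)}$, and its meeting with $\partial\cH^\alpha(\sfa_i)$ is an arc lying in $\sfa_i^-$ (it cannot meet $\sfa_i^+$ by disjointness of $Z^\pm$). This yields $h(\sfa_j)\subset\overline{\cH^\alpha(\sfa_i)}$ and $\sfa_i\cap h(\sfa_j)=\sfa_i^-\cap h(\sfa_j)^-$.

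The main obstacle is Step 1. The hypothesis is not mere unlinkedness but disjointness of the slightly enlarged intervals, so the point is to build the auxiliary crossing arc from $\sfa_{i-1}^+\cup\sfa_{i+1}^+$, both of which lie in $Z^+$ and share $p$. Under the assumption that $h(\sfa_j)^+$ meets $\sfa_i^+$, this arc must separate $h(x_j)$ from $h(z_1)$ inside $\cH$; this forces a $(-)$-ray from $h(z_1)$ to cross into the region bounded by $(\partial\cE_{i\pm1})^+$ and land on the inaccessible side. That contradicts the one-sided inaccessibility of $\ell$ and the eclipse structure. Handling the case where the intersection point is exactly $p$ or $h(p)$ will be the delicate sub-case; for it I would use \refprop{squashedFenceSystem}, which gives $h(\cE_k)\subset\overline{\cE_r\setminus\cE_{r+1}}$ for suitable indices.
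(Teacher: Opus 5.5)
Your Step~2 is fine once Step~1 is known. Step~1, however, is the part you yourself call the heart, and it is not actually proved; the mechanism you propose cannot work.

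\textbf{Why your Step~1 mechanism fails.}
\begin{itemize}
\item The only inaccessibility input available here is that of the $(-)$-axis $\ell$ and its translates (\refthm{simplicialAxis}). Nothing is established about accessibility or simpliciality of the $(+)$-arcs $\sigma_n^+$ or $\sfa_n^+$. So producing a ``two-sided accessible subarc of $(\partial\cE_n)^+$'' via \refprop{unlinkedCrossingLine} contradicts nothing.
\item By hypothesis, $h(x_j)$ and $h(z_1)$ lie in the same complementary arc of $\cldi[\Lambda(\Gamma)]{x_{i+1}}{x_{i-1}}$. Hence $\sfa_{i-1}^+\cup\sfa_{i+1}^+$ is not linked with $h(\sfd\sfa_j)$, and it does not separate $h(x_j)$ from $h(z_1)$.
\item The arcs $\sfa_{i\pm1}^+$ do not share an initial segment. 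They meet $\sfa_i^+$, and each other, only at $p$: since $g$ has no $g$-prong in $Z^+$, distinct $g$-iterates of a branch at $p$ are disjoint.
\item The ``delicate'' case $h(p)=p$ needs no eclipse input. The hypothesis forces $h(z_1)\neq z_1$, so $h\neq 1$, and $\Fix(h)\subset\Lambda(\Gamma)$ while $p\in\cH$.
\end{itemize}

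\textbf{The missing idea: a planar sandwich.} This is what the enlarged intervals in the hypothesis are for. The arcs $\sfa_{i+1}^+,\sfa_i^+,\sfa_{i-1}^+$ form a tripod at $p$ whose ends $x_{i+1},x_i,x_{i-1}$ occur in this circular order. Hence $\delta_0:=\sfa_{i+1}^+\cup\sfa_{i-1}^+$ is a pure $(+)$-arc crossing $\cH$. Moreover, $\Int\sfa_i^+$ lies in the component $\cH^R(\delta_0)$ of $\cH\setminus\delta_0$ adjacent to $\opi[\Lambda(\Gamma)]{x_{i+1}}{x_{i-1}}$. Likewise $h(\sfa_j^+)$ lies in $\closure{\cH^R(\delta_1)}$, where $\delta_1=h([x_{j+1},x_{j-1}])$. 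The two intervals are disjoint and $Z^+$ is a tree, so $\delta_1\subset\closure{\cH^L(\delta_0)}$, and therefore $\cH^R(\delta_0)\cap\cH^R(\delta_1)=\varnothing$. Consequently the $(+)$-parts could meet only if $h(p)=p$, which is excluded. This is the route the paper takes.

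\textbf{Alternative: your $(-)$-ray idea, made rigorous.} It works without any inaccessibility.
\begin{itemize}
\item Suppose $h(\sfa_j^+)$ meets $\sfa_i^+\setminus\{p\}$. Convexity of $h(\sfa_j^+)\cap(\delta_0\cup\sfa_i^+)$ in the tree $Z^+$ forces $h(p)\in\cH^R(\delta_0)$.
\item The ray $h(\sfa_j^-)\subset Z^-$ starts at $h(z_1)\notin\cldi[\Lambda(\Gamma)]{x_{i+1}}{x_{i-1}}$ and ends at $h(p)$. It must therefore meet $\delta_0\subset Z^+$, contradicting $Z^+\cap Z^-=\varnothing$.
\item The remaining case $p\in h(\sfa_j^+)$ follows from the symmetry $(i,j,h)\mapsto(j,i,h^{-1})$ of the hypotheses.
\end{itemize}

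\textbf{Your Step~2.} With Step~1 in place it goes through, and it is slightly leaner than the paper's argument. The connected set $h(\sfa_j^+)\setminus\{h(x_j)\}$ misses $\sfa_i$, so $h(p)$ automatically lies on the side of $h(x_j)$; the paper's preliminary exclusion of $h(p)\in\cH^R(\sfa_i)$ becomes unnecessary. One caveat: \refprop{enclosing} is stated for rays starting on the $Z^\pm$-part of the boundary, not at $h(x_j)$ or $h(z_1)$. So your final containment is really a direct connectivity argument rather than an application of that proposition.
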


\begin{proof}
By symmetry, it suffices to consider the case in which $\ell$ is right-inaccessible. By assumption,
$h(\cldi[\Lambda(\Gamma)]{x_{j+1}}{x_{j-1}} \cup \{z_1\}) \subset \opi[\Lambda(\Gamma)]{z_1}{x_{i+1}}$ or $\opi[\Lambda(\Gamma)]{x_{i-1}}{z_1}$.
Without loss of generality, assume
$h(\cldi[\Lambda(\Gamma)]{x_{j+1}}{x_{j-1}} \cup \{z_1\}) \subset \opi[\Lambda(\Gamma)]{x_{i-1}}{z_1}$.
We claim $h(\sfa_j)\subset \closure{\cH^L(\sfa_i)}$.

First we show $h(p)\notin \cH^R(\sfa_i)$. If $h(p)\in \cH^R(\sfa_i)$, then $\partial \cH^R(\sfa_i)$ separates $h(p)$ and $h(z_1)$, so the pure ray $h(\sfa_j^-)$ intersects $\sfa_i$. Since $Z^+\cap Z^-=\varnothing$ and the synapses of $\sfa_i$ and $h(\sfa_j)$ lie in $Z^+$, it follows that $h(\sfa_j^-)$ is linked with $\sfa_i^-$, contradicting the one-sided simpliciality of $\ell$.

Next we show $h(\sfa_j^+)\cap \sfa_i=\varnothing$. Set $\delta_0=[x_{i+1},x_{i-1}]$ and $\delta_1=h([x_{j+1},x_{j-1}])$. Then $\delta_0,\delta_1\subset Z^+$ are pure arcs. Assume $x_{i+1}<x_{i-1}$ and $h(x_{j+1})<h(x_{j-1})$. Since $\sfd \delta_0$ and $\sfd \delta_1$ are unlinked, $\delta_0$ and $\delta_1$ are unlinked. Moreover $\delta_1\subset \closure{\cH^L(\delta_0)}$, $\delta_0\subset \closure{\cH^L(\delta_1)}$, and $\cH^R(\delta_0)\cap \cH^R(\delta_1)=\varnothing$. Since $\sfa_i^+$ crosses $\cH^R(\delta_0)$ and $\sfa_j^+$ crosses $\cH^R(\delta_1)$, we obtain $\Int \sfa_i \cap \Int \sfa_j=\varnothing$.

If $p=h(p)$, then $h$ is non-trivial and fixes $p\in \cH$, contradicting $\Fix(h)\subset \Lambda(\Gamma)$. Hence $\sfa_i^+\cap \sfa_j^+=\varnothing$, and therefore $\sfa_i^+\cap h(\sfa_j)=\varnothing$. It follows that $h(p)\in \cH^L(\sfa_i)$. Therefore,  since $\sfa_i^-$ is at least one-sided inaccessible, \refprop{enclosing}, applied to $h(\sfa_j^-)$ with respect to $\cH^L(\sfa_i)$, yields $h(\sfa_j)\subset \closure{\cH^L(\sfa_i)}$.
\end{proof}

\subsection{Geometric realization of $\cL(\Gamma,\{z_i\}_{i=1}^m)$}

In the setting of \refconst{firstStepConst} and \refconst{extConst}, \refrmk{realization} yields a unique geodesic lamination $\cG(\Gamma,\{z_i\}_{i=1}^m)$ on $\cH/\Gamma$ whose lift $\widetilde{\cG}(\Gamma,\{z_i\}_{i=1}^m)$ to $\cH$ is the geometric realization of $\cL(\Gamma,\{z_i\}_{i=1}^m)$.

We now discuss the geometry of auxiliary and generating leaves in $\cG(\Gamma,\{z_i\}_{i=1}^m)$, with \reflem{closureOfLeaf} and \reflem{closureOfManyLeaves} in view.

\begin{prop}[Fanning arcs]\label{Prop:fanning2}
Assume the conditions in \refconst{firstStepConst}.
Let $\cL(\Gamma,\{z_i\}_{i=1}^m)$ be the circle lamination constructed in \refconst{firstStepConst} or \refconst{extConst}.
Then each generating leaf $\ell_i$ and each auxiliary leaf $\sfd \sfa_n$ with $n>\cN$ has trivial stabilizer in $\Gamma$.
In particular, if $h\in \Gamma$ fixes $z_i$, then:
\begin{itemize}
    \item if $i=1$, then for each $n>\cN$, $h(\sfa_n)$ crosses either $\cH^L(\sfa_n)$ or $\cH^R(\sfa_n)$, and $h(x_n)\neq x_n$;
    \item if $i>1$, then $h(\gamma_i)$ crosses either $\cH^L(\gamma_i)$ or $\cH^R(\gamma_i)$, and $h(z_{i-1})\neq z_{i-1}$.
\end{itemize}
In the case $i=1$, we also have $z_0\neq z_1$.
\end{prop}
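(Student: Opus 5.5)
We model the argument on \reflem{fanningArcs}. First we reduce triviality of the stabilizers to the fixed-point statements. If $h\in\Gamma\setminus\{1\}$ preserves a generating leaf $\ell_i=\{z_{i-1},z_i\}$ with $i>1$, then $h$ or $h^2$ fixes both $z_{i-1}$ and $z_i$. These two points are the endpoints of the pure arc $\gamma_i\subset\ell\subset Z^-$. The curve $\ell$ is an axis of $g$ and is one-sided inaccessible by \refthm{simplicialAxis}, so we argue as in the proof of \refthm{twoInOne} (applied with $Z^\pm$ exchanged to the element whose fixed set is $\{z_{i-1},z_i\}\subset Z^-$). That theorem forbids an element with both fixed points in $Z^-$, so no such $h$ exists. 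This gives the case $i>1$ once we also know that $h(\gamma_i)$ crosses one side.

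For an auxiliary leaf $\sfd\sfa_n=\{x_n,z_1\}$, suppose $h$ (or $h^2$) fixes $x_n$ and $z_1$. Since $\Fix(h)\subset\Lambda(\Gamma)$ and $p\in\cH$, we have $h(p)\neq p$. Then $h(\sfa_n)$ and $\sfa_n$ are two connector arcs with common endpoints $x_n\in Z^+$ and $z_1\in Z^-$ but distinct synapses. By unique path-connectedness, $\sfa_n^+\cup h(\sfa_n^+)$ contains an arc from $p$ to $h(p)$ through $x_n$, and similarly $\sfa_n^-\cup h(\sfa_n^-)$ passes through $z_1$. The union of these then bounds a Jordan domain. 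We then argue as in the first claim of \reflem{fanningArcs}. The branches at $h(p)$ not containing $x_n$ are moved by $hgh^{-1}$, by \refprop{invBranch}, since there is no prong by \refthm{oneProng}. Their images under $hg^{\pm1}h^{-1}$ land on opposite sides of $\sfa_n^-$ near $z_1$, or else $h(\sfa_n^-)$ is linked with $\sfa_n^-$. Either outcome contradicts the one-sided inaccessibility of $\ell$ from \refthm{simplicialAxis}.

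For the crossing statements when $h$ fixes only $z_1$ (or only $z_i$), we follow \reflem{fanningArcs}. First we show $h(p)\notin\sfa_n$ using the same opposite-sides argument. After replacing $h$ by $h^{-1}$, we get $h(p)\in\cH^L(\sfa_n)$ or $h(p)\in\cH^R(\sfa_n)$. We then apply \refprop{enclosing} to $h(\sfa_n^-)$, which starts at the common point $z_1$, relative to that half-domain. Together with the unlinkedness of \refprop{auxiliaryLeaf}, this yields $h(\sfa_n)\subset\closure{\cH^\alpha(\sfa_n)}$ with interior disjoint from $\sfa_n$, that is, $h(\sfa_n)$ crosses $\cH^\alpha(\sfa_n)$. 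In particular $h(x_n)\neq x_n$. The case $i>1$ is identical and easier, since $\gamma_i$ is pure in $Z^-$ and the one-sided inaccessibility of $\ell$ directly excludes linking.

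Finally we show $z_0\neq z_1$. Suppose $z_0=z_1$. Then $x_n\to z_1$, so the arcs $\sfa_n$ pinch: both endpoints converge to $z_1$. Choose $h\in\Gamma$ whose conical limit sequence for $z_1$ blows up neighbourhoods of $z_1$. Translates $h(\sfa_n)$ then have endpoints separated on $\Lambda(\Gamma)$ and linked with some fixed $\sfa_{n'}$. This contradicts \refprop{auxiliaryLeaf} together with \reflem{auxArcUnlinked}. We expect this last step, ensuring that the pinching arcs produce a genuinely linked configuration and not just nested ones, to be the main obstacle. We would first try to use the monotonicity of $\{x_n\}$ and the fact that every $\sfa_n$ passes through $p\in\cH$, which stays a bounded distance from $\Lambda(\Gamma)$.
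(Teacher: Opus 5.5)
Your treatment of the generating leaves is essentially the paper's: a nontrivial element preserving $\{z_{i-1},z_i\}\subset Z^-$ would, after squaring, have its fixed set in $Z^-$, which \refthm{twoInOne} forbids. Your crossing statement for $i>1$ (via \refprop{enclosing} and one-sided inaccessibility of $\ell$) also matches the paper. The gap is in everything involving the auxiliary arcs $\sfa_n$.

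The ``opposite-sides'' branch argument you transplant from \reflem{fanningArcs} has no analogue here. In that lemma, $h(p)$ was assumed to lie on $\gamma_1^+$. The type~I domain $h(\cD(p))$ of an invariant fence (\reflem{simplicialSide}) then located the moved branch so that its $hg^{\pm1}h^{-1}$-images land on opposite sides of $\gamma_1^+$. In the no-prong setting there is no invariant fence. Nothing in your configuration makes branches at $h(p)$, which lie in $Z^+$, produce rays landing on both sides of $\sfa_n^-$ near $z_1$. Your claim that $\sfa_n^+\cup h(\sfa_n^+)$ passes through $x_n$ is also unjustified, since the two arcs may share a terminal segment.

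What is missing is a separation argument by $Z^+$-arcs through $p$:
\begin{itemize}
\item \emph{Trivial stabilizer of $\sfd\sfa_n$.} Suppose $h$ fixes $x_n$ and $z_1$, with $a(h)=x_n$ say. Then $h^k(p)\to x_n$, while $h^k(\sfa_n^-)$ is a ray in $Z^-$ from $z_1$ to $h^k(p)$, contained in $\cH$ apart from $z_1$. Since $\{x_\cN,x_{n+1}\}$ is linked with $\{x_n,z_1\}$, for large $k$ this ray must cross $[x_\cN,x_{n+1}]\subset Z^+$, which is impossible.
\item \emph{The claim $h(p)\notin\sfa_n$.} Suppose $h$ fixes $z_1$ and $h(p)\in\Int\sfa_n^+$. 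Then $h(\sfa_n)^-$ runs from $z_1$ to $h(p)$ inside $\cH^L(\sfa_n)$ or $\cH^R(\sfa_n)$. It must therefore cross $\sfa_{n-1}^+$ or $\sfa_{n+1}^+$, a contradiction.
\item \emph{Disjointness of the $(+)$-segments.} \refprop{enclosing} applied to $h(\sfa_n^-)$ controls only the $(-)$-ray; you still need $h(\sfa_n)^+\cap\sfa_n^+=\varnothing$. \refprop{auxiliaryLeaf} cannot supply this, because $h(\sfd\sfa_n)$ is not a leaf of $\cL$; unlinkedness of two pairs sharing $z_1$ is automatic anyway. The paper first applies the previous claim to $h^{-1}$, giving $p\notin h(\sfa_n)$. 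It then shows $h(p)$ cannot lie on the side of $[x_{n-1},x_{n+1}]$ away from $z_1$. Finally, a common point of $h(\sfa_n)^+$ and $\Int\sfa_n^+$ would force $p\in h(\sfa_n)^+$, by unique path-connectedness of $Z^+$.
\end{itemize}

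Your argument for $z_0\neq z_1$ fails, as you suspected. The paper explicitly notes that $\sfd\sfa_n$ may be linked with translates $h(\sfd\sfa_{n'})$, since these are not leaves of $\cL$. So exhibiting such a linking contradicts neither \refprop{auxiliaryLeaf}, which concerns leaves of $\cL$, nor \reflem{auxArcUnlinked}, which assumes unlinkedness rather than producing it. Moreover, the proposition asserts $z_0\neq z_1$ only when $z_1$ is fixed by a nontrivial $h\in\Gamma$. The paper reads it off from the fanning property of that $h$ just established; your sketch never uses $h$. The unconditional statement is \refcor{limitAuxiliaryLeaf}. Its case where $z_1$ is not fixed needs the perfectness argument of \reflem{closureOfAuxiliaryLeaf}, which a conical-limit sketch does not replace.
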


\begin{proof}
We first show that each generating leaf and each auxiliary leaf has trivial stabilizer.
For the generating leaves, this is exactly \refthm{twoInOne}.
It therefore remains to prove that, for each $n>\cN$, the auxiliary leaf $\sfd \sfa_n$ has trivial stabilizer.

Suppose that $\sfd \sfa_n$ is preserved by some $h\in \Gamma$.
Then $\Fix(h)=\sfd \sfa_n$ and $a(h)=x_n$.
Since $\{x_\cN,x_{n+1}\}$ is linked with $\sfd \sfa_n$, the ray $h^i(\sfa_n^-)\subset Z^-$ meets $[x_\cN,x_{n+1}]\subset Z^+$ for all sufficiently large $i$, which is impossible.
Hence the stabilizer of $\sfd \sfa_n$ is trivial.
This proves the first assertion.

We then prove the second assertion.
By symmetry, it suffices to consider the case where $\ell$ is right inaccessible.
Assume that $\ell$ is right inaccessible and that $h\in \Gamma$ fixes $z_i$.
Both $\sfa_n,n>\cN$ and $\gamma_i,i>1$  carries a natural linear order compatible with $\ell$.

Assume first that $i>1$.
By the first assertion, $h(z_{i-1})\neq z_{i-1}$.
Hence either $h(w_i)\in \opi[\Lambda(\Gamma)]{z_i}{z_{i-1}}$ or $h(w_i)\in \opi[\Lambda(\Gamma)]{z_{i-1}}{z_i}$.
Replacing $h$ by $h^{-1}$ in the latter case, we may assume that
\[
h(z_{i-1})\in \opi[\Lambda(\Gamma)]{z_i}{z_{i-1}}.
\]
Applying \refprop{enclosing} to $h(\gamma_i)$ with respect to $\cH^R(\gamma_i)$, the right inaccessibility of $\gamma_i$ implies that $h(\gamma_i)$ crosses $\cH^R(\gamma_i)$.
Thus the conclusion holds in this case.

Assume \(i=1\).
We first claim that \(h(p)\notin \sfa_n\).
Suppose otherwise.
Since \(\Fix(h)\subset \Lambda(\Gamma)\), we have \(h(p)\neq p\), hence \(h(p)\in \Int \sfa_n^+\) as \(Z^+\cap Z^-=\varnothing\).
Since \(\sfa_n^-\) and \(h(\sfa_n)^-\) are right inaccessible, \refprop{enclosing} implies that \(h(\sfa_n)^-\) crosses \(\cH^L(\sfa_n)\) or \(\cH^R(\sfa_n)\) and lands at \(h(p)\).
As \(n>\cN\), in either case \(h(\sfa_n)^-\) meets \([x_{n-1},x_{n+1}]\subset Z^+\), contradicting \(Z^+\cap Z^-=\varnothing\).

Hence \(h(p)\in \cH^L(\sfa_n)\) or \(\cH^R(\sfa_n)\).
We show that, respectively, \(h(\sfa_n)\) crosses \(\cH^L(\sfa_n)\) or \(\cH^R(\sfa_n)\).
Applying \refprop{enclosing} to \(h(\sfa_n)^-\), the right inaccessibility of \(h(\sfa_n)^-\) or \(\sfa_n^-\), respectively, gives
\[
\Int h(\sfa_n)^- \subset \cH^L(\sfa_n)
\quad\text{or}\quad
\Int h(\sfa_n)^- \subset \cH^R(\sfa_n).
\]
Thus it suffices to show \(h(\sfa_n)^+\cap \sfa_n^+=\emptyset\), which yields \(\Int h(\sfa_n)\subset \cH^L(\sfa_n)\) or \(\cH^R(\sfa_n)\), respectively

Applying the above claim to \(h^{-1}\), we have \(p\notin h(\sfa_n)\).
Hence it suffices to show \(h(\sfa_n)^+\cap \Int \sfa_n^+=\emptyset\).

Let \(\cA=[x_{n-1},x_{n+1}]\subset Z^+\), ordered from \(x_{n+1}\) to \(x_{n-1}\).
Then \(z_1<x_{n-1}<x_n<x_{n+1}\) in \(\Lambda(\Gamma)\).
We claim \(h(p)\notin \cH^L(\cA)\).
Otherwise \(\partial \cH^L(\cA)\) separates \(h(p)\) from \(z_1\), and since \(h(\sfa_n)^-\) lands at both, we get \(h(\sfa_n)^-\cap \cA\neq\emptyset\), a contradiction.

Since \(h(p)\neq p\), either
\[
h(p)\in \Int \cA\setminus\{p\}=\Int \sfa_{n+1}^+\cup \Int \sfa_{n-1}^+
\]
or \(h(p)\in \cH^R(\cA)\).
In either case, if \(v\in h(\sfa_n)^+\cap \Int \sfa_n^+\), then \([h(p),v]\ni p\), hence \(p\in h(\sfa_n)^+\), a contradiction.
This completes the proof. The last assertion follows from the previous one.
\qedhere

\end{proof}

\begin{lem}[Perfectness of the first-step geodesic lamination]\label{Lem:closureOfAuxiliaryLeaf}
Assume the conditions of \refconst{firstStepConst}. Let $\cL=\cL(\Gamma,\{z_i\}_{i=1}^m)$ be the circle lamination constructed in \refconst{firstStepConst} or \refconst{extConst}. Then, for each $n>\cN$, the leaf $\sfg(\sfd \sfa_n)$ corresponds to a bi-infinite geodesic $\sfg_n$ in $\cH/\Gamma$, possibly non-simple, which is disjoint from $\cG=\cG(\Gamma,\{z_i\}_{i=1}^m)$.

Moreover, if $z_1$ is not fixed by any non-trivial element of $\Gamma$, then $\cG(\Gamma,z_1)=\cG(\Gamma,z_1)'$, $\cG''\neq\varnothing$, and there exists a unique principal region $U$ of $\cG''$ such that $\sfg_n\subset U$ and each half-ray of $\sfg_n$ limiting to $z_1$ escapes the same end of $U$ and spirals toward the same component of $\cG''$.
\end{lem}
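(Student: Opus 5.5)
The plan is to treat the two assertions in order.

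\textbf{First assertion.} By \refprop{auxiliaryLeaf}, for each $n>\cN$ the pair $\sfd\sfa_n$ is unlinked with every leaf of $\cL$. Hence the geodesic $\sfg(\sfd\sfa_n)$ in $\cH$ meets no leaf of $\widetilde\cG$ transversally. The same statement applied to $\Gamma$-translates gives that no translate $h(\sfg(\sfd\sfa_n))$ crosses $\widetilde\cG$. The pair $\sfd\sfa_n$ is itself not a leaf of $\cL$, since \refprop{auxiliaryLeaf} also says that no sequence of leaves converges to it, and a constant sequence is one such sequence. Since $\cL$ is $\Gamma$-invariant, no translate $h(\sfd\sfa_n)$ is a leaf either. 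So the projection $\sfg_n$ is disjoint from $\cG$. It need not be simple, because $\sfd\sfa_n$ may be linked with $h(\sfd\sfa_n)$, as remarked before \refprop{auxiliaryLeaf}. By \refprop{fanning2}, $\sfd\sfa_n$ has trivial stabilizer, so $\sfg_n$ is a bi-infinite geodesic and not a closed one.

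\textbf{Second assertion: perfectness.} Suppose $z_1$ is fixed by no non-trivial element of $\Gamma$. We want every leaf $\{a,r\}\in\cL(\Gamma,z_1)$ to be a limit of other leaves. Take a conical limit sequence $g_i$ for $z_1$ with respect to $\{a,r\}$. Then $g_i(z_1)\to r$. The leaves $g_i(\{a,r\})$ lie in $\cL(\Gamma,z_1)$. We pass to accumulation points: any limit of leaves $\{a_i,r_i\}$ that are conical pairs for $z_1$ is again a conical pair, by \refrmk{conicalLimit}. The aim is to produce, for a given leaf $\lambda$, conical pairs distinct from $\lambda$ converging to it, using the closedness in \refrmk{conicalLimit} and a diagonal argument. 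Concretely, I would perturb the conical sequence $g_i$ by composing with powers of a fixed element, which shifts the limit pair slightly, and use that $z_1$ is not a fixed point to ensure the perturbed pairs are distinct. If this works, $\cG(\Gamma,z_1)$ has no isolated leaves, so $\cG(\Gamma,z_1)=\cG(\Gamma,z_1)'$, and hence $\cG''\neq\varnothing$. I expect producing these distinct nearby pairs to be the \textbf{main obstacle}. The fallback is to argue via \reflem{closureOfLeaf}: if some leaf were isolated, its endpoint would be fixed, contradicting the hypothesis.

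\textbf{Second assertion: the principal region.} The half-ray of $\sfg_n$ tending to $z_1$ stays in a single complementary region of $\cG''$, since $\sfg_n$ is disjoint from $\cG$. Because $z_1$ is not a fixed point, this half-ray cannot be asymptotic to a closed leaf. Applying \reflem{closureOfLeaf} and \refprop{gapShape}, as in the proof of \refthm{twoInOne}, it must escape an end of a principal region $U$ and spiral toward a component of $\cG''$.

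\textbf{Uniqueness and independence of $n$.} Different $n$ share the endpoint $z_1$, so their half-rays toward $z_1$ are asymptotic in $\cH$. They therefore project to the same end of $U$ and spiral toward the same component of $\cG''$. Finally, $\sfg_n\subset U$: the geodesic $\sfg_n$ crosses no leaf of $\cG$, and it shares this end with the single region $U$, which forces uniqueness of $U$.
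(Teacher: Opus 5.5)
The first assertion is fine and matches the paper: \refprop{auxiliaryLeaf} for unlinkedness and non-accumulation, and \refprop{fanning2} for the trivial stabilizer. The genuine gap is the perfectness of $\cG(\Gamma,z_1)$, which is the core of the lemma, and neither of your routes works. Post-composing a conical sequence with $f^k$ only replaces the limit pair $\lambda$ by $f^k(\lambda)$, which does not approach $\lambda$. Pre-composing gives conical sequences for $f^{-k}(z_1)$, not for $z_1$. The fallback is false. \reflem{closureOfLeaf} says nothing about endpoints of isolated leaves, and in general an isolated leaf need not have a fixed endpoint: take a diagonal of an ideal quadrilateral complementary to a minimal lamination with no closed leaves. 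Perfectness is not a formal consequence of $\cL(\Gamma,z_1)$ being the set of conical limit pairs of a non-fixed point. The zipper must enter, through the auxiliary leaves $\sfd\sfa_n$.

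The paper's argument has two steps your proposal lacks. First, it shows that no leaf of $\cL(\Gamma,z_1)$ has non-trivial stabilizer, so there are no closed leaves; this lets it identify $\cG_0''=\cG_0'$ and read off cusp structure from \refprop{gapShape}. This is a three-case analysis of how $g_i(\sfd\sfa_n)$ approach a putative $\langle h\rangle$-invariant leaf. The last case produces linked $(-)$-rays $h^{\sfk}g_j(\sfa_n^-)$ and $g_i(\sfa_n^-)$, contradicting \refthm{simplicialAxis}. Second, suppose $\lambda=\{\alpha,\rho\}$ were isolated. Then its end at $\rho$ spirals onto the derived lamination of the closure $\cG_0$ of its projection. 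Hence $\rho$ is a cusp of a complementary region, bounded by leaves $\{x_L,\rho\}$ and $\{\rho,x_R\}$ with $x_L<\rho<x_R<\alpha$. Take a conical sequence $h_i$ for $z_1$ with respect to $\lambda$. The auxiliary leaves $h_i(\sfd\sfa_n)=\{h_i(z_1),h_i(x_n)\}$ converge to $\lambda$, are pairwise distinct by \refprop{fanning2}, and are unlinked with these two leaves by \refprop{auxiliaryLeaf}. Since $h_i(x_n)\to\alpha\notin\cldi[\Lambda(\Gamma)]{x_L}{x_R}$, this forces $h_i(z_1)=\rho$ for all large $i$. So $h_j^{-1}h_i$ is a non-trivial element fixing $z_1$, which is where the hypothesis on $z_1$ is really used.

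Relatedly, in your last step \reflem{closureOfLeaf} does not apply to the possibly non-simple $\sfg_n$. What forces the half-ray toward $z_1$ out through an end of $U$ is that its accumulation set is exactly $\cG(\Gamma,z_1)$, since conical limit pairs are limits of the $h_i(\sfd\sfa_n)$. That set lies in $\cG''$ once perfectness is known. Your asymptotic-rays argument for independence of $n$ is fine.
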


\begin{proof}
The first statement follows from \refprop{auxiliaryLeaf} and \refprop{fanning2}.

For the second statement, assume that $z_1$ is not fixed by any non-trivial element of $\Gamma$. We first observe that $\cG_1=\cG(\Gamma,z_1)$ has no isolated leaf, i.e. $\cG_1=\cG_1'$. This will imply that $\cG''\ne \varnothing$. Assume that $\lambda$ is an isolated leaf of $\cL(\Gamma,z_1)$ and let $\sfg$ be the corresponding leaf in $\cG_1$.

\begin{claim*}
The stabilizer of any leaf of $\cL(\Gamma,z_1)$ is trivial. In particular, $\sfg$ is bi-infinite.
\end{claim*}

\begin{proof}[Proof of the claim]
Assume not. Then there exists a leaf $\mu$ of $\cL(\Gamma,z_1)$ with $\Stab{\Gamma}{\mu}=\langle h\rangle$ for some $h\in \Gamma$. Since $\mu$ is a conical limit pair for $z_1$, there is a conical limit sequence $\{g_i\}$ for $z_1$ with respect to $\mu=\{a,r\}$ such that $g_i(z_1)\to r$, and $g_i(y)\to a$ for $y\in \Lambda(\Gamma)\setminus\{z_1\}$. We may assume $a=a(h)$ and $r=r(h)$.

Fix $n>\cN$ and write $\lambda_n=\sfd \sfa_n=\{z_1,x_n\}$. Then $g_i(\lambda_n)\to \mu$ in $\cM$. By \refprop{fanning2}, $\mu\neq g_i(\lambda_n)$ for all but finitely many $i$, so by \refprop{auxiliaryLeaf}, after passing to a subsequence, $\{g_i(\lambda_n)\}$ lies entirely on one side, either $\opi[\Lambda(\Gamma)]{r}{a}$ or $\opi[\Lambda(\Gamma)]{a}{r}$.

Assume first that it is $\opi[\Lambda(\Gamma)]{r}{a}$-side. After passing to a subsequence, one of the following holds:
\begin{enumerate}
\item $r=g_{i+1}(z_1)=g_i(z_1)<g_i(x_n)<g_{i+1}(x_n)<a$;
\item $r<g_{i+1}(z_1)<g_i(z_1)<g_i(x_n)=g_{i+1}(x_n)=a$;
\item $r<g_{i+1}(z_1)<g_i(z_1)<g_i(x_n)<g_{i+1}(x_n)<a$.
\end{enumerate}

The first case implies that some non-trivial element fixes $z_1$, contradiction by the assumption. In the second case, $g_i\circ g_1^{-1}$ fixes $a$ and hence equals $h^{k(i)}$. Since $g_1(z_1)\in \opi[\Lambda(\Gamma)]{r}{a}$ and  $h^{k(i)} g_1(z_1)=g_i(z_1)\to r=r(h)$, we have that $k(i)\to -\infty$, so $g_i(p)\to r$, contradicting the conical limit property.

Now consider the third case. For all sufficiently large $i$, the leaves $g_i(\sfd \sfa_n)$ and $hg_i(\sfd \sfa_n)$ are linked, i.e.
$r<g_i(z_1)<hg_i(z_1)<g_i(x_n)<hg_i(x_n)<a$.
Fix such an $i$. Choose a Jordan domain $B$ in $S_\infty^2$ containing $a$ such that $h(\closure{B})\subset B$ and
$B\cap (g_i(\sfa_n)\cup hg_i(\sfa_n))=\varnothing$.
Then choose a flat neighborhood $U$ of $a$ relative to $\opi[\Lambda(\Gamma)]{hg_i(x_n)}{g_i(z_1)}$ such that $U\subset B$ and $\cI_U:=U\cap \Lambda(\Gamma)$ is an open interval containing $a$.

Now take $j>i$ sufficiently large so that $g_j(\sfa_n^+)\subset U$ and $g_j(z_1)\in \opi[\Lambda(\Gamma)]{r}{g_i(z_1)}$.
For each $k\in \NN$, we have
$h^k(g_j(\sfa_n^+))\subset h^k(U)\subset h^k(B)\subsetneq h(B)\subsetneq B$,
so $h^k(U)$ is again a flat neighborhood of $a$ relative to $\opi[\Lambda(\Gamma)]{hg_i(x_n)}{g_i(z_1)}$, disjoint from $g_i(\sfa_n)\cup hg_i(\sfa_n)$.
Moreover,
$h^k(U)\cap \Lambda(\Gamma)=h^k(\cI_U)\subset \closure{h^k(\cI_U)}\subset \cI_U$.

Since $g_i(\sfd \sfa_n)$ and $hg_i(\sfd \sfa_n)$ are linked and $a=a(h)$, there exists $\sfk\in \NN$ such that $h^{\sfk}(g_j(\sfd \sfa_n))$ and $g_i(\sfd \sfa_n)$ are linked.
Because $h^\sfk(U)\cap g_i(\sfa_n)=\varnothing$, $h^\sfk(g_j(\sfa_n^+))\subset h^\sfk(U)$, and
$h^\sfk g_j(z_1)\in \opi[\Lambda(\Gamma)]{g_i(z_1)}{g_i(x_n)}$,
the disjointness of $Z^\pm$ implies that $h^\sfk(g_j(\sfa_n^-))$ is linked with $g_i(\sfa_n^-)$.
This contradicts the one-sided inaccessibility of $\ell$.

The case where $\{g_i(\lambda_n)\}_{i\in \NN}$ lies in $\opi[\Lambda(\Gamma)]{a}{r}$ can be handled by a symmetric argument. 
Thus the claim follows.
\end{proof}

After the claim, $\sfg$ is a bi-infinite simple isolated leaf of $\cG_1$, and $\cG_1$ has no closed leaf. Choose a conical limit sequence $\{h_i\}$ for $z_1$ with respect to $\lambda=\{\alpha,\rho\}$ such that $h_i(z_1)\to \rho$ and $h_i(y)\to \alpha$ for $y\in \Lambda(\Gamma)\setminus\{z_1\}$.

Let $\cL_0$ be the closure of the $\Gamma$-orbit of $\lambda$ in $\cM$, and let $\cG_0$ be the corresponding sublamination of $\cG_1$, namely the closure of $\sfg$ in $\cH/\Gamma$.
By assumption, $\lambda$ is isolated in $\cL_0$, and $\sfg$ is the unique isolated leaf of $\cG_0$. Thus $\cG_0'=\cG_0\setminus \sfg$. Since, by the above claim, $\cG_0$ has no closed leaf, we also have $\cG_0''=\cG_0'$ (see \cite[Corollary~4.7.1]{Casson88}). Moreover, each component of $\cG_0'$ is minimal. By construction, a half-ray of $\sfg$ limiting to $\rho$ spirals toward a component of $\cG_0'$. Hence there is a unique principal region $U$ of $\cG_0$ whose end is approached by this half-ray. By \refprop{gapShape}, there exist boundary leaves $\ell_L,\ell_R$ of $\cL_0$ with $\ell_L=\{x_L,\rho\}$ and $\ell_R=\{x_R,\rho\}$ for some $x_L<\rho<x_R<\alpha$ in $\Lambda(\Gamma)$, such that $\opi[\Lambda(\Gamma)]{x_L}{\rho}$ and $\opi[\Lambda(\Gamma)]{\rho}{x_R}$ are not isolated in $\cL_0$.

On the other hand, for each $n>\cN$, $h_i(\sfd \sfa_n)\to \lambda$ as $h_i(z_1)\to \rho$ and $h_i(x_n)\to \alpha$. By \refprop{fanning2}, after passing to a subsequence, we may assume that the auxiliary leaves $h_i(\sfd \sfa_n)$ are pairwise distinct. By \refprop{auxiliaryLeaf}, each of them is unlinked with both $\ell_L$ and $\ell_R$. It follows that there exist $i\neq j$ such that $h_i(\sfd \sfa_n)\cap h_j(\sfd \sfa_n)=\{\rho\}$ and $\rho=h_i(z_1)=h_j(z_1)$. Hence $h_j^{-1}h_i$ is a non-trivial element of $\Gamma$ fixing $z_1$, contradicting the assumption. Therefore $\cL(\Gamma,z_1)$ has no isolated leaf, so $\cG_1=\cG_1'$. Since $\cG_1\subset \cG''$, it follows that $\cG''\neq \varnothing$.

Finally, fix $n>\cN$. Let $\sfr$ be the half-ray of $\sfg_n$ limiting to $z_1$. By \refprop{auxiliaryLeaf}, $\sfg_n$ is not a leaf of $\cG$ and is disjoint from $\cG$. Hence $\sfr$ is contained in a principal region $U$ of $\cG''$.
Recall \refprop{gapShape}.
If $U$ is a finite ideal polygon, there is nothing to prove. Otherwise, $U$ has a unique core $U_0$, which is either a simple closed curve or a compact subsurface of $\cH/\Gamma$, and $U\setminus U_0$ is a disjoint union of open crowns. Since $\cG''$ contains $\cG_0$, the ray $\sfr$ accumulates on a component of $\cG''$ and therefore escapes an end of $U$.

Since $z_1$ is not fixed by any non-trivial element of $\Gamma$, \refprop{oneEndFix} implies that there are only finitely many boundary leaves of $\cL_0$ containing $z_1$. It follows from the preceding discussion that, for all $n>\cN$, each half-ray of $\sfg_n$ limiting to $z_1$ escapes the same end of $U$ and spirals toward the same component of $\cG''$. This proves the second statement.
\end{proof}

\subsection{Uniform bouncing direction at a fixed endpoint}

In this and the following sections, we study the bouncing direction as in \refsec{uniformBouncing}. We begin with the following lemma.
With \refprop{fanning2} at hand, we adapt the proof of \reflem{oneStepAtFix} to the present setting.
\begin{lem}[One-step extension at a fixed end]\label{Lem:oneStepAtFix2}
Assume the hypotheses of \refconst{firstStepConst}.
Let $\cL(\Gamma,\{z_i\}_{i=1}^m)$, where $m\ge 1$, be a circle lamination constructed in \refconst{firstStepConst} or \refconst{extConst}.
Suppose that $\ell$ is right (resp.\ left) inaccessible, and that $z_m$ is fixed by a non-trivial element $h\in \Gamma$.
Then there exists $z_{m+1}\in \ell$ such that $\{z_i\}_{i=1}^{m+1}$ satisfies the bouncing condition of \refconst{extConst}, and
$z_{m-1}<z_m<z_{m+1}$ (resp. $z_{m+1}<z_m<z_{m-1}$) in $\Lambda(\Gamma)$.
\end{lem}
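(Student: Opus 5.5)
The plan is to mirror the proof of \reflem{oneStepAtFix}, replacing $\cJ(g)^+$ by the axis $\ell\subset Z^-$, the role of left inaccessibility by right inaccessibility of $\ell$, and \reflem{fanningArcs} by \refprop{fanning2}. By symmetry we treat only the right-inaccessible case, so $\ell$ is right simplicial. There are two cases, $m>1$ (the last generating arc $\gamma_m=[z_{m-1},z_m]\subset\ell$ is pure) and $m=1$ (only the auxiliary arcs $\sfa_n$ are available).

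\emph{Case $m>1$.} By \refprop{fanning2}, $h(z_{m-1})\neq z_{m-1}$, and $h(\gamma_m)$ crosses $\cH^L(\gamma_m)$ or $\cH^R(\gamma_m)$. After replacing $h$ by $h^{-1}$, we may assume it crosses the side for which $h(z_{m-1})$ lies on the arc of $\Lambda(\Gamma)$ beyond $z_m$ in the required direction. Then $\cA:=\gamma_m\cup h(\gamma_m)=[z_{m-1},h(z_{m-1})]$ is an arc in $Z^-$ with the order of $\gamma_m$. Let $\cR$ be the end ray of $\ell$ starting at $z_{m-1}$ and passing through $z_m$; which end is meant is dictated by the strictly decreasing order on $\ell$. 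Since $Z^-$ is a real tree, $\cA\cap\cR=[z_{m-1},y]$ for some $y\in h(\gamma_m)$.
\begin{itemize}
\item If $y=h(z_{m-1})$, set $z_{m+1}:=h(z_{m-1})$.
\item Otherwise, $\cR\setminus[z_{m-1},y]$ leaves $\cA$ at $y$ through a branch on the simplicial-permitted side, by right simpliciality of $\ell$. Form the Jordan curve $\cJ=\cA\cup(\text{complementary arc of }\Lambda(\Gamma))$. This terminal line starts at $y$ on one side of $\cJ$ and lands at $q\notin\closure{\cH}$ on the other side, so it must cross the arc of $\Lambda(\Gamma)$. Its initial crossing point gives $z_{m+1}$, and $[z_m,z_{m+1}]$ crosses $\cH$, which is the bouncing condition.
\end{itemize}
In either subcase, the cyclic order $z_{m-1}<z_m<z_{m+1}$ follows from the position of $h(z_{m-1})$.

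\emph{Case $m=1$.} Here $h$ fixes $z_1$, and we use the end ray of $\ell$ beyond $z_1$ in place of $\gamma_1$; I expect this case to be the main obstacle. Fix $n>\cN$ and consider $\sfa_n$ with $\sfd\sfa_n=\{z_1,x_n\}$. By \refprop{fanning2}, $h(\sfa_n)$ crosses one side of $\sfa_n$ and $h(x_n)\neq x_n$; choose $h^{\pm1}$ so it crosses the correct side. The ray $\sfa_n^-$ and $h(\sfa_n^-)$ are distinct rays in $Z^-$ both issuing from $z_1$. Their union contains an arc $\fA\subset Z^-$ through $z_1$, playing the role of $[z,h(z)]$ in \reflem{oneStepAtFix}. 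Intersect the portion of $\ell$ in $\fA$ with the full end ray of $\ell$ toward $q$, exactly as was done there with $\cJ(g)^+$.
\begin{itemize}
\item If that ray stays in $\fA$ up to a point of $\Lambda(\Gamma)$, take this point as $z_2$.
\item Otherwise, right simpliciality of $\ell$ forces the departing line of $\ell$ into the correct side of $\fA$. The Jordan curve argument with $q\in\cD^R(\Lambda(\Gamma))$ then produces $z_2\in\ell\cap\Lambda(\Gamma)$ such that $[z_1,z_2]$ crosses $\cH$.
\end{itemize}

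The delicate points in the case $m=1$ are the following.
\begin{itemize}
\item The synapse $p$ sits inside the arcs $\sfa_n$, so disjointness of $Z^\pm$ must be used to guarantee $h(\sfa_n^+)\cap\sfa_n^+=\varnothing$. This comes from the proof of \refprop{fanning2}, and is needed so that the relevant Jordan domains separate $q$ from the branch where $\ell$ departs.
\item We must verify that $z_2\neq z_1$ and obtain the stated cyclic order. This should use $z_0\neq z_1$ from \refprop{fanning2} together with the monotonicity of $\{x_n\}$ from \refconst{firstStepConst}.
\end{itemize}
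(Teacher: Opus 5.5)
Your case $m>1$ is the paper's argument. Your construction for $m=1$ also essentially matches it: the line $\sfa_n^-\cup h(\sfa_n)^-$ through $z_1$, the part of $\ell$ leaving it through a right branch, and the Jordan curve through $\sfa_n$, $h(\sfa_n)$ and an arc of $\Lambda(\Gamma)$.

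The gap is at the step you treat as routine. For $m=1$ the required order $z_0<z_1<z_2$ forces $z_2\neq z_0$, and this does not follow from $z_0\neq z_1$ and the monotonicity of $\{x_n\}$. Monotonicity only gives the following. The new arc $[z_1,v]\subset Z^-$ cannot meet $\sfa_{n'}^+\subset Z^+$, so it crosses $\cH^R(\sfa_{n'})$ for every $n'>n$. Hence $v\in\lopi[\Lambda(\Gamma)]{z_1}{z_0}$, and no more. The endpoint $v=z_0$ is not excluded by anything planar. The point $z_0$ is only a limit of points $x_{n'}\in Z^+$. In the one-sided simplicial case both $z_0$ and all of $\ell$ lie in $\cE_\infty$. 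So neither disjointness of $Z^\pm$ nor the eclipse gives a contradiction. By contrast, $z_2\neq z_1$, which you flag, is automatic.

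Excluding $v=z_0$ needs a lamination argument with the auxiliary leaves. Assume $v=z_0$, so that $\sfd\sfa_n\to\{z_1,v\}$ from within $\cldi[\Lambda(\Gamma)]{z_0}{z_1}$.

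\emph{If a nontrivial $f\in\Gamma$ fixes $v$.} Then $f$ does not fix $z_1$, by Theorem~\ref{Thm:twoInOne}. The already-proved case $m>1$, applied to $\{z_1,v\}$, yields $z_3$ such that $\{v,z_3\}$ is linked with $\sfd\sfa_n$ for all large $n$. This contradicts Proposition~\ref{Prop:auxiliaryLeaf}.

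\emph{If no nontrivial element fixes $v$.} Since $h$ fixes $z_1$ but not $v$, Proposition~\ref{Prop:oneEndFix} shows that $\{z_1,v\}$ is isolated in $\cL(\Gamma,\{z_1,v\})$. Lemmas~\ref{Lem:closureOfLeaf} and \ref{Lem:closureOfManyLeaves}, with Proposition~\ref{Prop:gapShape}, then give a boundary leaf $\{v,w\}$ with $z_1<v<w$. This leaf is again linked with $\sfd\sfa_n$ for large $n$, contradicting Proposition~\ref{Prop:auxiliaryLeaf}.

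Without this step your argument only proves $z_2\in\lopi[\Lambda(\Gamma)]{z_1}{z_0}$. That is too weak for later uses of the lemma, e.g.\ the case $m=2$ of Corollary~\ref{Cor:limitAuxiliaryLeaf}.
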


\begin{proof}
The statement is symmetric according to whether $\ell$ is right or left inaccessible. Thus, without loss of generality, we may assume that $\ell$ is right inaccessible.

We first treat the case $m>1$. By \refprop{fanning2} and the assumption, there exists an element $h\in \Gamma$ such that
\(h(z_m)=z_m\), \(h(z_{m-1})\in \opi[\Lambda(\Gamma)]{z_m}{z_{m-1}}\),
and $h(\gamma_m)$ crosses $\cH^R(\gamma_m)$.

Set $\cA:=\gamma_m\cup h(\gamma_m)$, so that $\cA$ is the arc $[z_{m-1},h(z_{m-1})]$ in $Z^-$. Equip $\cA$ with the linear order compatible with $\gamma_m$, and hence with $\ell$. Let $\cR$ be the end ray of $\ell$ at $z_{m-1}$ landing at $q$. Then $\gamma_m$ is a common subarc of $\cA$ and $\cR$, containing an endpoint of each. Since $Z^-$ is a real tree, their intersection $\cI:=\cA\cap \cR$ is an arc containing $\gamma_m$, and hence $\cI=[z_{m-1},w]$ for some $w\in h(\gamma_m)$.

If $w=h(z_{m-1})$, then $\cA\subset \cR\subset \ell$, so setting $z_{m+1}:=w$ completes the proof. Assume now that $w\neq h(z_{m-1})$. Then the terminal ray $\cR_w:=\cR\setminus \cI$ of $\cR$ is contained in a right branch of $\cA$ at $w$.

Consider the Jordan curve $\cJ:=\cA\cup \opi[\Lambda(\Gamma)]{h(z_{m-1})}{z_{m-1}}$, equipped with the circular order compatible with $\cA$. One end of $\cR_w$ lands at $w\in \cD^R(\cJ)$, and the other lands at $q\in \cD^L(\cJ)$. Since $\cA\cap \cR_w=\varnothing$, the line $\cR_w$ must meet the complementary boundary arc $\opi[\Lambda(\Gamma)]{h(z_{m-1})}{z_{m-1}}$. Hence some end ray of $\cR_w$ landing at $w$ crosses $\cD^R(\cJ)$. Let $z_{m+1}$ be the initial point of this end ray. Then $z_{m+1}\in \ell$, and $z_{m-1}<z_m<z_{m+1}$ in $\Lambda(\Gamma)$. This proves the case $m>1$.

We now consider the case $m=1$. Fix $n>\cN$. By \refprop{fanning2} and the assumption, we have $z_0\neq z_1$, and we may choose $h\in \Stab{\Gamma}{z_1}$ such that $h(\sfa_n)$ crosses $\cH^R(\sfa_n)$ and $h(x_n)\in \opi[\Lambda(\Gamma)]{z_1}{x_n}$. Hence $\sfa_n^-\cap h(\sfa_n)^-=\{z_1\}$, and $L:=\sfa_n^-\cup h(\sfa_n)^-$ is a line in $Z^-$ landing at $p$ and $h(p)$. Equip $L$ with the linear order compatible with $\sfa_n^-$, and hence with $\ell$. Then $\sfa_n^-$ is a common end ray of $L$ and $\ell$. Since $Z^-$ is a real tree, their intersection $\cR:=L\cap \ell$ is an end ray of both, containing $\sfa_n^-$. Let $u=\partial \cR$. Then the complementary line $\cR':=\ell\setminus \cR$ is contained in the right branch of $L$ at $u$.

Now consider the Jordan curve
\(
\cJ:=\sfa_n\cup h(\sfa_n)\cup \opi[\Lambda(\Gamma)]{x_n}{h(x_n)},
\)
equipped with the circular order compatible with $L$. One end of $\cR'$ lands at $u\in \cD^R(\cJ)$ and the other lands at $q\in \cD^L(\cJ)$. Since $L\cap \cR'=\varnothing$ and $Z^+\cap Z^-=\varnothing$, the line $\cR'$ must meet the complementary boundary arc $\opi[\Lambda(\Gamma)]{h(x_n)}{x_n}$. Hence we may choose an end ray of $\cR'$ that lands at $u$ and crosses $\cD^R(\cJ)$. Let $v$ be the initial point of this end ray. Then $v\in \opi[\Lambda(\Gamma)]{h(x_n)}{x_n}$, and $[z_1,v]$ crosses $\cH^R(\sfa_n)$.

To show that $v$ is the desired $z_2$, it suffices to verify that $z_1<v<z_0$.
For each $n'>n$, since $\sfa_{n'}^+$ crosses $\cH^R(\sfa_n)$ and $[z_1,v]\subset Z^-$ cannot meet $\sfa_{n'}^+$, we have that $[z_1,v]$ crosses $\cH^R(\sfa_{n'})$. Hence $v\in \lopi[\Lambda(\Gamma)]{z_1}{z_0}$.

To see that $v\neq z_0$, assume for contradiction that $v=z_0$. If $v$ is fixed by a non-trivial element $f\in \Gamma$, then by \refthm{twoInOne}, $f$ cannot fix $z_1$, and by the previous case there exists $z_3\in \ell$ such that $v>z_3$ in $\ell$, $z_1<v<z_3$ in $\Lambda(\Gamma)$, and $[v,z_3]$ crosses $\cH^R([z_1,v])$. However, since $\{\sfd \sfa_n\}_{n>\cN}$ converges to $\{z_0,z_1\}$ in $\cldi[\Lambda(\Gamma)]{z_0}{z_1}$, the leaf $\{v,z_3\}$ of $\cL(\Gamma,\{z_1,v,z_3\})$ (given by \refconst{extConst}) is linked with $\sfd \sfa_n$ for all sufficiently large $n$, contradicting \refprop{auxiliaryLeaf}. Hence no non-trivial element of $\Gamma$ fixes $v$.

On the other hand, since $z_1$ is fixed by a non-trivial element of $G$, it follows from \refprop{oneEndFix} that $\{z_1,v\}$ is isolated in $\cL(\Gamma,\{z_1,v\})$. Let $\sfg$ be the leaf in $\cG(\Gamma,\{z_1,v\})$ associated with $\{z_1,v\}$, which is a bi-infinite isolated leaf of $\cG(\Gamma,\{z_1,v\})$. Since $v$ is not fixed by any non-trivial element of $\Gamma$, by \reflem{closureOfLeaf} and \reflem{closureOfManyLeaves}, we have $\cG(\Gamma,\{z_1,v\})''\neq \varnothing$, and there exists a principal region $U$ of $\cG(\Gamma,\{z_1,v\})''$ such that $U$ contains $\sfg$ and every subray of $\sfg$ escapes toward an end of $U$. Therefore, since $\sfg$ is isolated, we can find a boundary leaf $\{v,w\}$ of $\cL(\Gamma,\{z_1,v\})$, corresponding to a boundary leaf of $U$, such that $z_1<v<w$ in $\Lambda(\Gamma)$. However, $\{v,w\}$ is linked with $\sfd \sfa_n$ for all sufficiently large $n$, again contradicting \refprop{auxiliaryLeaf}. Thus $v\neq z_0$, and $v$ is the desired $z_2$.
\end{proof}

The following corollary shows that a bouncing sequence in a uniform direction does not jump over $z_0$.
\begin{cor}[Well ordered bouncing points]\label{Cor:limitAuxiliaryLeaf}
Assume the conditions in \refconst{firstStepConst}.
Let $\cL=\cL(\Gamma,\{z_i\}_{i=1}^m)$ be the circle lamination constructed in \refconst{firstStepConst} or \refconst{extConst}.
Then $z_0\neq z_1$, and no leaf of $\cL$ is linked with $\{z_0,z_1\}$.

In particular, the following hold:
\begin{itemize}
    \item If $m=2$, then $z_2\ne z_0$.
    \item If $m>2$ and $\ell$ is right (resp.\ left) inaccessible, and if
    \[
    z_0<z_1<\cdots<z_{m-1}
    \quad\text{and}\quad
    z_{m-2}<z_{m-1}<z_m
    \quad\text{in }\Lambda(\Gamma)
    \]
    (resp.\
    \[
    z_0>z_1>\cdots>z_{m-1}
    \quad\text{and}\quad
    z_{m-2}>z_{m-1}>z_m
    \quad\text{in }\Lambda(\Gamma),
    \]
    then
    \[
    z_{m-1}<z_m<z_0
    \qquad
    (\text{resp.\ } z_{m-1}>z_m>z_0)\quad \text{in }\Lambda(\Gamma).
    \]
\end{itemize}
\end{cor}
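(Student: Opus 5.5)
The corollary has two parts: $z_0\neq z_1$, and no leaf of $\cL$ is linked with $\{z_0,z_1\}$. The two bulleted items should follow from the second part together with the ordering already present in the bouncing construction.

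\emph{Step 1: $z_0\neq z_1$.} Suppose $z_0=z_1$. In the right simplicial case we have $x_{n+1}<x_n<z_1$ and $x_n\to z_0$ in $\Lambda(\Gamma)$. So the auxiliary leaves $\sfd\sfa_n=\{x_n,z_1\}$ shrink to the point $z_1$. I would then take a conical limit sequence $\{h_i\}$ for $z_1$ with respect to some pair $\{a,r\}$. Applying $h_i$ to the arcs $\sfa_n$, with $n$ chosen depending on $i$ by a diagonal argument, should produce translates $h_i(\sfa_{n_i})$ whose $(+)$-segments lie in small neighbourhoods of $r$, while their $(-)$-rays run from near $r$ to the points $h_i(p)$. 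Comparing two such translates as in the proof of \refprop{unlinkedConicalLimitPairs} should give linked $(-)$-segments. That contradicts the one-sided inaccessibility of $\ell$ (\refthm{simplicialAxis}). When $z_1$ is fixed by a nontrivial element, this is already the last assertion of \refprop{fanning2}, so only the non-fixed case needs this argument.

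\emph{Step 2: no leaf is linked with $\{z_0,z_1\}$.} Let $\lambda\in\cL$ and suppose $\lambda$ is linked with $\{z_0,z_1\}$. Linking is an open condition on the leaves, and $\sfd\sfa_n\to\{z_0,z_1\}$ in $\cM$ while $z_0\ne z_1$ by Step 1. Hence $\lambda$ is linked with $\sfd\sfa_n$ for all sufficiently large $n$, unless $\lambda$ shares the endpoint $z_1$ with $\{z_0,z_1\}$. The generic case therefore contradicts \refprop{auxiliaryLeaf} directly.

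The main obstacle is the boundary case where $\lambda$ has $z_1$ as an endpoint. In that case linking with $\{z_0,z_1\}$ is impossible, since two pairs sharing an endpoint are never linked. The remaining boundary case is $\lambda\ni z_0$: such a $\lambda$ is also unlinked with $\{z_0,z_1\}$, by the definition of linking. So in fact the open-condition argument covers everything, and I expect the real difficulty to be Step 1.

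\emph{Step 3: the bullets.} For $m=2$, if $z_2=z_0$ I would use the limiting configuration exactly as in the end of the proof of \reflem{oneStepAtFix2}. Either $z_0$ is fixed, and \reflem{oneStepAtFix2} produces a further leaf $\{z_0,z_3\}$ linked with $\sfd\sfa_n$ for large $n$. Or $z_0$ is not fixed, and \reflem{closureOfLeaf} yields a boundary leaf $\{z_0,w\}$ linked with $\sfd\sfa_n$. Both options contradict \refprop{auxiliaryLeaf}.

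For $m>2$ in the right inaccessible case, the leaf $\ell_m=\{z_{m-1},z_m\}$ has $z_{m-1}\in\opi[\Lambda(\Gamma)]{z_1}{z_0}$ by the assumed monotonicity, and $z_m>z_{m-1}$. If $z_m$ lay beyond $z_0$, that is in $\opi[\Lambda(\Gamma)]{z_0}{z_1}$, then $\ell_m$ would be linked with $\{z_0,z_1\}$, contradicting Step 2. The equality $z_m=z_0$ is excluded by the same argument as the $m=2$ case. The left inaccessible case follows by symmetry.
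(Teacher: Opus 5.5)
Your Step~2 and your handling of the case where $z_1$ is fixed (via the last assertion of \refprop{fanning2}) match the paper. Step~1, however, has a genuine gap when $z_1$ is not fixed by a nontrivial element of $\Gamma$.

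The conical-limit sketch cannot work as stated. For any conical limit sequence $\{h_i\}$ for $z_1$ with respect to $\{a,r\}$ one has $h_i(p)\to a$, because $p\neq z_1$. Since $h_i(p)$ is an endpoint of both $h_i(\sfa_n^+)$ and $h_i(\sfa_n^-)$, no choice of $n_i$ places the $(+)$-segments near $r$.

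More importantly, the proof of \refprop{unlinkedConicalLimitPairs} draws its contradiction from two \emph{linked} conical limit pairs. The hypothesis $z_0=z_1$ produces nothing linked: for fixed $h$ the leaves $h(\sfd\sfa_n)$ simply shrink to $h(z_1)$. Auxiliary leaves also need not be unlinked from one another. So even a linked pair of translates is contradictory only if their $(+)$-segments are known to be disjoint, and \reflem{auxArcUnlinked} guarantees that only under extra unlinkedness hypotheses.

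The missing idea is the global input of \reflem{closureOfAuxiliaryLeaf}. When $z_1$ is unfixed, $\cG(\Gamma,z_1)$ is perfect; this is the substantial step, resting on trivial stabilisers of conical limit pairs. Moreover, all $\sfg_n$ lie in one principal region $U$ of $\cG''$, with their $z_1$-ends leaving through the same end of $U$. By \refprop{gapShape}, $z_1$ is then an ideal vertex of the corresponding lift of $U$, flanked by leaves $\{u_0,z_1\}$ and $\{z_1,u_1\}$. These confine every $x_n$ to $\cldi[\Lambda(\Gamma)]{u_1}{u_0}$, away from $z_1$.

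In Step~3, the case $m>2$ with $z_m=z_0$ is not settled by repeating the $m=2$ argument. For $m=2$, the leaf $\{z_0,w\}$ bounding the gap next to $\ell_2$ has $w\in\opi[\Lambda(\Gamma)]{z_0}{z_1}$, so it links $\sfd\sfa_n$ for large $n$. For $m>2$, the gap next to $\ell_m=\{z_{m-1},z_0\}$ on the side of the $x_n$ may have $\{z_0,z_k\}$, with $1\le k\le m-2$, as its other leaf at $z_0$. An ideal polygon with vertices $z_0,\dots,z_{m-1}$ is an example, and such a leaf links no $\sfd\sfa_n$.

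The paper avoids this by working with the sublamination generated by $\ell_m$ alone. From its $\cG_0''$ it extracts a leaf $\{z_m,w\}$ with $w$ unfixed and $\opi[\Lambda(\Gamma)]{z_m}{w}$ \emph{not} isolated. Non-isolation rules out $w=z_k$ for $k\ge 2$. The approximating leaves, which by \refprop{oneEndFix} eventually avoid $z_0$ and $w$, then link $\sfd\sfa_n$.

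Two smaller omissions remain. First, for $m\ge 4$ you must also exclude $z_m\in\opi[\Lambda(\Gamma)]{z_k}{z_{k+1}}$ for $1\le k\le m-3$; this is immediate from unlinkedness with $\ell_{k+1}$. Second, for $m=2$ the end of the proof of \reflem{oneStepAtFix2} uses that $z_1$ is fixed. When it is not, you first need $\opi[\Lambda(\Gamma)]{z_0}{z_1}$ to be isolated in $\cL(\Gamma,\ell_2)$ before \reflem{closureOfLeaf} yields $\{z_0,w\}$ on the correct side. This holds because a side sequence would eventually have to end at $z_1$ to avoid linking $\sfd\sfa_n$, and \refprop{oneEndFix} would then make $z_1$ fixed.
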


\begin{proof}
The statement is symmetric according to whether $\ell$ is right or left inaccessible. Thus, it suffices to treat the case where $\ell$ is right inaccessible.

To show that $z_0\neq z_1$, consider $\cG_1=\cG(\Gamma,z_1)$. If $z_1$ is fixed by a non-trivial element of $\Gamma$, then \reflem{oneStepAtFix2} implies that $z_0\neq z_1$. Otherwise, the second statement of \reflem{closureOfAuxiliaryLeaf} implies that $z_0\neq z_1$. Moreover, in either case, since $\sfd \sfa_n\to \{z_0,z_1\}$ in $\cM$ as $n\to\infty$, \refprop{auxiliaryLeaf} implies that no leaf of $\cL$ is linked with $\{z_0,z_1\}$. This proves the first statement.

We next consider the second statement in the case $m=2$. Assume for contradiction that $z_2=z_0$, that is, $\ell_2=\{z_0,z_1\}$.

If $z_1$ is fixed by a non-trivial element of $\Gamma$, then the desired conclusion follows from \reflem{oneStepAtFix2}. Thus, we may assume that no non-trivial element of $\Gamma$ fixes $z_1$.

In this case, by \reflem{closureOfAuxiliaryLeaf} and \refprop{gapShape}, there are two leaves $\{u_0,z_1\}$ and $\{z_1,u_1\}$ of $\cL(\Gamma,\ell_2)$ such that
\[
x_{\cN+1}<u_0<z_1<u_1\le z_0
\]
in $\Lambda(\Gamma)$. We claim that $\opi[\Lambda(\Gamma)]{z_0}{z_1}$ is isolated in $\cL(\Gamma,\ell_2)$. Otherwise, there exists a $\opi[\Lambda(\Gamma)]{z_0}{z_1}$-side sequence $\{\lambda_i\}_{i\in \NN}$ in $\cL(\Gamma,\ell_2)$. Since each $\lambda_i$ is unlinked with $\{u_0,z_1\}$, it follows that $\lambda_i$ has $z_1$ as an endpoint for all sufficiently large $i$. Then \refprop{oneEndFix} implies that $z_1$ is fixed by a non-trivial element of $\Gamma$, contradicting the assumption.

On the other hand, no non-trivial element of $\Gamma$ fixes $z_2$. Indeed, if some non-trivial element fixed $z_2$, then \reflem{oneStepAtFix2} would imply that there exists
\(
z_3\in \opi[\Lambda(\Gamma)]{z_2}{z_1}=\opi[\Lambda(\Gamma)]{z_0}{z_1}
\)
such that $\{z_i\}_{i=1}^3$ satisfies the bouncing condition in \refconst{extConst}. However, since $\sfd \sfa_n \to \ell_2$ as $n\to\infty$ and $\sfd \sfa_n\subset \cldi[\Lambda(\Gamma)]{z_0}{z_1}$, the leaf $\ell_3$ is linked with $\sfd \sfa_n$ for all sufficiently large $n$. This contradicts \refprop{auxiliaryLeaf}, applied to $\cL(\Gamma,\{z_i\}_{i=1}^3)$. Hence no non-trivial element of $\Gamma$ fixes $z_2$.

Since $\cL(\Gamma,\ell_2)$ is generated by $\ell_2$, no non-trivial element of $\Gamma$ fixes either endpoint of $\ell_2$, and $\opi[\Lambda(\Gamma)]{z_0}{z_1}$ is isolated in $\cL(\Gamma,\ell_2)$, it follows from \reflem{closureOfLeaf}, together with \refprop{gapShape}, that there exists a boundary leaf $\{z_0,w\}$ such that $w\in \opi[\Lambda(\Gamma)]{z_0}{z_1}$. Hence, as above, $\{z_0,w\}$ is linked with $\sfd \sfa_n$ for all sufficiently large $n$, contradicting \refprop{auxiliaryLeaf}. This proves the case $m=2$.

Assume now that $m>2$.
By \refprop{auxiliaryLeaf}, the leaf $\sfd\gamma_m$ is unlinked with $\sfd\gamma_k$ for $1<k<m$, and also with $\sfd\sfa_n$ for all $n>\cN$.
Hence
\[
z_m\in \{z_i:i=1,\dots,m-2\}\cup \lopi[\Lambda(\Gamma)]{z_{m-1}}{z_0}.
\]
Since $Z^-$ is simply connected, it follows that
\(
z_m\in \lopi[\Lambda(\Gamma)]{z_{m-1}}{z_0}.
\)

We now show that $z_m\neq z_0$.
Assume for contradiction that $z_m=z_0$.

Suppose first that $z_m$ is fixed by a non-trivial element of $\Gamma$.
By \reflem{oneStepAtFix2}, there exists a point
\(z_{m+1}\in \opi[\Lambda(\Gamma)]{z_m}{z_{m-1}}\)
such that $\{z_i\}_{i=1}^{m+1}$ satisfies the bouncing condition in \refconst{extConst}.
Since $Z^-$ is simply connected, we have
\(
z_{m+1}\in \opi[\Lambda(\Gamma)]{z_0}{z_1},
\)
and therefore $\sfd\gamma_{m+1}$ is linked with $\sfd\sfa_n$ for all sufficiently large $n$, contradicting \refprop{auxiliaryLeaf}.
Therefore no non-trivial element of $\Gamma$ fixes $z_m$.

We claim that there exists a leaf $\{z_m,w\}$ of $\cL$ such that
\begin{itemize}
    \item $w\in \opi[\Lambda(\Gamma)]{z_m}{z_{m-1}}$,
    \item no non-trivial element of $\Gamma$ fixes $w$, and
    \item $\opi[\Lambda(\Gamma)]{z_m}{w}$ is not isolated in $\cL$.
\end{itemize}

This completes the proof. Indeed, since $\opi[\Lambda(\Gamma)]{z_m}{w}$ is not isolated and $w\neq z_i$ for $i=2,\dots,m-1$, we have $w\in \lopi[\Lambda(\Gamma)]{z_0}{z_1}$.
By \refprop{oneEndFix}, there exists a $\opi[\Lambda(\Gamma)]{z_m}{w}$-side sequence $\{\lambda_i\}$ in $\cL$ with $\lambda_i\cap\{w,z_0\}=\varnothing$ for all $i$.
Since $z_1\in \sfd\sfa_n$ and $x_n\in \opi[\Lambda(\Gamma)]{z_0}{z_1}$ for $n>\cN$, the leaf $\sfd\sfa_n$ is linked with $\lambda_i$ for all large $i$, contradicting \refprop{auxiliaryLeaf}.

It remains to prove the claim.

Since $m>2$ and $\ell_{m-1}$ exists, $\opi[\Lambda(\Gamma)]{z_m}{z_{m-1}}$ is isolated, hence $\ell_m$ is a boundary leaf.
Let $\cL_0$ be the closure of the $\Gamma$-orbit of $\ell_m$ in $\cM$, and $\cG_0\subset \cG=\cG(\Gamma,\{z_i\}_{i=1}^m)$ the corresponding geodesic sublamination in $\cH/\Gamma$.
Let $\sfg$ be the leaf associated with $\ell_m$.

If $z_{m-1}$ is fixed by a non-trivial element, then $\ell_m$ (hence $\sfg$) is isolated by \refprop{oneEndFix}.
Since no non-trivial element fixes $z_m$, \reflem{closureOfLeaf} implies that $\sfg$ lies in a principal region of $\cG_0''$ and its end at $z_m$ spirals toward a component of $\cG_0''$.
\refprop{gapShape} yields the desired leaf.

If no non-trivial element fixes $z_{m-1}$, then $\opi[\Lambda(\Gamma)]{z_m}{z_{m-1}}$ is isolated, so $\sfg$ is a boundary leaf of $\cG_0$.
\reflem{closureOfLeaf} implies that both ends of $\sfg$ spiral toward components of $\cG_0''$, possibly the same.
\refprop{gapShape} again yields the desired leaf.\qedhere

\end{proof}

\subsection{Outside and inside sequences}
Unlike in \refsec{uniformBouncing}, there is no well-defined invariant fence in the present setting.
Nevertheless, it is still useful to think of $\ell$ as a two-fold fence.
From this point of view, approaching $\ell$ from its inaccessible side should be regarded as an inside approximation, whereas approaching $\ell$ from the other side should be regarded as an outside approximation.
In this heuristic picture, the former corresponds to approaching the $(+)$-side of a fence, and the latter to approaching the $(-)$-side, respectively.

Let $\cL=\cL(\Gamma,\{z_k\}_{k=1}^m)$ be a circle lamination constructed by either \refconst{firstStepConst} or \refconst{extConst}.
Given a leaf $\lambda$ of $\cL(\Gamma,z_1)$, choose a conical limit sequence $\{h_i\}_{i\in \NN}$ in $\Gamma$ with respect to $\lambda$.

We call $\{h_i\}_{i\in\NN}$ an \emph{outside approximation} to $\lambda$ if $\{h_i(p)\}_{i\in\NN}\subset (\Int \cE_\fN)^c$ for some $\fN\in\ZZ$.
We call it an \emph{inside approximation} if, for each $n\in\ZZ$, the sequence $\{h_i(p)\}_{i\in\NN}$ is eventually contained in $\Int \cE_n$.

By \refprop{eclipseLProperty}, every conical limit sequence admits a subsequence of one of these two types, since $h_i(p)\in Z^+$ and $h_i(p)\notin\{p,q\}$.

\begin{lem}[Canonical approximations for conical limit pairs]\label{Lem:canonicalApprox}
Assume the conditions in \refconst{firstStepConst}.
Let $\cL=\cL(\Gamma,\{z_k\}_{k=1}^m)$ be a circle lamination constructed by either \refconst{firstStepConst} or \refconst{extConst}.
Let $\lambda$ be a leaf of $\cL(\Gamma,z_1)$. Then
the following hold:
\begin{itemize}
    \item if $\lambda$ admits an outside approximation, then $\lambda \subset \closure{\cE_\fN \setminus \cE_{\fN+1}}$ for some $\fN \in \ZZ$;
    \item if $\lambda$ admits an inside approximation, then $\lambda\subset \cE_\infty$.
\end{itemize}
Moreover, the following hold:
\begin{enumerate}
    \item  $\lambda \not\subset \ell$:
    \begin{itemize}
        \item if $\lambda\subset \cE_\infty$, then $\lambda$ has no outside approximation;
        \item if $\lambda \not\subset  \cE_\infty$, then $\lambda$ has no inside approximation;
    \end{itemize}
    \item $\lambda \subset \ell$:
    \begin{itemize}

        \item if $\lambda$ admits an outside approximation, then  $\lambda\subset [s_n,s_{n+1}]$ for some $n\in \ZZ$.
    \end{itemize}
\end{enumerate}
\end{lem}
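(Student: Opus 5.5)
The plan is to track where the images $h_i(\sfa_n)$ of a fixed auxiliary arc go under a conical limit sequence $\{h_i\}$ for $z_1$ with respect to $\lambda=\{a,r\}$. Here $h_i(z_1)\to r$ and $h_i(y)\to a$ for all $y\neq z_1$. In particular $h_i(x_n)\to a$ and $h_i(p)\to a$, while the endpoint $h_i(z_1)$ of the $(-)$-segment tends to $r$. Both endpoints of $\lambda$ are therefore limits of points of the translated arcs $h_i(\sfa_n)\subset h_i(\partial\cE_n)$, and we locate these arcs using the squashed fence system of \refprop{squashedFenceSystem}.

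\emph{Step 1 (the two location statements).} Since $h_i(p)\neq p$, \refprop{squashedFenceSystem} gives indices $k(i),j(i)$ with
\[
h_i(\cE_{j(i)})\subset \closure{\cE_{k(i)}\setminus\cE_{k(i)+1}}.
\]
Because $\partial\cE_n\subset \cE_{n-1}$ and $h_i(\partial\cE_n)$ is attached to $h_i(\cE_{j(i)})$ through $h_i(p)$ and $h_i(\ell)$, the same region, enlarged by a bounded number of steps, contains $h_i(\sfa_n)$. We would replace $n$ by a suitable fixed $n$, or pass to the description via $h_i(\ell)$, to make this uniform.

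For an outside approximation we have $h_i(p)\notin\Int\cE_\fN$, so $k(i)$ is bounded above. Since $k(i)\to-\infty$ is impossible (the domains exhaust $S^2_\infty$ and $h_i(p)\to a$ converges), $k(i)$ takes some value $\fN$ infinitely often. Passing to that subsequence and taking limits gives $\lambda\subset\closure{\cE_\fN\setminus\cE_{\fN+1}}$.

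For an inside approximation $k(i)\to+\infty$, so both $h_i(x_n)$ and $h_i(z_1)$ lie in $\cE_{k(i)}$. Hence $a,r\in\bigcap_n\cE_n=\cE_\infty$. This is the analogue of \reflem{dragging}.

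\emph{Step 2 (the case $\lambda\not\subset\ell$).} By the separation property of \refprop{eclipseLProperty} in the one-sided simplicial case,
\[
\closure{\cE_\fN\setminus\cE_{\fN+1}}\cap\cE_\infty=\{p\}\cup g^m([\delta(0),g\delta(0)])\subset\{p\}\cup\ell .
\]
Suppose $\lambda\subset\cE_\infty$ admits an outside approximation. Then $\lambda\subset\{p\}\cup\ell$, and since $p\notin\Lambda(\Gamma)$ we get $\lambda\subset\ell$, contrary to assumption. Conversely, if $\lambda\not\subset\cE_\infty$, an inside approximation would force $\lambda\subset\cE_\infty$ by Step 1. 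This settles (1).

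\emph{Step 3 (the case $\lambda\subset\ell$ with an outside approximation).} From Step 1 we have $\lambda\subset\closure{\cE_\fN\setminus\cE_{\fN+1}}\cap\ell$. By the separation formula this is a single fundamental segment of $\ell$, namely $g^m([\delta(0),g\delta(0)])$. In the notation $s_n=\sfd\sigma_n\cap\ell$ it equals $[s_{n},s_{n+1}]$ up to the indexing convention, using $s_n<s_{n-1}$.

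\emph{Main obstacle.} The hardest part is Step 1's uniformity: $\sfa_n$ is a piece of $\partial\cE_n$ and not of $\cE_\infty$, so controlling all of $h_i(\sfa_n)$ by the single index $k(i)$ needs care. First we would try to exploit $\sfa_n^+\subset\partial\cE_n\subset\cE_{n-1}$ and the nesting to get $h_i(\sfa_n)\subset h_i(\cE_{n-1})$, then compare $n-1$ with $j(i)$ via $g$-equivariance. This equivariance is available because the squashed fence is $g$-invariant, so $h_i$ can be precomposed by powers of $g$ without changing $h_i(\fF(g))$.
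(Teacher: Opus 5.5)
Your decomposition matches the paper's. You locate the translates with \refprop{squashedFenceSystem}, show that the region index is eventually constant for an outside approximation and tends to $+\infty$ for an inside one, and then read off (1) and (2) from the separation property in \refprop{eclipseLProperty}. Steps 2 and 3 are fine. The gap is in Step 1, exactly where you flag it, and the repair you propose does not work.

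For a fixed auxiliary arc you only know $h_i(\sfa_n)\subset h_i(\cE_n)$. By contrast, \refprop{squashedFenceSystem} controls $h_i(\cE_{j(i)})$ only for $j(i)$ beyond some threshold that depends on $h_i$. Nothing bounds $j(i)-n$, so ``the same region, enlarged by a bounded number of steps'' is unjustified. The same goes for the claims that $h_i(\sfa_n)$, and in particular $h_i(x_n)$, lies in $\closure{\cE_{k(i)}\setminus\cE_{k(i)+1}}$ (resp.\ in $\cE_{k(i)}$ in the inside case).

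Precomposing with powers of $g$ cannot fix this, for two reasons:
\begin{itemize}
\item Since $\Fix(g)\cap\Lambda(\Gamma)=\varnothing$, we have $g^s\notin\Gamma$ for $s\neq 0$, so $h_ig^s$ is no longer a conical limit sequence in $\Gamma$.
\item Since $\Lambda(\Gamma)$ is not $g$-invariant, $g^s(\sfa_n)$ is not one of the arcs $\sfa_{n'}$, and its endpoints $g^s(z_1),g^s(x_n)$ need not lie on $\Lambda(\Gamma)$.
\end{itemize}
Relabelling the eclipse changes nothing. What you need is an arc with endpoints on $\Lambda(\Gamma)$, converging to $\lambda$, that actually sits inside $h_i(\cE_{j(i)})$.

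The paper closes the gap by letting the index depend on $i$. Take $n(i)\ge\max\{j(i),\cN+1\}$; then
$h_i(\sfa_{n(i)})\subset h_i(\cE_{n(i)})\subset h_i(\cE_{j(i)})\subset\closure{\cE_{k(i)}\setminus\cE_{k(i)+1}}$.
The moving endpoint is controlled by uniformity. All $x_n$ lie in $\cldi[\Lambda(\Gamma)]{z_0}{x_\cN}$, which is a compact subset of $\Lambda(\Gamma)\setminus\{z_1\}$ precisely because $z_0\neq z_1$ (\refcor{limitAuxiliaryLeaf}). Uniform convergence on compacta then gives $h_i(x_{n(i)})\to a$. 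You never invoke $z_0\neq z_1$, and without it this step fails.

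The route you mention only in passing (``via $h_i(\ell)$'') also works, and is shorter. We have $p\in\cE_\infty$, and in the one-sided simplicial case $z_1\in\ell\subset\cE_\infty$ by \refprop{eclipseLProperty}. So the last assertion of \refprop{squashedFenceSystem} directly gives $h_i(p),h_i(z_1)\in h_i(\cE_\infty)\subset\closure{\cE_{k(i)}\setminus\cE_{k(i)+1}}$. Moreover $h_i(p)\to a$ (as $p\neq z_1$) and $h_i(z_1)\to r$ pointwise. This avoids both the uniformity issue and the appeal to $z_0\neq z_1$, but the witness for $a$ must then be $h_i(p)$, not $h_i(x_n)$.
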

\begin{proof}
Without loss of generality, we may assume that $\ell$ is right inaccessible.
Let $\{h_i\}_{i\in\NN}\subset \Gamma$ be a conical limit sequence for $z_1$ with respect to $\lambda=\{a,r\}$ such that $h_i(z_1)\to r$ and $h_i(y)\to a$ for all $y\in \Lambda(\Gamma)\setminus\{z_1\}$.
By \refcor{limitAuxiliaryLeaf}, we have $z_0\neq z_1$, and hence $\cldi[\Lambda(\Gamma)]{z_0}{x_\cN}$ is a compact subset of $\Lambda(\Gamma)\setminus\{z_1\}$.
Therefore \(h_i(\cldi[\Lambda(\Gamma)]{z_0}{x_\cN})\to a\) as $i\to\infty$.

On the other hand, since $\Fix(h_i)\subset \Lambda(\Gamma)$ and $h_i(p)\neq p$, \refprop{squashedFenceSystem} implies that
\(h_i(\sfa_{n(i)})\subset \closure{\cE_{k(i)}\setminus \cE_{k(i)+1}}\)
for some $n(i)>\cN$ and $k(i)\in \ZZ$.
By the above observation, we have $\sfd h_i(\sfa_{n(i)})\to \lambda$ as $i\to\infty$.

If $\{h_i\}_{i\in\NN}$ is an outside approximation, then \refprop{eclipseLProperty} implies, after passing to a subsequence, that $k(i)$ is constant.
It follows that
\(
\lambda \subset \closure{\cE_{k(i)}\setminus \cE_{k(i)+1}},
\)
so this constant value is the desired $\fN$.
If $\{h_i\}_{i\in\NN}$ is an inside approximation, then \refprop{eclipseLProperty} implies that $k(i)\to\infty$, and hence $\lambda\subset \cE_\infty$.
This proves the first assertion.

The second statement follows by applying the first assertion together with \refprop{eclipseLProperty}.
This completes the proof.
\end{proof}

A sequence $\{\lambda_i\}_{i\in\NN}$ of leaves  of $\cL$ is said to be \emph{regular} if the following hold:
\begin{itemize}
    \item If $m=1$, then either every $\lambda_i$ admits an outside approximation, or every $\lambda_i$ admits an inside approximation.
    \item if $m>1$, then every $\lambda_i$ is in the $\Gamma$-orbit of $\{\ell_j\}_{j=2}^{m}$.
\end{itemize}

Note that any leaf in $\cL$ is approximated by a regular sequence of leaves by the construction of $\cL$ (\refprop{finiteGenLeaves}).
A regular sequence $\{\lambda_i\}_{i\in\NN}$ is said to be:
\begin{itemize}
    \item \emph{outside} if there is an integer $\fN\in \ZZ$ such that $\Int \cE_\fN\cap\lambda_i=\varnothing$ for all $i\in \NN$ and when  $m=1$, every $\lambda_i$ admits an outside approximation;
    \item \emph{inside}, otherwise.
\end{itemize}

\begin{prop}[Dragging the limit points by a squashed fence]\label{Prop:squashedFenceDragging}
    Let $\cL=\cL(\Gamma,\{z_i\}_{i=1}^m)$ be a circle lamination constructed by either \refconst{firstStepConst} or \refconst{extConst}.
    If an open interval $I$ in $\Lambda(\Gamma)$ admits a $I$-side sequence of leaves, then $\partial I=\lambda_\infty$ is a leaf of $\cL$ and there is a $I$-side regular sequence  $\{\lambda_i\}_{i\in \NN}$ of leaves such that $I_i\subsetneq I_{i+1}\subset I$ where $ I_i$ is a component of $\Lambda(\Gamma)\setminus \lambda_i$.
    In particular, the following hold:
\begin{itemize}
    \item if  $\{\lambda_i\}_{i\in\NN}$ is outside,  then  $\lambda_\infty \cap \Int \cE_\fN=\varnothing$ for some $\fN \in \ZZ$;
    \item if $\{\lambda_i\}_{i\in\NN}$ is inside, then $\lambda_\infty\subset \cE_\infty$.
\end{itemize}
In particular, if $\lambda_\infty \subset \cE_\infty$ and $\lambda_\infty \not \subset \ell\cup \Fix(g)$, then $\{\lambda_i\}_{i\in\NN}$ is inside.
\end{prop}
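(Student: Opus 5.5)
The plan is to first produce a regular monotone $I$-side sequence, and then classify its limit through the grid structure of the squashed fence system.

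\emph{Step 1: limit leaf and regular approximation.} Since $\cL$ is closed in $\cM$ and the given $I$-side sequence converges to $\partial I$, we get $\lambda_\infty=\partial I\in\cL$. By \refprop{finiteGenLeaves} (for $m>1$) or by the definition of $\cL(\Gamma,z_1)$ (for $m=1$), every leaf of $\cL$ is a limit of leaves in the $\Gamma$-orbit of the generating leaves, respectively of conical limit pairs. We therefore replace each leaf of the given sequence by a nearby regular leaf lying on $I$; this is possible because the leaves are unlinked, so approximants of a leaf lying on $I$ and close to $\partial I$ can be taken to lie on $I$ as well.

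Passing to a subsequence, we then make the nesting strict, $I_i\subsetneq I_{i+1}$. If infinitely many of the leaves share an endpoint $e$, then $e\in\lambda_\infty$, and \refprop{oneEndFix} forces $\lambda_\infty$ to be isolated on that side, which contradicts the existence of the sequence. Hence, after discarding terms, the endpoints are strictly monotone and the $I_i$ are strictly nested.

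For $m=1$, we refine further. By the remark preceding \reflem{canonicalApprox}, each conical limit sequence has a subsequence that is either an outside or an inside approximation. A pigeonhole argument then lets us assume that all $\lambda_i$ admit approximations of the same type.

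\emph{Step 2: the outside case.} If the sequence is outside, then by definition $\lambda_i\cap\Int\cE_\fN=\varnothing$ for a fixed $\fN$. Since $\Int\cE_\fN$ is open, the limit also satisfies $\lambda_\infty\cap\Int\cE_\fN=\varnothing$.

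\emph{Step 3: the inside case.} Here we mimic \reflem{dragging} and \reflem{draggingByFatFence}. Write $\lambda_i\subset h_i(\cdot)$, where $h_i\in\Gamma$ and the argument is either a generating arc $\gamma_j\subset\ell$ or an auxiliary arc $\sfa_n$. Since $h_i(p)\ne p$, \refprop{squashedFenceSystem} gives an index $k(i)$ with
\[
h_i(\cE_\infty\cup\ell)\subset\closure{\cE_{k(i)}\setminus\cE_{k(i)+1}}.
\]
We then show $k(i)\to\infty$.
\begin{itemize}
\item When $m=1$ and all $\lambda_i$ admit inside approximations, the first part of \reflem{canonicalApprox} gives $\lambda_i\subset\cE_\infty$ directly, and closedness of $\cE_\infty$ yields $\lambda_\infty\subset\cE_\infty$.
\item When $m>1$ and the sequence is not outside, for every $\fN$ infinitely many $\lambda_i$ meet $\Int\cE_\fN$. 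Passing to a subsequence, this forces $k(i)\to\infty$. Then $\lambda_i\subset\cE_{k(i)}$, and nestedness gives $\lambda_\infty\subset\cE_\infty$.
\end{itemize}

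\emph{Step 4: the final assertion.} Suppose $\lambda_\infty\subset\cE_\infty$ but $\lambda_\infty\not\subset\ell\cup\Fix(g)$, and suppose the sequence were outside. Step 2 gives $\lambda_\infty\cap\Int\cE_\fN=\varnothing$. By \refprop{eclipseLProperty}, in the one-sided simplicial case we have
\[
\cE_\infty\setminus\Int\cE_\fN\subset\ell\cup\{p\}\cup\Fix(g),
\]
because $\bigcap_n\Int\cE_n\subset\Int\cE_\fN$. This forces $\lambda_\infty\subset\ell\cup\Fix(g)$, a contradiction, so the sequence must be inside.

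The main obstacle is Step 1 for $m=1$. There one has to ensure that regular approximants can be chosen on the correct side $I$, consistently of one type, with strict nesting, and while avoiding leaves with a common endpoint. I would handle this with \refprop{auxiliaryLeaf} and \refprop{oneEndFix} together with a diagonal argument.
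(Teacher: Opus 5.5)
Your outline follows the paper's proof. You choose a regular $I$-side sequence, use openness of $\Int\cE_\fN$ in the outside case, and use the grid structure in the inside case. Your Step~4 correctly supplies the argument for the last assertion, which the paper leaves implicit.

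\textbf{The gap: a missing inside case in Step~3.} For $m=1$, a regular sequence all of whose leaves admit \emph{outside} approximations is still \emph{inside} by definition whenever no single $\fN$ satisfies $\lambda_i\cap\Int\cE_\fN=\varnothing$ for all $i$. Your pigeonhole in Step~1 may produce exactly such a sequence. Neither of your bullets covers it. The first assumes inside approximations. The second uses \refprop{squashedFenceSystem} via a description $\lambda_i\subset h_i(\ell)$, which is unavailable for $m=1$: the leaves of $\cL(\Gamma,z_1)$ are conical limit pairs, not translates of generating leaves or of $\sfd\sfa_n$. So the framing at the start of Step~3 does not apply there.

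The repair is the paper's case~(2).
\begin{itemize}
\item By the first part of \reflem{canonicalApprox}, $\lambda_i\subset\closure{\cE_{\fN_i}\setminus\cE_{\fN_i+1}}$ for some $\fN_i$.
\item This set is disjoint from $\Int\cE_n$ for every $n>\fN_i$. Hence the absence of a uniform $\fN$ forces $\sup_i\fN_i=\infty$.
\item Along a subsequence with $\fN_i\to\infty$ you get $\lambda_i\subset\cE_{\fN_i}$, and so $\lambda_\infty\subset\cE_\infty$.
\end{itemize}

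\textbf{A secondary error in Step~1.} The claim that a common endpoint $e$ contradicts the existence of the sequence does not follow from \refprop{oneEndFix}. Its first part needs an element fixing \emph{exactly one} endpoint of $\lambda_\infty$. If the stabilizer of $e$ also fixes the other endpoint (a lift of a closed leaf), then translates of a leaf spiralling onto it form an $I$-side sequence sharing $e$. You do not need this step, though. The statement only asks for $I_i\subsetneq I_{i+1}$, and this follows by passing to a subsequence: the leaves are pairwise unlinked, lie on $I$, differ from $\partial I$, and converge to it.

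\textbf{An unnecessary complication for $m=1$.} No replacement of leaves is needed. Every leaf of $\cL(\Gamma,z_1)$ is itself a conical limit pair, so it already admits an outside or an inside approximation, and only the pigeonhole on type is required. The ``main obstacle'' you anticipate, with \refprop{auxiliaryLeaf} and a diagonal argument, does not arise.
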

\begin{proof}
By assumption, there is an $I$-sided sequence $\{\eta_i\}_{i\in\NN}$ of leaves in $\cL$.
After passing to a subsequence, we may assume that $J_i\subsetneq J_{i+1}\subset I$, where $\partial J_i=\eta_i$.
By the construction of $\cL$ and \refprop{finiteGenLeaves}, the following hold:
\begin{itemize}
    \item if $m=1$, then every $\eta_i$ lies in $\cL(\Gamma,z_1)$ and hence admits either an outside or an inside approximation;
    \item if $m>1$, then every $\eta_i$ lies in the closure in $\cM$ of the $\Gamma$-orbit of $\{\ell_j\}_{j=2}^{m}$.
\end{itemize}
Hence we may choose an $I$-side regular sequence $\{\lambda_i\}_{i\in\NN}$ of leaves such that $I_i\subsetneq I_{i+1}\subset I$, where $I_i$ is a component of $\Lambda(\Gamma)\setminus\lambda_i$.

To prove the second statement, first assume that $\{\lambda_i\}_{i\in\NN}$ is outside.
Then there exists $\fN\in\NN$ such that $\Int \cE_{\fN}\cap \lambda_i=\varnothing$ for all $i\in\NN$.
Therefore $\Int \cE_{\fN}\cap \lambda_\infty=\varnothing$.

Now assume that $\{\lambda_i\}_{i\in\NN}$ is inside.
If $m>1$, then by \refprop{squashedFenceSystem}, after passing to a subsequence of $\{\lambda_i\}_{i\in\NN}$, there exists a strictly increasing sequence $\{n(i)\}_{i\in\NN}$ of integers such that $\lambda_i\subset \Int \cE_{n(i)}$.
This implies that $\lambda_\infty\subset \cE_\infty$.

If $m=1$, there are two cases:
\begin{enumerate}
    \item every $\lambda_i$ admits an inside approximation;
    \item every $\lambda_i$ admits an outside approximation and, after passing to a subsequence, there exists a strictly increasing sequence $\{n(i)\}_{i\in\NN}$ of integers such that $\lambda_i\cap \Int \cE_{n(i)}\neq\varnothing$.
\end{enumerate}
Thus the desired result follows from \reflem{canonicalApprox}. \qedhere
\end{proof}

\subsection{Uniform bouncing direction}
We now return to the analysis of the uniform bouncing direction induced by outside and inside sequences associated with the squashed fence.

\begin{lem}[One-step extension by outside sequences for a squashed fence]\label{Lem:outsideExt2}
Assume \refconst{firstStepConst}.
Suppose that $\cL=\cL(\Gamma,\{z_i\}_{i=1}^m)$ is a circle lamination constructed by \refconst{firstStepConst} or \refconst{extConst}, and that $\ell$ is right (resp.\ left) inaccessible.
Assume further that there exists an outside regular sequence $\{\lambda_i\}_{i\in\NN}$ in $\cL$ which is $\opi[\Lambda(\Gamma)]{z_m}{w}$-sided (resp.\ $\opi[\Lambda(\Gamma)]{w}{z_m}$-sided) for some $w\in \opi[\Lambda(\Gamma)]{z_m}{z_{m-1}}$ (resp.\ $w\in \opi[\Lambda(\Gamma)]{z_{m-1}}{z_m}$).
Then there exists a point $z_{m+1}$ in $\opi[\Lambda(\Gamma)]{z_m}{z_{m-1}}$ (resp.\ $\opi[\Lambda(\Gamma)]{z_{m-1}}{z_m}$) such that $\{z_i\}_{i=1}^{m+1}$ satisfies the bouncing condition in \refconst{extConst}.
\end{lem}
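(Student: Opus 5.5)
We adapt the proof of \reflem{outsideExt}, with the roles of $Z^+$ and $Z^-$ interchanged. The squashed fence $\fF(g)$ plays the part of the invariant fence: its inaccessible side is now the right side of $\ell\subset Z^-$. By symmetry we treat only the case where $\ell$ is right inaccessible.

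\textbf{Reduction and local setup.} If $z_m$ is fixed by a non-trivial element of $\Gamma$, then \reflem{oneStepAtFix2} already gives $z_{m+1}$, so we assume it is not. Set $\gamma_1:=\sfa_n$ for a fixed $n>\cN$ when $m=1$, and write $\fD$ for the right Jordan domain of $\gamma_m$ in $\cH$. As in \reflem{outsideExt}, choose:
\begin{itemize}
\item a flat neighborhood $V$ of $w$ relative to $\opi[\Lambda(\Gamma)]{z_m}{z_{m-1}}$;
\item a flat neighborhood $U$ of $z_m$ relative to $\ell$, with $\closure U\cap(\ell\cup\Fix(g))\subset \ell$, and $\closure U$ disjoint from $\closure V$ and from $\cldi[\Lambda(\Gamma)]{w}{z_{m-1}}$.
\end{itemize}
Write $\partial U\cap\ell=\{e_0,e_1\}$ and $U\cap\Lambda(\Gamma)=\opi{y_0}{y_1}$.

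Pass to a subsequence of $\{\lambda_i\}$ so that its endpoints $\alpha_i\to z_m$ and $\beta_i\to w$ are strictly monotone. \refprop{oneEndFix} rules out a common endpoint, since $z_m$ is not fixed by a non-trivial element and $\{z_m,w\}$ is not isolated. The sequence is regular and outside, so each $\lambda_i$ is of one of two kinds:
\begin{itemize}
\item $h_i(\ell_{k_i})$, which is a pure arc in $Z^-$;
\item when $m=1$, a leaf of $\cL(\Gamma,z_1)$ admitting an outside approximation. By \reflem{canonicalApprox} it is then a limit of auxiliary leaves $h(\sfd\sfa_{n'})$ contained in $\closure{\cE_\fN\setminus\cE_{\fN+1}}$.
\end{itemize}
In either case we may replace $\lambda_i$ by an adapted arc $\tau_i$ whose endpoints lie arbitrarily close to those of $\lambda_i$, and which is disjoint from $\Int\cE_\fN$ for a uniform $\fN$.

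\textbf{Key claims.} First, the subarc of $\tau_i$ crossing the quadrant $\fB$ of $U$ from $\alpha_i$ does not meet $[e_0,z_m]$ and exits through $\opi[\partial U]{y_1}{e_0}$. This follows from the right inaccessibility of $\ell$ and from \refprop{enclosing}, exactly as in the first claim of \reflem{outsideExt}. The only case split is whether $\tau_i$ is pure in $Z^-$ or mixed with its synapse in $Z^+$. If the synapse landed on $(e_0,z_m)$, it would produce a ray landing on the right side of $\ell$.

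Second, outsideness pins the exit point $\mu_i$ to the arc of $\partial U$ on the non-$\cE_\infty$ side. This replaces "$\cD(p)$" in Claim~\ref{Clm:endOrder}. Here we use that $U$ can be chosen inside $\Int\cE_{\fN+2}$ near $z_m\in\ell$, which is possible because $\ell\subset\cE_\infty$ in the one-sided simplicial case of \refprop{eclipseLProperty}. Third, $[z_m,e_1]\subset\ell$ crosses $\fB$. Otherwise some $\tau_i$ would be linked with a subarc of $\ell$, contradicting \refprop{unlinkedCrossingLine} and \refthm{simplicialAxis}.

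\textbf{Conclusion and main obstacle.} By the third claim, a truncation of $(z_m,e_1]$ lands at $z_m$ inside $\fD$. Its initial point on $\opi[\Lambda(\Gamma)]{z_m}{z_{m-1}}$ is the desired $z_{m+1}$, and the bouncing condition holds by construction. The main obstacle is the second claim when $m=1$. There the leaves are only limits of auxiliary leaves, and auxiliary leaves may link one another. We would first try to use \reflem{auxArcUnlinked} together with \refprop{squashedFenceSystem} to keep the approximating arcs $h(\sfa_{n'})$ pairwise controlled inside a single annulus $\closure{\cE_\fN\setminus\cE_{\fN+1}}$. That control is what should force the exit points onto the outside arc of $\partial U$.
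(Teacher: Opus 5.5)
Your outline is the paper's: reduce to $z_m$ not fixed via \reflem{oneStepAtFix2}, replace the outside leaves by adapted arcs $\tau_i$ avoiding $\Int\cE_{\fN}$ for a uniform $\fN$, confine their exits to one half of $\partial U$, and force the continuation of $\ell$ past $z_m$ into $\cH^R(\gamma_m)$. But it omits the step that makes your third claim work: the $\tau_i$ must be made pairwise disjoint.

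The paper shows that no $\tau_i$ can meet both $\tau_{i-1}$ and $\tau_{i+1}$. Their endpoints lie in different components of $\Lambda(\Gamma)\setminus\sfd\tau_i$, so the resulting arc in $Z^-$ would be linked with $\tau_i$ (for $m=1$, with $\tau_i^-$), contradicting right inaccessibility; one then passes to a subsequence. For $m>1$ this is direct, since the $\tau_i=h_i(\gamma_{k_i})$ are pure in $Z^-$. For $m=1$ this is exactly where \reflem{auxArcUnlinked} enters: it guarantees that two auxiliary arcs meet only along their $(-)$-segments.

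Disjointness yields strictly monotone exit points $\mu_i<\mu_{i+1}\le e_1$, hence $\mu_i\neq e_1$. Only then is $\{\alpha_i,\mu_i\}$ linked with the endpoints of the subarc $(\sfe,e_1)$ of $\ell$. Without it your third claim does not go through. An arc $\tau_i\subset h_i(\ell)$ may run along $\ell$ and exit $U$ exactly at $e_1$: the $\tau_i$ approach $\ell$ from its accessible side, and nothing you have proved forbids this. The device \reflem{outsideExt} uses for an exit at $e_1$ (enclosing plus one-sided inaccessibility of $\cJ(g)^+$) has no analogue here, for the same reason.

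The step you flag as the main obstacle is not one, but your justification for it is wrong. $U$ cannot lie in $\Int\cE_{\fN+2}$: outsideness gives $z_m\notin\Int\cE_{\fN}$, and $\ell\cap\bigcap_n\Int\cE_n=\varnothing$ by \refprop{eclipseLProperty}. If $U$ did lie there, no $\tau_i$ could enter $U$ at all. Instead choose $s_{\fN+2}<z_m$ on $\ell$ and $U$ with $\closure{U}\cap\partial\cE_{\fN+2}=\closure{U}\cap\ell$, so that only the half of $U$ on the inaccessible side of $\ell$ lies in $\Int\cE_{\fN+2}$. Every $\tau_i$ avoids $\Int\cE_{\fN}$ (for $m=1$ via \reflem{canonicalApprox} and \refprop{squashedFenceSystem}, as you say). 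So the exit points are confined to the other half immediately. The same observation, not a synapse argument, also keeps the crossing subarcs off $[e_0,z_m]$; the synapses of the $\tau_i$ lie in $Z^+$, never on $\ell$.

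Finally, for $m=1$ your construction only places $z_2$ in $\opi[\Lambda(\Gamma)]{z_1}{x_n}$. You still need that $[z_1,z_2]\subset Z^-$ cannot cross any $\sfa_{n'}^+$ with $n'>n$, which gives $z_2\in\lopi[\Lambda(\Gamma)]{z_1}{z_0}$. Then use \refcor{limitAuxiliaryLeaf} to exclude $z_2=z_0$.
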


\begin{proof}
Assume that $z_m$ is not fixed by any non-trivial element of $\Gamma$; otherwise, the conclusion follows from \reflem{oneStepAtFix2}.
By symmetry, it suffices to treat the case where $\ell$ is right inaccessible.

Since $\{\lambda_i\}_{i\in \NN}$ is an outside regular sequence with respect to $\fF(g)$, we may choose $\sfn>\cN$ so that
$\Int \cE_{\sfn-2}\cap \lambda_i=\varnothing$ for all $i\in \NN$, $s_{\sfn-2}<z_m$ in $\ell$, and the end ray of $\ell$ from $s_{\sfn-2}$ to $q$ is contained in $\cD^R(\Lambda(\Gamma))$.
Moreover, since $\{\lambda_i\}_{i\in \NN}$ is $\opi[\Lambda(\Gamma)]{z_m}{w}$-sided, the leaf
$\lambda_\infty=\{z_m,w\}$ is also a leaf of $\cL$ and
$\lambda_\infty\cap \Int \cE_{\sfn-2}=\varnothing$.

Set $\gamma=\sfa_{\sfn}$ if $m=1$ and $\gamma=\gamma_m$ if $m>1$, and let
\[
\cJ=\sigma_{\sfn-2}\cup \sigma_{\sfn+1}\cup [s_{\sfn-2},s_{\sfn+1}].
\]
Then $\gamma$ is adapted to $Z^\pm$, and $\cJ$ is a fence whose nodes are the synapses of $\sigma_{\sfn-2}$ and $\sigma_{\sfn+1}$.
Write $\sfd\gamma=\{z_m,\overline{z_m}\}$ and $\fD=\cH^R(\gamma)$.
Then $z_m<w<\overline{z_m}$ in $\partial\fD$.

Take a flat neighborhood $U$ of $z_m$ relative to $\ell$ such that
\[
\closure{U}\cap \partial \cE_{\sfn+1}=\closure{U}\cap \ell,\quad
\closure{U}\cap \cJ=\varnothing,\quad
\closure{U}\cap \cldi[\Lambda(\Gamma)]{w}{\overline{z_m}}=\varnothing.
\]
Then take a flat neighborhood $V$ of $w$ relative to $\opi[\partial \fD]{z_m}{\overline{z_m}}$ such that
\[
\closure{V}\cap \closure{U}=\varnothing
\quad\text{and}\quad
\closure{V}\cap \partial \fD \subset \opi[\partial \fD]{z_m}{\overline{z_m}}.
\]

Observe that:
\begin{itemize}
    \item $\partial U\cap \ell=\{e_0,e_1\}$ with $e_0>z_m>e_1$ in $\ell$, and in fact $e_0>z_m>e_1>s_{\sfn-2}$ since $\closure{U}\cap \cJ=\varnothing$;
    \item there exists an open interval $\cI=\opi[\Lambda(\Gamma)]{u_0}{u_1}$ containing $z_m$ and crossing $U$, since $\partial U$ separates $z_m$ and $w$, and the ray $[e_0,z_m)$ lands at $z_m$ from the left side of $\Lambda(\Gamma)$;
    \item $\partial V\cap \Lambda(\Gamma)=\{u_2,u_3\}$ with $z_m<u_2<w<u_3<\overline{z_m}$ in $\Lambda(\Gamma)$;
    \item hence $u_0<z_m<u_1<u_2<w<u_3<\overline{z_m}$ in $\Lambda(\Gamma)$.
\end{itemize}

After passing to a subsequence, we may assume that
\[
\cldi[\Lambda(\Gamma)]{u_1}{u_2}\subset I_i\subsetneq I_{i+1}\subset \opi[\Lambda(\Gamma)]{z_m}{w},
\]
where $I_i$ is a component of $\Lambda(\Gamma)\setminus \lambda_i$.
If infinitely many $\lambda_i$ share an endpoint $\varepsilon\in\{z_m,w\}$, then \refprop{oneEndFix} implies that $\varepsilon$ is fixed by a non-trivial element $h\in \Gamma$.
Since, by assumption, no non-trivial element of $\Gamma$ fixes $z_m$, we must have $\varepsilon=w$.
But then \refprop{oneEndFix} implies that $\lambda_\infty$ is isolated, contradicting $\lambda_i\to \lambda_\infty$.
Thus, after passing to a subsequence, we may assume that
\[
\closure{I_i}\subset I_{i+1}\subset \opi[\Lambda(\Gamma)]{z_m}{w}.
\]

\begin{claim}
There exists a sequence $\{\sfb_i\}_{i\in\NN}$ of adapted arcs crossing $\cH$ such that:
\begin{itemize}
    \item if $m=1$, then $\sfb_i$ is a mixed arc whose synapse lies in $Z^+$, while if $m>1$, then $\sfb_i$ is a pure arc in $Z^-$;
    \item $\sfb_i\cap \sfb_j=\varnothing$ whenever $i\ne j$, and $\sfb_i\cap \Int \cE_{\sfn-1}=\varnothing$;
    \item
    $\cldi[\Lambda(\Gamma)]{u_1}{u_2}\subset \closure{J_i}\subset J_{i+1}\subset \opi[\Lambda(\Gamma)]{z_m}{w}$
    and $\bigcup_{i\in\NN} J_i=\opi[\Lambda(\Gamma)]{z_m}{w}$, where $J_i$ denotes a component of $\Lambda(\Gamma)\setminus \sfd \sfb_i$,.
\end{itemize}
\end{claim}
\begin{proof}
    Assume first that \(m>1\).
Since \(\{\lambda_i\}_{i\in\NN}\) is regular, for each \(i\in\NN\) we may write
\(\lambda_i=h_i(\ell_{j(i)})\) for some \(h_i\in \Gamma\) and \(j(i)\in \{2,\dots,m\}\).
By \refprop{squashedFenceSystem}, there exists \(n(i)\in\ZZ\) such that
\(h_i(\gamma_{j(i)})\subset h_i(\ell)\subset \closure{\cE_{n(i)}\setminus \cE_{n(i)+1}}\).
Since \(\lambda_i\subset h_i(\gamma_{j(i)})\) and \(\lambda_i\cap \Int \cE_{\sfn-2}=\varnothing\), \refprop{eclipseLProperty} implies \(n(i)\le \sfn-2\). Hence
\(h_i(\gamma_{j(i)})\cap \Int \cE_{\sfn-1}=\varnothing\).
Set \(\sfb_i=h_i(\gamma_{j(i)})\).

It remains to show that the arcs \(\sfb_i\) are pairwise disjoint.
We first claim that \(\sfb_i\) cannot meet both \(\sfb_{i-1}\) and \(\sfb_{i+1}\) simultaneously.
Indeed, otherwise, since \(\sfd \sfb_i=\lambda_i\) and \(\closure{I_i}\subset I_{i+1}\) for all \(i\in\NN\), the leaves \(\lambda_{i-1}\) and \(\lambda_{i+1}\) lie in distinct components of \(\Lambda(\Gamma)\setminus \lambda_i\).
It follows that the arc joining an endpoint of \(\sfb_{i-1}\) to an endpoint of \(\sfb_{i+1}\) is a pure arc in \(Z^-\) linked with \(\sfb_i\), contradicting the right inaccessibility of \(\sfb_i\).

Therefore, after passing to a subsequence, we may assume that \(\sfb_i\cap \sfb_j=\varnothing\) whenever \(i\neq j\).
This gives the desired sequence.

We next consider the case \(m=1\).
Since \(\{\lambda_i\}_{i\in \NN}\) is an outside regular sequence, each \(\lambda_i\) admits an outside side approximation \(\{h_{i,j}\}_{j\in\NN}\).
By \refprop{squashedFenceSystem}, \(h_{i,j}(\cE_{m(i,j)}) \subset \closure{\cE_{n(i,j)}\setminus\cE_{n(i,j)+1}}\) for some \(m(i,j),n(i,j)\in \ZZ\).
Since \(\{h_{i,j}(z_1)\}_{j\in\NN}\) converges to an endpoint of \(\lambda_i\), \(h_{i,j}(z_1)\in h_{i,j}(\ell)\subset \closure{\cE_{n(i,j)}\setminus\cE_{n(i,j)+1}}\), and \(\lambda_i\cap \Int \cE_{\sfn-2}=\varnothing\), we see that for each \(i\), the sequence \(\{n(i,j)\}_{j\in\NN}\) is eventually less than or equal to \(\sfn-2\).
Therefore, after passing to a subsequence of \(\{h_{i,j}\}_{(i,j)\in \NN\times \NN}\), we have
\(h_{i,j}(\cE_{m(i,j)})\cap \Int \cE_{\sfn-1}=\varnothing\).

On the other hand, write \(I_i=\opi[\Lambda(\Gamma)]{a_i}{b_i}\).
Since \(\closure{I_i}\subset I_{i+1}\), we may choose sequences \(\{U_i\}_{i\in\NN}\) and \(\{V_i\}_{i\in\NN}\) of pairwise disjoint open intervals such that
\(\closure{U_1}\cap \closure{V_1}=\varnothing\), \(a_i\in U_i\), and \(b_i\in V_i\) for every \(i\in\NN\).

For each \(i\in\NN\), since \(\{h_{i,j}\}_{j\in \NN}\) is a conical limit sequence for \(z_1\) with respect to \(\lambda_i\), for each $i\in\NN$, there exists \(\sfj(i)\in\NN\) such that one of the following holds for all \(j\ge \sfj(i)\):
\begin{itemize}
    \item \(h_{i,j}(z_1)\in U_i\) and \(h_{i,j}(\cldi[\Lambda(\Gamma)]{z_0}{x_\cN})\subset V_i\); or
    \item \(h_{i,j}(z_1)\in V_i\) and \(h_{i,j}(\cldi[\Lambda(\Gamma)]{z_0}{x_\cN})\subset U_i\).
\end{itemize}

Now set \(\sfb_i=h_{i,\sfj(i)}(\sfa_{m(i,\sfj(i))})\).
Since \(h_{i,\sfj(i)}(\cE_{m(i,\sfj(i))})\cap \Int \cE_{\sfn-1}=\varnothing\), we have \(\sfb_i\cap \cE_{\sfn-1}=\varnothing\).
Also, by the choice of \(U_i\), \(V_i\), and \(\sfj(i)\), we have
\(\closure{J_i}\subset J_{i+1}\subset \opi[\Lambda(\Gamma)]{z_m}{w}\) and \(\bigcup_{i\in\NN} J_i=\opi[\Lambda(\Gamma)]{z_m}{w}\).

It remains to show that the arcs \(\sfb_i\) are pairwise disjoint.
As above, we claim that \(\sfb_i\) cannot meet both \(\sfb_{i-1}\) and \(\sfb_{i+1}\) simultaneously.
Otherwise, by \reflem{auxArcUnlinked}, the arcs \(\sfb_i,\sfb_{i-1},\sfb_{i+1}\) intersect only along their \((-)\)-segments.
Since \(\sfd \sfb_{i-1}\) and \(\sfd \sfb_{i+1}\) lie in distinct components of \(\Lambda(\Gamma)\setminus \sfd \sfb_i\) and the synapse of \(\sfb_i\) lies in \(Z^+\), the arc joining the endpoint of \(\sfb_{i-1}^-\) to the endpoint of \(\sfb_{i+1}^-\) is an arc in $Z^-$, linked with \(\sfb_i^-\), contradicting the right inaccessibility of \(\sfb_i^-\).

Therefore, after passing to a subsequence, we may assume that \(\sfb_i\cap \sfb_j=\varnothing\) whenever \(i\ne j\).
This gives the desired sequence.
\end{proof}

Let \(\{\sfb_i\}_{i\in \NN}\) be the sequence given by the above claim.
Define a Jordan curve
\[
\cC=[e_0,z_m]\cup \opi[\Lambda(\Gamma)]{z_m}{u_1}\cup \opi[\partial U]{u_1}{e_0},
\]
equipped with the circular order for which \(z_m<u_1<e_0\).
Write \(J_i=\opi[\Lambda(\Gamma)]{\sfx_i}{\sfy_i}\).

Since \(\partial U\) separates \(\sfx_i\) and \(\sfy_i\), there exists a subline \(\sft_i\subset \sfb_i\) such that
\(\sfd \sft_i=\{\sfx_i,\sfw_i\}\) for some \(\sfw_i\in \partial U\cap \sfb_i\), and \(\sft_i\) crosses \(\cD^L(\cC)\).

Since the arcs \(\{\sfb_i\}_{i\in\NN}\) are pairwise disjoint and the sequence \(\{\sfx_i\}_{i\in\NN}\) is decreasing, we have
\[
u_1<\sfw_i<\sfw_{i+1}<e_0 \quad \text{in} \quad \cC.
\]
Moreover, since \(\opi[\partial U]{e_1}{e_0}\subset \Int \cE_\sfn\) and \(\sfb_i\subset (\Int \cE_{\sfn-1})^c\), we have
\(\sfw_i\in \cldi[\partial U]{e_0}{e_1}\).
Hence \(u_1<\sfw_i\le e_1<e_0<u_0\) in \(\partial U\), and thus \(u_1<\sfw_i\le e_1<e_0\) in \(\cC\).

\begin{claim}
\([z_m,e_1]\) crosses \(\cD^L(\cC)\).
\end{claim}
\begin{proof}[Proof of the claim]
It suffices to show that \([z_m,e_1]\) does not meet \(\opi[\Lambda(\Gamma)]{z_m}{u_1}\).
Assume otherwise.
Since \(u_1<e_1<e_0\) in \(\cC\), the ray \([z_m,e_1)\) lands at \(e_1\) from the left side of \(\opi[\cC]{u_1}{e_0}\).
Since \((z_m,e_1)\) is disjoint from \([e_0,z_m)\), by assumption, there exists \(\sfe \in \opi[\Lambda(\Gamma)]{z_m}{u_1}\cap (z_m,e_1)\) such that \(\cA=(\sfe,e_1)\) crosses \(\cC\) through \(\cD^L(\cC)\).

Since \(\{\sfx_i\}_{i\in \NN}\) converges to \(z_m\) on \(\opi[\Lambda(\Gamma)]{z_m}{u_1}\), we may choose \(\sfk\in \NN\) such that \(\sfx_\sfk\in \opi[\cC]{z_m}{\sfe}\).
On the other hand, \(\sfw_\sfk\in \lopi[\cC]{u_1}{e_1}\).

Since \(\sfb_i\cap \sfb_j=\varnothing\) for all \(i\neq j\) and \(\sfw_i<\sfw_{i+1}\le e_1\) in \(\cC\) for all \(i\in \NN\), we have \(\sfw_i\neq e_1\).
Hence \(\sfd \sft_j\) and \(\sfd \cA\) are linked in \(\cC\).
Since \(\cA\subset Z^-\), the disjointness of \(Z^\pm\) implies that \(\sft_j\) is not pure in \(Z^+\).
Therefore \(\sft_\sfk^-\) and \(\cA\) are linked, contradicting the one-sided simpliciality of \(\ell\).
\end{proof}
By the above claim, a truncation of the ray \((z_m,e_1]\subset \ell\) lands at \(z_m\) on $\fD$.
Extending this truncation, we obtain an arc in \(Z^-\) crossing \(\cH^R(\gamma)\).
If \(m>1\), the proof is complete.
If \(m=1\), \refcor{limitAuxiliaryLeaf} yields the conclusion.
\qedhere
\end{proof}

\setcounter{claim}{0}

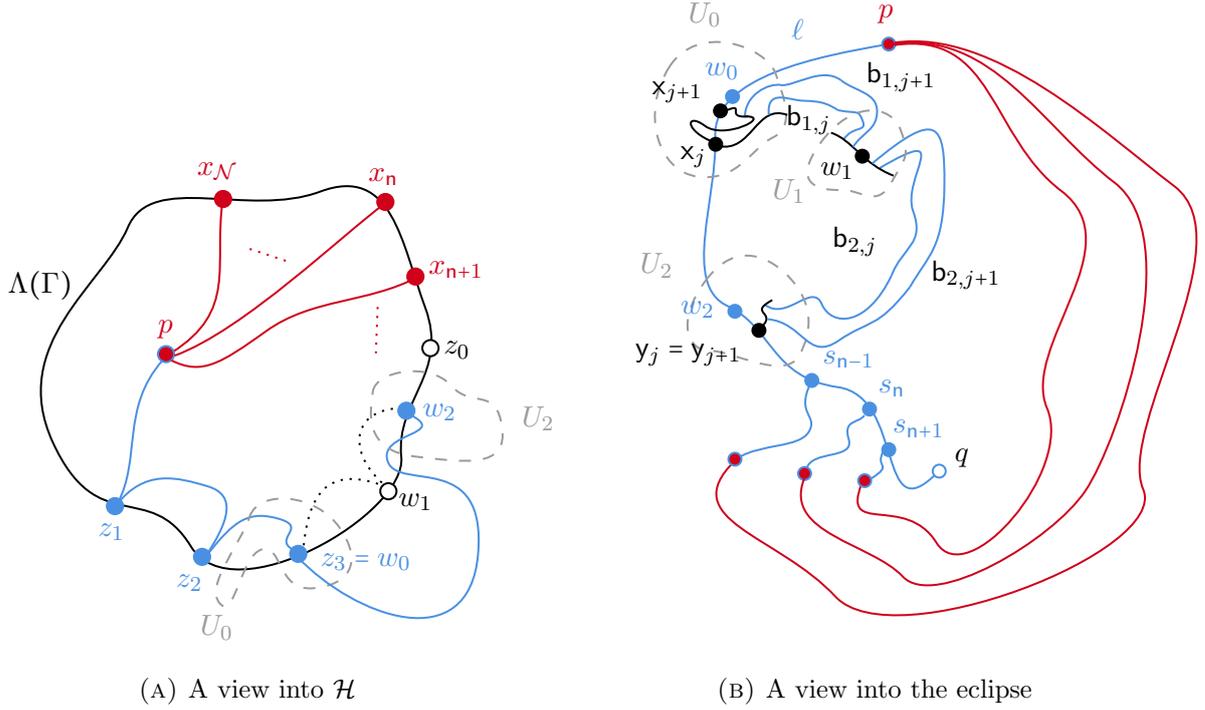
\begin{figure}
    \centering

\begin{subfigure}[t]{0.4\textwidth}

\tikzset{every picture/.style={line width=0.75pt}} %set default line width to 0.75pt        

\begin{tikzpicture}[x=0.75pt,y=0.75pt,yscale=-1.5,xscale=1.5]
%uncomment if require: \path (0,349); %set diagram left start at 0, and has height of 349

%Shape: Polygon Curved [id:ds05540374141696458] 
\draw   (224.86,92.79) .. controls (248.86,83.79) and (251.86,109.79) .. (259.86,131.79) .. controls (267.86,153.79) and (249.86,162.79) .. (251.86,181.79) .. controls (253.86,200.79) and (195.86,234.79) .. (181.86,211.79) .. controls (167.86,188.79) and (150.86,209.79) .. (135.86,176.79) .. controls (120.86,143.79) and (152.86,123.79) .. (160.86,104.79) .. controls (168.86,85.79) and (200.86,101.79) .. (224.86,92.79) -- cycle ;
%Curve Lines [id:da4462564185393767] 
\draw [color={rgb, 255:red, 208; green, 2; blue, 27 }  ,draw opacity=1 ]   (175.4,146.8) .. controls (197.4,132.8) and (189.4,117.8) .. (192.4,95.8) ;
%Shape: Circle [id:dp011581361592361339] 
\draw  [color={rgb, 255:red, 74; green, 144; blue, 226 }  ,draw opacity=1 ][fill={rgb, 255:red, 208; green, 2; blue, 27 }  ,fill opacity=1 ][line width=0.75]  (170.86,147.29) .. controls (170.86,145.8) and (172.07,144.58) .. (173.57,144.58) .. controls (175.06,144.58) and (176.27,145.8) .. (176.27,147.29) .. controls (176.27,148.79) and (175.06,150) .. (173.57,150) .. controls (172.07,150) and (170.86,148.79) .. (170.86,147.29) -- cycle ;
%Curve Lines [id:da733176881155499] 
\draw [color={rgb, 255:red, 74; green, 144; blue, 226 }  ,draw opacity=1 ]   (157.4,197.4) .. controls (165.4,182.4) and (158.4,165.4) .. (172.4,149.4) ;
%Curve Lines [id:da048287397004118104] 
\draw [color={rgb, 255:red, 208; green, 2; blue, 27 }  ,draw opacity=1 ]   (176.4,147.8) .. controls (199.4,139.4) and (233.4,106.2) .. (246.4,96.8) ;
%Curve Lines [id:da9937446761034313] 
\draw [color={rgb, 255:red, 208; green, 2; blue, 27 }  ,draw opacity=1 ]   (175.4,149.4) .. controls (185.4,154.4) and (191.44,150.71) .. (206.92,139.55) .. controls (222.4,128.4) and (240.4,130.4) .. (257.4,121.4) ;
%Straight Lines [id:da5675501007197415] 
\draw [color={rgb, 255:red, 208; green, 2; blue, 27 }  ,draw opacity=1 ] [dash pattern={on 0.84pt off 2.51pt}]  (244,131.48) -- (243.4,147) ;
%Shape: Circle [id:dp5283793105166963] 
\draw  [color={rgb, 255:red, 208; green, 2; blue, 27 }  ,draw opacity=1 ][fill={rgb, 255:red, 208; green, 2; blue, 27 }  ,fill opacity=1 ][line width=0.75]  (189.86,95.27) .. controls (189.86,93.77) and (191.07,92.56) .. (192.57,92.56) .. controls (194.06,92.56) and (195.27,93.77) .. (195.27,95.27) .. controls (195.27,96.76) and (194.06,97.98) .. (192.57,97.98) .. controls (191.07,97.98) and (189.86,96.76) .. (189.86,95.27) -- cycle ;
%Shape: Circle [id:dp5753513497216594] 
\draw  [color={rgb, 255:red, 208; green, 2; blue, 27 }  ,draw opacity=1 ][fill={rgb, 255:red, 208; green, 2; blue, 27 }  ,fill opacity=1 ][line width=0.75]  (243.86,96.27) .. controls (243.86,94.77) and (245.07,93.56) .. (246.57,93.56) .. controls (248.06,93.56) and (249.27,94.77) .. (249.27,96.27) .. controls (249.27,97.76) and (248.06,98.98) .. (246.57,98.98) .. controls (245.07,98.98) and (243.86,97.76) .. (243.86,96.27) -- cycle ;
%Shape: Circle [id:dp7107677083809859] 
\draw  [color={rgb, 255:red, 208; green, 2; blue, 27 }  ,draw opacity=1 ][fill={rgb, 255:red, 208; green, 2; blue, 27 }  ,fill opacity=1 ][line width=0.75]  (253.86,121.27) .. controls (253.86,119.77) and (255.07,118.56) .. (256.57,118.56) .. controls (258.06,118.56) and (259.27,119.77) .. (259.27,121.27) .. controls (259.27,122.76) and (258.06,123.98) .. (256.57,123.98) .. controls (255.07,123.98) and (253.86,122.76) .. (253.86,121.27) -- cycle ;
%Straight Lines [id:da7349562917293778] 
\draw [color={rgb, 255:red, 208; green, 2; blue, 27 }  ,draw opacity=1 ] [dash pattern={on 0.84pt off 2.51pt}]  (201.4,112) -- (213.4,116) ;
%Shape: Circle [id:dp5126954816248799] 
\draw  [color={rgb, 255:red, 0; green, 0; blue, 0 }  ,draw opacity=1 ][fill={rgb, 255:red, 255; green, 255; blue, 255 }  ,fill opacity=1 ][line width=0.75]  (258.86,145.27) .. controls (258.86,143.77) and (260.07,142.56) .. (261.57,142.56) .. controls (263.06,142.56) and (264.27,143.77) .. (264.27,145.27) .. controls (264.27,146.76) and (263.06,147.98) .. (261.57,147.98) .. controls (260.07,147.98) and (258.86,146.76) .. (258.86,145.27) -- cycle ;
%Shape: Circle [id:dp5623198978559015] 
\draw  [color={rgb, 255:red, 74; green, 144; blue, 226 }  ,draw opacity=1 ][fill={rgb, 255:red, 74; green, 144; blue, 226 }  ,fill opacity=1 ][line width=0.75]  (153.86,198.27) .. controls (153.86,196.77) and (155.07,195.56) .. (156.57,195.56) .. controls (158.06,195.56) and (159.27,196.77) .. (159.27,198.27) .. controls (159.27,199.76) and (158.06,200.98) .. (156.57,200.98) .. controls (155.07,200.98) and (153.86,199.76) .. (153.86,198.27) -- cycle ;
%Curve Lines [id:da1940972361564719] 
\draw [color={rgb, 255:red, 74; green, 144; blue, 226 }  ,draw opacity=1 ]   (157,198) .. controls (166.4,185.6) and (177.4,188.6) .. (187.4,191.6) .. controls (197.4,194.6) and (194.4,203.6) .. (186.4,215.6) ;
%Shape: Circle [id:dp1926953431119346] 
\draw  [color={rgb, 255:red, 74; green, 144; blue, 226 }  ,draw opacity=1 ][fill={rgb, 255:red, 74; green, 144; blue, 226 }  ,fill opacity=1 ][line width=0.75]  (182.86,215.27) .. controls (182.86,213.77) and (184.07,212.56) .. (185.57,212.56) .. controls (187.06,212.56) and (188.27,213.77) .. (188.27,215.27) .. controls (188.27,216.76) and (187.06,217.98) .. (185.57,217.98) .. controls (184.07,217.98) and (182.86,216.76) .. (182.86,215.27) -- cycle ;
%Shape: Circle [id:dp38026521250894774] 
\draw  [color={rgb, 255:red, 74; green, 144; blue, 226 }  ,draw opacity=1 ][fill={rgb, 255:red, 74; green, 144; blue, 226 }  ,fill opacity=1 ][line width=0.75]  (214.86,214.27) .. controls (214.86,212.77) and (216.07,211.56) .. (217.57,211.56) .. controls (219.06,211.56) and (220.27,212.77) .. (220.27,214.27) .. controls (220.27,215.76) and (219.06,216.98) .. (217.57,216.98) .. controls (216.07,216.98) and (214.86,215.76) .. (214.86,214.27) -- cycle ;
%Curve Lines [id:da8843946433202704] 
\draw [color={rgb, 255:red, 74; green, 144; blue, 226 }  ,draw opacity=1 ]   (187,215) .. controls (196.4,202.6) and (201.4,199.6) .. (211.4,202.6) .. controls (221.4,205.6) and (207.4,209.6) .. (217.4,213.6) ;
%Curve Lines [id:da265547329278496] 
\draw  [dash pattern={on 0.84pt off 2.51pt}]  (219.4,211.4) .. controls (221.4,191.4) and (229.4,184.4) .. (248.4,192.4) ;
%Shape: Circle [id:dp7372754944325217] 
\draw  [color={rgb, 255:red, 0; green, 0; blue, 0 }  ,draw opacity=1 ][fill={rgb, 255:red, 255; green, 255; blue, 255 }  ,fill opacity=1 ][line width=0.75]  (244.86,193.27) .. controls (244.86,191.77) and (246.07,190.56) .. (247.57,190.56) .. controls (249.06,190.56) and (250.27,191.77) .. (250.27,193.27) .. controls (250.27,194.76) and (249.06,195.98) .. (247.57,195.98) .. controls (246.07,195.98) and (244.86,194.76) .. (244.86,193.27) -- cycle ;
%Curve Lines [id:da704378340621452] 
\draw  [dash pattern={on 0.84pt off 2.51pt}]  (245.4,190.4) .. controls (233.4,179.4) and (235.4,164.4) .. (254.4,165.4) ;
%Shape: Circle [id:dp19034171415149415] 
\draw  [color={rgb, 255:red, 74; green, 144; blue, 226 }  ,draw opacity=1 ][fill={rgb, 255:red, 74; green, 144; blue, 226 }  ,fill opacity=1 ][line width=0.75]  (250.86,166.27) .. controls (250.86,164.77) and (252.07,163.56) .. (253.57,163.56) .. controls (255.06,163.56) and (256.27,164.77) .. (256.27,166.27) .. controls (256.27,167.76) and (255.06,168.98) .. (253.57,168.98) .. controls (252.07,168.98) and (250.86,167.76) .. (250.86,166.27) -- cycle ;
%Shape: Polygon Curved [id:ds6469359180943016] 
\draw  [color={rgb, 255:red, 155; green, 155; blue, 155 }  ,draw opacity=1 ][dash pattern={on 4.5pt off 4.5pt}] (201.4,201.8) .. controls (204.4,192.6) and (223.4,193.8) .. (231.4,205.6) .. controls (239.4,217.4) and (234.4,224.8) .. (222.4,225.8) .. controls (210.4,226.8) and (212.4,210.8) .. (205.4,212.8) .. controls (198.4,214.8) and (196.4,236.8) .. (191.4,230.8) .. controls (186.4,224.8) and (198.4,211) .. (201.4,201.8) -- cycle ;
%Curve Lines [id:da579831816067839] 
\draw [color={rgb, 255:red, 74; green, 144; blue, 226 }  ,draw opacity=1 ]   (217.4,215.4) .. controls (236.4,235.4) and (274.4,249.4) .. (277.4,214.4) .. controls (280.4,179.4) and (251.15,193.41) .. (247.4,183.4) .. controls (243.65,173.39) and (268.52,174.46) .. (254.4,167.4) ;
%Shape: Polygon Curved [id:ds3212489293048526] 
\draw  [color={rgb, 255:red, 155; green, 155; blue, 155 }  ,draw opacity=1 ][dash pattern={on 4.5pt off 4.5pt}] (242.4,159.8) .. controls (243.4,152.8) and (261.4,149.8) .. (268.4,157.8) .. controls (275.4,165.8) and (283.4,157.8) .. (285.4,171.8) .. controls (287.4,185.8) and (273.29,183.6) .. (256.29,181.6) .. controls (239.29,179.6) and (241.4,166.8) .. (242.4,159.8) -- cycle ;

% Text Node
\draw (120,118.4) node [anchor=north west][inner sep=0.75pt]    {$\Lambda ( \Gamma )$};
% Text Node
\draw (170,135) node [anchor=north west][inner sep=0.75pt]  [font=\normalsize,color={rgb, 255:red, 208; green, 2; blue, 27 }  ,opacity=1 ]  {$p$};
% Text Node
\draw (240,84) node [anchor=north west][inner sep=0.75pt]  [font=\normalsize,color={rgb, 255:red, 208; green, 2; blue, 27 }  ,opacity=1 ]  {$x_{\sfn}$};
% Text Node
\draw (260,115) node [anchor=north west][inner sep=0.75pt]  [font=\normalsize,color={rgb, 255:red, 208; green, 2; blue, 27 }  ,opacity=1 ]  {$x_{\sfn+1}$};
% Text Node
\draw (183,82) node [anchor=north west][inner sep=0.75pt]  [font=\normalsize,color={rgb, 255:red, 208; green, 2; blue, 27 }  ,opacity=1 ]  {$x_{\cN}$};
% Text Node
\draw (265,142) node [anchor=north west][inner sep=0.75pt]  [font=\normalsize,color={rgb, 255:red, 0; green, 0; blue, 0 }  ,opacity=1 ]  {$z_{0}$};
% Text Node
\draw (150,203) node [anchor=north west][inner sep=0.75pt]  [font=\normalsize,color={rgb, 255:red, 74; green, 144; blue, 226 }  ,opacity=1 ]  {$z_{1}$};
% Text Node
\draw (176,220) node [anchor=north west][inner sep=0.75pt]  [font=\normalsize,color={rgb, 255:red, 74; green, 144; blue, 226 }  ,opacity=1 ]  {$z_{2}$};
% Text Node
\draw (224,213) node [anchor=north west][inner sep=0.75pt]  [font=\normalsize,color={rgb, 255:red, 74; green, 144; blue, 226 }  ,opacity=1 ]  {$z_{3} =w_{0}$};
% Text Node
\draw (250,193) node [anchor=north west][inner sep=0.75pt]  [font=\normalsize,color={rgb, 255:red, 0; green, 0; blue, 0 }  ,opacity=1 ]  {$w_{1}$};
% Text Node
\draw (258,163) node [anchor=north west][inner sep=0.75pt]  [font=\normalsize,color={rgb, 255:red, 74; green, 144; blue, 226 }  ,opacity=1 ]  {$w_{2}$};
% Text Node
\draw (184,233.8) node [anchor=north west][inner sep=0.75pt]  [color={rgb, 255:red, 155; green, 155; blue, 155 }  ,opacity=1 ]  {$U_{0}$};
% Text Node
\draw (291,163.8) node [anchor=north west][inner sep=0.75pt]  [color={rgb, 255:red, 155; green, 155; blue, 155 }  ,opacity=1 ]  {$U_{2}$};
\end{tikzpicture}
\subcaption{A view into $\cH$}
\end{subfigure}
\hspace{0.1\textwidth}
\begin{subfigure}[t]{0.4\textwidth}

\tikzset{every picture/.style={line width=0.75pt}} %set default line width to 0.75pt        

\begin{tikzpicture}[x=0.75pt,y=0.75pt,yscale=-1.2,xscale=1.2]
%uncomment if require: \path (0,349); %set diagram left start at 0, and has height of 349

%Curve Lines [id:da49664971489976817] 
\draw [color={rgb, 255:red, 74; green, 144; blue, 226 }  ,draw opacity=1 ]   (297.88,80.89) .. controls (271.43,85.59) and (226.4,93.93) .. (227.4,119.33) .. controls (228.4,144.73) and (213.4,181.33) .. (232.4,190.33) .. controls (251.4,199.33) and (250.4,220.4) .. (278.4,224.4) .. controls (306.4,228.4) and (294.4,285.6) .. (319.4,260.6) ;
%Curve Lines [id:da037224961871722195] 
\draw [color={rgb, 255:red, 74; green, 144; blue, 226 }  ,draw opacity=1 ]   (295.71,132.62) .. controls (327.71,113.62) and (304.71,136.62) .. (309.71,150.62) .. controls (314.71,164.62) and (305.4,165.4) .. (300.71,177.62) ;
%Curve Lines [id:da1602623902071365] 
\draw [color={rgb, 255:red, 74; green, 144; blue, 226 }  ,draw opacity=1 ]   (248.14,189.9) .. controls (257.14,196.9) and (267.71,180.17) .. (271.71,189.59) .. controls (275.71,199.02) and (290.4,199.4) .. (300.71,177.62) ;
%Curve Lines [id:da1859451159650124] 
\draw [color={rgb, 255:red, 74; green, 144; blue, 226 }  ,draw opacity=1 ]   (278.43,108.31) .. controls (287.14,110) and (286.71,107.76) .. (288.71,112.76) .. controls (290.71,117.76) and (284.57,114.5) .. (280.71,120.76) ;
%Shape: Circle [id:dp14552939039724488] 
\draw  [color={rgb, 255:red, 74; green, 144; blue, 226 }  ,draw opacity=1 ][fill={rgb, 255:red, 255; green, 255; blue, 255 }  ,fill opacity=1 ][line width=0.75]  (317.86,259.6) .. controls (317.86,258.1) and (319.07,256.89) .. (320.57,256.89) .. controls (322.06,256.89) and (323.27,258.1) .. (323.27,259.6) .. controls (323.27,261.1) and (322.06,262.31) .. (320.57,262.31) .. controls (319.07,262.31) and (317.86,261.1) .. (317.86,259.6) -- cycle ;
%Shape: Circle [id:dp3307410174618869] 
\draw  [color={rgb, 255:red, 74; green, 144; blue, 226 }  ,draw opacity=1 ][fill={rgb, 255:red, 208; green, 2; blue, 27 }  ,fill opacity=1 ][line width=0.75]  (296.86,80.6) .. controls (296.86,79.1) and (298.07,77.89) .. (299.57,77.89) .. controls (301.06,77.89) and (302.27,79.1) .. (302.27,80.6) .. controls (302.27,82.1) and (301.06,83.31) .. (299.57,83.31) .. controls (298.07,83.31) and (296.86,82.1) .. (296.86,80.6) -- cycle ;
%Shape: Circle [id:dp6398620601025935] 
\draw  [color={rgb, 255:red, 74; green, 144; blue, 226 }  ,draw opacity=1 ][fill={rgb, 255:red, 74; green, 144; blue, 226 }  ,fill opacity=1 ][line width=0.75]  (231.86,102.6) .. controls (231.86,101.1) and (233.07,99.89) .. (234.57,99.89) .. controls (236.06,99.89) and (237.27,101.1) .. (237.27,102.6) .. controls (237.27,104.1) and (236.06,105.31) .. (234.57,105.31) .. controls (233.07,105.31) and (231.86,104.1) .. (231.86,102.6) -- cycle ;
%Shape: Polygon Curved [id:ds9904742317568836] 
\draw  [color={rgb, 255:red, 155; green, 155; blue, 155 }  ,draw opacity=1 ][dash pattern={on 4.5pt off 4.5pt}] (208.29,90.19) .. controls (220.29,71.19) and (248.4,81.73) .. (252.4,92.73) .. controls (256.4,103.73) and (260.51,115.27) .. (251.4,119.73) .. controls (242.29,124.19) and (237.29,138.19) .. (220.29,136.19) .. controls (203.29,134.19) and (196.29,109.19) .. (208.29,90.19) -- cycle ;
%Shape: Polygon Curved [id:ds32559968235640446] 
\draw  [color={rgb, 255:red, 155; green, 155; blue, 155 }  ,draw opacity=1 ][dash pattern={on 4.5pt off 4.5pt}] (223.4,179.73) .. controls (236.4,161.93) and (251.29,170.19) .. (259.4,179.73) .. controls (267.51,189.27) and (268.29,200.39) .. (262.29,211.19) .. controls (256.29,221.99) and (234.29,208.99) .. (225.29,209.19) .. controls (216.29,209.39) and (210.4,197.53) .. (223.4,179.73) -- cycle ;
%Curve Lines [id:da10596837732015407] 
\draw    (228.71,123.33) .. controls (242.71,126.33) and (244.4,105.33) .. (257.4,110.33) ;
%Shape: Circle [id:dp43567048450442325] 
\draw  [color={rgb, 255:red, 0; green, 0; blue, 0 }  ,draw opacity=1 ][fill={rgb, 255:red, 0; green, 0; blue, 0 }  ,fill opacity=1 ][line width=0.75]  (224.86,122.6) .. controls (224.86,121.1) and (226.07,119.89) .. (227.57,119.89) .. controls (229.06,119.89) and (230.27,121.1) .. (230.27,122.6) .. controls (230.27,124.1) and (229.06,125.31) .. (227.57,125.31) .. controls (226.07,125.31) and (224.86,124.1) .. (224.86,122.6) -- cycle ;
%Curve Lines [id:da9651804409534774] 
\draw [color={rgb, 255:red, 74; green, 144; blue, 226 }  ,draw opacity=1 ]   (249.71,110.33) .. controls (248.71,100.33) and (256.12,103.15) .. (261.29,104.05) .. controls (266.45,104.94) and (271.4,103.4) .. (278.43,108.31) ;
%Shape: Polygon Curved [id:ds13670878570302025] 
\draw  [color={rgb, 255:red, 155; green, 155; blue, 155 }  ,draw opacity=1 ][dash pattern={on 4.5pt off 4.5pt}] (271.4,121.73) .. controls (284.4,103.93) and (300.57,105.76) .. (304.57,116.76) .. controls (308.57,127.76) and (306.57,127.96) .. (300.57,138.76) .. controls (294.57,149.56) and (288.57,141.56) .. (279.57,141.76) .. controls (270.57,141.96) and (258.4,139.53) .. (271.4,121.73) -- cycle ;
%Curve Lines [id:da040459252093603126] 
\draw    (275.57,117.76) .. controls (284.57,120.76) and (288.57,130.76) .. (301.57,135.76) ;
%Shape: Circle [id:dp9060386468955414] 
\draw  [color={rgb, 255:red, 0; green, 0; blue, 0 }  ,draw opacity=1 ][fill={rgb, 255:red, 0; green, 0; blue, 0 }  ,fill opacity=1 ][line width=0.75]  (285.86,127.6) .. controls (285.86,126.1) and (287.07,124.89) .. (288.57,124.89) .. controls (290.06,124.89) and (291.27,126.1) .. (291.27,127.6) .. controls (291.27,129.1) and (290.06,130.31) .. (288.57,130.31) .. controls (287.07,130.31) and (285.86,129.1) .. (285.86,127.6) -- cycle ;
%Curve Lines [id:da017394383025079674] 
\draw    (245.4,200.33) .. controls (254.71,192.62) and (240.14,193.9) .. (251.14,187.9) ;
%Shape: Circle [id:dp7184575924420116] 
\draw  [color={rgb, 255:red, 0; green, 0; blue, 0 }  ,draw opacity=1 ][fill={rgb, 255:red, 0; green, 0; blue, 0 }  ,fill opacity=1 ][line width=0.75]  (242.86,200.6) .. controls (242.86,199.1) and (244.07,197.89) .. (245.57,197.89) .. controls (247.06,197.89) and (248.27,199.1) .. (248.27,200.6) .. controls (248.27,202.1) and (247.06,203.31) .. (245.57,203.31) .. controls (244.07,203.31) and (242.86,202.1) .. (242.86,200.6) -- cycle ;
%Curve Lines [id:da800959694500028] 
\draw [color={rgb, 255:red, 74; green, 144; blue, 226 }  ,draw opacity=1 ]   (239.71,111.33) .. controls (239.71,101.76) and (249.55,96.87) .. (254.71,97.76) .. controls (259.88,98.65) and (271.71,89.76) .. (279.71,95.76) ;
%Curve Lines [id:da8575905686141982] 
\draw [color={rgb, 255:red, 74; green, 144; blue, 226 }  ,draw opacity=1 ]   (279.71,95.76) .. controls (285.4,100.4) and (294.71,100.76) .. (294.71,108.76) .. controls (294.71,116.76) and (288.57,117.5) .. (284.71,123.76) ;
%Curve Lines [id:da5031823879220163] 
\draw [color={rgb, 255:red, 74; green, 144; blue, 226 }  ,draw opacity=1 ]   (292.71,130.62) .. controls (319.71,107.9) and (323.03,113.4) .. (322.71,139.9) .. controls (322.4,166.4) and (316.09,168.9) .. (308.4,182.4) ;
%Curve Lines [id:da9583590849311909] 
\draw [color={rgb, 255:red, 74; green, 144; blue, 226 }  ,draw opacity=1 ]   (249.14,195.9) .. controls (264.14,197.19) and (265.71,213.9) .. (278.71,204.9) .. controls (291.71,195.9) and (294.4,200.4) .. (308.4,182.4) ;
%Curve Lines [id:da14455647634072488] 
\draw    (229.71,109.33) .. controls (242.14,103.9) and (230.14,111.9) .. (241.14,110.9) ;
%Shape: Circle [id:dp029692140830384073] 
\draw  [color={rgb, 255:red, 0; green, 0; blue, 0 }  ,draw opacity=1 ][fill={rgb, 255:red, 0; green, 0; blue, 0 }  ,fill opacity=1 ][line width=0.75]  (226.86,108.6) .. controls (226.86,107.1) and (228.07,105.89) .. (229.57,105.89) .. controls (231.06,105.89) and (232.27,107.1) .. (232.27,108.6) .. controls (232.27,110.1) and (231.06,111.31) .. (229.57,111.31) .. controls (228.07,111.31) and (226.86,110.1) .. (226.86,108.6) -- cycle ;
%Curve Lines [id:da17603251633378747] 
\draw    (240.14,110.9) .. controls (251.14,110.9) and (231.14,120.9) .. (223.14,114.9) .. controls (215.14,108.9) and (214.14,116.9) .. (227.14,121.9) ;
%Curve Lines [id:da37352408472556997] 
\draw [color={rgb, 255:red, 208; green, 2; blue, 27 }  ,draw opacity=1 ]   (302,80) .. controls (343.4,74) and (388.4,117.2) .. (418.4,140.2) .. controls (448.4,163.2) and (394.4,241.2) .. (405.4,267.2) .. controls (416.4,293.2) and (301.4,338.2) .. (277.4,312.2) .. controls (253.4,286.2) and (210.4,280.4) .. (235.4,255.4) ;
%Curve Lines [id:da5276437058784719] 
\draw [color={rgb, 255:red, 74; green, 144; blue, 226 }  ,draw opacity=1 ]   (237,254) .. controls (246.4,248) and (262.4,250) .. (266.4,243) .. controls (270.4,236) and (260.4,236) .. (267.4,222) ;
%Shape: Circle [id:dp6183164069342888] 
\draw  [color={rgb, 255:red, 74; green, 144; blue, 226 }  ,draw opacity=1 ][fill={rgb, 255:red, 208; green, 2; blue, 27 }  ,fill opacity=1 ][line width=0.75]  (232.86,254.6) .. controls (232.86,253.1) and (234.07,251.89) .. (235.57,251.89) .. controls (237.06,251.89) and (238.27,253.1) .. (238.27,254.6) .. controls (238.27,256.1) and (237.06,257.31) .. (235.57,257.31) .. controls (234.07,257.31) and (232.86,256.1) .. (232.86,254.6) -- cycle ;
%Shape: Circle [id:dp6112773096976698] 
\draw  [color={rgb, 255:red, 74; green, 144; blue, 226 }  ,draw opacity=1 ][fill={rgb, 255:red, 74; green, 144; blue, 226 }  ,fill opacity=1 ][line width=0.75]  (232.86,192.6) .. controls (232.86,191.1) and (234.07,189.89) .. (235.57,189.89) .. controls (237.06,189.89) and (238.27,191.1) .. (238.27,192.6) .. controls (238.27,194.1) and (237.06,195.31) .. (235.57,195.31) .. controls (234.07,195.31) and (232.86,194.1) .. (232.86,192.6) -- cycle ;
%Shape: Circle [id:dp4493817229849214] 
\draw  [color={rgb, 255:red, 74; green, 144; blue, 226 }  ,draw opacity=1 ][fill={rgb, 255:red, 74; green, 144; blue, 226 }  ,fill opacity=1 ][line width=0.75]  (264.86,221.6) .. controls (264.86,220.1) and (266.07,218.89) .. (267.57,218.89) .. controls (269.06,218.89) and (270.27,220.1) .. (270.27,221.6) .. controls (270.27,223.1) and (269.06,224.31) .. (267.57,224.31) .. controls (266.07,224.31) and (264.86,223.1) .. (264.86,221.6) -- cycle ;
%Curve Lines [id:da9357977148403693] 
\draw [color={rgb, 255:red, 208; green, 2; blue, 27 }  ,draw opacity=1 ]   (302,81) .. controls (343.4,75) and (341.4,119.2) .. (371.4,142.2) .. controls (401.4,165.2) and (354.4,207.2) .. (365.4,233.2) .. controls (376.4,259.2) and (332.4,310.6) .. (319.4,289.6) .. controls (306.4,268.6) and (279.4,287.6) .. (288.4,264.6) ;
%Curve Lines [id:da5998391360305778] 
\draw [color={rgb, 255:red, 74; green, 144; blue, 226 }  ,draw opacity=1 ]   (292.4,262.6) .. controls (301.8,256.6) and (293.4,258.2) .. (297.4,251.2) ;
%Shape: Circle [id:dp38756147448517453] 
\draw  [color={rgb, 255:red, 74; green, 144; blue, 226 }  ,draw opacity=1 ][fill={rgb, 255:red, 74; green, 144; blue, 226 }  ,fill opacity=1 ][line width=0.75]  (296.86,250.6) .. controls (296.86,249.1) and (298.07,247.89) .. (299.57,247.89) .. controls (301.06,247.89) and (302.27,249.1) .. (302.27,250.6) .. controls (302.27,252.1) and (301.06,253.31) .. (299.57,253.31) .. controls (298.07,253.31) and (296.86,252.1) .. (296.86,250.6) -- cycle ;
%Shape: Circle [id:dp7761250736253853] 
\draw  [color={rgb, 255:red, 74; green, 144; blue, 226 }  ,draw opacity=1 ][fill={rgb, 255:red, 208; green, 2; blue, 27 }  ,fill opacity=1 ][line width=0.75]  (286.86,263.6) .. controls (286.86,262.1) and (288.07,260.89) .. (289.57,260.89) .. controls (291.06,260.89) and (292.27,262.1) .. (292.27,263.6) .. controls (292.27,265.1) and (291.06,266.31) .. (289.57,266.31) .. controls (288.07,266.31) and (286.86,265.1) .. (286.86,263.6) -- cycle ;
%Shape: Circle [id:dp16215820043552287] 
\draw  [color={rgb, 255:red, 74; green, 144; blue, 226 }  ,draw opacity=1 ][fill={rgb, 255:red, 74; green, 144; blue, 226 }  ,fill opacity=1 ][line width=0.75]  (288.86,233.6) .. controls (288.86,232.1) and (290.07,230.89) .. (291.57,230.89) .. controls (293.06,230.89) and (294.27,232.1) .. (294.27,233.6) .. controls (294.27,235.1) and (293.06,236.31) .. (291.57,236.31) .. controls (290.07,236.31) and (288.86,235.1) .. (288.86,233.6) -- cycle ;
%Curve Lines [id:da6987535947683101] 
\draw [color={rgb, 255:red, 74; green, 144; blue, 226 }  ,draw opacity=1 ]   (265.4,258.6) .. controls (267.4,250.6) and (285.45,254.95) .. (282.92,246.27) .. controls (280.4,237.6) and (287.4,239.7) .. (289.4,236.2) ;
%Shape: Circle [id:dp17726686750387366] 
\draw  [color={rgb, 255:red, 74; green, 144; blue, 226 }  ,draw opacity=1 ][fill={rgb, 255:red, 208; green, 2; blue, 27 }  ,fill opacity=1 ][line width=0.75]  (261.86,260.6) .. controls (261.86,259.1) and (263.07,257.89) .. (264.57,257.89) .. controls (266.06,257.89) and (267.27,259.1) .. (267.27,260.6) .. controls (267.27,262.1) and (266.06,263.31) .. (264.57,263.31) .. controls (263.07,263.31) and (261.86,262.1) .. (261.86,260.6) -- cycle ;
%Curve Lines [id:da8308395356977258] 
\draw [color={rgb, 255:red, 208; green, 2; blue, 27 }  ,draw opacity=1 ]   (301.4,80.6) .. controls (342.8,74.6) and (363.4,115.6) .. (393.4,138.6) .. controls (423.4,161.6) and (395.4,246.8) .. (381.4,267.8) .. controls (367.4,288.8) and (318.4,317.8) .. (305.4,303.8) .. controls (292.4,289.8) and (255.4,285.8) .. (264.4,262.8) ;

% Text Node
\draw (294,63) node [anchor=north west][inner sep=0.75pt]  [font=\normalsize,color={rgb, 255:red, 208; green, 2; blue, 27 }  ,opacity=1 ]  {$p$};
% Text Node
\draw (325.86,249) node [anchor=north west][inner sep=0.75pt]  [font=\normalsize,color={rgb, 255:red, 0; green, 0; blue, 0 }  ,opacity=1 ]  {$q$};
% Text Node
\draw (258,69) node [anchor=north west][inner sep=0.75pt]  [font=\normalsize,color={rgb, 255:red, 74; green, 144; blue, 226 }  ,opacity=1 ]  {$\ell$};
% Text Node
\draw (211.86,187) node [anchor=north west][inner sep=0.75pt]  [font=\normalsize,color={rgb, 255:red, 74; green, 144; blue, 226 }  ,opacity=1 ]  {$w_{2}$};
% Text Node
\draw (222,87) node [anchor=north west][inner sep=0.75pt]  [font=\normalsize,color={rgb, 255:red, 74; green, 144; blue, 226 }  ,opacity=1 ]  {$w_{0}$};
% Text Node
\draw (215,61.93) node [anchor=north west][inner sep=0.75pt]  [color={rgb, 255:red, 155; green, 155; blue, 155 }  ,opacity=1 ]  {$U_{0}$};
% Text Node
\draw (195,167) node [anchor=north west][inner sep=0.75pt]  [color={rgb, 255:red, 155; green, 155; blue, 155 }  ,opacity=1 ]  {$U_{2}$};
% Text Node
\draw (211.86,122) node [anchor=north west][inner sep=0.75pt]  [font=\normalsize,color={rgb, 255:red, 0; green, 0; blue, 0 }  ,opacity=1 ]  {$\sfx_{j}$};
% Text Node
\draw (269.86,129) node [anchor=north west][inner sep=0.75pt]  [font=\normalsize,color={rgb, 255:red, 0; green, 0; blue, 0 }  ,opacity=1 ]  {$w_{1}$};
% Text Node
\draw (250,135.93) node [anchor=north west][inner sep=0.75pt]  [color={rgb, 255:red, 155; green, 155; blue, 155 }  ,opacity=1 ]  {$U_{1}$};
% Text Node
\draw (193,205) node [anchor=north west][inner sep=0.75pt]  [font=\normalsize,color={rgb, 255:red, 0; green, 0; blue, 0 }  ,opacity=1 ]  {$\sfy_{j} =\sfy_{j+1}$};
% Text Node
\draw (275,156) node [anchor=north west][inner sep=0.75pt]  [font=\normalsize,color={rgb, 255:red, 0; green, 0; blue, 0 }  ,opacity=1 ]  {$\sfb_{2,j}$};
% Text Node
\draw (289,87) node [anchor=north west][inner sep=0.75pt]  [font=\normalsize,color={rgb, 255:red, 0; green, 0; blue, 0 }  ,opacity=1 ]  {$\sfb_{1,j+1}$};
% Text Node
\draw (255.71,105) node [anchor=north west][inner sep=0.75pt]  [font=\normalsize,color={rgb, 255:red, 0; green, 0; blue, 0 }  ,opacity=1 ]  {$\sfb_{1,j}$};
% Text Node
\draw (316,170) node [anchor=north west][inner sep=0.75pt]  [font=\normalsize,color={rgb, 255:red, 0; green, 0; blue, 0 }  ,opacity=1 ]  {$\sfb_{2,j+1}$};
% Text Node
\draw (200,95) node [anchor=north west][inner sep=0.75pt]  [font=\normalsize,color={rgb, 255:red, 0; green, 0; blue, 0 }  ,opacity=1 ]  {$\sfx_{j+1}$};
% Text Node
\draw (270.86,208) node [anchor=north west][inner sep=0.75pt]  [font=\normalsize,color={rgb, 255:red, 74; green, 144; blue, 226 }  ,opacity=1 ]  {$s_{\sfn-1}$};
% Text Node
\draw (300,237) node [anchor=north west][inner sep=0.75pt]  [font=\normalsize,color={rgb, 255:red, 74; green, 144; blue, 226 }  ,opacity=1 ]  {$s_{\sfn+1}$};
% Text Node
\draw (294,220) node [anchor=north west][inner sep=0.75pt]  [font=\normalsize,color={rgb, 255:red, 74; green, 144; blue, 226 }  ,opacity=1 ]  {$s_{\sfn}$};

\end{tikzpicture}
\subcaption{A view into the eclipse}
\end{subfigure}
\caption{The case $(m,\sft)=(3,2)$ with $\ell$ right-inaccessible}
\label{Fig:UV}
\end{figure}

\begin{lem}[Dragging by inside sequences of a squashed fence]\label{Lem:insideExt2}
   Assume the conditions in \refconst{firstStepConst}.
Suppose that $\cL=\cL(\Gamma, \{z_i\}_{i=1}^m)$ is a circle lamination, constructed by \refconst{firstStepConst} or by \refconst{extConst}.
Assume that $\ell$ is right inaccessible,  $z_0<z_1<z_2<\cdots<z_m$ in $\Lambda(\Gamma)$ and $z_m$ is not fixed by a non-trivial element of $\Gamma$.
If  $\{w_i\}_{i=0}^t, t\ge 1$ is a finite sequence in $\Lambda(\Gamma)$, satisfying the following conditions:
\begin{itemize}
    \item $z_m=w_0<w_1<\cdots< w_t<z_0$ in $\cldi[\Lambda(\Gamma)]{z_m}{z_0}$;
    \item all $\opi[\Lambda(\Gamma)]{w_{i-1}}{w_{i}}$ are not isolated in $\cL$;
    \item there exists an inside sequence $\{\lambda_j\}_{j\in \NN}$, that is $\opi[\Lambda(\Gamma)]{w_0}{w_1}$-side in $\cL$.
\end{itemize}
Then, $w_i \in \bigcap_{n\in \ZZ} \Int \cE_n\setminus q=\cE_\infty\setminus (\ell\cup \Fix(g))$ for all $i \in \{1,2,\dots,t\}$.

Moreover, the same statement holds for the case where
$\ell$ is left inaccessible and
\[
z_m<z_{m-1}<\cdots<z_0 \text{ in }\Lambda(\Gamma),
\]
and $\{w_i\}_{i=0}^t$ is a decreasing finite sequence in $\lopi[\Lambda(\Gamma)]{z_0}{z_m}$ with $w_0=z_m$.
In particular, $w_i\in \cE_\infty\setminus (\ell\cup \Fix(g))$ for all $i \in \{1,2,\dots,t\}$.
\end{lem}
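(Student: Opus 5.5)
This is the squashed-fence analogue of \reflem{directionalDragging}, and I will follow that proof with the eclipse $\{\cE_n\}$ along $\ell$ in place of the eclipse on $\cD(p)$. I will use \refprop{squashedFenceSystem}, \refprop{squashedFenceDragging} and \reflem{canonicalApprox} in place of \reflem{separatingFence} and \reflem{dragging}. By symmetry I treat only the right-inaccessible case.

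\textbf{First step.} For each $i\in\{1,\dots,t\}$, \refprop{squashedFenceDragging} gives an $\opi[\Lambda(\Gamma)]{w_{i-1}}{w_i}$-side regular sequence $\{\lambda_{i,j}\}_j$ with nested complementary intervals; for $i=1$ I take the given inside sequence. Since $\{\lambda_{1,j}\}$ is inside, \refprop{squashedFenceDragging} yields $\{w_0,w_1\}\subset\cE_\infty$. Now $w_1\neq q$, because $q\notin\Lambda(\Gamma)$, and $w_1\neq p$, because $p\in\cH$. If $w_1\in\ell$, then $\{w_0,w_1\}\subset\ell\cap\Lambda(\Gamma)$ and $[w_0,w_1]\subset Z^-$ crosses $\cH$. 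In that case I would argue as in \reflem{outsideExt2} and \refcor{limitAuxiliaryLeaf}: the leaf $\{z_m,w_1\}$ would then satisfy the bouncing condition, and, because $\opi{w_0}{w_1}$ is approximated from the side of the (right inaccessible) $\ell$, this would give a leaf linked with some $\sfd\sfa_n$ or with $\{z_0,z_1\}$, contradicting \refprop{auxiliaryLeaf}. Having excluded $w_1\in\ell$, the decomposition of $\cE_\infty$ in \refprop{eclipseLProperty} gives $w_1\in\bigcap_n\Int\cE_n\setminus\{q\}$.

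\textbf{Induction.} Suppose $w_{i-1}\in\bigcap_n\Int\cE_n$. The leaves $\lambda_{i,j}$ have an endpoint converging to $w_{i-1}$, which lies in every $\Int\cE_n$, so an outside regular sequence is impossible: such a sequence avoids $\Int\cE_{\fN}$, and so does its limit. Hence $\{\lambda_{i,j}\}$ is inside, and \refprop{squashedFenceDragging} gives $w_i\in\cE_\infty$. It remains to show $w_i\notin\ell$; $w_i\neq q$ and $w_i\neq p$ hold as before.

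\textbf{Main obstacle.} Let $\sfm$ be the first index with $w_\sfm\in\ell$. I would rerun the barrier argument from the proof of \reflem{directionalDragging}. Take arcs $\sfb_{i,j}$ realizing the leaves $\lambda_{i,j}$: translates of the $\gamma_k$, or, when $m=1$, translates of $\sfa_n$. By \refprop{squashedFenceSystem}, each $\sfb_{i,j}$ sits in a single slab $\closure{\cE_{s}\setminus\cE_{s+1}}$ with $s=s(i,j)\to\infty$. After passing to subsequences the slabs are far apart, and these arcs are pairwise disjoint: \reflem{auxArcUnlinked} controls the case $m=1$, and right inaccessibility controls the general case. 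Choose flat neighbourhoods $U_0$ of $w_0$ and $U_\sfm$ of $w_\sfm$ relative to $\ell$. Concatenating the $\sfb_{i,j}$ with subarcs of $\Lambda(\Gamma)$ gives alternatively adapted arcs $\cS_j$ with endpoints $\sfa_j,\sfb_j$ on $\ell$, and these form Jordan curves $\hat\cS_j=[\sfa_j,\sfb_j]\cup\cS_j$. The $\hat\cS_j$ should be nested and should lie in the inaccessible side of $\ell$. Then take the connector $\sigma_\sfn\subset\partial\cE_\sfn$ with $\sfn$ so large that $\sigma_\sfn$ misses all $\sfb_{i,j}$ for the fixed $j$. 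Its $(+)$-segment starting at $p$ must enter $\cS_j$ through a piece of $\Lambda(\Gamma)$ near $U_0$ or $U_\sfm$. For larger $j'$, that crossing is linked with the arc $\sfb_{1,j'}$ or $\sfb_{\sfm,j'}$, which contradicts $\cE_{\sfn+1}\cap\sigma_\sfn=\{p\}$. The delicate point is that $\ell$ is in $Z^-$, so the roles of $\pm$ are swapped relative to \reflem{directionalDragging}. The argument needs $\sigma_\sfn^+$ from $p$ to land on the right side of $\cS_j$, and this is exactly where the hypothesis that $\ell$ is right inaccessible (equivalently, has no right branches at points of $\ell$) is used.
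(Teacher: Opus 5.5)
Your overall plan matches the paper's. You take the minimal index $\sfm$ with $w_\sfm\in\ell$, build alternatively adapted arcs $\cS_j$ from slab-localized arcs $\sfb_{i,j}$, and get a contradiction from the $(+)$-segment of a connector in a deep $\partial\cE_n$. Two steps, however, fail as written.

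\textbf{The separate exclusion of $w_1\in\ell$.} Your first-step argument here does not work.
\begin{itemize}
\item The arc $[w_0,w_1]$ lies on the end ray of $\ell$ from $z_m$ toward $q$. That ray may leave $\closure{\cH}$ and return to $\Lambda(\Gamma)$, so $[w_0,w_1]$ need not cross $\cH$.
\item Even if it did, a bouncing extension is not contradictory in this lemma.
\item The approximating leaves lie in $\opi[\Lambda(\Gamma)]{w_0}{w_1}\subset\opi[\Lambda(\Gamma)]{z_m}{z_0}$. By the given circular order, this interval lies in a single complementary interval of both $\sfd\sfa_n=\{z_1,x_n\}$ and $\{z_0,z_1\}$. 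So no linking arises, and \refprop{auxiliaryLeaf} gives nothing.
\end{itemize}
Ruling out $w_1\in\ell$ is exactly the case $\sfm=1$ of the barrier argument. You should drop this step and allow $\sfm=1$ there, as the paper does. Minimality of $\sfm$ is all that is needed to make $\{\lambda_{i,j}\}_j$ inside for every $i\le\sfm$.

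\textbf{The unused hypothesis that $z_m$ is not fixed by a nontrivial element of $\Gamma$.} Your barrier argument needs it. \refprop{squashedFenceDragging} only gives $I_{i,j}\subsetneq I_{i,j+1}$, so infinitely many $\lambda_{i,j}$ might share an endpoint. The paper rules this out first:
\begin{itemize}
\item If infinitely many $\lambda_{1,j}$ share an endpoint, \refprop{oneEndFix} makes that endpoint $\Gamma$-fixed.
\item Since $w_0=z_m$ is not fixed, the shared endpoint is $w_1$.
\item \refprop{oneEndFix} then makes $\{w_0,w_1\}$ isolated, contradicting $\lambda_{1,j}\to\{w_0,w_1\}$.
\item Inductively, no $w_i$ is fixed, and one may arrange $\closure{I_{i,j}}\subset I_{i,j+1}$.
\end{itemize}
Without this, the arcs $\sfb_{i,j}$ cannot be made pairwise disjoint. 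Neither \reflem{auxArcUnlinked} nor the inaccessibility argument applies to arcs sharing an endpoint. The points $\sfx_j,\sfy_j\in\ell$ that close up $\cS_j$ are then also undefined.

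\textbf{Steps asserted but not proved.} Beyond these two issues, several steps carry most of the work in the paper and are only stated in your sketch:
\begin{itemize}
\item the nesting of the curves $\hat\cS_j$;
\item $\Int\cS_j\cap\closure{\ell}=\varnothing$, which needs a careful choice of $\sfn$ and of neighbourhoods $U_i\subset\Int\cE_{\sfn+1}$ of the intermediate $w_i$;
\item the localization of the crossing point near $U_0$ or $U_\sfm$.
\end{itemize}
Finally, the contradiction must be phrased with the $(+)$-subarc $[p,\sfe]$. The statement $\sigma_\sfn\cap\cE_{\sfn+1}=\{p\}$ is false, since $s_\sfn$ lies on $\partial\cE_{\sfn+1}$.
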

\begin{proof}
By symmetry, it is enough to consider the case where \(\ell\) is right inaccessible.
Since each interval \(\opi[\Lambda(\Gamma)]{w_{i-1}}{w_i}\) is not isolated in \(\cL\), Proposition~\ref{Prop:squashedFenceDragging} yields, for each \(i\), an \(\opi[\Lambda(\Gamma)]{w_{i-1}}{w_i}\)-side regular sequence \(\{\lambda_{i,j}\}_{j\in\NN}\) of leaves of \(\cL\) such that \(\lambda_{1,j}=\lambda_j\) and \(I_{i,j}\subsetneq I_{i,j+1}\subset \opi[\Lambda(\Gamma)]{w_{i-1}}{w_i}\), where \(I_{i,j}\) is a component of \(\Lambda(\Gamma)\setminus \lambda_{i,j}\).

We claim that, after passing to subsequences, one may arrange that \(\closure{I_{i,j}}\subset I_{i,j+1}\) for every \(i\), and that no \(w_i\) is fixed by a nontrivial element of \(\Gamma\).

Suppose that infinitely many \(\lambda_{1,j}=\lambda_j\) share an endpoint \(\varepsilon\).
Then \(\varepsilon\in\{w_0,w_1\}\).
By \refprop{oneEndFix}, \(\varepsilon\) is fixed by a nontrivial element \(h\in\Gamma\).
Since no nontrivial element fixes \(w_0=z_m\), it follows that \(\varepsilon=w_1\) and \(h(w_0)\neq w_0\).
Applying \refprop{oneEndFix} again, we obtain that \(\{w_0,w_1\}\) is isolated in \(\cL\), contradicting \(\lambda_j\to\{w_0,w_1\}\).

Thus, after passing to a subsequence, we may assume that \(\closure{I_{1,j}}\subset I_{1,j+1}\), and \refprop{oneEndFix} implies that \(w_1\) is not fixed by any nontrivial element of \(\Gamma\).
Repeating the argument inductively in \(i\) and passing to further subsequences, we obtain, for each \(i\), that \(\closure{I_{i,j}}\subset I_{i,j+1}\) and that \(w_i\) is not fixed by any nontrivial element of \(\Gamma\).
Write \(I_{i,j}=\opi[\Lambda(\Gamma)]{\alpha_{i,j}}{\beta_{i,j}}\).

Since \(\{\lambda_{1,j}\}_{j\in\NN}\) is inside, by \refprop{squashedFenceDragging}, \(\{w_0,w_1\}\subset \cE_\infty\).

\begin{claim}\label{Clm:transferInside}
For each \(i\) with \(1<i\le t\), if \(w_{i-1}\in \cE_\infty\setminus \closure{\ell}\), then \(w_i\in \cE_\infty\setminus \Fix(g)\) and \(\{\lambda_{i,j}\}_{j\in\NN}\) is inside.
\end{claim}

\begin{proof}
This follows from \refprop{squashedFenceDragging}.
\end{proof}

We now show that \(w_i\notin \ell\) for all \(i\in\{1,\dots,t\}\).
Suppose for contradiction that \(\sft\) is the smallest index such that \(w_\sft\in \ell\).
Applying \refclm{transferInside} successively, we obtain, for every \(i\in\{1,\dots,\sft\}\), that \(w_i\in \cE_\infty\) and \(\{\lambda_{i,j}\}_{j\in\NN}\) is inside.
In particular, \(w_\sft<w_0=z_m\) in \(\ell\).

Choose \(\sfn\in\NN\) such that \(\sfn>\cN\), \(s_{\sfn-1}<w_\sft<z_m\) in \(\ell\), and the end ray of \(\ell\) from \(s_{\sfn-1}\) to \(q\) is contained in \(\cD^R(\Lambda(\Gamma))\).
Set \(\gamma_1=\sfa_\sfn\) and \(\fD=\bigcap_{i=1}^m \cH^R(\gamma_i)\), and define \(\cC=\sigma_{\sfn-1}\cup [s_{\sfn-1},s_{\sfn+1}]\cup \sigma_{\sfn+1}\).
Equip \(\partial \fD\) with the circular order compatible with the linear order on \(\opi[\Lambda(\Gamma)]{z_m}{z_0}\).

Take flat neighborhoods \(U_0\) of \(w_0\) and \(U_\sft\) of \(w_\sft\) relative to \(\ell\), that satisfy the following:
\begin{itemize}
    \item \(\closure{U_i}\cap \cC =\varnothing\) and  \(\closure{U_i}\cap \ell= \closure{U_i}\cap \partial \cE_{\sfn+1}\) for all \(i\in \{0,\sft\}\),
    \item \(\closure{U_0}\cap \closure{U_\sft}=\varnothing\) and \(\closure{U_0}\cap \cldi[\partial \fD]{w_1}{z_{m-1}}=\varnothing\);
    \item \(\closure{U_\sft}\cap \cldi[\partial \fD]{z_0}{w_{\sft-1}}=\varnothing\).
\end{itemize}
See \reffig{UV} (left).

Since \(w_i\in \Int \cE_{\sfn+1}\) for all \(i\notin \{0,\sft\}\), we can take flat neighborhoods \(U_i\) of \(w_i\), \(i\notin \{0,\sft\}\), relative to \(\opi[\Lambda(\Gamma)]{w_0}{w_\sft}\) satisfying the following:
\begin{itemize}
    \item \(\closure{U_i}\subset \Int \cE_{\sfn+1}\) for all \(i\notin \{0,\sft\}\);
    \item \(\closure{U_i}\cap \partial \fD= \closure{U_i}\cap \opi[\Lambda(\Gamma)]{w_0}{w_\sft}\) for all \(i\notin \{0,\sft\}\);
    \item \(\closure{U_i}\cap \closure{U_j}=\varnothing\) if \(i\neq j\), possibly \(i,j\in \{0,\sft\}\).
\end{itemize}
See \reffig{UV} (right).

For each \(i\in \{0,1,\cdots, \sft\}\), there is a unique arc \(\cldi[\Lambda(\Gamma)]{u_{2i}}{u_{2i+1}}\) that crosses \(U_i\) and contains \(w_i\).
Note that
\[
z_{m-1}<u_0<z_m=w_0<u_1<w_1<\cdots <w_{\sft-1}<u_{2\sft}<w_\sft<u_{2\sft+1}< x_0 \text{ in } \Lambda(\Gamma).
\]

After taking a subsequence of \(\{\lambda_{i,j}\}_{j\in\NN}\), we may assume that \(\cldi[\Lambda(\Gamma)]{u_{2i-1}}{u_{2i}}\subset I_{i,j}\) for all \(i \neq \sft\).
Note that \(\cldi[\partial \fD]{\beta_{i,j}}{\alpha_{i+1,j}} \subset \cldi[\Lambda(\Gamma)]{u_{2i}}{u_{2i+1}} \subset \closure{U_{i}}\subset \Int \cE_{\sfn+1} \setminus (\closure{U_0}\cup \closure{U_\sft})\) for all \(i \nin \{0,\sft\}\).
\begin{claim}\label{Clm:adaptedArcs}
For each \(i\in\{1,\dots,\sft\}\), there exists a sequence \(\{\sfb_{i,j}\}_{j\in\NN}\) of adapted arcs crossing \(\cH\) such that:
\begin{itemize}
    \item \(\sfb_{i,j}\) is mixed with the synapse in \(Z^+\) if \(m=1\), and pure in \(Z^-\) if \(m>1\);
    \item \(\sfb_{i,j}\cap \sfb_{i',j'}=\varnothing\) whenever \((i,j)\neq (i',j')\);
    \item for each \(i\), \[\cldi[\Lambda(\Gamma)]{u_{2i-1}}{u_{2i}}\subset K_{i,j}\subset \closure{K_{i,j}}\subset K_{i,j+1}\subset \opi[\Lambda(\Gamma)]{w_{i-1}}{w_i},\]
    where \(K_{i,j}\) is a component of \(\Lambda(\Gamma)\setminus \sfd\sfb_{i,j}\);
    \item for each \(i\), \(\bigcup_{j\in\NN} K_{i,j}=\opi[\Lambda(\Gamma)]{w_{i-1}}{w_i}\);
    \item \(\sfb_{i,j}\subset \closure{\cE_{\sfn(i,j)}\setminus \cE_{\sfn(i,j)+1}}\) for some \(\sfn(i,j)\in\ZZ\) with \(\sfn(i,j)>\sfn\);
    \item for each \(i\), the sequence \(\{\sfn(i,j)\}_{j\in\NN}\) is strictly increasing.
\end{itemize}
\end{claim}

\begin{proof}[Proof of the claim]
Fix \(i\in\{1,\dots,\sft\}\).
First consider the case \(m>1\).
Since \(\{\lambda_{i,j}\}_{j\in\NN}\) is regular and \(m>1\), we have \(\lambda_{i,j}=h_{i,j}(\ell_{\sfk(i,j)})\) for some \(h_{i,j}\in\Gamma\) and \(\sfk(i,j)\in\{2,\dots,m\}\).
By \refprop{squashedFenceSystem}, \(h_{i,j}(\gamma_{\sfk(i,j)})\subset h_{i,j}(\cE_\infty)\subset \closure{\cE_{\sfn(i,j)}\setminus \cE_{\sfn(i,j)+1}}\) for some \(\sfn(i,j)\in\ZZ\).
Note that \(\closure{\cE_n\setminus \cE_{n+1}}\cap \Int \cE_{n'}=\varnothing\) for all \(n'>n\).
Hence, since \(\{\lambda_{i,j}\}_{j\in\NN}\) is inside, we have \(\sfn(i,j)\to\infty\) as \(j\to\infty\).
After passing to a subsequence in \(j\), we may assume that \(\sfn(i,j)>\sfn\) for all \(j\) and that \(\{\sfn(i,j)\}_{j\in\NN}\) is strictly increasing.
Set \(\sfb_{i,j}=h_{i,j}(\gamma_{\sfk(i,j)})\).

To ensure that the arcs \(\sfb_{i,j}\) are pairwise disjoint, we claim that, for each \(j>1\), the arc \(\sfb_{i,j}\) cannot intersect both \(\sfb_{i,j+1}\) and \(\sfb_{i,j-1}\).
Indeed, otherwise \([\alpha_{i,j+1},\alpha_{i,j-1}]\) in \(Z^-\) would be linked with \(\sfb_{i,j}\), since \(\alpha_{i,j+1}\) and \(\alpha_{i,j-1}\) lie in distinct components of \(\Lambda(\Gamma)\setminus \lambda_{i,j}\).
This is impossible because \(\sfb_{i,j}\) is right inaccessible.
Therefore, after passing to a further subsequence in \(j\), we may also assume that \(\{\sfb_{i,j}\}_{j\in\NN}\) is pairwise disjoint.
By the choice of \(\{\lambda_{i,j}\}_{j\in\NN}\), the arcs \(\sfb_{i,j}\) and the integers \(\sfn(i,j)\) satisfy the required properties.

It remains to consider the case \(m=1\).
Since \(\{\lambda_{i,j}\}_{j\in\NN}\) is regular, exactly one of the following holds:
\begin{enumerate}
    \item every \(\lambda_{i,j}\) admits an outside approximation;
    \item every \(\lambda_{i,j}\) admits an inside approximation.
\end{enumerate}
Hence, for each \((i,j)\), we choose an outside approximation sequence in case \((1)\), and an inside approximation sequence in case \((2)\), denoted by \(\{h_{i,j,k}\}_{k\in\NN}\), to \(\lambda_{i,j}\).
By \reflem{canonicalApprox}, \(h_{i,j,k}(\cE_{m(i,j,k)})\subset \closure{\cE_{n(i,j,k)}\setminus \cE_{n(i,j,k)+1}}\)
for some \(n(i,j,k), m(i,j,k)\in\ZZ\).

On the other hand, since \(\closure{I_{i,j}}\subset I_{i,j+1}\), we may choose sequences \(\{U_{i,j}\}_{j\in\NN}\) and \(\{V_{i,j}\}_{j\in\NN}\) of pairwise disjoint open intervals such that \(\closure{U_{i,1}}\cap \closure{V_{i,1}}=\varnothing\), \(\alpha_{i,j}\in U_{i,j}\), and \(\beta_{i,j}\in V_{i,j}\) for all \(j\).

Since \(\{h_{i,j,k}\}_{k\in\NN}\) is a conical limit sequence for \(z_1\) with respect to \(\lambda_{i,j}\), and \(\cldi[\Lambda(\Gamma)]{z_0}{x_\cN}\) is a compact subset of \(\Lambda(\Gamma)\setminus\{z_1\}\) by \refcor{limitAuxiliaryLeaf}, for each \((i,j)\) there exists \(\sfk(i,j)>\cN\) such that, for all \(k\ge \sfk(i,j)\), one of the following holds:
\begin{itemize}
    \item \(h_{i,j,k}(z_1)\in U_{i,j}\) and \(h_{i,j,k}(\cldi[\Lambda(\Gamma)]{z_0}{x_\cN})\subset V_{i,j}\);
    \item \(h_{i,j,k}(z_1)\in V_{i,j}\) and \(h_{i,j,k}(\cldi[\Lambda(\Gamma)]{z_0}{x_\cN})\subset U_{i,j}\).
\end{itemize}

In case \((1)\), for each \((i,j)\), by \reflem{canonicalApprox}, there exists \(\fN(i,j)\in \ZZ\) such that \(|\fN(i,j)-n(i,j,k)|\le 1\) for all sufficiently large \(k\).
Moreover, since \(\{\lambda_{i,j}\}_{j\in\NN}\) is inside, \(\fN(i,j)\to\infty\) as \(j\to\infty\).
Hence, for each \((i,j)\), we choose \(\kappa(i,j)\in \NN\) so that \(|\fN(i,j)-n(i,j,\kappa(i,j))|\le 1\) and \(\kappa(i,j)>\sfk(i,j)\).
Then we set \[\sfb_{i,j}=h_{i,j,\kappa(i,j)}(\sfa_{m(i,j,\kappa(i,j))}) \quad \text{and}\quad \sfn(i,j)=n(i,j,\kappa(i,j)).\]
Note that \(\sfb_{i,j}\subset \closure{\cE_{\sfn(i,j)}\setminus \cE_{\sfn(i,j)+1}}\) and \(\sfn(i,j)\to\infty\) as \(j\to\infty\).
Hence, after taking subsequences in \(j\), we can assume that \(\{\sfn(i,j)\}_{j\in \NN}\) is strictly increasing with \(\sfn(i,j)>\sfn\).

In case \((2)\), for each \((i,j)\), \(n(i,j,k)\to\infty\) as \(k\to\infty\).
Hence, for each \(i\), we can choose a subsequence \(\{(a(i,j),b(i,j))\}_{j\in\NN}\) of \(\NN\times \NN\) such that \(b(i,j)>\sfk(i,j)\) for all \(j\), and \(\{n(i,a(i,j),b(i,j))\}_{j\in\NN}\) is strictly increasing with \(n(i,a(i,j),b(i,j))>\sfn\).
Hence, in this case, we let \[\sfb_{i,j}=h_{i,a(i,j),b(i,j)}(\sfa_{m(i,a(i,j),b(i,j))})\quad \text{and}\quad \sfn(i,j)=n(i,a(i,j),b(i,j)).\]
Again we have that \(\sfb_{i,j}\subset \closure{\cE_{\sfn(i,j)}\setminus \cE_{\sfn(i,j)+1}}\).

To ensure that \(\sfb_{i,j}\) are pairwise disjoint, as above, we claim that \(\sfb_{i,j}\) cannot meet both \(\sfb_{i,j-1}\) and \(\sfb_{i,j+1}\) simultaneously.
Indeed, otherwise, by \reflem{auxArcUnlinked}, \(\sfb_{i,j}\) meets \(\sfb_{i,j-1}\) and \(\sfb_{i,j+1}\) along their \((-)\)-segments.
By the choice of \(U_{i,j}\) and \(V_{i,j}\), \(\sfd\sfb_{i,j-1}\) and \(\sfd\sfb_{i,j+1}\) lie in distinct components of \(\Lambda(\Gamma)\setminus \sfd\sfb_{i,j}\). Since the synapse of \(\sfb_{i,j}\) lies in \(Z^+\), the arc joining the endpoint of \(\sfb_{i,j-1}\) in its \((-)\)-segment to the endpoint of \(\sfb_{i,j+1}\) in its \((-)\)-segment is an arc in \(Z^-\) linked with \(\sfb_{i,j}^-\). This contradicts the right inaccessibility of \(\sfb_{i,j}^-\).
Therefore, after passing to a subsequence in \(j\), we may assume that \(\{\sfb_{i,j}\}_{j\in\NN}\) is pairwise disjoint and satisfies the third item.
\end{proof}

Let \(\{\sfb_{i,j}\}_{i,j}\) and \(\{\sfn(i,j)\}_{i,j}\) be given by \refclm{adaptedArcs}.
Since $\sfb_{i,j}$ are pairwise disjoint and $\closure{K_{i,j}}\subset K_{i,j+1}\subset \opi[\Lambda(\Gamma)]{w_{i-1}}{w_{i}}$, we can see that $p\nin \sfb_{i,j}$.
Since \(\sfb_{i,j}\subset \closure{\cE_{\sfn(i,j)}\setminus \cE_{\sfn(i,j)+1}}\) with \(\sfn(i,j)>\sfn\) and \(\closure{\cE_{\sfn(i,j)}\setminus \cE_{\sfn(i,j)+1}}\setminus\{p\}\subset \Int\cE_\sfn\), we have \[\sfd\sfb_{i,j}\subset \sfb_{i,j}\subset \Int\cE_\sfn.\]

Write \(K_{i,j}=\opi[\Lambda(\Gamma)]{\mu_{i,j}}{\nu_{i,j}}\), and let \(U_0^R\) and \(U_\sft^R\) be the right open neighborhoods of \(w_0\) and \(w_\sft\) associated with \(U_0\) and \(U_\sft\), respectively. Since \[\mu_{1,j}\in \opi[\Lambda(\Gamma)]{w_0}{u_1}\subset U_0, \quad \nu_{\sft,j}\in \opi[\Lambda(\Gamma)]{u_{2\sft}}{w_\sft}\subset U_\sft, \quad  U_i\cap \Int\cE_\sfn=U_i^R \text{ for } i\in\{0,\sft\},\] it follows that \(\mu_{1,j}\in U_0^R\) and \(\nu_{\sft,j}\in U_\sft^R\).
By assumption, \(w_i\in\ell\) and \(U_i^R\cap \ell=\varnothing\) for \(i\in\{0,\sft\}\), and hence, for each \(j\in\NN\) there exist points \(\sfx_j\in \ropi[\partial\fD]{w_0}{\mu_{1,j}}\) and \(\sfy_j\in \lopi[\partial\fD]{\nu_{\sft,j}}{w_\sft}\) such that \(\cldi[\partial\fD]{\sfx_j}{\mu_{1,j}}\cap \ell=\{\sfx_j\}\) and \(\cldi[\partial\fD]{\nu_{\sft,j}}{\sfy_j}\cap \ell=\{\sfy_j\}\). Thus \(\sfx_j\) is the first point of \(\ell\) encountered when moving from \(\mu_{1,j}\) toward \(w_0\) along \(\partial\fD\), and \(\sfy_j\) is the first point of \(\ell\) encountered when moving from \(\nu_{\sft,j}\) toward \(w_\sft\). By construction, \(\{\sfx_j\}_{j\in\NN}\) is decreasing and \(\{\sfy_j\}_{j\in\NN}\) is increasing in \(\cldi[\partial\fD]{w_0}{w_\sft}\), and \(s_\sfn<s_{\sfn-1}<\sfy_j<\sfx_j\le z_m\) on \(\ell\).
See \reffig{UV} (right).

For each \(j\in\NN\), define an arc \(\cS_j\) by
\[
\cS_j=
\begin{dcases}
\ \cldi[\partial\fD]{\sfx_j}{\mu_{1,j}}\cup \sfb_{1,j}\cup \cldi[\partial\fD]{\nu_{1,j}}{\sfy_j} & \text{if }\sft=1,\\
\ \cldi[\partial\fD]{\sfx_j}{\mu_{1,j}}
\cup \left(\bigcup_{i=1}^{\sft-1}\bigl(\sfb_{i,j}\cup \cldi[\partial\fD]{\nu_{i,j}}{\mu_{i+1,j}}\bigr)\right)
\cup \bigl(\sfb_{\sft,j}\cup \cldi[\partial\fD]{\nu_{\sft,j}}{\sfy_j}\bigr) & \text{if }\sft>1.
\end{dcases}
\]
Then \(\cS_j\) is alternatively adapted to \(Z^\pm\) with respect to \(\{\mu_{i,j},\nu_{i,j}:i=1,\dots,\sft\}\), and \(\cS_j\subset \closure{\cH}\).

\begin{claim}\label{Clm:altCrossing}
    $\cS_j$ crosses $\Int   \cE_\sfn$.
    In particular, $\Int \cS_j\cap \closure{\ell}=\varnothing$ and $\sfd \cS_j=\{\sfx_j,\sfy_j\}\subset (s_{\sfn-1}, z_m] \subset  \ell$.
\end{claim}
\begin{proof}
First, note that by the choice of $\sfx_j,\sfy_j$, we have that  $\sfd \cS_j=\{\sfx_j,\sfy_j\}\subset (s_{\sfn-1}, z_m] \subset  \ell$.
By the choice of \(U_i\), we have
\[
\Lambda(\Gamma)\cap \Int \cS_j\subset \left( U_0^R\cup \bigcup_{i=1}^{\sft-1}U_i\cup U_\sft^R \right)\subset \Int\cE_{\sfn+1},
\]
where the middle union is understood to be empty when \(\sft=1\).
On the other hand, \(\cS_j\setminus \Lambda(\Gamma)\subset \Int\cE_\sfn\). Therefore \(\Int \cS_j\subset \Int \cE_\sfn\) and so $\cS_j$ crosses $\Int \cE_\sfn$.

We then show that $\Int \cS_j\cap \closure{\ell}=\varnothing$.
Since $\sfd \ell=\{p,q\}$ is disjoint from $\sfb_{i,j}$ and $\Lambda(\Gamma)$, it suffices to prove $\Int \cS_j\cap \ell=\varnothing$.
Since \(\Int \cS_j\subset \Int \cE_\sfn\),  $\Int \cS_j$ could meet $\ell$ only along the end ray of $\ell$ from $s_\sfn$ to $q$.
By the choice of $\sfn$, the end ray of $\ell$ from $s_{\sfn}$ to $q$ lies in $\cH^R(\Lambda(\Gamma))$, whereas $\cS_j\subset \closure{\cH}$.
Thus $\Int \cS_j\cap \ell=\varnothing$, hence $\Int \cS_j\cap \closure{\ell}=\varnothing$.
\end{proof}

After \refclm{altCrossing},   $\hat\cS_j=[\sfx_j,\sfy_j]\cup \cS_j$ is a well-defined Jordan curve.
Note that $\hat \cS_j\cap \closure{\ell}=[\sfx_j,\sfy_j]$.
We orient $\hat\cS_j$ by the circular order compatible with the linear order on $\ell$, and orient $\cS_j$ by the induced linear order.

\begin{claim}\label{Clm:unlinking2}
For any $\sfj,\sfj'$, the pairs $\{\sfx_\sfj,\sfy_\sfj\}$ and $\{\sfx_{\sfj'},\sfy_{\sfj'}\}$ are unlinked in $[s_{\sfn-1},z_m]$; equivalently, either $[\sfx_\sfj,\sfy_\sfj]\subset [\sfx_{\sfj'},\sfy_{\sfj'}]$ or $[\sfx_{\sfj'},\sfy_{\sfj'}]\subset [\sfx_\sfj,\sfy_\sfj]$. In particular, either $\cD^R(\hat\cS_\sfj)\subset \cD^R(\hat\cS_{\sfj'})$ or $\cD^R(\hat\cS_{\sfj'})\subset \cD^R(\hat\cS_\sfj)$.
\end{claim}

\begin{proof}[Proof of the claim]
It is enough to prove that either $\cD^R(\hat\cS_\sfj)\subset \cD^R(\hat\cS_{\sfj'})$ or $\cD^R(\hat\cS_{\sfj'})\subset \cD^R(\hat\cS_\sfj)$. Suppose $\sfj'<\sfj$ and there exist points $\sfp,\sfq\in \Int\cS_{\sfj'}$ with $\sfp\in \cD^L(\hat\cS_\sfj)$ and $\sfq\in \cD^R(\hat\cS_\sfj)$. Since $[s_\sfn,z_m]\cap \Int \cS_{\sfj'}=\varnothing$ by \refclm{altCrossing}, there is a Jordan subarc $\sfc\subset \Int\cS_{\sfj'}$ with $\sfd\sfc=\{\sfp,\sfr\}$ for some $\sfr\in \Int\cS_\sfj\cap \Int\cS_{\sfj'}$ and $\Int\sfc\subset \cD^L(\hat\cS_\sfj)$. For any left neighborhood $\fU$ of $\sfr$ relative to $\cS_\sfj$, there exists a subarc $\sfc'$ of $\sfc$ such that $\sfr\in \sfc'\subset \fU$. Since $\sfj'<\sfj$ and $\cS_j\cap \cS_{j'}\subset \cS_j\cap \Lambda(\Gamma)$, there is no such a subarc in $\Int\cS_{\sfj'}$. Hence $\Int\cS_{\sfj'}$ is contained in the closure of a Jordan domain bounded by $\hat\cS_\sfj$, which proves the claim. \qedhere
\end{proof}

\begin{claim}\label{Clm:ascending2}
We have $\cD^R(\hat\cS_j)\subset \cD^R(\hat\cS_{j+1})$ for all $j$. In particular, $[\sfx_j,\sfy_j]\subset [\sfx_{j+1},\sfy_{j+1}]$ for all $j$.
\end{claim}

\begin{proof}[Proof of the claim]
By \refclm{unlinking2}, either $\cD^R(\hat\cS_j)\subset \cD^R(\hat\cS_{j+1})$ or $\cD^R(\hat\cS_{j+1})\subset \cD^R(\hat\cS_j)$.

Assume first that $\sft>1$. Any right neighborhood of $\nu_{1,j+1}$ relative to $\cS_{j+1}$ contains a subarc $\cldi[\partial\fD]{\nu}{\nu_{1,j+1}}$ of $\cS_j$ for some $\nu\in \opi[\partial\fD]{\nu_{1,j}}{\nu_{1,j+1}}$, while $\cldi[\partial\fD]{\nu_{1,j}}{\nu_{1,j+1}}\cap \cS_{j+1}=\{\nu_{1,j+1}\}$. Hence $\cD^R(\hat\cS_j)\subset \cD^R(\hat\cS_{j+1})$.

Now assume that $\sft=1$. Since $U_0$ is flat relative to $\ell$, we have $\partial U_0\cap \partial\cE_\sfn=\{e_0,d_0\}$ with $e_0>d_0$ on $\ell$. Recall that $\cldi[\Lambda(\Gamma)]{u_0}{u_1}$ crosses $U_0$, that $[e_0,w_0)$ lands at $w_0$ on the left side of $\cldi[\Lambda(\Gamma)]{u_0}{u_1}$, and that $u_0<w_0<\mu_{1,j+1}<\mu_{1,j}<u_1$ and $u_2<\nu_{1,j}<\nu_{1,j+1}<w_1$ in $\Lambda(\Gamma)$. Let $\fC$ be the Jordan curve given by the union of $[e_0,w_0)$, $\cldi[\Lambda(\Gamma)]{w_0}{u_1}$, and $\cldi[\partial U_0]{u_1}{e_0}$, equipped with the circular order compatible with the linear order on $\cldi[\Lambda(\Gamma)]{w_0}{u_1}$.

Since $\partial U_0$ separates $\mu_{1,k}$ and $\nu_{1,k}$ for $k\in\{j,j+1\}$, there exist points $p_j\in \cldi[\partial U_0]{u_1}{e_0}\cap \Int\sfb_{1,j}$ and $p_{j+1}\in \cldi[\partial U_0]{u_1}{e_0}\cap \Int\sfb_{1,j+1}$ such that, for each $k\in\{j,j+1\}$, the subarc $\sfc_k$ of $\sfb_{1,k}$ joining $p_k$ to $\mu_{1,k}$ crosses $\fC$ through $\cD^L(\fC)$. Since $\sfc_j$ and $\sfc_{j+1}$ are disjoint, the pairs $\sfd\sfc_j$ and $\sfd\sfc_{j+1}$ are disjoint and unlinked in $\fC$. It follows from $w_0<\mu_{1,j+1}<\mu_{1,j}<u_1$ that $u_1<p_j<p_{j+1}<e_0$ on $\partial U_0$. Since $p_j,p_{j+1}\in \Int\cE_\sfn$, we also have $d_0<p_j<p_{j+1}<e_0$ in $\cldi[\partial U_0]{d_0}{e_0}$. Therefore $\cD^R(\hat\cS_j)\subset \cD^R(\hat\cS_{j+1})$. \qedhere
\end{proof}

We now complete the proof.
By \refclm{altCrossing}, we can take  an increasing sequence $\{\sfm(j)\}_{j\in\NN}$ such that $\max_{1\le i\le \sft}\sfn(i,j)+2<\sfm(j)$ and $\sigma_{\sfm(j)}^-\cap \cS_j=\varnothing$.
Then, since $p\nin \sfb_{i,j}$ as observed above, $\cE_{\sfm(j)}\cap \sfb_{i,j}=\varnothing$ and \[s_\sfn\ge s_{\sfn(i,j)-1}>s_{\sfn(i,j)+2}>s_{\sfm(j)} \text{ in } \ell\] for all $i\in\{1,\dots,\sft\}$.

Let $\fK_j$ be the Jordan curve given by the union of $\sigma_{\sfm(j)}$, $[s_{\sfm(j)},s_\sfn]$, and $\sigma_\sfn$, equipped with the circular order satisfying $p<s_{\sfm(j)}<s_\sfn$.
Since $\cE_{\sfm(j)}\cap \sfb_{i,j}=\varnothing$ and $\sfb_{i,j}\subset \Int \cE_\sfn$, we can see that $\sfb_{i,j}\subset \cD^L(\fK_j)$ and so $\sfd \sfb_{i,j}=\{\mu_{i,j},\nu_{i,j}\}\subset \cD^L(\fK_j)$.

On the other hand, $\closure{\cD^L(\fK_j)}$ does not meet the end ray $\sfr$ of $\ell$ starting at $s_{\sfn-1}$ and landing at $p$, while $\sfx_j\in \sfr$, so $\sfx_j\in \cD^R(\fK_j)$. Hence $\fK_j$ separates $\mu_{1,j}$ and $\sfx_j$. Since $\lopi[\Lambda(\Gamma)]{\sfx_j}{\mu_{1,j}}\subset U_0^R\subset \Int\cE_\sfn$ and $U_0^R\cap \ell=\varnothing$, the connector arc $\sigma_{\sfm(j)}$ meets $\lopi[\Lambda(\Gamma)]{\sfx_j}{\mu_{1,j}}$, hence also $\Int\cS_j$.

Fix $j\in\NN$. Since $\cS_j$ crosses $\Int\cE_\sfn$ and $\sigma_{\sfm(j)}^-\cap \cS_j=\varnothing$, we may choose $\sfe\in \sigma_{\sfm(j)}^+\cap \cS_j$ so that the subarc $[p,\sfe]$ of $\sigma_{\sfm(j)}^+$ crosses the left Jordan domain $(\Int\cE_\sfn)^L(\cS_j)$ of $\cS_j$. Since $\cE_{\sfm(j)}\cap \sfb_{i,j}=\varnothing$ for all $i\in\{1,\dots,\sft\}$, we have $\sfe\in \cS_j\cap \Lambda(\Gamma)$. As $\sfe\in \sigma_{\sfm(j)}^+\subset Z^+$, \refprop{eclipseLProperty} implies that $\sfe\notin \cE_\infty$ and $\sfe\neq w_i$ for all $i\in\{0,1,\dots,\sft\}$.

We first claim that $\sfe\in \opi[\partial\fD]{\sfx_j}{\mu_{1,j}}\cup \opi[\partial\fD]{\nu_{\sft,j}}{\sfy_j}$. Suppose not. Then $\sft>1$ and $\sfe\in \opi[\partial\fD]{\nu_{i,j}}{w_i}\cup \opi[\partial\fD]{w_i}{\mu_{i+1,j}}$ for some $i\in\{1,\dots,\sft-1\}$. By the choice of $\sfe$, we have $[p,\sfe]\cap \cldi[\partial\fD]{\nu_{i',j}}{\mu_{i'+1,j}}=\varnothing$ for all $i'\neq i$, and $[p,\sfe)$ lands at $\sfe$ on the left side of $\cS_j$. Since, for each $i\in\{1,\dots,\sft\}$, the sequences $\{\mu_{i,j}\}_j$ and $\{\nu_{i,j}\}_j$ are strictly monotone in $\opi[\partial\fD]{w_{i-1}}{w_i}$, \refclm{ascending2} allows us to choose $j'$ large enough that $\sfe\in \cD^R(\hat\cS_{j'})$, $\sfb_{i,j'}\subset \cE_{\sfm(j)+3}$ for all $i\in\{1,\dots,\sft\}$, and $\lopi[\partial\fD]{\sfx_{j'}}{\mu_{1,j'}}\cup \ropi[\partial\fD]{\nu_{\sft,j'}}{\sfy_{j'}}\subset \Int\cE_{\sfm(j)+3}$. Since $\hat\cS_{j'}$ separates $\sfe$ from $p$, the arc $[p,\sfe]$ meets $\cS_{j'}$. On the other hand, because $\sfe\in \cD^R(\hat\cS_{j'})$, we have $[p,\sfe]\cap \cldi[\partial\fD]{\nu_{i,j'}}{\mu_{i+1,j'}}=\varnothing$ for all $i\in\{1,\dots,\sft-1\}$, and \refprop{eclipseLProperty} shows that $[p,\sfe]$ cannot meet $\cS_{j'}\cap \Lambda(\Gamma)$. Hence $[p,\sfe]$ meets $\sfb_{i',j'}$ for some $i'$, contradicting $\sfb_{i',j'}\subset \cE_{\sfm(j)+3}\setminus\{p\}$ and $[p,\sfe]\cap \cE_{\sfm(j)+3}=\{p\}$. This proves the claim.

Assume first that $\sfe\in \opi[\partial\fD]{\sfx_j}{\mu_{1,j}}$. Let $\fO$ be the Jordan curve $\cldi[\partial\cE_\sfn]{p}{\sfx_j}\cup \opi[\partial\fD]{\sfx_j}{\sfe}\cup [\sfe,p]$, equipped with the circular order compatible with the linear order on $\lopi[\partial\cE_\sfn]{p}{\sfx_j}$ induced from $\ell$. By \refclm{ascending2}, we may choose $j'$ so large that $\cldi[\partial\fD]{\sfx_{j'}}{\mu_{1,j'}}\subset \closure{\cD^R(\fO)}$, $\sfb_{i,j'}\subset \cE_{\sfm(j)+3}$ for all $i\in\{1,\dots,\sft\}$, and $\lopi[\partial\fO]{\sfx_{j'}}{\mu_{1,j'}}\cup \ropi[\partial\fD]{\nu_{\sft,j'}}{\sfy_{j'}}\subset \Int\cE_{\sfm(j)+3}$.

If $\sfx_j=\sfx_{j'}$, then $\{\sfe,p\}$ and $\sfd\sfb_{1,j'}$ are linked in $\partial(\Int\cE_\sfn)^L(\cS_j)$, so $\sfb_{1,j'}$ meets $[p,\sfe]$. If $\sfx_j\neq \sfx_{j'}$, then $\{\sfe,p\}$ and $\{\sfx_{j'},\nu_{1,j'}\}$ are linked in $\partial(\Int\cE_\sfn)^L(\cS_j)$, and the inclusion $\cldi[\partial\fD]{\sfx_{j'}}{\mu_{1,j'}}\subset \closure{\cD^R(\fO)}$ again implies that $\sfb_{1,j'}$ meets $[p,\sfe]$. In either case this contradicts $\sfb_{1,j'}\subset \cE_{\sfm(j)+3}\setminus\{p\}$ and $\cE_{\sfm(j)+3}\cap [p,\sfe]=\{p\}$.

Finally, suppose $\sfe\in \opi[\partial\fD]{\nu_{\sft,j}}{\sfy_j}$. Let $\fP$ be the Jordan curve $[p,\sfe]\cup \opi[\partial\fD]{\sfe}{\sfy_j}\cup \cldi[\partial\cE_\sfn]{\sfy_j}{p}$. Then $\sfb_{\sft,j'}$ meets $[p,\sfe]$ for all sufficiently large $j'$, contradicting \refprop{eclipseLProperty}. This completes the proof.\qedhere

\end{proof}

\subsection{No prong in $Z^+$}

\refthm{tameFix} follows from \refthm{twoInOne} and \refthm{oneInOne}.
We now complete the proof by proving \refthm{oneInOne}.
\begin{proof}[Proof of \refthm{oneInOne}]

By symmetry, it suffices to rule out the existence of an element $g \in G$ that fixes a unique point $p \in Z^+$ and acts freely on $Z^-$, assuming that $\ell$ is right-inaccessible. By \refthm{simplicialAxis}, $\ell$ is one-sided inaccessible. Applying \refconst{firstStepConst}, we obtain a squashed fence $\fF(g)=(\ell,\{\cE_n\}_{n\in \ZZ})$ for $g$ and a circle lamination $\cL(\Gamma,z_1)$ in the limit circle $\Lambda(\Gamma)$ of a quasi-Fuchsian closed surface subgroup $\Gamma \le G$. We henceforth use the notation and assumptions from \refconst{firstStepConst}.

We claim that there exists a point $z_2 \in \ell$ such that $z_0<z_1<z_2$ in $\Lambda(\Gamma)$ and $\{z_1,z_2\}$ satisfies the bouncing condition in \refconst{extConst}.

Assume otherwise. If $z_1$ is fixed by some non-trivial element of $\Gamma$, then \reflem{oneStepAtFix2} yields a contradiction. Hence $z_1$ is not fixed by any nontrivial element of $\Gamma$. By \reflem{closureOfAuxiliaryLeaf}, the lamination $\cG(\Gamma,z_1)$ is perfect, i.e.\ $\cG(\Gamma,z_1)=\cG(\Gamma,z_1)'$. Moreover, there exists a unique principal region $U$ of $\cG(\Gamma,z_1)$ such that, for every $n>\cN$, the half-ray of $\sfg_n$ toward $z_1$ exits through the same end $\{e_i\}_{i\in\NN}$ of $U$ and spirals toward the same component of $\cG(\Gamma,z_1)$, where $\sfg_n$ is the bi-infinite geodesic in $\cH/\Gamma$ corresponding to $\sfg(\sfd\sfa_n)$.

Since $\{\sfd \sfa_n\}_{n>\cN}$ converges to $\{z_0,z_1\}$, lying in $\opi[\Lambda(\Gamma)]{z_0}{z_1}$, by \refprop{auxiliaryLeaf}, $U$ can not be an ideal polygon.
Moreover, by \refprop{gapShape}, $U$ has a crown that contains  the end $\{e_i\}_{i\in\NN}$ of $U$.
Hence, there exists an element $h\in \Gamma$ and a bi-infinite sequence $\{w_i\}_{i\in \ZZ}$ that satisfies:
\begin{itemize}
    \item $r(h)<\cdots<w_{-i}<\cdots<w_0<\cdots<w_i<\cdots<a(h)$ in $\Lambda(\Gamma)$;
    \item $\opi[\Lambda(\Gamma)]{w_{i-1}}{w_{i}}$ are not isolated in $\cL_1$;
    \item there is a $k\in \NN$ such that $h(w_i)=w_{i+k}$ for all $i\in \ZZ$;
    \item  $z_0\in \ropi{a(h)}{r(h)}$ (since $\{\sfd \sfa_n\}_{n>\cN}$ converges to $\{z_0,z_1\}$ in $\cldi[\Lambda(\Gamma)]{z_0}{z_1}$).
\end{itemize}
By \refprop{squashedFenceDragging}, for each $i\in\NN$, there is a $\opi[\Lambda(\Gamma)]{w_{i-1}}{w_{i}}$-side regular sequence $\{\lambda_{i,j}\}_{j\in \NN}$.
If  $\{\lambda_{i,j}\}_{j\in \NN}$ is outside, then by \reflem{outsideExt2}, this is a contradiction by the assumption.
Otherwise, by \reflem{insideExt2}, $w_i\in \cE_\infty\setminus (\ell \cup \Fix(g))$ for all $i\in \NN$ and so $h(w_0)=w_k\in \cE_\infty\setminus (\ell \cup \Fix(g))$.
However, since $h(p)\neq p$,  by \refprop{squashedFenceSystem}, $h(w_k)\in \closure{\cE_m\setminus \cE_{m+1}}$ for some $m\in \ZZ$.
This is a contradiction since  $\cE_\infty  \cap \closure{\cE_m\setminus \cE_{m+1}} \subset \{p\}\cup \ell$ by \refprop{eclipseLProperty}.
Therefore, we are done.

Now, we assume that a finite sequence of points in $\ell$ ,  $\{z_i\}_{i=1}^m, m\ge2$, satisfies the bouncing condition in \refconst{extConst} with $z_0<z_1<z_2<\cdots<z_m$.
We then claim that there is a point $z_{m+1}$ in  $\opi[\Lambda(\Gamma)]{z_m}{z_0}$ such that $\{z_i\}_{i=1}^{m+1}$ satisfies the bouncing condition.
Assume not.
If $z_m$ is fixed by a non-trivial element in $\Gamma$, then by \reflem{oneStepAtFix2} and \refcor{limitAuxiliaryLeaf}, this is a contradiction.
Hence, $z_m$ is not fixed by a non-trivial element in $\Gamma$.

Recall that  when $m\ge 2$, \refprop{finiteGenLeaves} implies that $\cL(\Gamma,\{z_i\}_{i=1}^m)$ is generated by $\{\sfd \gamma_i\}_{i=2}^{m}$.
Namely, if $\sfg_i$ are the corresponding leaf of $\sfd \gamma_i$ of $\cG=\cG(\Gamma, \{z_i\}_{i=1}^m))$ in $\cH/\Gamma$.
Since $z_m$ is not fixed by a non-trivial element of $\Gamma$, the end of $\sfg_{m}$, that corresponds to $z_m$, spirals toward a component $\cG_0$ of $\cG''$.
Moreover, by \reflem{closureOfLeaf} and \reflem{closureOfManyLeaves}, there is a principal region $U$ of $\cG''$ where $\sfg_{m}$ is either contained in $U$ or a boundary leaf of $U$.

If $U$ is a finite sided ideal polygon, then the end of $\sfg_{m}$, that corresponds to $z_{m-1}$, spirals toward a $\cG_0$ as $\sfg_{m}$ is either a boundary leaf of $U$ or contained in $U$.
Then, it follows from \reflem{closureOfManyLeaves} that  $\sfg_{m-1}$ also is either a boundary leaf of $U$ or contained in $U$.
By repeating applying \reflem{closureOfManyLeaves}, we can see that  $\sfg_i$ also is either a boundary leaf of $U$ or contained in $U$ for all $k$ with $0<i<m$.
However, this is again a contradiction by \reflem{closureOfAuxiliaryLeaf}.
Hence, $U$ can not be a finite sided ideal polygon.

By \refprop{gapShape}, $U$ admits the core.
Then, there is a crown $\fC$ in $U$ such that $\fC$ contains the end of $\sfg_m$, associated with $z_m$, or $\sfg_m$ is a boundary leaf of $U$, that is in the metric completion of $\fC$.
Therefore, we can find an element $f\in \Gamma$ and a bi-infinite sequence $\{w_i\}_{i\in \ZZ}$ in $\Lambda(\Gamma)$, satisfying the following:
\begin{itemize}
    \item $r(f)<\cdots<w_{-i}<\cdots<w_0=z_m<\cdots<w_i<\cdots<a(f)$ in $\Lambda(\Gamma)$;
    \item $z_{m-1}\in \cldi[\Lambda(\Gamma)]{w_1}{w_{-1}}$;
    \item $\opi[\Lambda(\Gamma)]{w_{i-1}}{w_{i}}$ are not isolated in $\cL(\Gamma,\{z_i\}_{i=1}^m)$;
    \item there is a $\sfk\in \NN$ such that $f(w_i)=w_{i+\sfk}$ for all $i\in \ZZ$;
\end{itemize}
Note that $\sfg(a(f), r(f))/\langle f \rangle$ is the base of $\fC$.

Observe that $\opi[\Lambda(\Gamma)]{z_i}{z_{i-1}}$ is isolated in $\cL(\Gamma,\{z_i\}_{i=1}^m)$ for all $1<i\le m$. Suppose for contradiction that there exists an $\opi[\Lambda(\Gamma)]{z_i}{z_{i-1}}$-side sequence $\{\kappa_j\}_{j\in \NN}$ in $\cL(\Gamma,\{z_i\}_{i=1}^m)$. Since each $\kappa_j$ is unlinked with $\{z_{i-2},z_{i-1}\}$ (by \refprop{auxiliaryLeaf} if $i=2$), it follows that, for all sufficiently large $j$, the leaf $\kappa_j$ contains $z_{i-1}$. By \refprop{oneEndFix} and \refprop{fanning2}, this implies that some nontrivial element of $\Gamma$ fixes $z_{i-1}$ but not $z_i$. Consequently, \refprop{oneEndFix} implies that $\{z_i,z_{i-1}\}$ is isolated, a contradiction. Hence $\sfd\gamma_i$, for $1<i\le m$, are boundary leaves of $\cL(\Gamma,\{z_i\}_{i=1}^m)$.

In particular, $z_{m-1}\neq w_1$. Indeed, $\opi[\Lambda(\Gamma)]{z_m}{z_{m-1}}$ is isolated, whereas $\opi[\Lambda(\Gamma)]{w_0}{w_1}$ is not.

\begin{claim*}
  $z_0\nin \opi[\Lambda(\Gamma)]{w_0}{a(f)}$.
\end{claim*}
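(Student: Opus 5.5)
The claim is that $z_0$ cannot lie strictly between $w_0=z_m$ and $a(f)$ on the side where the crown sequence $\{w_i\}_{i\ge 0}$ accumulates. I would argue by contradiction and use the auxiliary leaves $\sfd\sfa_n$. Suppose $z_0\in\opi[\Lambda(\Gamma)]{w_0}{a(f)}$.

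\emph{Locating $z_0$ in a gap.} Since $w_i\to a(f)$ monotonically, there is a unique $i\ge 1$ with $z_0\in\lopi[\Lambda(\Gamma)]{w_{i-1}}{w_i}$.

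\emph{Producing a linked pair.} By \refcor{limitAuxiliaryLeaf}, no leaf of $\cL=\cL(\Gamma,\{z_i\}_{i=1}^m)$ is linked with $\{z_0,z_1\}$. Moreover, $\{z_0,z_1\}$ is the limit of the auxiliary leaves $\sfd\sfa_n$, and by \refprop{auxiliaryLeaf} each $\sfd\sfa_n$ is unlinked with every leaf of $\cL$ and is not a limit of leaves of $\cL$. The ordering $z_0<z_1<\cdots<z_m$ places $z_1$ in $\opi[\Lambda(\Gamma)]{z_0}{z_m}$, hence outside $\cldi[\Lambda(\Gamma)]{w_0}{a(f)}$; the condition $z_{m-1}\in\cldi[\Lambda(\Gamma)]{w_1}{w_{-1}}$ confirms which side we are on. Therefore, if $z_0\in\opi[\Lambda(\Gamma)]{w_{i-1}}{w_i}$, the leaf $\{w_{i-1},w_i\}$ of $\cL$ separates $z_0$ from $z_1$ and is linked with $\{z_0,z_1\}$, a contradiction. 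The same contradiction arises from leaves of the non-isolated side sequences converging to $\{w_{i-1},w_i\}$ furnished by \refprop{squashedFenceDragging}, because $x_n\to z_0$ forces $\sfd\sfa_n$ to be linked with such leaves for large $n$.

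\emph{The boundary case $z_0=w_{i-1}$.} Here I expect the main obstacle. If $i-1=0$ then $z_0=z_m$, which \refcor{limitAuxiliaryLeaf} rules out for $m\ge2$. For $i-1\ge1$, I would use that $\opi[\Lambda(\Gamma)]{w_{i-1}}{w_i}$ is not isolated. By \refprop{oneEndFix}, since no nontrivial element fixes $w_{i-1}$ when $i-1>0$, one gets an $\opi[\Lambda(\Gamma)]{w_{i-1}}{w_i}$-side sequence of leaves whose endpoints avoid $w_{i-1}$. Since $x_n$ approaches $z_0=w_{i-1}$ from the side of $w_{i-2}$ (or from inside the gap, according to right inaccessibility), for large $n$ the auxiliary leaf $\{z_1,x_n\}$ is linked with these approximating leaves, contradicting \refprop{auxiliaryLeaf}.

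The delicate point is confirming from which side $x_n$ tends to $z_0$. The right-inaccessible ordering $x_{n+1}<x_n<z_1$ of \refconst{firstStepConst} says that the $x_n$ decrease toward $z_0$ from the $z_1$ side, so the approximating leaves inside the gap $\opi[\Lambda(\Gamma)]{w_{i-1}}{w_i}$ separate $x_n$ from $z_1$, and this gives the desired link. If this orientation bookkeeping fails, the fallback is to work with the leaf $\{w_{i-2},w_{i-1}\}$ instead.
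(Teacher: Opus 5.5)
The step that fails is your assertion that the ordering $z_0<z_1<\cdots<z_m$ ``places $z_1$ in $\opi[\Lambda(\Gamma)]{z_0}{z_m}$, hence outside $\cldi[\Lambda(\Gamma)]{w_0}{a(f)}$''. Assume, for contradiction, that $z_0\in\opi[\Lambda(\Gamma)]{w_0}{a(f)}$. Then the arc $\opi[\Lambda(\Gamma)]{z_0}{z_m}=\opi[\Lambda(\Gamma)]{z_0}{w_0}$ contains $\opi[\Lambda(\Gamma)]{z_0}{a(f)}\subset\opi[\Lambda(\Gamma)]{w_0}{a(f)}$. So the cyclic order alone does not keep $z_1$ (or $z_2,\dots,z_{m-1}$) out of $\opi[\Lambda(\Gamma)]{w_0}{a(f)}$, and the condition $z_{m-1}\in\cldi[\Lambda(\Gamma)]{w_1}{w_{-1}}$ does not either.

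In the configuration $w_{i-1}<z_0<x_n<z_1<w_i$ your contradiction never appears. The leaf $\{w_{i-1},w_i\}$, and all leaves of large index in a sequence approximating it from inside the gap, enclose $z_0$, $z_1$ and every $x_n$. Hence none of the leaves you invoke is linked with $\{z_0,z_1\}$ or with $\sfd\sfa_n=\{x_n,z_1\}$. Excluding this situation is exactly the substantive part of the paper's proof. The paper first shows $z_i\notin\opi[\Lambda(\Gamma)]{w_0}{a(f)}$ for every $i\ge 1$: it follows the asymptotic generating leaves $\sfg(\sfd\gamma_i)$ through vertices of the crown via \reflem{closureOfManyLeaves}, then uses \reflem{closureOfAuxiliaryLeaf} and \refprop{auxiliaryLeaf}. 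Only afterwards does it deduce the statement for $z_0$ from $\sfd\sfa_n\to\{z_0,z_1\}$.

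What your argument never uses is the chain of generating leaves $\{z_j,z_{j+1}\}$, $1\le j<m$, joining $z_1$ to $z_m=w_0$; this is what closes the gap.
\begin{itemize}
\item Since $w_0=z_m$ is fixed by no nontrivial element of $\Gamma$ and each $\{w_{j-1},w_j\}$ is a non-isolated leaf, \refprop{oneEndFix} shows inductively that no $w_j$ with $j\ge 0$ is fixed by a nontrivial element.
\item Its second part then shows that $\opi[\Lambda(\Gamma)]{w_{i-1}}{w_i}$ is approached from inside by leaves of $\cL$ whose endpoints both lie in the open gap.
\item Consequently, a leaf of $\cL$ with one endpoint in $\opi[\Lambda(\Gamma)]{w_{i-1}}{w_i}$ has its other endpoint there too. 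Otherwise it would be linked with $\{w_{i-1},w_i\}$ or with one of these approximating leaves.
\item Propagating along the chain, $z_1\in\opi[\Lambda(\Gamma)]{w_{i-1}}{w_i}$ would force $z_m=w_0$ into that gap, which is absurd.
\end{itemize}
Once this case is removed, the remaining configurations (including $z_0=w_j$) can be handled essentially as you propose, by linking with $\{w_{i-1},w_i\}$ or its approximating leaves. You must order the choice of $n$ and of the approximating leaf appropriately in each subcase.
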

\begin{proof}[Proof of the claim]
It suffices to show that  $z_{i}\nin \opi[\Lambda(\Gamma)]{w_0}{a(f)}$ for all $i\ge1$.
Then, the desired statement follows from  \refprop{auxiliaryLeaf} since $\{\sfd \sfa_n\}_{n>\cN}$ converges to $\{z_{0},z_{1}\}$ in $\cldi[\Lambda(\Gamma)]{z_0}{z_1}$ and $z_0\in \opi[\Lambda(\Gamma)]{z_m}{z_1}$.

Assume that $z_\sfm=w_{j(0)}$ for some $\sfm$ with $0<\sfm<m$ and for some $0<j(0)$.
Since $\sfg(\sfd \gamma_{\sfm})$ and $\sfg(\sfd \gamma_{\sfm-1})$ are asymptotic at $z_{\sfm}$, it follows from \reflem{closureOfManyLeaves} and the circular ordering of $\{z_i\}_{i=1}^m$ that
$z_{\sfm-1}=w_{j(1)}$ for some $0<j(1)<j(0)$.
After repeating applying \reflem{closureOfManyLeaves}, we can find $j(s)$ such that $0<j(s+1)<j(s)$ for all $s\in \{0,1,\cdots, \sfm-1\}$ and $z_{\sfm-s}=w_{j(s)}$ for all $s\in \{1,\cdots, \sfm-1\}$.
By \reflem{closureOfAuxiliaryLeaf}, each $\sfd \sfa_n,n>\cN$ is linked with $\sfd \gamma_r$ for some $\sfm\le r$, which is a contradiction by \refprop{auxiliaryLeaf}.
\end{proof}

For each $i\in \NN\cup\{0\}$, by \refprop{squashedFenceDragging}, we can find a $\opi[\Lambda(\Gamma)]{w_{i-1}}{w_{i}}$-side regular sequence $\{\lambda_{i,j}\}_{j\in\NN}$.
If $\{\lambda_{1,j}\}_{j\in\NN}$ is outside, then by \reflem{outsideExt2} and \refcor{limitAuxiliaryLeaf}, we can extend $\{z_i\}_{i=1}^m$, keeping the bouncing condition and the circular ordering condition.
Hence, $\{\lambda_{1,j}\}_{j\in\NN}$ is inside.
Therefore, by \reflem{insideExt2}, we have that  $w_\sfk\in \cE_\infty \setminus (\ell\cup \Fix(g))$.
However, since $h(w_0)=w_\sfk$ and  $h(p)\neq p$,  by \refprop{squashedFenceSystem}, $h(w_\sfk)\in \closure{\cE_m\setminus \cE_{m+1}}$ for some $m\in \ZZ$.
This is a contradiction since  $\cE_\infty  \cap \closure{\cE_m\setminus \cE_{m+1}} \subset \{p\}\cup \ell$ by \refprop{eclipseLProperty}.
Therefore, there is a point $z_{m+1}$ in $\opi[\Lambda(\Gamma)]{z_m}{z_0}$ such that $\{z_i\}_{i=1}^{m+1}$ satisfies the bouncing condition.

Inductively, we can find a strictly decreasing sequence $\{z_i\}_{i=1}^\infty$ in $\ell$ such that $\{z_i\}_{i=1}^m$ satisfies the bouncing condition for any $m>2$ and $\{z_i\}_{i=1}^\infty$ is strictly increasing in $\cldi[\Lambda(\Gamma)]{z_1}{z_0}$.
    Then, the closure $\cL_\infty$ of $\Gamma$-orbits of $\{\{z_{i-1},z_{i}\}:i\in \NN\}$ in $\cM$ is  a $\Gamma$-invariant circle lamination  by \refprop{finiteGenLeaves}.
    Then, by \refrmk{realization}, the geometric realization $\wt{\cG}_\infty$ of $\cL_\infty$ induces a geodesic lamination on $\cH/\Gamma$.
    Since  $\{z_i\}_{i=1}^\infty$ is strictly increasing in $\cldi[\Lambda(\Gamma)]{z_1}{z_0}$, for each $i\in \NN$,
    $\opi[\Lambda(\Gamma)]{z_i}{z_{i-1}}$ is isolated in $\cL_\infty$ by the right simpliciality of $\ell$.
    Hence, each leaf $\sfg_i$ of $\cG_\infty$, that corresponds to $\{z_{i-1},z_i\}$, is a boundary leaf in $\cG_\infty$.
    By \refprop{finiteBD}, there are only finitely many boundary leaves in $\cG_\infty$.
    This implies that there are $\sft, \sft'$  in $\NN$ such that $1<\sft <  \sft'$ and $(\sff(z_{\sft-1}),\sff(z_{\sft}))=(z_{\sft'-1},z_{\sft'})$ for some non-trivial element $\sff$ in $\Gamma$.
    Then, $\sff$ maps the subarc $[z_{\sft-1},z_{\sft}]$ of $\ell$ to the subarc $[z_{\sft'-1},z_{\sft'}]$ of $\ell$, preserving the orders.
    Observe that by the right inaccessibility of $\ell$,  $\sff([z_{\sft-1}, z_{\sft'-1}])$ is also a subarc of $\ell$.
    Hence, $\ell_\sff=\bigcup_{\sfm\in\ZZ} \sff([z_{\sft-1}, z_{\sft'-1}])$ is a $\sff$-invariant subline of $\ell$.
    By \reflem{invRayLand}, $\sfd \ell_\sff=\Fix(\sff)$.
    If $\sfd \ell_\sff$ intersects $\Fix(g)$, then $\Fix(g)=\Fix(\sff)$ by the discreteness of $G$.
    However, this is a contraction since $\Fix(g)\cap \Lambda(\Gamma)=\varnothing$ and $\Fix(\sff)\subset \Lambda(\Gamma)$.
    When  $\sfd \ell_\sff\subset \ell$, this is also a contracition by \refthm{twoInOne}.
    This completes the proof.
    \qedhere

\end{proof}

\appendix
\section{Geodesic laminations in hyperbolic surfaces}\label{App:geodLami}
In this appendix, we briefly summarize facts and propositions about geodesic laminations on surfaces, based on  \cite{Casson88}.
We follow the terminology of \cite{Casson88}, and for further details, we refer the reader to \cite{Casson88}.

Let $S_g$ be a closed hyperbolic surface and $\{\ell_i\}_{i=1}^n$ a disjoint collection of bi-infinite simple geodesics in $S_g$.
Note that the closure $\cG$ of  $\bigcup_{i=1}^n \ell_i$ is a geodesic lamination on $S_g$ (\cite[Lemma~3.2]{Casson88}).
The following summarizes \cite[Lemma~4.2, Lemma~4.3]{Casson88}:
\begin{prop}[Finitenss of boundary leaves]\label{Prop:finiteBD}
      Let $S_g$ be a closed hyperbolic surface and  $\cG$ a geodesic lamination on $S_g$.
      Then, $\cG$ has only finitely many principal regions, each with only finitely many boundary leaves.
      Moreover, the union of all boundary leaves in $\cG$ is dense in $\cG$.
\end{prop}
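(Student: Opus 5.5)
This is a standard fact about geodesic laminations on a closed hyperbolic surface $S_g$, and I plan to derive it from area and Euler characteristic considerations, following \cite{Casson88}.

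\textbf{Finitely many principal regions, each with finitely many boundary leaves.} Since $\cG$ is closed and, by \cite{Casson88}, has Lebesgue measure zero, its complement $S_g\setminus\cG$ has full area $2\pi(2g-2)$. Each complementary component is an open hyperbolic surface with geodesic boundary. I will pass to its metric completion, a complete hyperbolic surface with totally geodesic boundary, which may have crowns and ideal spikes.

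Next I apply Gauss--Bonnet to the completion of each component $U$. An ideal polygon with $k$ spikes has area $(k-2)\pi$. More generally, a component with core subsurface $U_0$ and crown ends with $k$ spikes in total has area $-2\pi\chi(U_0)+k\pi$ (up to the standard normalization), and this area is at least $\pi$. So every principal region has area bounded below by a positive constant, and hence there are at most $2(2g-2)$ of them.

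The same formula bounds the number of spikes in each region by $2(2g-2)$. Each boundary leaf of a region runs between consecutive spikes, or is a closed boundary geodesic of the core. So each region has finitely many boundary leaves; closed boundary curves are controlled by $|\chi|$.

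\textbf{Density of boundary leaves.} I expect this to be the main obstacle. Let $x\in\cG$ lie on a leaf $\lambda$, and take a short transverse arc $\tau$ through $x$. I will use that $\cG\cap\tau$ is a closed set with empty interior, a consequence of measure zero. So either $\tau\setminus\cG$ accumulates on $x$ from some side, or $x$ is an endpoint of a complementary interval.

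Each complementary interval of $\tau\setminus\cG$ lies in some principal region, and its two endpoints lie on leaves that bound that region, hence on boundary leaves. So any point of $\cG\cap\tau$ is either itself on a boundary leaf or is a limit of endpoints of complementary intervals, and therefore a limit of points on boundary leaves. Taking $\tau$ arbitrarily small shows that the union of boundary leaves is dense.

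The delicate point is checking that the leaf through an endpoint of a complementary interval really is a boundary leaf, rather than merely approaching the region. I would settle this via the local product structure of $\cG$ near $\tau$.
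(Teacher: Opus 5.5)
Your route is the standard one. The paper gives no argument of its own for this proposition and simply cites Lemmas~4.2 and~4.3 of \cite{Casson88}, and an area count via Gauss--Bonnet together with a transversal argument for density is the usual way to prove those facts.

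There is, however, one step you use without justification, and as written the finiteness part is circular at that point. The formula $\mathrm{area}=-2\pi\chi(U_0)+k\pi$ presupposes that the completion $\hat U$ of a complementary component has finite topological type: finitely many spikes, finitely many boundary components, and finitely generated fundamental group. The same is true of the picture of $\hat U$ as a compact core with finitely many crowns attached, which is the content of Proposition~\ref{Prop:gapShape}. So you cannot then read off from that formula that the number of spikes, and hence of boundary leaves, is finite. You need an a priori reason why a complementary region of finite area has finite type.

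The cleanest fix is to double $\hat U$ along its geodesic boundary. The double is a complete hyperbolic surface without boundary, of area at most $2\,\mathrm{area}(S_g)$, hence of finite type. Each of its finitely many cusps meets the fixed set of the reflection in at most two rays, and the closed fixed components are disjoint simple closed geodesics, so $\partial\hat U$ has finitely many components. Gauss--Bonnet on the double then gives your formula. Alternatively, apply Gauss--Bonnet to compact truncations of $\hat U$ that see $k$ spikes; the area bound $2\pi(2g-2)$ then bounds $k$ and the complexity uniformly.

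With finite type in hand, the rest is fine.
\begin{itemize}
\item The lower bound $\pi$ on the area holds because ideal polygons have $k\ge 3$. A non-polygonal region either has $\chi<0$ or is an annulus carrying at least one spike, since two distinct closed geodesics are never freely homotopic.
\item Non-closed boundary lines correspond bijectively to spikes.
\item Your density argument is correct. The flow-box chart you invoke for the delicate point is needed twice. First, it turns two-dimensional measure zero of $\cG$ into empty interior of $\cG\cap\tau$, because an interval in $\cG\cap\tau$ would produce an open set of leaves. Second, a gap $(a,b)\times(-1,1)$ adjacent to the plaque through $a$ keeps leaves from accumulating on the leaf through $a$ from that side, since convergence of leaves in the universal cover is uniform on compacta. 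So that leaf is isolated on one side and is a boundary leaf.
\end{itemize}
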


Recall that a \emph{crown} is a complete hyperbolic surface $\cC$ with finite area and geodesic boundary, constructed as follows:
let $g\in \Isomp(\HH^2)$ be a hyperbolic isometry.
Then, there is a $g$-axis $\ell$ in $\HH^2$.
Say that $a(g),r(g)$ are attracting and repelling fixed points of $g$, respectively.
Take distinct points $\{t_0,t_1,\cdots,t_n\}$ in $\partial \HH^2$ such that $r(g)<t_0<t_1<\cdots<t_n<a(g)$ on $\partial \HH^2$ and $g(t_0)=t_n$.
There is the unique closed hyperbolically convex region $\widetilde{\cC}$ in $\HH^2$ such that the closure of $\widetilde{\cC}$ in $\HH^2\cup \partial \HH^2$ is the union of $\widetilde{\cC}$ and $\{g^k(t_i):k\in \ZZ \text{ and } i=1,2,\cdots,n\}\cup \Fix(g)$.
Then, $\cC$ is defined as the quotient surface $\widetilde{\cC}/\langle g \rangle$.

Note that each component of the boundary of $\cC$ is either  the simple closed geodesic $\ell/\langle g \rangle$ or a bi-infinite geodesic whose lifting in $\widetilde{\cC}$ is the geodesic, joining $t_i,t_{i+1}$ for some $i$.
We call $\ell/\langle g \rangle$ the \emph{base} of $\cC$.
\begin{prop}\label{Prop:gapShape}
In \refprop{finiteBD}, each principal region $U$ is either a finite sided ideal polygon or there is a unique compact set $U_0$ such that each component of  $U\setminus U_0$ is isometric to the  interior of a crown.
\end{prop}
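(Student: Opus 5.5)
The plan is to work with the metric completion \(\bar U\) of the principal region \(U\) rather than with \(U\) itself, following \cite{Casson88}. The completion \(\bar U\) is a complete hyperbolic surface with geodesic boundary. Its area is at most \(\operatorname{area}(S_g)\), and by \refprop{finiteBD} it has only finitely many boundary leaves. The interior of \(\bar U\) is \(U\), so any structural statement about \(\bar U\) transfers directly to \(U\). The proof then splits according to whether \(\bar U\) is simply connected.

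\emph{Simply connected case.} Here I would lift \(\bar U\) isometrically to a closed convex region in \(\HH^2\) bounded by lifts of boundary leaves. No boundary leaf of \(\bar U\) can be a closed geodesic, since its lift would force nontrivial \(\pi_1\). So every boundary component is a bi-infinite geodesic. Consecutive boundary geodesics must be asymptotic: the closed surface has no cusps, and leaves of \(\cG\) are disjoint, so neighbouring sides can only meet at infinity. Finiteness of the area then bounds the number of sides, using the area formula \((n-2)\pi\) for an ideal \(n\)-gon. This gives a finite sided ideal polygon.

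\emph{Non-simply-connected case.} Since \(\bar U\) has finite area and finitely many boundary leaves, it has finite topological type. I would analyse its ends one at a time.
\begin{enumerate}
\item Each end of \(\bar U\) is a cyclic chain of boundary leaves \(t_0t_1, t_1t_2,\dots\), with consecutive leaves asymptotic, forming a spike region.
\item The loop going once around such an end is homotopically nontrivial and not homotopic into a cusp, because \(S_g\) is closed. So it is represented by a hyperbolic element \(g\) of \(\pi_1(\bar U)\subset\pi_1(S_g)\).
\item Lifting the chain to \(\HH^2\) gives points \(r(g)<t_0<\cdots<t_n<a(g)\) with \(g(t_0)=t_n\). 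The region between the axis of \(g\) and the lifted chain is exactly the \(\widetilde{\cC}\) of the crown construction. Hence this end is isometric to the interior of a crown, with base the closed geodesic of \(g\).
\item These closed geodesics are pairwise disjoint and simple, since they are the geodesic representatives of disjoint peripheral curves.
\item Define \(U_0\) as the complement in \(U\) of the open crowns cut off by these bases. It is compact because it has finite area and all ends have been removed. It may degenerate to a single simple closed geodesic when \(\bar U\) is an annulus with crowns on both sides.
\item Uniqueness of \(U_0\) follows because each base is the unique closed geodesic in the free homotopy class of its peripheral loop.
\end{enumerate}

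\emph{Expected main obstacle.} I expect the hardest step to be showing that every end has the crown shape. This requires two things. First, the boundary chain at an end must be finite and closed up by a single deck transformation \(g\). Second, \(g\) must be hyperbolic rather than parabolic. For the first point I would use finiteness of boundary leaves from \refprop{finiteBD}. For the second I would use that \(S_g\) is closed, so every nontrivial element is hyperbolic. With \(g\) hyperbolic, convexity of \(\bar U\) together with disjointness of leaves forces the axis of \(g\) to lie inside \(\bar U\), which places the base inside the region as required.
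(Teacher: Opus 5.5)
Your outline is correct and is the standard argument. The paper gives no proof of its own here; it simply imports the statement from Casson--Bleiler \cite{Casson88}. The strategy is sound: pass to $\bar U$, split by whether it is simply connected, produce each crown base as the axis of the peripheral hyperbolic element, and use convexity of the lift to put that axis in the closure of the lift. A few justifications in your sketch need repair, but none changes the strategy.

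\textbf{Simply connected case.} Disjointness of leaves and absence of cusps do not show that consecutive sides of $\widetilde U$ share an ideal endpoint. They do not exclude a nondegenerate arc of $\partial\HH^2$ in the closure of $\widetilde U$ lying between two consecutive sides. What excludes it is finite area: with only finitely many sides, such an arc forces $\widetilde U$ to contain a half-plane.

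\textbf{Non-simply-connected case.} The same finite-area input is what rules out funnel-type ends, so that every noncompact end is a cycle of spikes. Your assertion that $\bar U$ has finite topological type also needs a line of proof. For instance, double $\bar U$ along its boundary to get a complete finite-area hyperbolic surface, hence one of finite type. Its cusps are exactly the doubled spikes, since interior cusps are impossible: $\pi_1(U)\to\pi_1(S_g)$ is injective because lifts of $U$ are convex, and $S_g$ is closed.

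\textbf{Closed boundary leaves.} Your list of ends omits boundary components of $\bar U$ that are closed leaves of $\cG$. These are neither spikes nor crown bases, so they must be absorbed into $U_0$. Hence $U_0$ has to be taken inside $\bar U$ rather than $U$. With $U_0\subset U$, your compactness claim in step~5 fails, for example when $\cG$ is a multicurve, or when $\bar U$ is a single crown whose base is a closed leaf.
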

The compact set $U_0$ is called the  \emph{core} of $U$ and each component of $U\setminus U_0$ is called the \emph{open crown} of $U$.
Note that $U_0$ is either a simple closed geodesic or a compact hyperbolic surface with geodesic boundary.

We define an \emph{end} of $U$ as the restiction of a topological end of its metric completion, which is a descending sequence of open subsets.
We say that an end of a bi-infinite geodesic or of a geodesic ray \(\ell\) in \(S_g\)
\emph{escapes} toward an end \(\{e_i\}_{i\in\mathbb{N}}\) of \(U\) if each \(e_i\)
contains a proper subray of \(\ell\) corresponding to that end of $\ell$.

\begin{prop}\label{Prop:oneEndFix}
    Let $S_g$ be a closed hyperbolic surface and  $\cG$ a geodesic lamination on $S_g$.
    Write $\widetilde{\cG}$ for the lifting of $\cG$ to the universal cover $\HH^2$.
    If $\lambda$ is a leaf of $\widetilde{\cG}$ and there exists an element $h\in \pi_1(S_g)\leq \PSL(\RR)$ that fixes exactly one end point $e$ of $\lambda$ in $\partial \HH^2$, then $\lambda$ is isolated in $\cG$.
    In particular, if there is a point $p$ that is an end point of infinitely many leaves of $\widetilde{\cG}$, then $p$ is fixed by a non-trivial element of $\pi_1(S_g)$.
\end{prop}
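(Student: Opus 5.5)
The statement to prove is \refprop{oneEndFix}: if a deck transformation $h\neq 1$ fixes exactly one endpoint $e$ of a leaf $\lambda$ of $\widetilde{\cG}$, then $\lambda$ is isolated. The plan is a direct argument in $\HH^2$ using only that leaves of $\widetilde{\cG}$ are pairwise disjoint and that $\widetilde{\cG}$ is a closed, $\pi_1(S_g)$-invariant set.

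\textbf{Setting up.} Write $\lambda=\{e,f\}$, where $h(e)=e$ and $h(f)\neq f$. Since $S_g$ is closed, $h$ is hyperbolic, and $e$ is one of its fixed points. Replacing $h$ by $h^{-1}$ if necessary, we may assume $e=r(h)$, so that $h^n(f)\to a(h)$ as $n\to\infty$. Note that $a(h)\neq f$, since $f$ is not fixed.

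\textbf{The orbit of $\lambda$.} The leaves $h^n(\lambda)=\{e,h^n(f)\}$ all lie in $\widetilde{\cG}$ and are pairwise asymptotic at $e$. Their other endpoints $h^n(f)$ are strictly monotone in $\partial\HH^2\setminus\{e\}$ and converge to $a(h)$ as $n\to+\infty$ and back to $e$ as $n\to-\infty$. Consequently these leaves cut the complementary arc from $e$ to $a(h)$ that contains $f$ into a sequence of consecutive ideal triangles, namely the triangles with vertices $e,h^n(f),h^{n+1}(f)$.

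\textbf{Isolation of $\lambda$.} Suppose a leaf $\mu$ of $\widetilde{\cG}$ with $\mu\neq\lambda$ comes very close to $\lambda$. By disjointness, $\mu$ cannot cross any $h^n(\lambda)$. If $\mu$ is near $\lambda$ on the side toward $h(f)$, its endpoints lie near $e$ and near $f$, in the arcs cut out by $h^{\pm1}(\lambda)$. Since $\mu$ cannot cross $h(\lambda)$ or $h^{-1}(\lambda)$, it must lie in the ideal triangle $\{e,f,h(f)\}$ (respectively $\{e,h^{-1}(f),f\}$ on the other side). A leaf inside that triangle with one endpoint near $f$ and the other near $e$ must in fact have $e$ as an endpoint, because its endpoints lie on the two sides of $f$ within the triangle's boundary arcs. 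Then the endpoints of $\mu$ are $e$ and a point in the interval $(f,h(f))$, and $\mu$ is asymptotic to $\lambda$.

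\textbf{The main obstacle.} The hardest step is ruling out infinitely many leaves $\mu_k$ asymptotic at $e$ accumulating on $\lambda$. The plan is to push each $\mu_k$ by powers of $h$ into a fundamental domain. All translates $h^n(\mu_k)$ are leaves with endpoint $e$ and other endpoint in the fixed compact interval $[f,h(f)]$. On the closed surface they project to leaves spiralling onto the closed geodesic of $h$ at the cusp-like end near $e$. A lamination has only finitely many leaves asymptotic to a given closed geodesic from a given side, by \refprop{finiteBD}: such leaves bound principal regions (crowns), and a principal region has finitely many boundary leaves. Hence only finitely many distinct leaves with endpoint $e$ meet $[f,h(f)]$ up to the action of $h$, so no sequence of them can converge to $\lambda$. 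Therefore $\lambda$ is isolated.

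\textbf{The ``in particular''.} If $p$ is an endpoint of infinitely many leaves and is fixed by no nontrivial element, then asymptotic leaves at $p$ project to distinct leaves of $\cG$ that are pairwise asymptotic on $S_g$. Leaves at $p$ have a monotone family of other endpoints, which accumulate. By closedness they produce a non-isolated leaf together with infinitely many boundary leaves of one principal region, contradicting \refprop{finiteBD}.
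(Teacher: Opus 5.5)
Your reduction step is correct and is the paper's first step: a leaf $\mu\neq\lambda$ close enough to $\lambda$ must be $\{e,a\}$ with $a$ near $f$, because otherwise $\mu$ is linked with $h(\lambda)$ or $h^{-1}(\lambda)$. Your phrasing via ``the ideal triangle $\{e,f,h(f)\}$'' is off: a geodesic from $e$ to a point strictly between $f$ and $h(f)$ is not contained in that triangle. The linking argument gives the conclusion directly, though.

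\textbf{The gap.} The problem is the step you yourself call the main obstacle. You assert that leaves asymptotic to the closed leaf from a given side ``bound principal regions'', i.e.\ are boundary leaves, and then apply finiteness of boundary leaves. This is unjustified, and it is essentially equivalent to what you are proving. Let $A\subset[f,h(f)]$ be the closed set of other endpoints of leaves $\{e,a\}$. Then $\{e,a\}$ is a boundary leaf only when $a$ is an endpoint of a complementary gap of $A$. Nothing you have said rules out $A$ being a Cantor set, or even containing an interval. In that case most of these leaves are not boundary leaves, and the finiteness in \refprop{finiteBD} gives no bound on them. The finiteness you claim does hold in the end, but your justification of it is circular.

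\textbf{The missing ingredient} is the last clause of \refprop{finiteBD}: boundary leaves are dense in $\cG$. The paper uses it as follows.
\begin{enumerate}
\item By density, choose the sequence accumulating on $\lambda$ to consist of boundary leaves from the start.
\item Your reduction then gives infinitely many distinct boundary leaves $\{e,a_n\}$ with all $a_n$ in one fundamental domain for the maximal cyclic subgroup $\langle k\rangle$ containing $h$, not merely for $\langle h\rangle$.
\item Since $\cG$ has finitely many boundary leaves, three of these lie in a single $\pi_1(S_g)$-orbit. Composing the identifying elements gives some $\phi$ with $\phi(e)=e$ carrying one of them to another.
\item The fundamental-domain condition forces $\phi\notin\langle k\rangle$. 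So $\phi$ and $h$ share exactly one fixed point, contradicting discreteness.
\end{enumerate}
This discreteness input (the stabilizer of $e$ is cyclic) is also exactly what you would need to pass from ``finitely many leaves of $\cG$'' to ``finitely many lifts at $e$ modulo $h$''. You gloss over that passage too.

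\textbf{The ``in particular''.} It has the same hole. Closedness alone yields a non-isolated leaf at $p$. To get infinitely many distinct boundary leaves, you need infinitely many gaps between the leaves ending at $p$. That again requires density of boundary leaves, or that $\cG$ has empty interior. You should also observe that when $p$ has trivial stabilizer, each leaf of $\cG$ has at most two lifts ending at $p$, so distinct lifts give distinct leaves of $\cG$ up to a factor of two.
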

\begin{proof}
    Note that there is a natural circular order on the Gromov boundary $\partial \HH^2$, induced from the orientation of $\HH^2$.
    Write $e,e'$ for the end points of $\lambda$ in $\partial \HH^2$.
    Recall that there is a unique maximal abelian subgroup $H$ of $\pi_1(S_g)$ that contains $h$, and since $\pi_1(S_g)$ is hyperbolic, $H\cong \ZZ$.
    Say that $k$ is a generator of $H$.
    Also, we may assume that $k(e')\in \opi[\partial \HH^2]{e}{e'}=R$ and so $k^{-1}(e')\in \opi[\partial \HH^2]{e'}{e}=L$.
    Assume that $\lambda$ is not isolated.
    By the density of the boundary leaves (\refprop{finiteBD}), we can take a non-trivial sequence of boundary leaves in $\cG$, such that it converges to $\lambda$ and the end points are contained in $\closure{R}$ or in $\closure{L}$.
    Without loss of the generality, we may assume that the end points of such a sequence in $\closure{R}$.
    Hence, we can find infinitely many distinct boundary leaves $\{\lambda_n\}_{n\in \NN}$  with one endpoint at $e$ and the other endpoints $e_n$ in $\opi[\partial \HH^2]{k(e')}{e'}$.
    However, since by \refprop{finiteBD}, there are only finitely many orbit classes of boundary leaves in $\widetilde{\cG}$, we can find two boundary leaves $\lambda_{i}, \lambda_{j}$ and $f\in \pi_1(M)$ such that  $f(e)=e$ and $\lambda_i=f(\lambda_j)$.
    Since $\{e_i,e_j\}\subset \opi[\partial \HH^2]{h(e')}{e'}$ and $e_j\neq e_i=f(e_j)$, by the choice of $k$,  we have that $f\nin H$.
    This implies that $h$ and $f$ are hyperbolic isometries, that share only one fixed point.
    This is a contradiction by the discreteness of $\pi_1(S_g)$ in $\PSL(\RR)$.
    Therefore, the first statement has been proved.
    The second statement follows from the proof of \cite[Lemma~4.5]{Casson88}.
    Thus, we are done.
\end{proof}

Recall that the \emph{derived lamination} $\cG'$ of a geodesic lamination $\cG$ in $S_g$ is defined as the union of non-isolated leaves of $\cG$.
We say that $\cG$ is \emph{perfect} if $\cG'=\cG$.
Note that  $\cG$ is perfect and connected if and only if every leaf of $\cG$ is dense in $\cG$ (\cite[Corollary4.7.2]{Casson88}).
Since for any geodesic lamination $\cG$ in $S_g$,  $\cG'''=\cG''$ (\cite[Corollary4.7.1]{Casson88}).
Hence, in each component $\cL$ of $\cG''$, every leaf of $\cL$ are dense in $\cL$.

The following is a combination of  \cite[Lemma~3.2, Lemma~3.6, Corollary~4.7.1, Corollary~4.7.2, Lemma~4.4, Lemma~4.2, Corollary4.7.2]{Casson88}, in \cite{Casson88}:
\begin{lem}\label{Lem:closureOfLeaf}
Let $\ell$ be a bi-infinite simple geodesic in a closed hyperbolic surface $S_g$. Then, the closure $\cG$ of $\ell$ in $S$ is a geodesic lamination.
In particular, the following hold:
\begin{itemize}
    \item If $\ell$ is isolated (from both side) in $\cG$, then  one of the following hold:
\begin{enumerate}
    \item if $\cG''=\varnothing$ , then $\cG'$ consists of one or two disjoint simple closed geodesics,  to which the ends of $\ell$ spiral.
    \item if $\cG''\neq \varnothing$, then there is a unique principal region $U$ of $\cG''$, that  contains $\ell$.

    In particular, $\cG''$ has at most two connected components and for each component $\cL$ of $\cG''$,  at least one end of $\ell$ escapes toward an end of $U$, spiraling toward $\cL$.
\end{enumerate}
\item If $\ell$ is not isolated in $\cG $ and is a boundary leaf of a principal region $U$ of $\cG$, then $\cG$ has neither isolated leaves nor closed leaves. Consequently, $\cG'=\cG$ and $\cG$ is connected.
\end{itemize}
\end{lem}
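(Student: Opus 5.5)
The statement to prove is \reflem{closureOfLeaf}, which the paper presents as a compilation of results from \cite{Casson88}. The proof will therefore be an assembly of Casson's lemmas rather than new work.

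\textbf{Step 1: $\cG$ is a lamination.} The closure of a simple geodesic is a union of pairwise disjoint simple geodesics, since limits of disjoint simple geodesic segments are disjoint or equal. Hence $\cG$ is a geodesic lamination by \cite[Lemma~3.2]{Casson88}.

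\textbf{Step 2: the derived laminations.} The derived lamination $\cG'$ is closed, and $\cG'''=\cG''$ by \cite[Corollary~4.7.1]{Casson88}. Every isolated leaf of $\cG$ lies in a principal region of $\cG'$, and its ends accumulate on $\cG'$.

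\textbf{Step 3: $\ell$ isolated from both sides.} Then $\cG=\ell\cup\cG'$, because $\ell$ is dense in $\cG$, so every other leaf is a limit of translates of $\ell$ and hence non-isolated.

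\emph{Case $\cG''=\varnothing$.} Every leaf of $\cG'$ is isolated in $\cG'$. A lamination all of whose leaves are isolated consists of finitely many leaves, because a closed surface carries only finitely many boundary leaves (\refprop{finiteBD}). Each such leaf is then closed or spirals, and spiraling would produce a non-isolated limit leaf in $\cG''$. So $\cG'$ consists of simple closed geodesics. Both ends of $\ell$ accumulate only on $\cG'$, so each end spirals onto one of them, giving at most two.

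\emph{Case $\cG''\neq\varnothing$.} Both $\ell$ and $\cG'\setminus\cG''$ are disjoint from $\cG''$. Take $U$ to be the principal region of $\cG''$ containing $\ell$; it is unique because $\ell$ is connected. Each end of $\ell$ leaves every compact subset of $U$ and accumulates on the boundary of $U$. By \refprop{gapShape}, the non-compact part of $U$ is a finite union of crowns or ideal-polygon spikes, so the end must escape through one end of $U$.

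Every leaf of each component $\cL$ of $\cG''$ is dense in $\cL$, by \cite[Corollary~4.7.2]{Casson88} applied to $\cG''=\cG'''$. Since $\ell$ is dense in $\cG$, the ends of $\ell$ must accumulate on every component of $\cG''$. With only two ends, this gives at most two components, each approached by some end spiraling toward it.

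\textbf{Step 4: $\ell$ non-isolated and a boundary leaf.} Since $\ell$ is dense in $\cG$ and non-isolated, $\ell\subset\cG'$, and $\cG'$ is closed, so $\cG=\cG'$. Hence $\cG$ is perfect, so it has no isolated leaves. A closed leaf would be isolated, since nothing can accumulate on a closed leaf without crossing it in a compact lamination, so there are no closed leaves either. Connectedness follows because $\cG$ is the closure of the connected set $\ell$.

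\textbf{Main obstacle.} The delicate point is the claim in Step 3 that each end of $\ell$ escapes toward a single end of $U$ and spirals onto one specific component of $\cG''$. For this I would use \cite[Lemma~4.4]{Casson88}, which describes the ends of an isolated leaf inside a complementary region, together with the crown structure from \refprop{gapShape}.
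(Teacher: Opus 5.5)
Your overall route, assembling the statement from \cite{Casson88}, is the same as the paper's, which offers nothing beyond a citation list. However, two of the steps you supply are false as written.

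\textbf{Step~4.} You claim that nothing can accumulate on a closed leaf without crossing it. This is wrong: a leaf spiraling onto a closed geodesic $c$ accumulates on $c$ without crossing it. Your own case $\cG''=\varnothing$ in Step~3 has exactly this, which makes the closed geodesics of $\cG'$ non-isolated leaves of $\cG$. The correct argument runs as follows. $\cG$ is perfect and connected, so by \cite[Corollary~4.7.2]{Casson88} every leaf is dense in $\cG$. A closed leaf $c$ is a closed subset, so $c=\cG\supset\ell$. This forces $\ell=c$, contradicting that $\ell$ is not closed. Note that the boundary-leaf hypothesis is never needed.

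\textbf{Step~3, case $\cG''\neq\varnothing$.} Your assertion that \emph{each} end of $\ell$ leaves every compact subset of $U$ is false. Take one end of $\ell$ spiraling onto a closed geodesic $c$, and the other escaping up a crown spike toward a non-closed minimal lamination $M$ (for instance, $M$ filling a one-holed torus with boundary $c$). Then $\cG'=c\cup M$, $\cG''=M$ and $c\subset U$, so the first end stays in a compact part of $U$. This is precisely why the lemma only says ``at least one end''; your argument would show that both ends escape.

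The missing ingredient is the fact you defer to in your last paragraph: the limit set of each end of the isolated leaf $\ell$ is a single minimal sublamination. This is the structure theorem: a geodesic lamination is a finite union of minimal sublaminations plus finitely many isolated non-compact leaves spiraling onto them. It gives the following.
\begin{itemize}
\item $\cG=\ell\cup M_1\cup M_2$ and $\cG'=M_1\cup M_2$.
\item $\cG''$ is the union of the non-closed $M_i$, since a closed $M_i$ is at positive distance from the other and hence isolated in $\cG'$.
\item Only the ends spiraling onto a non-closed $M_i$ leave compact subsets of $U$. By the crown/polygon structure, each such end then escapes through one end of $U$.
\end{itemize}
The same fact, or at least the connectedness of each end's limit set, is what justifies your unargued count ``two ends, hence at most two components''.
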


For a pair of disjoint geodesics $\ell_0,\ell_1$ in $S_g$, we say that $\ell_0$ and $\ell_1$ are \emph{asymptotic} if there exist lifts $\widetilde{\ell}_0$ of $\ell_0$ and $\widetilde{\ell}_1$ of $\ell_1$ in $\mathbb{H}^2$ that share one endpoint in $\partial \mathbb{H}^2$.
Then, it follows from \reflem{closureOfLeaf}, \refprop{oneEndFix} and \cite[Lemma~4.5]{Casson88}:
\begin{lem}\label{Lem:closureOfManyLeaves}
    Let $\{\ell_i\}_{i=1}^n$ be a finite collection of pairwise disjoint bi-infinite simple geodesics in a closed hyperbolic surface $S_g$. Then, the closure $\cG$ of $\bigcup_{i=1}^n\ell_i$ in $S$ is a geodesic lamination and it is the union of the geodesic laminations $\cG_i$, defined as the closure of $\ell_i$.

    In particular, if $\ell_i$ are boundary leaves of $\cG$, each $\ell_i$ is contained in the metric completion of a principal region of $\cG''$.
    Moreover, if $\ell_i$ and $\ell_j$ with $i\neq j$ are asymptotic and the asymptotic ends of those spiral toward a  component of $\cG''$, then there exists a principal region $U$ of $\cG''$ such that each of $\ell_i$ and $\ell_j$ is either contained in $U$ or a boundary leaf of $U$.

\end{lem}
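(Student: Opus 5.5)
We would prove the three assertions in order. The first is point-set topology plus the cited Casson results. The closure of a finite union is the union of the closures, so $\cG=\bigcup_{i=1}^n\cG_i$ with $\cG_i=\closure{\ell_i}$. By \reflem{closureOfLeaf} (Casson, Lemma~3.2), each $\cG_i$ is a geodesic lamination. To see that $\cG$ is one, it suffices that leaves of $\cG_i$ and $\cG_j$ do not cross transversally. Any such leaves are locally uniform limits of segments of $\ell_i$ and $\ell_j$, and a transverse crossing is an open condition, so it would force $\ell_i$ to cross $\ell_j$. This contradicts disjointness; this is the same argument as Casson, Lemma~3.2 applied to finitely many geodesics.

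For the second assertion, recall that $\cG''$ is a closed sublamination of $\cG$ (Casson, Corollary~4.7.1), so every complementary region of $\cG$ lies in a complementary region of $\cG''$. We split into two cases.
\begin{itemize}
\item If $\ell_i\not\subset\cG''$, then $\ell_i$ is disjoint from $\cG''$, since leaves of a lamination are either contained in or disjoint from a sublamination. Hence $\ell_i$ lies in a single complementary region $U$ of $\cG''$. By \refprop{finiteBD} and \refprop{gapShape}, $U$ is a principal region.
\item If $\ell_i\subset\cG''$, we use that $\ell_i$ is a boundary leaf of $\cG$. Then some side of $\ell_i$ faces a principal region $V$ of $\cG$, and $V$ is contained in a principal region $U$ of $\cG''$. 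Thus $\ell_i$ lies in the metric completion of $U$.
\end{itemize}

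For the third assertion, choose lifts $\widetilde\ell_i,\widetilde\ell_j$ sharing the endpoint $e$, and let $\Sigma$ be the strip between them near $e$. We claim that no leaf of $\widetilde{\cG''}$ meets $\Sigma$ transversally to its cusp. Such a leaf would separate $\widetilde\ell_i$ from $\widetilde\ell_j$ near $e$, so it would have $e$ as an endpoint. Since leaves of $\cG''$ are non-isolated in $\cG''$, nearby leaves would also enter $\Sigma$ and so also end at $e$. This gives infinitely many leaves ending at $e$, and by \refprop{oneEndFix} some nontrivial $h$ fixes $e$. In that case the spiraling ends accumulate on the closed geodesic covered by the axis of $h$. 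That component of $\cG''$ is then a single closed leaf, and both ends spiral onto it from a definite side. Granting the claim, the germs of $\ell_i$ and $\ell_j$ at their asymptotic ends lie in the same complementary region of $\cG''$, or on its frontier. Combining this with the second assertion and Casson, Lemma~4.5 (which controls leaves sharing an endpoint) yields one principal region $U$ of $\cG''$ for both.

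The main obstacle is the fixed-endpoint case. There one must rule out that $\ell_i$ and $\ell_j$ spiral onto a closed leaf from opposite sides, since that would place them in different principal regions. We would first try to show that asymptoticity at $e$ together with the lift of the axis through $e$ forces both lifts into the same half-plane bounded by that axis. The reason is that the axis, being a leaf ending at $e$, cannot lie strictly between two geodesics sharing the endpoint $e$ without crossing one of them or coinciding with it. If that argument fails, we would fall back on a case analysis via \refprop{gapShape} of the crowns based on that closed geodesic.
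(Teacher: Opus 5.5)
Your first two assertions are handled correctly. The gap is in the third assertion, in what you call the fixed-endpoint case.

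You show that infinitely many leaves of $\wt{\cG''}$ end at $e$ and that some nontrivial $h$ fixes $e$. At that point you do not reach a contradiction. Instead you posit that the relevant component of $\cG''$ is a closed leaf. You then try to exclude opposite-side spiraling by claiming that a geodesic ending at $e$ cannot lie strictly between two geodesics sharing the endpoint $e$ without crossing one of them. That claim is false: with $e=\infty$ in the upper half-plane, the vertical lines $\mathrm{Re}\,z=a$, $\mathrm{Re}\,z=c$, $\mathrm{Re}\,z=b$ with $a<c<b$ are pairwise disjoint. So disjointness cannot rule out a leaf of $\wt{\cG''}$, for instance the axis of $h$, sitting between $\wt\ell_i$ and $\wt\ell_j$. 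Your fallback via crowns is only a plan, not an argument. As written, the key claim behind the third assertion is unproved.

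The missing step is the \emph{first} half of \refprop{oneEndFix}, applied to the lamination $\cG''$ itself.
\begin{itemize}
\item A leaf of $\wt{\cG''}$ with exactly one endpoint fixed by a nontrivial element is isolated in $\cG''$.
\item But $\cG'''=\cG''$, so $\cG''$ has no isolated leaves.
\item Among your infinitely many leaves of $\wt{\cG''}$ ending at $e$, at most one is the axis of $h$. Every other one has exactly one endpoint fixed by $h$, which is a contradiction.
\end{itemize}

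Hence your claim holds with no exceptional case: no leaf of $\wt{\cG''}$ enters $\Sigma$ near $e$. In the half-plane picture, a geodesic that enters the strip above height $(b-a)/2$ without crossing its sides must be vertical.

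The cusp of $\Sigma$ then lies in a single complementary component $\wt U$ of $\wt{\cG''}$. Each of $\wt\ell_i,\wt\ell_j$ is of one of two kinds:
\begin{itemize}
\item disjoint from $\wt{\cG''}$, and then contained in $\wt U$ by connectedness; or
\item a leaf of $\wt{\cG''}$ on the frontier of $\wt U$, and then a boundary leaf of it.
\end{itemize}
Projecting gives the desired $U$.

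In particular, the scenario you were worried about (a closed leaf of $\cG''$ approached by spiraling ends) never occurs. A closed leaf of $\cG''$ could only be non-isolated through leaves of $\cG''$ spiraling onto it. Those leaves have exactly one endpoint fixed by a nontrivial element, so they are isolated by the same proposition.
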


\section*{Acknowledgement}

The author would like to thank Harry Hyungryul Baik, Michele Triestino, Chi Cheuk Tsang, Minju Lee, and Sam Taylor for several helpful conversations. The author is especially grateful to Danny Calegari and Ino Loukidou for many stimulating email exchanges, valuable comments, and helpful discussions. Special thanks are due to Danny Calegari for his encouragement to write this paper.

The author was supported by the National Research Foundation of Korea (NRF) grants funded by the Korean government (RS-2022-NR072395, RS-2025-02293115), by the Mid-Career Researcher Program through the National Research Foundation of Korea funded by the Korean government (RS-2023-00278510), and by the KIAS–KAIST joint research grant.

\bibliographystyle{alpha}
\bibliography{biblio}

\end{document}

%% file: header_newcommand.tex
\newcommand{\CC}{\mathbb{C}}

\newcommand{\HH}{\mathbb{H}}

\newcommand{\NN}{\mathbb{N}}

\newcommand{\RR}{\mathbb{R}}

\newcommand{\ZZ}{\mathbb{Z}}

\newcommand{\cA}{\mathcal{A}}

\newcommand{\cC}{\mathcal{C}}
\newcommand{\cD}{\mathcal{D}}
\newcommand{\cE}{\mathcal{E}}
\newcommand{\cF}{\mathcal{F}}
\newcommand{\cG}{\mathcal{G}}
\newcommand{\cH}{\mathcal{H}}
\newcommand{\cI}{\mathcal{I}}
\newcommand{\cJ}{\mathcal{J}}
\newcommand{\cK}{\mathcal{K}}
\newcommand{\cL}{\mathcal{L}}
\newcommand{\cM}{\mathcal{M}}
\newcommand{\cN}{\mathcal{N}}

\newcommand{\cR}{\mathcal{R}}
\newcommand{\cS}{\mathcal{S}}

\newcommand{\sG}{\mathscr{G}}

\newcommand{\sL}{\mathscr{L}}

\newcommand{\fA}{\mathsf{A}}
\newcommand{\fB}{\mathsf{B}}
\newcommand{\fC}{\mathsf{C}}
\newcommand{\fD}{\mathsf{D}}

\newcommand{\fF}{\mathsf{F}}

\newcommand{\fI}{\mathsf{I}}

\newcommand{\fK}{\mathsf{K}}

\newcommand{\fN}{\mathsf{N}}
\newcommand{\fO}{\mathsf{O}}
\newcommand{\fP}{\mathsf{P}}

\newcommand{\fS}{\mathsf{S}}

\newcommand{\fU}{\mathsf{U}}

\newcommand{\sfa}{\mathsf{a}}
\newcommand{\sfb}{\mathsf{b}}
\newcommand{\sfc}{\mathsf{c}}
\newcommand{\sfd}{\mathsf{d}}
\newcommand{\sfe}{\mathsf{e}}
\newcommand{\sff}{\mathsf{f}}
\newcommand{\sfg}{\mathsf{g}}

\newcommand{\sfi}{\mathsf{i}}
\newcommand{\sfj}{\mathsf{j}}
\newcommand{\sfk}{\mathsf{k}}

\newcommand{\sfm}{\mathsf{m}}
\newcommand{\sfn}{\mathsf{n}}

\newcommand{\sfp}{\mathsf{p}}
\newcommand{\sfq}{\mathsf{q}}
\newcommand{\sfr}{\mathsf{r}}
\newcommand{\sfs}{\mathsf{s}}
\newcommand{\sft}{\mathsf{t}}

\newcommand{\sfw}{\mathsf{w}}
\newcommand{\sfx}{\mathsf{x}}
\newcommand{\sfy}{\mathsf{y}}
\newcommand{\sfz}{\mathsf{z}}

\newcommand{\Isomp}{\operatorname{Isom_+}}

\newcommand{\Fix}{\operatorname{Fix}}

\newcommand{\Stab}[2]{\operatorname{Stab}_{#1}(#2)}

\newcommand{\PSL}{\operatorname{PSL}_2}

\newcommand{\id}{\mathrm{id}}

\newcommand{\opi}[3][S^1]{(#2, #3)_{#1}}
\newcommand{\ropi}[3][S^1]{[#2, #3)_{#1}}
\newcommand{\lopi}[3][S^1]{(#2, #3]_{#1}}
\newcommand{\cldi}[3][S^1]{[#2, #3]_{#1}}

\newcommand{\closure}[1]{\overline{#1}}
\newcommand{\wt}[1]{\widetilde{#1}}

\renewcommand{\setminus}{\smallsetminus}

\DeclareMathOperator{\Int}{\mathsf{Int}}

%% file: main.bbl
\begin{thebibliography}{Bow98}

\bibitem[Bow98]{Bowditch98}
Brian~H. Bowditch.
\newblock A topological characterisation of hyperbolic groups.
\newblock {\em J. Amer. Math. Soc.}, 11(3):643--667, 1998.

\bibitem[Cal06]{Calegari06}
Danny Calegari.
\newblock Universal circles for quasigeodesic flows.
\newblock {\em Geom. Topol.}, 10:2271--2298, 2006.

\bibitem[Cal07]{Calegari07}
Danny Calegari.
\newblock {\em Foliations and the geometry of 3-manifolds}.
\newblock Oxford Mathematical Monographs. Oxford University Press, Oxford,
  2007.

\bibitem[CB88]{Casson88}
Andrew~J. Casson and Steven~A. Bleiler.
\newblock {\em Automorphisms of surfaces after {N}ielsen and {T}hurston},
  volume~9 of {\em London Mathematical Society Student Texts}.
\newblock Cambridge University Press, Cambridge, 1988.

\bibitem[CD03]{CalegariDunfield03}
Danny Calegari and Nathan~M. Dunfield.
\newblock Laminations and groups of homeomorphisms of the circle.
\newblock {\em Invent. Math.}, 152(1):149--204, 2003.

\bibitem[CL24]{CalegariLoukidou24}
Danny Calegari and Ino Loukidou.
\newblock Zippers.
\newblock \url{https://arxiv.org/abs/2411.15610v1}, 2024.
\newblock arXiv:2411.15610v1.

\bibitem[CL26]{CalegariLoukidou26}
Danny Calegari and Ino Loukidou.
\newblock Zippers, 2026.
\newblock arXiv:2411.15610, to appear in Geometry \& Topology.

\bibitem[Fra13]{Frankel13}
Steven Frankel.
\newblock Quasigeodesic flows and {M}\"{o}bius-like groups.
\newblock {\em J. Differential Geom.}, 93(3):401--429, 2013.

\bibitem[Fra18]{Frankel18}
Steven Frankel.
\newblock Coarse hyperbolicity and closed orbits for quasigeodesic flows.
\newblock {\em Ann. of Math. (2)}, 188(1):1--48, 2018.

\bibitem[KM12]{KahnMarkovic12}
Jeremy Kahn and Vladimir Markovic.
\newblock Immersing almost geodesic surfaces in a closed hyperbolic three
  manifold.
\newblock {\em Ann. of Math. (2)}, 175(3):1127--1190, 2012.

\bibitem[Lev98]{Levitt98}
Gilbert Levitt.
\newblock Non-nesting actions on real trees.
\newblock {\em Bull. London Math. Soc.}, 30(1):46--54, 1998.

\bibitem[Mar13]{Markovic13}
Vladimir Markovic.
\newblock Criterion for {C}annon's conjecture.
\newblock {\em Geom. Funct. Anal.}, 23(3):1035--1061, 2013.

\bibitem[New51]{Newman51}
M.~H.~A. Newman.
\newblock {\em Elements of the topology of plane sets of points}.
\newblock Cambridge, at the University Press, 1951.
\newblock 2nd ed.

\bibitem[Thu22]{ThurstonUniI}
William~P. Thurston.
\newblock Three-manifolds, foliations and circles, {I} preliminary version.
\newblock In {\em Collected works of {W}illiam {P}. {T}hurston with commentary.
  {V}ol. {I}. {F}oliations, surfaces and differential geometry}, pages
  353--412. Amer. Math. Soc., Providence, RI, [2022] \copyright 2022.
\newblock December 1997 eprint.

\bibitem[Tuk98]{Tukia98}
Pekka Tukia.
\newblock Conical limit points and uniform convergence groups.
\newblock {\em J. Reine Angew. Math.}, 501:71--98, 1998.

\end{thebibliography}
